\newcommand{\Ufive}{\Delta_5}
\newcommand{\caseone}[2]{{\cal A}(#1,#2)}
\newcommand{\casetwo}[2]{{\cal B}(#1,#2)}
\newcommand{\ignore}[1]{}
\newcommand{\un}{1} 
\newcommand{\de}{2}
\newcommand{\tr}{3}
\newcommand{\unb}{\overline{1}} 
\newcommand{\deb}{\overline{2}}
\newcommand{\trb}{\overline{3}}
\newcommand{\one}{1}
\newcommand{\two}{2}
\newcommand{\thr}{3}
\newcommand{\oneb}{\overline{1}}
\newcommand{\twob}{\overline{2}}
\newcommand{\thrb}{\overline{3}}
\newcommand{\AAA}{1}
\newcommand{\BBB}{2}
\newcommand{\CCC}{3}
\newcommand{\DDD}{4}
\newcommand{\cside}{\mu}
\newcommand{\dside}{\overline{\mu}}
\newcommand{\iAAA}{i}
\newcommand{\jBBB}{j}
\newcommand{\AAAb}{\overline{i}}
\newcommand{\BBBb}{\overline{j}}
\newcommand{\ptg}{{\bullet}}
\newcommand{\bcut}[2]{{\cal D}_#2}
\newcommand{\cut}[1]{{\cal D}_#1}
\newcommand{\rev}[1]{{#1}^{-1}}
\newcommand{\node}[3]{#1#2#3}
\newcommand{\nodeone}[2]{#1#2}
\newcommand{\nodetwo}[2]{\jbar{#1}#2}
\newcommand{\nodethr}[2]{#1\jbar{#2}}
\newcommand{\nodefou}[2]{\jbar{#1}\jbar{#2}}
\newcommand{\setnode}[2]{\{\nodeone{#1}{#2},\nodetwo{#1}{#2},\nodethr{#1}{#2},\nodefou{#1}{#2}\}}
\newcommand{\setnodebis}[2]{\{\nodeone{#1}{#2},\nodetwo{#1}{#2},\nodethr{#1}{#2},\nodefou{#1}{#2} \mid #2 \in I\setminus #1\}}
\newcommand{\OO}{1}
\newcommand{\uu}{2}
\newcommand{\vv}{3}
\newcommand{\ww}{4}
\newcommand{\xx}{k}
\newcommand{\yy}{k'}
\newcommand{\GOO}{\Gamma_{\OO}}
\newcommand{\Guu}{\Gamma_{\uu}}
\newcommand{\Gvv}{\Gamma_{\vv}}
\newcommand{\Gww}{\Gamma_{\ww}}
\newcommand{\Gxx}{\Gamma_{\xx}}
\newcommand{\nA}{X}
\newcommand{\nB}{Y}
\newcommand{\nC}{A}
\newcommand{\nD}{B}
\newcommand{\gtriangle}{$\gamma$-triangle}
\newcommand{\gtriangles}{$\gamma$-triangles}
\newcommand{\base}[1]{\operatorname{Int}_\gamma(#1)}
\newcommand{\cobase}[1]{\operatorname{Ext}_\gamma(#1)}
\newcommand{\oldflagop}[1]{\sigma_{#1}}
\newcommand{\oldflagdoublureop}[1]{\widehat{\sigma}_{#1}}
\newcommand{\OldFlag}[1]{{\cal F}(#1)}
\newcommand{\OldFlagDoublure}[1]{\widehat{{\cal F}}(#1)}
\newcommand{\XX}[1]{{\cal N}(#1)}
\newcommand{\interval}[2]{[#1,#2]}
\newcommand{\TopPlane}{{\cal P}}
\newcommand{\Lines}{\mathbb{L}}
\renewcommand{\Lines}{{\cal L}}
\newcommand{\OLines}{\widetilde{\Lines}}
\newcommand{\aline}{\ell}
\renewcommand{\aline}{L}
\newcommand{\apoint}{P}
\newcommand{\apointbis}{Q}
\newcommand{\bline}{k}
\renewcommand{\bline}{K}
\newcommand{\gcurve}{\operatorname{curve}_\gamma}
\newcommand{\UU}{\gamma_+}
\newcommand{\VV}{\gamma_-}
\newcommand{\tracecurve}{$\gamma$-curve}
\newcommand{\tracecurves}{$\gamma$-curves}
\newcommand{\TD}{{\cal D}}
\newcommand{\MS}{{\cal M}}
\newcommand{\OPC}{\overline{\MS}}
\newcommand{\GPP}{{\cal G}}
\newcommand{\PP}{{\cal P}}
\newcommand{\CC}{{\cal C}}
\newcommand{\Map}[3]{#1 : #2 \rightarrow #3}
\newcommand{\MapLight}[3]{#2 \rightarrow #3}
\newcommand{\lift}[1]{\widetilde{#1}}
\newcommand{\stripbis}[1]{C(#1)}
\newcommand{\twin}[1]{#1_*}
\newtheorem{rem}{Remark}
\newenvironment{remark}{\begin{rem}\em}{\end{rem}}
\newtheorem{exa}{Example}
\newenvironment{example}{\begin{exa}\em}{\end{exa}}
\newtheorem{theorem}{Theorem}
\newtheorem{lemma}[theorem]{Lemma}
\newcommand{\cal}{\mathcal}
\newcommand{\onecrossR}[2]{\overline{#2}}
\newcommand{\twocrossR}[2]{\overline{#2}}
\newcommand{\thrcrossR}[2]{#2}
\newcommand{\foucrossR}[2]{#2}
\newcommand{\flagone}[4]{(\{\nodeone{#1}{#2}\},{#3},{#4})}
\newcommand{\cyclefouR}[4]{
\foucrossR{#1}{#2}
\onecrossR{#1}{#2}
\foucrossR{#1}{#3}
\onecrossR{#1}{#3}
\foucrossR{#1}{#4}
\onecrossR{#1}{#4}
\twocrossR{#1}{#2}
\thrcrossR{#1}{#2}
\twocrossR{#1}{#3}
\thrcrossR{#1}{#3}
\twocrossR{#1}{#4}
\thrcrossR{#1}{#4}
}
\newcommand{\cyclethrTHRTWOtwoR}[3]
{
\onecrossR{#2}{#3}
\foucrossR{#2}{#1}
\onecrossR{#2}{#1}
\twocrossR{#2}{#3}
\thrcrossR{#2}{#3}
\twocrossR{#2}{#1}
\thrcrossR{#2}{#1}
\foucrossR{#2}{#3}
}
\newcommand{\cyclethrTHRTWOthrR}[3]
{
\foucrossR{#3}{#1}
\onecrossR{#3}{#1}
\thrcrossR{#3}{#2}
\foucrossR{#3}{#2}
\twocrossR{#3}{#1}
\thrcrossR{#3}{#1}
\onecrossR{#3}{#2}
\twocrossR{#3}{#2}
}
\def\aone{a}\def\bone{b}\def\cone{c}\def\done{d}\def\eone{e}\def\fone{f}
\def\gone{g}\def\hone{h}\def\ione{i}\def\jone{j}\def\kone{k}\def\lone{l}
\def\atwo{\hat{a}}\def\btwo{\hat{b}}\def\ctwo{\hat{c}}\def\dtwo{\hat{d}}\def\etwo{\hat{e}}\def\ftwo{\hat{f}}
\def\gtwo{\hat{g}}\def\htwo{\hat{h}}\def\itwo{\hat{i}}\def\jtwo{\hat{j}}\def\ktwo{\hat{k}}\def\ltwo{\hat{l}}
\def\athr{\tilde{a}}\def\bthr{\tilde{b}}\def\cthr{\tilde{c}}\def\dthr{\tilde{d}}\def\ethr{\tilde{e}}\def\fthr{\tilde{f}}
\def\gthr{\tilde{g}}\def\hthr{\tilde{h}}\def\ithr{\tilde{i}}\def\jthr{\tilde{j}}\def\kthr{\tilde{k}}\def\lthr{\tilde{l}}
\def\afou{\check{a}}\def\bfou{\check{b}}\def\cfou{\check{c}}\def\dfou{\check{d}}\def\efou{\check{e}}\def\ffou{\check{f}}
\def\gfou{\check{g}}\def\hfou{\check{h}}\def\ifou{\check{i}}\def\jfou{\check{j}}\def\kfou{\check{k}}\def\lfou{\check{l}}
\def\rdone{\rev{d}}\def\rfone{\rev{f}}
\def\rgone{\rev{g}}\def\rione{\rev{i}}\def\rkone{\rev{k}}
\def\ratwo{\rev{\hat{a}}}\def\rbtwo{\rev{\hat{b}}}\def\rctwo{\rev{\hat{c}}}\def\retwo{\rev{\hat{e}}}
\def\rgtwo{\rev{\hat{g}}}\def\rhtwo{\rev{\hat{h}}}\def\rjtwo{\rev{\hat{j}}}\def\rltwo{\rev{\hat{l}}}
\def\rathr{\rev{\tilde{a}}}\def\rcthr{\rev{\tilde{c}}}\def\rdthr{\rev{\tilde{d}}}\def\rfthr{\rev{\tilde{f}}}
\def\rhthr{\rev{\tilde{h}}}\def\rjthr{\rev{\tilde{j}}}\def\rlthr{\rev{\tilde{l}}}
\def\rbfou{\rev{\check{b}}}\def\rdfou{\rev{\check{d}}}\def\rffou{\rev{\check{f}}}
\def\rhfou{\rev{\check{h}}}\def\rjfou{\rev{\check{j}}}\def\rlfou{\rev{\check{l}}}
\def\anormal{{a}}\def\bnormal{{b}}\def\cnormal{{c}}\def\dnormal{{d}}
\def\enormal{{e}}\def\fnormal{{f}}\def\gnormal{{g}}\def\hnormal{{h}}
\def\ansour{1}\def\bnsour{2}\def\cnsour{3}\def\dnsour{4}
\def\ensour{5}\def\fnsour{6}\def\gnsour{7}\def\hnsour{8}
\def\achapeau{\hat{a}}\def\bchapeau{\hat{b}}\def\cchapeau{\hat{c}}\def\dchapeau{\hat{d}}
\def\echapeau{\hat{e}}\def\fchapeau{\hat{f}}\def\gchapeau{\hat{g}}\def\hchapeau{\hat{h}}
\def\acsour{\hat{1}}
\def\bcsour{\hat{2}}
\def\ccsour{\hat{3}}
\def\dcsour{\hat{4}}
\def\ecsour{\hat{5}}
\def\fcsour{\hat{6}}
\def\gcsour{\hat{7}}
\def\hcsour{\hat{8}}
\def\atilde{\tilde{a}}
\def\btilde{\tilde{b}}
\def\ctilde{\tilde{c}}
\def\dtilde{\tilde{d}}
\def\etilde{\tilde{e}}
\def\ftilde{\tilde{f}}
\def\gtilde{\tilde{g}}
\def\htilde{\tilde{h}}
\def\atsour{\tilde{1}}
\def\btsour{\tilde{2}}
\def\ctsour{\tilde{3}}
\def\dtsour{\tilde{4}}
\def\etsour{\tilde{5}}
\def\ftsour{\tilde{6}}
\def\gtsour{\tilde{7}}
\def\htsour{\tilde{8}}
\newcommand{\barre}[1]{\overline{#1}}
\newcommand{\bbpp}[2]{#1#2\ptg\ptg}
\newcommand{\BBPP}[2]{#1#2\ptg\ptg}
\newcommand{\BBBPBPF}[3]{#1#2#3\ptg\barre{#2}\ptg}
\newcommand{\BBBPBPS}[3]{#1\barre{#2}#3\ptg#2\ptg}
\newcommand{\BBBBPPF}[3]{#1#2#3\barre{#1}\ptg\ptg}
\newcommand{\BBBBPPS}[3]{\barre{#1}#2#3#1\ptg\ptg}
\newcommand{\nameM}{M}
\newcommand{\nBR}{{\cal B}}
\newcommand{\nRR}{{\cal R}}
\newcommand{\roll}[2]{#1_{\otimes #2}}
\newcommand{\jind}[1]{#1}
\newcommand{\jbar}[1]{\overline{#1}}
\newcommand{\cyclePAsimple}[2]{#1#2\overline{#1#2}}
\newcommand{\cyclePAnonsim}[2]{\overline{#1}#1\overline{#2}#2}
\newcommand{\name}[1]{C_{#1}}
\newcommand{\bname}[2]{\name{#1}(#2)}
\newcommand{\ii}{1}
\newcommand{\jj}{3}
\newcommand{\kk}{2}
\newcommand{\bii}{\overline{1}}
\newcommand{\bjj}{\overline{3}}
\newcommand{\bkk}{\overline{2}}
\newcommand{\SSJ}{{\cal P}}
\newcommand{\red}{\textcolor{red}{\bigcirc}}
\newcommand{\blu}{\textcolor{blue}{\bullet}}
\newcommand{\subarrang}{\zeta}
\newcommand{\trace}[3]{[#1,#2]}
\newcommand{\polar}[1]{#1^{\diamond}}
\newcommand{\tang}[1]{#1^{*}}
\newcommand{\dual}[1]{#1^{*}}
\newcommand{\EEE}{K}
\newcommand{\FFF}{K'}
\newcommand{\GGG}{G}
\newcommand{\HHH}{H}
\newcommand{\CB}{U}
\newcommand{\OCB}{V}
\newcommand{\FCB}{\widetilde{\CB}}
\newcommand{\FCBP}{\FCB_+}
\newcommand{\FCBM}{\FCB_-}
\newcommand{\CBP}{\CB_+}
\newcommand{\CBM}{\CB_-}
\newcommand{\OCBP}{\OCB_+}
\newcommand{\OCBM}{\OCB_-}
\newcommand{\II}{I}
\newcommand{\FII}{\widetilde{\II}}
\newcommand{\IP}{\II_+}
\newcommand{\FIP}{\FII_+}
\newcommand{\FIM}{\FII_-}
\newcommand{\CL}{L}
\newcommand{\FCL}{\widetilde{L}}
\newcommand{\mylabel}{\nu}
\newcommand{\vijone}{\nodeone{i}{j}}
\newcommand{\vijtwo}{\nodetwo{i}{j}}
\newcommand{\vijthr}{\nodethr{i}{j}}
\newcommand{\vijfou}{\nodefou{i}{j}}
\newcommand{\pcplane}{\widehat{\TopPlane}}
\newcommand{\pclines}{\widehat{\Lines}}
\newcommand{\pp}{{\cal P}}
\newcommand{\rr}{{\cal R}}
\newcommand{\lpp}{{\cal L}}
\newcommand{\spp}{\mathbb{RP}^2}
\newcommand{\transmap}{\mu}
\newcommand{\opposite}[1]{\widehat{#1}}
\newcommand{\ot}{\opposite{t}}
\newcommand{\wo}{\opposite{w}} 
\newcommand{\uo}{\opposite{u}}
\newcommand{\renewmeet}[3]{L_{#1#2#3}} 
\newcommand{\QNodes}{{\cal V}}
\newcommand{\interior}[1]{\operatorname{Int}(#1)}
\newcommand{\calC}[1]{{\cal C}_{#1}}
\newcommand{\calCstar}[1]{{\cal C}^*_{#1}}
\title[\today]{{LR} characterization of chirotopes of finite planar families  of  pairwise disjoint convex bodies} 
\author{Luc Habert and Michel Pocchiola}
\thanks{MP was partially supported by the TEOMATRO grant ANR-10-BLAN 0207.}
\address{Luc Habert}
\email{Luc.Habert@normalesup.org}
\address{Michel Pocchiola\\ 
Universit{\'e} Pierre \&  Marie Curie \\
Institut de Math{\'e}matiques de Jussieu (UMR 7586) \\
4 place Jussieu\\
75252 Paris Cedex\\
France}
\email{pocchiola@math.jussieu.fr}
\date{\today}
\begin{document}

\maketitle

\begin{abstract} 
We extend the classical LR characterization of chirotopes of finite planar families of points to chirotopes of finite planar families of pairwise disjoint 
convex bodies: 
a map $\chi$ on the set of $3$-subsets of a finite set $I$  is a chirotope of finite planar families of  pairwise disjoint convex bodies 
if and only if  for every $3$-, $4$-, and $5$-subset $J$ of $I$ the restriction of $\chi$ to the set of $3$-subsets  
of $J$ is a chirotope of finite planar families of pairwise disjoint convex bodies. 
Our main tool is the polarity map, i.e., 
the map that  assigns to a convex body the set of lines missing its interior, from which we  derive the  key notion of arrangements of 
double pseudolines, introduced for the first time in this paper. 

\medskip
\noindent
{\bf Keywords.} Convexity, discrete geometry, projective planes, pseudoline arrangements, chirotopes. 

\bigskip
\bigskip
\noindent
Abbreviated versions in 
Abstracts of the 12th European Workshop Comput. Geom. pages 211--214, March 2006, Delphes, Greece,
in the poster session of the Workshop on Geometric and Topological Combinatorics
(satellite conference of ICM 2006), September 2006, Alcala de Henares, Spain,
and in Proc. 25th Annu. ACM Sympos. Comput. Geom. (SCG09), pages 314--323, June 2009, Aahrus, Denmark.
\end{abstract}


\clearpage
\tableofcontents
\phantom{sautdepage}
\clearpage

\section{Introduction}\label{secone}
The term planar in the title makes reference to real two-dimensional projective planes. 
We review what we need of the basics of real two-dimensional  projective planes and especially the notion of convex body before introducing the notion of chirotope, 
explaining the main result of the paper and the main lines of its proof. 
\subsection{Cross surfaces and  projective planes}
We assume that the reader is familiar with basic notions of algebraic and combinatorial topology like
homeomorphism, homotopy, fundamental group, covering, etc.,
found, for example, in~\cite[chap. 0 and 1]{h-at-01}.
The following standard notions, basic results and terminology associated with projective planes will be used throughout the paper; they are 
mainly taken from~\cite{sbghl-cpp-95,gpwz-atp-94,ps-gs-01,s-tp-68}
\begin{enumerate}
\item
A {\it closed (open)  topological disk} or {\it closed (open)  two-cell} is a topological space homeomorphic to the unit closed (open)  disk of $\mathbb{R}^2$.
An {\it orientation} of a 
topological disk is a one-to-one parametrization of the topological disk by the unit disk of $\mathbb{R}^2$, defined up to direct homeomorphism,
 and an {\it oriented topological disk} is a topological disk endowed with an orientation. 
Orientations will be 
indicated in our drawings by a little oriented circle in the interior of the disk or by an arrow on its boundary.
\item A  {\it cross surface}{}\footnote{We follow the J.~H.~Conway's proposition to call a sphere with one crosscap a {\it cross surface}; cf.~\cite{fw-czipp-99}.}
 is a topological space homeomorphic to the ``standard'' cross surface~$\spp$, quotient of the unit sphere 
$\mathbb{S}^2$ of $\mathbb{R}^3$ under 
identification of antipodal points;
cross surfaces will be represented in our drawings by circular diagrams with antipodal boundary points identified, as illustrated in Fig.~\ref{projectiveplane}a.
\item An  {\it open crosscap} or {\it open M{\"o}bius strip} is a  topological space homeomorphic to  a cross surface with one point or one closed topological disk deleted; 
an open  crosscap is a noncompact surface and its one-point compactification
(the space obtained by adding to the crosscap a point at infinity) 
is a cross surface. 
\item A {\it pseudocircle} is a simple closed curve  embedded in a cross surface; 
the connected components of the complement of a pseudocircle in its underlying cross surface are called its {\it open sides},  or simply its {\it sides}. 
An {\it oriented pseudocircle} is a pseudocircle endowed with an {\it orientation} (i.e., a one-to-one parametrization of the pseudocircle by $\mathbb{S}^1$, defined up to direct homeomorphism), 
indicated in our drawings by an arrow; as usual the intersection of two oriented pseudocircles 
is the intersection of their unoriented versions.
\item A {\it pseudoline} is a non-separating pseudocircle and a {\it double pseudoline} or {\it pseudo-oval} is a separating pseudocircle; 
cf. Fig.~\ref{projectiveplane}b and~\ref{projectiveplane}c.  There is a unique isomorphism class of pseudolines, i.e., given two pseudolines, one is
the image of the other by a homeomorphism of their underlying cross surfaces; 
in particular the complement of a pseudoline 
is an open two-cell.
Similarly for double pseudolines: there is a unique isomorphism class of double pseudolines and  
the complement of a  double pseudoline has two connected components (an open two-cell and an open  crosscap). 
The {\it core} pseudolines of a double pseudoline are the pseudolines contained in its crosscap side; cf. Fig.~\ref{projectiveplane}c, and~\ref{projectiveplane}d. 
\begin{figure}[!htb]
\psfrag{aa}{(a)}
\psfrag{bb}{(b)}
\psfrag{cc}{(c)}
\psfrag{dd}{(d)}
\centering
\includegraphics[width=0.9875\linewidth]{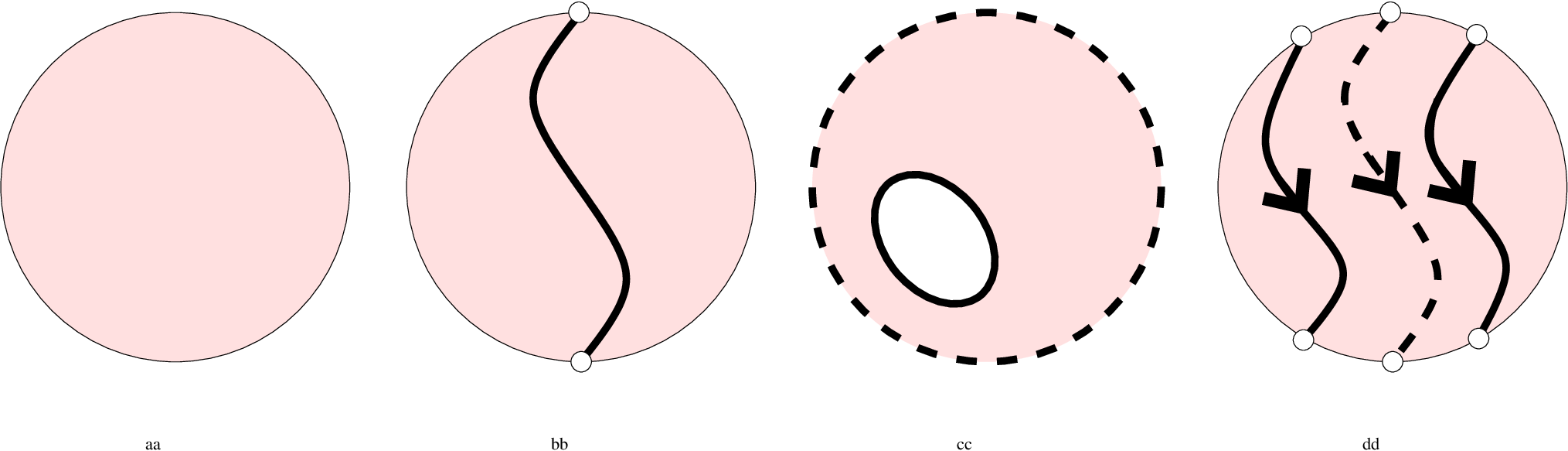}
\caption{ (a) A cross surface represented by a circular diagram with antipodal
boundary points identified; (b) a pseudoline; (c) a double
pseudoline with one of its core pseudolines drawn in dashed and with its disk side in white; (d) 
an oriented double pseudoline with one of its core oriented pseudolines drawn in dashed
\label{projectiveplane}}
\end{figure}

\item A {\it projective plane } is  a  topological point-line incidence geometry $(\pp,\lpp)$  whose point space $\pp$ 
is a cross surface, whose line space $\lpp$ is a subspace of the space of pseudolines of $\pp$, and whose incidence relations are the membership relations; as usual the dual of a point $p$ 
of a projective plane is denoted $\dual{p}$ and is defined as its set of incident lines.  
The {\it duality principle} for projective planes asserts that the dual $(\lpp,\dual{\pp})$ of a projective plane  $(\pp,\lpp)$
is still a projective plane, i.e., $\lpp$ is a cross surface and $\dual{\pp}$, the set of $\dual{p}$ as $p$ ranges over $\pp$, 
 is a subspace of the space of pseudolines of $\lpp$.  
In particular the dual of a finite set of points is an {\it arrangement of pseudolines}, i.e., a finite set of pseudolines (living in the same cross surface) 
that intersect pairwise in exactly one point; the basics of pseudoline arrangements used in the paper are reviewed in  Appendix~\ref{appendix:apl}. 
A projective plane is isomorphic to its bidual via the map that assigns to a point its dual and to a line the set of duals of its points. 
\item The standard projective plane is defined as the standard cross surface~$\spp$ together with the image 
of the space of great circles of $\mathbb{S}^2$
under the canonical projection $\MapLight{}{\mathbb{S}^2}{\spp}$. (Equivalently the standard projective plane can be defined as the projective completion of the Euclidean plane.)  
The standard projective plane is isomorphic to its dual via the map 
$\varphi$ that assigns to the point $(u,v,w)\in \mathbb{S}^2$ the great circle with equation $ux+vy+wz=0$ and that assigns to 
the great circle with equation $ux+vy+wz =0$, for $(u,v,w)\in \mathbb{S}^2$, the pencil of circles through the point $(u,v,w).$ 
\end{enumerate}
A {\it convex body} is a closed subset of the point space of a projective plane whose intersection with any line of the 
plane is a (necessarily closed) line segment; 
the {\it polar} of a convex body $U$, denoted $\polar{U}$, is the set of lines of the plane missing the interior of the convex body 
and its {\it dual}, denoted $\tang{U}$, is the set of lines of the plane intersecting 
the body but not its interior, {\it tangents} for short. 
For example, for $(u,v,w) \in \mathbb{S}^2$ and $h \in (0,1)$, the disk in the standard projective plane with equation 
\begin{equation}
|ux+vy+wz| \geq (1-h^2)^{1/2}
\end{equation}
is a convex body, its polar is the disk with equation  $|ux+vy+wz| \geq h$, and its dual 
is the circle with equation $|ux+vy+wz| = h$.
Similarly for finitely generated (pointed and full-dimensional) 
cones of the standard projective plane: the polar of the cone generated by the vectors $(u_i,v_i,w_i) \in \mathbb{S}^2$, $w_i >0$, 
is the polyhedral cone intersection of the half-spaces $u_ix+v_i y+ w_i z \geq 0$, $z\geq 0$.
As illustrated in these examples, 
a convex body of a projective plane is a closed topological disk, 
its polar is a convex body of the dual projective plane, and 
its dual is the boundary of its polar, hence a double pseudoline. Furthermore, polarity extends to oriented convex bodies: the polar of an oriented convex body has a natural orientation, inherited from the orientation of the body, 
compatible with the reorientation  operation.  (In the case where there is exactly one tangent through each boundary point and only one touching point per tangent 
the orientation of the polar inherited from the orientation 
$f$  is simply defined as the extension to the unit closed disk  of the map that assigns to $u\in \mathbb{S}^1$ the tangent to the convex body through the boundary point $f(u)$ of $U$.
 The general case follows once it is observed that the set of boundary points through which passes  a proper interval of tangents and the set of proper line segments included 
in the boundary are both countable.)  
Last but not least, we take for granted that, up to homeomorphism, the dual arrangement
of a pair of disjoint convex bodies of a projective plane is the unique arrangement of two double pseudolines that intersect transversely  in four points and induce a cellular decomposition of their underlying cross surface.
\begin{theorem}\label{theoone}
A convex body of a projective plane is a topological disk, its polar  is a convex body of the dual projective plane, and its dual  
is the boundary of its polar (hence a double pseudoline).
Furthermore, up to homeomorphism, the dual arrangement of a pair of disjoint convex bodies of a projective plane 
 is  the unique cellular arrangement of two double pseudolines that intersect transversely  in four points;
 in particular, two disjoint convex bodies share exactly four common tangents, the arrangement of these four tangents is simple, 
and the set of lines missing the two bodies is nonempty. 
\end{theorem} 
\begin{proof}
No proofs of these basic properties are available in the literature on convexity in projective planes that we became aware~\cite{c-gcI-74,c-gcII-78,ck-gcIII-78, gv-cspg-58, d-crpns-55, dghps-ctap-07, d-cepp-76,d-gcsp-52, hmos-ccs-08,kw-actrb-71,s-opgfg-91}. For completeness we offer proofs in Appendix~\ref{sectwo}.
\end{proof}

Fig.~\ref{gpmrfig}a  shows  a pair of disjoint convex bodies with the arrangement of their four common tangents. 
\begin{figure}[!htb]
\centering
\psfrag{U}{$U$}\psfrag{V}{$V$}
\psfrag{US}{$\tang{U}$}\psfrag{VS}{$\tang{V}$}
\psfrag{us}{$\dual{u}$}\psfrag{vs}{$\dual{v}$}
\psfrag{u}{$u$}\psfrag{v}{$v$}
\psfrag{aa}{(a)}\psfrag{bb}{(b)}
\includegraphics[width=0.75\linewidth]{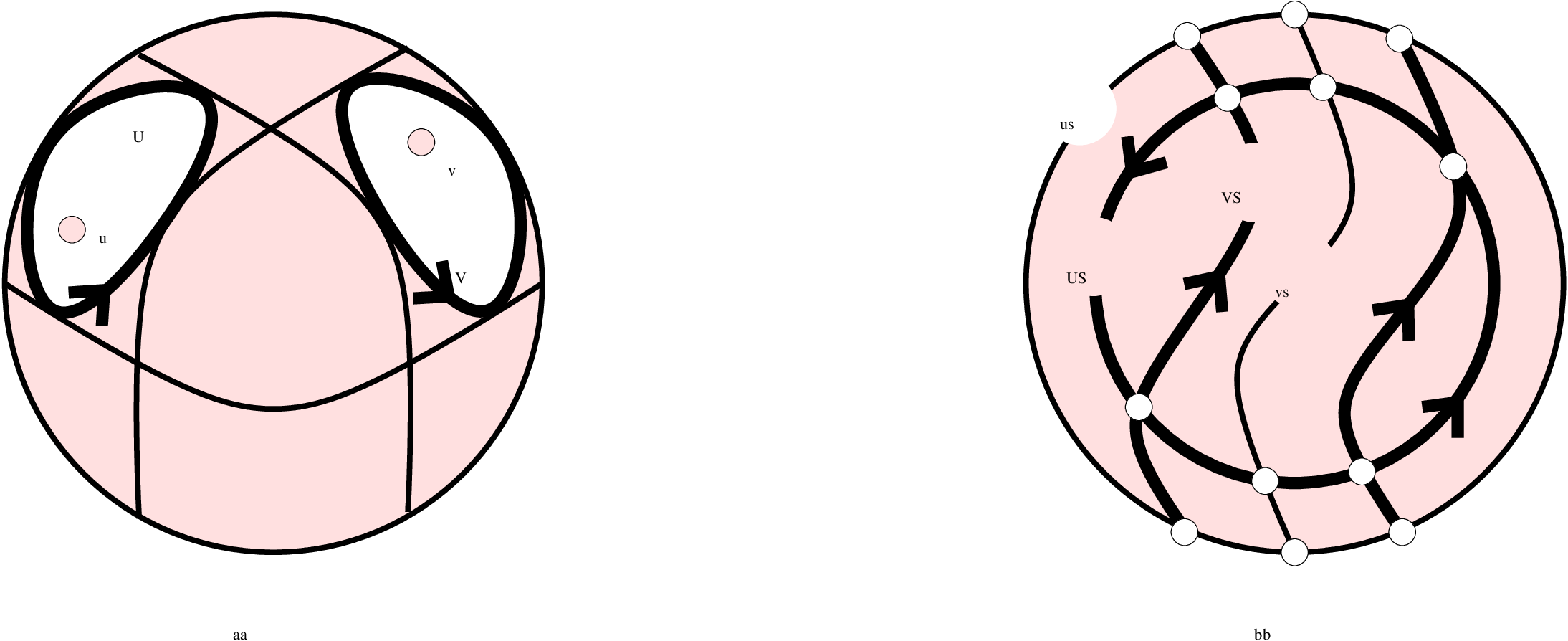}
\caption{(a) Two disjoint oriented convex bodies  with their common tangents  and (b) their dual arrangement \label{gpmrfig}}
\end{figure}
Each body is indexed, oriented and marked with an interior point. Fig.~\ref{gpmrfig}b  shows its dual arrangement. 
The automorphism group of the dual arrangement is trivial, the permutation group of a $2$-set or the dihedral group of order $8$ (group of automorphisms of the square)
depending on whether orientation and indexing of the pseudocircles are both taken into account, 
the orientation of the pseudocircles is taken into account but not their indexing, or  neither
the orientation nor the indexing are taken into account. 
Observe that the dual arrangement does not encode the nature of the contacts between the convex bodies and their common tangents. 
Thereafter, the four common tangents of two disjoint convex bodies will be called their {\it bitangents}.

\subsection{Definitions and main results} 
Throughout the paper, we use the words {\it configuration of convex bodies} for a finite family of pairwise disjoint convex bodies of a projective plane and we use, unless specified otherwise, the words {\it arrangement of double pseudolines} for a finite family of double pseudolines of a cross surface with the property that its 
subfamilies of size two are homeomorphic to the dual arrangement of a (hence any) configuration of two convex bodies; cf. Theorem~\ref{theoone}.
The {\it rhombicubeoctahedron} or {\it hemi- rhombicubeoctahedron arrangement} is the arrangement of double pseudolines composed of the $3$ circles of the standard projective plane 
with centers $(1,0,0)$, $(0,1,0)$, $(0,0,1)$ and radius $\arccos 1/2$ or, to say it differently, with equations are $|x| =1/2$, $|y|=1/2$ and $|z| = 1/2$.  
Its face poset is that of the projective version of the rhombicubeoctahedron (hence the name), one of the 13 Archimedean solids.
The {\it cube} or {\it hemi-cube arrangement} is the arrangement of double pseudolines composed of the $3$ circles of the standard projective plane with equations
$|x| = 1/\sqrt{3}$, 
$|y|= 1/\sqrt{3}$ and  
$|z| = 1/\sqrt{3}$.  Its face poset is 
obtained from that of the projective version of the cube (the hemi-cube) by replacing its $1$-cells by digons; cf.  Fig.~\ref{defarrangdoublelinePP}.
\begin{figure}[!htb]
\centering
\psfrag{C}{$\alpha$}
\psfrag{G}{$\gamma$}
\psfrag{MSG}{$\MS(\gamma)$}
\psfrag{PP}{$\PP$}\psfrag{infty}{$\infty$}
\psfrag{1}{$1$-cell}\psfrag{2}{$2$-cell}\psfrag{0}{$0$-cell}\psfrag{3}{$\cal M$}\psfrag{-1}{$\emptyset$}
\psfrag{aa}{$(a)$} \psfrag{bb}{$(b)$} \psfrag{cc}{$(c)$} \psfrag{dd}{$(d)$}
\psfrag{aa}{(a)} \psfrag{bb}{(b)} \psfrag{cc}{(c)} \psfrag{dd}{(d)}
\psfrag{one}{$1$}
\psfrag{two}{$2$}
\psfrag{thr}{$3$}
\psfrag{splitting}{\footnotesize splitting}
\psfrag{merging}{\footnotesize merging}
\psfrag{arrow}{$\rightarrow$}
\psfrag{barrow}{$\leftarrow$}
\includegraphics[width=0.9875\linewidth]{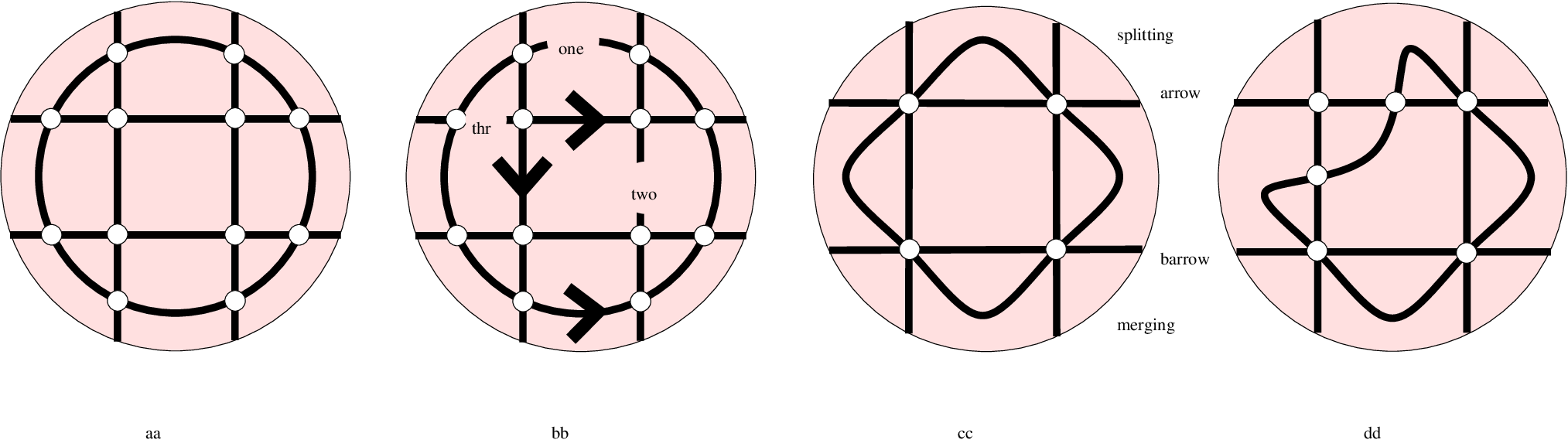}
\caption{\label{defarrangdoublelinePP}
(a) The rhombicubeoctahedron arrangement; 
(b) an indexed and oriented version of the rhombicubeoctahedron arrangement;
(c) the cube (or hemi-cube) arrangement;
(d) an arrangement of three double pseudolines obtained from the hemi-cube arrangement  by a splitting mutation 
} 
\end{figure}
We extend in the natural way the basic terminology of  arrangements of pseudolines to arrangements of double pseudolines.  In particular we use the following terminology.  
\begin{enumerate}
\item A vertex of an arrangement is {\it ordinary} if exactly two curves of the arrangement meet at that vertex.
An  arrangement is {\it simple} if all vertices of it are ordinary. 
Three vertices of the arrangement of Fig.~\ref{defarrangdoublelinePP}d are ordinary; three are not.
The rhombicubeoctahedron  arrangement is simple; the hemi-cube arrangement is not.
\item A {\it mutation} is a  homotopy of arrangements 
during which only one of the curves of the arrangement is moving and only one of the incidence relations between the moving curve and the vertices of
the cell complex induced by the other curves changes its value, swapping from false to true (first case) or from true to false (second case):
In the first case we speak of a {\it merging} 
mutation and in second  case we  speak of a {\it splitting} mutation. 
Fig.~\ref{defarrangdoublelinePP}c and ~\ref{defarrangdoublelinePP}d  show arrangements of three double pseudolines that are related by mutations of merging and splitting.
\item The {\it flag diagram} of an arrangement is the $3$-valent graph on its set of {\it flags} (maximal simplices of its first barycentric subdivision)
whose edges are the pairs of adjacent flags, each edge being labeled by the numeral $0,1$ or $2$ depending on whether the flags of the edge differ by their 
 $0$-, $1$-, or $2$-cells;
one can also think a flag diagram as  the Cayley graph of the group generated by the $0$-, $1$- and $2$-flag operators, denoted $\sigma_0, \sigma_1$ and $\sigma_2$ in the sequel, which are  the involutive operators on the set of flags
that exchange two  adjacent flags that differ by their $0$-, $1$-, or $2$-cells, respectively.
Fig~\ref{flagdiagrambis}a and \ref{flagdiagrambis}b show the (geometric version of the) first barycentric subdivision and the flag diagram of an arrangement of two double pseudolines.  
Fig.~\ref{flagdiagrambis}c and \ref{flagdiagrambis}d show this for the hemi-cube arrangement.
\item An {\it indexed arrangement of oriented double pseudolines} is a one-to-one map  
that assigns to each index of a finite set of indices an oriented double pseudoline of a cross surface such that the image of the map is an arrangement of oriented double pseudolines.
\item 
The {\it isomorphism class} of an arrangement is its set of homeomorphic images: in other words, two arrangements are called {\it isomorphic} if one is the image of the other by a homeomorphism 
of their underlying cross surfaces. The {\it isomorphism class} of an indexed arrangement of oriented double pseudolines is defined in a similar way.
\item Let $\Delta$ be a finite  abstract simplicial complex.
A {\it $\Delta$-chirotope} is a map on $\Delta$ that assigns to the simplex $J$ an isomorphism class of arrangements of oriented double pseudolines indexed by $J$ 
with the property that if $J'$ is a subset of $J$ then $\chi(J')$ is a subarrangement of $\chi(J)$. The $\chi(J)$, $J\in \Delta$, are called the 
{\it entries} of the $\Delta$-chirotope~$\chi$.  
A {\it $k$-chirotope on the indexing set $I$} is a  $\Delta$-chirotope whose domain $\Delta$ is the complex of subsets of size at most $k$ of $I$, and a {\it  chirotope} is 
the restriction of a $3$-chirotope to the set of $3$-subsets of its domain.
\item 
For any indexed arrangement of oriented double pseudolines $\Gamma$ and any simplicial complex $\Delta$ on the  indexing set of $\Gamma$ 
the  {\it $\Delta$-chirotope} of $\Gamma$ is  the map $\chi_{\Gamma}$ on $\Delta$ 
that assigns to $J \in \Delta$ the isomorphism class of the subarrangement of $\Gamma$ indexed by $J$.  
\end{enumerate}
Arrangements of double pseudolines are conveniently 
represented  by their flag diagrams,
in view of the following two properties: 
\begin{enumerate}
\item two arrangements are isomorphic if and only if their flag diagrams are isomorphic, cf. ~\cite[Appendix 4.7]{blswz-om-99}; and 
\item 
the group of automorphisms of an arrangement (by definition quotient of the group of self-homeomorphisms of the arrangement by its subgroup of self-homeo\-morphisms isotopic to the identity map) 
is isomorphic to the group of automorphisms of its flag diagram or, equivalently, to the centralizer of the flag operators in the group of permutations of the flags.
Note that an automorphism is defined by the image of one flag since the face poset of an arrangement is flag-connected.
\end{enumerate}
\begin{example} 
The automorphism group of an arrangement of two double pseudolines is the dihedral of order $8$, the group of automorphisms of the square.
The automorphisms $\tau_1$ and $\tau_2$ defined by $\tau_1(F) = \sigma_1(F)$ and  $\tau_2(F) = \sigma_0(F)$
where $F$ is any one of the $8$ flags of the
tetragon intersection of the crosscap sides of the double pseudolines of the arrangement 
are an example of pair of generators of this group, 
for which $\tau_1^2=\tau_2^2 = 1$ and $(\tau_2\tau_1)^4 = 1$  is a complete set of relations; cf. Fig.~\ref{flagdiagrambis}a and Fig.~\ref{flagdiagrambis}b.  
\end{example}
\begin{figure}[!htb]
\psfrag{un}{$1$}\psfrag{de}{$2$}\psfrag{tr}{$3$}\psfrag{qu}{$4$}\psfrag{ci}{$5$}\psfrag{si}{$6$}\psfrag{se}{$7$}\psfrag{hu}{$8$}
\psfrag{unp}{$1'$}\psfrag{dep}{$2'$}\psfrag{trp}{$3'$}\psfrag{qup}{$4'$}\psfrag{cip}{$5'$}\psfrag{sip}{$6'$}\psfrag{sep}{$7'$}\psfrag{hup}{$8'$}
\psfrag{unq}{\tiny $1''$}\psfrag{deq}{$2''$}\psfrag{trq}{$3''$}\psfrag{quq}{$4''$}\psfrag{ciq}{\tiny $5''$}\psfrag{siq}{\footnotesize $6''$}\psfrag{seq}{$7''$}\psfrag{huq}{\tiny $8''$}
\psfrag{un}{$F$}\psfrag{de}{}\psfrag{tr}{}\psfrag{qu}{}\psfrag{ci}{}\psfrag{si}{}\psfrag{se}{}\psfrag{hu}{}
\psfrag{unp}{}\psfrag{dep}{}\psfrag{trp}{}\psfrag{qup}{}\psfrag{cip}{}\psfrag{sip}{}\psfrag{sep}{}\psfrag{hup}{}
\psfrag{unq}{\tiny}\psfrag{deq}{}\psfrag{trq}{}\psfrag{quq}{}\psfrag{ciq}{\tiny}\psfrag{siq}{\footnotesize }\psfrag{seq}{}\psfrag{huq}{\tiny }
\psfrag{oga}{$24$}\psfrag{ogabasic}{$8$}
\psfrag{oga}{}\psfrag{ogabasic}{}
\psfrag{nrr}{$2$}
\psfrag{namebasic}{$A$} \psfrag{name}{$Z$}
\psfrag{namebasic}{}\psfrag{name}{}
\psfrag{cycles}{}
\psfrag{cyclesbis}{}
\psfrag{on}{\footnotesize \tiny $1$}
\psfrag{tw}{\footnotesize \tiny $2$}
\psfrag{ze}{\footnotesize \tiny $0$}
\psfrag{F}{\tiny $F$}
\psfrag{Fone}{\tiny$\tau_1(F)$}
\psfrag{Fzer}{\tiny $\tau_2(F)$}
\psfrag{Fthr}{\tiny $\tau_3(F)$}
\centering
\psfrag{aa}{$(a)$} \psfrag{bb}{$(b)$} \psfrag{cc}{$(c)$} \psfrag{dd}{$(d)$}
\psfrag{aa}{(a)} \psfrag{bb}{(b)} \psfrag{cc}{(c)} \psfrag{dd}{(d)}
\includegraphics[width=0.875\linewidth]{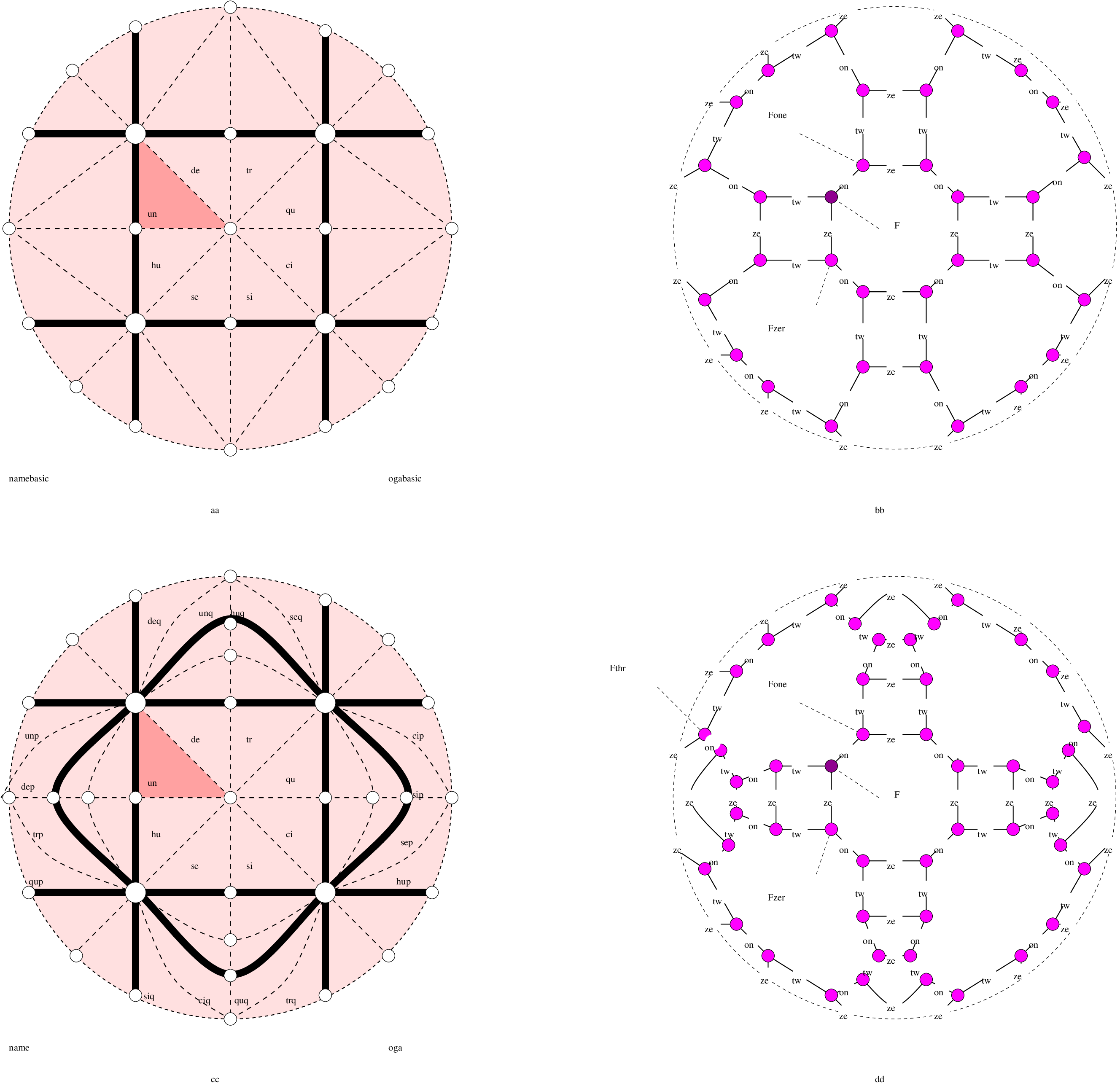}
\caption{\label{flagdiagrambis} 
(a) The first barycentric subdivision of an arrangement of two double pseudolines 
and (b) its flag diagram together with a pair $\tau_1,\tau_2$ of generators of its automorphism group, implicitly defined by the images of the flag $F$; 
(c)  the first barycentric subdivision of the hemi-cube arrangement 
and (d) its flag diagram together with a triple $\tau_1,\tau_2, \tau_3$
 of generators of its automorphism group, implicitly defined by the images of the flag $F$}
\end{figure}

\begin{example} 
The automorphism group of the hemi-cube arrangement is the permutation group of a $4$-set. 
The automorphisms $\tau_1, \tau_2$ and $\tau_3$ defined by $\tau_1(F) = \sigma_1(F), \tau_2(F) = \sigma_0(F)$, and 
$\tau_3(F) = \sigma_{1}\sigma_{2}\sigma_{1}\sigma_{2}(F)$ 
where $F$ is any one of the $3\times 8$ flags of the $3$ tetragons 
of the arrangement (each tetragon is the intersection of the crosscap sides of a pair of double pseudolines) 
are an example of triple of generators of this group,
for which $\tau_1^2 = \tau_2^2 = \tau_3^3 = 1$, $(\tau_{1}\tau_2)^4 = 1$, $\tau_1 \tau_3 = \tau_3^2 \tau_1$ and 
$\tau_2\tau_3 = \tau_3 (\tau_1\tau_2)^2$ is a complete set of relations; cf. Fig.~\ref{flagdiagrambis}c and Fig.~\ref{flagdiagrambis}d.  
\end{example}

Besides (appropriately labeled) flag diagrams, two other codings of indexed arrangements of oriented double pseudolines are used in the paper. Both  are 
 defined using the idea of {\it signed indices}, namely the original indices $i_1,i_2,\ldots,i_n$ and their complements 
$\overline{i}_1, \ldots, \overline{i}_n$;
the original indices are said to be positive, their complements are said to be negative, and the complement of a
negative index is its positive version; cf~\cite[page 12]{k-ah-92}.
Indexed arrangements of oriented double pseudolines 
are now extended to negative indices
by assigning to a negative index the reoriented version of the double pseudoline assigned to its complement.
In this introduction we only give the definition of one of these two codings, namely the coding by {\it side cycles}.  

Let $\Gamma$ be an indexed arrangement of oriented double pseudolines. Its coding by side cycles assigns to each (positive and negative) index  of $\Gamma$ 
two circular words on the set of indices: the first one is called its {\it side cycle of disk type} and the second one is called its {\it side cycle of crosscap type}.
The side cycle of disk type assigned to the index $i$  is the circular sequence of indices 
of the double pseudolines crossed by the side wheel of a sidecar rolling on $\Gamma_i$, side wheel on the disk side of $\Gamma_i$, 
that are (locally) oriented away from~$\Gamma_i$.  
Similarly  the side cycle of crosscap type assigned to the index $i$ is the circular sequence of indices 
of the double pseudolines crossed by the side wheel of a sidecar rolling on $\Gamma_i$, side wheel on the crosscap side of $\Gamma_i$,
that are (locally) oriented away from~$\Gamma_i$.  
Note that the side cycles of disk (crosscap)  type assigned to an index and to its complement are reverse to one another 
and that for simple  arrangements the side cycle of disk type assigned to an index is the complement of its side cycle of crosscap type and vice versa.
We show in Section~\ref{secfou} that the isomorphism class of an indexed arrangement of oriented double pseudolines depends only on its side cycles.
\begin{example}
The side cycles of disk type and crosscap type of the rhombicubeoctahedron arrangement of Fig.~\ref{defarrangdoublelinePP}b are  
$$
\begin{array}{cl}
1:&              \thrcrossR{1}{2}
                 \foucrossR{1}{2}
                 \thrcrossR{1}{3}
                 \foucrossR{1}{3}
                 \onecrossR{1}{2}
                 \twocrossR{1}{2}
                 \onecrossR{1}{3}
                 \twocrossR{1}{3}
\\
2:&              \thrcrossR{2}{3}
                 \foucrossR{2}{3}
                 \thrcrossR{2}{1}
                 \foucrossR{2}{1}
                 \onecrossR{2}{3}
                 \twocrossR{2}{3}
                 \onecrossR{2}{1}
                 \twocrossR{2}{1}
\\
3:&              \thrcrossR{3}{1}
                 \foucrossR{3}{1}
                 \thrcrossR{3}{2}
                 \foucrossR{3}{2}
                 \onecrossR{3}{1}
                 \twocrossR{3}{1}
                 \onecrossR{3}{2}
                 \twocrossR{3}{2}
\end{array}
\qquad \text{and} \qquad
\begin{array}{cl}
1:&              
                 \onecrossR{1}{2}
                 \twocrossR{1}{2}
                 \onecrossR{1}{3}
                 \twocrossR{1}{3}
                 \thrcrossR{1}{2}
                 \foucrossR{1}{2}
                 \thrcrossR{1}{3}
                 \foucrossR{1}{3}
\\
2:&              
                 \onecrossR{2}{3}
                 \twocrossR{2}{3}
                 \onecrossR{2}{1}
                 \twocrossR{2}{1}
                 \thrcrossR{2}{3}
                 \foucrossR{2}{3}
                 \thrcrossR{2}{1}
                 \foucrossR{2}{1}
\\
3:&              
                 \onecrossR{3}{1}
                 \twocrossR{3}{1}
                 \onecrossR{3}{2}
                 \twocrossR{3}{2}
                 \thrcrossR{3}{1}
                 \foucrossR{3}{1}
                 \thrcrossR{3}{2}
                 \foucrossR{3}{2}
\end{array}
$$

\end{example}
\begin{example}  
The side cycles of disk type and crosscap type of the hemi-cube arrangement of Fig.~\ref{CodingADPTER}a are 
$$
\begin{array}{cl}
1: & \jind{3}\jind{2}\jbar{2}\jind{3}\jbar{3}\jbar{2}\jind{2}\jbar{3}\\ 
2: & \jbar{3}\jind{1}\jbar{1}\jbar{3}\jind{3}\jbar{1}\jind{1}\jind{3}\\ 
3: & \jbar{2}\jbar{1}\jind{1}\jbar{2}\jind{2}\jind{1}\jbar{1}\jind{2}
\end{array}
\qquad \text{and} \qquad
\begin{array}{cl}
1: & \jbar{2}\jbar{3}\jbar{3}\jind{2}\jind{2}\jind{3}\jind{3}\jbar{2}\\ 
2: & \jbar{1}\jind{3}\jind{3}\jind{1}\jind{1}\jbar{3}\jbar{3}\jbar{1}\\ 
3: & \jind{1}\jind{2}\jind{2}\jbar{1}\jbar{1}\jbar{2}\jbar{2}\jind{1}.
\end{array}
$$
Similarly the side cycles of disk type and crosscap type of the arrangement of Fig.~\ref{CodingADPTER}b (obtained from that of Fig.~\ref{CodingADPTER}a
 by a splitting mutation) 
are 
$$
\begin{array}{cl}
1: & \jind{3}\jind{2}\jbar{2}\jind{3}\jbar{3}\jbar{2}\jind{2}\jbar{3}\\ 
2: & \jbar{3}\jind{1}\jbar{1}\jbar{3}\jind{3}\jbar{1}\jind{1}\jind{3}\\ 
3: & \jbar{2}\jbar{1}\jind{1}\jbar{2}\jind{2}\jind{1}\jbar{1}\jind{2}
\end{array}
\qquad \text{and} \qquad
\begin{array}{cl}
1: & \jbar{3}\jbar{2}\jbar{3}\jind{2}\jind{2}\jind{3}\jind{3}\jbar{2}\\ 
2: & \jind{3}\jbar{1}\jind{3}\jind{1}\jind{1}\jbar{3}\jbar{3}\jbar{1}\\ 
3: & \jind{2}\jind{1}\jind{2}\jbar{1}\jbar{1}\jbar{2}\jbar{2}\jind{1}.
\end{array}
$$
Note that these  two arrangements have the same side cycles of disk type but differ in their side cycles of crosscap type.
\end{example}
\begin{figure}[!htb]
\psfrag{un}{1}\psfrag{deu}{2}\psfrag{tr}{3}
\psfrag{unb}{$\jbar{1}$}\psfrag{deb}{$\jbar{2}$}\psfrag{trb}{$\jbar{3}$}
\psfrag{onecd}{1}\psfrag{twocd}{2}\psfrag{thrcd}{3}
\psfrag{A}{$A$}\psfrag{B}{$B$}\psfrag{C}{$C$}\psfrag{D}{$D$}
\psfrag{A}{}\psfrag{B}{}\psfrag{C}{}\psfrag{D}{}
\psfrag{aa}{(a)}\psfrag{bb}{(b)}
\centering
\includegraphics[width=0.875\linewidth]{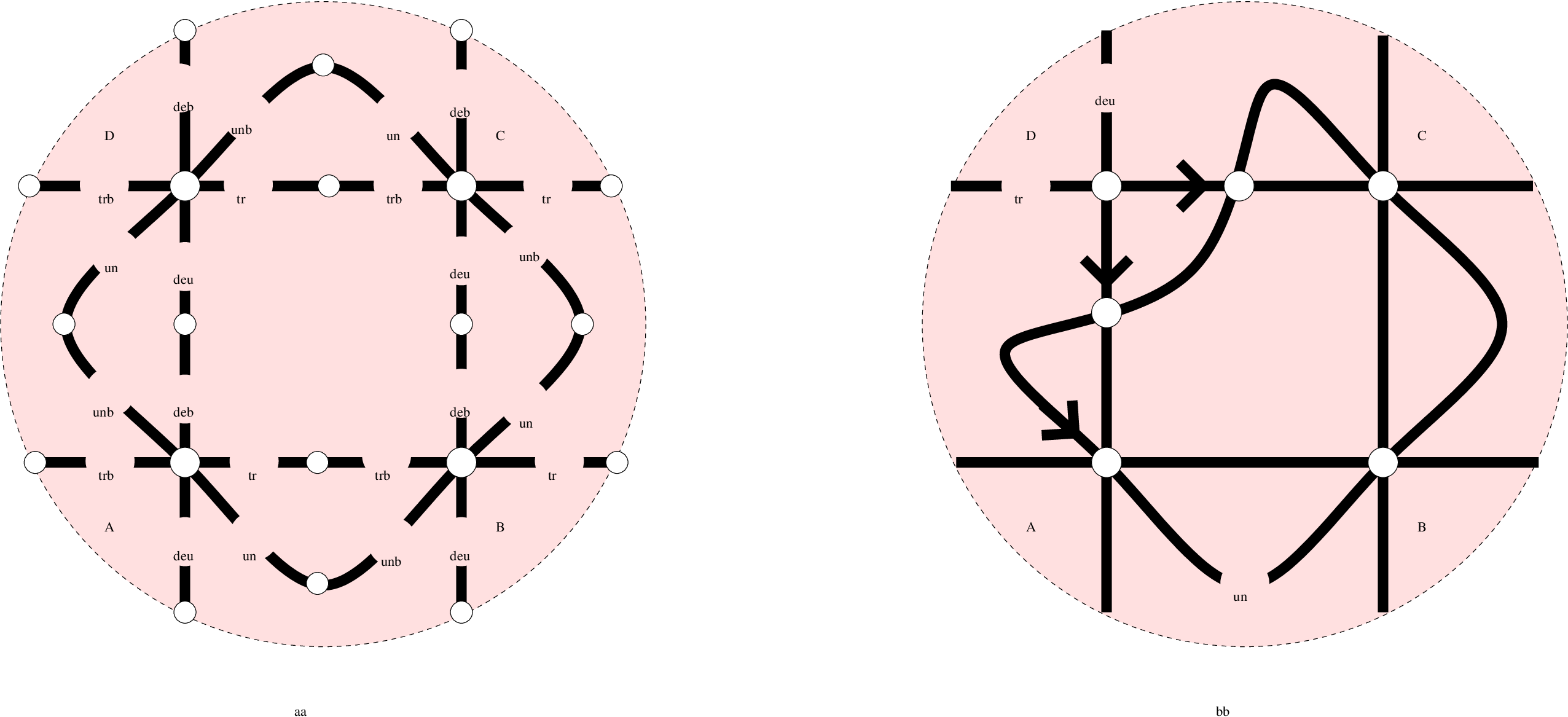}
\caption{ (a) The first barycentric subdivision of the one-skeleton of an indexed and oriented version of the hemi-cube arrangement : each edge of the subdivision is labeled 
with the index of the signed supporting curve of the edge that is,
locally on the edge, oriented away from the vertex of the arrangement to which the edge
is incident; (b) an indexed and oriented version of the arrangement of Fig.~\ref{defarrangdoublelinePP}d  \label{CodingADPTER}
}
\end{figure}

We are now ready to state the first main result of the paper.  It is a direct extension of the rank three case or
 pseudoline case of the Folkman-Lawrence LR characterization (LR for local realizability) of chirotopes of
arrangements of pseudohyperplanes~\cite{fl-om-78}.
\begin{theorem}
\label{theoADP}
The map that assigns to an isomorphism class of indexed arrangements of oriented double pseudolines its chirotope is one-to-one and its range 
is the set of maps~$\chi$  on the set of $3$-subsets  of a finite set  $I$ such that for every $3$-, $4$-, and $5$-subset $J$ of $I$
the restriction of $\chi$ to the set of $3$-subsets of $J$ is a chirotope of arrangements of double pseudolines. 
In other terms, the map which assigns to an isomorphism class of indexed arrangements of double pseudolines its $3$ chirotope is one-to-one and that which assigns 
its $5$-chirotope is (one-to-one and) onto.  \qed 
\end{theorem}
The main lines of its proof are the following. 

Concerning the range part we proceed in three steps. 
First, we extend the notion of double pseudoline arrangements by relaxing the condition on the arrangement which says that the genus of its underlying 
nonorientable surface is~$1$ while retaining locally in the vicinity of a curve of the arrangement, the notions of disk side and crosscap side. 
(The underlying surface of a subarrangement of size at least $2$  is the one obtained by gluing topological disks along the boundaries of a closed tubular neighborhood
 of the curves of the subarrangement in the underlying surface of the whole arrangement. Thus, a subarrangement does not  necessarily live
in a surface whose genus is that  of the underlying surface of the whole arrangement. By convention the underlying surface of a subarrangement of size zero or one is a cross surface.) 
Second, we show that {\it the map that assigns to an isomorphism class of indexed arrangements of oriented double pseudolines its  $5$-chirotope is one-to-one and onto.} 
Third, we  characterize among these arrangements {\it those living in a cross surface as those whose subarrangements of size at most $5$ live in a cross surface.} 
To prove that the arrangements living in a cross surface are those whose subarrangements of size at most $5$ live in a cross surface it can be argued 
that the mutation graphs of the latter are connected or 
that for any pair $FF'$ of distinct faces of an arrangement living in a cross surface, there exists a subarrangement of size at most $3$ whose faces containing $F$ and $F'$ are distinct, 
the {\it separation property} for short. 
Thus, a byproduct of our study is the following direct extension of the Ringel homotopy theorem for arrangements of pseudolines~\cite{r-tegtg-56}. 
\begin{theorem}
\label{theoHT} 
Any two arrangements of double pseudolines of the same size and living in the same cross surface
are homotopic via a finite sequence of mutations followed by an isotopy; in other words,  mutation graphs are connected. \qed
\end{theorem}
Further analysis of the separation property leads us to prove that an arrangement of double pseudolines whose subarrangements of size at most $4$ live in cross surfaces, 
lives in a cross surface or its subarrangements of size $4$ belong to a well-defined class of few tens of arrangements. Therefore, a computer check of the conjecture 
that the arrangements of double pseudolines living in cross surfaces are those whose subarrangements of size at most $4$ live in cross surfaces is doable with modest computing ressources. 
This computer check will the subject of another paper.
That's all for the range part.

Concerning the one-to-one part we proceed by induction on the number of double pseudolines, the crucial case being  the base case of $4$ double pseudolines and, more specifically,
the base case of $4$ double pseudolines with a chosen one whose intersections with each of the others are ordinary and occur in consecutive runs, 
{\it T{\"u}rkenbund} or {\it martagons}{}\footnote{``Da stehn sie also, die Geschwisterkinder, links bl{\"u}t der T{\"u}rkenbund, bl{\"u}t wild, bl{\"u}t wie nirgends, und rechts, da steht die Rapunzel,
und Dianthus superbus, die Prachtnelke, steht nicht weit davon." Gespr{\"a}ch im Gebirg, Paul Celan.} for short. 
Fortunately the list of martagons on $4$ double pseudolines is easily calculated by hand from the exhaustive list of simple arrangements 
of $3$ double pseudolines which in turn is (less) easily  calculated by hand  using the connectedness of mutation graphs. 
It turns out that there are only two martagons on four double pseudolines and that each depends only on its chirotope.

We come now to the definition of chirotopes of configurations of convex bodies. 
Our definition is a natural extension  of the classical 
definition of chirotopes of configurations of points of the standard projective plane; cf. Appendix~\ref{appendix:CFPFP}. 
As for arrangements of double pseudolines, indexed configurations of oriented convex bodies are extended to negative indices 
by assigning to a negative index the reoriented version of the convex body assigned to its complement.

Let $\Delta$ be an  indexed configuration of oriented convex bodies of a projective plane $(\pp,\lpp)$, let $\tau$ be a line of $(\pp,\lpp)$,
let $\rr_\tau$ be the equivalence relation on $\pp$ generated by the pairs of points belonging to a same line segment of $\Delta\cap \tau$, and 
let $\Map{\omega_\tau}{\pp}{\pp/\rr_\tau}$ be the  associated quotient map.
We define   
\begin{enumerate}  
\item the {\it cocycle of $\Delta$ at $\tau$} or the {\it cocycle of $\tau$ with respect to $\Delta$} or the {\it cocycle of the pair $(\Delta,\tau)$} as 
the homeomorphism class of the image of the pair $(\Delta,\tau)$ under 
$\omega_\tau$, i.e., the set of $(\varphi \omega_\tau \Delta, \varphi \omega_\tau \tau)$ as $\varphi$ ranges over the set of homeomorphisms with domain $\pp/\rr_\tau$; 
\item  a {\it bitangent cocycle} or {\it zero-cocycle}  as a cocycle at a bitangent;
\item the {\it isomorphism class of  $\Delta$}
 as  the set of configurations that have the same set of bitangent cocycles as $\Delta$, hence the same set of cocycles as $\Delta$ (use a simple perturbation argument); and 
\item the {\it chirotope of $\Delta$} as the map that assigns to each $3$-subset $J$ of the indexing set of $\Delta$ the isomorphism
class of the subfamily indexed by $J$.
\end{enumerate}
Fig.~\ref{FinalPiercing} depicts the bitangent cocycles of an indexed configuration of three oriented convex bodies. 
\begin{figure}[!htb]
\psfrag{LA}{} \psfrag{LB}{} \psfrag{LC}{} \psfrag{LD}{}
\psfrag{one}{$1$}
\psfrag{two}{$2$}
\psfrag{thr}{$3$}
\psfrag{onecd}{$1$}
\psfrag{twocd}{$2$}
\psfrag{thrcd}{$3$}
\psfrag{A}{$A$}
\psfrag{B}{$B$}
\psfrag{C}{$C$}
\psfrag{D}{$D$}
\psfrag{PA}{$A$}
\psfrag{PB}{$B$}
\psfrag{PC}{$C$}
\psfrag{PD}{$D$}
\psfrag{BBBP}{$\bkk\ii\kk\ptg$}
\psfrag{BPvB}{$\ii\ptg,\kk$}
\psfrag{BvB}{$\ii,\kk$}
\psfrag{BBBB}{$\ii\kk\bii\bkk$}
\psfrag{BBvB}{$\ii\bii,\bkk$}
\psfrag{LA}{$\ii\ptg\bkk\ptg\bjj\ptg$}
\psfrag{LB}{$\ii\ptg\bjj\ptg\kk\ptg$}
\psfrag{LC}{$\ii\ptg\kk\ptg\jj\ptg$}
\psfrag{LD}{$\ii\ptg\jj\ptg\bkk\ptg$}
\psfrag{aa}{(a)}
\psfrag{bb}{(b)}
\psfrag{cc}{(c)}
\centering
\includegraphics[width=0.9875\linewidth]{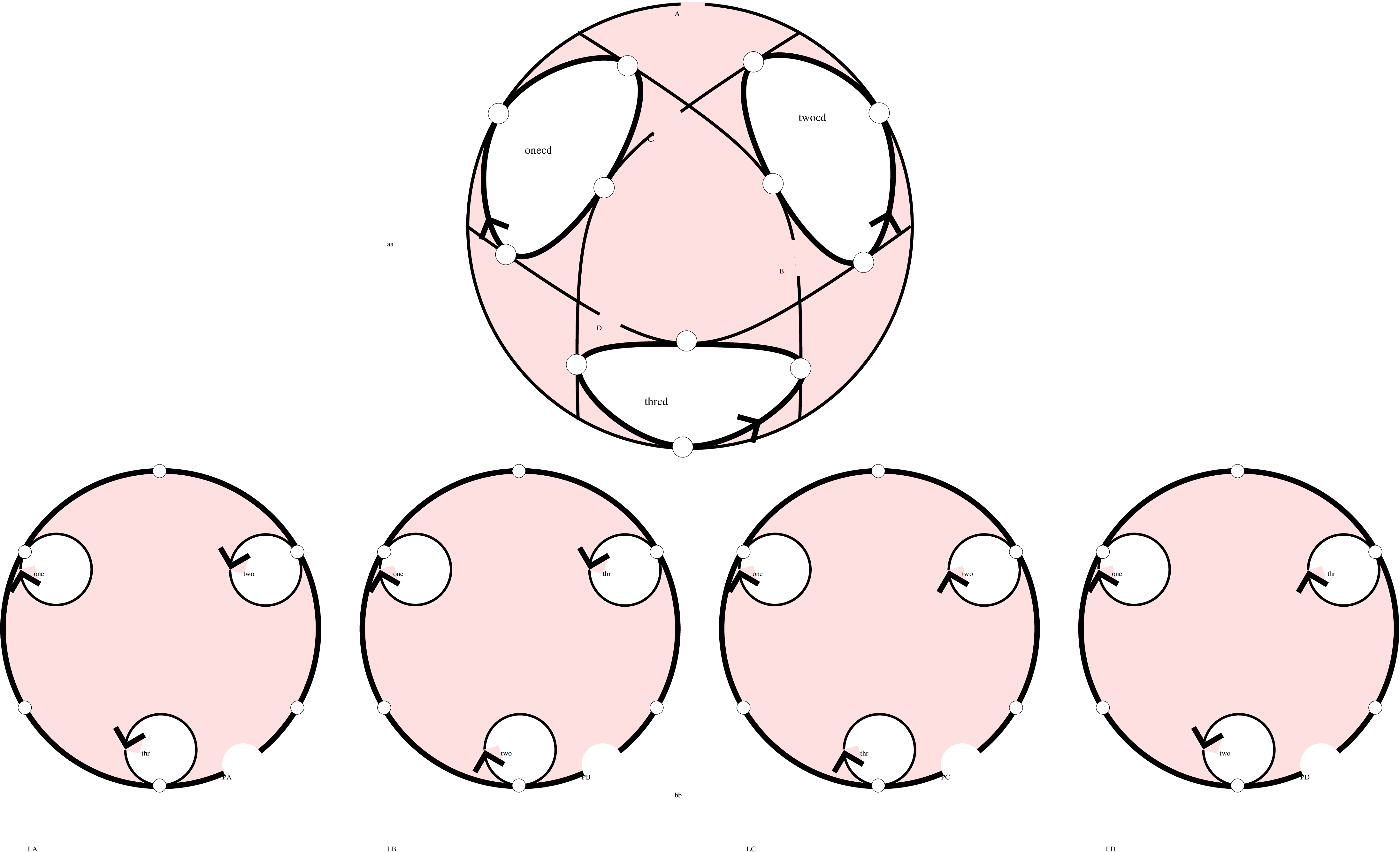}
\caption{(a) The dual configuration of an indexed and oriented version of the hemi-cube arrangement  
together with its bitangents
(these bitangents are all tritangents and are labeled $A,B,C$ and $D$ to ease the correspondance between the diagrams);
 (b)  its bitangent cocycles with their signatures
\label{FinalPiercing}}
\end{figure}
Observe that the cocycle of a tangent to a body does not encode the nature, line segment or point, of the intersection between the tangent and 
the body since the map $\omega_\tau$ reduces this intersection to a point.
A cocycle is conveniently represented by its {\it signature}: a set of words on the signed indices plus the extra symbol $\ptg$ that is defined  as follows. 
Let $\bcut{\pp}{\tau}$ be the closed 2-cell obtained by cutting $\pp/\rr_\tau$ along the line $\tau/\rr_\tau$,
let $\Map{\nu_{\tau}}{\bcut{\pp}{\tau}}{\pp/\rr_\tau}$
be the induced canonical projection,
let $\Sigma_\tau$ be the set of connected components of the pre-images under $\nu_\tau$ of the (indexed and oriented) convex bodies
of $\Delta$ 
(the cardinality of $\Sigma_\tau$ is twice the number of convex bodies  intersected by $\tau$ plus  the number of  bodies missed  by $\tau$), 
let $\epsilon$ be an orientation of $\cut{\tau}$ and let $\overline{\epsilon}$ be its opposite.
The {\it signature of the pair $(\Delta,\tau)$} or the {\it signature of $\Delta$ at $\tau$} is then defined as the pair of {\it signatures of the triples $(\Delta, \tau,\epsilon)$
 and $(\Delta, \tau,\overline{\epsilon})$} where the {\it signature of a triple $(\Delta,\tau, \epsilon)$}
is the set of indices of the elements of $\Sigma_\tau$ with orientation $\epsilon$ contained in the interior of $\cut{\tau}$ plus
the circular sequence of indices of the elements of $\Sigma_\tau$ with orientation $\epsilon$
encountered when walking along the boundary of $\bcut{\pp}{\tau}$ according to the orientation $\epsilon$
with the convention that the indices indexing points are replaced by the extra symbol $\ptg$.
Since
the signature of the triple  $(\Delta,\tau,\overline{\epsilon})$
is obtained from that of the triple $(\Delta,\tau ,\epsilon)$ by replacing each of its elements  by the reversal of its complement (with the convention that $\ptg$ is its own complement) 
the signature of $(\Delta,\tau)$ can be represented by any of its two elements.
Clearly the  cocycle of a pair $(\Delta,\tau)$ depends only on its signature and vice-versa.
 Fig.~\ref{FinalZeroCocycles} depicts the bitangent cocycles  of configurations of two and three convex bodies together with their signatures.
\begin{figure}[!htb]
\psfrag{lone}{}
\psfrag{ltwo}{}
\psfrag{lthr}{}
\psfrag{lfiv}{}
\psfrag{lsix}{}
\psfrag{lsev}{}
\psfrag{one}{}
\psfrag{two}{}
\psfrag{thr}{}
\psfrag{one}{$\ii$}
\psfrag{two}{$\kk$}
\psfrag{thr}{$\jj$}
\psfrag{lone}{$\ii\kk\ptg\ptg$}
\psfrag{ltwo}{$\ii\kk\jj\ptg\ptg\ptg$}
\psfrag{lthr}{$\ii\ptg \kk\ptg \jj\ptg$}
\psfrag{lfiv}{$\ii\kk\ptg\ptg,\jj$}
\psfrag{lsix}{$\ii\bjj\kk \ptg \jj \ptg$}
\psfrag{lsev}{$\bjj\ii\kk\jj \ptg \ptg$}
\psfrag{o6}{$8$}
\psfrag{o2}{$24$}
\psfrag{o22}{$4$}
\centering
\includegraphics[width=0.75\linewidth]{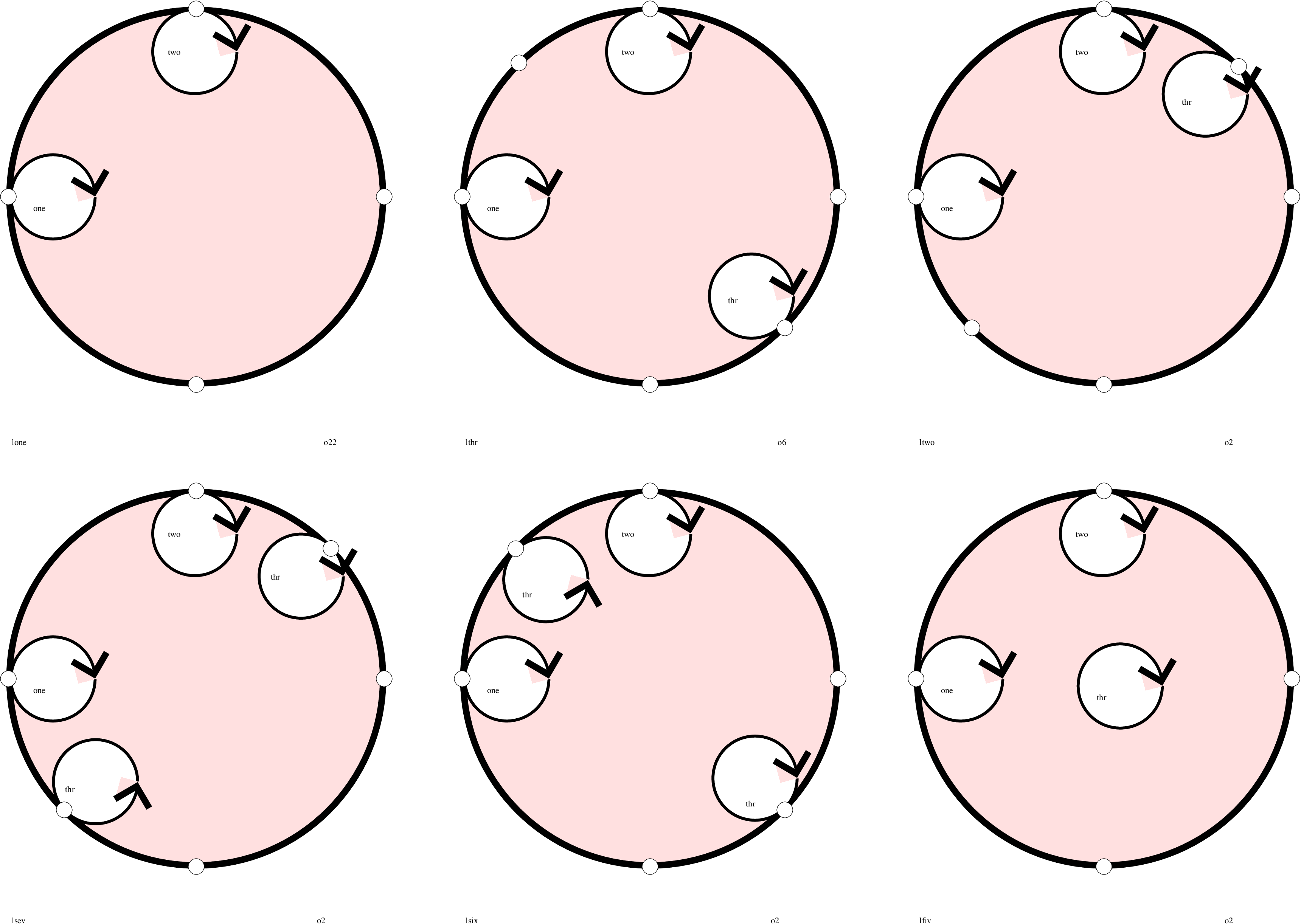}
\caption{The bitangent cocycles on the indexing set $\{1,2\}$ and $\{1,2,3\}$:
each bitangent cocycle is labeled at
at its bottom left  with its signature and
at its bottom right with its number of reoriented and reindexed versions:
thus the number of bitangent cocycles
on a given set of two indices is exactly the number ($4$) of bitangents of a pair of disjoint convex bodies,
 and  the number of bitangent cocycles on a given set of three indices is $8 + 4 \times 24 = 104$.  \label{FinalZeroCocycles}}
\end{figure}

We are now ready to state the second main result of the paper. Its onto part is called the {\it geometric representation theorem for arrangements of double pseudolines} thereafter.
\begin{theorem}
\label{theoPM} 
The map that assigns to an indexed configuration of oriented convex bodies the isomorphism class of its dual arrangement 
is compatible with the isomorphism relation on indexed configurations of oriented convex bodies. 
Furthermore the induced quotient map is one-to-one and onto, i.e., any arrangement of double pseudolines is isomorphic to the dual arrangement of a configuration of convex bodies.  \qed
\end{theorem}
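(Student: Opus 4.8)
The plan is to establish the three assertions---well-definedness (compatibility with isomorphism), injectivity, and surjectivity---in that order, with Theorem~\ref{theoHT} (connectedness of mutation graphs) doing the heavy lifting for surjectivity. For \textbf{compatibility}, I would first check that the dual arrangement is itself an arrangement of double pseudolines in the sense defined above: each body is a convex body, so by the duality principle and the discussion preceding Theorem~\ref{theoone} its dual is a double pseudoline, and by Theorem~\ref{theoone} every pair among them intersects in exactly four transversal points and induces a cellular decomposition. Then I would show that two isomorphic indexed oriented configurations---meaning they share the same bitangent \monades, hence (by the perturbation remark) the same \monades---have isomorphic dual arrangements. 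The idea is that the combinatorial type of the dual arrangement is recorded cell-by-cell by the \monades: a cell of the dual arrangement corresponds to a line of the plane, and which double pseudolines it lies on/inside is read off from the \monade of the configuration at that line. So the face poset of the dual arrangement is reconstructible from the set of \monades, giving an isomorphism of face posets, hence (by the cited fact that face posets determine isomorphism class) an isomorphism of arrangements.

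For \textbf{injectivity} of the induced quotient map, I would argue the converse: if two configurations have isomorphic dual arrangements then they have the same set of \monades, hence the same set of bitangent \monades, hence are isomorphic. The key point is that the \monade of $(\fcb,\tau)$ is encoded in the local combinatorial data of the dual arrangement around the cell dual to $\tau$---the cyclic pattern in which the double pseudoa lines and the ``piercing'' structure appear---and an arrangement isomorphism transports this data faithfully, including the orientation data since we work with oriented double pseudolines. Care is needed because the dual arrangement ``does not encode the nature of the contacts between the convex bodies and their common tangents'' (as noted after Theorem~\ref{theoone}); but the notion of isomorphism of configurations was deliberately defined via $\omega_\tau$ precisely to quotient out that contact information, so this mismatch is exactly absorbed.

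The main obstacle is \textbf{surjectivity}: given an abstract arrangement $\arrang$ of double pseudolines in a cross surface, produce a configuration of convex bodies whose dual arrangement is isomorphic to $\arrang$. The plan is a realization-by-deformation argument powered by Theorem~\ref{theoHT}. Fix a size-$n$ arrangement $\arrang$; pick a reference arrangement $\arrang_0$ of the same size that is visibly dual to a configuration of convex bodies (e.g.\ a family of $n$ small circles of the standard projective plane, as in the first example of Figure~\ref{defarrangdoublelinePP}). By Theorem~\ref{theoHT}, $\arrang$ and $\arrang_0$ are connected by a finite sequence of mutations. So it suffices to show that the class of arrangements realizable as dual arrangements of configurations of convex bodies is closed under a single mutation (merging or splitting). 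For this I would show that a mutation of the dual arrangement can be lifted to a continuous deformation of the underlying configuration of convex bodies: a mutation changes exactly one incidence between one moving double pseudoline and one vertex of the complex cut out by the others; on the primal side, moving one convex body slightly (or reshaping it) changes exactly one ``\monade'' crossing---a bitangent appearing or disappearing, equivalently a supporting line of one body passing through the appropriate special line configuration of two others. One must verify that throughout the deformation the bodies stay convex and pairwise disjoint; shrinking the moving body toward an interior point if necessary keeps disjointness, and convexity is preserved under the elementary moves. Concluding, the image class contains $\arrang_0$ and is mutation-closed, hence by Theorem~\ref{theoHT} equals all arrangements of double pseudolines of that size; since $n$ was arbitrary, the quotient map is onto. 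The delicate technical points to watch are: checking that \emph{every} single mutation (both merging and splitting, and in all the local configurations classified in Figure~\ref{FinalZeroCocycles}) admits such a primal lift, and ensuring the lift respects orientations.
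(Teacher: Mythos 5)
Your high-level architecture (mutation-connectedness for surjectivity, cocycle data for compatibility and injectivity) matches the paper's, but the core step of your surjectivity argument has a genuine gap. You propose to prove mutation-stability of the property ``to be a dual arrangement'' by lifting a mutation to a continuous deformation of the convex bodies themselves---``moving one convex body slightly (or reshaping it)'' inside the given plane. The difficulty is that the configuration lives in an arbitrary topological \gpp, whose line space is fixed and may be far from the standard one; there is no guarantee that this plane contains a convex body whose dual double pseudoline sits on the other side of the relevant vertex, nor that a one-parameter family of convex bodies realizing the sweep exists there. The realized configuration after the mutation in general lives in a \emph{different} projective plane. The paper circumvents exactly this obstruction by factoring the duality transform through the arrangement of bitangents: it shows that the isomorphism class of the dual arrangement depends only on a certain pseudoline arrangement $L$ indexed by the nodes (a ``raiponce''), performs the mutation as a purely combinatorial surgery on $L$ (replacing the pseudolines through the triangle being inverted by a single ``one-level'' pseudoline $\ell$, or inserting a perturbed pencil for a splitting mutation), and then invokes the Goodman--Pollack--Wenger--Zamfirescu enlargement theorem to embed the new pseudoline arrangement $L'$ in the line space of some projective plane, in whose central cells $\nabla_i(L')$ fresh convex bodies are inscribed. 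Without this (or an equivalent device) your induction step does not close.

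On the compatibility and injectivity halves, your intuition---that the \monades\ and the dual arrangement determine one another once contact information is quotiented out by $\omega_\tau$---is the right one, but ``the face poset is reconstructible cell-by-cell from the \monades'' and ``the \monade\ is encoded in the local data around the dual cell'' both conceal the actual work. A \monade\ at $\tau$ records the circular order of all tangency points along $\tau$ together with the positions and orientations of the remaining bodies, which is not local data at one vertex of the dual arrangement. The paper recovers this via its node-cycle coding theorem for arrangements, the injectivity of \monade\ maps for two-body subfamilies, and an explicit verification (for three bodies) that a bitangent \monade\ is determined by its two-body sub-\monades; the equivalence ``same chirotope $\Leftrightarrow$ isomorphic dual arrangements'' is then proved through the cycles, with the martagon case analysis handling the degenerate separation patterns. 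You would need to supply equivalents of these lemmas for your sketch to become a proof.
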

The main lines of its proof are the following. 

Compatibility and one-to-one parts are easy consequences of two basic properties of cocycles: first,
the injectivity of the map that assigns to each cell of the dual arrangement of an indexed configuration of two oriented convex bodies the cocycle of the configuration at some 
(hence any) element of the cell, and, second,
the injectivity of the map that assigns to a bitangent cocycle of an indexed family of at least three oriented convex bodies 
the  sub-cocycles obtained by removing in turn  each of the convex bodies. 
Concerning the onto part we show that the property of being isomorphic to the dual arrangement of a configuration of convex bodies is invariant under mutation
(mutation graphs being connected the result follows). 
To this end we first show that the isomorphism class of the dual arrangement of an indexed configuration of oriented convex bodies depends only on 
the isomorphism class of its (appropriately) indexed arrangement of bitangents, its {\it Rapunzel} or {\it raiponce} for short. 
Then we easily characterize the class of raiponces, using the
{\it enlargement theorem for pseudoline arrangements} of Goodman, Pollack, Wenger and Zamfirescu~\cite{gpwz-atp-94}; cf. Appendix~\ref{appendix:apl}.
And finally  we explain how to push back a mutation at the level of raiponces (the resulting operation is not a mutation). 
Combining Theorems~\ref{theoADP} and~\ref{theoPM} we get the result announced in the abstract, namely:

\begin{theorem}
\label{theoFCB} 
The map that assigns to an isomorphism class of indexed configurations of oriented convex bodies its chirotope is one-to-one and its range 
is the set of maps~$\chi$ on the set of $3$-subsets of a finite set~$I$ such that for every $3$-, $4$-, and $5$-subset~$J$ of~$I$  
the restriction of $\chi$ to the set of $3$-subsets of $J$ is a chirotope of configurations of convex bodies. 
\qed \end{theorem}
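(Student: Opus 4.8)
The plan is to derive Theorem~\ref{theoFCB} by combining Theorem~\ref{theoPM} and Theorem~\ref{theoADP} through the polarity/duality dictionary, exactly as one derives the classical point-configuration characterization from the corresponding statement for pseudoline arrangements. First I would observe that Theorem~\ref{theoPM} gives, after passing to quotients, a bijection between isomorphism classes of finite indexed configurations of oriented convex bodies and isomorphism classes of indexed arrangements of oriented double pseudolines, namely the dual-arrangement map. The content to check here is that this bijection is \emph{compatible with restriction to subsets of the index set}: the dual arrangement of a subfamily $\fcb_J$ is the subarrangement indexed by $J$ of the dual arrangement of $\fcb$. This is essentially immediate from the definitions (polarity is computed body by body), but it is the hinge that lets chirotopes be transported: since the chirotope of $\fcb$ assigns to a triple $J$ the isomorphism class of $\fcb_J$, and the chirotope of an indexed oriented arrangement assigns to $J$ the isomorphism class of its $J$-indexed subarrangement, the bijection of Theorem~\ref{theoPM} intertwines the two chirotope maps.

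Granting that, the proof is a two-line deduction. Injectivity of the convex-body chirotope map: if two configurations $\fcb,\fcb'$ have the same chirotope, then their dual arrangements have the same chirotope (by the intertwining), hence by the injectivity half of Theorem~\ref{theoADP} the dual arrangements are isomorphic as indexed oriented arrangements, hence by the bijectivity of Theorem~\ref{theoPM} $\fcb$ and $\fcb'$ are isomorphic. For the range: a map $\chi$ on triples of $I$ is the chirotope of some configuration of oriented convex bodies if and only if it is the chirotope of some indexed arrangement of oriented double pseudolines (apply Theorem~\ref{theoPM} in both directions, together with the restriction-compatibility so that ``$\chi$ on triples of $J$ is a convex-body chirotope'' matches ``$\chi$ on triples of $J$ is a double-pseudoline chirotope'' for every $J$); and by Theorem~\ref{theoADP} the latter condition is precisely that every $3$-, $4$- and $5$-subset $J$ have $\chi|_J$ equal to the chirotope of an indexed arrangement of oriented double pseudolines, which again translates back to ``$\chi|_J$ is the chirotope of a finite indexed configuration of oriented convex bodies.'' This yields exactly the asserted local-to-global characterization.

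The only genuine work, then, is the restriction-compatibility lemma, and the one subtlety worth flagging is that the ``isomorphism class'' of a configuration was defined via bitangent cocycles rather than via an ambient homeomorphism, so I would want to confirm that two configurations with isomorphic dual arrangements do indeed have the same bitangent cocycles (and conversely). Theorem~\ref{theoone} already tells us that a bitangent of a pair of bodies corresponds to a transversal quadruple point of the dual arrangement, and more generally a bitangent cocycle of $\fcb$ records, for a fixed bitangent $\tau$, the combinatorial type of how all the bodies meet $\tau$ after collapsing segments --- which is exactly the data of the circular sequence of crossings read off the dual arrangement along the corresponding point's ``line'' in the dual plane. Making this correspondence precise (and checking it is insensitive to the segment-vs-point nature of contacts, which the $\omega_\tau$ quotient was designed to absorb) is where the care goes; it is a bookkeeping argument using the already-established Theorem~\ref{theoPM} rather than a new idea. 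The rest is formal.

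\begin{proof}
By Theorem~\ref{theoPM} the dual-arrangement map induces a bijection between isomorphism classes of finite indexed configurations of oriented convex bodies and isomorphism classes of indexed arrangements of oriented double pseudolines. This bijection commutes with restriction to a subset $J$ of the index set: the dual arrangement of the subfamily $\fcb_J$ is the subarrangement indexed by $J$ of the dual arrangement of $\fcb$, since polarity is computed body by body and the bitangent cocycles of $\fcb_J$ are the bitangent cocycles of $\fcb$ that only involve indices in $J$. Consequently the chirotope of $\fcb$ (which assigns to a triple $J$ the isomorphism class of $\fcb_J$) and the chirotope of the dual arrangement of $\fcb$ (which assigns to $J$ the isomorphism class of the $J$-indexed subarrangement) correspond to one another under the bijection; that is, the convex-body chirotope map factors as the double-pseudoline chirotope map composed with the bijection of Theorem~\ref{theoPM}.

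Injectivity of the convex-body chirotope map now follows from the injectivity half of Theorem~\ref{theoADP} together with the bijectivity of the map of Theorem~\ref{theoPM}. For the description of the range, a map $\chi$ on the triples of a finite set $I$ is the chirotope of a finite indexed configuration of oriented convex bodies if and only if it is the chirotope of an indexed arrangement of oriented double pseudolines; and by the range description in Theorem~\ref{theoADP}, the latter holds if and only if for every $3$-, $4$- and $5$-subset $J$ of $I$ the restriction $\chi|_J$ is the chirotope of an indexed arrangement of oriented double pseudolines, equivalently (again via Theorem~\ref{theoPM} and restriction-compatibility) the chirotope of a finite indexed configuration of oriented convex bodies. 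This is the asserted characterization.
\end{proof}
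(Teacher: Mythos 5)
Your proposal is correct and follows exactly the route the paper intends: the paper's entire proof is the remark ``Theorem~\ref{theoFCB} is a simple combination of Theorems~\ref{theoPM} and~\ref{theoADP},'' and your write-up supplies precisely the details of that combination (the restriction-compatibility of the duality map being the hinge, which the paper's Theorem~\ref{theotwobis} also confirms by showing that equality of chirotopes of configurations is equivalent to isomorphism of their dual arrangements). Nothing further is needed.
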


\subsection{Organization of the paper}
The paper is organized as follows. 
In Section~\ref{sec:homotopy} we prove that mutation graphs are connected (Theorem~\ref{theoHT}) 
and we use this connectedness result to compute the isomorphism classes of simple arrangements of three 
double pseudolines and 
the martagons on three and four double pseudolines.
In Section~\ref{secthr}  we show, again using the connectedness of mutation graphs proved in Section~\ref{sec:homotopy}, that  
any arrangement of double pseudolines is isomorphic to the dual arrangement of a  configuration of convex  bodies (onto part of Theorem~\ref{theoPM}). 
In Section~\ref{secfou} we prove that the isomorphism class of an indexed arrangement of oriented double pseudolines depends only on its chirotope (Theorem~\ref{theoADP}) 
and we prove that the map that assigns to a configuration of convex bodies the isomorphism class of its dual arrangement is compatible with the isomorphism relation 
on configurations of convex bodies and that the induced quotient map is one-to-one (Theorem~\ref{theoPM}).
In Section~\ref{secfiv} we introduce the arrrangements of double pseudolines living in nonorientable surfaces of arbitrary genus and we prove the LR characterization of chirotopes of indexed arrangements of oriented double pseudolines living in cross surfaces (Theorem~\ref{theoADP}). 
Still in Section~\ref{secfiv} we offer results in strong support of 
the conjecture that the arrangements living in cross surfaces are those whose subarrangements of size at most $4$ live in cross surfaces.
In Section~\ref{secsix} we discuss {\it arrangements of pseudocircles} 
(as natural extensions of both arrangements of pseudolines and arrangements of double pseudolines), {\it crosscap or M{\"o}bius arrangements} 
and their {\it fibrations} (as dual arrangements of affine configurations of convex bodies with, in particular, a positive answer to 
a question of Goodman and Pollack about the realizability of their double permutation sequences by affine configurations of pairwise disjoint convex bodies).
We conclude in the seventh and last section with  a list of open problems suggested by this research.

\section{Homotopy theorem}\label{sec:homotopy}
In this section we prove  that any two  arrangements of double pseudolines with the same number
of double pseudolines and living in the same cross surface are homotopic via a finite sequence of mutations
followed by an isotopy; cf. Theorem~\ref{theoHT}. We proceed into two steps:
\begin{enumerate}
\item firstly, in order to benefit from Ringel's homotopy theorem for arrangements of pseudolines, 
 we embed the collection of isomorphism classes of simple arrangements of pseudolines into the collection of isomorphism classes of arrangements of 
double pseudolines; 
the embedding is canonical and is based on the notion of {\it thin} arrangement of double pseudolines;
\item secondly (and this is the core of our proof) we introduce a `pumping' device to come down to the  
case of arrangements of pseudolines.
\end{enumerate}
We also provide representatives of the isomorphism classes of simple arrangements of three double pseudolines and we use these representatives 
to  compute the full list of martagons on three and four double pseudolines. Recall that martagons are arrangements 
that play a special r{\^o}le in the proof that 
the isomorphism class of an indexed arrangement of oriented double pseudolines depends only on its chirotope.


\subsection{Thin arrangements of double pseudolines} 
A simple arrangement of double pseu\-do\-li\-nes is  
{\it thin} if the crosscap sides of  its double pseudolines are free of vertices.  A 
thin arrangement of double pseudolines $\Gamma^*$  is a {\it double} of a simple arrangement of pseudolines $\Gamma$ (or $\Gamma$ is a {\it core} arrangement of pseudolines of $\Gamma^*$) if
there exists a one-to-one correspondence between $\Gamma$ and $\Gamma^*$ such that any pseudoline of $\Gamma$ is a core pseudoline of its corresponding 
double pseudoline in~$\Gamma^*$.
\begin{figure}[!htb]
\centering
\psfrag{C}{$\alpha$}
\psfrag{G}{$\gamma$}
\psfrag{MSG}{$\MS(\gamma)$}
\psfrag{PP}{$\PP$}\psfrag{infty}{$\infty$}
\psfrag{1}{$1$-cell}\psfrag{2}{$2$-cell}\psfrag{0}{$0$-cell}\psfrag{3}{$\cal M$}\psfrag{-1}{$\emptyset$}
\psfrag{aa}{$(a)$} \psfrag{bb}{$(b)$} \psfrag{cc}{$(c)$} \psfrag{dd}{$(d)$}
\psfrag{aa}{(a)} \psfrag{bb}{(b)} \psfrag{cc}{(c)} \psfrag{dd}{(d)}
\psfrag{one}{$1$}
\psfrag{two}{$2$}
\psfrag{thr}{$3$}
\psfrag{splitting}{\footnotesize splitting}
\psfrag{merging}{\footnotesize merging}
\psfrag{arrow}{$\rightarrow$}
\psfrag{barrow}{$\leftarrow$}
\includegraphics[width=0.65\linewidth]{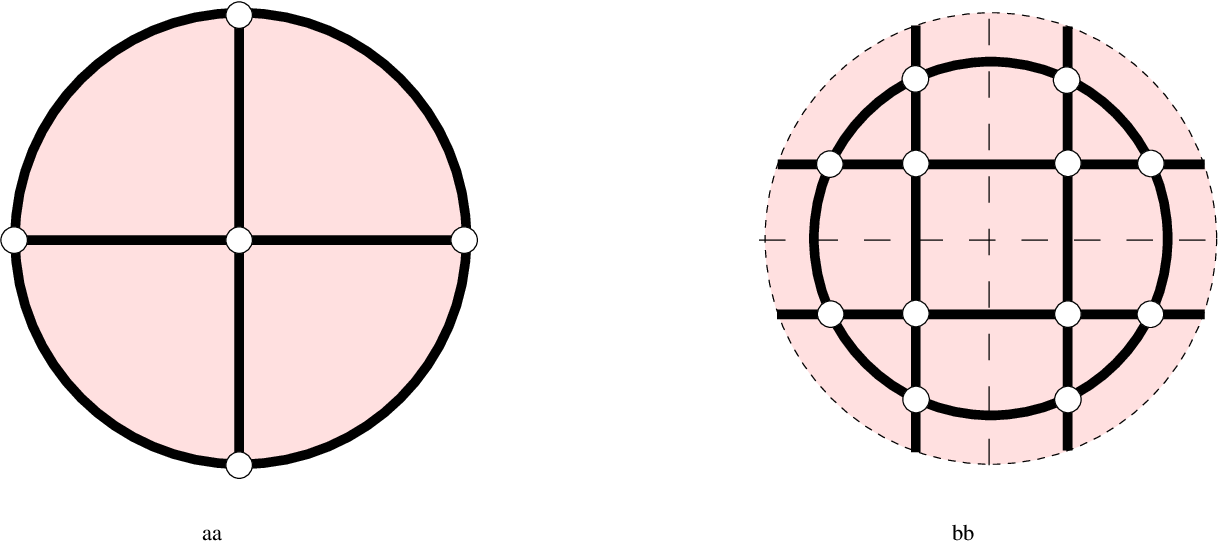}
\caption{\label{finaldouble}
(a) The octahedron arrangement and (b) its double, the rhombicubeoctahedron arrangement} 
\end{figure}
For example the rhombicubeoctahedron arrangement is thin and is the double of the octahedron arrangement, the unique simple arrangement of $3$ pseudolines; 
cf. Fig.~\ref{finaldouble}.
The following two lemmas are simple consequences of the definitions. 
 
\begin{lemma}\label{EmbeddingSimple} 
The map that assigns to a simple arrangement of pseudolines its set of doubles induces a one-to-one 
and onto 
correspondence between the set of isomorphism classes 
of simple arrangements of pseudolines and the set of isomorphism  classes of thin arrangements of double pseudolines.\qed 
\end{lemma}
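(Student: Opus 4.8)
The plan is to realise the asserted correspondence as a pair of mutually inverse maps between isomorphism classes: a \emph{doubling} map $D$, which sends a simple arrangement of pseudolines to the common isomorphism class of its doubles, and a \emph{coring} map $C$, which recovers a pseudoline arrangement from a simple thin arrangement of double pseudolines by selecting a core pseudoline inside each crosscap side. Most of the work lies in checking that $D$ is well defined and takes values in simple thin arrangements of double pseudolines, and that $C\circ D$ and $D\circ C$ are identity maps.

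\emph{Doubling.} Given a simple arrangement of pseudolines $\Gamma=\{\ell_1,\dots,\ell_n\}$ in a projective plane, I would fix for each $i$ a thin closed one-sided (M{\"o}bius) neighbourhood $N_i$ of $\ell_i$ --- a closed M{\"o}bius strip with core $\ell_i$ --- in general position, so that $N_i\cap N_j$ is a small square around the vertex $\ell_i\cap\ell_j$ and is empty otherwise, and so that no $N_k$ meets an $\ell_i$ away from the vertices of $\Gamma$. Put $\ell_i^{*}:=\partial N_i$ and $\Gamma^{*}:=\{\ell_1^{*},\dots,\ell_n^{*}\}$. A short Euler-characteristic count shows that $\ell_i^{*}$ is a double pseudoline whose crosscap side is $\interior{N_i}$, and the latter contains the core pseudoline $\ell_i$. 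Near a vertex $v=\ell_i\cap\ell_j$ the curves $\ell_i^{*}$ and $\ell_j^{*}$ cross as a grid of two strands against two strands, so $\ell_i^{*}\cap\ell_j^{*}$ consists of exactly four transversal points, all lying on the boundaries $\partial N_i$ and $\partial N_j$ and therefore in no crosscap side; reading the complement of $\ell_i^{*}\cup\ell_j^{*}$ off this local picture and applying Theorem~\ref{theoone} shows that every pair of curves of $\Gamma^{*}$ is homeomorphic to the dual arrangement of two disjoint convex bodies. Thinness and general position then give that $\Gamma^{*}$ is simple with all crosscap sides vertex-free, hence a simple thin arrangement of double pseudolines, and a double of $\Gamma$ by construction. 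Since any two systems of thin neighbourhoods of the $\ell_i$ can be joined by an ambient isotopy, and since an isomorphism $\Gamma\to\Gamma'$ carries such a system for $\Gamma$ to one for $\Gamma'$, the isomorphism class of $\Gamma^{*}$ depends only on that of $\Gamma$; in particular the set of doubles of $\Gamma$ lies in a single isomorphism class, so $D$ is a well-defined map on isomorphism classes.

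\emph{Coring.} Given a simple thin arrangement $\Gamma^{*}=\{\ell_1^{*},\dots,\ell_n^{*}\}$, choose a core pseudoline $\ell_i$ in the crosscap side of each $\ell_i^{*}$ and set $\Gamma:=\{\ell_1,\dots,\ell_n\}$. The essential use of thinness is that, because the crosscap side of $\ell_i^{*}$ carries no vertex of $\Gamma^{*}$, there is an ambient isotopy supported in a neighbourhood of that crosscap side which pushes $\ell_i^{*}$ onto the boundary of an arbitrarily thin M{\"o}bius neighbourhood of $\ell_i$; carrying out these isotopies simultaneously --- which is possible because the regions where distinct crosscap sides overlap are again vertex-free --- shows in one stroke that $\Gamma$ is a simple arrangement of pseudolines, that $\Gamma^{*}$ is a double of it, and, using that the core pseudolines of a fixed double pseudoline form a single homeomorphism class, that the isomorphism class of $\Gamma$ is independent of the chosen cores. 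Thus $C$ is well defined and $D\circ C=\mathrm{id}$. Conversely, in the double $\Gamma^{*}$ of $\Gamma$ built above, the core selected inside $\interior{N_i}$ is isotopic within $N_i$ to $\ell_i$, so coring returns the isomorphism class of $\Gamma$ and $C\circ D=\mathrm{id}$. Together these establish the claimed one-to-one and onto correspondence.

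\emph{Main obstacle.} The only genuinely delicate step is the rigidity used for $C$: that thinness allows one to slide a double pseudoline onto any of its core pseudolines, simultaneously for all curves of the arrangement, without changing its isomorphism type. I would argue this by deleting the finite vertex set of $\Gamma^{*}$ from the cross surface, so that each crosscap side becomes a genuine open M{\"o}bius strip in which such an isotopy is elementary, and then patching the isotopies over the pairwise overlaps --- vertex-free squares --- using the uniqueness up to homeomorphism of a double pseudoline together with its system of core pseudolines to ensure the patched isotopy is ambient. Everything else is a routine inspection of the local picture near the vertices of $\Gamma$ and a count of Euler characteristics.
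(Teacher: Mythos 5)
Your proof is correct, and it is the natural elaboration of the argument the paper has in mind: the paper states this lemma without proof, declaring it a ``simple consequence of the definitions,'' and your doubling-by-M{\"o}bius-neighbourhoods and coring-by-isotopy construction is precisely the standard way to make that claim precise. Nothing in your argument diverges from or conflicts with the paper's (unwritten) reasoning.
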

\begin{lemma} \label{relevementdesmutations}
Let $\Gamma$ and $\Gamma'$ be two simple arrangements of pseudolines and let $\Gamma^*$ and $\Gamma'^*$ be 
double versions of $\Gamma$ and $\Gamma'.$ 
Assume that $\Gamma$ and $\Gamma'$ are connected by a  sequence of two mutations (a merging mutation followed by its `symmetric' splitting mutation) during which the
moving pseudoline is $\Gamma_i$. Then
$\Gamma^*$ and $\Gamma'^*$ are homotopic via a sequence of sixteen mutations during which the only moving double pseudoline is~$\Gamma_i^*$.  \qed
\end{lemma}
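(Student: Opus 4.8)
The plan is to reduce the claimed sixteen‑mutation homotopy of $\Gamma^*$ to the two‑mutation homotopy of $\Gamma$ by a careful local analysis near the moving pseudoline $\Gamma_i$. First I would fix a disk neighborhood $D$ of the point where the mutation of $\Gamma$ takes place, small enough that inside $D$ only the three pseudolines $\Gamma_i,\Gamma_j,\Gamma_k$ participating in the triangle flip appear, and outside $D$ nothing changes during the whole process. Since $\Gamma^*$ is a double of $\Gamma$ and is thin, each $\Gamma_\ell^*$ is a thin pseudo‑oval whose crosscap side is free of vertices and which "hugs" the core pseudoline $\Gamma_\ell$; inside $D$ the relevant picture is three thin ribbons $\Gamma_i^*,\Gamma_j^*,\Gamma_k^*$ running parallel to $\Gamma_i,\Gamma_j,\Gamma_k$, each pair meeting in the four transversal points forced by Theorem~\ref{theoone}.

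The heart of the argument is a purely local lemma: if a thin ribbon $\Gamma_i^*$ is swept across the crossing point of two other thin ribbons $\Gamma_j^*,\Gamma_k^*$ (mimicking the elementary triangle move of $\Gamma_i$ across $\Gamma_j\cap\Gamma_k$ in the pseudoline world), then the two sides of $\Gamma_i^*$ cross the four intersection points of $\Gamma_j^*$ with $\Gamma_k^*$ one at a time, and each such crossing is a single mutation of the arrangement of double pseudolines. Counting: $\Gamma_i^*$ has two sides (two core‑adjacent boundary arcs), the pair $\{\Gamma_j^*,\Gamma_k^*\}$ presents four vertices, and $\Gamma_i^*$ must pass from one side of the $\Gamma_j^*\Gamma_k^*$ crossing configuration to the other and then back (merging mutation followed by its symmetric splitting mutation, as in the hypothesis), giving $2 \times 4 \times 2 = 16$ mutations, exactly as claimed, with $\Gamma_i^*$ the only moving curve throughout. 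I would make this precise by writing down the sequence of face posets: start from $\Gamma^*$, perform the eight mutations realizing the "merging" half (the eight new incidences created as the two sides of $\Gamma_i^*$ absorb, in order, the four $\Gamma_j^*\cap\Gamma_k^*$ vertices), then the eight mutations realizing the symmetric "splitting" half, and check that the endpoint is a double of $\Gamma'$; thinness of $\Gamma_i^*$ is preserved at every intermediate stage except, necessarily, at the stage when $\Gamma_i^*$ is straddling the crossing, which is fine since we only need the endpoints to be thin.

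The main obstacle I anticipate is bookkeeping the order and type of the sixteen elementary moves and verifying that each is indeed a legal mutation — i.e., that at each step exactly one incidence between the moving curve and a vertex of the complex cut out by the other curves toggles, and that the intermediate configurations are genuine arrangements of double pseudolines (each pair still meeting in four transversal points and still inducing a cellular decomposition, cf. Theorem~\ref{theoone}). The cleanest way to discharge this is to argue in a single chart: choose coordinates in $D$ in which $\Gamma_j,\Gamma_k$ are two transversal straight segments and $\Gamma_i$ a segment being translated across their intersection; replace each segment by a thin "double" (a narrow rectangle, so its two long sides are two parallel segments and its two short ends contribute no interior vertices), and then the sixteen moves are literally the sixteen events "an endpoint of one short-side / long-side crossing of $\Gamma_i^*$ passes over a corner of the $\Gamma_j^*$–$\Gamma_k^*$ crossing". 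Since everything in $D$ is modeled by an explicit planar picture, the check that each event is a single mutation and that the composite realizes the prescribed two‑mutation homotopy of $\Gamma$ is routine; the only care needed is to confirm that the local model can be glued back into the ambient cross surface without creating spurious incidences outside $D$, which follows because the doubles $\Gamma_\ell^*$ can be taken arbitrarily thin and because $\Gamma$ and $\Gamma'$ agree outside $D$.
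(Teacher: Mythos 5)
Your proposal is correct and supplies exactly the local verification that the paper omits: the authors state this lemma (together with Lemma~\ref{EmbeddingSimple}) as ``a simple consequence of the definitions'' and give no proof, and your count $2\times 4\times 2=16$ --- two local strands of the thin double $\Gamma_i^*$, each sweeping over the four transversal intersection points of $\Gamma_j^*$ and $\Gamma_k^*$ (all of which are concentrated near the single crossing $\Gamma_j\cap\Gamma_k$ by thinness), each passage being a merging immediately followed by its symmetric splitting --- is the intended bookkeeping. One small internal inconsistency: your second paragraph schedules the moves as eight mergings followed by eight splittings, whereas the generic translation model of your third paragraph produces eight interleaved merge--split pairs (a rigid sweep cannot hold one strand incident to a vertex while another incidence is created, since that would change two incidences in one mutation); the interleaved order is the one to keep, and it changes nothing in the total count or in the validity of the intermediate arrangements.
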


\subsection{The pumping lemma}
We come now to the statement of our pumping lemma and 
to the proof of our homotopy theorem.
\begin{lemma}[Pumping Lemma] \label{mainresult}
Let $\Gamma$ be a simple arrangement   
of double pseudolines, and $\gamma \in \Gamma$.  
Assume that there is a vertex
of the arrangement $\Gamma$ lying in the  crosscap side of $\gamma$. 
Then there is a triangular two-cell of the arrangement $\Gamma$
 contained in the crosscap side of  $\gamma$  with 
a side supported by~$\gamma$.\qed
\end{lemma}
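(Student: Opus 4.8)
## Proof Strategy

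The plan is to argue by contradiction and extremality, exploiting the structure of the crosscap side of $\gamma$. Recall from the basic theory (Theorem~\ref{theoone} and the surrounding discussion) that the crosscap side of a double pseudoline $\gamma$ is an open Möbius strip, and that $\gamma$ together with the other double pseudolines of $\Gamma$ restricted to this Möbius strip induces a cellular decomposition of it, since each pair of double pseudolines already induces a cellular decomposition of the whole cross surface. The hypothesis gives at least one vertex in $\interior{}$ of the crosscap side, so this induced decomposition is nontrivial. I would first set up the local picture: intersect all the curves of $\Gamma$ with the closed crosscap side, obtaining an arrangement-like structure inside a Möbius strip whose boundary is $\gamma$, and whose interior vertices come from crossings among the other double pseudolines (each other $\delta\in\Gamma$ meets $\gamma$ in four points, so contributes arcs in the crosscap side).

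Next I would look at the two-cells of this decomposition that have a side on $\gamma$, i.e. the two-cells adjacent to the boundary. The key point is that a two-cell touching $\gamma$ is bounded by an arc of $\gamma$ and by arcs coming from the other curves; I want to show the minimal such cell is a triangle. The natural approach is a counting/Euler-characteristic or a ``sweep toward $\gamma$'' argument analogous to the classical proof that a simple arrangement of pseudolines has a triangular face adjacent to any given line. Concretely: among all two-cells having a side on $\gamma$, pick one, say $T$, that is innermost or minimal in a suitable sense (e.g. fewest vertices on its boundary, or minimal with respect to a sweeping order along $\gamma$). If $T$ is not a triangle, it has a side $s$, not on $\gamma$, supported by some curve $\delta$, and $\delta$ together with the geometry near $T$ produces a strictly smaller cell with a side on $\gamma$ on the $\gamma$-side of $\delta$ — the usual ``cut off a corner'' move. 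This contradicts minimality. The simplicity of $\Gamma$ ensures the vertices are genuine transversal crossings so the corner-cutting is well defined, and the fact that $\gamma$ is a double pseudoline (separating, with its crosscap side an open Möbius strip with $\gamma$ as its only boundary circle) guarantees that these smaller cells stay inside the crosscap side.

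The main obstacle I anticipate is the nonorientability and the ``wrap-around'' behavior of curves in the Möbius strip: a chord of the strip entering through $\gamma$ may exit through $\gamma$ after going around, so the naive planar corner-cutting argument needs care to ensure the extremal cell one produces really is strictly smaller and really does have a side on $\gamma$ rather than degenerating. I would handle this by passing to the orientation double cover of the crosscap side (an annulus), lifting the arrangement, and running the extremal-triangle argument there, where the classical pseudoline-arrangement reasoning applies cleanly; a triangle adjacent to a boundary circle of the annulus projects to a triangle adjacent to $\gamma$ in the crosscap side. Alternatively, one can argue directly by choosing, among all faces in the crosscap side incident to $\gamma$, one whose closure meets $\gamma$ in an arc that is minimal for inclusion along $\gamma$, and then checking that each of its non-$\gamma$ sides must be a single arc meeting at genuine vertices, forcing the triangle. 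Either way, once the reduction to a boundary-adjacent extremal cell is in place, the combinatorics is the standard triangle-lemma argument and I do not expect surprises there.
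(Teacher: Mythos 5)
Your opening move---passing to the orientation double cover so that the crosscap side of $\gamma$ lifts to a cylinder bounded by the two lifts of $\gamma$, and hunting for a triangle adjacent to one of these boundary circles---is exactly the paper's setup, and your remark that such a triangle projects back to a triangular two-cell adjacent to $\gamma$ downstairs is correct. The gap is in the sentence claiming that, once in the annulus, ``the classical pseudoline-arrangement reasoning applies cleanly.'' It does not: the arcs you obtain in the cylinder (maximal subarcs of the lifts of the other double pseudolines, each running from one boundary circle to the other) are not pairwise once-crossing. Two such arcs can meet in $0$, $1$ or $2$ points, and the two-crossing case is precisely what invalidates the standard corner-cutting/extremal argument. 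When you cut a corner of a non-triangular boundary-adjacent cell along two arcs that cross twice, the new candidate region need not be nested inside the old one---its base interval on the boundary circle can land entirely outside the base of the region you started from (this is the situation of Lemma~\ref{positionone}, where the two base intervals come out disjoint)---so none of the minimality measures you propose (fewest boundary vertices, inclusion of base arcs, a sweeping order along $\gamma$) decreases monotonically, and termination of the corner-cutting is not guaranteed.

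The paper's proof is organized around exactly this difficulty. It works not with two-cells of the full arrangement but with triangles cut out by the boundary circle and just two crossing arcs, defines a derivation $T \mapsto T'$ on the admissible ones, and then proves position lemmas plus an enclosing lemma (Lemmas~\ref{positionone}, \ref{positiontwo} and \ref{delta}) whose combined effect is that the derived sequence $T_0, T_1, T_2, \ldots$ is eventually stationary even though it is not monotone in any simple sense; stationarity then forces an honest triangular two-cell by Lemma~\ref{positionthr}. So your strategy needs, at minimum, a replacement for the monotonicity you are implicitly assuming: without an invariant that controls the non-nested case produced by doubly-crossing arcs, the argument does not close.
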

\begin{proof}
Let ${\cal P}$ be the underlying cross surface of $\Gamma$ and let $\Map{p}{\lift{{\cal P}}}{{\cal P}}$ be a $2$-sheeted unbranched covering of ${\cal P}.$ 
For example the two relations 
$$\left\{\begin{array}{c}
\alpha_1 \alpha_2 =1\\
\alpha_2\alpha_1 =1
\end{array} \right.
$$
define a $2$-sheeted unbranched covering of the cross surface defined by the relation~$\alpha\alpha =1$; cf.~\cite{m-bcat-91,j-cts-79}.  
The two lifts under $p$ of a curve $\tau$ of $\Gamma$ are denoted $\tau_+$ and~$\tau_-$, and the set of lifts of the curves of $\Gamma$ is denoted $\lift{\Gamma}$.  
Fig.~\ref{TwoCoveringXX}a shows a subarrangement of two double pseudolines and Fig.~\ref{TwoCoveringXX}b shows its $2$-sheeted unbranched covering. 
\begin{figure}[!htb]
\centering
\psfrag{UU}{$\UU$}\psfrag{VV}{$\VV$}
\psfrag{alpha}{$\alpha$}
\psfrag{alphaone}{$\alpha_1$}
\psfrag{alphatwo}{$\alpha_2$}
\psfrag{B}{$B$}\psfrag{Bs}{$B_*$}
\psfrag{Ud}{$\tau$}\psfrag{Vd}{$\tau$}
\psfrag{Ud'}{$\tau'$}\psfrag{Vd'}{$\tau'$}
\psfrag{U}{$\tau_+$}\psfrag{V}{$\tau_-$}
\psfrag{U'}{$\tau'_+$}\psfrag{V'}{$\tau'_-$}
\psfrag{Up}{$B$} \psfrag{Upp}{$B'$} 
\psfrag{aa}{(a)} \psfrag{bb}{(b)} \psfrag{cc}{} \psfrag{dd}{(c)} \psfrag{ee}{}
\includegraphics[width=0.8500075\linewidth]{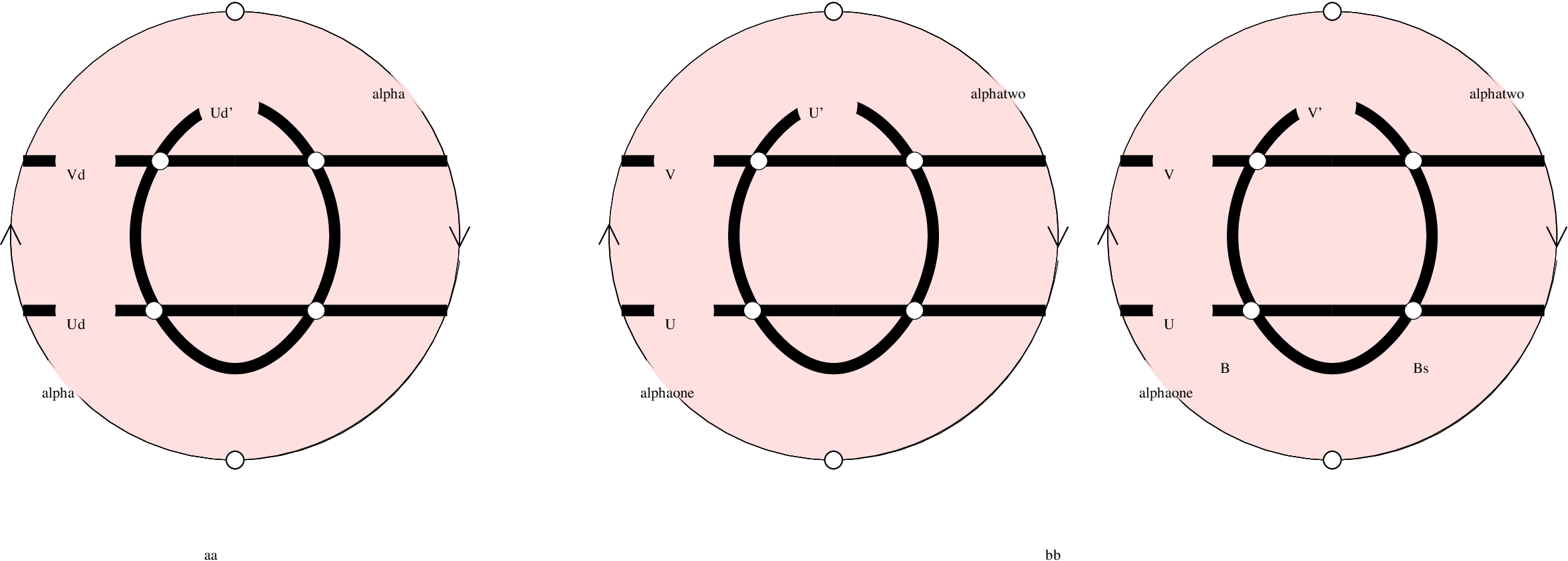}
\caption{
(a) An arrangement of two double pseudolines; (b) its $2$-sheeted unbranched covering 
\label{TwoCoveringXX}}
\end{figure}
We note that two curves of $\lift{\Gamma}$ have exactly $0$ or $2$ intersection 
points depending on whether they are the lifts of the same 
curve in $\Gamma$, or not. By convention if $B$ is one of the two intersection points of 
two crossing curves of $\lift{\Gamma}$  then the other one is denoted~$B_*$, as illustrated in Fig.~\ref{TwoCoveringXX}b. 
Let $C$ be the cylinder of $\lift{{\cal P}}$ 
bounded by $\UU$ and $\VV$.  We introduce  the following terminology.
\begin{enumerate}
\item A {\it \tracecurve\ supported by} $\gamma' \in \Gamma$, $\gamma' \neq \gamma$, is a maximal subcurve of $\gamma'_+$ or $\gamma'_-$ contained in the cylinder $C$.
Observe that there are four \tracecurves\ supported by  $\gamma'$ (two per lift of $\gamma'$)  and that a \tracecurve\ has an endpoint on $\UU$ and 
the other one on $\VV.$
The \tracecurve\ with endpoint $B$ on $\UU$ is denoted $\gcurve(B).$  
\item An {\it arrangement of \tracecurves}  is a set of \tracecurves\ embedded in the cylinder $C.$ 
The cell  decomposition of the cylinder $C$ induced by an arrangement of two \tracecurves\ depends only on the number of intersection points, 
as depicted in Fig.~\ref{TwoCoveringYY}.
\begin{figure}[!htb]
\centering
\psfrag{UU}{$\UU$}\psfrag{VV}{$\VV$}
\psfrag{alpha}{$\alpha$}
\psfrag{alphaone}{$\alpha_1$}
\psfrag{alphatwo}{$\alpha_2$}
\psfrag{B}{$B$}\psfrag{Bs}{$B_*$}
\psfrag{Ud}{$\tau$}\psfrag{Vd}{$\tau$}
\psfrag{Ud'}{$\tau'$}\psfrag{Vd'}{$\tau'$}
\psfrag{U}{$\tau_+$}\psfrag{V}{$\tau_-$}
\psfrag{U'}{$\tau'_+$}\psfrag{V'}{$\tau'_-$}
\psfrag{Up}{$B$} \psfrag{Upp}{$B'$} 
\psfrag{aa}{(a)} \psfrag{bb}{(b)} \psfrag{cc}{} \psfrag{dd}{} \psfrag{ee}{}
\includegraphics[width=0.8500075\linewidth]{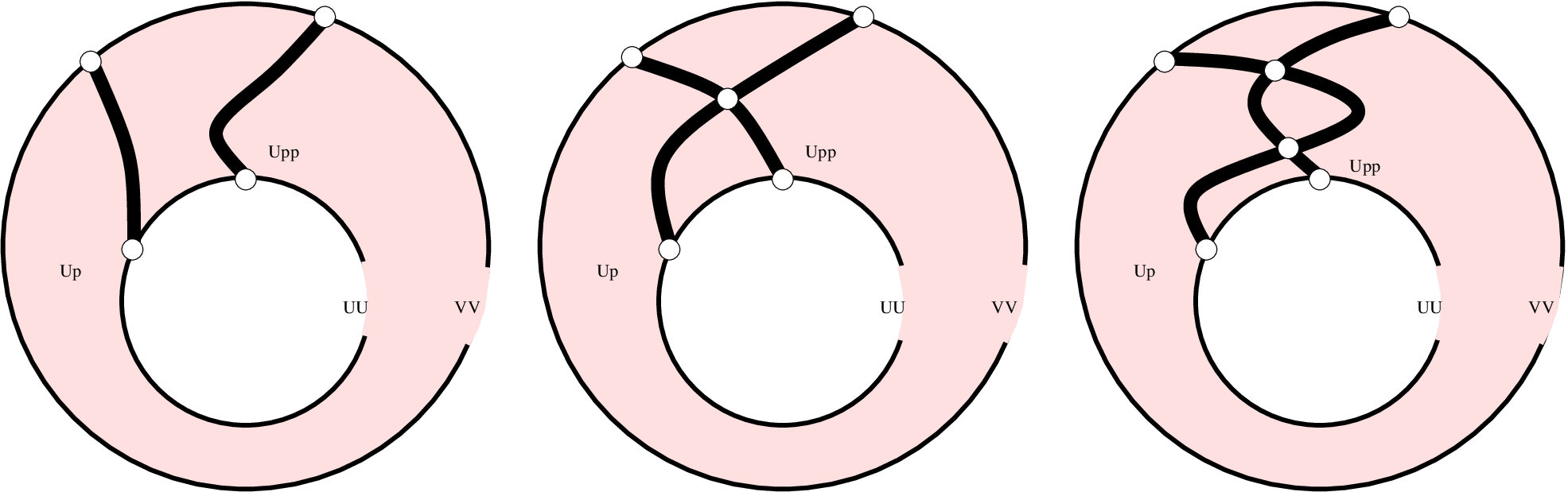}
\caption{The $3$ possible arrangements of two  $\gamma$-curves 
\label{TwoCoveringYY}}
\end{figure}
\item A {\it \gtriangle} is  a triangular face of the 
arrangement of  two crossing \tracecurves\ 
with a side supported by $\UU$;
the vertex of a \gtriangle\ not on $\UU$ is called its {\it apex} and the side
of a \gtriangle\ supported by $\UU$ is called its {\it base side}. 
The interior and the exterior of the base side of a  \gtriangle\ $T$, considered as a subset of $\UU$, are denoted $\base{T}$ and $\cobase{T}$, respectively. 
\item A  \gtriangle\ is {\it admissible} if one of its two sides
with the apex as an endpoint is an edge of $\lift{\Gamma}$. 
\begin{figure}[!htb]
\centering
\psfrag{aa}{\normalsize (a)} \psfrag{bb}{\normalsize (b)} \psfrag{cc}{\normalsize (c)}
\tiny
\footnotesize
\psfrag{Delta}{$\Delta$}
\psfrag{CYZ}{\normalsize $\stripbis{Y,Y'}$}
\psfrag{CBBY}{\normalsize $\stripbis{B,B',Y}$}
\psfrag{CBBZ}{\normalsize $\stripbis{B,B',Y'}$}
\psfrag{CBBB}{\normalsize $\stripbis{B,B',B''}$}
\psfrag{x}{$X$}
\psfrag{y1}{$Y'$}
\psfrag{y0}{$Y$}
\psfrag{t0}{$T$} \psfrag{t1}{$T'$} \psfrag{t2}{$T''$} \psfrag{a0}{$A$}
\psfrag{a1}{$A'$} \psfrag{a2}{$A''$} \psfrag{a3}{$A'''$} \psfrag{a4}{$A^{(4)}$}
\psfrag{b0}{$B$} \psfrag{b1}{$B'$} \psfrag{b2}{$B''$} \psfrag{b3}{$B'''$}
\psfrag{b4}{$B^{(4)}$} \psfrag{b5}{$B^{(5)}$} \psfrag{b1s}{$\twin{B}'$}
\psfrag{b2s}{$\twin{B}''$} \psfrag{bp3}{$\twin{B}'''$} \psfrag{T1}{$T'$}
\psfrag{a1s}{$\twin{A'}$}
\psfrag{T3}{$T'''$}
\psfrag{casetwo}{$B'' \in \cobase{T}$}
\psfrag{caseone}{$B'' \in \base{T}$}
\psfrag{aa}{(a)}
\psfrag{bb}{(b)}
\psfrag{cc}{(c)}
\psfrag{dd}{(d)}
\psfrag{ee}{(e)}
\psfrag{ff}{(f)}
\psfrag{gg}{(g)}
\includegraphics[width = 0.850 \linewidth]{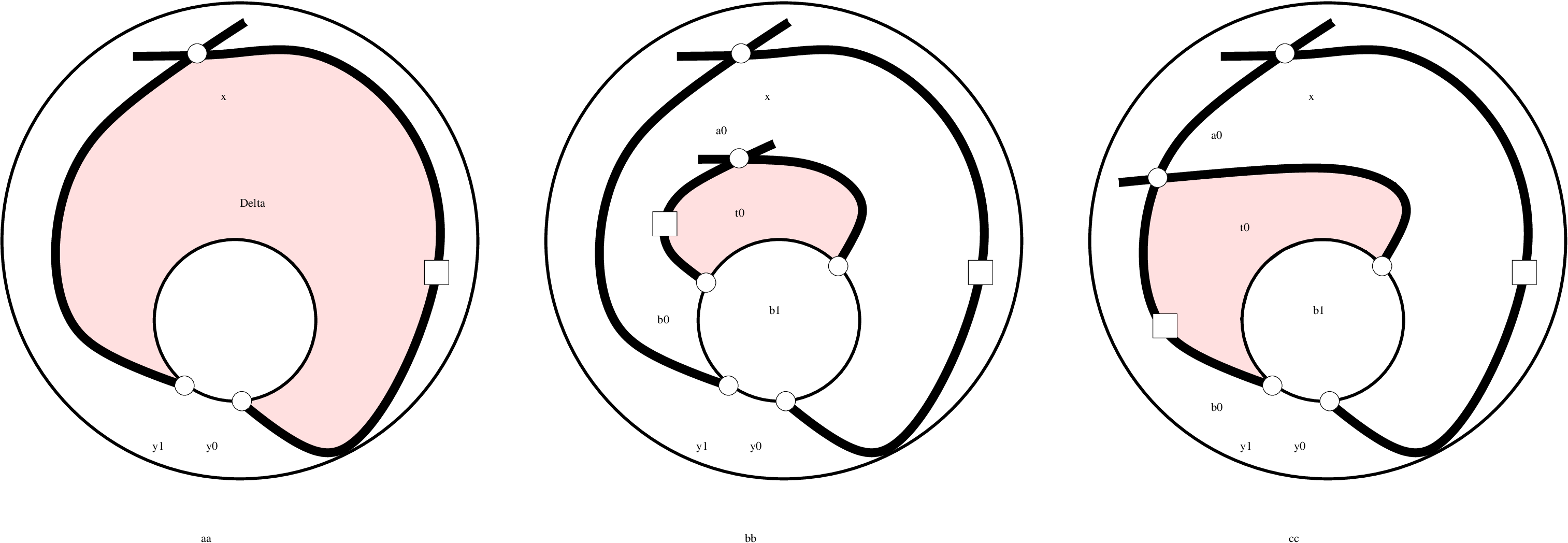}
\caption{The admissible \gtriangle\ $\Delta$ encloses the admissible \gtriangle\ $T$ \label{finalencloseone}}
\end{figure}
\item 
An admissible \gtriangle\ $\Delta = XYY'$ with apex $X$ and edge side $XY$ is said to enclose an admissible \gtriangle\ $T = ABB'$ with apex $A$ and edge side $AB$ if $T$ is included in $\Delta$ and walking along the base side of $\Delta$ from $Y$ to $Y'$ we encounter $B'$ before $B$, thus 
the arrangement of the four \tracecurves\ $\gcurve(Y)$, $\gcurve(Y')$, $\gcurve(B)$, $\gcurve(B')$ is, up to homeomorphism, one of those implicitly depicted in 
Fig.~\ref{finalencloseone}a in case $B\neq Y'$ or one of those implicitly depicted in 
Fig.~\ref{finalencloseone}b in case $B= Y'$.
\end{enumerate}
\begin{lemma}\label{existence}
There is at least one admissible \gtriangle.
\end{lemma}
\begin{proof}
Since by  assumption there is a vertex of $\Gamma$ in the crosscap side of  the double pseudoline 
$\gamma$, there is  a \gtriangle, say $T=ABB'$ with apex $A$. Let 
$A'$ be the vertex of $\lift{\Gamma}$ that follows 
$B'$ on the side $B'A$ of $T$. Then $A'$ is the apex 
of an  admissible \gtriangle\ $T' = A'B'B''$ with edge side $A'B'$. 
This proves that there is at least one admissible \gtriangle.
\end{proof}

\begin{figure}[!htb]
\centering
\psfrag{aa}{\normalsize (a)}
\psfrag{bb}{\normalsize (b)}
\psfrag{cc}{\normalsize (c)}
\psfrag{dd}{\normalsize (d)}
\psfrag{x}{$X$}
\psfrag{y1}{$Y'$}
\psfrag{y0}{$Y$}
\psfrag{t01}{$T,T'$}
\psfrag{a01}{$A,A'$}
\psfrag{t0}{$T$} \psfrag{t1}{$T'$} \psfrag{t2}{$T''$} \psfrag{a0}{$A$}
\psfrag{a1}{$A'$} \psfrag{a2}{$A''$} \psfrag{a3}{$A'''$} \psfrag{a4}{$A^{(4)}$}
\psfrag{b0}{$B$} \psfrag{b1}{$B'$} \psfrag{b2}{$B''$} \psfrag{b3}{$B'''$}
\psfrag{b4}{$B^{(4)}$} \psfrag{b5}{$B^{(5)}$} \psfrag{b1s}{$\twin{B}'$}
\psfrag{b2s}{$\twin{B}''$} \psfrag{bp3}{$\twin{B}'''$} \psfrag{T1}{$T'$}
\psfrag{a1s}{$\twin{A'}$}
\psfrag{T3}{$T'''$}
\psfrag{delta}{$\Delta$}
\psfrag{UU}{$\gamma^+$}
\psfrag{VV}{$\gamma^-$}

\includegraphics[width = 0.875 \linewidth]{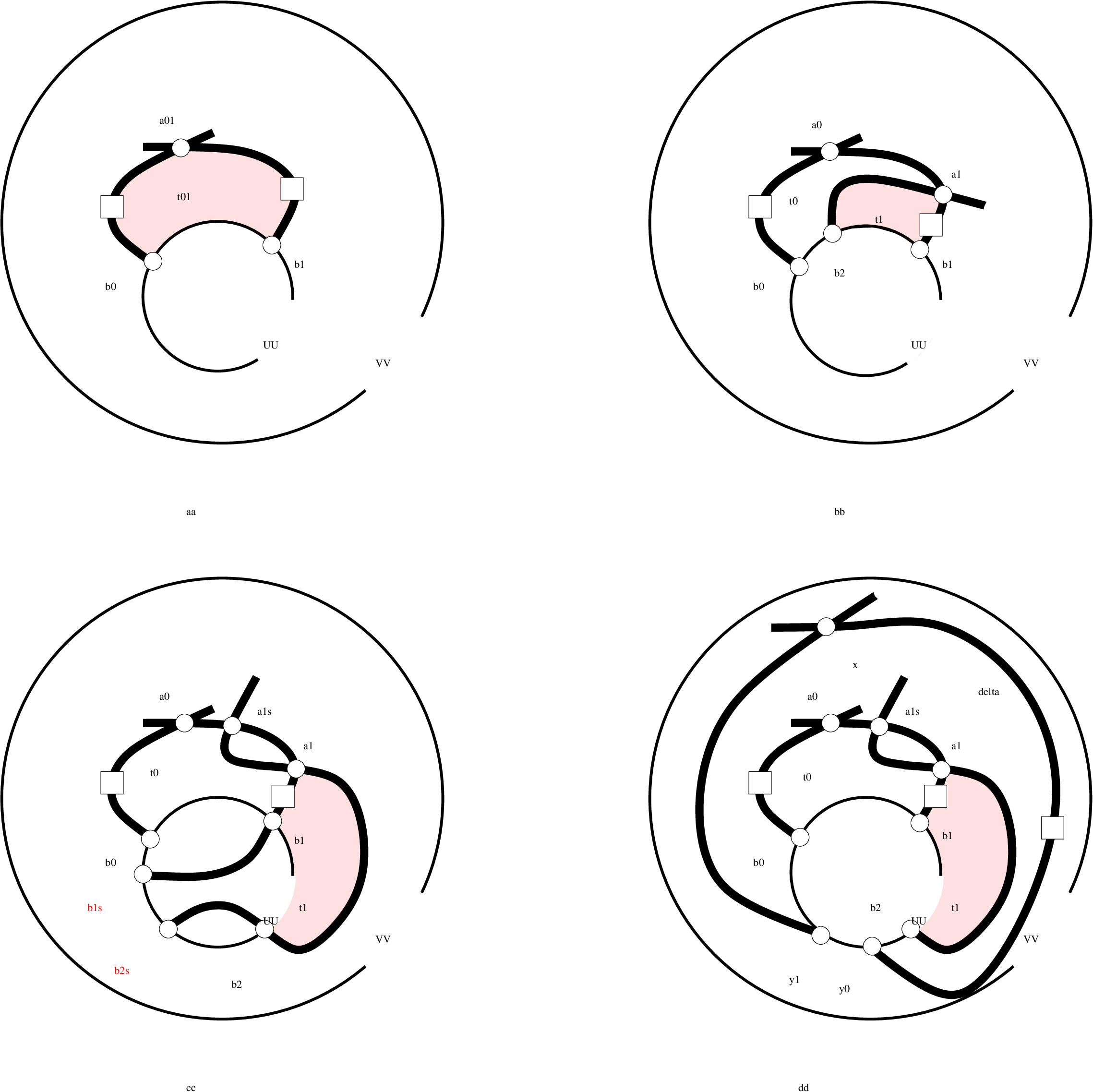}
\caption{Relative positions of an admissible \gtriangle\ $T$ and its derived  admissible \gtriangle~$T'$  
\label{caseoneBis}}
\end{figure} 

Now let $T=ABB'$ be an  admissible \gtriangle\ with apex $A$ and edge side $AB$,
 let $A'$ be the vertex of $\lift{\Gamma}$ that follows 
$B'$ on the side $B'A$ of $T$, and let $T' = A'B'B''$ be the admissible
\gtriangle\ with apex $A'$ and with edge side $A'B'$.
A simple use of the Jordan curve theorem leads to the  following three lemmas concerning 
the relative positions $T$  with respect to  $T'$, possibly in the presence  
of a third admissible \gtriangle\ $\Delta$ enclosing~$T$.  Fig.~\ref{caseoneBis}a, \ref{caseoneBis}b, \ref{caseoneBis}c, and \ref{caseoneBis}d illustrate these lemmas.

\begin{lemma} \label{positionthr} 
Assume that $T=T'$. Then $T$ is a triangular two-cell of $\widetilde{\Gamma}$. \qed
\end{lemma}

\begin{lemma}
\label{positiontwo}
Assume that $T\neq T'$ and that $B'' \in \base{T}$.  Then 
$\gcurve(B'')$ crosses the side $B'A$ of $T$ exactly once (at $A'$) and 
$\base{T'}$ is contained in $\base{T}$. \qed
\end{lemma}

\begin{lemma}\label{positionone}
Assume that $T\neq T'$ and that $B'' \in \cobase{T}$.  Then 
\begin{enumerate}
\item $\gcurve(B')$ and $\gcurve(B'')$ cross twice (at $A'$ and $\twin{A'}$) on the side $B'A$ of $T$, 
\item $\base{T}$ and $\base{T'}$ are disjoint, 
\item $\twin{B}'$ and $\twin{B}'' \in \cobase{T} \cap \cobase{T'}$, and 
\item walking along $\cobase{T} \cap \cobase{T'}$ from $B''$ to $B$ we encounter successively the points $\twin{B}''$ and $\twin{B}'$. 
\end{enumerate} 
Furthermore if  $\Delta$ encloses $T$ then $\Delta$ encloses $T'$. \qed
\end{lemma}

Consider now the sequence of admissible \gtriangles\ 
$T_0,T_1,T_2,\ldots $  
defined inductively by $T_0=T$ and $T_{k+1} = T'_k$  for $k\geq 0$. 
A simple combination of Lemmas~\ref{positionone}, and ~\ref{positiontwo} leads to the conclusion that the sequence $T_k$ is stationary.
According to Lemma~\ref{positionthr} the pumping lemma  follows. 
\end{proof}

\begin{remark} The  proof of the pumping lemma involves only subarrangements of size at most $6$; cf. Fig.~\ref{caseoneBis}d. 
A slightly more careful analysis shows that only the subarrangements of size at most $5$ are relevant.  
This key feature is exploited in Section~\ref{secfiv} to extend the classical LR characterization of chirotopes of arrangements of pseudolines 
to chirotopes of arrangements of double pseudolines; cf. Theorem~\ref{theoADP}.
\end{remark}
\begin{remark} \label{bravo}
The pumping lemma  asserts that a certain instance of the problem of sweeping a spherical arrangement of pseudocircles crossing pairwise in $0$ or $2$ points 
has a positive answer. 
This problem is studied in full generality by J.~Snoeyink and J. Hershberger~\cite{sh-sac-91} and, as pointed to us by an anonymous referee,  the
pumping lemma can be derived from their results. (It is necessary to use both Theorem 3.1  and Lemma 5.2  of~\cite{sh-sac-91}.)
\end{remark}

We are now ready for the proof of our homotopy theorem.

\setcounter{theorem}{2}
\begin{theorem}
Any two arrangements of double pseudolines of the same size and living in the same cross surface
are homotopic via a finite sequence of mutations followed by an isotopy; in other words,  mutation graphs are connected. 
\end{theorem}
\setcounter{theorem}{12}
\begin{proof}
Clearly any arrangement of double pseudolines is homotopic, via a finite sequence of splitting mutations, 
to a simple one. 
Now by a repeated application of the  pumping lemma
we see easily that any simple arrangement of double pseudolines 
is homotopic, via a finite sequence of mutations, to a simple thin one.
It remains to use Lemma~\ref{EmbeddingSimple}, Lemma~\ref{relevementdesmutations} and the homotopy theorem of Ringel for arrangements of pseudolines
to conclude the proof.
 \end{proof}

For the sake of completeness, we mention that one of the standard ways to prove the Ringel's homotopy theorem for arrangements of pseudolines is to 
show that any arrangement of pseudolines is homotopic, via a finite sequence of mutations 
followed by an isotopy, to a cyclic arrangement of pseudolines using {\it avant la lettre} the following specialization 
to arrangements of pseudolines of our pumping lemma for arrangements of double pseudolines (think of a pair of pseudolines as a pinched double pseudoline).
\begin{lemma}[Pumping Lemma for Arrangements of Pseudolines] \label{merging}
Let $\Gamma$ be a simple arrangement of pseudolines, let $\gamma, \gamma'\in \Gamma$, $\gamma \neq \gamma'$,
 and let $M(\gamma, \gamma')$ be one of the two two-cells of the subarrangement $\{\gamma,\gamma'\}$.  Assume that there exists a vertex of
the arrangement
$\Gamma$ lying in  $M(\gamma, \gamma')$.
Then there exists a triangular two-cell of the arrangement $\Gamma$ contained in
$M(\gamma,\gamma')$ with a side supported by $\gamma'$ and a vertex contained in $M(\gamma,\gamma')$. 
\end{lemma}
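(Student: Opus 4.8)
The plan is to read this statement as the specialization to pseudolines of the Pumping Lemma for arrangements of double pseudolines (Lemma~\ref{mainresult}), and to rerun the proof of that lemma in a lighter form: here the region $M=M(\gamma,\gamma')$ is already a disk, so the orientation two-covering used in the proof of Lemma~\ref{mainresult} is unnecessary, and the pumping is performed directly in $M$, with $\gamma'$ taking over the r{\^o}le of the boundary curve $U$ there.

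First I would fix the combinatorial model. Since $\gamma$ and $\gamma'$ meet in a single point $p=\gamma\cap\gamma'$, the complement of $\gamma\cup\gamma'$ is the union of two two-cells, each an open disk, and $M$ is one of them, bounded by an arc $A_\gamma=\gamma\setminus\{p\}$ and an arc $A_{\gamma'}=\gamma'\setminus\{p\}$. After deleting the pinch point $p$ --- through which, by simplicity of $\Gamma$, no curve of $\Gamma$ other than $\gamma,\gamma'$ passes --- the set $M\cup A_\gamma\cup A_{\gamma'}$ is homeomorphic to a closed band, which plays here the r{\^o}le of the cylinder $C$ of the proof of Lemma~\ref{mainresult}. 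Each $\delta\in\Gamma\setminus\{\gamma,\gamma'\}$ meets $\gamma$ once and $\gamma'$ once, neither crossing being $p$; so $\delta$ meets $A_\gamma\cup A_{\gamma'}$ transversally exactly twice, once on each arc, and thus contributes to $M$ a single chord --- a \emph{$\gamma'$-curve} --- from a point of $A_\gamma$ to a point $B$ of $A_{\gamma'}$, which I denote $c(B)$. Two $\gamma'$-curves cross at most once, and the hypothesis that some vertex of $\Gamma$ lies in $M$ says exactly that two $\gamma'$-curves cross --- necessarily inside $M$, since the crossing point of two pseudolines cannot lie on a third.

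Next I would import, with the obvious changes, the final part of the proof of Lemma~\ref{mainresult}. Call a triangular face of the decomposition of the band by a pair of crossing $\gamma'$-curves a \emph{$\gamma'$-triangle} when one of its sides --- its \emph{base side} --- is carried by $A_{\gamma'}$; its \emph{apex} is then the crossing point of the two $\gamma'$-curves, which by simplicity is a vertex of $\Gamma$ in the open disk $M$; and the $\gamma'$-triangle is \emph{admissible} if one of the two sides incident to its apex is an edge of $\Gamma$. As in the proof of Lemma~\ref{mainresult}, from a vertex of $\Gamma$ in $M$ one first gets a $\gamma'$-triangle $T=ABB'$ with apex $A$, and then, letting $A'$ be the vertex of $\Gamma$ that follows $B'$ on the side $B'A$ of $T$, an admissible $\gamma'$-triangle $T'=A'B'B''$ with edge side $A'B'$; so admissible $\gamma'$-triangles exist. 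The relative-position lemmas~\ref{positionone},~\ref{positiontwo},~\ref{positionthr} and~\ref{delta}, describing the placement of an admissible $\gamma'$-triangle $T$ with respect to its derived one $T'$ and, in~\ref{delta}, to a containing admissible $\gamma'$-triangle, hold here with the same Jordan-curve proofs --- and are in fact lighter, there being no double cover and hence no ``twin'' vertices $B_*$ to track. Combining Lemmas~\ref{positionone},~\ref{positiontwo} and~\ref{delta}, the sequence of admissible $\gamma'$-triangles $T_0,T_1,T_2,\dots$ given by $T_0=T$, $T_{k+1}=T'_k$ is stationary; by Lemma~\ref{positionthr} its stationary term is a triangular two-cell of $\Gamma$, contained in $M$, with base side on $\gamma'$ and with apex a vertex of $\Gamma$ lying in $M$. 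That is the required triangular two-cell.

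The only real work --- and hence the main obstacle --- is the one already faced in Lemma~\ref{mainresult}: proving the four relative-position lemmas and combining them to show the derived sequence is stationary. The remaining points (the reduction to a band, the description of the $\gamma'$-curves, the existence of an admissible $\gamma'$-triangle, and the fact that apexes never lie on $\partial M$, which delivers the ``vertex contained in $M(\gamma,\gamma')$'' clause) are routine consequences of the simplicity of the arrangement and the Jordan curve theorem.
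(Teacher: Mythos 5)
Your proof is sound, but it is worth noting that the paper itself does not prove this lemma at all: its ``proof'' is the single line \emph{``The proof is standard and will not be repeated here; see e.g.~\cite{c-mor3a-82}''}. What you have written is instead a careful implementation of the hint given in the paragraph immediately preceding the lemma, namely that the statement is the specialization of the Pumping Lemma for double pseudolines (Lemma~\ref{mainresult}) obtained by viewing the pair $\{\gamma,\gamma'\}$ as a pinched double pseudoline. Your reduction is correct in all the places where it needs checking: transversality at $\delta\cap\gamma$ forces each $\delta\in\Gamma\setminus\{\gamma,\gamma'\}$ to contribute exactly one chord to $M$; simplicity keeps every chord endpoint off the pinch point $p$ and every chord crossing off $\partial M$, which is what delivers the ``vertex contained in $M(\gamma,\gamma')$'' clause; and the band $M\cup A_\gamma\cup A_{\gamma'}$ is a faithful, cover-free substitute for the cylinder $C$ of Lemma~\ref{mainresult}. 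What the specialization buys is exactly what you say: no orientation two-covering, hence no twin points $B_*$, so Lemmas~\ref{positionone}--\ref{delta} must be \emph{restated} (their conclusions about $\twin{B}'$, $\twin{B}''$ become vacuous) and not merely reproved, but the Jordan-curve arguments and the stationarity of the derived sequence $T_0,T_1,T_2,\ldots$ go through unchanged. Since the paper only sketches that combination step even in the double-pseudoline case, your deferral to it leaves you at the same level of rigor as the source; the one thing I would make explicit is the degenerate case $A'=A$ (the side $B'A$ is already an edge of $\Gamma$), which is where Lemma~\ref{positionthr} is actually invoked.
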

\begin{proof} The proof is standard and will not be repeated here; see e.g.~\cite{c-mor3a-82}.\end{proof}
\begin{remark} The proof of the pumping lemma for arrangements of pseudolines involves only subarrangements of size $4$. 
This observation will be used in Section~\ref{secfiv} to give a new proof of the classical LR characterization of chirotopes of 
indexed arrangements of oriented pseudolines. (For historical comments on the various proofs of the LR characterization of chirotopes of       
indexed arrangements of oriented pseudolines and, more generaly, pseudo-hyperplanes, we refer to~\cite{b-com-06,bkms-trom-05,bms-fltrt-01}.)
\end{remark}

\begin{remark} At this point it is natural to ask if the space of one-extensions of an arrangement of double pseudolines is connected under mutations, 
as is the space of 
one-extensions of an arrangement of pseudolines~\cite{g-pa-04,blswz-om-99,sz-esom-93}. (A one-extension of an arrangement of $n$ pseudolines $\Gamma$ is a arrangement of 
$n+1$ pseudolines $\Gamma'$ of which $\Gamma$ is a sub-arrangement.)
A positive answer to that question, providing the key to a practical enumeration algorithm for simple arrangements of at most $5$ double pseudolines, 
is given in~\cite{fpp-nsafd-11}. 
The proof presented in~\cite{fpp-nsafd-11} of this connectedness result is based on an enhanced version of the pumping lemma which says that, given a double pseudoline $\gamma$ of an 
arrangement $\Gamma$ with the property that the vertices of the arrangement $\Gamma$ lying on the curve $\gamma$ are ordinary,
either there are (at least) two fans contained in the crosscap side of the double pseudoline $\gamma$ with base sides supported by $\gamma$ or there are no vertices of the arrangement 
contained in the crosscap side of $\gamma$. The enhanced version of the pumping lemma can be easily proved using the geometric representation theorem for 
arrangements of double pseudolines.
It will be interesting to have a direct proof of it since, as explained in~\cite{fpp-nsafd-11}, the geometric representation theorem for 
arrangements of double pseudolines can be derived from it. 
\end{remark}
\subsection{Martagons}\label{enumeration}
The exhaustive list of isomorphism classes of simple arrangements of
three double pseudolines is depicted in Fig.~\ref{fulllist}.  
This list was first established by hand, using the connectedness of the corresponding mutation graph. 
The adjacency list representation of this graph is the following:
$$ \begin{array}{lll}
\name{04} & \text{adjacent to} & \name{07} \\
\name{07} & :& \name{04}, \name{15},\name{18}\\
\name{15} & :& \name{07},\name{25_1},\name{25_2}\\
\name{18} & :& \name{07},\name{25_1},\name{37} \\
\name{22} & : & \name{25_2}\\
\name{25_1}& :&\name{15},\name{18},\name{32},\name{33},\name{43} \\
\name{25_2}& :&\name{15},\name{22},\name{33},\name{36}\\
\name{32} & :&\name{25_1}\\
\name{33} & :& \name{25_1},\name{25_2}\\
\name{36}&:&\name{25_2}\\
\name{37}& :& \name{18},\name{43},\name{64}\\
\name{43}& :& \name{25_1},\name{37}\\
\name{64}& :& \name{37}
\end{array}
$$
where $\name{\alpha}$ denotes the arrangement whose $2$-sequence of its numbers of $2$-cells of size~2 and~3 is $\alpha$.
Such a sequence identifies a unique isomorphism class of arrangements, with one exception: the
sequence $25$ identifies two isomorphism classes (which have also the same numbers of two-cells of size $4$,$5$,$6$, etc).
To distinguish them we use the sequences  $25_1$ and $25_2$, where the subscript stands for the order of the automorphism group of the corresponding arrangement. 
The orders of the automorphism groups of the arrangements are reported at the bottom right of the arrangements in Fig.~\ref{fulllist}. 
\begin{figure}[!htb]
\def\factor{0.200019315015000023}
\centering
\psfrag{8}{\small 8} \psfrag{7}{\small 7} \psfrag{6}{\small 6} \psfrag{5}{\small 5} \psfrag{4}{\small 4} \psfrag{3}{\small 3} \psfrag{2}{\small 2}
\psfrag{8}{} \psfrag{7}{} \psfrag{6}{} \psfrag{5}{} \psfrag{4}{} \psfrag{3}{} \psfrag{2}{}
\psfrag{A}{$04\scriptstyle9000$}
\psfrag{B}{$07\scriptstyle3300$}
\psfrag{C}{$18\scriptstyle1030$}
\psfrag{D}{$25\scriptstyle2310$} 
\psfrag{F}{$07\scriptstyle3300$} 
\psfrag{G}{$37\scriptstyle0003$}
\psfrag{H}{$15\scriptstyle4300$}
\psfrag{I}{\textcolor{red}{$252310$}}
\psfrag{J}{$43\scriptstyle3021$}
\psfrag{K}{$25\scriptstyle2310$}
\psfrag{L}{$33\scriptstyle3310$}
\psfrag{M}{$32\scriptstyle6020$}
\psfrag{N}{$25\scriptstyle2310$}
\psfrag{Nstar}{$25\scriptstyle2310$}
\psfrag{O}{$32\scriptstyle6020$}
\psfrag{P}{$22\scriptstyle8010$}
\psfrag{Q}{$25\scriptstyle2310$}
\psfrag{R}{$36\scriptstyle0040$}
\psfrag{Z}{$64{\scriptstyle 00003}$}
\psfrag{i1}{$A$}
\psfrag{i2}{$B$}
\psfrag{i3}{$C$}
\psfrag{o24}{24}
\psfrag{o12}{12}
\psfrag{o2}{2}
\psfrag{o4}{4}
\psfrag{o6}{6}
\psfrag{o1}{1}
\psfrag{dummy}{}
\psfrag{A}{$\name{04\scriptstyle9000}$}
\psfrag{B}{$\name{07\scriptstyle3300}$}
\psfrag{C}{$\name{18\scriptstyle1030}$}
\psfrag{D}{$\name{25\scriptstyle2310}$} 
\psfrag{F}{$\name{07\scriptstyle3300}$} 
\psfrag{G}{$\name{37\scriptstyle0003}$}
\psfrag{H}{$\name{15\scriptstyle4300}$}
\psfrag{I}{\textcolor{red}{$\name{252310}$}}
\psfrag{J}{$\name{43\scriptstyle3021}$}
\psfrag{K}{$\name{25\scriptstyle2310}$}
\psfrag{L}{$\name{33\scriptstyle3310}$}
\psfrag{M}{$\name{32\scriptstyle6020}$}
\psfrag{N}{$\name{25\scriptstyle2310}$}
\psfrag{Nstar}{$\name{25\scriptstyle2310}$}
\psfrag{O}{$\name{32\scriptstyle6020}$}
\psfrag{P}{$\name{22\scriptstyle8010}$}
\psfrag{Q}{$\name{25\scriptstyle2310}$}
\psfrag{R}{$\name{36\scriptstyle0040}$}
\psfrag{Z}{$\name{64{\tiny\scriptstyle 00003}}$}

\psfrag{A}{$\name{04}$}
\psfrag{B}{$\name{07}$}
\psfrag{C}{$\name{18}$}
\psfrag{D}{$\name{25}$} 
\psfrag{F}{$\name{07}$} 
\psfrag{G}{$\name{37}$}
\psfrag{H}{$\name{15}$}
\psfrag{I}{\textcolor{red}{$\name{252310}$}}
\psfrag{J}{$\name{43}$}
\psfrag{K}{$\name{25}$}
\psfrag{L}{$\name{33}$}
\psfrag{M}{$\name{32}$}
\psfrag{N}{$\name{25_2}$}
\psfrag{Nstar}{$\name{25_1}$}
\psfrag{O}{$\name{32}$}
\psfrag{P}{$\name{22}$}
\psfrag{Q}{$\name{25}$}
\psfrag{R}{$\name{36}$}
\psfrag{Z}{$\name{64}$}

\includegraphics[width = \factor\linewidth]{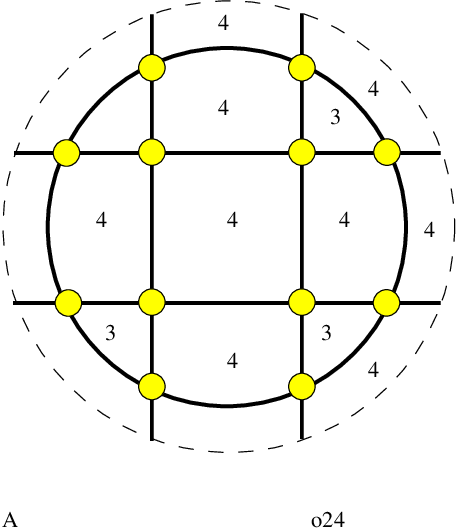}
\includegraphics[width = \factor\linewidth]{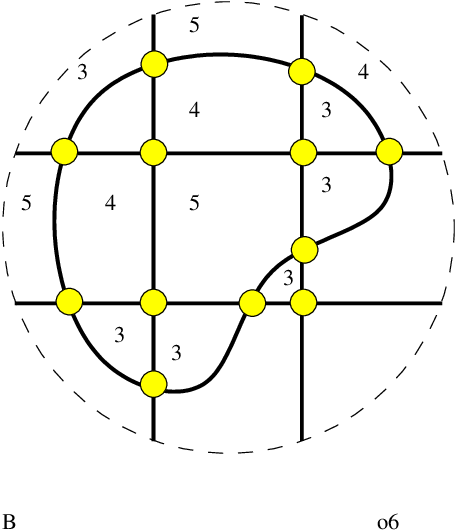}
\bigskip
\includegraphics[width = \factor\linewidth]{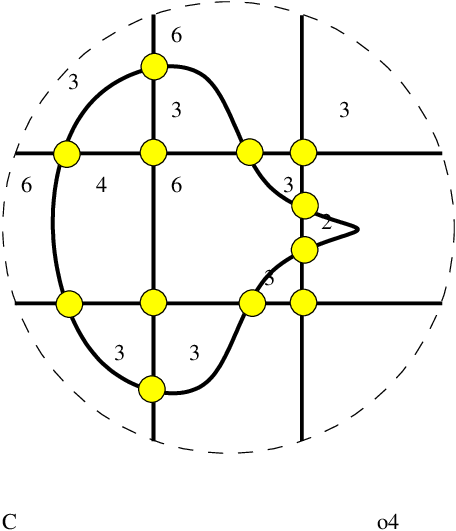}
\includegraphics[width = \factor\linewidth]{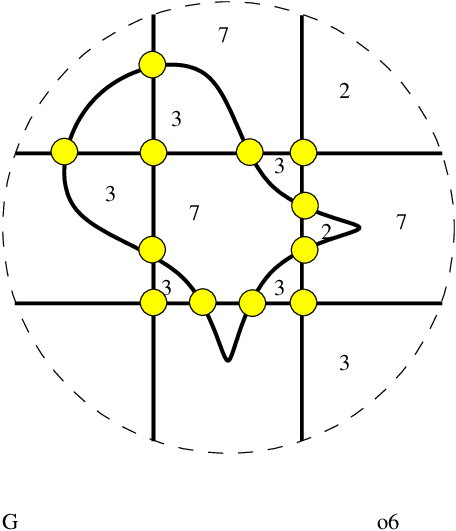}
\bigskip
\includegraphics[width = \factor\linewidth]{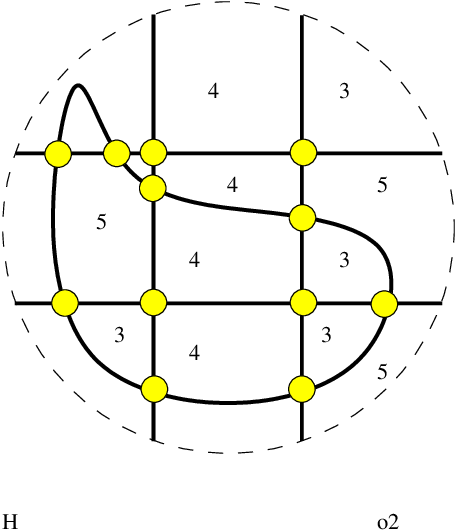}
\includegraphics[width = \factor\linewidth]{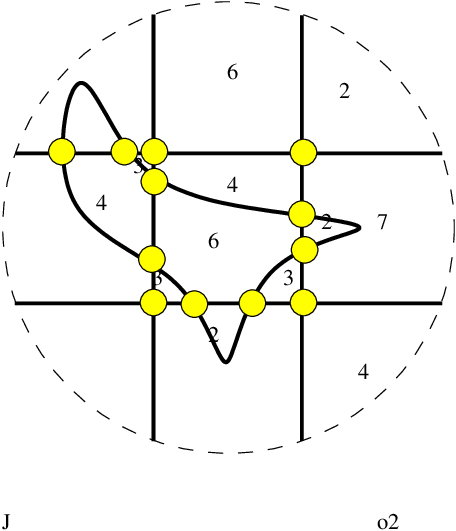}
\includegraphics[width = \factor\linewidth]{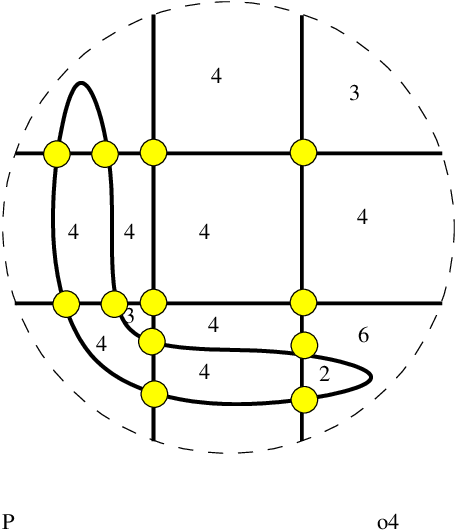}
\includegraphics[width = \factor\linewidth]{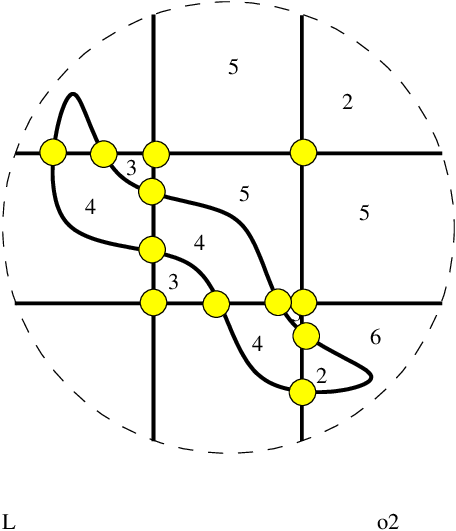}
\includegraphics[width = \factor\linewidth]{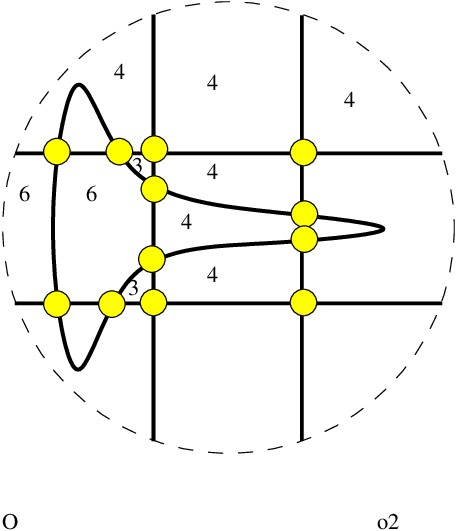}
\bigskip
\includegraphics[width = \factor\linewidth]{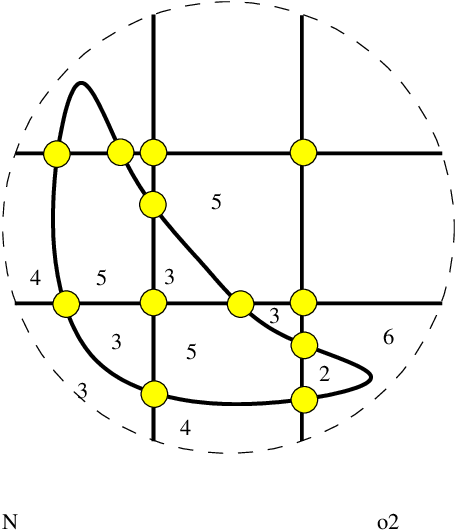}
\includegraphics[width = \factor\linewidth]{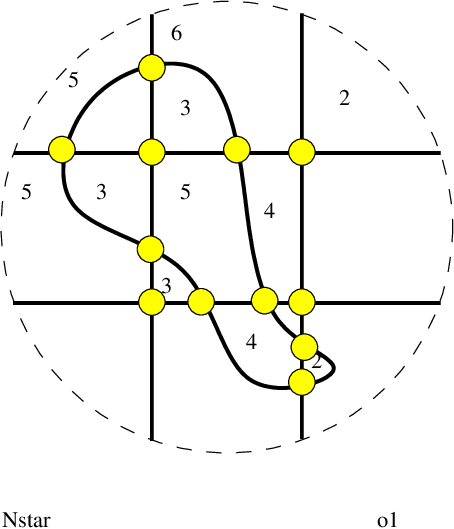}
\includegraphics[width = \factor\linewidth]{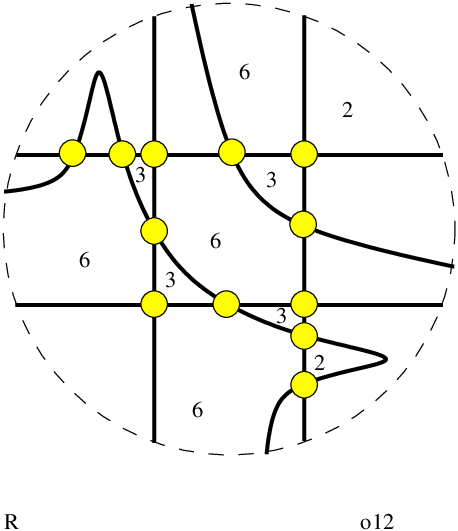}
\includegraphics[width = \factor\linewidth]{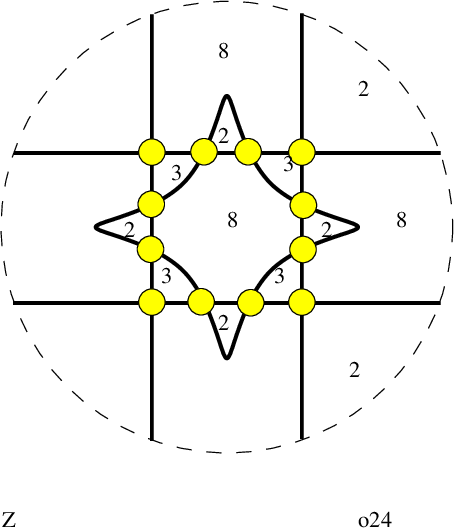}
\caption{Representatives of the thirteen isomorphism classes of simple arrangements of three double pseudolines.
In this figure each isomorphism class is labeled at its bottom left with a symbol to name the arrangement
and at its bottom right with the order of its automorphism group\label{fulllist}}
\end{figure}
Thus there are $13$ isomorphism classes of arrangements of three double pseudolines and 
$216$ isomorphism classes of indexed arrangements of three oriented double pseudolines on a given set of three indices (and not $214$ as indicated by error in~\cite{fpp-nsafd-11}). 
This latter number is computed as the sum 
$$\sum_{k\geq 1} \frac{3!2^3}{k} g_k  
$$
where $g_k$ is the number of arrangements with group of automorphisms  of order~$k$.  
For the number of isomorphism classes of arrangements of four double pseudolines and for the number of isomorphism classes of simple arrangements of five double pseudolines we refer to~\cite{fpp-nsafd-11}.

Using the exhaustive list of simple arrangements of three double pseudolines we now  compute the martagons on three and four double pseudolines.
Recall the definition of martagons. 
An arrangement of $n\geq 3$ double pseudolines $\Gamma$ is called  a {\it martagon with respect to} a double pseudoline $\gamma$ of $\Gamma$ if  
the vertices of the arrangement on the curve $\gamma$ are ordinary 
and if for any $\gamma' \in \Gamma$, $\gamma'\neq  \gamma$, no pair of distinct elements  $v,v'$  of  
\begin{equation}
\bigcup_{\gamma''\in \Gamma: \gamma'' \neq \gamma', \gamma}\gamma''\cap \gamma
\end{equation}
is  separated on the curve $\gamma$ by a pair of distinct elements $u,u'$ of $\gamma' \cap \gamma$;  in other words, the four intersection points of $\gamma'$ and $\gamma$ 
are ordinary and appear consecutively on the curve $\gamma$.
For example the arrangements $\name{22}$ and $\name{32}$ of Fig.~\ref{fulllist} are martagons with respect to the curved double pseudoline. 
Fig.~\ref{FinalMartagon} depicts examples of martagons 
\begin{figure}[!htb]
\centering
\psfrag{4}{} \psfrag{6}{} \psfrag{3}{} \psfrag{2}{}
\psfrag{O}{$326020$} \psfrag{P}{$228010$}
\psfrag{PNS1}{$\Gamma^{1}(X,Y,Z)$}
\psfrag{PNS10}{$\Gamma^{10}(X,Y,Z)$} \psfrag{PNS2}{$\Gamma^2(X,Y,Z)$}
\psfrag{PNS11}{$\Gamma^{11}(X,Y,Z)$}
\psfrag{U}{$\uu$} \psfrag{V}{$\vv$} \psfrag{W}{$\ww$} \psfrag{OO}{$\OO$}
\psfrag{U}{$X$} \psfrag{V}{$Y$} \psfrag{W}{$Z$}
\psfrag{one}{$\name{22}$}
\psfrag{two}{$\name{32}$}
\psfrag{thr}{$M_1$} 
\psfrag{fou}{$M_2$} 
\psfrag{oneor}{$4$}
\psfrag{twoor}{$2$}
\psfrag{thror}{$6$} 
\psfrag{fouor}{$2$} 
\includegraphics[width = 0.9999 \linewidth]{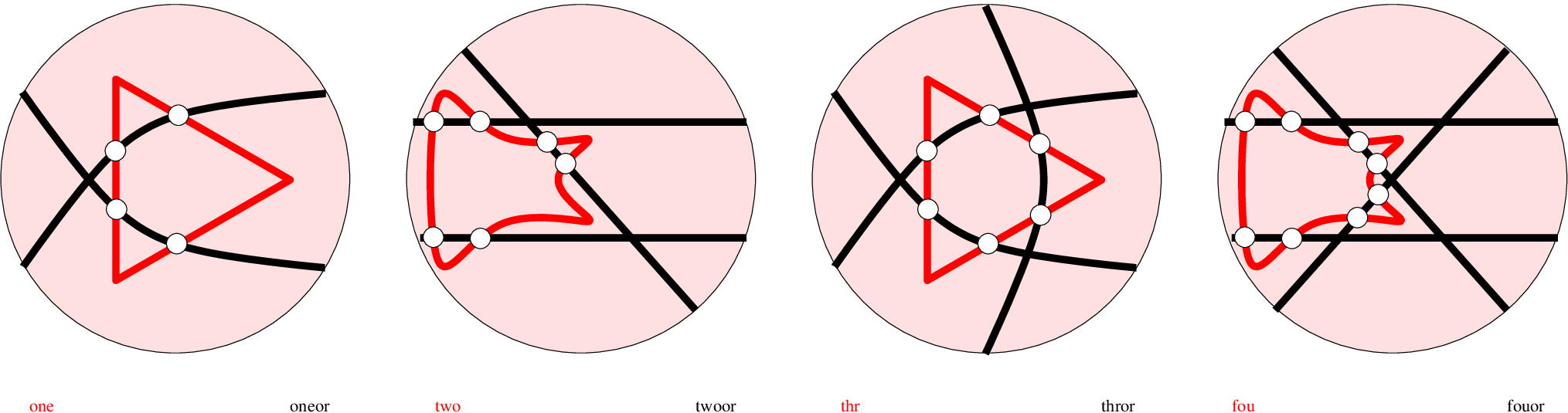}
\caption{Martagons with respect to the double pseudoline that do not intersect the dashed pseudoline, red in colored pdf, on three and four double pseudolines.
In this figure each double pseudoline whose crosscap side is free of vertices is  
simply represented by one of its core pseudolines
\label{FinalMartagon}}
\end{figure}
on three and four double pseudolines.
Observe that the subarrangements of size three of $M_1$ are $\name{22}$ (3 times) and $\name{04}$,
 and those of $M_2$ are $\name{22}$  and $\name{32}$, both  2 times.  The reader will have no difficulties adding to these examples martagons of arbitrary  size.
We leave the verification of the following lemma to the reader.
\begin{lemma} \label{martagonthr}
The only martagons on three and four double pseudolines are the arrangements of Fig.~\ref{FinalMartagon}. \qed
\end{lemma}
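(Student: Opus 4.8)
The plan is to carry out a finite, essentially mechanical case analysis supported by the exhaustive list of Figure~\ref{fulllist}. First I would observe that being a martagon with respect to $\gamma$ is a condition that only involves, for each pair $\gamma',\gamma''$ distinct from $\gamma$, the restriction of the arrangement to the three curves $\{\gamma,\gamma',\gamma''\}$: the condition says that the four points of $\gamma''\cap\gamma$ are consecutive on $\gamma$ and are not separated on $\gamma$ by the points of $\gamma'\cap\gamma$, and symmetrically with $\gamma'$ and $\gamma''$ exchanged. Hence an arrangement $\Gamma$ on $n$ curves is a martagon with respect to $\gamma$ if and only if every $3$-subarrangement containing $\gamma$ is a martagon with respect to $\gamma$ \emph{and} the pairwise "consecutivity patterns" of the various $\gamma''\cap\gamma$ along $\gamma$ are mutually compatible. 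So the first reduction is: classify which of the thirteen arrangements of Figure~\ref{fulllist} are martagons with respect to one of their three double pseudolines, and record, for each such arrangement and each admissible choice of $\gamma$, the cyclic word on $\gamma$ describing how the two blocks of four points interleave (here they do not interleave, by the martagon condition, so the datum is just a cyclic order of two disjoint $4$-blocks, i.e.\ trivial up to relabelling).

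Second, I would go through the list. By inspection of Figure~\ref{fulllist} together with the definition, an arrangement on three double pseudolines is a martagon with respect to $\gamma$ exactly when the vertices on $\gamma$ are all simple (so $\gamma$ meets each of the other two curves in four transversal points and no triple point lies on $\gamma$) and the four points of one other curve on $\gamma$ are consecutive. Checking the thirteen representatives, the only ones satisfying this for a suitable $\gamma$ are $\name{22}$ and $\name{32}$, as already asserted in the text just before the lemma; this gives the "three double pseudolines" half of the statement and simultaneously pins down $M_1$ and $M_2$ among the four-curve martagons as being built from these two blocks.

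Third, for four double pseudolines, let $\Gamma=\{\gamma,\gamma_1,\gamma_2,\gamma_3\}$ be a martagon with respect to $\gamma$. Each of the three subarrangements $\{\gamma,\gamma_i,\gamma_j\}$ must then be a martagon with respect to $\gamma$, hence isomorphic (as an indexed arrangement, keeping track of which curve plays the r\^ole of $\gamma$) to $\name{22}$ or $\name{32}$. This is a very rigid constraint: combining it with the observation of the text that the three-element subarrangements of $M_1$ are $\name{22}$ (three times) and $\name{04}$ — wait, note that $\name{04}$ is \emph{not} a martagon, so in fact what one must check is that when we delete $\gamma$ from $M_1$ we may get a non-martagon like $\name{04}$, which is allowed since the martagon condition only constrains subarrangements \emph{containing} $\gamma$. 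So the precise bookkeeping is: among the four $3$-subarrangements of $\Gamma$, the three that contain $\gamma$ are each $\name{22}$ or $\name{32}$, and the fourth (namely $\{\gamma_1,\gamma_2,\gamma_3\}$) is unconstrained. I would then enumerate the ways to glue three copies of $\name{22}$/$\name{32}$ along the common curve $\gamma$ so that, along $\gamma$, the $4$-blocks $\gamma_1\cap\gamma$, $\gamma_2\cap\gamma$, $\gamma_3\cap\gamma$ are pairwise non-interleaving — i.e.\ they occur as three disjoint consecutive arcs in some cyclic order — and so that the resulting gluing is realizable as an honest arrangement of double pseudolines on the cross surface. Realizability is where the list of Figure~\ref{fulllist} re-enters: each such candidate gluing, restricted to any three of its curves, must appear in that list, and conversely one checks by hand (this is the verification "left to the reader") that exactly two gluings survive, namely $M_1$ and $M_2$ of Figure~\ref{FinalMartagon}, with the stated automorphism group orders $6$ and $2$ and the stated $3$-subarrangement profiles ($C_{22}$ three times and $C_{04}$ for $M_1$; $C_{22}$ and $C_{32}$ each twice for $M_2$).

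The main obstacle is the realizability step: it is not enough that all $3$-subarrangements be admissible and all along-$\gamma$ patterns be compatible; one must verify that a global arrangement of four double pseudolines on the cross surface actually exists with those data, and that no two distinct such globals are isomorphic. In practice I would discharge this exactly as the text suggests — by drawing the finitely many candidates (starting from $\name{22}$ or $\name{32}$ on $\{\gamma,\gamma_1,\gamma_2\}$ and adding $\gamma_3$ in all ways consistent with the block constraint on $\gamma$ and with the $3$-subarrangement list), using the Jordan–curve / cellular–decomposition rigidity of Theorem~\ref{theoone} to prune impossible configurations and to see that the surviving pictures are pairwise non-isomorphic. Since everything is finite and small, this is a bounded hand computation rather than a conceptual difficulty, which is why the lemma is stated with "\qed" and the verification delegated; the proof proposal is therefore: (1) reduce the martagon property to its $3$-subarrangements containing $\gamma$ plus a non-interleaving condition on the blocks along $\gamma$; (2) read off from Figure~\ref{fulllist} that the only $3$-curve martagons are $\name{22}$ and $\name{32}$; (3) enumerate and prune the four-curve gluings, obtaining exactly $M_1$ and $M_2$.
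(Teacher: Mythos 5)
The paper itself offers no argument for this lemma beyond ``we leave the verification of the following lemma to the reader,'' so your proposal has to be measured against the verification the authors evidently intend: inspect the thirteen simple arrangements of Figure~\ref{fulllist} to isolate the three-curve martagons, then assemble and prune the four-curve candidates. Your outline is exactly that, and your steps (2) and (3) are in substance the intended computation (step (2) can be confirmed directly from the disk-type side cycles listed in Figures~\ref{fulllistdecored0443WCY} and~\ref{fulllistdecored2264WCY}: among the thirteen, only $C_{22}$ and $C_{32}$ possess a curve along which the two blocks of four intersection points are consecutive).

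Two points, however, need repair. First, step (1) reduces the martagon property to ``every $3$-subarrangement containing $\gamma$ is a martagon'' plus pairwise compatibility, and later you write ``pairwise non-interleaving --- i.e.\ they occur as three disjoint consecutive arcs.'' That ``i.e.''\ is false: the three blocks $\{1,2,7,8\}$, $\{3,4,5,6\}$, $\{9,10,11,12\}$ on a $12$-point circle are pairwise non-interleaving, yet the first is not a consecutive run, and the martagon condition as defined (for each $\gamma'$, \emph{no} pair from the union over all other curves is separated by $\gamma'\cap\gamma$, equivalently each block is consecutive among \emph{all} intersection points on $\gamma$) fails for it. The condition you must impose in the enumeration is the stronger ``each block is a consecutive run''; you do eventually use it, but the stated equivalence with pairwise data is wrong and, taken literally, would admit spurious four-curve candidates. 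Second, your realizability step requires every $3$-subarrangement of a candidate to appear in Figure~\ref{fulllist}, which lists only \emph{simple} arrangements. For the three subarrangements containing $\gamma$ this is justified (a non-simple vertex of a $3$-arrangement is a triple point, hence lies on $\gamma$, contradicting the simplicity of the vertices on $\gamma$), but the definition of a martagon constrains only the vertices on $\gamma$, so the fourth subarrangement $\{\gamma_1,\gamma_2,\gamma_3\}$ could a priori carry a triple point and fall outside the list. You first declare it ``unconstrained'' and then nonetheless look it up among the simple arrangements; the non-simple possibility must be ruled out or enumerated separately, and since the lemma is invoked in the proof of Theorem~\ref{theoADP} for arrangements that are not assumed simple, this case cannot simply be dropped.
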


\section{Geometric representation theorem\label{secthr}}
In this section we prove the {\it Geometric Representation Theorem} for double pseudoline arrangements announced in the introduction: 
any arrangement of double pseudolines is isomorphic to the dual arrangement of a configuration of convex bodies. 
The main idea of the proof is to show that the property on the set of arrangements of double pseudolines of being {\it the dual arrangement of a  configuration of convex bodies}, 
is stable under mutations. 
The main ingredients of the proof are 
\begin{enumerate}
\item the connectedness of mutation graphs;
\item the coding of the isomorphism class of an indexed arrangement of oriented double pseudolines by its family of {\it node cycles}; 
\item the {\it raiponces}: we name thus the (appropriately) indexed arrangements of bitangents of 
indexed configurations of oriented convex bodies;
\item the existence of a projective plane extension for any arrangement of pseudolines~\cite{gpwz-atp-94}.
\end{enumerate}

\subsection{Nodes and node cycles of an arrangement}\label{sec:cycles}

Let $\Gamma$ be an indexed arrangement of oriented double pseudolines 
and let 
$v(\Gamma)$ be the indexed family of vertices of $\Gamma$ defined by the following three conditions:
\begin{enumerate}
\item the indexing set of $v(\Gamma)$ is the set of unordered pairs $ij (=ji)$ of signed indices of~$\Gamma$
 with the property that $i \neq \overline{j}$; 
\item the $v_{\alpha}(\Gamma)$, $\alpha \in \setnode{i}{j}$, are the four intersection
points of the double pseudolines $\Gamma_i$ and~$\Gamma_j$;
\item walking along the double pseudoline 
$\Gamma_i$ we encounter the $v_{\alpha}(\Gamma)$, $\alpha \in \setnode{i}{j}$, in cyclic order 
$v_{\nodeone{i}{j}}(\Gamma),v_{\nodetwo{i}{j}}(\Gamma),v_{\nodethr{i}{j}}(\Gamma),v_{\nodefou{i}{j}}(\Gamma)$, 
as illustrated in Fig.~\ref{CodingADP}a. 
 \end{enumerate}
\begin{figure}[!htb]
\psfrag{aa}{(a)}
\psfrag{bb}{(b)}
\psfrag{cc}{(c)}
\psfrag{firs}{$i$}
\psfrag{seco}{$j$}
\psfrag{one}{$\nodeone{i}{j}$}
\psfrag{two}{$\nodetwo{i}{j}$}
\psfrag{thr}{$\nodethr{i}{j}$}
\psfrag{fou}{$\nodefou{i}{j}$}
\psfrag{un}{1}\psfrag{deu}{2}\psfrag{tr}{3}
\psfrag{A}{$A$}\psfrag{B}{$B$}\psfrag{C}{$C$}\psfrag{D}{$D$}
\psfrag{A}{}\psfrag{B}{}\psfrag{C}{$$}\psfrag{D}{$$}
\psfrag{AA}{$\{\nodeone{1}{2},\nodefou{1}{3},\nodetwo{2}{3}\}$}
\psfrag{BB}{$\{\nodetwo{1}{2},\nodeone{1}{3},\nodefou{2}{3}\}$}
\psfrag{CC}{$\{\nodethr{1}{2},\nodetwo{1}{3},\nodethr{2}{3}\}$}
\psfrag{DD}{$\{\nodefou{1}{2},\nodethr{1}{3},\nodeone{2}{3}\}$}
\psfrag{AAA}{$\nodetwo{2}{3}$}
\psfrag{EE}{$\nodeone{1}{2}$}
\psfrag{FF}{$\nodefou{1}{3}$}
\psfrag{AA}{$\nodeone{1}{2},\nodefou{1}{3},\nodetwo{2}{3}$}
\psfrag{BB}{$\nodetwo{1}{2},\nodeone{1}{3},\nodefou{2}{3}$}
\psfrag{CC}{$\nodethr{1}{2},\nodetwo{1}{3},\nodethr{2}{3}$}
\psfrag{DD}{$\nodefou{1}{2},\nodethr{1}{3},\nodeone{2}{3}$}
\centering
\includegraphics[width=0.9975\linewidth]{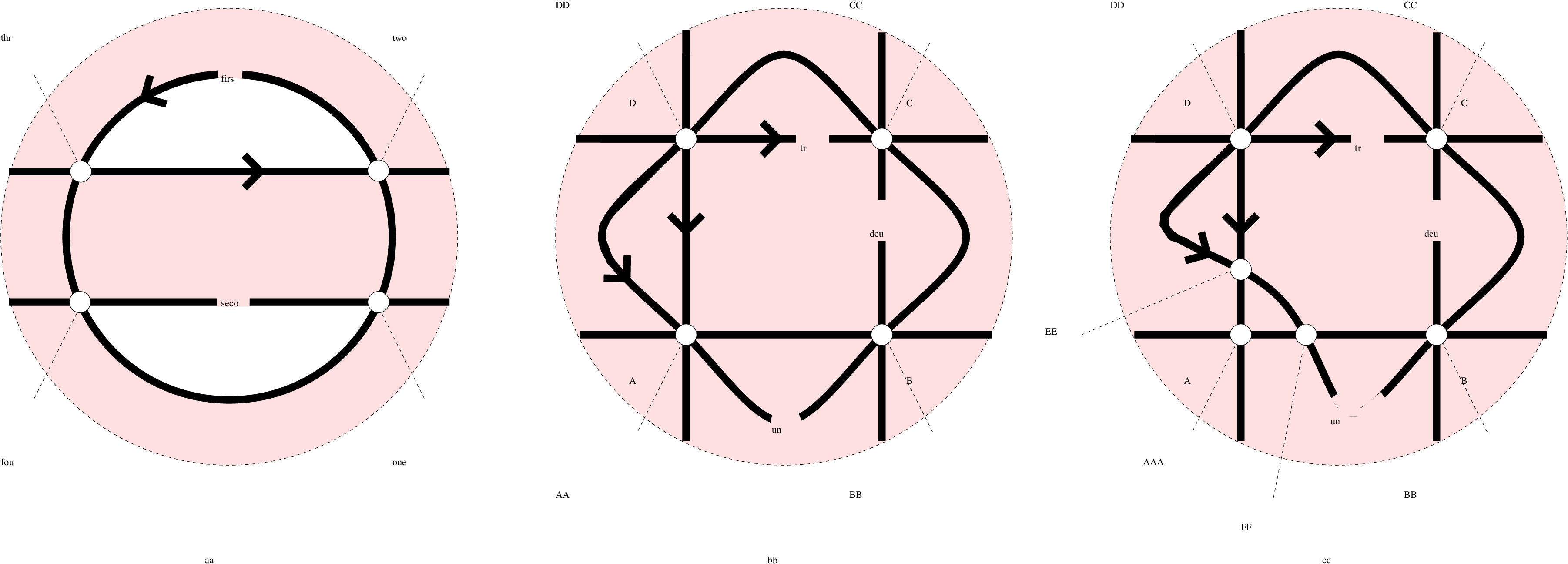}
\caption{Indexed families of vertices of indexed arrangements of two and three oriented double pseudolines \label{CodingADP}}
\end{figure}


The reader will easily check that the family $v(\Gamma)$ is well-defined.

The set of  {\it nodes} of $\Gamma$, denoted  $\QNodes(\Gamma)$, is the quotient of the indexing set of $v(\Gamma)$ under the relation ``to index the same vertex of $\Gamma$''
and the indexed family of {\it node cycles} of $\Gamma$, denoted $\CC(\Gamma)$,  is the indexed family 
of circular sequences of nodes of $\Gamma$ that correspond to the circular sequences of vertices of $\Gamma$ encountered when
walking along the double pseudolines of $\Gamma$, each circular sequence being indexed by the index of the double pseudoline on which is done the walk. 
Note that the cycles assigned to an index and its  complement are reverse to one another.
For example for the hemi-cube arrangement of  Fig.~\ref{CodingADP}b one has 
$\QNodes(\Gamma) = \{A,B,C,D\}$,
 $\CC_1(\Gamma)  =  ABCD$, $\CC_2(\Gamma)  =  ACBD$, and $\CC_3(\Gamma)  =  ABDC$ 
where 
$$
\begin{array}{llll}
A =  & \{\nodeone{1}{2},\nodefou{1}{3},\nodetwo{2}{3}\} & B =& \{\nodetwo{1}{2},\nodeone{1}{3},\nodefou{2}{3}\} \\
C = & \{\nodethr{1}{2},\nodetwo{1}{3},\nodethr{2}{3}\} & D =& \{\nodefou{1}{2},\nodethr{1}{3},\nodeone{2}{3}\}.
\end{array}
$$
Similarly for the arrangement of  Fig.~\ref{CodingADP}c, obtained from the hemi-cube arrangement of  Fig.~\ref{CodingADP}b by a splitting mutation,  one has 
$\QNodes(\Gamma) = \{A,B,C,D,E,F\}$,
 $\CC_1(\Gamma)  =  EFBCD$, $\CC_2(\Gamma)  =  EACBD$, and $\CC_3(\Gamma)  =  AFBDC$ 
where 
\begin{eqnarray*}
A & =& \{\nodetwo{2}{3}\}\\ 
E & =& \{\nodeone{1}{2}\}\\ 
F & =& \{\nodefou{1}{3}\}\\ 
B & =& \{\nodetwo{1}{2},\nodeone{1}{3},\nodefou{2}{3}\} \\
C & =& \{\nodethr{1}{2},\nodetwo{1}{3},\nodethr{2}{3}\}\\ 
D & =& \{\nodefou{1}{2},\nodethr{1}{3},\nodeone{2}{3}\}.
\end{eqnarray*}
The family $\CC(\Gamma)$ turns out to be a coding of the isomorphism class of $\Gamma.$

\begin{theorem}\label{coding}
Two indexed arrangements of oriented double pseudolines are isomorphic if and
only if  they
have the same indexed family of node cycles. \qed
\end{theorem}
\begin{proof}
Let $\Gamma$ be an indexed arrangement of oriented double pseudolines, 
let $\OldFlag{\Gamma}$ be the set of flags of the cell poset  $X(\Gamma)$ of $\Gamma$ 
and let $\Map{\oldflagop{i}(\Gamma)}{\OldFlag{\Gamma}}{\OldFlag{\Gamma}}$, $i\in \{0,1,2\}$, be its flag operators. 
The {\it node, index} and {\it side} of  a flag $F \in \OldFlag{\Gamma}$ are 
\begin{enumerate} 
\item the node of $\Gamma$ corresponding to the zero-cell of $F$;
\item the index of the supporting double pseudoline  of the one-cell of $F$ that is outgoing at the zero-cell of $F$; 
\item the symbol $\cside$ or its complement $\dside$
depending on whether the two-cell of $F$ is contained in the crosscap side of 
 the supporting double pseudoline of the one-cell of $F$ or is contained in its disk side.
\end{enumerate}
Fig.~\ref{flagcode} shows the first barycentric subdivision of an indexed arrangement of two oriented double pseudolines where each flag is labeled, using the obvious
convention,  with its node, index and side. 
\begin{figure}[!htb]
\psfrag{ii}{$\iAAA$}
\psfrag{jj}{$\jBBB$}
\psfrag{jb}{$\BBBb$}
\psfrag{ib}{$\AAAb$}
\psfrag{ij}{$\iAAA\jBBB$}
\psfrag{ibj}{$\AAAb\jBBB$}
\psfrag{ijb}{$\iAAA\BBBb$}
\psfrag{ibjb}{$\AAAb\BBBb$}
\psfrag{ds}{$\dside$}
\psfrag{cs}{$\cside$}
\centering
\includegraphics[width=0.5\linewidth]{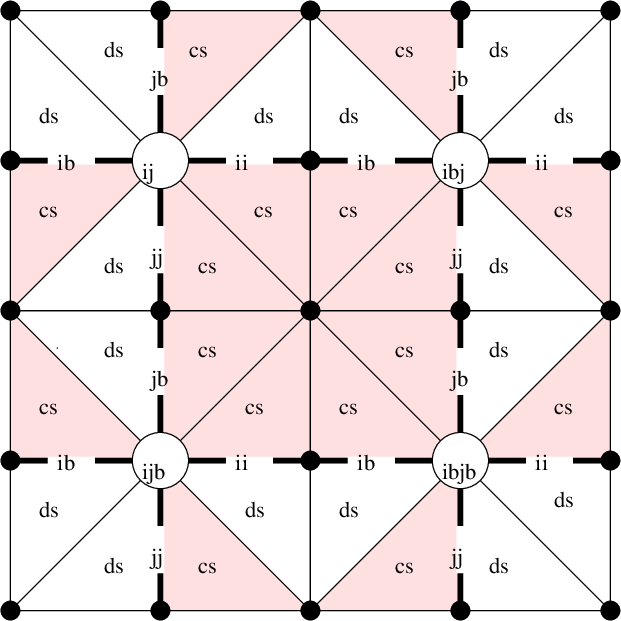}
\caption{\label{flagcode} The first barycentric subdivision  of an indexed arrangement of two oriented double pseudolines where each flag is labeled 
with its node, index and side}
\end{figure}
Let $I$ be the set of positive indices of $\Gamma$, let  $\OldFlagDoublure{\Gamma} = \{ (A, \nu, \eta) \mid \nu\in I \cup \overline{I}, A \in \CC_\nu(\Gamma), \eta \in \{\cside,\dside\}\}$,
 let $\Map{\omega(\Gamma)}{\OldFlag{\Gamma}}{\OldFlagDoublure{\Gamma}}$ be the (one-to-one and onto) map that assigns to the flag $F$ the triple 
composed of the node, index and side of $F$  and,  for $i\in \{0,1,2\}$, let  
 $$\oldflagdoublureop{i}(\Gamma) = \omega(\Gamma) \oldflagop{i}(\Gamma) \omega(\Gamma)^{-1}.$$ 
Table~\ref{sigmaoneoperator} gives the table of the operator $\oldflagdoublureop{1}(\Gamma)$ in the case where $\Gamma$ is an arrangement of two double pseudolines.

\begin{table}[!htb]
$$
\begin{array}{c||c|c|c|c}
F           
& \flagone{i}{j}{i}{\cside} 
& \flagone{i}{j}{i}{\dside}
& \flagone{i}{j}{\overline{i}}{\cside} 
& \flagone{i}{j}{\overline{i}}{\dside} 
\\
\oldflagdoublureop{1}(\Gamma)(F) 
& \flagone{i}{j}{j}{\cside} 
& \flagone{i}{j}{\overline{j}}{\cside} 
& \flagone{i}{j}{j}{\dside} 
& \flagone{i}{j}{\overline{j}}{\dside} 
\\
\end{array}
$$
\caption{\label{sigmaoneoperator} 
Table of the operator $\oldflagdoublureop{1}(\Gamma)$ in the case where $\Gamma$ is an indexed arrangement of two oriented double pseudolines with
 signed indexing set $\{i, \overline{i}, j, \overline{j}\}$}
\end{table}


Clearly two arrangements of oriented double pseudolines $\Gamma$ and $\Gamma'$ are isomorphic if
and only if for any $i \in \{0,1,2\}$ the operators $\oldflagdoublureop{i}(\Gamma)$ and $\oldflagdoublureop{i}(\Gamma')$ coincide. 
Therefore proving our theorem comes down to proving that the operators $\oldflagdoublureop{i}(\Gamma)$, $i \in \{0,1,2\}$, 
depend only on the indexed family $\CC(\Gamma).$ We define $\overline{\dside} = \cside$.
Clearly 
\begin{enumerate} 
\item $\oldflagdoublureop{2}(\Gamma) (A, \nu, \eta) = (A,  \nu, \overline{\eta})$;
\item $\oldflagdoublureop{0}(\Gamma) (A, \nu, \eta)=  (A', \overline{\nu}, \eta)$ 
where $A'$ is the successor of $A$ in the cycle $\CC_\nu(\Gamma)$. 
\end{enumerate}
Thus it remains to explain why $\oldflagdoublureop{1}(\Gamma)$ depends only on $\CC(\Gamma).$ (Actually it depends only on $\QNodes(\Gamma)$.)
For $J \subseteq I$ with at least two elements let  $\Gamma|J$ be the restriction of $\Gamma$ to $J$ and let $\Map{i_J}{\OldFlagDoublure{\Gamma|J}}{\OldFlagDoublure{\Gamma}}$
be the induced canonical injection (note that $i_J$ is the identity map on the two last coordinates).
For $F \in \OldFlagDoublure{\Gamma}$, let $U(F)$ be the set of $F_J = i_J \,\oldflagdoublureop{1}(\Gamma|J)(i_J)^{-1}(F)$ 
where $J$ ranges over the set of $2$-subsets of $I$  composed of the index  of $F$ and one of the indices occuring in its node,
 and endow $U(F)$ with the dominance relation $\prec_F$ defined 
by $F_J \prec_F F_K$ if  
$F_K = (\oldflagdoublureop{2}(\Gamma)(F_J))_{J\Delta K}$ where as usual $J\Delta K$ denotes the set symmetric difference operator. 
Clearly $\prec_F$ is a total order and $\oldflagdoublureop{1}(\Gamma)(F) = \min_{\prec_F} U(F).$
It follows that we can restrict our attention to the case where the size of the set of indices is two. 
The theorem follows.
\end{proof}

\begin{remark} In the preliminary versions~\cite{G-hp-htadp-06,G-hp-adp-09} of the paper we used the notations 
$v_{ij1}(\Gamma),v_{ij2}(\Gamma),v_{ij3}(\Gamma)$ and $v_{ij4}(\Gamma)$ for the vertices 
 $v_{\nodeone{i}{j}}(\Gamma),v_{\nodetwo{i}{j}}(\Gamma),v_{\nodethr{i}{j}}(\Gamma)$ and $v_{\nodefou{i}{j}}(\Gamma)$ of the arrangement $\Gamma$. 
The new notations are better in that they are compatible with the operation of changing sign.  
\end{remark}

\subsection{Raiponces} \label{raiponces}
Recall that a {\it cyclic} arrangement of pseudolines is a simple arrangement of pseudolines with the property that the maximum of its two-cell sizes is 
its number of pseudolines. The simple arrangements of size at most $5$ are cyclic.
Fig.~\ref{cyclic} shows cyclic arrangements of three, four, five and six pseudolines.
\begin{figure}[!htb]
\psfrag{A}{$A$} \psfrag{B}{$B$} \psfrag{C}{$C$} \psfrag{D}{$D$}
\psfrag{firs}{$i$} \psfrag{seco}{$j$} \psfrag{firscd}{$\nabla_{ij}(L)$} \psfrag{secocd}{$\nabla_{ji}(L)$}
\psfrag{onecd}{$\nabla_{1}(L^0)$} \psfrag{twocd}{$\nabla_{2}(L^0)$} \psfrag{thrcd}{$\nabla_{3}(L^0)$}
\psfrag{one}{$\renewmeet{i}{j}{1}$} \psfrag{two}{$\renewmeet{i}{j}{2}$} \psfrag{thr}{$\renewmeet{i}{j}{3}$} \psfrag{fou}{$\renewmeet{i}{j}{4}$}
\psfrag{un}{1} \psfrag{deu}{2} \psfrag{tr}{3}
\centering
\includegraphics[width = 0.98750 \linewidth]{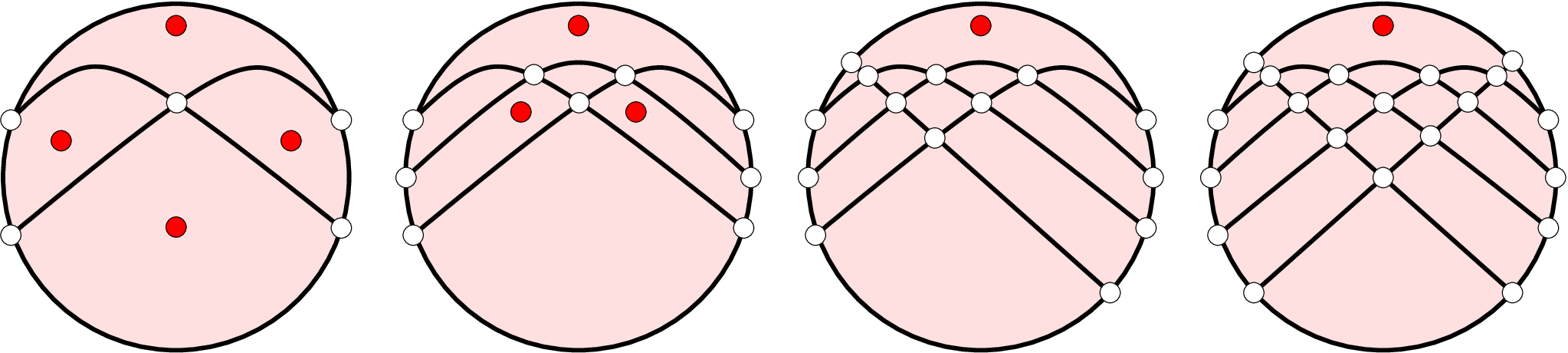}
\caption{Cyclic arrangements of 3, 4, 5 and 6 pseudolines  \label{cyclic}
 with their central cells marked with a little black bullet (red in  colored pdf)}
\end{figure}
The isomorphism class of a cyclic arrangement of pseudolines depends only of its number of pseudolines;  
in particular the number of two-cells realizing the maximum of their sizes is $2$, $4$, $3$ or $1$ depending on whether the number of pseudolines 
of the arrangement is $2$, $3$, $4$, or larger than $4$. 
A two-cell realizing the maximum of the two-cell sizes of a cyclic arrangement of pseudolines is called a {\it central cell} of the arrangement.
We isolate a simple lemma that will be repeatedly used in the sequel.
\begin{lemma}\label{relaxation} 
Let $L$ be a cyclic arrangement of $n\geq 3$ pseudolines, let $\nabla$ be a central cell of $L$ and let $L_1,L_2,\ldots,L_n$ be the circular list of pseudolines of $L$ 
encountered when walking along the boundary of $\nabla$. Let $K$ be a pseudoline such that (1) $K$ is tangent to $\nabla$ at the intersection point of $L_1$ and $L_2$  and (2) 
the family  $ L ' = L \setminus \{L_1,L_2\} \cup \{K\}$ 
is an arrangement of pseudolines. 
Then  (1) $L'$ is cyclic and (2) $\nabla$ is contained in a central cell $\nabla'$ of $L'$ such that walking along the boundary of $\nabla'$ we encounter the pseudolines of $L'$
in the circular order $K,L_3,L_4,\ldots,L_n$.  \qed
\end{lemma}

We are now ready to define the raiponces.

A {\it raiponce $L$ on a finite set of indices $I$} is a simple indexed arrangement of pseudolines such that 
\begin{enumerate}
\item the indexing set of $L$ is the set of unordered pairs  $ij (=ji)$ of signed indices of $I$ with the property that $i\neq \overline{j}$;
\item for any $i\in I$ and any $j\in I$, $i\neq j$, the subarrangement of $L$ whose pseudolines are the $L_{\alpha}$, $\alpha \in \setnode{i}{j}$,  
is an arrangement of four pseudolines;  we denote by $\nabla_{i,j}$ 
its unique oriented two-cell such that walking along its boundary we encounter the pseudolines $L_{\alpha}$, $\alpha \in \setnode{i}{j}$, 
in the circular order $L_{\vijone},L_{\vijtwo},L_{\vijthr}, L_{\vijfou}$, as illustrated in Fig.~\ref{gather}a;
\begin{figure}[!htb]
\psfrag{A}{$A$} \psfrag{B}{$B$} \psfrag{C}{$C$} \psfrag{D}{$D$}\psfrag{E}{$E$} \psfrag{F}{$F$}
\psfrag{firs}{$i$} \psfrag{seco}{$j$} \psfrag{firscd}{$\nabla_{i,j}$} \psfrag{secocd}{$\nabla_{j,i}$}
\psfrag{onecd}{$\nabla_{1}(L)$} \psfrag{twocd}{$\nabla_{2}(L)$} \psfrag{thrcd}{\small $\nabla_{3}(L)$} 
\psfrag{onecd}{$\nabla_{1}$} \psfrag{twocd}{$\nabla_{2}$} \psfrag{thrcd}{$\nabla_{3}$} 
\psfrag{one}{$L_{\nodeone{i}{j}}$}
\psfrag{two}{$L_{\nodetwo{i}{j}}$}
\psfrag{thr}{$L_{\nodethr{i}{j}}$} 
\psfrag{fou}{$L_{\nodefou{i}{j}}$} 
\psfrag{un}{1} \psfrag{deu}{2} \psfrag{tr}{3}
\psfrag{aa}{(a)}
\psfrag{bb}{(b)}
\psfrag{cc}{(c)}
\centering
\includegraphics[width = 0.99750 \linewidth]{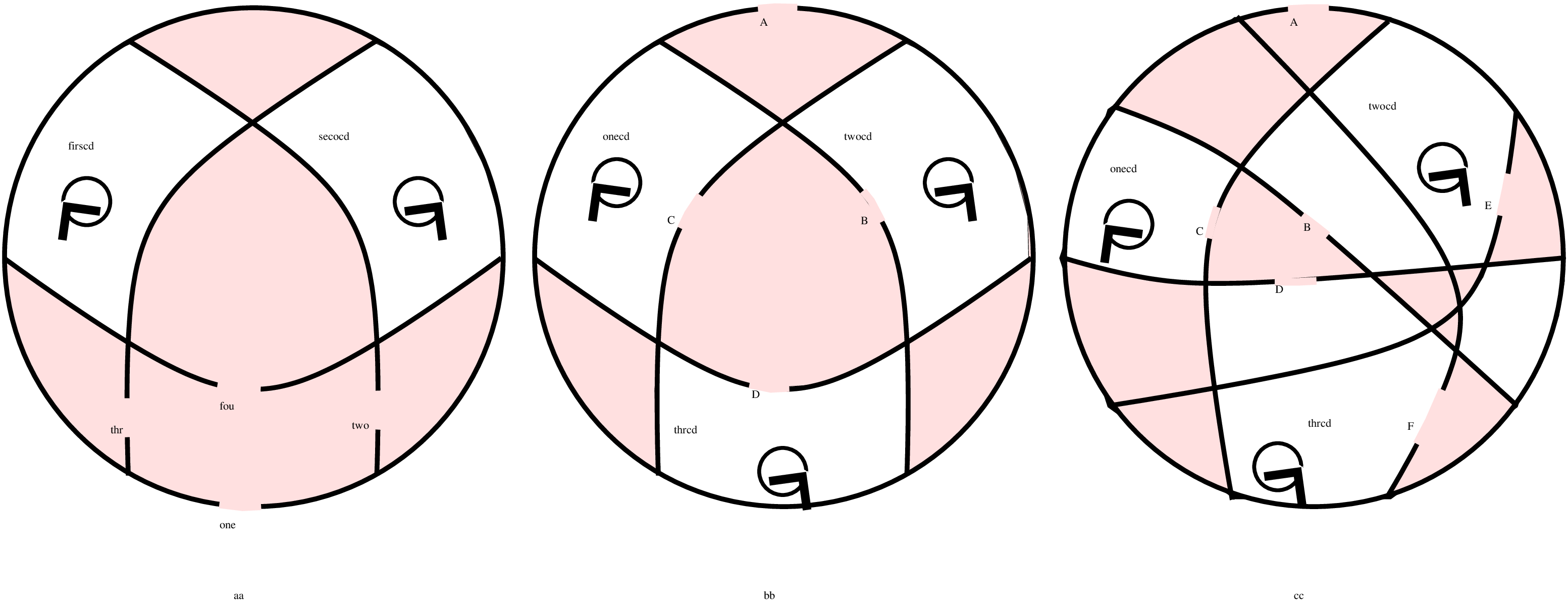}
\caption{
(a) A  raiponce on the indexing set $\{i,j\}$;  
(b) a raiponce on the indexing set $\{1,2,3\}$ composed of $4$ pseudolines: 
$A=L_{\nodeone{1}{2}}=L_{\nodefou{1}{3}}=L_{\nodetwo{2}{3}}$,
$B=L_{\nodetwo{1}{2}}=L_{\nodeone{1}{3}}=L_{\nodefou{2}{3}}$,
$C=L_{\nodethr{1}{2}}=L_{\nodetwo{1}{3}}=L_{\nodethr{2}{3}}$, 
$D=L_{\nodefou{1}{2}}=L_{\nodethr{1}{3}}=L_{\nodeone{2}{3}}$.
(c) a raiponce on the indexing set $\{1,2,3\}$ composed of $6$ pseudolines: 
$A=L_{\nodetwo{2}{3}}$,
$E=L_{\nodeone{1}{2}}$,
$F=L_{\nodefou{1}{3}}$,
$B=L_{\nodetwo{1}{2}}=L_{\nodeone{1}{3}}=L_{\nodefou{2}{3}}$,
$C=L_{\nodethr{1}{2}}=L_{\nodetwo{1}{3}}=L_{\nodethr{2}{3}}$, 
$D=L_{\nodefou{1}{2}}=L_{\nodethr{1}{3}}=L_{\nodeone{2}{3}}$
 \label{gather}} 
\end{figure}
note that $\nabla_{i,j}$ and $\nabla_{j,i}$ are by construction disjoint and that their closures share two vertices but no edge;
\item for any $i\in I$ the subarrangement of $L$ whose pseudolines are the $L_{\alpha}$, $\alpha\in \setnode{i}{j}$, $j\in I\setminus i$, is cyclic and  walking along the boundary of 
one of its oriented central cells we encounter for any $j \in I\setminus i$ the pseudolines $L_{\alpha}$, $\alpha \in \setnode{i}{j}$, 
in the circular order $L_{\vijone},L_{\vijtwo},L_{\vijthr}, L_{\vijfou}$; this oriented central cell, denoted $\nabla_i$,
 is necessarily the intersection of the $\nabla_{i,j}$, $j \in I\setminus i$. The indexed family of $\nabla_i$  is called the family of {\it central cells} of $L$.
\end{enumerate}
Let $L$ be a raiponce, let $\QNodes(L)$ be the quotient of the indexing set of $L$  under the relation ``to index the same pseudoline of $L$''
and let $\CC(L)$ be the indexed family  
of circular sequences of elements of $\QNodes(L)$ encountered when
walking along the (oriented) boundaries of the central cells of $L$, each sequence being indexed with the index of the central cell 
on the boundary of which is done the walk.
An element of  $\CC(L)$ will be called a {\it cycle} of $L$.
The reader will easily check that the families of cycles of the raiponces of Fig.~\ref{gather}a,~\ref{gather}b and~\ref{gather}c  coincide  with the families of cycles of the 
indexed arrangements of oriented double pseudolines of  Fig.~\ref{CodingADP}a,~\ref{CodingADP}b and~\ref{CodingADP}c.

Now let $\Delta$ be an indexed  configuration of oriented convex bodies with the property that its arrangement 
of bitangents is simple and let $\Delta^*$  be its dual (indexed and oriented) arrangement.
 Clearly the indexed family $v(\Delta^*)$ of vertices of $\Delta^*$---see  Section \ref{sec:cycles} for its definition---is a raiponce on the indexing set of $\Delta$, 
called the {\it raiponce of $\Delta$} thereafter.  
The following lemma claims that any raiponce is the raiponce of an indexed configuration of oriented convex bodies and that the map that assigns to an indexed configuration of convex bodies the isomorphism class of its dual arrangement can be factorized 
through the map that assigns to an indexed configuration of oriented convex bodies its raiponce.
The proof is easy.

\begin{lemma}
Let $L$ be  a raiponce on the indexing set $I$, let $\nabla$ be its indexed family of central cells,  let $\cal G$ be a projective plane extension of $L$, and let
${\cal R}(L, {\cal G})$ be the class of indexed configurations of oriented  convex bodies $\Delta$  of $\cal G$ with indexing set $I$ 
 such that for any $i\in I$
\begin{enumerate} 
\item  $\Delta_i$ is inscribed in the central cell $\nabla_i$, and 
\item the orientations of $\Delta_i$ and $\nabla_i$ are coherent.
\end{enumerate}
Then
\begin{enumerate}
\item ${\cal R} (L, {\cal G})$ is nonempty, and
\item for any $\Delta \in {\cal R}(L, {\cal G})$, the raiponce of $\Delta$ is $L$ and the isomorphism class of its dual arrangement $\Delta^*$ depends only on $L.$ 
\end{enumerate}
\end{lemma}
\begin{proof} The first point is clear since by construction the closures of the $\nabla_i$ intersect pairwise in at most two vertices. 
Similarly the second point is clear since by construction ${\cal V}(\Delta^* ) = {\cal V}(L)$ and  $\CC(\Delta^*) = \CC(L)$.  
\end{proof}

A {\it completion} of a raiponce $L$ is an indexed configuration of oriented convex bodies whose raiponce is $L$,
and a {\it primal representation} of an indexed arrangement of oriented double pseudolines $\Gamma$ is a raiponce $L$ with the property that the isomorphism
class of the dual arrangements of its completions is  the isomorphism class of $\Gamma$.
For example the raiponces of Fig.~\ref{gather}a,~\ref{gather}b and ~\ref{gather}c are  primal representations of the indexed arrangements of oriented 
double pseudolines of Fig.~\ref{CodingADP}a,~\ref{CodingADP}b and~\ref{CodingADP}c, respectively.
According to the previous discussion the properties `to be the dual arrangement of 
a family of pairwise disjoint convex bodies' and `to have a primal representation' are equivalent.
The next step is devoted to the proof that this last property is stable under mutations. 

\begin{remark} The dual arrangement of the family of central cells of a primal representation   
of an arrangement of double pseudolines  is, up to homeomorphism,  obtained from the arrangement of double pseudolines  by shrinking its digons into edges. 
(Here the duality is defined with respect to any projective plane extension of the primal representation.) 
\end{remark}
\subsection{Stability under  mutations}
\begin{theorem} \label{hgr}
Let $\Gamma$ and $\Gamma'$ be two indexed arrangements of oriented double pseudolines related by a mutation. Then 
$\Gamma$ has a primal representation if and only if $\Gamma'$ has a primal representation.  \qed
 \end{theorem}

Before embarking on the proof we isolate a simple property of
primal representations. The proof is  easy.

\begin{lemma}\label{dicoone} 
Let $L$ be a primal representation of an indexed arrangement of oriented double pseudolines $\Gamma$, let $\nabla$ be its 
indexed family of central cells, let $\sigma$ be a one-cell of $\Gamma$ supported by the curve $\Gamma_i$, 
let $v_\alpha$ and $v_\beta$ be endpoints of $\sigma$.
Then $L_\alpha$ and $L_\beta$ are consecutive pseudolines of the boundary of $\nabla_i$ and for any index $j \neq i$ of the indexing set of $\Gamma$ one has
\begin{enumerate} 
\item $\sigma$ is contained in  the crosscap side of $\Gamma_j$ if and only if 
the arrangement of pseudolines  $L_\alpha$, $L_\beta$, 
$L_{\nodeone{i}{j}}$, 
$L_{\nodetwo{i}{j}}$, 
$L_{\nodethr{i}{j}}$, and  
$L_{\nodefou{i}{j}}$ is the one depicted in 
Fig.~\ref{basicdicoone}a;
\item $\sigma$ is contained in the disk side of $\Gamma_i$ if and only if 
the arrangement of pseudolines  $L_\alpha$, $L_\beta$, 
$L_{\nodeone{i}{j}}$, 
$L_{\nodetwo{i}{j}}$, 
$L_{\nodethr{i}{j}}$, and  
$L_{\nodefou{i}{j}}$ is the one depicted in 
Fig.~\ref{basicdicoone}b. \qed
\end{enumerate}
\end{lemma}
\begin{figure}[!htb]
\centering

\psfrag{one}{${\nodeone{i}{j}}$}
\psfrag{two}{${\nodetwo{i}{j}}$}
\psfrag{thr}{${\nodethr{i}{j}}$} 
\psfrag{fou}{${\nodefou{i}{j}}$} 
\psfrag{xx}{$\alpha$} 
\psfrag{yy}{$\beta$} 

\psfrag{firs}{$i$} 
\psfrag{seco}{$j$} 
\psfrag{firscd}{$\nabla_{i,j}$}
\psfrag{secocd}{$\nabla_{j,i}$}
\psfrag{aa}{(a)} \psfrag{bb}{(b)}
\psfrag{cc}{(c)} \psfrag{dd}{(d)}
\psfrag{ee}{(e)} \psfrag{ff}{(f)}
\psfrag{gg}{(g)}
\includegraphics[width=0.750\linewidth]{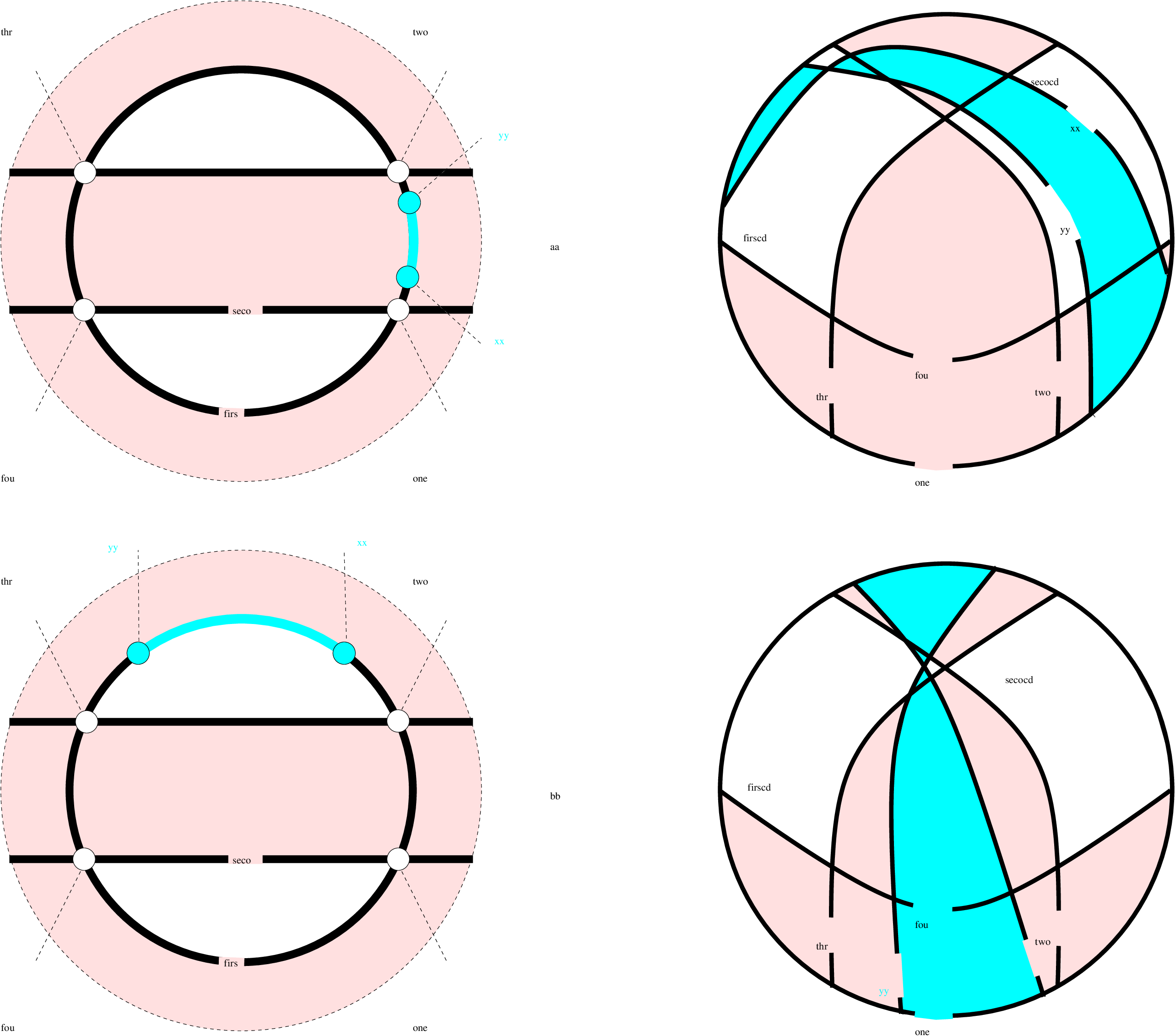}
\caption{\label{basicdicoone}
Dictionnary between 
the relative positions of an edge  $\sigma$ supported by the double pseudoline $\Gamma_j$ and a double pseudoline $\Gamma_i$ of an indexed arrangement $\Gamma$ of oriented double pseudolines 
and the relative positions of the corresponding central cells $\nabla_i$ and $\nabla_j$ of a primal representation of $\Gamma$ 
}
\end{figure}
\begin{proof}[Proof of Theorem~\ref{hgr}]
Let $L$ be a primal representation of an arrangement of oriented double pseudolines $\Gamma$, and consider a mutation connecting $\Gamma$ to an arrangement $\Gamma'$.
Our goal is to show that $\Gamma'$ has a primal representation~$L'$. 
Without loss of generality one can assume that $\Gamma$ is the dual arrangement of a completion $\Delta$ of~$L.$

We first examine the case of a merging mutation. 

Let $\Sigma$ be the  complex of adjacent triangular two-cells of $\Gamma$ involved in the merging
mutation and let $\lift{\Sigma}$ be one of its two lifts in a two-covering of the underlying  cross surface. 
We consider the set of vertices of $\lift{\Sigma}$ as an arrangement
$\Psi$ of oriented pseudolines and we introduce 
the subarrangement $\Psi_0$ composed of the three vertices of the boundary $\partial \lift{\Sigma}$ of $\lift{\Sigma}$ and 
the  one level $\ell$ of $\Psi_0$  with respect to its unique two-cell $\sigma_0$ with cyclic boundary;
note that $\ell$ is by construction a pseudoline and that any pseudoline in $L$ not in  $\Psi$ crosses $\ell$ in at most three
points. Fig.~\ref{dualityone} depicts the complex $\Sigma$ of a merging mutation, the subarrangement $\Psi_0$ with its cyclic two-cell $\sigma_0$ marked, and the one-level $\ell$
of $\Psi_0$ with respect to $\sigma_0.$  
\begin{figure}[!htb]
\centering
\psfrag{sz}{$\sigma_0$}
\psfrag{ell}{$\ell$}
\psfrag{w}{\tiny moving curve}
\psfrag{aa}{(a)}
\psfrag{bb}{(b)}
\psfrag{cc}{(c)}
\includegraphics[width=0.9875\linewidth]{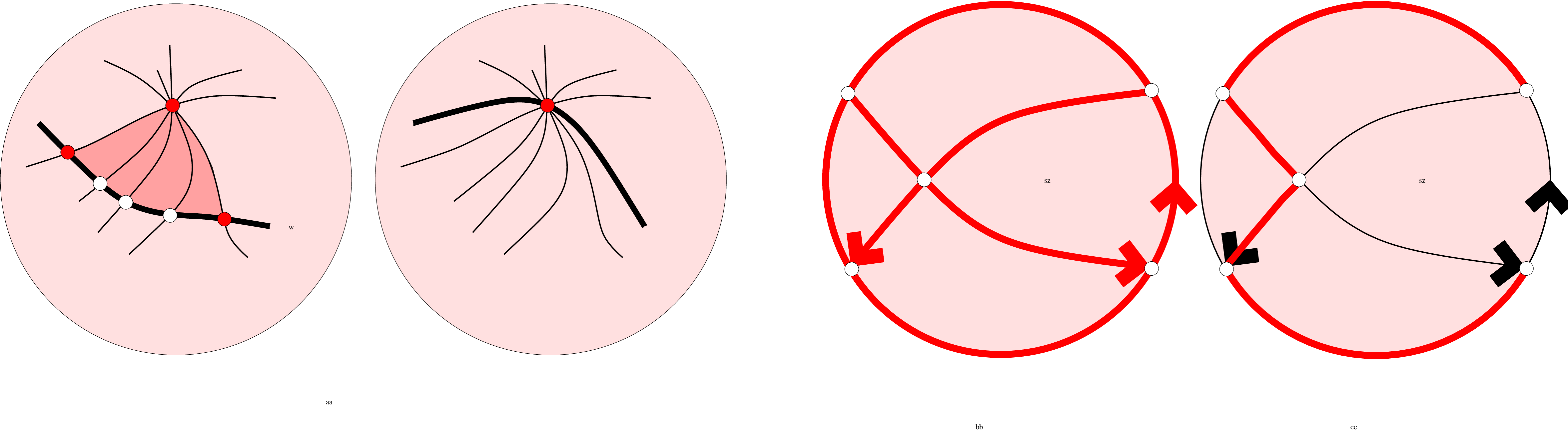}
\caption{ (a)
The complex $\Sigma$ of triangular two-cells involved in the merging mutation connecting $\Gamma$ to $\Gamma'$; 
(b)  the arrangement $\Psi_0$ composed of the vertices of the boundary of the complex $\lift{\Sigma}$;
 and (c) the one-level $\ell$ of the arrangement $\Psi_0$ \label{dualityone}}
\end{figure}

Let $L'$ be the indexed family of pseudolines defined by 
\begin{equation}
L'_{\tau} = \begin{cases}
         \ell & \text{ if $L_{\tau}$ is a vertex of $\Sigma$};\\
         L_{\tau} &  \text{otherwise},
\end{cases}
\end{equation}
where $\tau$ ranges over the indexing set of $L$.
\begin{lemma} We claim that 
\begin{enumerate}
\item $L'$ is a simple arrangement of pseudolines;
\item $L'$ is a primal representation of the arrangement $\Gamma'.$ 
\end{enumerate}
\end{lemma}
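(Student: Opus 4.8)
The plan is to verify the two claims by working in a two‑covering and tracking how the pseudolines of $L$ meet the relevant cells, using Lemmas~\ref{dicozero} and~\ref{dicoone} as the dictionary between the combinatorics of $\Gamma$ and the incidence structure of $\thistle$. First I would observe that by construction $\nabla_i(L')$ differs from $\nabla_i(L)$ only for those indices $i$ such that the curve $\Gamma_i$ supports an edge of the two‑cells of $\Sigma$, and for each such $i$ the polygon $\nabla_i(L')$ is obtained from $\nabla_i(L)$ by the local relaxation move of Lemma~\ref{relaxation} applied at a vertex lying on $\partial\lift\Sigma$; here $\ell$ plays the role of the tangent pseudoline $z$. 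Since the three vertices of $\partial\lift\Sigma$ (which are the pseudolines $L_\tau$ replaced by $\ell$) are pairwise crossing and $\ell$ is their one‑level with respect to the cyclic two‑cell $\sigma_0$, the family $L'$ is obtained from $L$ by simultaneously performing three such moves, one for each replaced curve; the fact that $\ell$ meets every other pseudoline of $L$ in at most three points guarantees that no forbidden double crossing is created, so $L'$ is simple. This is essentially a bookkeeping argument once the picture of Figure~\ref{dualityone} is set up.

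For the second claim I would check the three defining properties of a raiponce for $L'$ and then identify its family of cycles with $\CC(\Gamma')$. The point (1) of the raiponce definition — that for $i\ne j$ the four curves $L'_\alpha$, $\alpha\in\setnode{i}{j}$, form an arrangement of four pseudolines with a distinguished oriented two‑cell $\nabla_{ij}(L')$ in the prescribed circular order — follows from Lemma~\ref{relaxation}(2): each $\nabla_{ij}(L)$ either is unchanged or is enlarged to a two‑cell $\nabla_{ij}(L')$ containing it, with the cyclic order of supporting curves preserved because the relaxation move does not permute the surviving curves. Point (2), cyclicity of the pencil of curves through a fixed index together with centrality of $\nabla_i(L')$, follows from Lemma~\ref{relaxation}(1): each relaxation of a cyclic arrangement at a vertex of a central cell produces again a cyclic arrangement in which that central cell is absorbed into a new central cell, and one checks the circular order of the $L'_\alpha$ around $\nabla_i(L')$ is inherited. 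Then $\QNodes(L')=\QNodes(L)$ — the merging mutation identifies exactly the three nodes corresponding to the replaced curves, matching the fact that on the combinatorial side one incidence turns from false to true — and $\CC(L')$ is computed from the (oriented) boundaries of the $\nabla_i(L')$; comparing with the effect of the merging mutation on $\Gamma$ via Lemmas~\ref{dicozero} and~\ref{dicoone} shows $\CC(L')=\CC(\Gamma')$. By Theorem~\ref{coding} this means the completions of $L'$ are isomorphic to $\Gamma'$, i.e.\ $L'$ is a \repres\ of $\Gamma'$.

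The main obstacle I expect is a careful and honest verification that replacing the three vertices of $\partial\lift\Sigma$ by the single pseudoline $\ell$ simultaneously — rather than one at a time — really does realize exactly one merging mutation on the dual side and creates no extra crossings or incidences: one must rule out that $\ell$, being a one‑level, accidentally becomes tangent to or crosses badly one of the polygons $\nabla_i(L)$ for an index not involved in $\Sigma$, and one must check that the orientations of the $\nabla_i(L')$ remain coherent with the inherited orientations so that the completion ${\cal R}(L',{\cal G})$ is nonempty for a suitable \gpp\ extension ${\cal G}$. This is where the bound ``$\ell$ crosses every pseudoline of $L\setminus\Psi$ in at most three points'' is used decisively, and where Figure~\ref{dualityone} and Lemma~\ref{dicoone} do the real work. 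The splitting‑mutation case is then symmetric: it is the inverse of a merging mutation, so a \repres\ of $\Gamma'$ for a splitting mutation is produced by running the construction backwards, reading $\ell$ off from the appropriate one‑level on the $\Gamma'$ side.
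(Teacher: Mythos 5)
Your plan for claim (2) --- tangency of $\ell$ to the relevant $\nabla_t(L)$ at the prescribed vertices, Lemma~\ref{relaxation} to recover cyclicity and the central cells, then identification of the resulting cycles with those of $\Gamma'$ --- is essentially the paper's argument, and that part is sound. The genuine gap is in claim (1). You assert that because $\ell$ meets every other pseudoline of $L$ in at most three points, ``no forbidden double crossing is created, so $L'$ is simple.'' This is a non sequitur: two pseudolines of a \crosssurface\ cross an odd number of times, so the bound ``at most three'' leaves open exactly the bad case, namely a pseudoline of $L\setminus\Psi$ crossing $\ell$ three times, and that is what must be excluded for $L'$ to be an arrangement of pseudolines at all. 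You later flag this as ``the main obstacle I expect,'' but you never supply an argument, so claim (1) remains unproved in your write-up. The paper's resolution is short but indispensable: any pseudoline $\ell'$ crossing $\ell$ three times and forming a simple arrangement with $L$ must avoid the cyclic two-cell $\sigma_0$ of $\Psi_0$, and the classification of the relative positions of the $\nabla_t(L)$ inside $\Psi_0^+$ (Figure~\ref{relativepositionglobal}, derived from Lemma~\ref{dicoone}) then forces $\ell'$ to be transversal to every $\nabla_t(L)$ not contained in $\sigma_0$; by Lemma~\ref{dicozero} the dual of a vertex of $\Gamma$ lying on $\Gamma_t$ is a supporting pseudoline of $\nabla_t(L)$, not a transversal, so no element of $L\setminus\Psi$ can behave this way.

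Two smaller corrections. First, $\QNodes(L')\neq\QNodes(L)$: the merging mutation collapses all the nodes indexing vertices of $\Sigma$ (three of them only when the target vertex is simple in the arrangement of the non-moving curves) into a single node; this is precisely why one must check, as the paper does, that each $N'_{ij}$ still consists of four distinct pseudolines, using the facts that $N_{ij}$ contains at most one element of $\Sigma$ and that $\ell\notin L$. Second, the splitting case is not obtained by ``running the construction backwards'': the paper gives a genuinely different construction there (the auxiliary double pseudoline $w^*$, the points $B_i$, and the pencil of pseudolines perturbed into a cyclic arrangement), though that lies outside the lemma under review.
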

\begin{proof} 
Let $K$  be the set of indices of the supporting double pseudolines of the one-cells of $\Sigma$,  
let $K'\subseteq K$ be the set of three indices of the three supporting double pseudolines of the three sides of the boundary of $\Sigma$, 
and let $w \in K'$ be the index of the moving curve of the mutation. 
We denote by $\opposite{w}$ the vertex of the boundary of $\Sigma$ opposite the
side supported by $\Gamma_w$, and for any $t \in K\setminus \{w\}$ we denote by $\opposite{t}$ the vertex of $\Sigma$ where the double pseudolines $\Gamma_t$ and $\Gamma_w$ intersect. 
\begin{figure}[!htb]
\centering
\psfrag{caseone}{$\bigcup \Sigma \subset \TD(\Gamma_t)$}
\psfrag{casetwo}{$\bigcup \Sigma \subset \MS(\Gamma_t)$}
\psfrag{nat}{$\nabla_t$}
\psfrag{itemone}{$t \in I\setminus K$}

\psfrag{ot}{$\ot$}
\psfrag{ow}{$\wo$}
\psfrag{itemtwo}{$t\in K'\setminus\{w\}$}

\psfrag{caseonw}{$\bigcup \Sigma \subset \TD(\Gamma_w)$}
\psfrag{casetww}{$\bigcup \Sigma \subset \MS(\Gamma_w)$}

\psfrag{itemthr}{$t\in K\setminus K'$}

\psfrag{ou1}{$\opposite{u}_1$} \psfrag{ou2}{$\opposite{u}_2$} \psfrag{ou3}{$\opposite{u}_3$} \psfrag{ou4}{$\opposite{u}_4$} \psfrag{ou5}{$\opposite{u}_m$}
\psfrag{naw}{$\nabla_w$}
\psfrag{G0}{$\nabla_w$}
\psfrag{G0s}{$\nabla^*_w$}

\psfrag{itemfou}{}
\psfrag{itemfiv}{$t=w$}
\includegraphics[width=0.9850\linewidth]{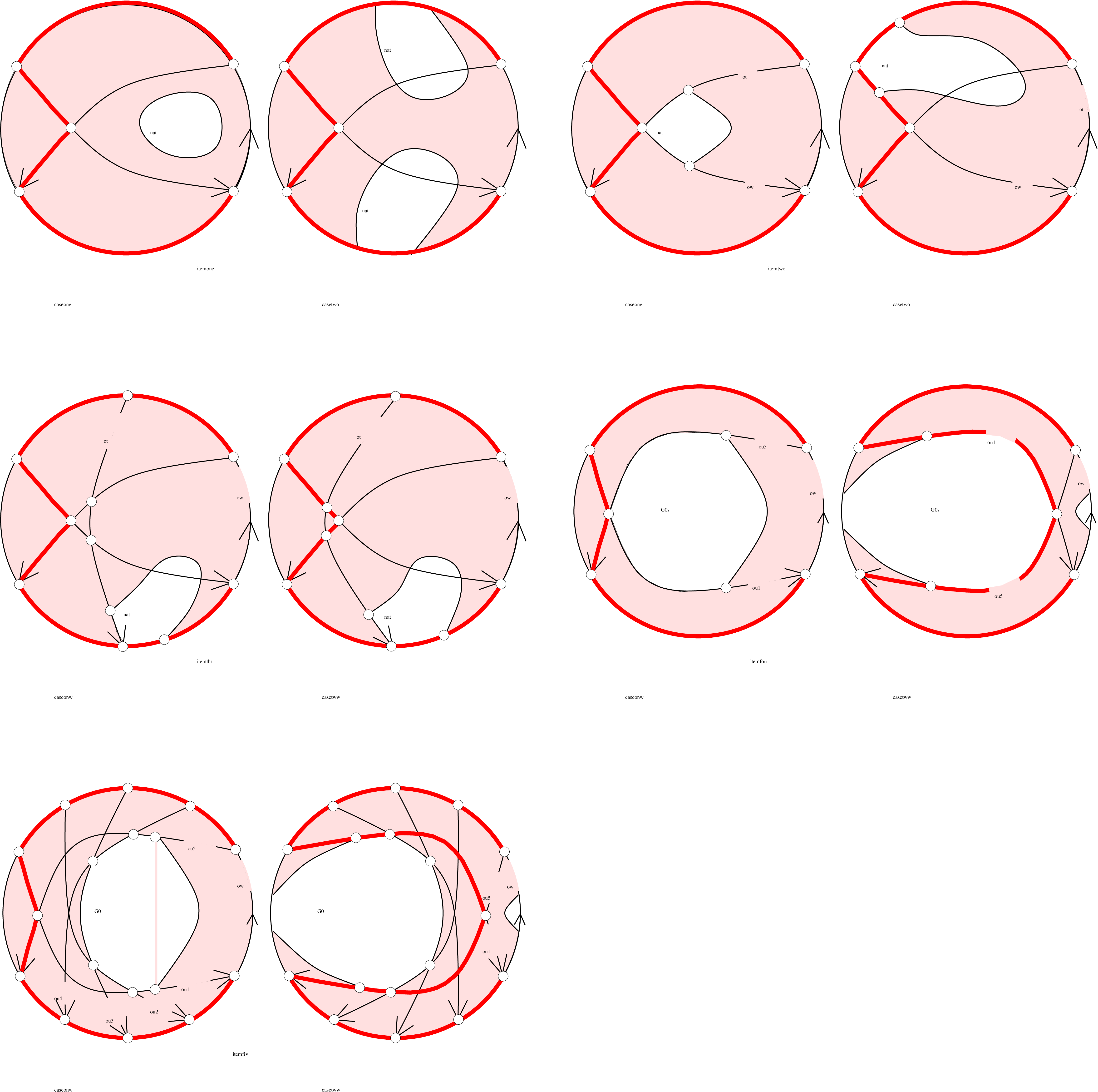}
\caption{Relative position of $\nabla_t$ in the arrangement~${\Psi_0}^+$  
\label{relativepositionglobal}}
\end{figure}
Let $\Psi_0^+$ be the arrangement $\Psi_0$ augmented with the line $\opposite{t}$ if $t\in K\setminus K'$; the arrangement $\Psi$ if $t=w$; the arrangement $\Psi_0$ otherwise.
We denote by $L^*$ the sub-raiponce of $L$ obtained by deleting the $L_{ij}$ with $i \in  K \setminus K'$.  The indexed families of centrals cells of $L$ and $L^*$ 
are denoted $\nabla$ and $\nabla^*$, respectively.
Finally let $\uo_1,\uo_2,\ldots,\uo_m$ be the sequence of vertices $\neq \wo$ of $\Sigma$ ordered along $\Gamma_w.$

Applying Lemma~\ref{dicoone} to the one-cells of $\lift{\Sigma}$ (and using induction on the size of $K\setminus K'$) we see easily that 
the relative position of $\nabla_t$ in the arrangement $\Psi_0^+$ depends only on whether the triangular two-cells of $\Sigma$ 
are contained in the disk side  $\TD(\Gamma_t)$ of $\Gamma_t$ or contained in the crosscap side $\MS(\Gamma_t)$ 
of $\Gamma_{t}$, as indicated in Fig.~\ref{relativepositionglobal}.
Furthermore one can also check that 
\begin{enumerate}
\item for any $t \in K\setminus \{w\}$ the pseudoline $\ell$ is tangent to $\nabla_t$ at the intersection point of $\opposite{w}$ and $\opposite{t}$;
\item the pseudolines in $\Psi\setminus \Psi_0$ cross the pseudoline $\ell$  all in three points or all in one point; 
\item the arrangement $\Psi$ is cyclic;
\item the relative position of $\nabla^*_{w}$ 
in the arrangement $\Psi_0$ depends only on whether the triangular two-cells of $\Sigma$ are contained in $\TD(\Gamma_w)$ or in $\MS(\Gamma_{w})$
as indicated in Fig.~\ref{relativepositionglobal};
in particular we note that $\ell$ is tangent to $\nabla^*_w$ at the intersection
point of $\opposite{u}_1$ and $\opposite{u}_m$.
\end{enumerate}

Pick now a pseudoline $\ell'$ such that  $L \cup \{\ell'\}$ is a simple  arrangement of pseudolines. 
Assume that $\ell'$ and $\ell$ cross three times. Clearly $\ell'$ avoids the cyclic two-cell of $\Psi_0$
and consequently---thanks to our  previous discussion on the position of the $\nabla_t$ in the arrangement $\Psi_0^+$---$\ell'$ is transversal to any $\nabla_t$, 
$t\in I\setminus (K\setminus K')$, not contained in $\sigma_0$. 
It follows that $\ell' \notin L\setminus \Psi$ and, consequently, there is no pseudoline of $L\setminus \Psi$ crossing $\ell$ three times: thus $L'$ is a simple arrangement of pseudolines.

We now prove that $L'$ is a raiponce and a primal representation of $\Gamma'$.  Given a subfamily $S$ of $L$ we define $S'$ to be the  corresponding subfamily of~$L'$. 
For any index  $i\in I$ let $M_i$ be the arrangement of pseudolines composed of the $L_{\alpha}$, $\alpha \in \setnode{i}{j}$, $j \in I\setminus \{i\}$, 
and  let  $N_{ij} = M_i \cap M_j$, $i\neq j \in I.$ Observe that $N_{ij}$
contains at most one element of $\Sigma$ and that  $\ell \notin L$: consequently $N'_{ij}$ is an arrangement of four pseudolines. 
By construction 
\begin{equation}
M'_{t} = \begin{cases}
           M_t \setminus \{\wo,\opposite{t}\}\cup \{\ell\} & \text{if $t \in K\setminus \{w\}$};\\
           M_t \setminus \Sigma\cup \{\ell\} & \text{if $t =w$};\\
           M_t &  \text{otherwise}.
\end{cases}
\end{equation}
Since for any $i \in K \setminus \{w\}$ the pseudoline $\ell$ is tangent to $\nabla_{i}$ at the intersection point of 
$\wo$ and $\ot$, and since 
$\ell$ is tangent to  $\nabla^*_{w}$ at the intersection point of $\uo_1$ and
$\uo_m$
it follows, according to Lemma~\ref{relaxation}, 
that for any $i$ the arrangement $M'_i$ is cyclic, that $\nabla_i$ is contained in one of its central two-cells $\nabla'_i$, and 
that walking along its boundary (oriented according to the orientation of $\nabla_i$) we encounter for any $j \in I\setminus i$
the pseudolines $L'_{\alpha}$, $\alpha \in \setnode{i}{j}$, in the circular order $L'_{\vijone}, L'_{\vijtwo}, L'_{\vijthr}, L'_{\vijfou}$; consequently 
$L'$ is a raiponce  and is (by construction) a primal representation of $\Gamma'.$
\end{proof}
\begin{figure}[!htb]
\centering
\psfrag{sz}{$\sigma_0$}
\psfrag{ell}{$\ell$}
\psfrag{w}{\tiny moving curve}
\psfrag{dt}{$\wo$}
\psfrag{dtstar}{$w^*$}
\psfrag{aone}{$A_1$}
\psfrag{atwo}{$A_2$}
\psfrag{athr}{$A_3$}
\psfrag{afou}{$A_4$}
\psfrag{bone}{$B_1$}
\psfrag{btwo}{$B_2$}
\psfrag{bi}{$B_i$}
\psfrag{bj}{$B_j$}
\psfrag{bfou}{$B_4$}
\psfrag{S}{$S$}
\psfrag{Q}{$Q$}
\psfrag{Qp}{$Q'$}
\psfrag{caseone}{$\wo \in {\cal D}(\Gamma'_w)$}
\psfrag{casetwo}{$\wo \in {\cal M}(\Gamma'_w)$}
\psfrag{aa}{(a)}
\psfrag{bb}{(b)}
\psfrag{cc}{(c)}
\includegraphics[width=0.875\linewidth]{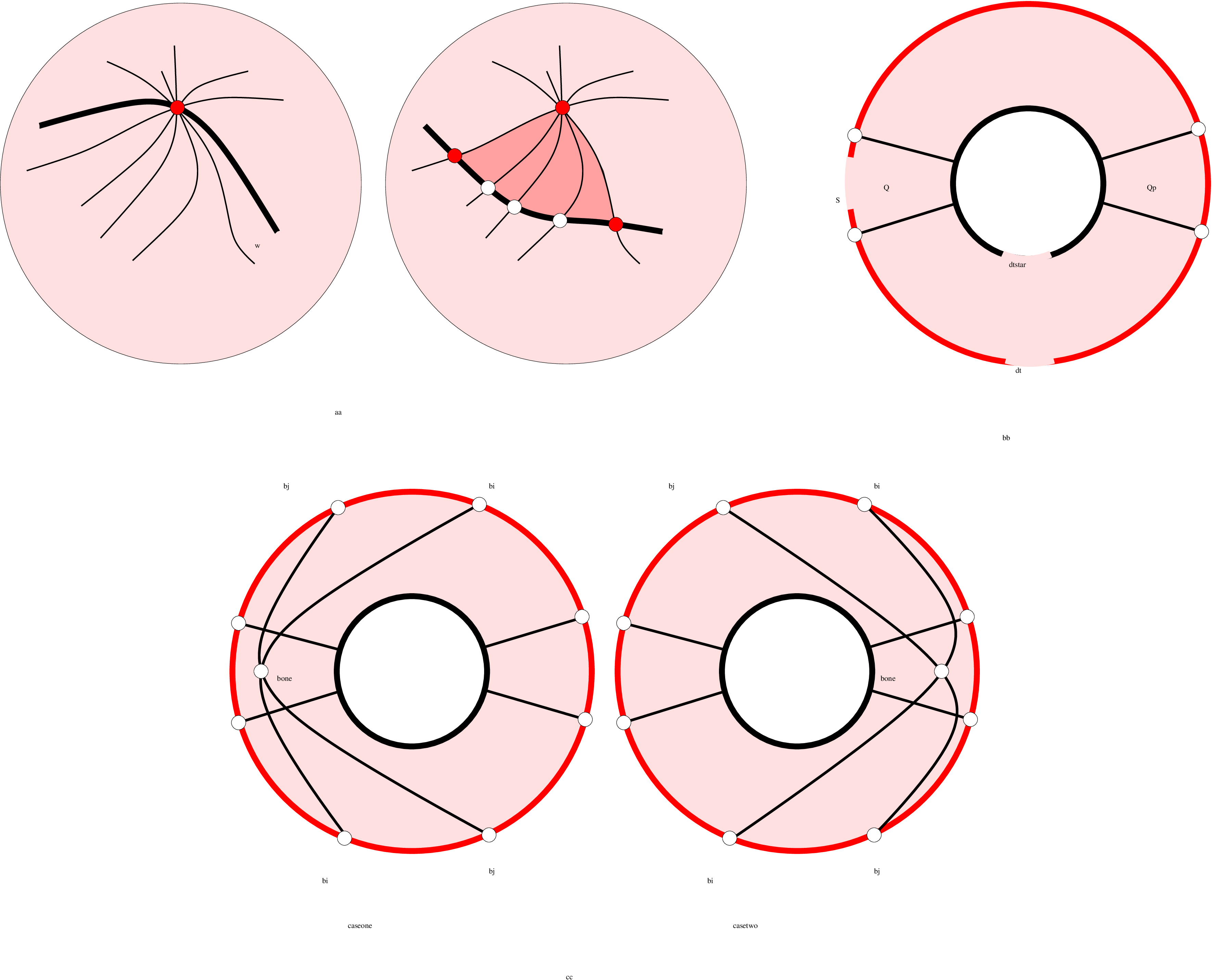}
\caption{Stability under splitting mutations
 \label{dualitytwo}}
\end{figure}
We now examine the case of a splitting mutation.

Let $K$ be the set of indices of the double pseudolines involved in the splitting mutation, let $w$ be the index of the moving double pseudoline,
 let $\wo$ be the vertex involved in the mutation, and for any $v \in K\setminus\{w\}$ let $x(v) \in \setnode{w}{v}$ defined by the condition $L_{x(v)} = \wo.$ 

Let $w^*$ be a double pseudoline containing $\wo$ in its crosscap side such
that any pseudoline of $L$ crosses $w^*$ in exactly two points and such  that no vertex
of the arrangement $L$ belongs to the M{\"o}bius strip $\MS(w^*)$ bounded by $w^*.$  
The pseudolines of $L$ induce a decomposition of $\MS(w^*)$ into
quadrilateral regions.
In particular the trace of the central cell of the raiponce $L$ indexed by $w$ onto $\MS(w^*)$ is one of its 
quadrilateral regions that we shall denote by $Q$. We denote by $S$ and $S^*$ the sides of $Q$
supported by $\wo$ and $w^*$, respectively, and  we denote by $Q'$ the second quadrilateral region of $\MS(w^*)$ bounded by $S$.

Let $B_1$ be a generic point of $Q$ if $\wo \in {\cal D}(\Gamma'_{w})$; otherwise let $B_1$ be a generic point of $Q'.$

For any $i \in K\setminus \{w\}$ we insert a generic point $B_i$ on the interior of the edge of the central cell of $L$ indexed by $i$ supported by  $\wo$, 
and we insert in the underlying pseudoline arrangement of $L$ a pseudoline $\ell_i$ such that 
\begin{enumerate} 
\item $\ell_i$ goes through the points $B_1$ and $B_i$,  and is contained in  $\MS(w^*)$;
\item the vertices of $L \cup \Psi$ are simple,  except $B_1$;
\end{enumerate}
and we perturb the pencil of pseudolines $\ell_i$ in the vicinity of $B_1$ into
a cyclic arrangement ${\ell_i^*}$ with a central cell containing $S^*$ or $S$
depending on whether $\wo \in {\cal D}(\Gamma'_{w})$ or not.

Now let $L'$ be the indexed family of pseudolines defined by 
$$
L'_{\tau} = \begin{cases}
\ell^*_v & \text{if $\tau = x(v)$ with $v\in K\setminus \{w\}$;} \\
L_{\tau} & \text{otherwise,}
\end{cases}
$$ 
where $\tau$ ranges over the indexing set of $L$.
A simple case analysis shows that $L'$ is a well-defined raiponce and is a primal
representation of $\Gamma'.$  Details are left to the reader.
\end{proof}

\begin{remark} Our proof of the Geometric Representation Theorem is constructive. For an alternative construction see~\cite{fpp-nsafd-11}.
\end{remark}


\section{Cycles, cocycles and chirotopes} \label{secfou}
In this section we prove that 
(1) the isomorphism class of an indexed arrangement of oriented double pseudolines
depends only on its family of isomorphism classes of subarrangements of size three, i.e., depends only on what we have called its chirotope;
 and that 
(2) the map that assigns to an indexed configuration of oriented convex bodies the isomorphism class of its dual arrangement is compatible with the isomorphism relations on the set of configurations of convex bodies, and it 
induces a one-to-one correspondence between the set of isomorphism classes of 
indexed configurations of oriented convex bodies and the set of isomorphism classes 
of indexed arrangements of oriented  double pseudolines.  
The main ingredients of our proof are 
\begin{enumerate}
\item the coding of the isomorphism class of an indexed arrangement of oriented double pseudolines by its family of side cycles;
\item the list of martagons on three and four double pseudolines, established in Section~\ref{sec:homotopy};
\item the injectivity of the map that assigns to each cell of the dual arrangement of an indexed configuration of two oriented convex bodies 
the cocycle of the configuration at some (hence any) element of the cell; and 
\item the injectivity of the map that assigns to a bitangent cocycle of an indexed family of at least three oriented convex bodies 
the  sub-cocycles obtained by removing in turn  each of the convex bodies. 
 \end{enumerate}

\subsection{Side cycles} We repeat the definition of side cycles given in the introduction. 
Let $\Gamma$ be an indexed arrangement of oriented double pseudolines and recall that $\Gamma$ is extended to the negative indices by assigning to a negative index 
the reoriented version of the oriented double pseudoline assigned to its positive version.
The {\it side cycle of disk type} assigned to the (signed) indice $i$, denoted~$D_i$, is the circular sequence of indices 
of the double pseudolines crossed by the side wheel of a sidecar rolling on $\Gamma_i$, side wheel on the disk side of $\Gamma_i$, 
that are (locally) oriented away from~$\Gamma_i$.  
Similarly  the {\it side cycle of crosscap type} assigned to the index $i$, denoted $M_i$,  is the circular sequence of indices of the 
double pseudolines crossed by the side wheel of a sidecar rolling on $\Gamma_i$, side wheel on the crosscap side of $\Gamma_i$,
that are (locally) oriented away from~$\Gamma_i$.  
Note that the side cycles of disk (crosscap)  type assigned to an index and its complement are reverse to one another 
and that for simple  arrangements the side cycle of disk type assigned to an index is the complement of its side cycle of crosscap type and vice versa.
 \begin{example} 
The side cycles of disk type of an arrangement $\Gamma$ on two double pseudolines, say indexed by $i,j$, are 
$$
\begin{array}{cl}
i: & \jbar{j}\jbar{j}\jind{j}\jind{j}\\
j: & \jbar{i}\,\jbar{i}\,\jind{i}\,\jind{i}.
\end{array}
$$
This can be easily read in Fig.~\ref{CodingADPBIS} where we have displayed the first barycentric subdivision of the one-skeleton of the arrangement and  
labeled each edge of the subdivision with the index of the  supporting double pseudoline of the edge that is, 
locally on the edge, oriented away from the vertex of the arrangement to which the edge
is incident.     
Observe that each symbol in these cycles corresponds in the natural way to a unique node of the arrangement, namely 
the linear sequence of symbols  $\jbar{j}\jbar{j}\jind{j}\jind{j}$ corresponds to the linear sequence of nodes $\{\nodeone{i}{j}\}\{\nodetwo{i}{j}\}\{\nodethr{i}{j}\}\{\nodefou{i}{j}\}$, as illustrated in 
Fig.~\ref{CodingADPBIS}. The side cycles of crosscap type of $\Gamma$ coincide with its side cycles of disk type but now the linear sequence of symbols  $\jbar{j}\jbar{j}\jind{j}\jind{j}$ 
corresponds to the linear sequence of nodes $\{\nodethr{i}{j}\}\{\nodefou{i}{j}\}\{\nodeone{i}{j}\}\{\nodetwo{i}{j}\}$.  
\end{example}
\begin{figure}[!htb]
\psfrag{fir}{$\jind{i}$}
\psfrag{firb}{$\jbar{i}$}
\psfrag{sec}{$\jind{j}$}
\psfrag{secb}{$\jbar{j}$}
\psfrag{one}{$\nodeone{i}{j}$}
\psfrag{two}{$\nodetwo{i}{j}$}
\psfrag{thr}{$\nodethr{i}{j}$}
\psfrag{fou}{$\nodefou{i}{j}$}
\centering
\includegraphics[width=0.43750000000875\linewidth]{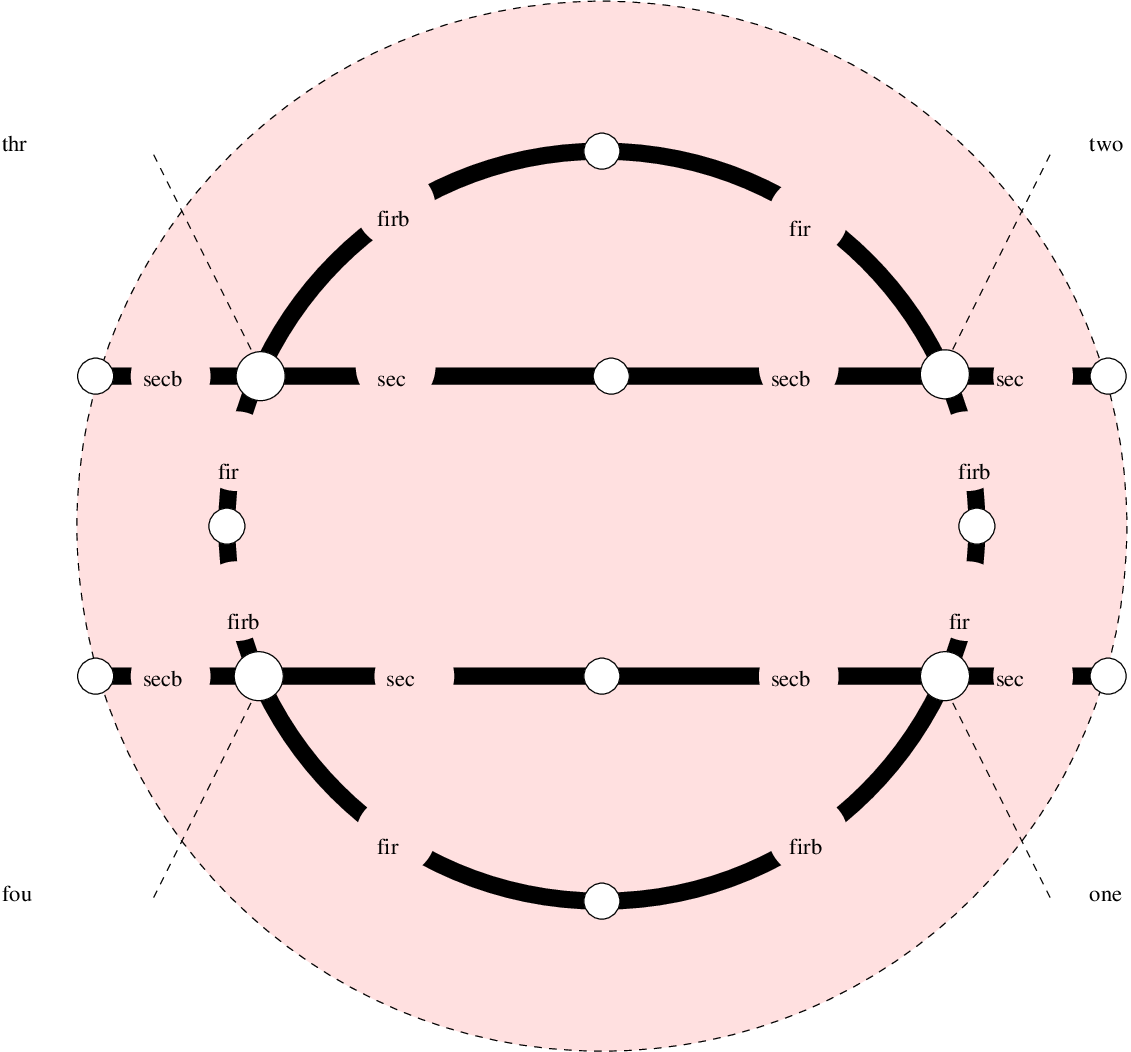}
\caption{The first barycentric subdivision of the one-skeleton of an arrangement of two double pseudolines: each edge of the subdivision is labeled 
with the signed index of its signed supporting curve that is,
locally on the edge, oriented away from the vertex of the arrangement to which the edge
is incident \label{CodingADPBIS}}
\end{figure}
Let $\Gamma$ be an indexed arrangement of oriented double pseudolines with indexing set~$I$,
let $D_i$ be its side cycles of disk type and $M_i$ those of crosscap type. 
Let $S_i$ be the result of replacing in $D_i$ the linear subsequences $\jbar{j}\jbar{j}\jind{j}\jind{j}$, $j \neq i$,  
by the linear sequences  $\{\nodeone{i}{j}\}\{\nodetwo{i}{j}\}\{\nodethr{i}{j}\}\{\nodefou{i}{j}\}$; similarly let  $T_i$ 
be the result of replacing in $M_i$ the linear subsequences $\jind{j}\jind{j}\jbar{j}\jbar{j}$, $j\neq i$, by the linear sequences
 $\{\nodeone{i}{j}\}\{\nodetwo{i}{j}\}\{\nodethr{i}{j}\}\{\nodefou{i}{j}\}$.
Clearly there is a one-to-one correspondence between the vertices of the arrangement lying on the curve indexed by $i$ and the maximal 
 factors  $\{i_1j_1\}\{i_2j_2\}\{i_3j_3\}\ldots \{i_kj_k\}$ of $S_i$ with $j_l \notin\{j_{l '}, \overline{j}_{l'}\}$ for all $1\leq l<l'\leq k$  
that appear in reverse order $\{i_kj_k\}\ldots\{i_3j_3\}\{i_2j_2\}\{i_1j_1\}$  in $T_i$, {\it prime factors} for short.
More precisely: 
\begin{enumerate}
\item the node of vertex $v$ associated with the  prime 
factor $\{i_1j_1\}\{i_2j_2\}\{i_3j_3\}\ldots \{i_kj_k\}$ of $S_i$ is the set of 
$\{i_{l}j_{l}\} \otimes \{i_{l'}j_{l'}\}$, $1\leq l \leq l' \leq k$,
 where $\{i_lj_l\}\otimes  \{i_{l'}j_{l'} \}$ is the element of the $4$-set $\setnode{j_l}{j_{l'}}$ indexing the intersection point of $\Gamma_{j_{l}}$ and $\Gamma_{j_{l'}}$ 
that coincides with~$v$.  As illustrated in Fig.~\ref{finalbouquet} (which depicts implicitly the $4\times 4$ possible nodes involving three curves indexed by $i,j$ and $j_*$ 
where $i\in I$,$j,j_*\in I\cup \overline{I}$ and where the dashed sides stand for the crosscap sides) this
\begin{figure}[!htb]
\psfrag{i}{$\jind{i}$}
\psfrag{j}{$\jind{j}$}
\psfrag{k}{$\jind{j_*}$}
\psfrag{ib}{$\jbar{i}$}
\psfrag{jb}{$\jbar{j}$}
\psfrag{kb}{$\jbar{j_*}$}
\psfrag{A}{$\{\node{i}{j}\}\{\node{i}{j_*}\}$}
\psfrag{B}{$\{\node{i}{j}\}\{\node{\jbar{i}}{j_*}\}$}
\psfrag{E}{$\{\node{\jbar{i}}{j}\}\{\node{i}{j_*}\}$}
\psfrag{F}{$\{\node{\jbar{i}}{j}\}\{\node{\jbar{i}}{j_*}\}$}
\centering
\includegraphics[width=0.987500\linewidth]{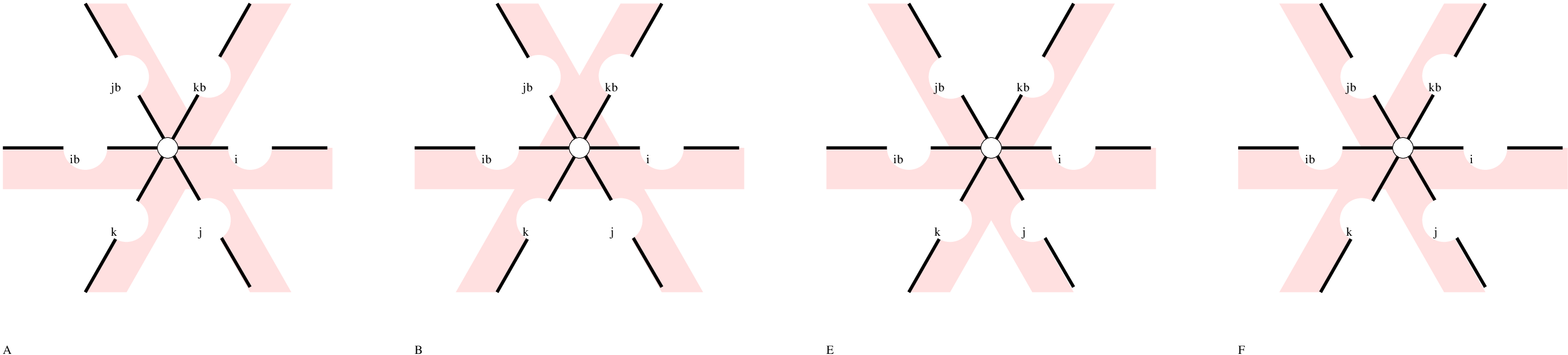}
\caption{\label{finalbouquet} Implicit description of the $4\times 4$ possible nodes involving three curves indexed by $i,j$ and ${j}_{*}$
where $i\in I$, $j,j_*\in I \cup \overline{I}$. The dashed sides stand for the crosscap sides}
\end{figure}
 element
depends solely on the information contained in the (ordered) pair $\{i_lj_l\}\{i_{l'}j_{l'}\}$,
 and the multiplication table of $\otimes$ is the following 
$$
\left\{
\begin{array}{ccc}
\{\node{i}{j}{}\} \otimes \{\node{i}{j_{\phantom{*}}}{}\} & = & \{\node{j}{i_{\phantom{*}}}{} \}\\
\{\node{i}{j}{}\} \otimes \{\node{i}{j_*}{}\} & = & \{\node{j}{\jbar{j_*}}{} \} \\
\{\node{i}{j}{}\} \otimes \{\node{\jbar{i}}{j_*}{} \} & = & \{\node{\jbar{j}}{\jbar{j_*}}{} \}\\
\{\node{\jbar{i}}{j}{}\} \otimes \{\node{i}{j_*}{} \} & = &\{\node{j}{j_*}{} \} \\
\{\node{\jbar{i}}{j}{}\} \otimes \{\node{\jbar{i}}{j_*}{} \} & = & \{\node{\jbar{j}}{j_*}{} \} 
\end{array}\right.
$$
where $i\in I$, $j,j_*\in I \cup \overline{I}$ with $j\notin \{j_*,\overline{j}_*\}$; 
\item Conversely,
the prime factor of $S_i$ corresponding to vertex $v$ is 
the sequence of $\{\eta j\}$, $\eta \in  \{i,\overline{i}\}$, that belong to the node $N$ of $v$, ordered according to the dominance relation  
$\{\eta j\} \prec \{\eta'j'\}$ if $\{\eta j\} \otimes \{\eta'j'\} \in N$. 
\end{enumerate}
Thus node cycles and side cycles carry exactly the same information about the arrangement. 
Since, according to Theorem~\ref{coding},  two indexed and oriented arrangements are isomorphic if and only if they have the same family of node cycles we get  
\begin{theorem}\label{cycle}
Two indexed arrangements of oriented double pseudolines are isomorphic if and only if they have the same family of side cycles.  \qed
\end{theorem}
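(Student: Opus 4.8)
The plan is to deduce the theorem from Theorem~\ref{coding} by showing that the family of side cycles $(D_i,M_i)_{i\in I}$ of an indexed arrangement of oriented double pseudolines $\Gamma$ and its family $\CC(\Gamma)$ of node cycles are mutually recoverable, one from the other. Once that is established the theorem is immediate: Theorem~\ref{coding} already asserts that two indexed oriented arrangements are isomorphic if and only if they have the same family of node cycles, so ``same side cycles'' and ``isomorphic'' become equivalent through the common intermediate condition ``same node cycles''.

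The first task is to make precise the two halves of the dictionary indicated in the paragraphs above. From side cycles to node cycles: using the rigidity of the local picture of a pair of double pseudolines guaranteed by Theorem~\ref{theoone}, one replaces in $D_i$ the linear subsequences $\jbar{j}\jbar{j}\jind{j}\jind{j}$, $j\neq i$, by $\{\nodeone{i}{j}\}\{\nodetwo{i}{j}\}\{\nodethr{i}{j}\}\{\nodefou{i}{j}\}$ to obtain $S_i$, and proceeds likewise on $M_i$ to obtain $T_i$; the prime factors of $S_i$ are then in canonical bijection with the vertices of $\Gamma$ on $\Gamma_i$, and the node carried by such a vertex is computed from its prime factor through the multiplication table of $\otimes$. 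Running this over all $i\in I$ reconstructs the set of nodes $\QNodes(\Gamma)$ together with each cyclic sequence $\CC_i(\Gamma)$, i.e.\ reconstructs $\CC(\Gamma)$. Conversely, from $\CC(\Gamma)$ one reads off, for each vertex $v$ on $\Gamma_i$ with node $N$, the factor made of the signed indices $\{\eta j\}$, $\eta=\pm i$, that lie in $N$, ordered by the dominance relation $\{\eta j\}\prec\{\eta'j'\}\iff\{\eta j\}\otimes\{\eta'j'\}\in N$; threading these factors along the cycle $\CC_i(\Gamma)$ and restoring the signs recovers $S_i$ and hence $D_i$, while threading them in reversed order recovers $T_i$ and hence $M_i$. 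Thus the two families carry the same information.

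With the dictionary in place, the proof concludes in a line: two indexed arrangements of oriented double pseudolines have the same family of side cycles if and only if they have the same family of node cycles, and by Theorem~\ref{coding} this holds if and only if they are isomorphic.

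The step I expect to require genuine care---the main obstacle---is verifying the internal consistency of the dictionary, namely that the operation $\otimes$ is well defined regardless of how a prime factor is written, and that the prime factor of $S_i$ attached to a vertex $v$ is precisely the reversal of the prime factor of $T_i$ attached to the same vertex, so that the disk-side and crosscap-side bookkeeping agree on a single set of nodes. Thanks again to Theorem~\ref{theoone}, this comes down to inspecting the $4\times 4$ local configurations of three oriented double pseudolines of Figure~\ref{finalbouquet}, which at once pins down the $\otimes$ table and confirms the match; the remaining points---that the substitutions producing $S_i$ and $T_i$ are unambiguous and that each vertex of $\Gamma$ arises exactly once as a prime factor of $S_i$ for every incident index $i$---are then routine.
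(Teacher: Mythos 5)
Your proposal is correct and follows essentially the same route as the paper: both establish a dictionary between side cycles and node cycles via the substitution $D_i\mapsto S_i$, $M_i\mapsto T_i$, the identification of prime factors with vertices, and the multiplication table of $\otimes$ read off from the local configurations of Figure~\ref{finalbouquet}, and then conclude by invoking Theorem~\ref{coding}. The point you flag as requiring care (well-definedness of $\otimes$ and the matching of reversed prime factors in $S_i$ and $T_i$) is exactly the verification the paper leaves to the reader.
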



Let $X$ be an arrangement of double pseudolines, let $X_*$ be an indexed and oriented version of $X$ and
recall that we have extended $X_*$ to the complements of the original indices by assigning to a negative index the reoriented version of the double pseudoline assigned to its complement. 
Let $G$ be the group of permutations of the signed indices which are compatible with the operation of taking the complement,  
 let $G_{X_*}$ 
be the stabilizer of $X_*$, i.e., the subgroup of $G$ whose elements are the permutations $\sigma$ such that $X_*\sigma$ and $X_*$ are isomorphic,  
and let $G_X$ be the group of automorphisms of  $X$, which  we think of as a subgroup of the group of permutations of the signed (or oriented) double pseudolines of the arrangement.
 Clearly the map that assigns to $\sigma \in G_X$ its conjugate $ X_*^{-1}\sigma X_* \in G$
under $X_*$ is a monomorphism of $G_X$ onto  $G_{X_*}$.  Thus we can see $G_X$ as a subgroup $G_{X_*}$ of $G$ and the number of distinct
indexed and oriented versions of $X$ is the index $[G:G_{X^*}]$ of $G_{X_*}$ in $G$.
In the sequel we use the notation $X(\sigma)$ for the arrangement $X_*\sigma$, $\sigma \in G$; hence $X(1) = X_*$, where $1$ is the unit of $G$.

\begin{example} Let $Z$ be the hemi-cube arrangement and let $Z_*= Z(123)$ be one of its indexed and oriented version on the indexing set $\{1,2,3\}$; cf.  Fig.~\ref{finalgroup}.
The group $G_{Z}$ is, as explained in Section~\ref{secone}, $S_4$.
Thus the number of distinct indexed and oriented versions of
$Z$ is $3!2^3/24 = 2$.
The group $G_{Z_*}$ is of order $24$ generated by the permutations
$\jbar{1}\jind{3}\jind{2}$ and
$\jbar{1}\jbar{2}\jind{3}$
and $\jind{2}\jbar{3}\jbar{1}$
(which correspond to the automorphisms $\tau_{1}$, $\tau_{2}$ and $\tau_{3}$ of Section~\ref{secone}), respectively.
Its two cosets are 
$$
G_{Z_*} = 
\left\{
\begin{array}{ccc}
123& 231& 312 \\
\overline{1}\overline{2}3&\overline{2}\overline{3}1&\overline{3}\overline{1}2\\
\overline{1}2\overline{3}&\overline{2}3\overline{1}&\overline{3}1\overline{2}\\
1\overline{2}\overline{3}&2\overline{3}\overline{1}&3\overline{1}\overline{2}\\
21\overline{3}&32\overline{1}&13\overline{2}\\
3\overline{2}1&1\overline{3}2&2\overline{1}3\\
\overline{1}32&\overline{2}13&\overline{3}21\\
\overline{2}\overline{1}\overline{3}& \overline{3}\overline{2}\overline{1}&\overline{1}\overline{3}\overline{2}
 \end{array}
\right\}, \qquad
(213) G_{Z_*} = \left\{
\begin{array}{ccc}
213 & 321 & 132\\
\overline{2}\overline{1}3&\overline{3}\overline{2}1&\overline{1}\overline{3}2\\
\overline{2}1\overline{3}&\overline{3}2\overline{1}&\overline{1}3\overline{2}\\
2\overline{1}\overline{3}&3\overline{2}\overline{1}&1\overline{3}\overline{2}\\
12\overline{3}&23\overline{1}&31\overline{2}\\
3\overline{1}2&1\overline{2}3&2\overline{3}1\\
\overline{2}31&\overline{3}12&\overline{1}23\\
\overline{1}\overline{2}\overline{3}&\overline{2}\overline{3}\overline{1}& \overline{3}\overline{1}\overline{2}
 \end{array}
\right\}.
$$
\begin{figure}[!htb]
\psfrag{un}{$1$}
\psfrag{deu}{$2$}
\psfrag{tr}{$3$}
\psfrag{oga}{$24$}
\psfrag{nrr}{$2$}
\psfrag{name}{$Z$}
\psfrag{namebis}{$Z(123)$}
\psfrag{nameter}{$Z(\overline{1}23)$}
\psfrag{cycles}{$
\begin{array}{cl}
\jind{2}\jbar{2}\jind{3}\jbar{3}\jbar{2}\jind{2}\jbar{3}\jind{3}\\ 
\jind{1}\jbar{1}\jbar{3}\jind{3}\jbar{1}\jind{1}\jind{3}\jbar{3}\\ 
\jbar{1}\jind{1}\jbar{2}\jind{2}\jind{1}\jbar{1}\jind{2}\jbar{2} 
\end{array}
$}
\psfrag{cyclesbis}{$
\begin{array}{cl}
\jind{2}\jbar{2}\jbar{3}\jind{3}\jbar{2}\jind{2}\jind{3}\jbar{3}\\ 
\jind{1}\jbar{1}\jbar{3}\jind{3}\jbar{1}\jind{1}\jind{3}\jbar{3}\\ 
\jbar{1}\jind{1}\jbar{2}\jind{2}\jind{1}\jbar{1}\jind{2}\jbar{2} 
\end{array}
$}
\psfrag{cycles}{}
\psfrag{cyclesbis}{}
\centering
\includegraphics[width=0.875\linewidth]{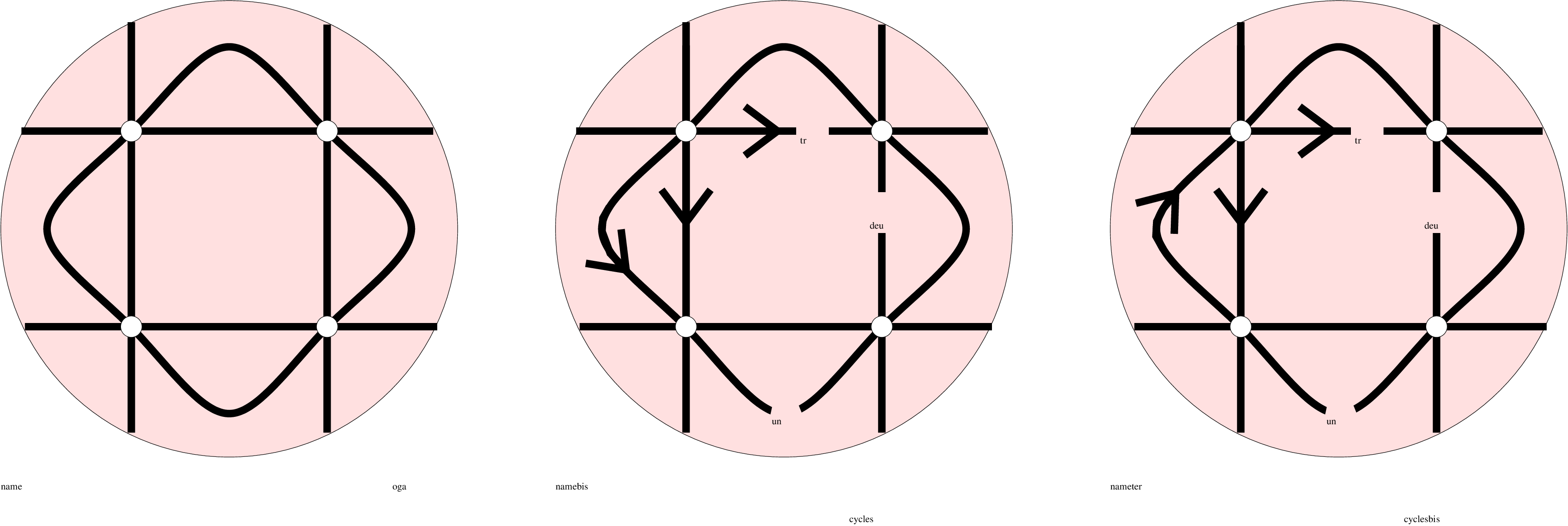}
\caption{The hemi-cube arrangement and two of its indexed and oriented versions on the indexing set $\{1,2,3\}$\label{finalgroup}}
\end{figure}
\end{example}

\begin{example}\label{examplemartagon}
As illustrated in Fig.~\ref{FinalMartagonTer}  
the two families of cycles on the indexing set $\{1,2,3,4\}$ 
$$
\begin{array}{ccl}
\AAA: & & 
\thrcrossR{\AAA}{\BBB} \foucrossR{\AAA}{\BBB} \onecrossR{\AAA}{\BBB} \twocrossR{\AAA}{\BBB}
\thrcrossR{\AAA}{\CCC} \foucrossR{\AAA}{\CCC} \onecrossR{\AAA}{\CCC} \twocrossR{\AAA}{\CCC}
\thrcrossR{\AAA}{\DDD} \foucrossR{\AAA}{\DDD} \onecrossR{\AAA}{\DDD} \twocrossR{\AAA}{\DDD}
\\
\BBB: &  &
\thrcrossR{\BBB}{\DDD} \foucrossR{\BBB}{\DDD} \foucrossR{\BBB}{\AAA} \onecrossR{\BBB}{\AAA}
\onecrossR{\BBB}{\CCC} \twocrossR{\BBB}{\CCC} \onecrossR{\BBB}{\DDD} \twocrossR{\BBB}{\DDD}
\twocrossR{\BBB}{\AAA} \thrcrossR{\BBB}{\AAA} \thrcrossR{\BBB}{\CCC} \foucrossR{\BBB}{\CCC} \\
\CCC: &  & 
\thrcrossR{\CCC}{\BBB} \foucrossR{\CCC}{\BBB} \foucrossR{\CCC}{\AAA} \onecrossR{\CCC}{\AAA}
\onecrossR{\CCC}{\DDD} \twocrossR{\CCC}{\DDD} \onecrossR{\CCC}{\BBB} \twocrossR{\CCC}{\BBB}
\twocrossR{\CCC}{\AAA} \thrcrossR{\CCC}{\AAA} \thrcrossR{\CCC}{\DDD} \foucrossR{\CCC}{\DDD} \\
\DDD: &  & 
\thrcrossR{\DDD}{\CCC} \foucrossR{\DDD}{\CCC} \foucrossR{\DDD}{\AAA} \onecrossR{\DDD}{\AAA}
\onecrossR{\DDD}{\BBB} \twocrossR{\DDD}{\BBB} \onecrossR{\DDD}{\CCC} \twocrossR{\DDD}{\CCC}
\twocrossR{\DDD}{\AAA} \thrcrossR{\DDD}{\AAA} \thrcrossR{\DDD}{\BBB} \foucrossR{\DDD}{\BBB}
\end{array}
\qquad \text{and} \qquad
\begin{array}{ccl}
\AAA: & &
\onecrossR{\AAA}{\BBB} \twocrossR{\AAA}{\BBB} \thrcrossR{\AAA}{\BBB} \foucrossR{\AAA}{\BBB}
\onecrossR{\AAA}{\CCC} \twocrossR{\DDD}{\CCC} \thrcrossR{\AAA}{\CCC} \foucrossR{\AAA}{\CCC}
\twocrossR{\AAA}{\DDD} \thrcrossR{\AAA}{\DDD} \foucrossR{\AAA}{\DDD} \onecrossR{\AAA}{\DDD}
\\
\BBB: &  &
\onecrossR{\BBB}{\CCC} \twocrossR{\BBB}{\CCC} \foucrossR{\BBB}{\AAA} \onecrossR{\BBB}{\AAA}
\twocrossR{\BBB}{\DDD} \thrcrossR{\BBB}{\DDD} \thrcrossR{\BBB}{\CCC} \foucrossR{\BBB}{\CCC}
\twocrossR{\BBB}{\AAA} \thrcrossR{\BBB}{\AAA} \foucrossR{\BBB}{\DDD} \onecrossR{\BBB}{\DDD}
\\
\CCC: &  & 
\onecrossR{\CCC}{\BBB} \twocrossR{\CCC}{\BBB} \foucrossR{\CCC}{\DDD} \onecrossR{\CCC}{\DDD}
\foucrossR{\CCC}{\AAA} \onecrossR{\CCC}{\AAA} \thrcrossR{\CCC}{\BBB} \foucrossR{\CCC}{\BBB}
\twocrossR{\CCC}{\DDD} \thrcrossR{\CCC}{\DDD} \twocrossR{\CCC}{\AAA} \thrcrossR{\CCC}{\AAA}
\\
\DDD: &  & 
\onecrossR{\DDD}{\BBB} \twocrossR{\DDD}{\BBB} \foucrossR{\DDD}{\AAA} \onecrossR{\DDD}{\AAA}
\thrcrossR{\DDD}{\BBB} \foucrossR{\DDD}{\BBB} \thrcrossR{\DDD}{\CCC} \foucrossR{\DDD}{\CCC}
\twocrossR{\DDD}{\AAA} \thrcrossR{\DDD}{\AAA} \onecrossR{\DDD}{\CCC} \twocrossR{\DDD}{\CCC}
\end{array}
$$
are the side cycles of disk type of indexed and oriented versions of the two 
martagons $M_1$ and $M_2$ on four double pseudolines, depicted in Fig.~\ref{FinalMartagon}. 
\begin{figure}[!htb]
\centering
\psfrag{4}{} \psfrag{6}{} \psfrag{3}{} \psfrag{2}{}
\psfrag{O}{$326020$} \psfrag{P}{$228010$}
\psfrag{PNS1}{$\Gamma^{1}(X,Y,Z)$}
\psfrag{PNS10}{$\Gamma^{10}(X,Y,Z)$} \psfrag{PNS2}{$\Gamma^2(X,Y,Z)$}
\psfrag{PNS11}{$\Gamma^{11}(X,Y,Z)$}
\psfrag{U}{$\uu$} \psfrag{V}{$\vv$} \psfrag{W}{$\ww$} \psfrag{OO}{$\OO$}
\psfrag{U}{$X$} \psfrag{V}{$Y$} \psfrag{W}{$Z$}
\psfrag{one}{$1$}
\psfrag{two}{$2$}
\psfrag{thr}{$3$}
\psfrag{fou}{$4$}
\psfrag{Mone}{$M_1(1234)$}
\psfrag{Mtwo}{$M_2(1234)$}
\psfrag{oMone}{$4!2^4/6$}
\psfrag{oMtwo}{$4!2^4/2$}
\includegraphics[width = 0.750 \linewidth]{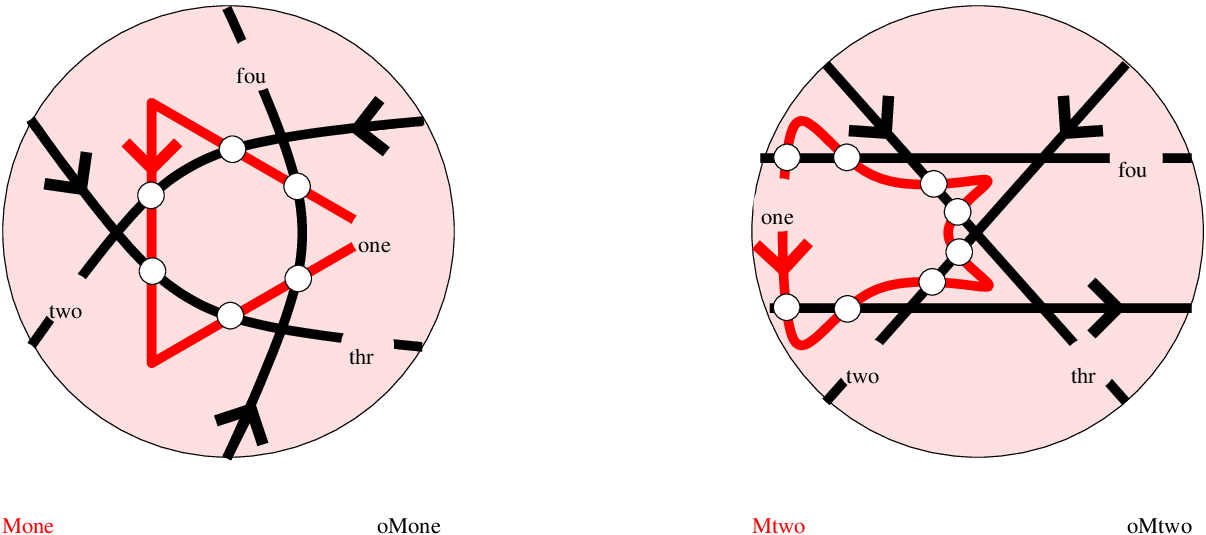}
\caption{Oriented and indexed versions of the martagons on $4$ double pseudolines
\label{FinalMartagonTer}}
\end{figure}
The automorphism group of $M_1$ is $S_3$ (dihedral group $D_3$) generated by the involutions $\overline{1324}$ and $\overline{1243}$, for example.
The automorphism group of $M_2$ is of order 2 generated by the permutation $\overline{1324}$.
\end{example}

We close this section by introducing, in Fig.~\ref{fulllistdecored0443WCY} and~\ref{fulllistdecored2264WCY},
one oriented and indexed version of  each of the thirteen simple arrangements of three double pseudolines of Fig.~\ref{fulllist}.  
We let the reader check that
the four subarrangements of size three of   
$M_1(1234)$ are 
$\bname{22}{1\overline{23}}$, $\bname{22}{1\overline{34}}$, $\bname{22}{1\overline{42}}$ and $\bname{04}{234}$. 
Similarly the four subarrangements of size three of   
the martagon $M_2(1234)$ are  
$\bname{22}{123}$, $\bname{22}{42\overline{3}}$, $\bname{32}{142}$, $\bname{32}{143}.$

\begin{figure}[!htb]
\footnotesize
\def\factor{0.2625325}
\centering
\psfrag{A}{$04\scriptstyle9000$}
\psfrag{B}{$07\scriptstyle3300$}
\psfrag{C}{$18\scriptstyle1030$}
\psfrag{D}{$25\scriptstyle2310$} 
\psfrag{F}{$07\scriptstyle3300$} 
\psfrag{G}{$37\scriptstyle0003$}
\psfrag{H}{$15\scriptstyle4300$}
\psfrag{I}{\textcolor{red}{$252310$}}
\psfrag{J}{$43\scriptstyle3021$}
\psfrag{K}{$25\scriptstyle2310$}
\psfrag{L}{$33\scriptstyle3310$}
\psfrag{M}{$32\scriptstyle6020$}
\psfrag{N}{$25\scriptstyle2310$}
\psfrag{Nstar}{$25\scriptstyle2310$}
\psfrag{O}{$32\scriptstyle6020$}
\psfrag{P}{$22\scriptstyle8010$}
\psfrag{Q}{$25\scriptstyle2310$}
\psfrag{R}{$36\scriptstyle0040$}
\psfrag{Z}{$64{\scriptstyle 00003}$}

\psfrag{A}{}
\psfrag{B}{}
\psfrag{C}{}
\psfrag{D}{} 
\psfrag{F}{} 
\psfrag{G}{}
\psfrag{H}{}
\psfrag{I}{}
\psfrag{J}{}
\psfrag{K}{}
\psfrag{L}{}
\psfrag{M}{}
\psfrag{N}{}
\psfrag{Nstar}{}
\psfrag{O}{}
\psfrag{P}{}
\psfrag{Q}{}
\psfrag{R}{}
\psfrag{Z}{}

\psfrag{one}{$\AAA$}
\psfrag{two}{$\BBB$}
\psfrag{thr}{$\CCC$}
\psfrag{oA}{$2$}
\psfrag{name}{$\name{04}(\AAA\BBB\CCC)$}
\psfrag{cycles}{$\begin{array}{c}
                 \thrcrossR{1}{2}
                 \foucrossR{1}{2}
                 \thrcrossR{1}{3}
                 \foucrossR{1}{3}
                 \onecrossR{1}{2}
                 \twocrossR{1}{2}
                 \onecrossR{1}{3}
                 \twocrossR{1}{3}
\\
                 \thrcrossR{2}{3}
                 \foucrossR{2}{3}
                 \thrcrossR{2}{1}
                 \foucrossR{2}{1}
                 \onecrossR{2}{3}
                 \twocrossR{2}{3}
                 \onecrossR{2}{1}
                 \twocrossR{2}{1}
\\
                 \thrcrossR{3}{1}
                 \foucrossR{3}{1}
                 \thrcrossR{3}{2}
                 \foucrossR{3}{2}
                 \onecrossR{3}{1}
                 \twocrossR{3}{1}
                 \onecrossR{3}{2}
                 \twocrossR{3}{2}
\end{array}$}
\includegraphics[width = \factor\linewidth]{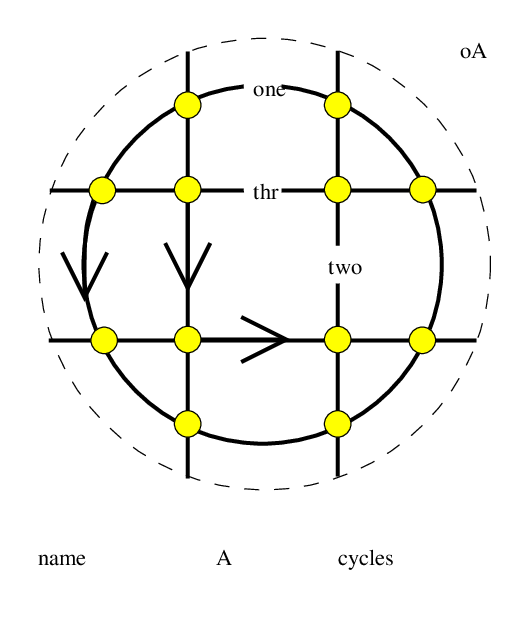}
\psfrag{one}{$\AAA$}
\psfrag{two}{$\BBB$}
\psfrag{thr}{$\CCC$}
\psfrag{cycles}{$\begin{array}{c}
                 \thrcrossR{1}{2}
                 \foucrossR{1}{2}
                 \thrcrossR{1}{3}
                 \foucrossR{1}{3}
                 \onecrossR{1}{2}
                 \onecrossR{1}{3}
                 \twocrossR{1}{2}
                 \twocrossR{1}{3}
\\
                 \thrcrossR{2}{3}
                 \thrcrossR{2}{1}
                 \foucrossR{2}{3}
                 \foucrossR{2}{1}
                 \onecrossR{2}{3}
                 \twocrossR{2}{3}
                 \onecrossR{2}{1}
                 \twocrossR{2}{1}
\\
                 \thrcrossR{3}{1}
                 \foucrossR{3}{1}
                 \thrcrossR{3}{2}
                 \onecrossR{3}{1}
                 \foucrossR{3}{2}
                 \twocrossR{3}{1}
                 \onecrossR{3}{2}
                 \twocrossR{3}{2}
\end{array}$}
\psfrag{name}{$\name{07}(\AAA\BBB\CCC)$}
\psfrag{oB}{$8$}
\includegraphics[width = \factor\linewidth]{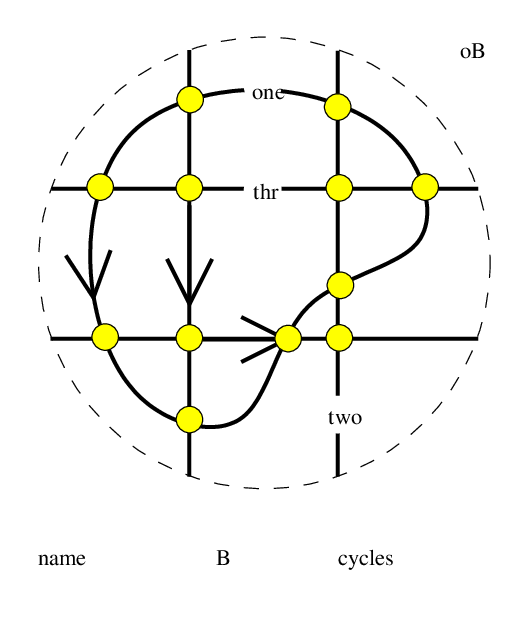}
\psfrag{one}{$\AAA$}
\psfrag{two}{$\BBB$}
\psfrag{thr}{$\CCC$}
\psfrag{cycles}{$\begin{array}{c}
                 \twocrossR{1}{3}
                 \foucrossR{1}{2}
                 \thrcrossR{1}{3}
                 \foucrossR{1}{3}
                 \onecrossR{1}{2}
                 \onecrossR{1}{3}
                 \twocrossR{1}{2}
                 \thrcrossR{1}{2}
\\
                 \twocrossR{2}{1}
                 \thrcrossR{2}{1}
                 \foucrossR{2}{3}
                 \foucrossR{2}{1}
                 \onecrossR{2}{3}
                 \twocrossR{2}{3}
                 \onecrossR{2}{1}
                 \thrcrossR{2}{3}
\\
                 \twocrossR{3}{2}
                 \foucrossR{3}{1}
                 \thrcrossR{3}{2}
                 \onecrossR{3}{1}
                 \foucrossR{3}{2}
                 \twocrossR{3}{1}
                 \onecrossR{3}{2}
                 \thrcrossR{3}{1}
\end{array}$}
\psfrag{name}{$\name{18}(\AAA\BBB\CCC)$}
\psfrag{oC}{$12$}
\includegraphics[width = \factor\linewidth]{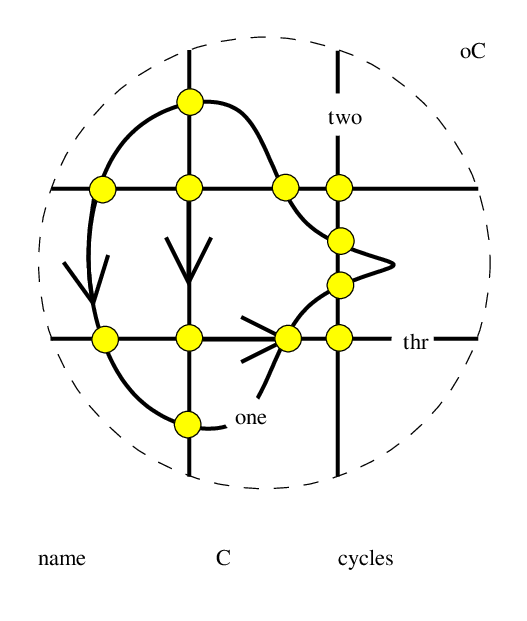}
\psfrag{one}{$\AAA$}
\psfrag{two}{$\BBB$}
\psfrag{thr}{$\CCC$}
\psfrag{name}{$\name{37}(\AAA\BBB\CCC)$}
\psfrag{oG}{$8$}
\psfrag{cycles}{$\begin{array}{c}
                 \twocrossR{1}{3}
                 \foucrossR{1}{2}
                 \thrcrossR{1}{3}
                 \onecrossR{1}{2}
                 \foucrossR{1}{3}
                 \onecrossR{1}{3}
                 \twocrossR{1}{2}
                 \thrcrossR{1}{2}
\\
                 \twocrossR{2}{1}
                 \thrcrossR{2}{1}
                 \foucrossR{2}{3}
                 \foucrossR{2}{1}
                 \onecrossR{2}{3}
                 \onecrossR{2}{1}
                 \twocrossR{2}{3}
                 \thrcrossR{2}{3}
\\
                 \twocrossR{3}{2}
                 \thrcrossR{3}{2}
                 \foucrossR{3}{1}
                 \onecrossR{3}{1}
                 \foucrossR{3}{2}
                 \twocrossR{3}{1}
                 \onecrossR{3}{2}
                 \thrcrossR{3}{1}
\end{array}$}
\includegraphics[width = \factor\linewidth]{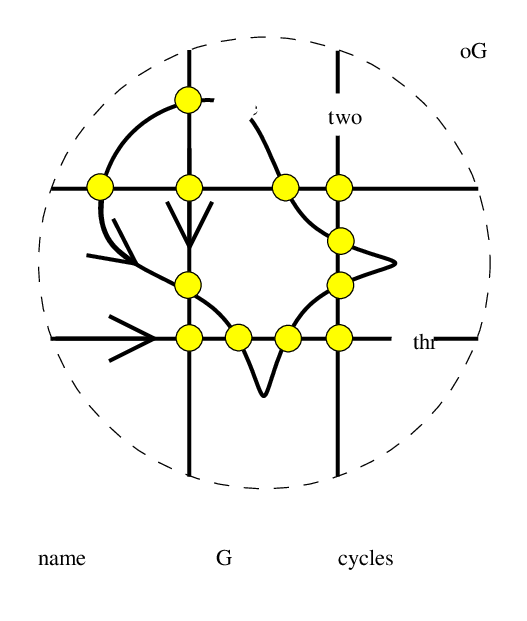}
\psfrag{one}{$\AAA$}
\psfrag{two}{$\BBB$}
\psfrag{thr}{$\CCC$}
\psfrag{name}{$\name{15}(\AAA\BBB\CCC)$}
\psfrag{oH}{$24$}
\psfrag{cycles}{$\begin{array}{c}
                 \thrcrossR{1}{3}
                 \foucrossR{1}{3}
                 \onecrossR{1}{2}
                 \twocrossR{1}{2}
                 \onecrossR{1}{3}
                 \thrcrossR{1}{2}
                 \foucrossR{1}{2}
                 \twocrossR{1}{3}
\\
                 \thrcrossR{2}{3}
                 \twocrossR{2}{1}
                 \foucrossR{2}{3}
                 \thrcrossR{2}{1}
                 \onecrossR{2}{3}
                 \foucrossR{2}{1}
                 \twocrossR{2}{3}
                 \onecrossR{2}{1}
\\
                 \twocrossR{3}{1}
                 \thrcrossR{3}{1}
                 \onecrossR{3}{2}
                 \twocrossR{3}{2}
                 \foucrossR{3}{1}
                 \thrcrossR{3}{2}
                 \foucrossR{3}{2}
                 \onecrossR{3}{1}
\end{array}$}
\includegraphics[width = \factor\linewidth]{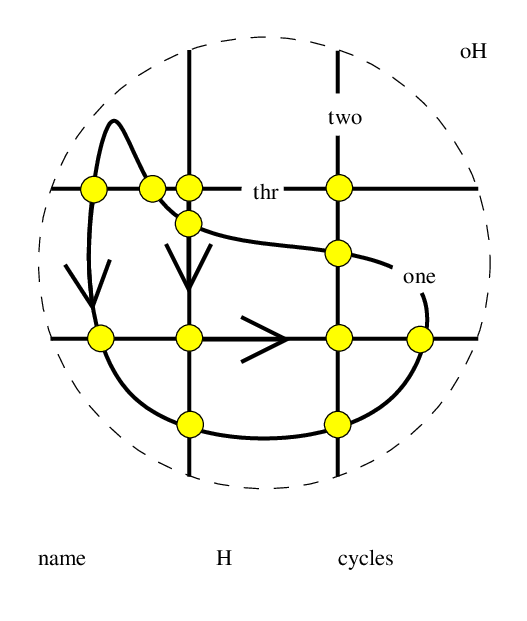}
\psfrag{one}{$\AAA$}
\psfrag{two}{$\BBB$}
\psfrag{thr}{$\CCC$}
\psfrag{name}{$\name{43}(\AAA\BBB\CCC)$}
\psfrag{oJ}{$24$}
\psfrag{cycles}{$\begin{array}{c}
                 \foucrossR{1}{2}
                 \twocrossR{1}{3}
                 \thrcrossR{1}{3}
                 \onecrossR{1}{2}
                 \foucrossR{1}{3}
                 \onecrossR{1}{3}
                 \twocrossR{1}{2}
                 \thrcrossR{1}{2}
\\
                 \twocrossR{2}{1}
                 \thrcrossR{2}{1}
                 \foucrossR{2}{3}
                 \onecrossR{2}{3}
                 \foucrossR{2}{1}
                 \onecrossR{2}{1}
                 \twocrossR{2}{3}
                 \thrcrossR{2}{3}
\\
                 \twocrossR{3}{2}
                 \thrcrossR{3}{2}
                 \foucrossR{3}{1}
                 \onecrossR{3}{1}
                 \foucrossR{3}{2}
                 \twocrossR{3}{1}
                 \thrcrossR{3}{1}
                 \onecrossR{3}{2}
\end{array}$}
\includegraphics[width = \factor\linewidth]{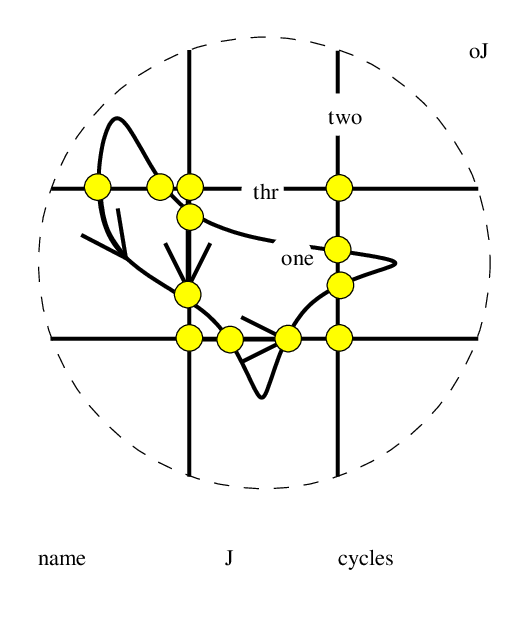}
\caption{Indexed and oriented versions of the arrangements $\name{04},\name{07},\name{18},\name{37},\name{15},\name{43}$.
Each diagram is labeled at its top right  by its number of (distinct) reindexed and reoriented versions and at its bottom right
by its side cycles of disk type 
\label{fulllistdecored0443WCY}}
\end{figure}

\begin{figure}[!htb]
\footnotesize
\def\factor{0.2625325}
\centering
\psfrag{A}{$04\scriptstyle9000$}
\psfrag{B}{$07\scriptstyle3300$}
\psfrag{C}{$18\scriptstyle1030$}
\psfrag{D}{$25\scriptstyle2310$} 
\psfrag{F}{$07\scriptstyle3300$} 
\psfrag{G}{$37\scriptstyle0003$}
\psfrag{H}{$15\scriptstyle4300$}
\psfrag{I}{\textcolor{red}{$252310$}}
\psfrag{J}{$43\scriptstyle3021$}
\psfrag{K}{$25\scriptstyle2310$}
\psfrag{L}{$33\scriptstyle3310$}
\psfrag{M}{$32\scriptstyle6020$}
\psfrag{N}{$25\scriptstyle2310$}
\psfrag{Nstar}{$25\scriptstyle2310$}
\psfrag{O}{$32\scriptstyle6020$}
\psfrag{P}{$22\scriptstyle8010$}
\psfrag{Q}{$25\scriptstyle2310$}
\psfrag{R}{$36\scriptstyle0040$}
\psfrag{Z}{$64{\scriptstyle 00003}$}

\psfrag{A}{}
\psfrag{B}{}
\psfrag{C}{}
\psfrag{D}{} 
\psfrag{F}{} 
\psfrag{G}{}
\psfrag{H}{}
\psfrag{I}{}
\psfrag{J}{}
\psfrag{K}{}
\psfrag{L}{}
\psfrag{M}{}
\psfrag{N}{}
\psfrag{Nstar}{}
\psfrag{O}{}
\psfrag{P}{}
\psfrag{Q}{}
\psfrag{R}{}
\psfrag{Z}{}

\psfrag{one}{$\AAA$}
\psfrag{two}{$\BBB$}
\psfrag{thr}{$\CCC$}
\psfrag{oP}{$12$}
\psfrag{name}{$\name{22}(\AAA\BBB\CCC)$}
\psfrag{cycles}{$\begin{array}{c}
                 \onecrossR{1}{3}
                 \twocrossR{1}{3}
                 \thrcrossR{1}{3}
                 \foucrossR{1}{3}
                 \onecrossR{1}{2}
                 \twocrossR{1}{2}
                 \thrcrossR{1}{2}
                 \foucrossR{1}{2}
\\
                 \thrcrossR{2}{3}
                 \foucrossR{2}{3}
                 \twocrossR{2}{1}
                 \thrcrossR{2}{1}
                 \onecrossR{2}{3}
                 \twocrossR{2}{3}
                 \foucrossR{2}{1}
                 \onecrossR{2}{1}
\\
                 \thrcrossR{3}{2}
                 \foucrossR{3}{2}
                 \twocrossR{3}{1}
                 \thrcrossR{3}{1}
                 \onecrossR{3}{2}
                 \twocrossR{3}{2}
                 \foucrossR{3}{1}
                 \onecrossR{3}{1}
\end{array}$}
\includegraphics[width = \factor\linewidth]{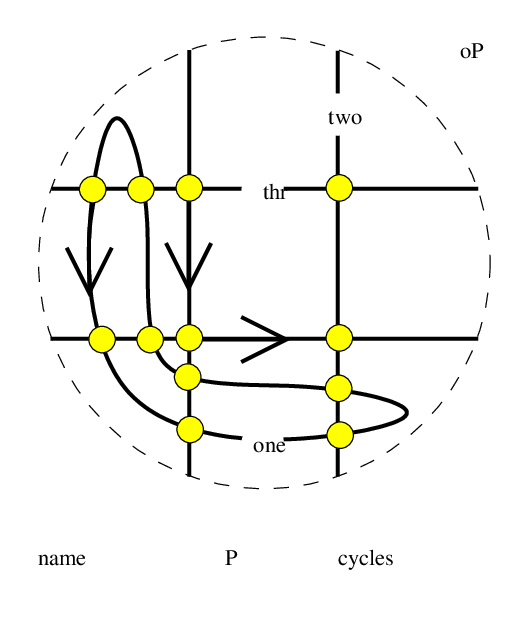}
\psfrag{one}{$\AAA$}
\psfrag{two}{$\BBB$}
\psfrag{thr}{$\CCC$}
\psfrag{oL}{$24$}
\psfrag{name}{$\name{33}(\AAA\BBB\CCC)$}
\psfrag{cycles}{$\begin{array}{c}
                 \foucrossR{1}{2}
                 \twocrossR{1}{3}
                 \thrcrossR{1}{3}
                 \onecrossR{1}{2}
                 \foucrossR{1}{3}
                 \twocrossR{1}{2}
                 \thrcrossR{1}{2}
                 \onecrossR{1}{3}
\\
                 \thrcrossR{2}{3}
                 \foucrossR{2}{3}
                 \twocrossR{2}{1}
                 \thrcrossR{2}{1}
                 \onecrossR{2}{3}
                 \foucrossR{2}{1}
                 \onecrossR{2}{1}
                 \twocrossR{2}{3}
\\
                 \onecrossR{3}{1}
                 \foucrossR{3}{2}
                 \twocrossR{3}{1}
                 \thrcrossR{3}{1}
                 \onecrossR{3}{2}
                 \twocrossR{3}{2}
                 \thrcrossR{3}{2}
                 \foucrossR{3}{1}
\end{array}$}
\includegraphics[width = \factor\linewidth]{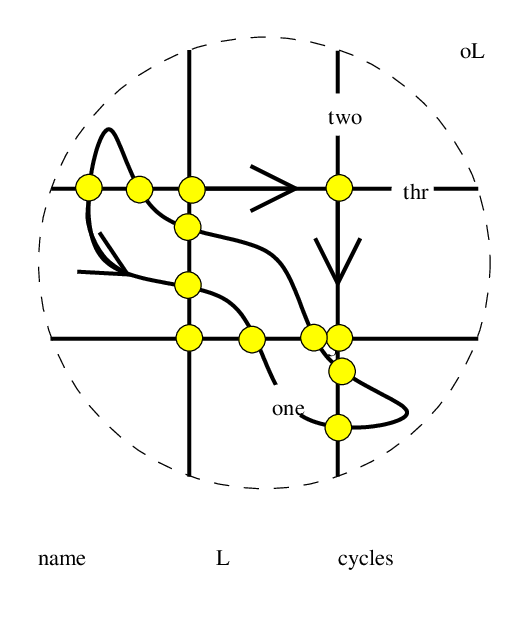}
\psfrag{one}{$\AAA$}
\psfrag{two}{$\BBB$}
\psfrag{thr}{$\CCC$}
\psfrag{name}{$\name{32}(\AAA\BBB\CCC)$}
\psfrag{oO}{$24$}
\psfrag{cycles}{$\begin{array}{c}
                 \twocrossR{1}{3}
                 \thrcrossR{1}{3}
                 \foucrossR{1}{3}
                 \onecrossR{1}{3}
                 \onecrossR{1}{2}
                 \twocrossR{1}{2}
                 \thrcrossR{1}{2}
                 \foucrossR{1}{2}
\\
                 \thrcrossR{2}{3}
                 \twocrossR{2}{1}
                 \thrcrossR{2}{1}
                 \foucrossR{2}{3}
                 \onecrossR{2}{3}
                 \foucrossR{2}{1}
                 \onecrossR{2}{1}
                 \twocrossR{2}{3}
\\
                 \thrcrossR{3}{2}
                 \foucrossR{3}{2}
                 \twocrossR{3}{1}
                 \thrcrossR{3}{1}
                 \onecrossR{3}{2}
                 \twocrossR{3}{2}
                 \foucrossR{3}{1}
                 \onecrossR{3}{1}
\end{array}$}
\includegraphics[width = \factor\linewidth]{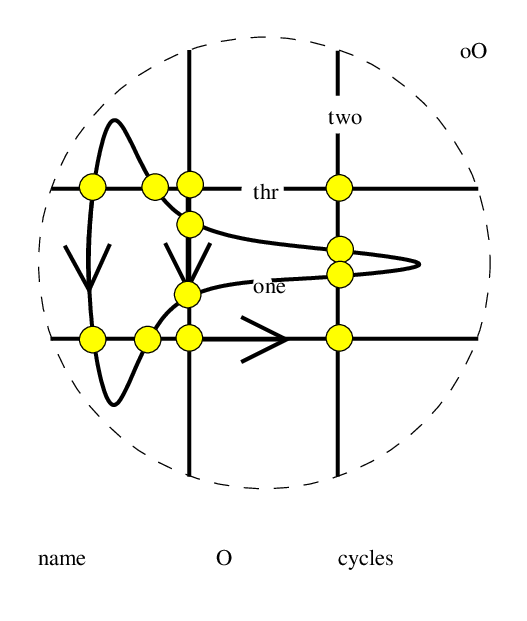}
\psfrag{one}{$\AAA$}
\psfrag{two}{$\BBB$}
\psfrag{thr}{$\CCC$}
\psfrag{oN}{$24$}
\psfrag{name}{$\name{{{25}_2}}(\AAA\BBB\CCC)$}
\psfrag{cycles}{$\begin{array}{c}
                 \foucrossR{1}{2}
                 \twocrossR{1}{3}
                 \thrcrossR{1}{3}
                 \foucrossR{1}{3}
                 \onecrossR{1}{2}
                 \twocrossR{1}{2}
                 \thrcrossR{1}{2}
                 \onecrossR{1}{3}
\\
                 \thrcrossR{2}{3}
                 \foucrossR{2}{3}
                 \twocrossR{2}{1}
                 \thrcrossR{2}{1}
                 \onecrossR{2}{3}
                 \foucrossR{2}{1}
                 \twocrossR{2}{3}
                 \onecrossR{2}{1}
\\
                 \onecrossR{3}{1}
                 \foucrossR{3}{2}
                 \twocrossR{3}{1}
                 \thrcrossR{3}{1}
                 \onecrossR{3}{2}
                 \twocrossR{3}{2}
                 \foucrossR{3}{1}
                 \thrcrossR{3}{2}
\end{array}$}
\includegraphics[width = \factor\linewidth]{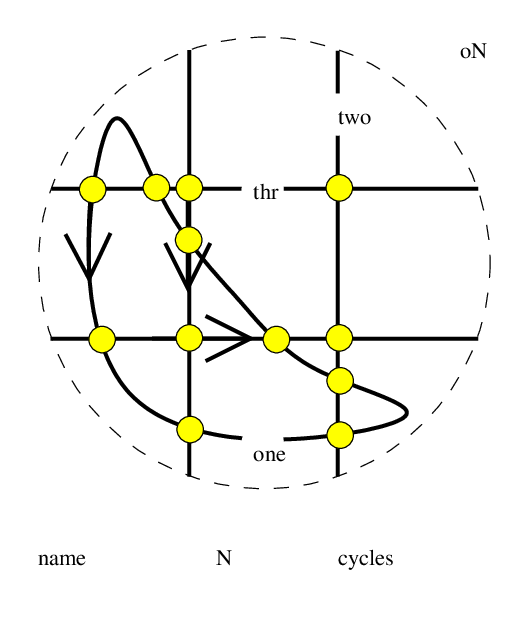}
\psfrag{one}{$\AAA$}
\psfrag{two}{$\BBB$}
\psfrag{thr}{$\CCC$}
\psfrag{oNstar}{$48$}
\psfrag{name}{$\name{{25_1}}(\AAA\BBB\CCC)$}
\psfrag{cycles}{$\begin{array}{c}
                 \foucrossR{1}{2}
                 \thrcrossR{1}{3}
                 \onecrossR{1}{2}
                 \foucrossR{1}{3}
                 \twocrossR{1}{2}
                 \thrcrossR{1}{2}
                 \onecrossR{1}{3}
                 \twocrossR{1}{3}
\\
                 \foucrossR{2}{1}
                 \onecrossR{2}{3}
                 \onecrossR{2}{1}
                 \twocrossR{2}{3}
                 \thrcrossR{2}{3}
                 \foucrossR{2}{3}
                 \twocrossR{2}{1}
                 \thrcrossR{2}{1}
\\
                 \twocrossR{3}{1}
                 \onecrossR{3}{2}
                 \thrcrossR{3}{1}
                 \twocrossR{3}{2}
                 \thrcrossR{3}{2}
                 \foucrossR{3}{1}
                 \onecrossR{3}{1}
                 \foucrossR{3}{2}
\end{array}$}
\includegraphics[width = \factor\linewidth]{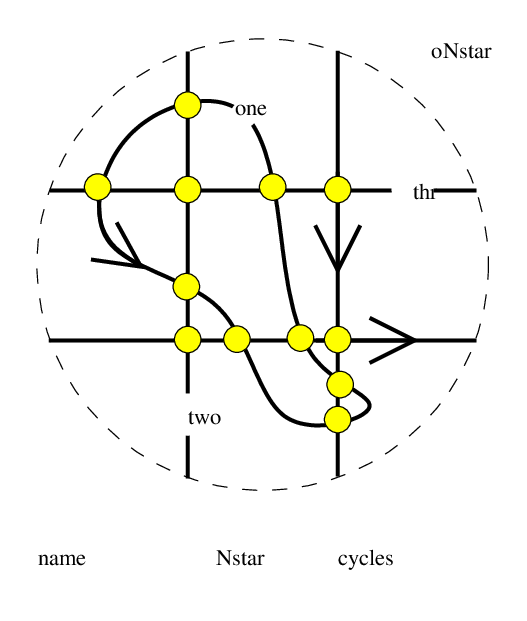}
\psfrag{one}{$\AAA$}
\psfrag{two}{$\BBB$}
\psfrag{thr}{$\CCC$}
\psfrag{oR}{$4$}
\psfrag{name}{$\name{36}(\AAA\BBB\CCC)$}
\psfrag{cycles}{$\begin{array}{c}
                 \thrcrossR{1}{3}
                 \twocrossR{1}{2}
                 \foucrossR{1}{3}
                 \onecrossR{1}{3}
                 \thrcrossR{1}{2}
                 \twocrossR{1}{3}
                 \foucrossR{1}{2}
                 \onecrossR{1}{2}
\\
                 \thrcrossR{2}{3}
                 \thrcrossR{2}{1}
                 \foucrossR{2}{3}
                 \foucrossR{2}{1}
                 \onecrossR{2}{1}
                 \onecrossR{2}{3}
                 \twocrossR{2}{1}
                 \twocrossR{2}{3}
\\
                 \foucrossR{3}{1}
                 \onecrossR{3}{1}
                 \onecrossR{3}{2}
                 \twocrossR{3}{1}
                 \twocrossR{3}{2}
                 \thrcrossR{3}{2}
                 \thrcrossR{3}{1}
                 \foucrossR{3}{2}
\end{array}$}
\includegraphics[width = \factor\linewidth]{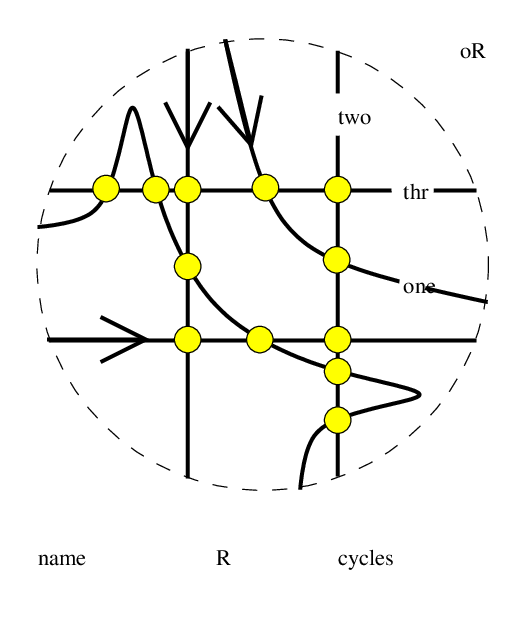}
\psfrag{one}{$\AAA$}
\psfrag{two}{$\BBB$}
\psfrag{thr}{$\CCC$}
\psfrag{cycles}{$\begin{array}{c}
                 \foucrossR{1}{2}
                 \onecrossR{1}{2}
                 \foucrossR{1}{3}
                 \onecrossR{1}{3}
                 \twocrossR{1}{2}
                 \thrcrossR{1}{2}
                 \twocrossR{1}{3}
                 \thrcrossR{1}{3}
\\
                 \foucrossR{2}{3}
                 \onecrossR{2}{3}
                 \foucrossR{2}{1}
                 \onecrossR{2}{1}
                 \twocrossR{2}{3}
                 \thrcrossR{2}{3}
                 \twocrossR{2}{1}
                 \thrcrossR{2}{1}
\\
                 \foucrossR{3}{1}
                 \onecrossR{3}{1}
                 \foucrossR{3}{2}
                 \onecrossR{3}{2}
                 \twocrossR{3}{1}
                 \thrcrossR{3}{1}
                 \twocrossR{3}{2}
                 \thrcrossR{3}{2}
\end{array}$}
\psfrag{name}{$\name{64}(\AAA\BBB\CCC)$}
\psfrag{oZ}{$2$}
\includegraphics[width = \factor\linewidth]{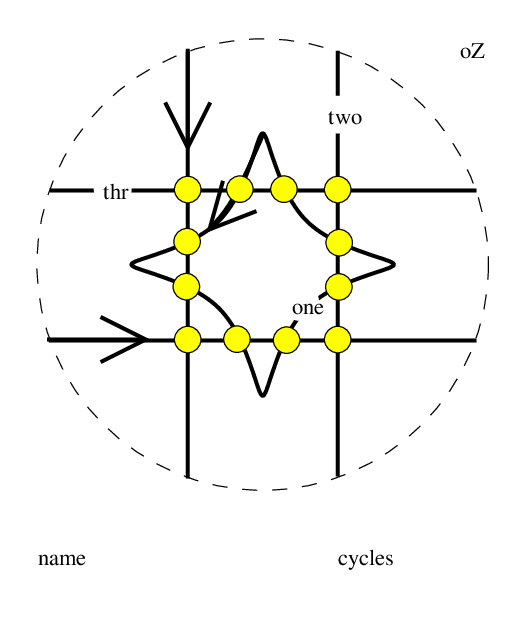}
\caption{Indexed and oriented versions of the arrangements $\name{22},\name{33},\name{32},\name{25_2},\name{25_1},\name{36},\name{64}.$ 
Each diagram is labeled at its top right  by its number of (distinct) reindexed and reoriented versions and at its bottom right
by its side cycles of disk type
\label{fulllistdecored2264WCY}}
\end{figure}


\clearpage
\subsection{Chirotopes}
We are now ready to show that the isomorphism class of an arrangement of oriented double pseudolines 
depends only on its chirotope; cf. first part of Theorem~\ref{theoADP}.  
\begin{theorem} The map that assigns to an isomorphism class of indexed arrangements of oriented double pseudolines 
its chirotope is one-to-one. 
\end{theorem}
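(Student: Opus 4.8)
The plan is to recover the family of side cycles of the arrangement from its chirotope and then invoke Theorem~\ref{cycle}. Recall that by that theorem an indexed oriented arrangement is determined, up to isomorphism, by its family of side cycles of disk type $\{D_i\}_{i\in I}$, and that the chirotope records, for each triple $\{i,j,k\}$, the isomorphism class (indices and orientations included) of the subarrangement $\Gamma|\{i,j,k\}$, equivalently---by Theorem~\ref{cycle} applied in size three---the three labelled side cycles of $\Gamma|\{i,j,k\}$. So it suffices to show that each $D_i$ is a function of these size-three data.

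First I would settle the case $|I|\le 4$. For $|I|\le 3$ there is nothing to prove, so assume $|I|=4$, write $\Delta=\Gamma|\{i,j,k,l\}$, and reconstruct $D_i^{\Delta}$. This is a cyclic word of length $12$ in which the four crossings with each of $\Gamma_j,\Gamma_k,\Gamma_l$ form quadruples $Q_j,Q_k,Q_l$; its restrictions to the pairs $\{j,k\}$, $\{j,l\}$, $\{k,l\}$, together with the node data binding them, are read off from the three subarrangements $\Delta|\{i,j,k\}$, $\Delta|\{i,j,l\}$, $\Delta|\{i,k,l\}$ supplied by the chirotope. Gluing three such pair-traces into one cyclic word is unambiguous unless each of $Q_j,Q_k,Q_l$ occurs along $\Gamma_i$ as a single consecutive block, i.e.\ unless $\Delta$ is a \emph{martagon} with respect to $\Gamma_i$, in which case the two possible cyclic orders of the three blocks have identical pair-traces. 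Here I would invoke Lemma~\ref{martagonthr}: the only martagons on three or four double pseudolines are $\name{22}$, $\name{32}$, $M_1$ and $M_2$. In particular a four-arrangement is a martagon with respect to at most one of its curves, so at least three of the side cycles $D_i^{\Delta},D_j^{\Delta},D_k^{\Delta},D_l^{\Delta}$ are unambiguously recovered from pair-traces; and inspecting the two exceptional arrangements $M_1,M_2$---whose lists of size-three subarrangements are distinct, each being a martagon with respect to a single curve that the chirotope can name---one checks that $\Delta$, hence the remaining side cycle, is still determined. This settles $|I|=4$.

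For $|I|\ge 5$ I would argue as follows. The cyclic order of the crossings along the oriented pseudocircle $\Gamma_i$ is determined by the cyclic orientation of each ordered triple of these crossings; any three of them lie on at most three of the curves $\Gamma_j$ with $j\ne i$, say $\Gamma_j,\Gamma_k,\Gamma_l$, and their cyclic orientation along $\Gamma_i$ is read from $D_i$ of the four-index subarrangement $\Gamma|\{i,j,k,l\}$. By the previous paragraph every such four-index subarrangement is determined by the chirotope, hence so is every $D_i$, and Theorem~\ref{cycle} concludes.

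The heart of the argument, and the step I expect to resist a short proof, is the martagon case of $|I|=4$: when the pair-traces fail to order the blocks $Q_j,Q_k,Q_l$ along $\Gamma_i$, the missing bit must be extracted from the residual size-three information---the side cycles along $\Gamma_j,\Gamma_k,\Gamma_l$ and the node data---and I do not see a way to do this uniformly; instead one leans on the finiteness afforded by Lemma~\ref{martagonthr} to reduce to a bounded verification on $M_1$ and $M_2$. A related point needing care is that the implication ``pair-traces determine the side cycle unless the arrangement is a martagon'' has to be established for side cycles that genuinely arise from arrangements of double pseudolines---using the rigidity of pairs given by Theorem~\ref{theoone}---and not for abstract coloured cyclic words, for which it is false.
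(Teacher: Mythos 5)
Your proposal is correct and follows essentially the same route as the paper's proof: reduce to four curves via the fact that a cyclic order is determined by its restrictions to triples of crossings, determine each side cycle from the size-three data except when the arrangement is a martagon with respect to that curve, and then dispose of the exceptional cases by the classification of Lemma~\ref{martagonthr} together with a finite check on $M_1$ and $M_2$. The paper makes your ``bounded verification'' explicit by exhibiting, for each of $M_1(1234)$ and $M_2(1234)$, the unique alternative candidate family of cycles $\mathcal{C}^*$ (obtained by permuting the blocks in the cycle of the martagon curve) and ruling it out because none of the signed permutations carrying $\mathcal{C}_1$ to $\mathcal{C}^*_1$ fixes the remaining cycles.
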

\begin{proof}
Let $\Gamma$ be an indexed arrangement of oriented double pseudo\-lines. 
According to Theorem~\ref{coding} the isomorphism class of $\Gamma$ depends only on the family of cycles of $\Gamma$; 
 therefore it is sufficient to show that the family of cycles of $\Gamma$
 depends only on the chirotope of $\Gamma$. 

Clearly the set of nodes of $\Gamma$ depends only on the chirotope of $\Gamma$,  and 
clearly we can restrict our attention to the case where $\Gamma$ has four
elements, say $\GOO,\Guu,\Gvv$ and $\Gww$. 

We now show that the cycle of $\Gamma$ indexed by $\OO$ depends only on the 
chirotope of $\Gamma$. 
We write $\XX{\xx}$, $\xx \in \{\uu,\vv,\ww\}$, for the set of nodes of $\Gamma$ indexing the intersection points of  $\GOO$ and $\Gxx$,
$\XX{\xx,\yy,\ldots}$ for $\XX{\{\xx,\yy,\ldots\}}$,  
and for any $\nC,\nC' \in \XX{\uu,\vv,\ww}$, $\nC \neq \nC'$, we write $\interval{\nC}{\nC'}$  for the set of $\nA \in\XX{\uu,\vv,\ww}$, including $\nC$ and $\nC'$,  
that appear between  $\nC$ and $\nC'$  on the cycle of $\Gamma$ indexed by~$\OO$. 

Let $\nC,\nC' \in \XX{\uu}$, $\nC\neq \nC'$, and let $\nD,\nD'\in \XX{\uu,\vv,\ww}$.
We say that the  pair $(\nC,\nC')$ separates the  pair $(\nD,\nD')$ if  $\nD \in \interval{\nC}{\nC'}$ and 
$\nD'\in \interval{\nC'}{\nC}$.  Clearly one can decide, using only the chirotope of $\Gamma$,  
\begin{enumerate}
\item whether $\nD$ belongs to the interval $\interval{\nC}{\nC'}$ or not; and
\item whether the pair $(\nC,\nC')$ separates the pair $(\nD,\nD')$ or not.
\end{enumerate}
Assume that a pair $(\nC,\nC')$ of distinct elements of $\XX{\uu}$ 
 separates a pair $(\nD,\nD')$ of elements of $\XX{\vv,\ww}$. 
(In particular this happens if one of the intersection points between $\GOO$ and $\Guu$ is not an ordinary vertex of the arrangement $\Gamma$.) In that case 
a pair $\nA,\nB$  of distinct elements of $\XX{\vv,\ww}$ lying in the open interval 
$\interval{\nC}{\nC'}\setminus \{\nC,\nC'\}$ appears in the linear order $\nA\nB$ in the interval 
$\interval{\nC}{\nC'}$ if and only if  $\nD', \nA, \nB$ appear in the cyclic order  $\nD'\nA\nB$ 
on the cycle of the arrangement $\{\GOO,\Gvv,\Gww\}$ indexed by $\OO$. Consequently the cycle of $\Gamma$ indexed by $\OO$ depends only on the 
chirotope of $\Gamma$ and we are done. Similarly we are done if a pair of distinct elements of $\XX{\vv}$ separates 
a pair of elements of $\XX{\uu,\ww}$, or if a pair of distinct elements of $\XX{\ww}$ separates a pair of elements of 
$\XX{\uu,\vv}$. 
Thus it remains to examine the case where for every $k\in\{\uu,\vv,\ww\}$ no pair of distinct elements of $\XX{k}$ 
separates a pair of elements of $\XX{\{\uu,\vv,\ww\}\setminus\{k\}}$,   
i.e., using the terminology introduced in the previous section, the  case where the arrangement $\Gamma$ is a  martagon with respect to~$\GOO$.
According to Lemma~\ref{martagonthr} and the notations introduced in Example~\ref{examplemartagon},
 this means that, up to permutation of the indices $\OO,\uu,\vv,\ww$ and their negatives,  
$\Gamma = M_1(1234)$ or $\Gamma = M_2(1234)$.
The theorem follows. Indeed if the chirotope of $\Gamma$ is the chirotope of $M_1(1234)$ then the family of side cycles  (of disk type) 
of $\Gamma$ is necessarily  
either the family  
$$
\begin{array}{ccl}
\calC{\AAA}: & & 
\thrcrossR{\AAA}{\BBB} \foucrossR{\AAA}{\BBB} \onecrossR{\AAA}{\BBB} \twocrossR{\AAA}{\BBB}
\thrcrossR{\AAA}{\CCC} \foucrossR{\AAA}{\CCC} \onecrossR{\AAA}{\CCC} \twocrossR{\AAA}{\CCC}
\thrcrossR{\AAA}{\DDD} \foucrossR{\AAA}{\DDD} \onecrossR{\AAA}{\DDD} \twocrossR{\AAA}{\DDD}
\\
\calC{\BBB}: &  &
\thrcrossR{\BBB}{\DDD} \foucrossR{\BBB}{\DDD} \foucrossR{\BBB}{\AAA} \onecrossR{\BBB}{\AAA}
\onecrossR{\BBB}{\CCC} \twocrossR{\BBB}{\CCC} \onecrossR{\BBB}{\DDD} \twocrossR{\BBB}{\DDD}
\twocrossR{\BBB}{\AAA} \thrcrossR{\BBB}{\AAA} \thrcrossR{\BBB}{\CCC} \foucrossR{\BBB}{\CCC} \\
\calC{\CCC}: &  & 
\thrcrossR{\CCC}{\BBB} \foucrossR{\CCC}{\BBB} \foucrossR{\CCC}{\AAA} \onecrossR{\CCC}{\AAA}
\onecrossR{\CCC}{\DDD} \twocrossR{\CCC}{\DDD} \onecrossR{\CCC}{\BBB} \twocrossR{\CCC}{\BBB}
\twocrossR{\CCC}{\AAA} \thrcrossR{\CCC}{\AAA} \thrcrossR{\CCC}{\DDD} \foucrossR{\CCC}{\DDD} \\
\calC{\DDD}: &  & 
\thrcrossR{\DDD}{\CCC} \foucrossR{\DDD}{\CCC} \foucrossR{\DDD}{\AAA} \onecrossR{\DDD}{\AAA}
\onecrossR{\DDD}{\BBB} \twocrossR{\DDD}{\BBB} \onecrossR{\DDD}{\CCC} \twocrossR{\DDD}{\CCC}
\twocrossR{\DDD}{\AAA} \thrcrossR{\DDD}{\AAA} \thrcrossR{\DDD}{\BBB} \foucrossR{\DDD}{\BBB}
\end{array}
$$
of side cycles of $M_1(1234)$
or the family
$$
\begin{array}{ccl}
\calCstar{\AAA}: & & 
\thrcrossR{\AAA}{\CCC} \foucrossR{\AAA}{\CCC} \onecrossR{\AAA}{\CCC} \twocrossR{\AAA}{\CCC}
\thrcrossR{\AAA}{\BBB} \foucrossR{\AAA}{\BBB} \onecrossR{\AAA}{\BBB} \twocrossR{\AAA}{\BBB}
\thrcrossR{\AAA}{\DDD} \foucrossR{\AAA}{\DDD} \onecrossR{\AAA}{\DDD} \twocrossR{\AAA}{\DDD} \\
\calCstar{\BBB}: &  &
\thrcrossR{\BBB}{\DDD} \foucrossR{\BBB}{\DDD} \foucrossR{\BBB}{\AAA} \onecrossR{\BBB}{\AAA}
\onecrossR{\BBB}{\CCC} \twocrossR{\BBB}{\CCC} \onecrossR{\BBB}{\DDD} \twocrossR{\BBB}{\DDD}
\twocrossR{\BBB}{\AAA} \thrcrossR{\BBB}{\AAA} \thrcrossR{\BBB}{\CCC} \foucrossR{\BBB}{\CCC} \\
\calCstar{\CCC}: &  & 
\thrcrossR{\CCC}{\BBB} \foucrossR{\CCC}{\BBB} \foucrossR{\CCC}{\AAA} \onecrossR{\CCC}{\AAA}
\onecrossR{\CCC}{\DDD} \twocrossR{\CCC}{\DDD} \onecrossR{\CCC}{\BBB} \twocrossR{\CCC}{\BBB}
\twocrossR{\CCC}{\AAA} \thrcrossR{\CCC}{\AAA} \thrcrossR{\CCC}{\DDD} \foucrossR{\CCC}{\DDD} \\
\calCstar{\DDD}: &  & 
\thrcrossR{\DDD}{\CCC} \foucrossR{\DDD}{\CCC} \foucrossR{\DDD}{\AAA} \onecrossR{\DDD}{\AAA}
\onecrossR{\DDD}{\BBB} \twocrossR{\DDD}{\BBB} \onecrossR{\DDD}{\CCC} \twocrossR{\DDD}{\CCC}
\twocrossR{\DDD}{\AAA} \thrcrossR{\DDD}{\AAA} \thrcrossR{\DDD}{\BBB} \foucrossR{\DDD}{\BBB}
\end{array}
$$
obtained from the family $\cal C$ by switching the blocks $\thrcrossR{\AAA}{\BBB} \foucrossR{\AAA}{\BBB} \onecrossR{\AAA}{\BBB} \twocrossR{\AAA}{\BBB}$ and $\thrcrossR{\AAA}{\CCC} \foucrossR{\AAA}{\CCC} \onecrossR{\AAA}{\CCC} \twocrossR{\AAA}{\CCC}$ in the cycle assigned to $1$ and by leaving the other cycles unchanged.
To rule out ${\cal C}^*$ from the set of families of cycles of double pseudoline arrangements it remains to observe that the  permutations 
that carry ${\cal C}_1$ onto ${\cal C}^*_1$ are exactly the $4$ permutations $1324,1243,1432,\overline{1234}$ and that none of these $4$ permutations 
leaves unchanged the triplet ${\cal C}_1,{\cal C}_2,{\cal C}_3.$ 
Similarly if the chirotope of $\Gamma$ is the chirotope of $M_2(12234)$ then 
the family of side cycles of $\Gamma$  is either the family 
$$
\begin{array}{ccl}
\calC{\AAA}: & &
\onecrossR{\AAA}{\BBB} \twocrossR{\AAA}{\BBB} \thrcrossR{\AAA}{\BBB} \foucrossR{\AAA}{\BBB}
\onecrossR{\AAA}{\CCC} \twocrossR{\DDD}{\CCC} \thrcrossR{\AAA}{\CCC} \foucrossR{\AAA}{\CCC}
\twocrossR{\AAA}{\DDD} \thrcrossR{\AAA}{\DDD} \foucrossR{\AAA}{\DDD} \onecrossR{\AAA}{\DDD}
\\
\calC{\BBB}: &  &
\onecrossR{\BBB}{\CCC} \twocrossR{\BBB}{\CCC} \foucrossR{\BBB}{\AAA} \onecrossR{\BBB}{\AAA}
\twocrossR{\BBB}{\DDD} \thrcrossR{\BBB}{\DDD} \thrcrossR{\BBB}{\CCC} \foucrossR{\BBB}{\CCC}
\twocrossR{\BBB}{\AAA} \thrcrossR{\BBB}{\AAA} \foucrossR{\BBB}{\DDD} \onecrossR{\BBB}{\DDD}
\\
\calC{\CCC}: &  & 
\onecrossR{\CCC}{\BBB} \twocrossR{\CCC}{\BBB} \foucrossR{\CCC}{\DDD} \onecrossR{\CCC}{\DDD}
\foucrossR{\CCC}{\AAA} \onecrossR{\CCC}{\AAA} \thrcrossR{\CCC}{\BBB} \foucrossR{\CCC}{\BBB}
\twocrossR{\CCC}{\DDD} \thrcrossR{\CCC}{\DDD} \twocrossR{\CCC}{\AAA} \thrcrossR{\CCC}{\AAA}
\\
\calC{\DDD}: &  & 
\onecrossR{\DDD}{\BBB} \twocrossR{\DDD}{\BBB} \foucrossR{\DDD}{\AAA} \onecrossR{\DDD}{\AAA}
\thrcrossR{\DDD}{\BBB} \foucrossR{\DDD}{\BBB} \thrcrossR{\DDD}{\CCC} \foucrossR{\DDD}{\CCC}
\twocrossR{\DDD}{\AAA} \thrcrossR{\DDD}{\AAA} \onecrossR{\DDD}{\CCC} \twocrossR{\DDD}{\CCC}
\end{array}
$$
of side cycles of $M_2(1234)$  or the family 
$$\begin{array}{ccl}
\calCstar{\AAA}: & &
\onecrossR{\AAA}{\CCC} \twocrossR{\DDD}{\CCC} \thrcrossR{\AAA}{\CCC} \foucrossR{\AAA}{\CCC}
\onecrossR{\AAA}{\BBB} \twocrossR{\AAA}{\BBB} \thrcrossR{\AAA}{\BBB} \foucrossR{\AAA}{\BBB}
\twocrossR{\AAA}{\DDD} \thrcrossR{\AAA}{\DDD} \foucrossR{\AAA}{\DDD} \onecrossR{\AAA}{\DDD}
\\
\calCstar{\BBB}: &  &
\onecrossR{\BBB}{\CCC} \twocrossR{\BBB}{\CCC} \foucrossR{\BBB}{\AAA} \onecrossR{\BBB}{\AAA}
\twocrossR{\BBB}{\DDD} \thrcrossR{\BBB}{\DDD} \thrcrossR{\BBB}{\CCC} \foucrossR{\BBB}{\CCC}
\twocrossR{\BBB}{\AAA} \thrcrossR{\BBB}{\AAA} \foucrossR{\BBB}{\DDD} \onecrossR{\BBB}{\DDD}
\\
\calCstar{\CCC}: &  & 
\onecrossR{\CCC}{\BBB} \twocrossR{\CCC}{\BBB} \foucrossR{\CCC}{\DDD} \onecrossR{\CCC}{\DDD}
\foucrossR{\CCC}{\AAA} \onecrossR{\CCC}{\AAA} \thrcrossR{\CCC}{\BBB} \foucrossR{\CCC}{\BBB}
\twocrossR{\CCC}{\DDD} \thrcrossR{\CCC}{\DDD} \twocrossR{\CCC}{\AAA} \thrcrossR{\CCC}{\AAA}
\\
\calCstar{\DDD}: &  & 
\onecrossR{\DDD}{\BBB} \twocrossR{\DDD}{\BBB} \foucrossR{\DDD}{\AAA} \onecrossR{\DDD}{\AAA}
\thrcrossR{\DDD}{\BBB} \foucrossR{\DDD}{\BBB} \thrcrossR{\DDD}{\CCC} \foucrossR{\DDD}{\CCC}
\twocrossR{\DDD}{\AAA} \thrcrossR{\DDD}{\AAA} \onecrossR{\DDD}{\CCC} \twocrossR{\DDD}{\CCC}
\end{array}
$$
obtained from the family $\cal C$ by switching the blocks $\thrcrossR{\AAA}{\BBB} \foucrossR{\AAA}{\BBB} \onecrossR{\AAA}{\BBB} \twocrossR{\AAA}{\BBB}$ and $\thrcrossR{\AAA}{\CCC} \foucrossR{\AAA}{\CCC} \onecrossR{\AAA}{\CCC} \twocrossR{\AAA}{\CCC}$ in the cycle assigned to $1$ and by leaving the other cycles invariant.
Again to rule out ${\cal C}^*$ from the set of families of cycles of double pseudoline arrangements it remains 
to observe that the permutations
that carry ${\cal C}_1$ onto ${\cal C}^*_1$ are  exactly the two permutations 
$1324,\overline{123}4,$ and that none of these $2$ permutations leaves invariant the cycle~${\cal C}_4$.
(In Section~\ref{secfiv} we interpret the ${\cal C}^*$ as side cycles of arrangements of double pseudolines living in a triple cross surface.) \end{proof}

\subsection{Cocycles \label{monades}}
Let $\Delta$ be an indexed configuration of oriented convex bodies of a projective plane $(\pp,\lpp)$ and let $\tau$ be a line of $(\pp,\lpp)$.
Recall that we have defined  
\begin{enumerate}
\item the {\it cocycle of $\Delta$ at $\tau$} or the {\it cocycle of $\tau$ with respect to $\Delta$} or the {\it cocycle of the pair $(\Delta,\tau)$} 
as the homeomorphism class of the image of the pair $(\Delta,\tau)$ under the 
quotient map $\Map{\omega_\tau}{\pp}{\pp/\rr_\tau}$ relative to the equivalence relation $\rr_\tau$ on $\pp$ generated by the pairs of points lying on a same line segment  
 of $\Delta\cap \tau$; 
\item a {\it bitangent cocycle} or {\it zero-cocycle} as a cocycle at a bitangent;
\item the {\it isomorphism class of $\Delta$} as the set of configurations that have the same set of bitangent cocycles as $\Delta$; and 
\item the {\it chirotope of $\Delta$} as the map that assigns to each $3$-subset $J$ of the indexing set of $\Delta$ the isomorphism 
class of the subfamily indexed by $J$. 
\end{enumerate}
To these four definitions we add the following one
\begin{enumerate}
\item[(5)] the {\it cocycle map} of $\Delta$ is the map that assigns to each cell $\sigma$ 
of the dual arrangement of $\Delta$ the cocycle of $\Delta$ at some (hence any) element of $\sigma$.  
\end{enumerate}
Fig.~\ref{FinalCocycles} depicts, 
up to reorientation and reindexing of the convex bodies, the cocycles of families of two and three pairwise disjoint convex bodies 
with respective indexing sets $\{\ii,\kk\}$ and $\{\ii,\kk,\jj\}$; in this figure each circular diagram is labeled at its 
bottom right by its number of reoriented and reindexed versions and at its 
bottom left by  its signature, a natural coding of the cocycle introduced in Section~\ref{secone} and that we will not repeat here.
\begin{figure}[!htb]
\psfrag{one}{$\ii$}\psfrag{two}{$\kk$}\psfrag{thr}{$\jj$}
\psfrag{one12}{$\ii$}\psfrag{two12}{$\kk$}\psfrag{thr12}{$\jj$}
\psfrag{one13}{$\ii$}\psfrag{two13}{$\jj$}\psfrag{thr13}{$\kk$}
\psfrag{one23}{$\kk$}\psfrag{two23}{$\jj$}\psfrag{thr23}{$\ii$}
\psfrag{lone}{$\ii\kk\ptg\ptg$}
\psfrag{ltwo}{$\ii\kk\jj\ptg\ptg\ptg$}
\psfrag{lthr}{$\ii\ptg \kk\ptg \jj\ptg$}
\psfrag{lfou}{$\ii \kk\ptg\ptg\ptg \jj$}
\psfrag{lfiv}{$\ii\kk\ptg\ptg,\jj$}
\psfrag{lsix}{$\ii\bjj\kk \ptg \jj \ptg$}
\psfrag{lsev}{$\bjj\ii\kk\jj \ptg \ptg$}
\psfrag{lfiv13}{$\ii\jj\ptg\ptg,\kk$}
\psfrag{lsix13}{$\ii\kk\jj \ptg \bkk \ptg$}
\psfrag{lsev13}{$\ii\jj\kk \ptg \ptg \bkk $}
\psfrag{lfiv23}{$\kk\jj\ptg\ptg,\ii$}
\psfrag{lsix23}{$\kk\ii\jj \ptg \bii \ptg$}
\psfrag{lsev23}{$\kk\jj\ii \ptg \ptg \bii $}
\psfrag{o24}{$24$} \psfrag{o12}{$12$} \psfrag{o8}{$8$}\psfrag{o6}{$6$}\psfrag{o4}{$4$}\psfrag{o4?}{$4$}\psfrag{o2}{$2$}\psfrag{o1}{$1$}
\psfrag{A}{$\ii\ptg,\kk,\jj$}
\psfrag{B}{$\bkk\ii\kk\ptg,\jj$}
\psfrag{C}{$\bkk\bjj\ii\kk\jj\ptg$}
\psfrag{D}{$\ii,\kk,\jj$}
\psfrag{E}{$\ii\bii,\kk,\jj$}
\psfrag{F}{$\ii\kk\bii\bkk,\jj$}
\psfrag{G}{$\ii\kk\jj\bii\bkk\bjj$}
\psfrag{BBBP}{$\bkk\ii\kk\ptg$}
\psfrag{BPvB}{$\ii\ptg,\kk$}
\psfrag{BvB}{$\ii,\kk$}
\psfrag{BBBB}{$\ii\kk\bii\bkk$}
\psfrag{BBvB}{$\ii\bii,\kk$}
\centering
\includegraphics[width=0.875\linewidth]{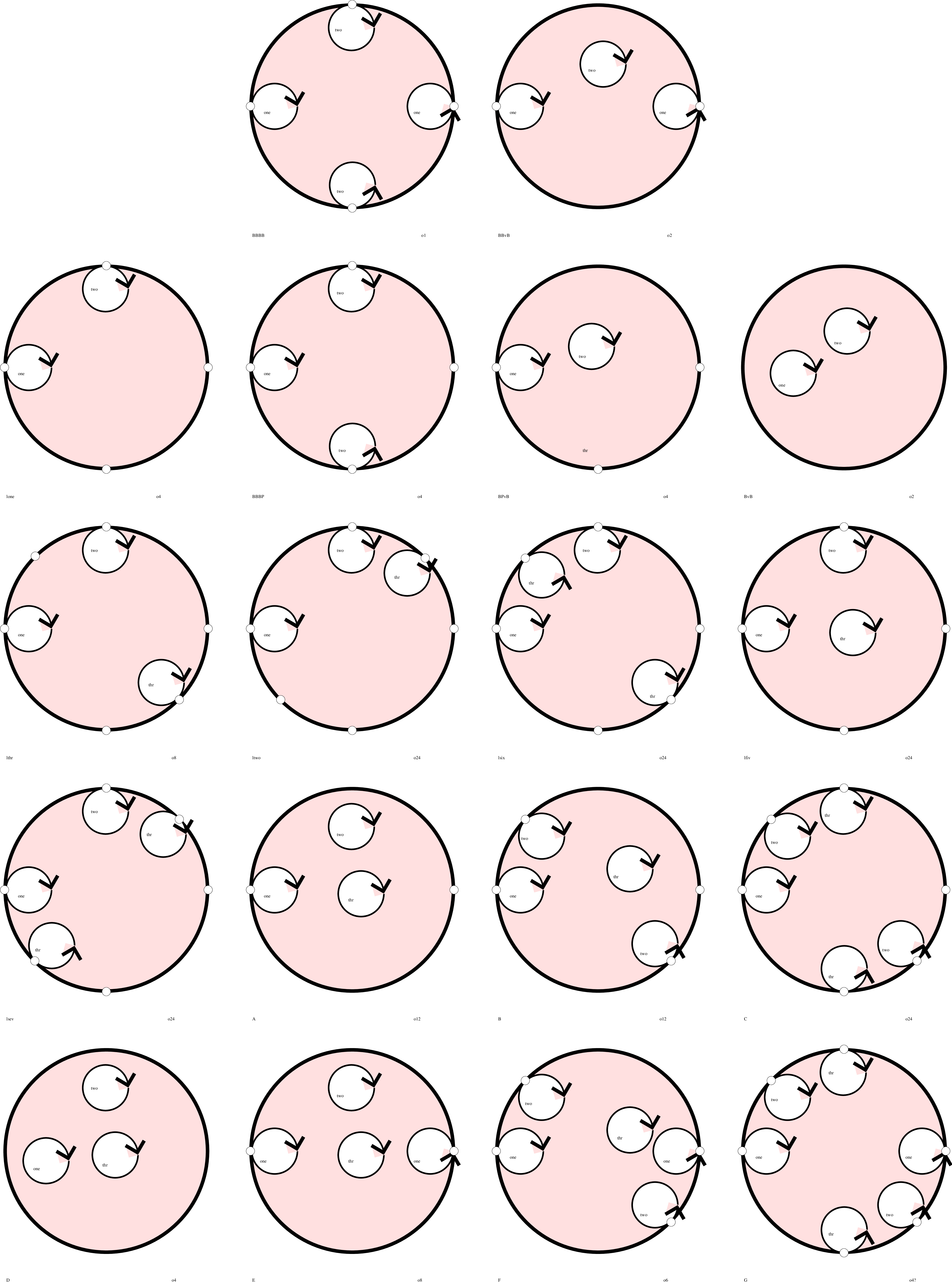}
\caption{Cocycles of indexed configurations of two and three  oriented convex bodies. Each cocycle is labeled at its bottom left with its signature and at its bottom right by its number of reoriented and reindexed versions \label{FinalCocycles}}
\end{figure}
Fig.~\ref{FinalThrCocyclesMap} depicts examples of cocycle maps of families of one, two, and three pairwise disjoint convex bodies 
with respective indexing sets $\{1\}$, $\{1,2\}$, and $\{1,2,3\}.$
\begin{figure}[!htb]
\centering
\psfrag{AAAA4}{$\un\ptg$}
\psfrag{AAA5}{$\un$} 
\psfrag{CCC9}{$\unb\un$} 
\psfrag{CC12}{$\de\ptg,\un$}
\psfrag{CC11}{$\un,\de$} 
\psfrag{CC10}{$\un\ptg,\de$} 
\psfrag{CC9}{$\unb\un,\de$}
\psfrag{CCC6}{$\un\de\ptg\ptg$} 
\psfrag{CC5}{$\un\de\unb\ptg$} 
\psfrag{CC4}{$\un\de\unb\deb$}
\psfrag{CC3}{$\deb\un\de\ptg$}
\psfrag{DDD3}{$\deb\un\ptg\ptg$} 
\psfrag{DD1}{$\un,\deb$}
\psfrag{AA7}{$\un\ptg,\deb$} 
\psfrag{AA6}{$\deb\ptg,\un$} 
\psfrag{AA5}{$\de\deb,\un$}
\psfrag{AAA4}{$\un\deb\ptg\ptg$} 
\psfrag{BBB2}{$\de\un\ptg\ptg$}
\psfrag{BB3}{$\unb\de\un\ptg$}
\psfrag{AA2}{$\de\un\deb\ptg$} 
\psfrag{DD3}{$\un\ptg\tr\ptg\deb\ptg$}
\psfrag{CC6}{$\un\ptg\de\ptg\tr\ptg$} 
\psfrag{BB2}{$\un\ptg\trb\ptg\de\ptg$}
\psfrag{AA4}{$\un\ptg\deb\ptg\trb\ptg$} 
\psfrag{un}{$1$} \psfrag{deu}{$2$} \psfrag{tr}{$3$}
\psfrag{A}{$A$} \psfrag{B}{$B$} \psfrag{C}{$C$} \psfrag{D}{$D$}
\psfrag{onecd}{$1$} \psfrag{twocd}{$2$} \psfrag{thrcd}{$3$}
\centering
\includegraphics[width=0.875\linewidth]{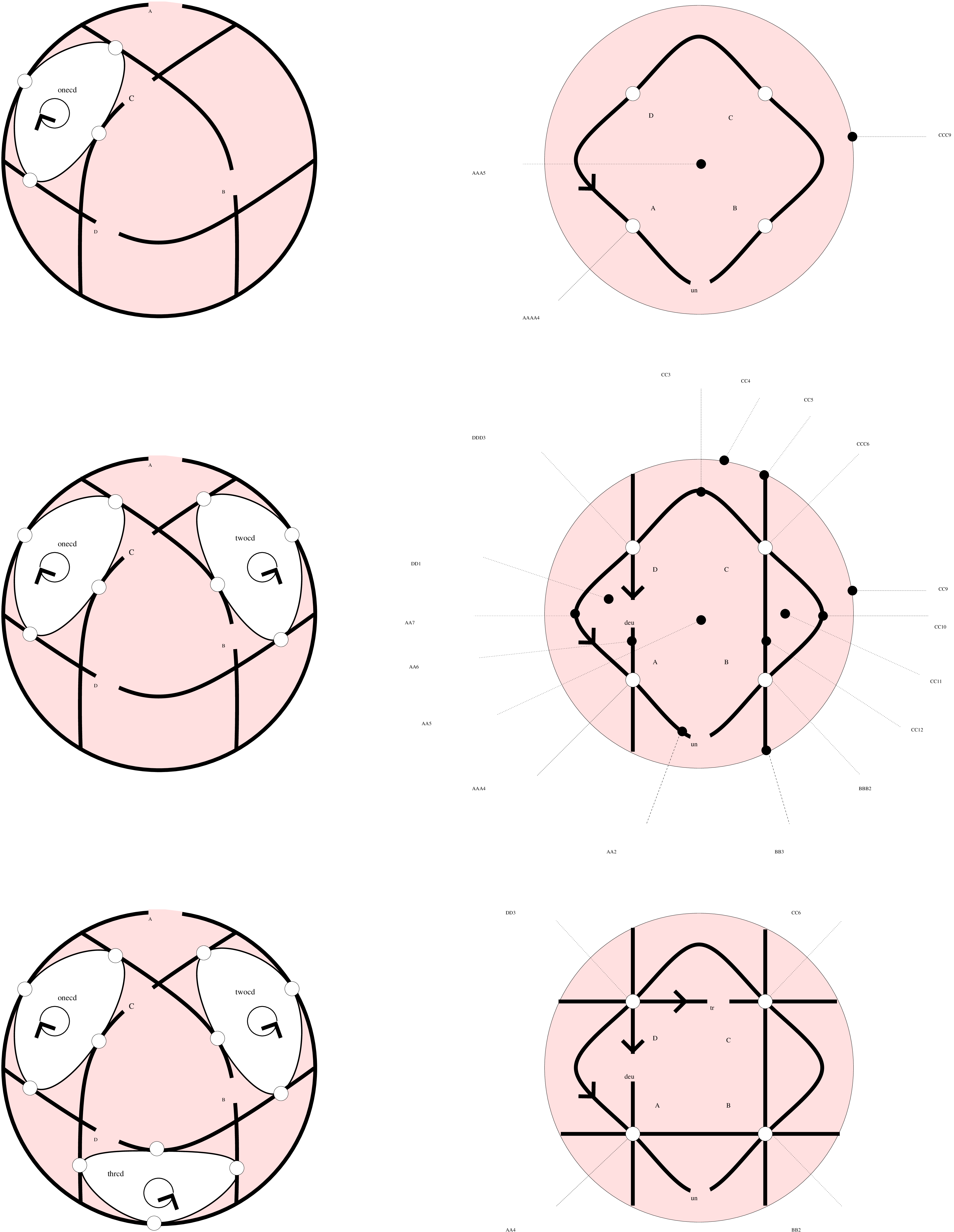}
\caption{Indexed configurations of one, two and three oriented convex bodies and the cocycle labeled versions of their dual arrangements
\label{FinalThrCocyclesMap}}
\end{figure}

In particular one can easily check that the cocycle map of a family of two bodies is one-to-one. 

\begin{lemma} Cocycle maps of families of two disjoint convex bodies are one-to-one. \qed
\end{lemma}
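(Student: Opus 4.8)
The plan is to prove that the cocycle map of a configuration $\fcb = \{\Delta_1, \Delta_2\}$ of two pairwise disjoint oriented convex bodies is injective on the cells of its dual arrangement $\fcb^*$ by exhibiting, explicitly, the finite list of cocycle-labels that occur and checking they are pairwise distinct. By Theorem~\ref{theoone}, the dual arrangement $\fcb^*$ is the unique arrangement of two double pseudolines meeting in four transversal points and inducing a cellular decomposition of its underlying cross surface; in particular its face poset is completely determined, so the cells of $\fcb^*$ are the same for every such configuration. Thus it suffices to enumerate the cells of this one fixed arrangement --- four bitangent vertices, the edges they bound (on the four arcs of $\Delta_1^*$ and the four arcs of $\Delta_2^*$), and the two-cells (the tetragon interior of each crosscap side, the two digons, the outer two-cell, etc.) --- and compute the cocycle-label at a representative point of each.

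The first step is to fix a representative configuration (e.g. two small round disks in the standard projective plane, or simply to work directly with the combinatorial picture of Figure~\ref{gpmrfig}) and, for each line $\tau$ lying in a given cell $\sigma$, read off the cocycle-label from its definition: cut $\pp$ along $\tau$ to obtain $\bcut{\pp}{\tau}$, list the bodies entirely interior to the cut (with sign according to orientation) and record the circular word of body-arcs and $\ptg$-symbols encountered along the boundary of $\bcut{\pp}{\tau}$, then pass to the unordered pair of the two orientations $\pm\epsilon$. The relevant labels are exactly those already displayed in Figure~\ref{FinalZeroCocycles} for the bitangent cells (the four zero-cocycles $\ii\ptg,\kk$; $\ii,\kk$; $\ii\kk\bii\bkk$; $\ii\bii,\kk$, up to reorientation/reindexation) together with the labels on the non-bitangent cells visible in Figure~\ref{FinalThrCocyclesMap} --- that is, the two `disk-type' labels where a body lies inside the cut disk ($\ii\kk\ptg\ptg$ and its relatives), the `edge' labels such as $\kk\ii\bkk\ptg$, and the `both outside' labels $\ii,\kk$, $\ii\ptg,\kk$, etc. The second step is the verification: go down the finite list and observe that no two distinct cells receive the same label. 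This is a purely mechanical check; the signs and the arrangement of $\ptg$'s distinguish, for instance, a cell where $\tau$ separates the two bodies from one where it does not, a cell where $\tau$ is tangent to $\Delta_1$ from one where it misses $\Delta_1$, and the relative cyclic order of the body-arcs on $\partial\bcut{\pp}{\tau}$ distinguishes the remaining cases.

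I would organize the verification around the trichotomy of Lemma~\ref{dicozero}/Lemma~\ref{dicoone}: for each of $i=1,2$, a line $\tau$ either misses $\Delta_i$, is tangent to $\Delta_i$, or pierces $\Delta_i$, and the cocycle-label records exactly this data (via absence of $i$, a lone $\ptg$-flanked occurrence of $i$, or a doubled occurrence of $i$) together with the sign and cyclic position. Since the face poset of $\fcb^*$ has only boundedly many cells and each of these `local types' determines the cell of $\fcb^*$ by Theorem~\ref{theoone}, reading off the label for one point of each cell and matching labels to cells finishes the proof. The main obstacle --- and it is a bookkeeping obstacle rather than a conceptual one --- is to be sure the enumeration of cells of the fixed two-double-pseudoline arrangement is complete and that the circular-word convention for $\bcut{\pp}{\tau}$ is applied consistently (in particular handling correctly the cells where a body meets $\tau$ in a segment versus a point, which is precisely why the $\omega_\tau$ quotient was built into the definition of cocycle); once the table is laid out, injectivity is immediate by inspection.
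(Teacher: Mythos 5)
Your proposal is correct and matches the paper's approach: the paper disposes of this lemma by displaying the cocycle-labelled dual arrangement of two bodies (Figure~\ref{FinalThrCocyclesMap}) and observing that distinct cells carry distinct labels, which is exactly the finite cell-by-cell enumeration and inspection you describe. Your additional organization of the check via the miss/tangent/pierce trichotomy of Lemmas~\ref{dicozero} and~\ref{dicoone} is a reasonable way to systematize what the paper leaves as ``one can easily check.''
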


We are now ready to prove that the map that assigns to an indexed configuration of oriented convex bodies the isomorphism class of its dual arrangement is compatible 
with the isomorphism relation on families of convex bodies, and that the quotient map is one-to-one (and onto).  
This means, for example, that the signatures $ \{1 \ptg \overline{2} \ptg \overline{3} \ptg\}$, 
$\{1 \ptg \overline{3} \ptg 2 \ptg\}$,$\{1 \ptg 2\ptg 3 \ptg\}$, $\{1 \ptg 3\ptg \overline{2} \ptg\}$ 
of the bitangent cocycles  of the configuration of three convex bodies depicted at the  bottom left of Fig.~\ref{FinalThrCocyclesMap}
is a coding of the isomorphism class of the dual arrangement of the configuration. 

\begin{theorem}\label{theotwobis}  
Let $\Delta$ and $\Delta'$ be two indexed configurations of oriented convex bodies. 
Then the  following four assertions are equivalent:
\begin{enumerate}
\item $\Delta$ and $\Delta'$ have the same chirotope;
\item $\Delta$ and $\Delta'$ have isomorphic dual arrangements;
\item $\Delta$ and $\Delta'$ have isomorphic cocycle maps; 
\item $\Delta$ and $\Delta'$ have the same set of $0$-cocycles (i.e., are isomorphic).  \qed
\end{enumerate} 
\end{theorem}
\begin{proof} Some implications are clear: 
\begin{enumerate}[(i)]
\item $(4) \Rightarrow (1)$; 
\item $(4),(2) \Rightarrow (3)$,  using a perturbation argument;
\item $(1),(2) \Rightarrow (4)$, because the family of $0$-cocycles of $\Delta$ depends only on 
the family of cocycle-labeled versions of the dual arrangements of subfamilies of three bodies and on the
isomorphism class of the dual arrangement of $\Delta$;
\item $(3) \Rightarrow (4),(2),(1).$
\end{enumerate} 
We now prove that $(1) \Leftrightarrow (2).$

We first prove that $(2) \Rightarrow (1).$  
Let ${\cal V}$ be the (finite) set of signatures $\transmap(\Delta,\tau)$ of the pairs $(\Delta,\tau)$ as  $\Delta$ ranges over the set of families of
$n\geq 3$ convex bodies indexed by $\{1,2,3,\ldots, n\}$ and where $\tau$ ranges over the set of bitangents of $\Delta$.
We leave the verification of the following property of the set ${\cal V}$ to the reader:
the map that assigns to any element $\transmap(\Delta,\tau)$ of ${\cal V}$ the set of $\transmap(\Delta',\tau)$ where $\Delta'$ ranges over the set of subfamilies of size $n-1$ of 
$\Delta$ is one-to-one: see Table~\ref{latable} for the case $n=3$; 
this proves that $(2) \Rightarrow (1).$ 

\begin{table}[!htb]
$$
\begin{array}{c||c|c|c}
\transmap(123,\tau) &\transmap(12,\tau)&\transmap(13, \tau)&\transmap(23,\tau) \\
\hline\hline 
&&&\\
123\ptg\ptg\ptg 
& 12\ptg\ptg
& 13\ptg\ptg
& 23\ptg\ptg  \\
1\ptg 2\ptg 3\ptg 
& 12\ptg\ptg 
& 31\ptg\ptg
& 23\ptg\ptg \\
12\ptg\ptg,3
& 12\ptg\ptg
& 1\ptg, 3
& 2\ptg,3\\
132 \ptg \overline{3} \ptg
& 12 \ptg \ptg
& \overline{3} 13 \ptg 
& 32\overline{3}\ptg  \\
\overline{3}123 \ptg \ptg 
& 12 \ptg \ptg
& \overline{3}13 \ptg 
& \overline{3}23 \ptg 
\end{array}
$$
\caption{The map that assigns to a bitangent cocycle of a family of three convex bodies its sub-cocycles on two bodies is one-to-one \label{latable}}
\end{table}

We now prove that $(1) \Rightarrow (2).$  
It is sufficient to prove it for families of three bodies.
Let $I$ be the indexing set of $\Delta$, let ${\cal I}$ be the set of pairs $(i,J)$ where $i$ ranges over $I$ and where $J$ ranges 
over the set of $3$-subsets of $I$ that contains $i$, and for $(i,J) \in {\cal I}$
let $C_{i,J}$ be the circular ordering of the bitangents $v_{\alpha}$, 
$\alpha \in \setnodebis{i}{j}$, 
along the (oriented) dual curve
$\Delta_i^*$ of $\Delta_i.$
According to Theorem~\ref{coding} 
the isomorphism class of the dual arrangement of $\Delta$ depends only on the family  of 
$C_{i,J}$, $(i,J) \in {\cal I}.$ Thus proving that $(1) \Rightarrow (2)$ comes  
down to proving that the $C_{i,J}$ depend only on the chirotope of $\Delta.$ This
latter statement is a simple consequence of the following two observations:
\begin{enumerate}
\item for any $j \in I\setminus \{i\}$, the  four vertices $v_{\alpha}$, 
$\alpha \in \setnode{i}{j}$, appear by definition 
in the circular order $v_{\vijone}$,$v_{\vijtwo}$,$v_{\vijthr}$,$v_{\vijfou}$ along $\Delta_i^*;$
\item 
for any $j \in I\setminus \{i\}$ and any  
$\alpha \in \setnode{i}{j}$ 
the position of $v_{\alpha}$ with respect to the $v_{\beta}$, $\beta \in \setnode{i}{k}$, $k \in I\setminus \{i,j\}$, depends only on the chirotope 
of $\Delta$ for the cocycle map is one-to-one for families of two bodies.  
\end{enumerate}
\end{proof}

\begin{remark} It is incorrect to say, as we did in~\cite{G-hp-adp-09}, that cocycle maps are one-to-one.   
However one can show that the space of transversals with given cocycle is connected.  
This can be used, as explained in the forthcoming paper~\cite{gp-pvhtt-13}, to extend to the projective setting one of the Wenger's generalizations of the Hadwiger's Transversal Theorem~\cite{h-uegt-57,w-ghtti-90} : 
{\it Let $\Delta_1,\Delta_2,\ldots,\Delta_n$  be a finite indexed family of at least $4$ pairwise disjoint oriented convex bodies of a
projective plane with the property  that for any quadruplet of indices $i<j<k<l$ there is a line  whose  signature
with respect to the subfamily $\Delta_i,\Delta_j,\Delta_k,\Delta_l$ is $ijkl\overline{ijkl}$. Then there is a line  whose signature
with respect to the family $\Delta$ is $123\ldots n \overline{1}\overline{2}\overline{3}\ldots\overline{n}$.} 
\end{remark}

Fig.~\ref{fulllistdecored6464},~\ref{fulllistdecored0407} and~\ref{fulllistdecored2233} depict the zero-cocycle labeled versions 
of the  thirteen indexed and oriented simple arrangements on three double pseudolines of Fig.~\ref{fulllistdecored0443WCY} and~\ref{fulllistdecored2264WCY}.

\begin{figure}[!htb]
\footnotesize
\centering
\def\factor{0.480}
\psfrag{oZ}{$2$}
\psfrag{Z}{$\name{64}(\ii\kk\jj)$}
\psfrag{one}{$\ii$}
\psfrag{two}{$\kk$}
\psfrag{thr}{$\jj$}
\psfrag{ZA}{$\BBBPBPS{\jj}{\ii}{\bkk}$}
\psfrag{ZB}{$\BBBPBPS{\kk}{\ii}{\jj}$}
\psfrag{ZC}{$\BBBPBPS{\bjj}{\ii}{\kk}$}
\psfrag{ZD}{$\BBBPBPS{\bkk}{\ii}{\bjj}$}
\psfrag{ZE}{$\BBBPBPS{\jj}{\kk}{\ii}$}
\psfrag{ZF}{$\BBBPBPS{\ii}{\jj}{\kk}$}
\psfrag{ZG}{$\BBBPBPF{\bjj}{\kk}{\ii}$}
\psfrag{ZH}{$\BBBPBPF{\ii}{\jj}{\bkk}$}
\psfrag{ZI}{$\BBBPBPF{\ii}{\kk}{\jj}$}
\psfrag{ZJ}{$\BBBPBPS{\bkk}{\jj}{\ii}$}
\psfrag{ZK}{$\BBBPBPF{\kk}{\jj}{\ii}$}
\psfrag{ZL}{$\BBBPBPS{\ii}{\kk}{\bjj}$}
\includegraphics[width = 0.6275\linewidth]{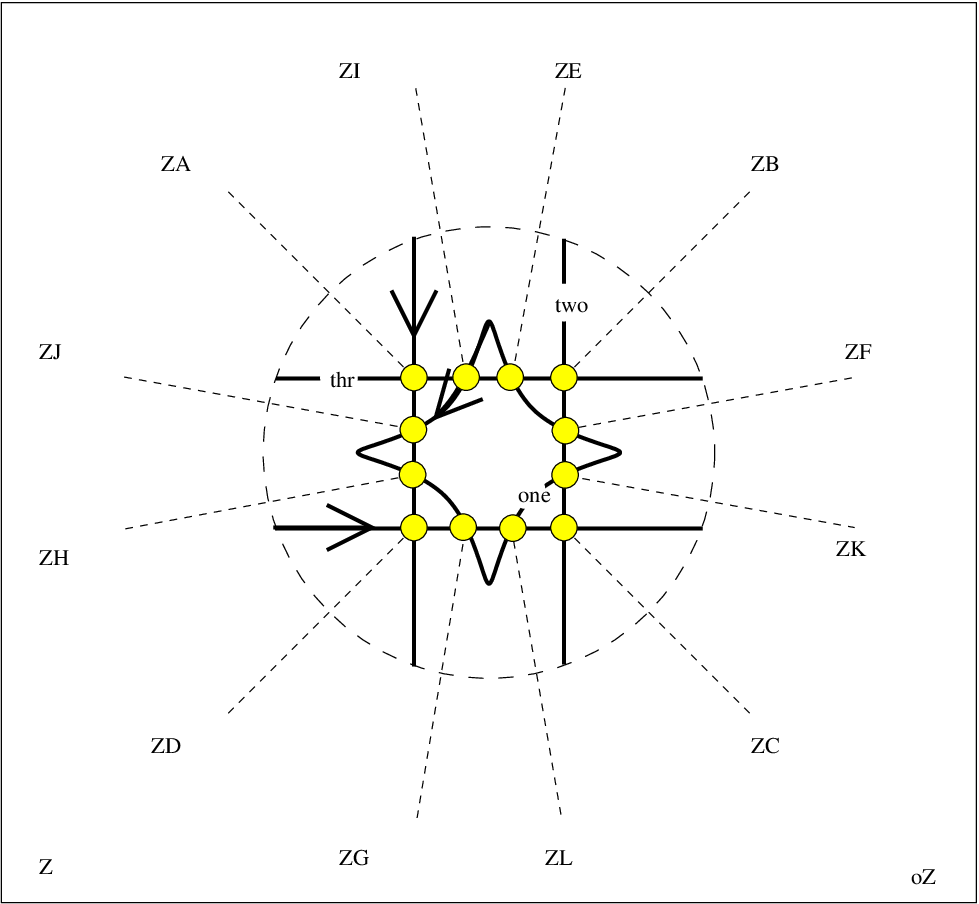}
\caption{The zero-cocycle labeled version of the arrangement $\bname{64}{123}$ \label{fulllistdecored6464}}
\end{figure}

\begin{figure}[!htb]
\footnotesize
\tiny
\def\factor{0.485}
\centering
\psfrag{one}{$\ii$}
\psfrag{two}{$\kk$}
\psfrag{thr}{$\jj$}
\psfrag{name}{$\name{04}(\ii\kk\jj)$}
\psfrag{oA}{$2$}
\psfrag{AA}{$\bbpp{\jj}{\bkk},\ii$}
\psfrag{AB}{$\bbpp{\kk}{\jj},\ii$}
\psfrag{AC}{$\bbpp{\bjj}{\kk},\ii$}
\psfrag{AD}{$\bbpp{\bkk}{\bjj},\ii$}
\psfrag{AF}{$\bbpp{\jj}{\ii},\kk$}
\psfrag{AE}{$\bbpp{\ii}{\kk},\jj$}
\psfrag{AH}{$\bbpp{\bjj}{\ii},\bkk$}
\psfrag{AG}{$\bbpp{\ii}{\bkk},\bjj$}
\psfrag{AJ}{$\bbpp{\ii}{\jj},\bkk$}
\psfrag{AI}{$\bbpp{\bkk}{\ii},\jj$}
\psfrag{AL}{$\bbpp{\kk}{\ii},\bjj$}
\psfrag{AK}{$\bbpp{\ii}{\bjj},\kk$}
\includegraphics[width = \factor\linewidth]{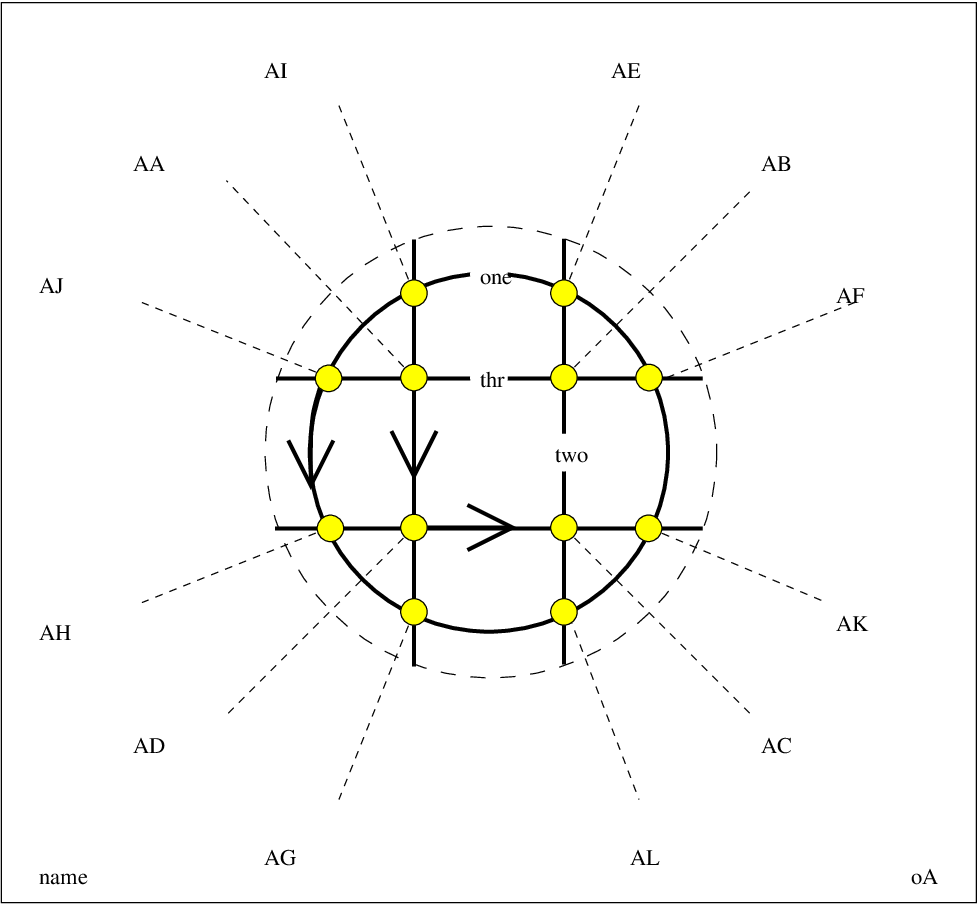}
\psfrag{name}{$\name{07}(\ii\kk\jj)$}
\psfrag{oB}{$8$}
\psfrag{AA}{$\ptg\jj\bkk\ptg,\ii$}
\psfrag{AB}{$\ptg\kk\jj\ptg,\ii$}
\psfrag{AC}{$\ii\ptg\bjj\ii\kk\ptg$}
\psfrag{AF}{$\ii\ptg\ptg\jj,\kk$}
\psfrag{AE}{$\ii\kk\ptg\ptg,\jj$}
\psfrag{AH}{$\ii\ptg\ptg\bjj,\bkk$}
\psfrag{AG}{$\ii\bkk\ptg\ptg,\bjj$}
\psfrag{AJ}{$\ii\jj\ptg\ptg,\bkk$}
\psfrag{AI}{$\ii\ptg\ptg\bkk,\jj$}
\psfrag{AK}{$\ii\ptg\bjj\ptg\kk\jj$}
\psfrag{AL}{$\ii\bkk\bjj\ptg\kk\ptg$}
\psfrag{AA}{$\BBPP{\jj}{\bkk},\ii$}
\psfrag{AB}{$\BBPP{\kk}{\jj},\ii$}
\psfrag{AC}{$\BBPP{\bjj}{\kk},\ii$}
\psfrag{AD}{$\BBPP{\bkk}{\bjj},\ii$}
\psfrag{AF}{$\BBPP{\jj}{\ii},\kk$}
\psfrag{AE}{$\BBPP{\ii}{\kk},\jj$}
\psfrag{AH}{$\BBPP{\bjj}{\ii},\bkk$}
\psfrag{AG}{$\BBPP{\ii}{\bkk},\bjj$}
\psfrag{AJ}{$\BBPP{\ii}{\jj},\bkk$}
\psfrag{AI}{$\BBPP{\bkk}{\ii},\jj$}
\psfrag{AL}{$\BBPP{\kk}{\ii},\bjj$}
\psfrag{AK}{$\BBPP{\ii}{\bjj},\kk$}
\psfrag{ZA}{$\BBBPBPS{\jj}{\ii}{\bkk}$}
\psfrag{ZB}{$\BBBPBPS{\kk}{\ii}{\jj}$}
\psfrag{AC}{$\BBBPBPS{\bjj}{\ii}{\kk}$}
\psfrag{ZD}{$\BBBPBPS{\bkk}{\ii}{\bjj}$}
\psfrag{ZE}{$\BBBPBPS{\jj}{\kk}{\ii}$}
\psfrag{ZF}{$\BBBPBPS{\ii}{\jj}{\kk}$}
\psfrag{ZG}{$\BBBPBPF{\bjj}{\kk}{\ii}$}
\psfrag{ZH}{$\BBBPBPF{\ii}{\jj}{\bkk}$}
\psfrag{ZI}{$\BBBPBPF{\ii}{\kk}{\jj}$}
\psfrag{ZJ}{$\BBBPBPS{\bkk}{\jj}{\ii}$}
\psfrag{AK}{$\BBBPBPF{\kk}{\jj}{\ii}$}
\psfrag{AL}{$\BBBPBPS{\ii}{\kk}{\bjj}$}
\includegraphics[width = \factor\linewidth]{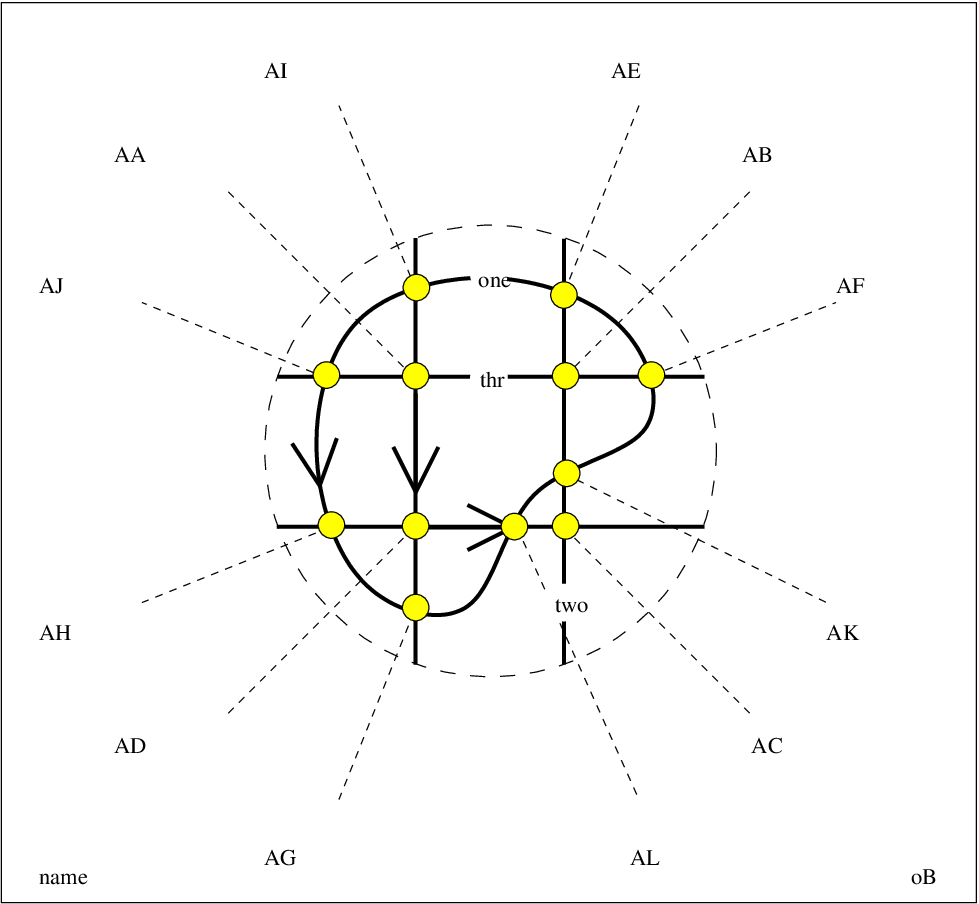}
\psfrag{oC}{$12$}
\psfrag{name}{$\name{18}(\ii\kk\jj)$}
\psfrag{AA}{$\BBPP{\jj}{\bkk},\ii$}
\psfrag{AB}{$\BBBPBPS{\kk}{\ii}{\jj}$}
\psfrag{AC}{$\BBBPBPS{\bjj}{\ii}{\kk}$}
\psfrag{AD}{$\BBPP{\bkk}{\bjj},\ii$}
\psfrag{AE}{$\BBBPBPS{\jj}{\kk}{\ii}$}
\psfrag{AF}{$\BBBPBPS{\ii}{\jj}{\kk}$}
\psfrag{AG}{$\BBPP{\ii}{\bkk},\bjj$}
\psfrag{AH}{$\BBPP{\bjj}{\ii},\bkk$}
\psfrag{AI}{$\BBPP{\bkk}{\ii},\jj$}
\psfrag{AJ}{$\BBPP{\ii}{\jj},\bkk$}
\psfrag{AK}{$\BBBPBPF{\kk}{\jj}{\ii}$}
\psfrag{AL}{$\BBBPBPS{\ii}{\kk}{\bjj}$}
\includegraphics[width = \factor\linewidth]{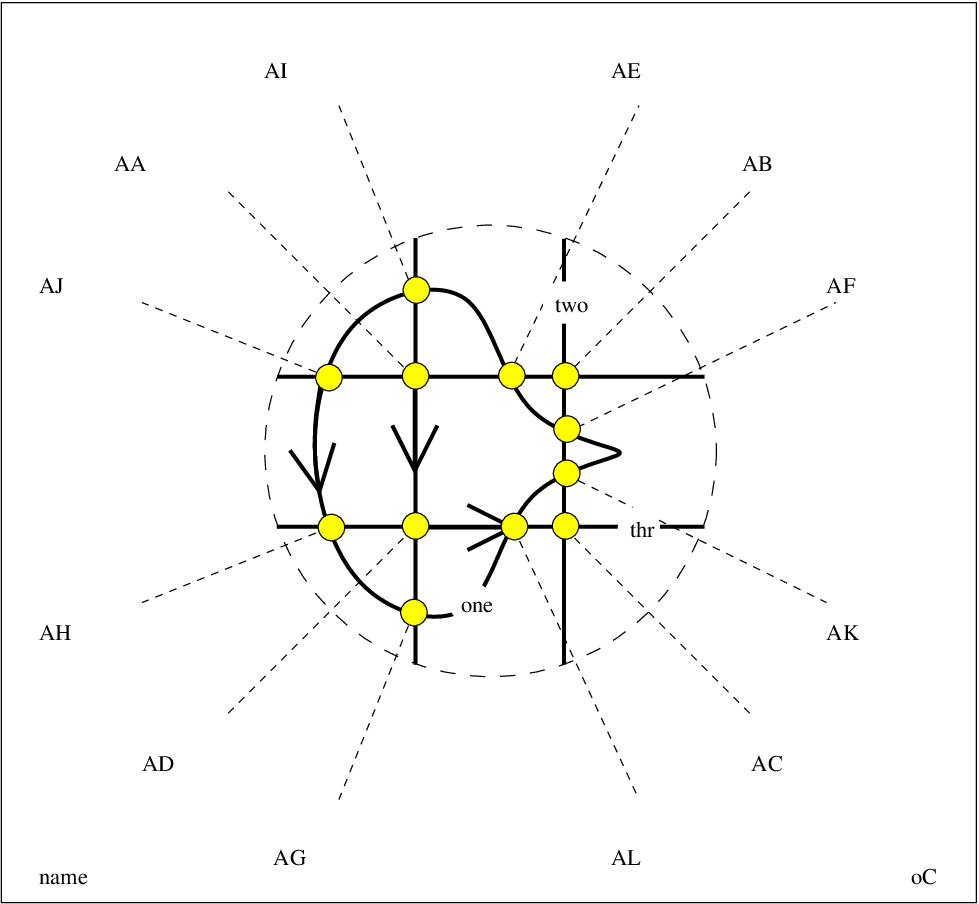}
\psfrag{oG}{$8$}
\psfrag{name}{$\name{37}(\ii\kk\jj)$}
\psfrag{AA}{$\BBPP{\jj}{\bkk},\ii$}
\psfrag{AB}{$\BBPP{\kk}{\jj},\ii$}
\psfrag{AC}{$\BBPP{\bjj}{\kk},\ii$}
\psfrag{AD}{$\BBPP{\bkk}{\bjj},\ii$}
\psfrag{AF}{$\BBPP{\jj}{\ii},\kk$}
\psfrag{AE}{$\BBPP{\ii}{\kk},\jj$}
\psfrag{AH}{$\BBPP{\bjj}{\ii},\bkk$}
\psfrag{AG}{$\BBPP{\ii}{\bkk},\bjj$}
\psfrag{AJ}{$\BBPP{\ii}{\jj},\bkk$}
\psfrag{AI}{$\BBPP{\bkk}{\ii},\jj$}
\psfrag{AL}{$\BBPP{\kk}{\ii},\bjj$}
\psfrag{AK}{$\BBPP{\ii}{\bjj},\kk$}
\psfrag{ZA}{$\BBBPBPS{\jj}{\ii}{\bkk}$}
\psfrag{AB}{$\BBBPBPS{\kk}{\ii}{\jj}$}
\psfrag{AC}{$\BBBPBPS{\bjj}{\ii}{\kk}$}
\psfrag{AD}{$\BBBPBPS{\bkk}{\ii}{\bjj}$}
\psfrag{AE}{$\BBBPBPS{\jj}{\kk}{\ii}$}
\psfrag{AF}{$\BBBPBPS{\ii}{\jj}{\kk}$}
\psfrag{AG}{$\BBBPBPF{\bjj}{\kk}{\ii}$}
\psfrag{AH}{$\BBBPBPF{\ii}{\jj}{\bkk}$}
\psfrag{ZI}{$\BBBPBPF{\ii}{\kk}{\jj}$}
\psfrag{ZJ}{$\BBBPBPS{\bkk}{\jj}{\ii}$}
\psfrag{AK}{$\BBBPBPF{\kk}{\jj}{\ii}$}
\psfrag{AL}{$\BBBPBPS{\ii}{\kk}{\bjj}$}
\includegraphics[width = \factor\linewidth]{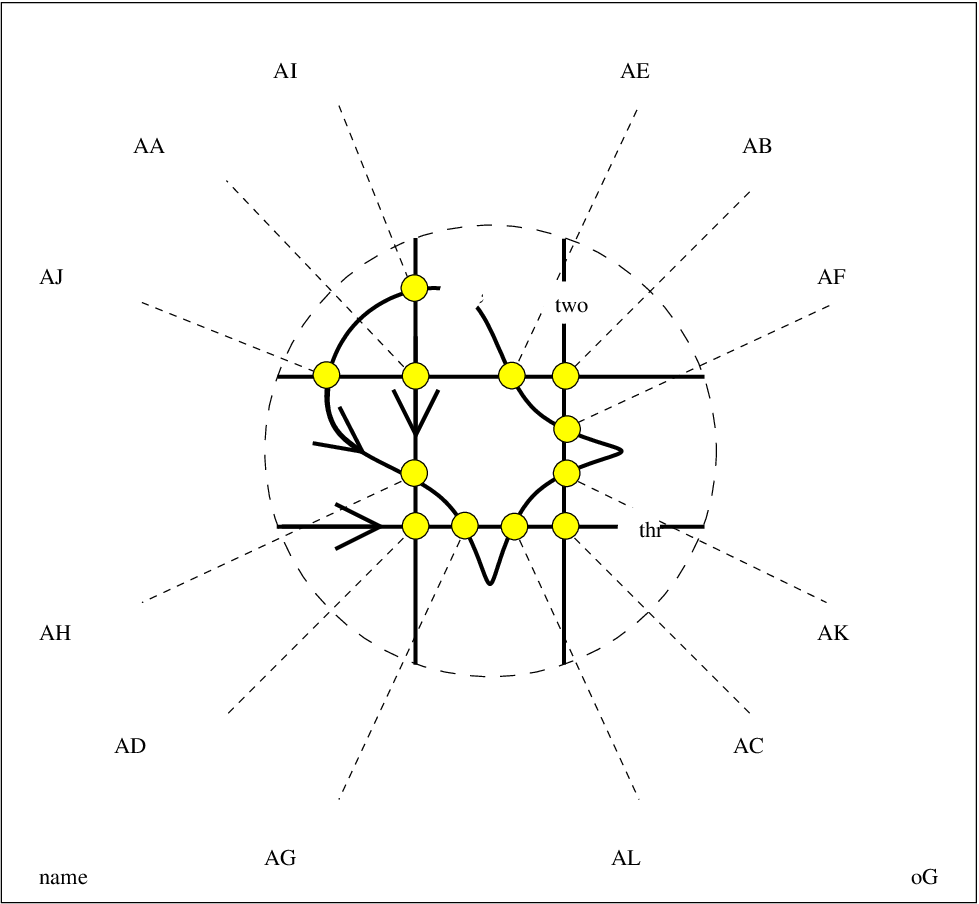}
\psfrag{oH}{$24$}
\psfrag{name}{$\name{15}(\ii\kk\jj)$}
\psfrag{AA}{$\BBBBPPS{\ii}{\jj}{\bkk}$}
\psfrag{AB}{$\BBBPBPS{\kk}{\ii}{\jj}$}
\psfrag{AC}{$\BBPP{\bjj}{\kk},\ii$}
\psfrag{AD}{$\BBPP{\bkk}{\bjj},\ii$}
\psfrag{AE}{$\BBBBPPF{\jj}{\bkk}{\ii}$}
\psfrag{AF}{$\BBBPBPS{\ii}{\jj}{\kk}$}
\psfrag{AH}{$\BBPP{\bjj}{\ii},\bkk$}
\psfrag{AG}{$\BBPP{\ii}{\bkk},\bjj$}
\psfrag{AJ}{$\BBPP{\ii}{\jj},\bkk$}
\psfrag{AI}{$\BBPP{\jj}{\ii},\bkk$}
\psfrag{AL}{$\BBPP{\kk}{\ii},\bjj$}
\psfrag{AK}{$\BBPP{\ii}{\bjj},\kk$}
\includegraphics[width = \factor\linewidth]{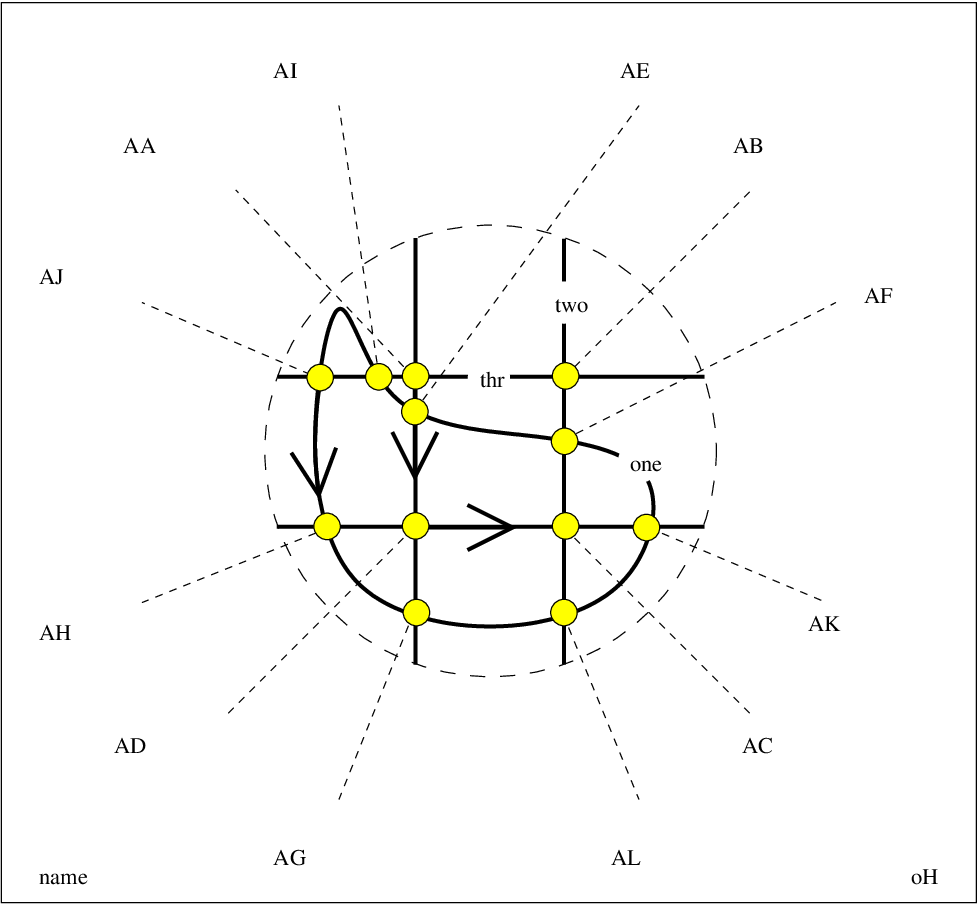}
\psfrag{oJ}{$24$}
\psfrag{name}{$\name{43}(\ii\kk\jj)$}
\psfrag{AA}{$\BBBBPPS{\ii}{\jj}{\bkk}$}
\psfrag{AB}{$\BBBPBPS{\kk}{\ii}{\jj}$}
\psfrag{AC}{$\BBPP{\bjj}{\kk},\ii$}
\psfrag{AD}{$\BBPP{\bkk}{\bjj},\ii$}
\psfrag{AE}{$\BBBBPPF{\jj}{\bkk}{\ii}$}
\psfrag{AF}{$\BBBPBPS{\ii}{\jj}{\kk}$}
\psfrag{AH}{$\BBPP{\bjj}{\ii},\bkk$}
\psfrag{AG}{$\BBPP{\ii}{\bkk},\bjj$}
\psfrag{AJ}{$\BBPP{\ii}{\jj},\bkk$}
\psfrag{AI}{$\BBPP{\jj}{\ii},\bkk$}
\psfrag{AL}{$\BBPP{\kk}{\ii},\bjj$}
\psfrag{AK}{$\BBPP{\ii}{\bjj},\kk$}
\psfrag{ZA}{$\BBBPBPS{\jj}{\ii}{\bkk}$}
\psfrag{ZB}{$\BBBPBPS{\kk}{\ii}{\jj}$}
\psfrag{AC}{$\BBBPBPS{\bjj}{\ii}{\kk}$}
\psfrag{AD}{$\BBBPBPS{\bkk}{\ii}{\bjj}$}
\psfrag{ZE}{$\BBBPBPS{\jj}{\kk}{\ii}$}
\psfrag{ZF}{$\BBBPBPS{\ii}{\jj}{\kk}$}
\psfrag{AG}{$\BBBPBPF{\bjj}{\kk}{\ii}$}
\psfrag{AH}{$\BBBPBPF{\ii}{\jj}{\bkk}$}
\psfrag{ZI}{$\BBBPBPF{\ii}{\kk}{\jj}$}
\psfrag{ZJ}{$\BBBPBPS{\bkk}{\jj}{\ii}$}
\psfrag{AK}{$\BBBPBPF{\kk}{\jj}{\ii}$}
\psfrag{AL}{$\BBBPBPS{\ii}{\kk}{\bjj}$}
\includegraphics[width = \factor\linewidth]{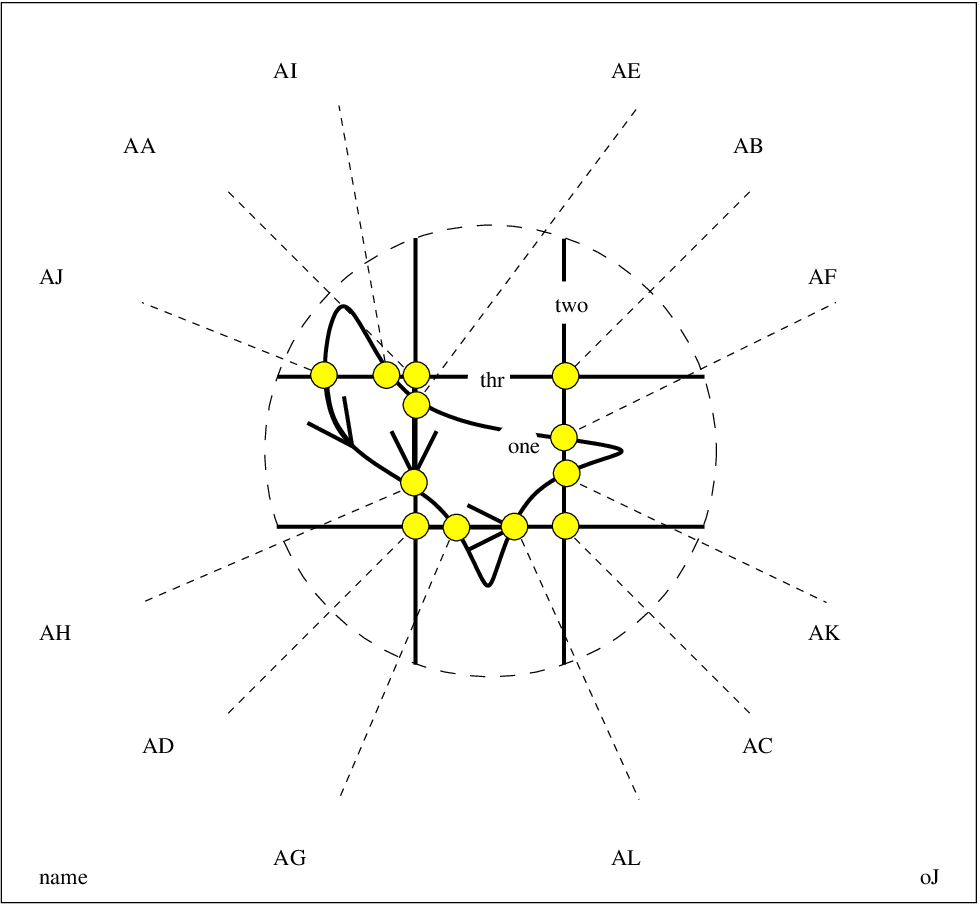}
\caption{Zero-cocycle labeled versions of the arrangements $\name{04}(\ii\kk\jj),\name{07}(\ii\kk\jj),
\name{18}(\ii\kk\jj),\name{37}(\ii\kk\jj),\name{15}(\ii\kk\jj),\name{43}(\ii\kk\jj)$ \label{fulllistdecored0407}}
\end{figure}

\begin{figure}[!htb]
\footnotesize
\tiny
\def\factor{0.495}
\centering
\psfrag{one}{$\ii$}
\psfrag{two}{$\kk$}
\psfrag{thr}{$\jj$}
\psfrag{oP}{$12$}
\psfrag{name}{$\name{22}(\ii\kk\jj)$}
\psfrag{AA}{$\BBBBPPS{\ii}{\jj}{\bkk}$}
\psfrag{AB}{$\BBBPBPS{\kk}{\ii}{\jj}$}
\psfrag{AC}{$\BBBBPPF{\ii}{\bjj}{\kk}$}
\psfrag{AD}{$\BBBPBPF{\bkk}{\ii}{\bjj}$}
\psfrag{AE}{$\BBPP{\ii}{\bjj},\bkk$}
\psfrag{AF}{$\BBPP{\bkk}{\ii}{\bjj}$}
\psfrag{AH}{$\BBPP{\bjj}{\ii},\bkk$}
\psfrag{AG}{$\BBPP{\ii}{\bkk},\bjj$}
\psfrag{AJ}{$\BBPP{\ii}{\jj},\bkk$}
\psfrag{AI}{$\BBPP{\jj}{\ii},\bkk$}
\psfrag{AK}{$\BBPP{\ii}{\kk},\bjj$}
\psfrag{AL}{$\BBPP{\kk}{\ii},\bjj$}
\includegraphics[width = \factor\linewidth]{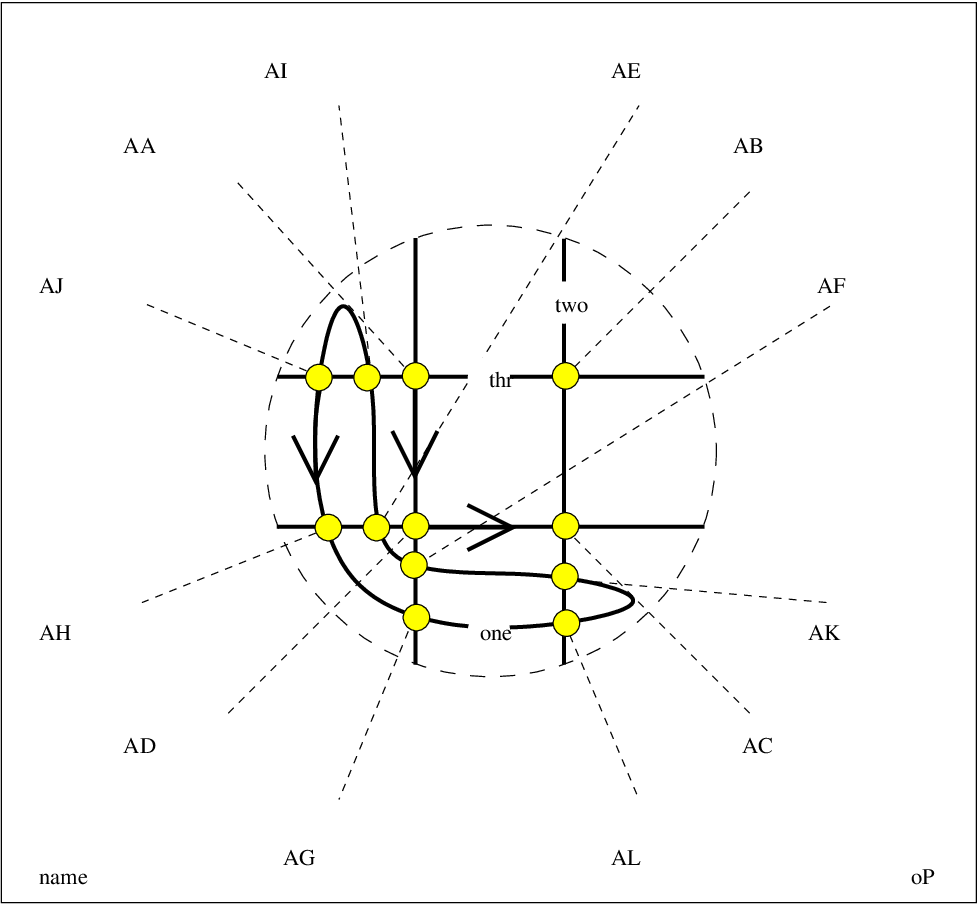}
\psfrag{oL}{$24$}
\psfrag{name}{$\name{33}(\ii\kk\jj)$}
\psfrag{AA}{$\BBBBPPS{\ii}{\jj}{\bkk}$}
\psfrag{AB}{$\BBBPBPS{\kk}{\ii}{\jj}$}
\psfrag{AC}{$\BBBBPPF{\ii}{\bjj}{\kk}$}
\psfrag{AD}{$\BBPP{\bkk}{\bjj},\ii$}
\psfrag{AE}{$\BBBBPPF{\jj}{\bkk}{\ii}$}
\psfrag{AF}{$\BBBBPPS{\kk}{\ii}{\bjj}$}
\psfrag{AH}{$\BBPP{\bjj}{\ii},\bkk$}
\psfrag{AG}{$\BBPP{\ii}{\bkk},\bjj$}
\psfrag{AJ}{$\BBPP{\ii}{\jj},\bkk$}
\psfrag{AI}{$\BBPP{\jj}{\ii},\bkk$}
\psfrag{AK}{$\BBPP{\ii}{\kk},\bjj$}
\psfrag{AL}{$\BBPP{\kk}{\ii},\bjj$}
\psfrag{ZA}{$\BBBPBPS{\jj}{\ii}{\bkk}$}
\psfrag{ZB}{$\BBBPBPS{\kk}{\ii}{\jj}$}
\psfrag{ZC}{$\BBBPBPS{\bjj}{\ii}{\kk}$}
\psfrag{AD}{$\BBBPBPS{\bkk}{\ii}{\bjj}$}
\psfrag{ZE}{$\BBBPBPS{\jj}{\kk}{\ii}$}
\psfrag{ZF}{$\BBBPBPS{\ii}{\jj}{\kk}$}
\psfrag{AG}{$\BBBPBPF{\bjj}{\kk}{\ii}$}
\psfrag{AH}{$\BBBPBPF{\ii}{\jj}{\bkk}$}
\psfrag{ZI}{$\BBBPBPF{\ii}{\kk}{\jj}$}
\psfrag{ZJ}{$\BBBPBPS{\bkk}{\jj}{\ii}$}
\psfrag{ZK}{$\BBBPBPF{\kk}{\jj}{\ii}$}
\psfrag{ZL}{$\BBBPBPS{\ii}{\kk}{\bjj}$}
\includegraphics[width = \factor\linewidth]{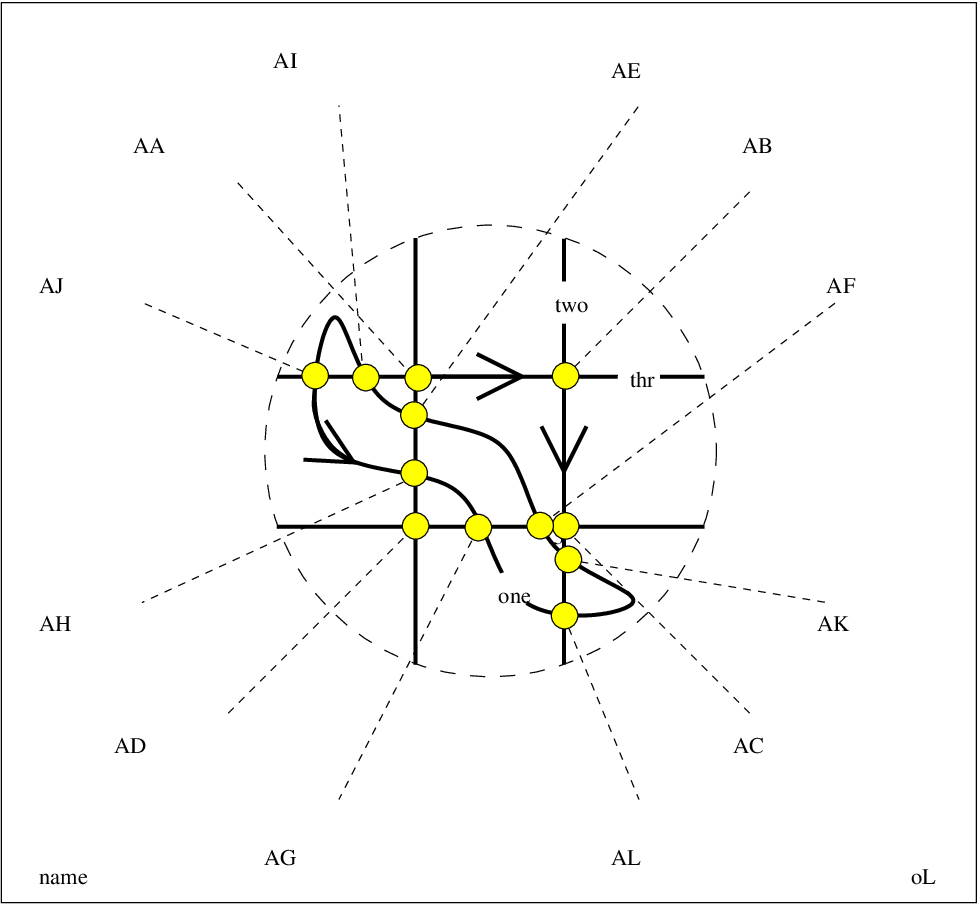}
\psfrag{oO}{$24$}
\psfrag{name}{$\name{32}(\ii\kk\jj)$}
\psfrag{AA}{$\BBBBPPS{\ii}{\jj}{\bkk}$}
\psfrag{AB}{$\BBBPBPS{\kk}{\ii}{\jj}$}
\psfrag{AC}{$\BBBPBPS{\bjj}{\ii}{\kk}$}
\psfrag{AD}{$\BBBBPPF{\ii}{\bkk}{\bjj}$}
\psfrag{AE}{$\BBBBPPF{\jj}{\bkk}{\ii}$}
\psfrag{AF}{$\BBBPBPS{\ii}{\jj}{\kk}$}
\psfrag{AG}{$\BBPP{\ii}{\bjj},\bkk$}
\psfrag{AH}{$\BBPP{\bjj}{\ii},\bkk$}
\psfrag{AI}{$\BBPP{\jj}{\ii},\bkk$}
\psfrag{AJ}{$\BBPP{\ii}{\jj},\bkk$}
\psfrag{AK}{$\BBBPBPF{\kk}{\jj}{\ii}$}
\psfrag{AL}{$\BBBBPPF{\jj}{\ii}{\bkk}$}
\includegraphics[width = \factor\linewidth]{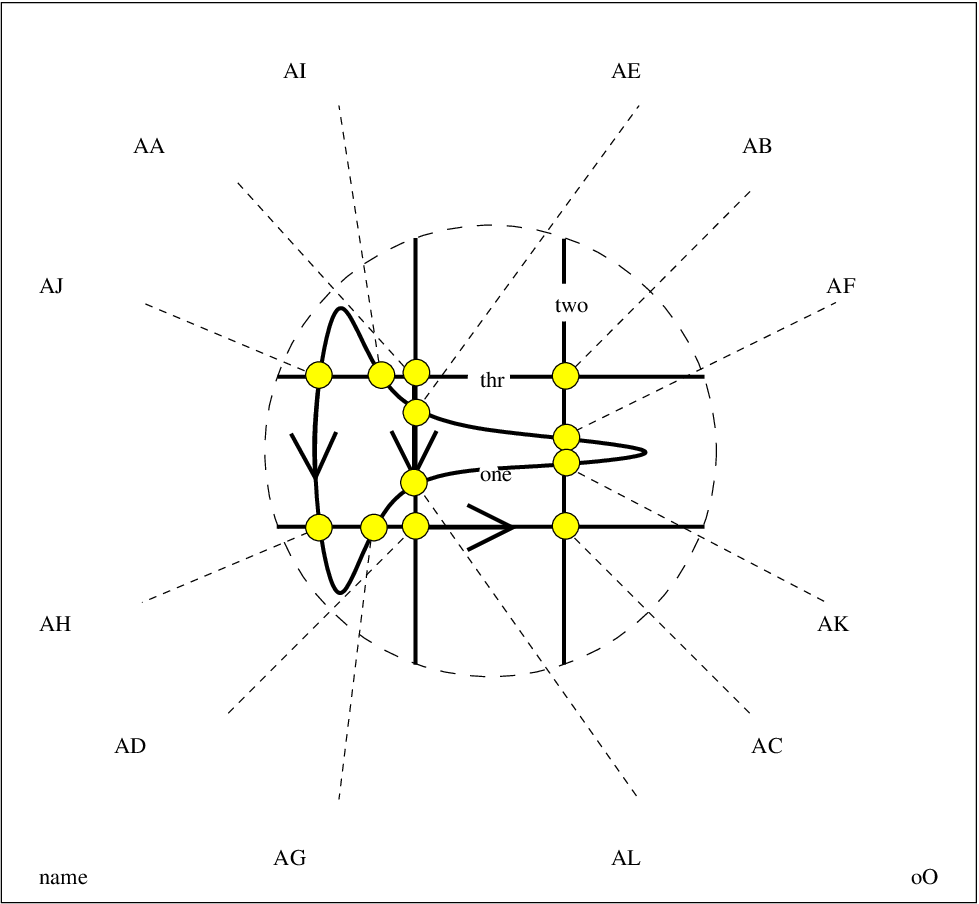}
\psfrag{oN}{$24$}
\psfrag{name}{$\name{{25_2}}(\ii\kk\jj)$}
\psfrag{AA}{$\BBBBPPS{\ii}{\jj}{\bkk}$}
\psfrag{AB}{$\BBBPBPS{\kk}{\ii}{\jj}$}
\psfrag{AC}{$\BBBBPPF{\ii}{\bjj}{\kk}$}
\psfrag{AD}{$\BBPP{\bkk}{\bjj},\ii$}
\psfrag{AE}{$\BBBBPPF{\jj}{\bkk}{\ii}$}
\psfrag{AF}{$\BBBBPPS{\kk}{\ii}{\bjj}$}
\psfrag{AH}{$\BBPP{\bjj}{\ii},\bkk$}
\psfrag{AG}{$\BBPP{\ii}{\bkk},\bjj$}
\psfrag{AJ}{$\BBPP{\ii}{\jj},\bkk$}
\psfrag{AI}{$\BBPP{\jj}{\ii},\bkk$}
\psfrag{AK}{$\BBPP{\ii}{\kk},\bjj$}
\psfrag{AL}{$\BBPP{\kk}{\ii},\bjj$}
\includegraphics[width = \factor\linewidth]{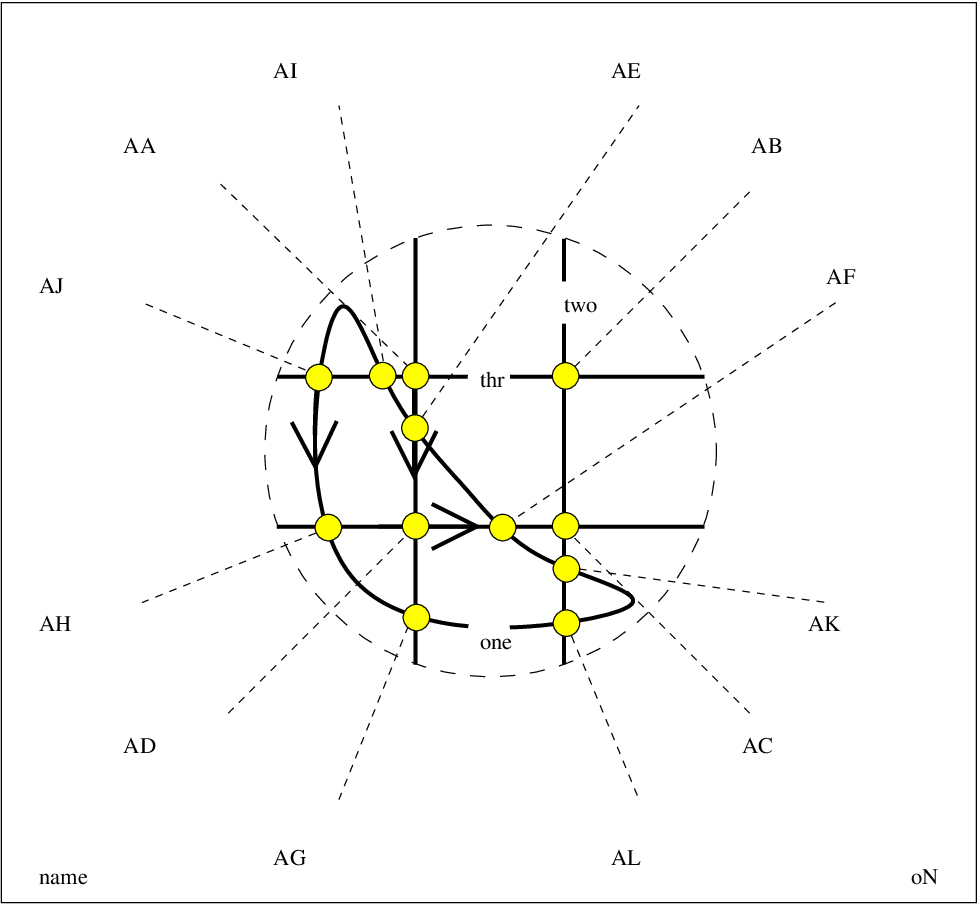}
\psfrag{oNstar}{$48$}
\psfrag{name}{$\name{{25_1}}(\ii\kk\jj)$}
\psfrag{AA}{$\BBPP{\jj}{\bkk},\ii$}
\psfrag{AB}{$\BBPP{\kk}{\jj},\ii$}
\psfrag{AC}{$\BBPP{\bjj}{\kk},\ii$}
\psfrag{AD}{$\BBPP{\bkk}{\bjj},\ii$}
\psfrag{AF}{$\BBPP{\jj}{\ii},\kk$}
\psfrag{AE}{$\BBPP{\ii}{\kk},\jj$}
\psfrag{AH}{$\BBPP{\bjj}{\ii},\bkk$}
\psfrag{AG}{$\BBPP{\ii}{\bkk},\bjj$}
\psfrag{AJ}{$\BBPP{\ii}{\jj},\bkk$}
\psfrag{AI}{$\BBPP{\bkk}{\ii},\jj$}
\psfrag{AL}{$\BBPP{\kk}{\ii},\bjj$}
\psfrag{AK}{$\BBPP{\ii}{\bjj},\kk$}
\psfrag{AD}{$\BBBPBPS{\bkk}{\ii}{\bjj}$}
\psfrag{AG}{$\BBBPBPF{\bjj}{\kk}{\ii}$}
\psfrag{AH}{$\BBBPBPF{\ii}{\jj}{\bkk}$}
\psfrag{AB}{$\BBBPBPS{\kk}{\ii}{\jj}$}
\psfrag{AE}{$\BBBPBPS{\jj}{\kk}{\ii}$}
\psfrag{AC}{$\BBBBPPF{\ii}{\bjj}{\kk}$}
\psfrag{AF}{$\BBBBPPS{\kk}{\ii}{\bjj}$}
\psfrag{AK}{$\BBPP{\ii}{\kk},\bjj$}
\psfrag{AL}{$\BBPP{\kk}{\ii},\bjj$}
\includegraphics[width = \factor\linewidth]{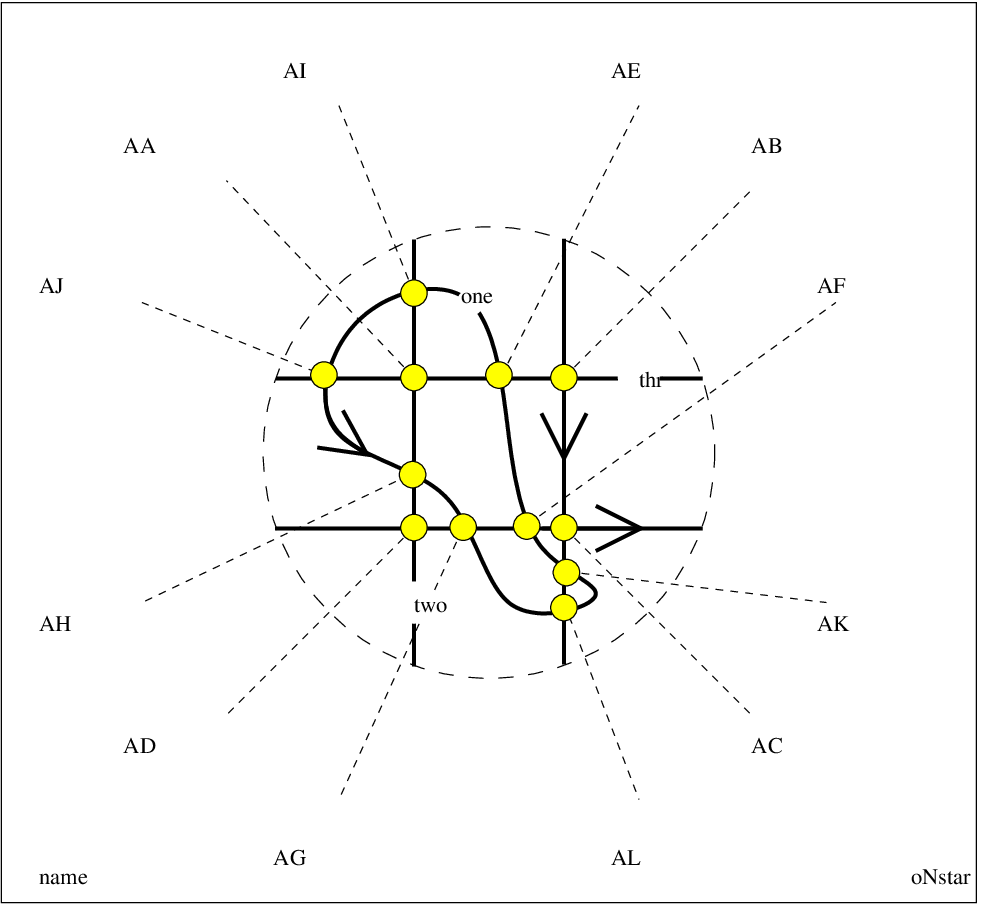}
\psfrag{oR}{$4$}
\psfrag{name}{$\name{36}(\ii\kk\jj)$}
\psfrag{RA}{$\BBBBPPS{\ii}{\jj}{\bkk}$}
\psfrag{RB}{$\BBPP{\kk}{\jj},{\ii}$}
\psfrag{RC}{$\BBBBPPF{\ii}{\bjj}{\kk}$}
\psfrag{RD}{$\BBPP{\bkk}{\bjj},\ii$}
\psfrag{RE}{$\BBBBPPF{\kk}{\ii}{\jj}$}
\psfrag{RF}{$\BBBBPPS{\kk}{\ii}{\jj}$}
\psfrag{RH}{$\BBPP{\bjj}{\ii},\bkk$}
\psfrag{RG}{$\BBPP{\ii}{\bkk},\bjj$}
\psfrag{RJ}{$\BBPP{\ii}{\jj},\bkk$}
\psfrag{RI}{$\BBPP{\jj}{\ii},\bkk$}
\psfrag{RK}{$\BBPP{\ii}{\kk},\bjj$}
\psfrag{RL}{$\BBPP{\kk}{\ii},\bjj$}
\includegraphics[width = \factor\linewidth]{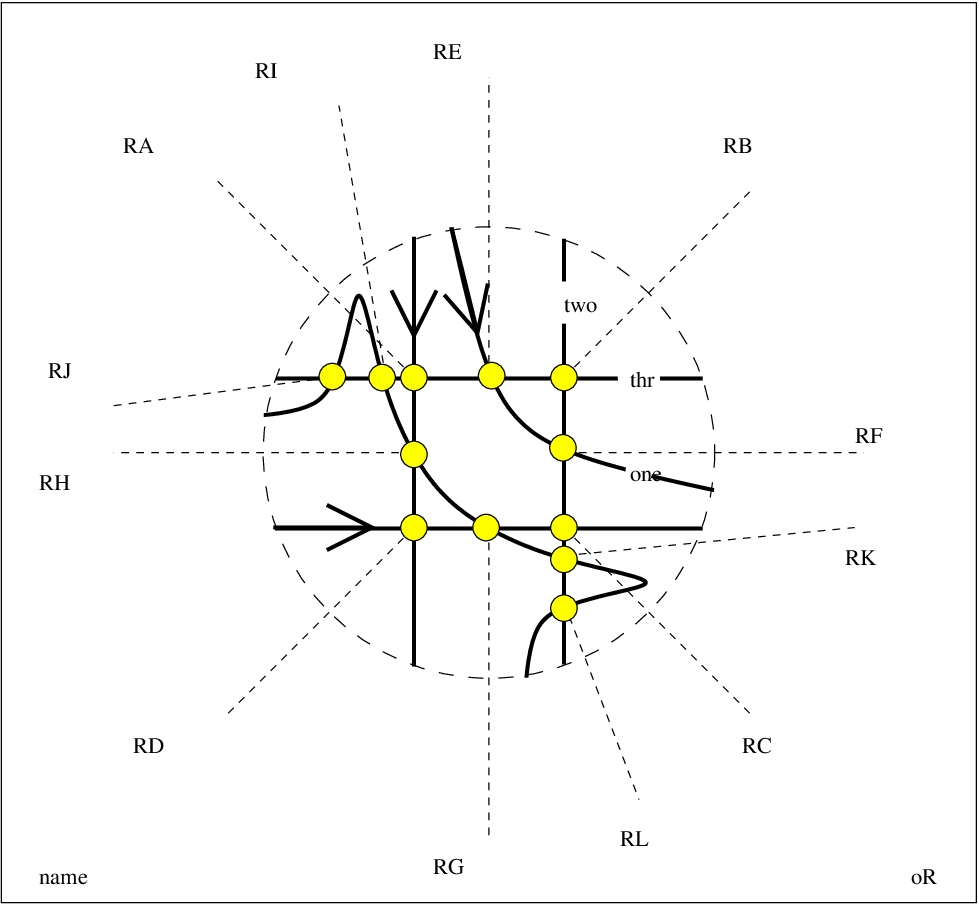}
\caption{Zero-cocycle labeled versions of the arrangements $\name{22}(\ii\kk\jj),\name{33}(\ii\kk\jj),\name{32}(\ii\kk\jj),\name{{25_2}}(\ii\kk\jj),\name{{25_1}}(\ii\kk\jj)$ and $\name{36}(\ii\kk\jj)$
\label{fulllistdecored2233}}
\end{figure}

\phantom{sautdepage}

\clearpage
\section{LR characterization\label{secfiv}}
\label{axioms}
In this section we prove  the LR characterization of chirotopes of double pseudoline arrangements; cf. Theorem~\ref{theoADP}. 
As said in the introduction, the proof goes through the notion of arrangements of double pseudolines 
living in nonorientable surfaces of any genus. In addition, we apply the same proof technique to give two new proofs of the classical 
LR characterization of chirotopes of pseudoline arrangements.

\subsection{Arrangements of genus $1,2,\ldots$ \label{karrang}}
By an arrangement of double pseudolines of genus $g\geq 1$ we mean a finite family $\Gamma$ 
of at least two simple closed curves cellularly embedded in a compact  
nonorientable surface $\SSJ_\Gamma$ of genus $g$ with the property that there exist closed tubular neighborhoods $R_i$ of the $\Gamma_i$ (ribbons for short)
such that  
\begin{enumerate}
\item for any subfamily $\subarrang$ of $\Gamma$ the union of its ribbons, denoted  $R_\subarrang$, is a closed tubular
 neighborhood of the union of its curves; the compact surface 
obtained by attaching topological disks to the boundary curves of $R_\subarrang$, using homeomorphisms for the attaching maps,
 is denoted $\SSJ_\subarrang$;
 \item any subfamily $\subarrang$ of $\Gamma$ of size $2$ considered as embedded not in $\SSJ_\Gamma$ but in $\SSJ_\subarrang$ 
is homeomorphic to the dual arrangement of some  (hence any) configuration of two convex bodies; 
\item for any $\Gamma_i, \Gamma_j\in \Gamma$ the intersection of  the ribbon $R_i$ of $\Gamma_i$ and the disk side of $\Gamma_i$ in the subarrangement $\Gamma_i,\Gamma_j$ 
is independent of $\Gamma_j$. 
\end{enumerate}
Thus arrangements of double pseudolines of genus $1$ are the arrangements of double pseudolines as defined in the previous sections. 
Fig.~\ref{cuffarrang} depicts two embeddings  in $3$-space of the tubular neighborhood of an arrangement of two double pseudolines (thus a union of two ribbons). 
\begin{figure}[!htb]
\footnotesize
\psfrag{one}{\normalsize$1$} \psfrag{two}{\normalsize $2$}
\psfrag{a}{$\nodeone{1}{2}$}
\psfrag{b}{$\nodetwo{1}{2}$}
\psfrag{c}{$\nodethr{1}{2}$}
\psfrag{d}{$\nodefou{1}{2}$}

\psfrag{un}{$1$} \psfrag{de}{$2$} \psfrag{tr}{$3$} \psfrag{qu}{$4$} \psfrag{ci}{$5$}
\centering
\includegraphics[width=0.875\linewidth]{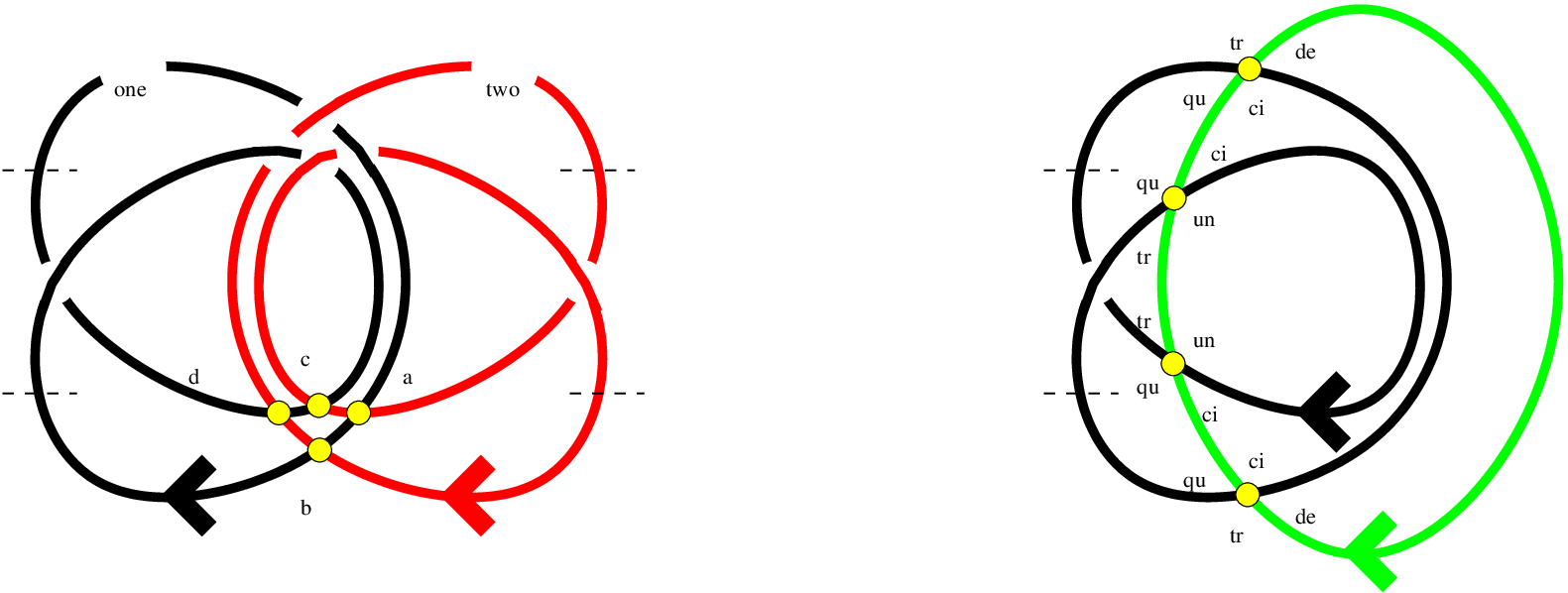}
\caption{\label{cuffarrang}Two embeddings  in $3$-space of the tubular neighborhood of an indexed arrangement of two oriented double pseudolines}
\end{figure}
A horizontal dashed line segment indicates the presence of a half-twist ($180$ degrees) of the ribbon crossed by the line segment and the numbers, in the right diagram, 
label the corners of the polygonal boundary curves  of the neighborhood (the corners of a polygonal boundary curve being labeled by the same number). 
We extend in the natural way to the class of  arrangements of double pseudolines of arbitrary genus  
the notions of thinness,  mutations, isomorphism classes, node cycles (you must {\it not} forget the  binary operation $\otimes$), 
side cycles of disk and crosscap type together with their prime factors, ($\Delta$-)chirotopes, and so one 
associated with the class of arrangements of double pseudolines of  genus $1$. 
As for arrangements of genus $1$, the isomorphism class of an arrangement of any genus depends only on its family of side cycles, with the 
net benefit  that there is now a  very simple characterization of cycles that arose as side cycles of simple arrangements: 
{\it  
a family of circular sequences $D_i$, $i\in I$, is the family of side cycles of disk type of a simple arrangement of oriented double pseudolines indexed by $I$ 
if and only if  the  $D_i$ are shuffles of the elementary circular sequences $\jind{j}\jind{j}\jbar{j}\jbar{j}$, $j\neq i$.} 
The case of any arrangements is hardly more complicated~: only the condition that prime factors occur consistently on side cycles has to be taken in account; the exact formulation 
is postponed to the end of the section. 
In this broader context the (range part of the) LR characterization of chirotopes of arrangements of double pseudolines of genus $1$ is a direct consequence of the two following theorems.
\begin{theorem} \label{LRCanygenus} 
The map which  assigns to an isomorphism class of indexed arrangements of oriented double pseudolines its $4$-chirotope is one-to-one 
and that which assigns its $5$-chirotope is (one-to-one and) onto.\qed
\end{theorem}
\begin{theorem} \label{uptofive}
The class of arrangements of double pseudolines of genus $1$ is the class of arrangements of double pseudolines whose subarrangements 
of size at most $5$ are of genus $1$. \qed
\end{theorem}

In a similar way, we introduce the notion of arrangements of pseudolines of arbitrary genus (ribbons are now crosscaps) and we extend the related terminology~: mutations, 
isomorphism classes, side cycles, ($\Delta$-)chirotopes, and so on. 
Furthermore, exactly as we did for the collection of isomorphism classes of  simple arrangements of pseudolines of genus $1$, 
we embed the collection of isomorphism classes of simple arrangements of pseudolines into the collection 
of isomorphism classes of arrangements of double pseudolines via the support of the isomorphism classes of thin arrangements of double pseudolines. 
Fig.~\ref{cuffarrang} shows an embedding  in $3$-space of the tubular neighborhood of an indexed arrangement of two oriented pseudolines. 
\begin{figure}[!htb]
\psfrag{one}{$1$} \psfrag{two}{$2$}
\psfrag{a}{$\nodeone{1}{2}$}
\psfrag{b}{$\nodetwo{1}{2}$}
\psfrag{c}{$\nodethr{1}{2}$}
\psfrag{d}{$\nodefou{1}{2}$}
\psfrag{un}{$1$} \psfrag{de}{$2$} \psfrag{tr}{$3$} \psfrag{qu}{$4$} \psfrag{ci}{$5$}
\centering
\includegraphics[width=0.4500\linewidth]{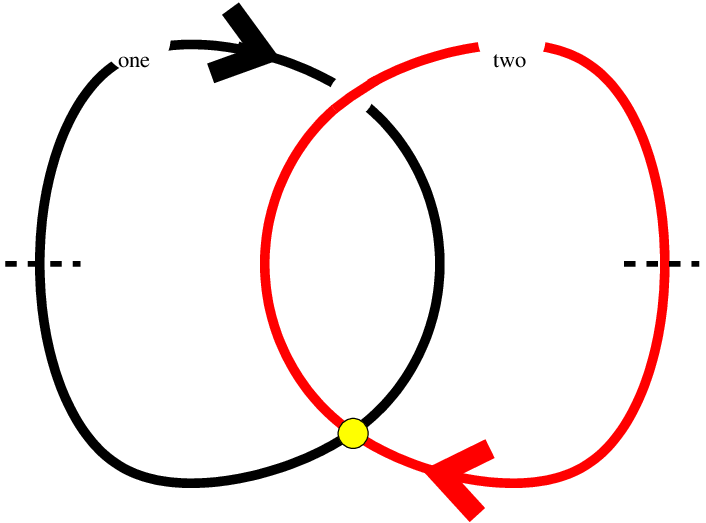}
\caption{\label{cuffarrang} Embedding  in $3$-space of the tubular neighborhood of an indexed arrangement of two oriented pseudolines}
\end{figure}
Again, in this broader context, the classical LR characterization of chirotopes of arrangements of pseudolines of genus $1$ is a direct consequence of the two following theorems.  
\begin{theorem}\label{LRCanygenusPL}
The map which  assigns to an isomorphism class of indexed arrangements of oriented pseudolines its $3$-chirotope is one-to-one
and that which assigns its $5$-chirotope is (one-to-one and) onto. \qed
\end{theorem}
\begin{theorem}\label{uptofour}
The class of arrangements of pseudolines of genus $1$ is the class of arrangements
 pseudolines whose subarrangements of size at most $4$ are of genus $1$. \qed
\end{theorem}

Before proving these theorems (and discuss improved versions of Theorems~\ref{uptofive} and~\ref{uptofour}) we give few examples of arrangements.
\begin{example} Fig.~\ref{noccside}a depicts a family of three curves cellularly embedded in a Klein bottle (decomposed by the curves into $2$ digons, $2$ trigons, $6$ tetragons, $1$ hexagon and $1$ octagon) 
that fulfills condition (2) but not condition (3) of the definition 
of an arrangement of double pseudolines: the disk side of the green curve in the  arrangement composed of the green and red curves 
and 
the disk side of the green curve in the  arrangement composed of the green and black curves intersect the  ribbon of the green curve in two distinct cylinders  
(on the other hand, disk and crosscap sides of the red and black curves are well-defined).
Fig.~\ref{noregcomplex}b depicts an arrangement of three curves,  obtained by adding two twists on the green curve of the configuration  of Fig.~\ref{noccside}a. 
\begin{figure}[!htb]
\footnotesize 
\psfrag{i}{$\jind{i}$} \psfrag{j}{$\jind{j}$} \psfrag{k}{$\jind{k}$}
\psfrag{ib}{$\jbar{i}$} \psfrag{jb}{$\jbar{j}$} \psfrag{kb}{$\jbar{k}$}
\psfrag{un}{$1$} \psfrag{de}{$2$} \psfrag{tr}{$3$}
\psfrag{ontw}{$12$} \psfrag{onth}{$13$} \psfrag{twth}{$23$}
\psfrag{M}{$M$} \psfrag{M}{} \psfrag{C}{$\gamma$}
\psfrag{hei}{$8$}
\psfrag{a}{(a)}
\psfrag{b}{(b)}
\centering
\includegraphics[width=0.95\linewidth]{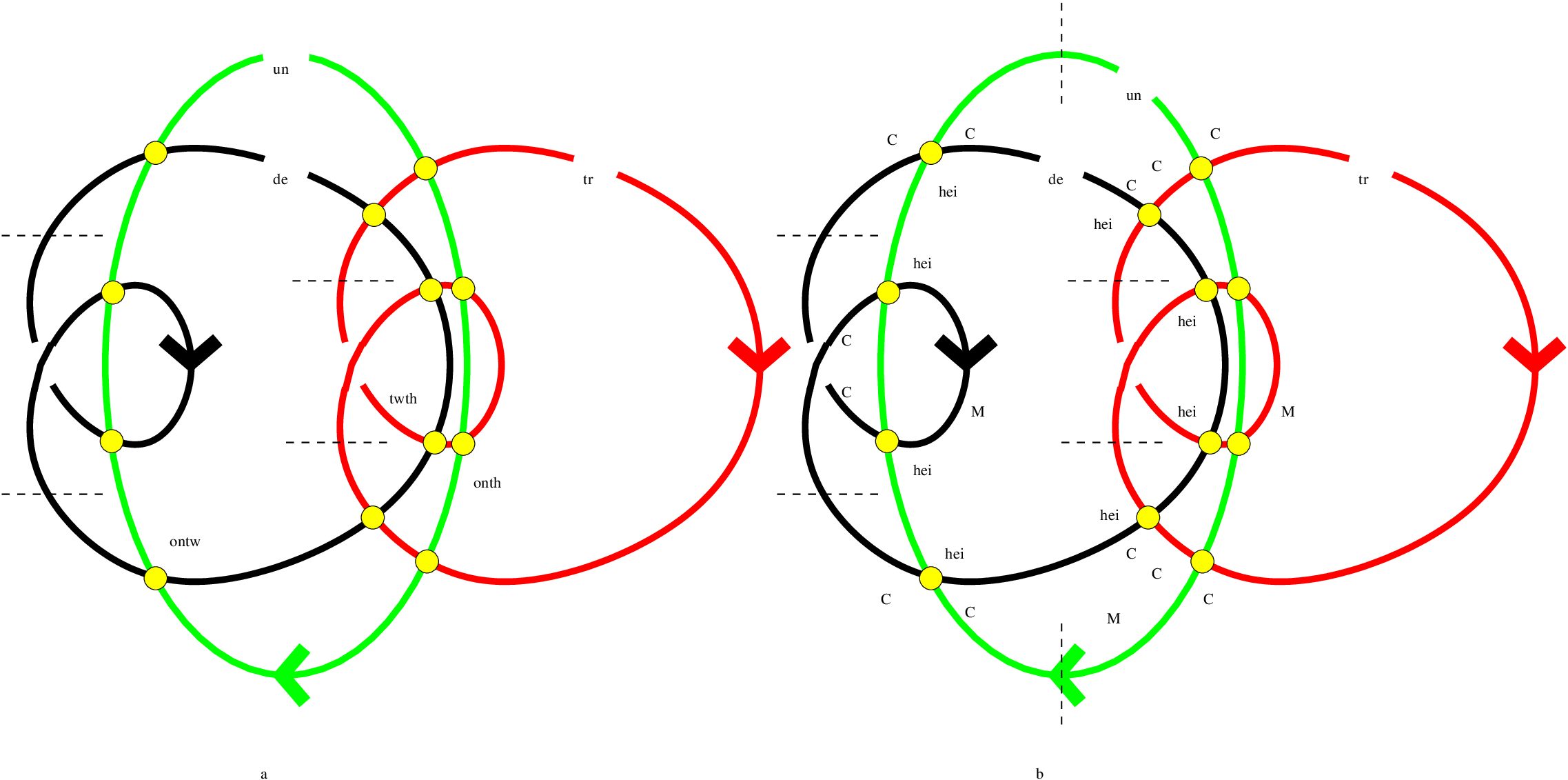}
\caption{\label{noregcomplex}\label{noccside} (a) A family of three curves cellularly embedded in a Klein bottle that fulfills condition (2) but not condition (3) 
of the definition of an arrangement of double pseudolines; (b) An arrangement of three curves living in a double Klein bottle}
\end{figure}
It is composed of 
$2$ digons,  $6$ tetragons, $1$ octagon and $1$ dodecagon (the corners of the octagon are labeled with the numeral $8$ and 
those of the dodecagon by the letter $\gamma$). 
It lives in a double Klein bottle. 
Its node cycles are  the same as those of the previous example, i.e., 
$$\begin{array}{cc}
1 : & \nodeone{1}{2},\nodetwo{1}{2},\nodethr{1}{2},\nodefou{1}{2},\nodethr{1}{3},\nodefou{1}{3},\nodeone{1}{3},\nodetwo{1}{3}\phantom{.}\\
2 : & \nodeone{2}{1},\nodetwo{2}{1},\nodethr{2}{1},\nodefou{2}{1},\nodethr{2}{3},\nodefou{2}{3},\nodeone{2}{3},\nodetwo{2}{3}\phantom{.}\\
3:  & \nodethr{3}{1},\nodethr{3}{2},\nodefou{3}{2},\nodefou{3}{1},\nodeone{3}{1},\nodeone{3}{2},\nodetwo{3}{2},\nodetwo{3}{1}.
\end{array}
$$
\end{example}
\begin{example}
Fig.~\ref{cuffarrangthr} depicts embeddings in $3$-space of tubular neighborhoods of two arrangements on three curves. 
Again the horizontal dashed line segments indicate the presence of half-twists ($180$ degrees) of the ribbons  of the tubular neighborhood. 
\begin{figure}[!htb]
\footnotesize
\psfrag{un}{$8$}
\psfrag{de}{$9$}
\psfrag{u}{\normalsize $1$}
\psfrag{d}{\normalsize $2$}
\psfrag{tr}{\normalsize $3$}
\centering
\includegraphics[width=0.875\linewidth]{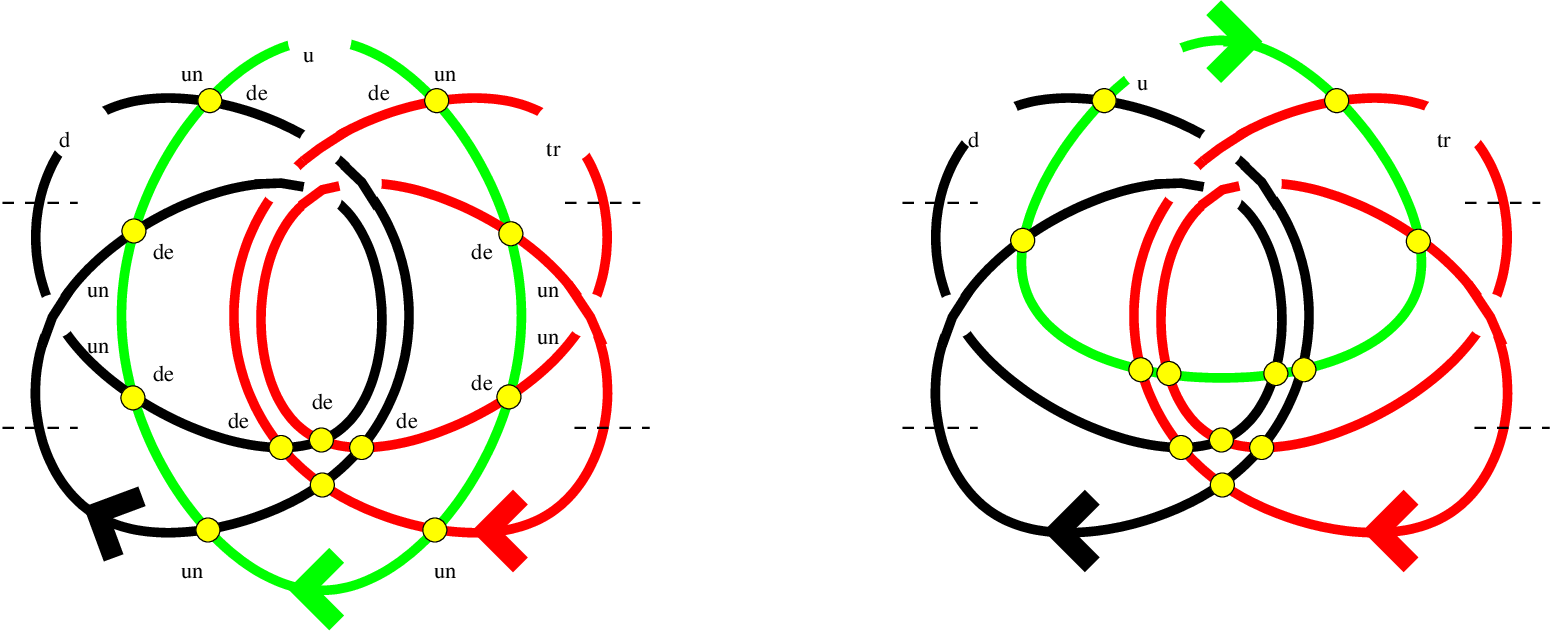}
\caption{\label{cuffarrangthr} Two arrangements on three curves living in a double Klein bottle}
\end{figure}
Both live in a sphere with 4 crosscaps (a double Klein bottle) 
 decomposed by the curves into 1  trigon, 7  tetragons, 1  octagon and  1  nonagon. (In the left diagram the corners of the octagon are labeled by the numeral $8$ and those of the nonagon by the numeral $9$.)
If we orient clockwise the curves and use, respectively, the indices 1, 2 and 3 for the green, blue and red curves, then the side cycles of disk type  of the arrangements are, respectively, 
$$\begin{array}{cc}
1: &\onecrossR{1}{2} \twocrossR{1}{2} \thrcrossR{1}{2} \foucrossR{1}{2} \onecrossR{1}{3} \twocrossR{1}{3} \thrcrossR{1}{3} \foucrossR{2}{3}\\
2: &\onecrossR{2}{3} \twocrossR{2}{3} \onecrossR{2}{1} \twocrossR{2}{1} \thrcrossR{2}{3} \foucrossR{2}{3} \thrcrossR{2}{1} \foucrossR{2}{1}\\
3: &\onecrossR{3}{2} \twocrossR{3}{2} \thrcrossR{3}{1} \foucrossR{3}{1} \thrcrossR{3}{2} \foucrossR{3}{2} \onecrossR{3}{1} \twocrossR{3}{1}
\end{array}
\qquad \text{and} \qquad  
\begin{array}{cc}
1: &\onecrossR{1}{2} \twocrossR{1}{2} \thrcrossR{1}{3} \foucrossR{1}{3} \thrcrossR{1}{2} \foucrossR{1}{2} \onecrossR{1}{3} \twocrossR{1}{3}\phantom{.} \\
2: &\onecrossR{2}{3} \twocrossR{2}{3} \twocrossR{2}{1} \thrcrossR{2}{1} \thrcrossR{2}{3} \foucrossR{2}{3} \foucrossR{2}{1} \onecrossR{2}{1}\phantom{.} \\
3: &\onecrossR{3}{2} \twocrossR{3}{2} \twocrossR{3}{1} \thrcrossR{3}{1} \thrcrossR{3}{2} \foucrossR{3}{2} \foucrossR{3}{1} \onecrossR{3}{1}.
\end{array}
$$
Observe that the first arrangement is a martagon (with respect to and only to the green curve) but the second one is not and 
that these two arrangements are connected by a sequence of four mutations.  
\end{example}

\begin{example}
Fig.~\ref{thinarrangthr} depicts embeddings in $3$-space of tubular neighborhoods of two thin arrangements on three curves. 
\begin{figure}[!htb]
\psfrag{one}{$1$}
\psfrag{two}{$2$}
\psfrag{thr}{$3$}
\centering
\includegraphics[width=0.875\linewidth]{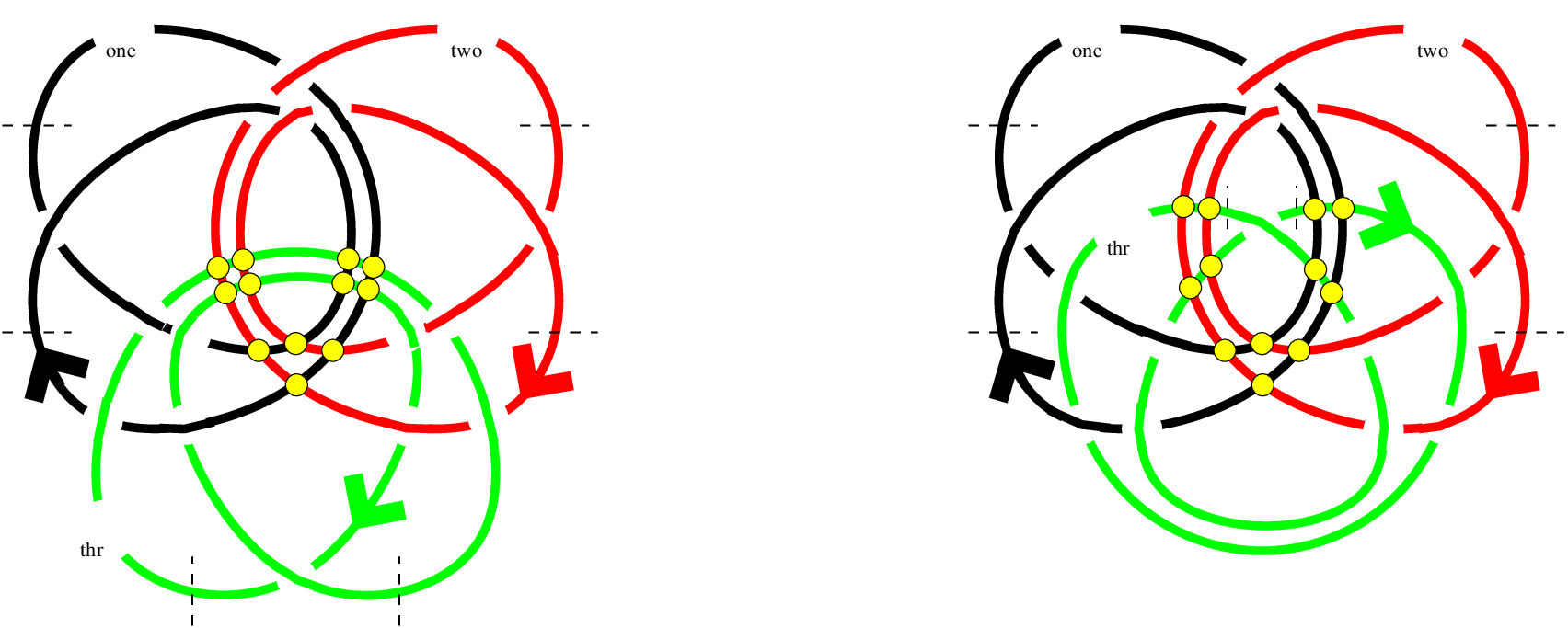}
\caption{\label{thinarrangthr} Two thin arrangements of three double pseudolines}
\end{figure}
The first arrangement lives in a cross surface decomposed by the curves into $4$ trigons and $9$ tetragons, 
and the second one in a surface with  $3$ crosscaps  decomposed by the curves into $2$ hexagons and $9$ tetragons.  Their side cycles of disk type are, respectively,
$$\begin{array}{cc}
1: & \thrcrossR{1}{2} \foucrossR{1}{2} \thrcrossR{1}{3} \foucrossR{2}{3} \onecrossR{1}{2} \twocrossR{1}{2} \onecrossR{1}{3} \twocrossR{1}{3} \\
2: & \thrcrossR{2}{3} \foucrossR{2}{3} \thrcrossR{2}{1} \foucrossR{2}{1} \onecrossR{2}{3} \twocrossR{2}{3} \onecrossR{2}{1} \twocrossR{2}{1} \\
3: & \thrcrossR{3}{1} \foucrossR{3}{1} \thrcrossR{3}{2} \foucrossR{3}{2} \onecrossR{3}{1} \twocrossR{3}{1} \onecrossR{3}{2} \twocrossR{3}{2} 
\end{array}
\qquad \text{and} \qquad  
\begin{array}{cc}
1: & \thrcrossR{1}{2} \foucrossR{1}{2} \thrcrossR{1}{3} \foucrossR{2}{3} \onecrossR{1}{2} \twocrossR{1}{2} \onecrossR{1}{3} \twocrossR{1}{3}\phantom{.}\\
2: & \thrcrossR{2}{3} \foucrossR{2}{3} \thrcrossR{2}{1} \foucrossR{2}{1} \onecrossR{2}{3} \twocrossR{2}{3} \onecrossR{2}{1} \twocrossR{2}{1}\phantom{.} \\
3: & \thrcrossR{3}{1} \foucrossR{3}{1} \onecrossR{3}{2} \twocrossR{3}{2} \onecrossR{3}{1} \twocrossR{3}{1} \thrcrossR{3}{2} \foucrossR{3}{2}. 
\end{array}
$$
These two arrangements are doubles of those of Fig~\ref{arrangPLthr}. 
Note that a family of circular sequences $D_i$, $i\in I$, is the family of side cycles of disk type of a thin arrangement of oriented double pseudolines indexed by $I$ 
if and only if  the  $D_i$ are the images under the morphism $\varphi(x) = xx$ of the side cycles of a simple arrangement of oriented pseudolines  indexed by $I$. 
\end{example}

\begin{example} 
Fig.~\ref{arrangPLthr} depicts embeddings in $3$-space of tubular neighborhoods of two arrangements of three  pseudolines. 
\begin{figure}[!htb]
\psfrag{one}{$1$}
\psfrag{two}{$2$}
\psfrag{thr}{$3$}
\centering
\includegraphics[width=0.875\linewidth]{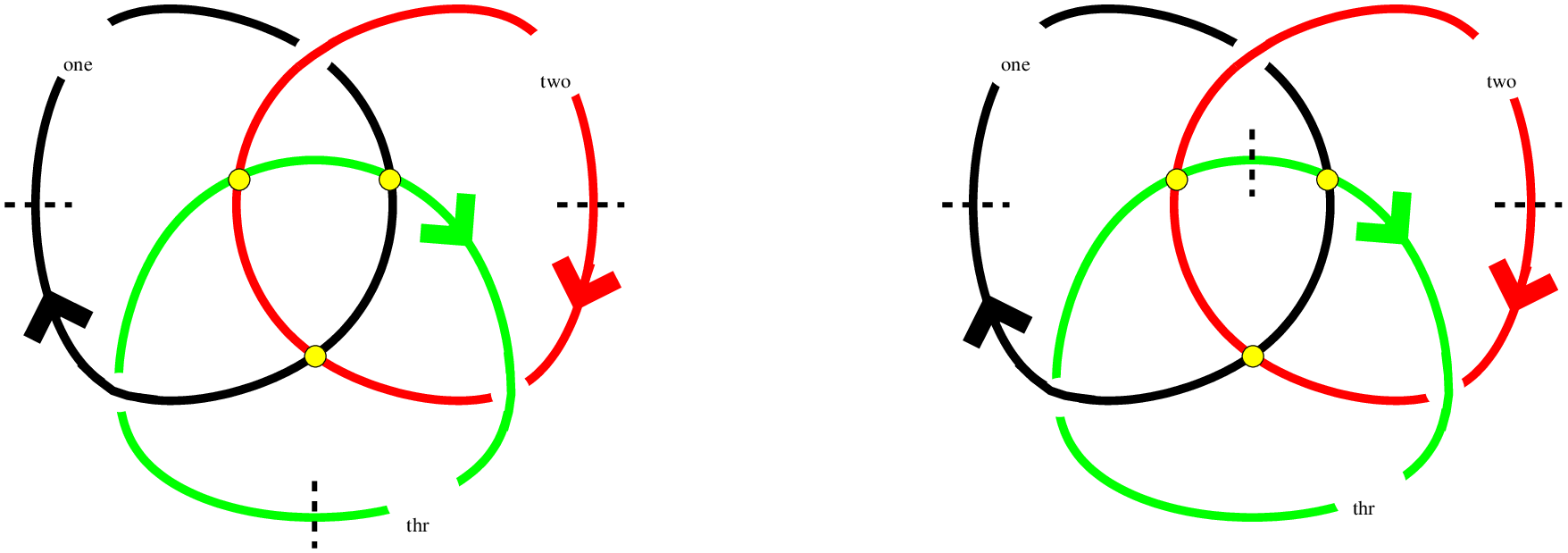}
\caption{\label{arrangPLthr} Two arrangements of three pseudolines}
\end{figure}
The first arrangement lives in a cross surface decomposed by the curves into $4$ trigons, 
and the second one in a surface with  $3$ crosscaps  decomposed by the curves into $2$ hexagons.  Their side cycles are, respectively,
$$\begin{array}{cc}
1: & \foucrossR{1}{2} \foucrossR{2}{3} \twocrossR{1}{2} \twocrossR{1}{3} \\
2: & \foucrossR{2}{3} \foucrossR{2}{1} \twocrossR{2}{3} \twocrossR{2}{1} \\
3: & \foucrossR{3}{1} \foucrossR{3}{2} \twocrossR{3}{1} \twocrossR{3}{2} 
\end{array}
\qquad \text{and} \qquad  
\begin{array}{cc}
1: & \foucrossR{1}{2} \foucrossR{2}{3} \twocrossR{1}{2} \twocrossR{1}{3}\phantom{.} \\
2: & \foucrossR{2}{3} \foucrossR{2}{1} \twocrossR{2}{3} \twocrossR{2}{1}\phantom{.} \\
3: & \foucrossR{3}{1} \twocrossR{3}{2} \twocrossR{3}{1}  \foucrossR{3}{2}. 
\end{array}
$$
These two arrangements are core arrangements of those of Fig~\ref{thinarrangthr}.
Note that a family of circular sequences $D_i$, $i\in I$, is the family of side cycles of a simple arrangement of oriented pseudolines indexed by $I$ 
if and only if  the  $D_i$ are {\it antipodal} shuffles of the elementary circular sequences $\jind{j}\jbar{j}$, $j\neq i$. 
(Here antipodal means that $\jind{j}$ and $\jbar{j}$ occur at positions that differ by the maximum amount, i.e., the cardinality of $I$ minus $1$.) 
\end{example}

\begin{example} 
Fig.~\ref{celldecompici} depicts the cell complex of a simple arrangement of three octagonal curves (colored red, green and purple in colored pdf)  
$$
\begin{array}{ccl}
\tau_1 & = & 
\ansour\anormal\bnsour\bnormal\cnsour\cnormal\dnsour\dnormal\ensour\enormal\fnsour\fnormal\gnsour\gnormal\hnsour\hnormal \\ 
\tau_2 & = & 
\acsour\achapeau\bcsour\bchapeau\ccsour\cchapeau\dcsour\dchapeau\ecsour\echapeau\fcsour\fchapeau\gcsour\gchapeau\hcsour\hchapeau, \\  
\tau_3 & = & 
\atsour\atilde\btsour\btilde\ctsour\ctilde\dtsour\dtilde\etsour\etilde\ftsour\ftilde\gtsour\gtilde\htsour\htilde,
\end{array}
$$
living in a triple cross surface as one can check by calculating the Euler characteristic of the surface. 
In the figure the cell complex is augmented with  its dual graph (oriented arbitrarily at our convenience).
\begin{figure}[!htb]
\psfrag{an}{$\anormal$} \psfrag{ac}{$\achapeau$} \psfrag{at}{$\atilde$}
\psfrag{bn}{$\bnormal$} \psfrag{bc}{$\bchapeau$} \psfrag{bt}{$\btilde$}
\psfrag{cn}{$\cnormal$} \psfrag{cc}{$\cchapeau$} \psfrag{ct}{$\ctilde$}
\psfrag{dn}{$\dnormal$} \psfrag{dc}{$\dchapeau$} \psfrag{dt}{$\dtilde$}
\psfrag{en}{$\enormal$} \psfrag{ec}{$\echapeau$} \psfrag{et}{$\etilde$}
\psfrag{fn}{$\fnormal$} \psfrag{fc}{$\fchapeau$} \psfrag{ft}{$\ftilde$}
\psfrag{gn}{$\gnormal$} \psfrag{gc}{$\gchapeau$} \psfrag{gt}{$\gtilde$}
\psfrag{hn}{$\hnormal$} \psfrag{hc}{$\hchapeau$} \psfrag{ht}{$\htilde$}

\psfrag{one}{\footnotesize \tiny $1$}
\psfrag{two}{\footnotesize \tiny $2$}
\psfrag{thr}{\footnotesize \tiny$3$}
\psfrag{fou}{\footnotesize \tiny$4$}
\psfrag{fiv}{\footnotesize \tiny$5$}
\psfrag{six}{\footnotesize \tiny$6$}
\psfrag{sev}{\footnotesize \tiny$7$}
\psfrag{hei}{\footnotesize \tiny$8$}
\psfrag{nin}{\footnotesize \tiny$9$}
\psfrag{ten}{\footnotesize \tiny$10$}
\psfrag{ele}{\footnotesize \tiny$11$}

\psfrag{vun}{\footnotesize \tiny$\dnsour,\ecsour$}
\psfrag{vde}{\footnotesize \tiny$\dtsour,\ensour$}
\psfrag{vtr}{\footnotesize \tiny$\dcsour,\etsour$}
\psfrag{vqu}{\footnotesize \tiny$\ansour,\hcsour$}
\psfrag{vci}{\footnotesize \tiny$\atsour,\hnsour$}
\psfrag{vsi}{\footnotesize \tiny$\acsour,\htsour$}
\psfrag{vse}{\footnotesize \tiny$\bnsour,\fcsour$}
\psfrag{vhu}{\footnotesize \tiny$\cnsour,\gcsour$}
\psfrag{vne}{\footnotesize \tiny$\btsour,\fnsour$}
\psfrag{vdi}{\footnotesize \tiny$\ctsour,\gnsour$}
\psfrag{von}{\footnotesize \tiny$\bcsour,\ftsour$}
\psfrag{vdo}{\footnotesize \tiny$\ccsour,\gtsour$}
\centering
\includegraphics[width = 0.875\linewidth]{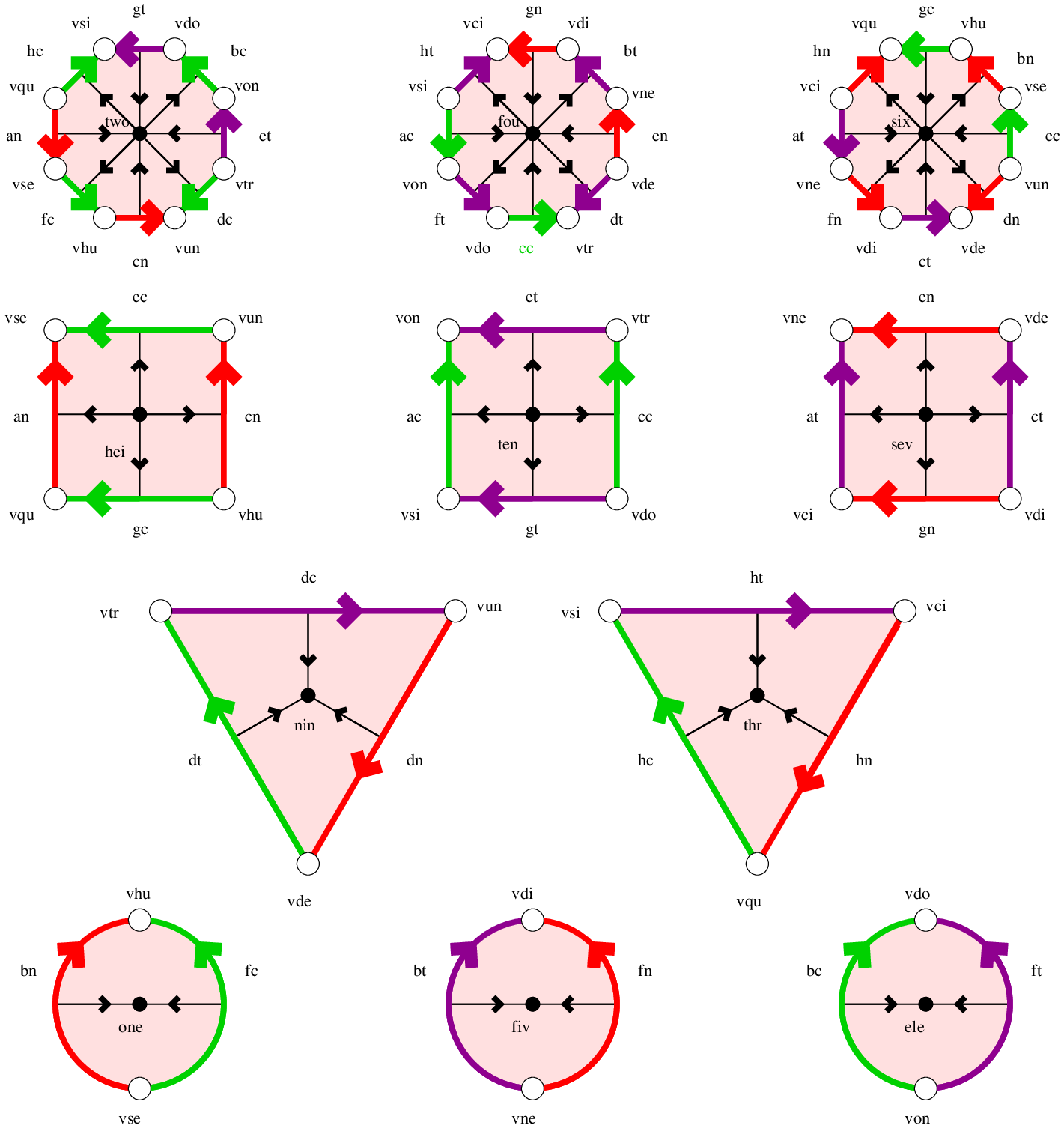}
\caption{An arrangement of three double pseudolines living in a triple cross surface. The double pseudolines are drawn red, green and purple in colored pdf
\label{celldecompici}}
\end{figure}
Using the symbol of an edge of the cell complex to denote its dual
we get a dual  presentation
composed of a system of $12$ equations in $24$ symbols
$$
\begin{array}{rrrr}
\anormal\fchapeau = \echapeau\bnormal & \anormal\hchapeau = \gchapeau\hnormal & \cnormal\fchapeau = \gchapeau \bnormal  & \cnormal\dchapeau = \echapeau\dnormal \\
\atilde \fnormal  = \enormal\btilde   & \atilde\hnormal  = \gnormal\htilde    & \ctilde\fnormal  = \gnormal\btilde      & \ctilde\dnormal  = \enormal \dtilde \\
\achapeau\ftilde  = \etilde\bchapeau  & \achapeau\htilde  = \gtilde\hchapeau  & \cchapeau\ftilde  = \gtilde \bchapeau& \cchapeau \dtilde = \etilde \dchapeau 
\end{array}
$$
providing evidence that this system of curves is a well-defined arrangement, 
as illustrated in Fig.~\ref{twoarrangbis} where the shaded regions denote the crosscap sides of the curves. 
\begin{figure}[!htb]
\psfrag{a}{$\gchapeau$} \psfrag{f}{$\echapeau$} \psfrag{p}{$\ctilde$} \psfrag{u}{$\atilde$} \psfrag{g}{$\cnormal$}
\psfrag{o}{$\anormal$} \psfrag{s}{$\gtilde$} \psfrag{r}{$\etilde$} \psfrag{d}{$\cchapeau$} \psfrag{c}{$\achapeau$}
\psfrag{m}{$\gnormal$} \psfrag{l}{$\enormal$} \psfrag{b}{$\hchapeau$} \psfrag{e}{$\dchapeau$} \psfrag{t}{$\htilde$}
\psfrag{q}{$\dtilde$} \psfrag{h}{$\dnormal$} \psfrag{n}{$\hnormal$} \psfrag{x}{$\fnormal$} \psfrag{y}{$\btilde$}
\psfrag{z}{$\fchapeau$} \psfrag{v}{$\bchapeau$} \psfrag{w}{$\ftilde$} \psfrag{oo}{$\bnormal$}
\psfrag{ap}{$\gchapeau$} \psfrag{fp}{$\echapeau$} \psfrag{pp}{$\ctilde$} \psfrag{up}{$\atilde$} \psfrag{gp}{$\cnormal$}
\psfrag{op}{$\anormal$} \psfrag{sp}{$\gtilde$} \psfrag{rp}{$\etilde$} \psfrag{dp}{$\cchapeau$} \psfrag{cp}{$\achapeau$}
\psfrag{mp}{$\gnormal$} \psfrag{lp}{$\enormal$} \psfrag{bp}{$\hchapeau$} \psfrag{ep}{$\dchapeau$} \psfrag{tp}{$\htilde$}
\psfrag{qp}{$\dtilde$} \psfrag{hp}{$\dnormal$} \psfrag{np}{$\hnormal$} \psfrag{xp}{$\fnormal$} \psfrag{yp}{$\btilde$}
\psfrag{zp}{$\fchapeau$} \psfrag{vp}{$\bchapeau$} \psfrag{wp}{$\ftilde$} \psfrag{oop}{$\bnormal$}

\psfrag{soan}{\footnotesize $\ansour$} \psfrag{sobn}{\footnotesize $\bnsour$} \psfrag{socn}{\footnotesize $\cnsour$} \psfrag{sodn}{\footnotesize $\dnsour$}
\psfrag{soen}{\footnotesize $\ensour$} \psfrag{sofn}{\footnotesize $\fnsour$} \psfrag{sogn}{\footnotesize $\gnsour$} \psfrag{sohn}{\footnotesize $\hnsour$}
\psfrag{soat}{\footnotesize $\atsour$} \psfrag{sobt}{\footnotesize $\btsour$} \psfrag{soct}{\footnotesize $\ctsour$} \psfrag{sodt}{\footnotesize $\dtsour$}
\psfrag{soet}{\footnotesize $\etsour$} \psfrag{soft}{\footnotesize $\ftsour$} \psfrag{sogt}{\footnotesize $\gtsour$} \psfrag{soht}{\footnotesize $\htsour$}
\psfrag{soac}{\footnotesize $\acsour$} \psfrag{sobc}{\footnotesize $\bcsour$} \psfrag{socc}{\footnotesize $\ccsour$} \psfrag{sodc}{\footnotesize $\dcsour$}
\psfrag{soec}{\footnotesize $\ecsour$} \psfrag{sofc}{\footnotesize $\fcsour$} \psfrag{sogc}{\footnotesize $\gcsour$} \psfrag{sohc}{\footnotesize $\hcsour$}

\psfrag{one}{\footnotesize \tiny $1$} \psfrag{two}{\footnotesize \tiny $2$} \psfrag{thr}{\footnotesize \tiny$3$} \psfrag{fou}{\footnotesize \tiny$4$}
\psfrag{fiv}{\footnotesize \tiny$5$} \psfrag{six}{\footnotesize \tiny$6$} \psfrag{sev}{\footnotesize \tiny$7$} \psfrag{hei}{\footnotesize \tiny$8$}
\psfrag{nin}{\footnotesize \tiny$9$} \psfrag{ten}{\footnotesize \tiny$10$} \psfrag{ele}{\footnotesize \tiny$11$} 
\centering
\includegraphics[width = 0.875\linewidth]{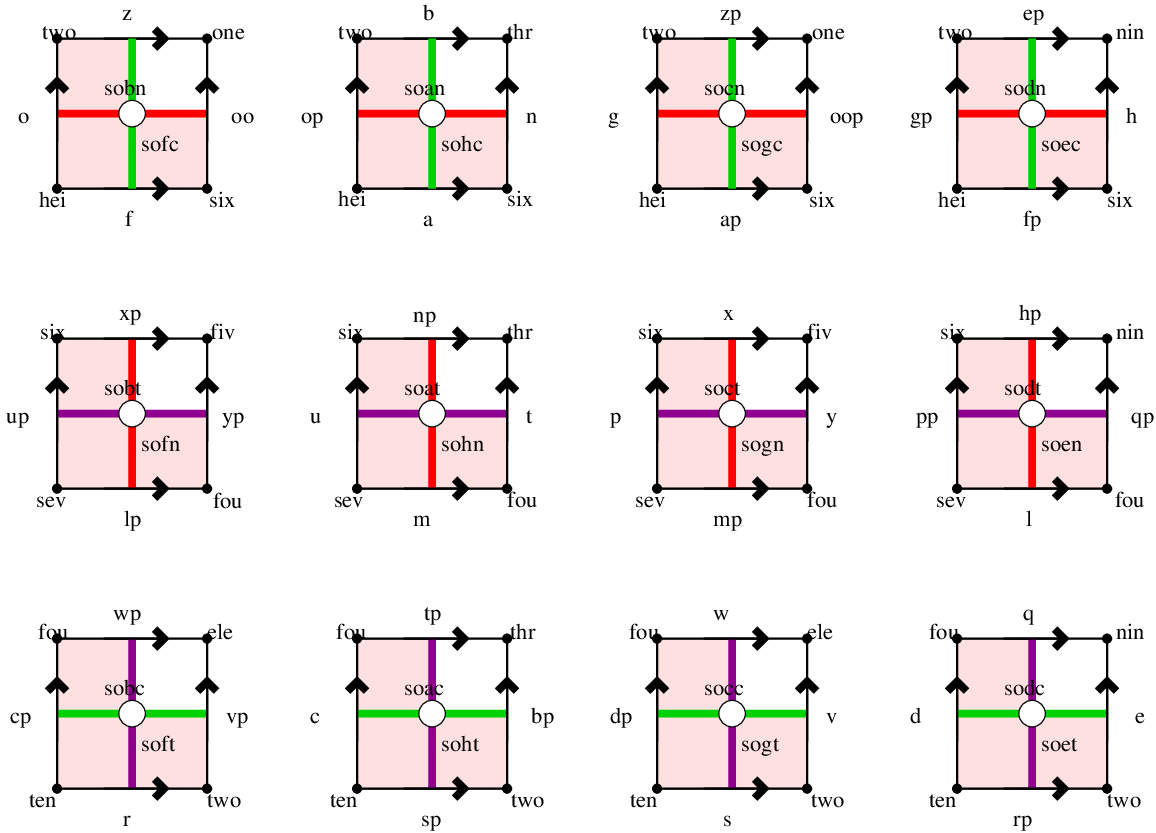}
\caption{An  arrangement of three double pseudolines living in a triple cross surface\label{twoarrangbis}}
\end{figure}
We built it as the simple arrangement with side cycles (of disk type)
$$
\begin{array}{cc}
1: & \thrcrossR{1}{2}\foucrossR{1}{2}\onecrossR{1}{2}\twocrossR{1}{2}\thrcrossR{1}{3}\foucrossR{1}{3}\onecrossR{1}{3}\twocrossR{1}{3}\\
2: & \thrcrossR{2}{3} \foucrossR{2}{3} \onecrossR{2}{3} \twocrossR{2}{3} \thrcrossR{2}{1} \foucrossR{2}{1} \onecrossR{2}{1} \twocrossR{2}{1}\\
3: & \thrcrossR{3}{1} \foucrossR{3}{1} \onecrossR{3}{1} \twocrossR{3}{1} \thrcrossR{3}{2} \foucrossR{3}{2} \onecrossR{3}{2} \twocrossR{3}{2}
\end{array}
$$
(this can be read easily on the dual presentation). Observe that it is a martagon with respect to each of its three curves. 
\end{example}

\clearpage
\begin{example} The thin $3$-chirotope $\chi$ on the indexing set $\{1,2,3,4,5\}$ with entries 
$$
\begin{array}{ccccc}
\bname{04}{123} & \bname{04}{124} & \bname{04}{125} & \bname{04}{134} & \bname{04}{145} \\ 
\bname{04}{234} & \bname{04}{245} & \bname{04}{345} & \bname{04}{153} & \bname{04}{253} 
\end{array}
$$
admits a $4$-extension (i.e., $\chi$ is the restriction of a $4$-chirotope), depicted in Fig.~\ref{fourchi}, 
\begin{figure}[!htb]
\psfrag{one}{$1$} \psfrag{two}{$2$} \psfrag{thr}{$3$} \psfrag{fou}{$4$} \psfrag{fiv}{$5$} \psfrag{one}{$1$}
\psfrag{AA}{$1234$} \psfrag{BB}{$1245$} \psfrag{CC}{$1235$} \psfrag{DD}{$1345$} \psfrag{EE}{$2345$}
\centering
\includegraphics[width=0.85\linewidth]{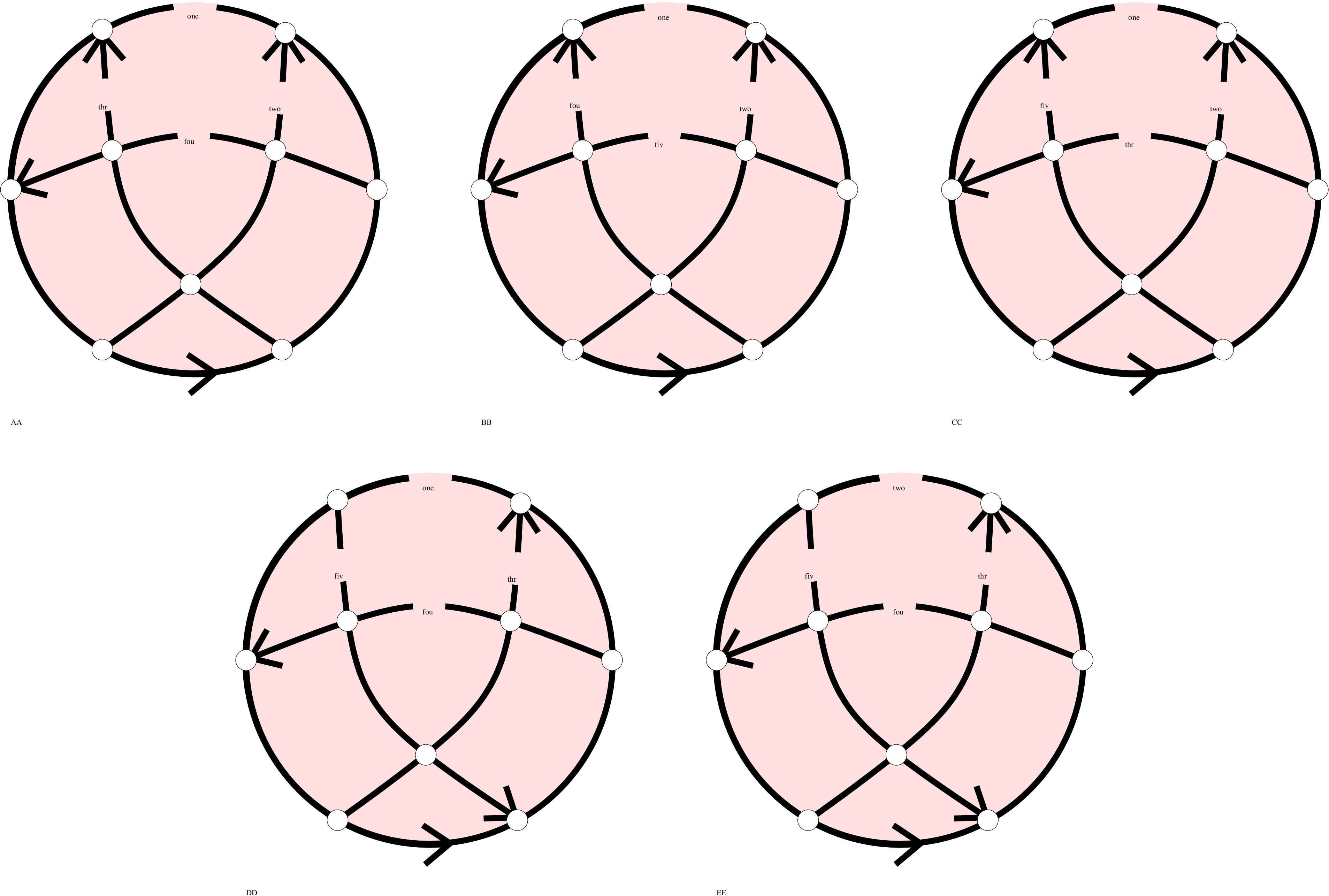}
\caption{A $4$-chirotope on the indexing set $\{1,2,3,4,5\}$  that is not a $5$-chirotope \label{fourchi}}
\end{figure}
but no $5$-extension because 
there is no cycle involving the indices $2,3,4,5$ and their negatives exactly twice in which  
the side cycles of disk type 
$2233\overline{2233}$,
$2244\overline{2244}$,
$2255\overline{2255}$,
$3344\overline{3344}$,
$4455\overline{4455}$, $5533\overline{5533}$ assigned to the index 
$1$ of the entries $\bname{04}{123}$, $\bname{04}{124}$, $\bname{04}{125}$, $\bname{04}{134}$, $\bname{04}{145}$ and $\bname{04}{153}$ of~$\chi$
are subcycles.  The same conclusion holds if we interpret the entries as entries of a $3$-chirotope of pseudoline arrangements.
\end{example}
\begin{example}\label{examplefour}
The simple $3$-chirotope on the indexing set  $\{1,2,3,4\}$ with entries 
$ \bname{64}{123}$, $\bname{64}{124}$, $\bname{64}{134}$, $\bname{64}{234}$ 
is the chirotope of a unique simple arrangement $\Upsilon$ on four curves
whose side cycles of disk type 
are
$$\begin{array}{cc}
1: & \cyclefouR{1}{2}{3}{4}\phantom{.}\\
2: & \cyclefouR{2}{3}{4}{1}\phantom{.}\\
3: & \cyclefouR{3}{4}{1}{2}\phantom{.}\\
4: & \cyclefouR{4}{1}{2}{3}.
\end{array}
$$
The surface is a sphere with 7 crosscaps decomposed by the curves into $19$ two-cells ($12$ digons, $3$ octagons, and $4$ dodecagons) put together according to the following presentation:
$$
\begin{array}{rrcrrc}
1:   &  \aone\ratwo & = 1  &      2:   &  \gone\gtwo  & = 1\\       
3:   &  \itwo\rfthr & = 1  &      4:   &  \ctwo\rlthr & = 1\\       
5:   &  \bthr\rffou & = 1  &      6:   &  \hthr\lfou  & = 1\\       
7:   &  \eone\rbfou & = 1  &      8:   &  \kone\hfou  & = 1\\        
9:   &  \cone\rdthr & = 1  &      10:  &  \ione\jthr  & = 1\\       
11:  &  \ktwo\rdfou & = 1  &      12:  &  \etwo\jfou & = 1\\       
13 : & \rbtwo\bone\rcthr\gfou\lone\rltwo\efou\rathr  & = 1  \\
14 : & \afou\gthr\rjtwo\cfou\rfone\rhtwo\ethr\rdone  & = 1 & 15 : & \ftwo\hone\kthr\dtwo\kfou\ithr\jone\ifou      & = 1\\
16 : & \aone\btwo\rlthr\dtwo\rjfou\ftwo\rgone\htwo\fthr\jtwo\dfou\ltwo & = 1 &  17 : & \atwo\bone\dthr\done\bfou\fone\rgtwo\hone\rjthr\jone\rhfou\lone & = 1 \\
18 : & \ethr \itwo \gthr \rlfou \ithr \rione \kthr \rctwo \athr \ffou \cthr \cone & = 1 & 19 : & \ctwo \ktwo \efou \bthr \gfou \rkone \ifou \retwo \kfou \rhthr \afou \eone
& = 1
\end{array}
$$
where 
$$\begin{array}{ccc}
\Upsilon_1 & = & \aone\bone\cone\done\eone\fone\gone\hone\ione\jone\kone\lone\phantom{.}\\
\Upsilon_2 & = & \atwo\btwo\ctwo\dtwo\etwo\ftwo\gtwo\htwo\itwo\jtwo\ktwo\ltwo\phantom{.}\\
\Upsilon_3 & = & \athr\bthr\cthr\dthr\ethr\fthr\gthr\hthr\ithr\jthr\kthr\lthr\phantom{.}\\
\Upsilon_4 & = & \afou\bfou\cfou\dfou\efou\ffou\gfou\hfou\ifou\jfou\kfou\lfou.
\end{array}
$$
A dual presentation  is given by the following system of 24 equations in 48 symbols
$$\begin{array}{cccc}
\aone\btwo = \atwo\bone & \gtwo\fone = \gone \htwo & \gone\ftwo = \gtwo \fone & \aone\ltwo = \atwo \lone  \\
\cone\ethr = \dthr\done & \jthr\hone = \ione \kthr & \ione\ithr = \jthr \jone & \dthr\bone = \cone \cthr  \\
\eone\cfou = \bfou\fone & \hfou\jone = \kone \ifou & \kone\gfou = \hfou \lone & \bfou\done = \eone \afou  \\
\itwo\gthr = \fthr\jtwo & \ctwo\athr = \lthr \btwo & \ctwo\kthr = \lthr \dtwo & \fthr\htwo = \itwo \ethr  \\
\ktwo\efou = \dfou\ltwo & \jfou\dtwo = \etwo \kfou & \etwo\ifou = \jfou \ftwo & \dfou\jtwo = \ktwo \cfou  \\
\bthr\gfou = \ffou\cthr & \lfou\gthr = \hthr \afou & \hthr\kfou = \lfou \ithr & \ffou\athr = \bthr \efou  
\end{array}
$$
where we use the same symbol to denote an edge and its dual; the dual presentation is also depicted in Fig.~\ref{c64fouhere}.

\begin{figure}[!htb]
\psfrag{A}{} \psfrag{B}{} \psfrag{C}{} \psfrag{D}{}
\psfrag{one}{$ij_1$} \psfrag{two}{$ij_2$} \psfrag{thr}{$ij_3$} \psfrag{fou}{$ij_4$}
\psfrag{one}{} \psfrag{two}{} \psfrag{thr}{} \psfrag{fou}{}
\psfrag{mm}{$--$} \psfrag{mp}{$-+$} \psfrag{pm}{$+-$} \psfrag{pp}{$++$}
\psfrag{mm}{} \psfrag{mp}{} \psfrag{pm}{} \psfrag{pp}{}
\psfrag{ii}{}

\psfrag{A}{$1$} \psfrag{B}{$2$} \psfrag{C}{$3$} \psfrag{D}{$4$}
\psfrag{E}{$5$} \psfrag{F}{$6$} \psfrag{G}{$7$} \psfrag{H}{$8$}
\psfrag{I}{$9$} \psfrag{J}{$10$} \psfrag{K}{$11$} \psfrag{L}{$12$}
\psfrag{M}{$13$} \psfrag{N}{$14$} \psfrag{O}{$15$} \psfrag{P}{$16$}
\psfrag{Q}{$17$} \psfrag{R}{$18$} \psfrag{S}{$19$}

\psfrag{aone}{$\aone$} \psfrag{bone}{$\bone$} \psfrag{cone}{$\cone$} \psfrag{done}{$\done$}
\psfrag{eone}{$\eone$} \psfrag{fone}{$\fone$} \psfrag{gone}{$\gone$} \psfrag{hone}{$\hone$}
\psfrag{ione}{$\ione$} \psfrag{jone}{$\jone$} \psfrag{kone}{$\kone$} \psfrag{lone}{$\lone$}

\psfrag{atwo}{$\atwo$} \psfrag{btwo}{$\btwo$} \psfrag{ctwo}{$\ctwo$} \psfrag{dtwo}{$\dtwo$}
\psfrag{etwo}{$\etwo$} \psfrag{ftwo}{$\ftwo$} \psfrag{gtwo}{$\gtwo$} \psfrag{htwo}{$\htwo$}
\psfrag{itwo}{$\itwo$} \psfrag{jtwo}{$\jtwo$} \psfrag{ktwo}{$\ktwo$} \psfrag{ltwo}{$\ltwo$}

\psfrag{athr}{$\athr$} \psfrag{bthr}{$\bthr$} \psfrag{cthr}{$\cthr$} \psfrag{dthr}{$\dthr$}
\psfrag{ethr}{$\ethr$} \psfrag{fthr}{$\fthr$} \psfrag{gthr}{$\gthr$} \psfrag{hthr}{$\hthr$}
\psfrag{ithr}{$\ithr$} \psfrag{jthr}{$\jthr$} \psfrag{kthr}{$\kthr$} \psfrag{lthr}{$\lthr$}

\psfrag{afou}{$\afou$} \psfrag{bfou}{$\bfou$} \psfrag{cfou}{$\cfou$} \psfrag{dfou}{$\dfou$}
\psfrag{efou}{$\efou$} \psfrag{ffou}{$\ffou$} \psfrag{gfou}{$\gfou$} \psfrag{hfou}{$\hfou$}
\psfrag{ifou}{$\ifou$} \psfrag{jfou}{$\jfou$} \psfrag{kfou}{$\kfou$} \psfrag{lfou}{$\lfou$}

\centering
\includegraphics[width=0.975\linewidth]{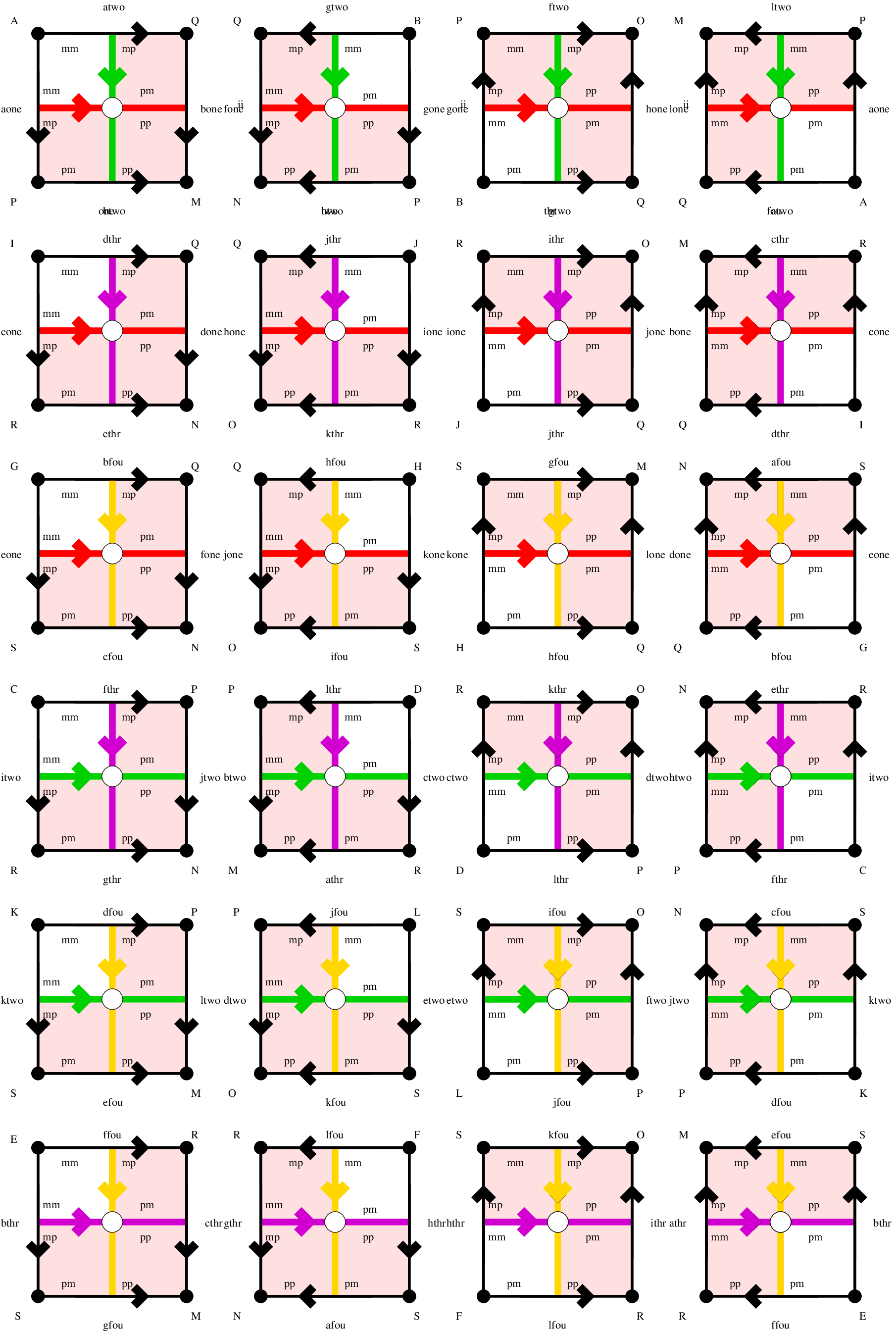}
\caption{The dual presentation of the unique arrangement on four curves whose chirotope is the one with entries $\bname{64}{123}, \bname{64}{124}, \bname{64}{134}, \bname{64}{234}$. 
The underlying surface of this arrangement is a sphere with $7$ crosscaps
 \label{c64fouhere}}
\end{figure}
\end{example}

\clearpage
\begin{example} 
The chirotope of the martagon $M_1(1234)$ is the chirotope of a second martagon $M^*_1(1234)$ defined by the (side) cycles (of disk type)  
$$
\begin{array}{cc}
\AAA: &  
\onecrossR{\AAA}{\BBB} \twocrossR{\AAA}{\BBB} \thrcrossR{\AAA}{\BBB} \foucrossR{\AAA}{\BBB}
\onecrossR{\AAA}{\CCC} \twocrossR{\AAA}{\CCC} \thrcrossR{\AAA}{\CCC} \foucrossR{\AAA}{\CCC}
\onecrossR{\AAA}{\DDD} \twocrossR{\AAA}{\DDD} \thrcrossR{\AAA}{\DDD} \foucrossR{\AAA}{\DDD}\phantom{.} \\
\BBB: & 
\onecrossR{\BBB}{\CCC} \twocrossR{\BBB}{\CCC} \onecrossR{\BBB}{\DDD} \twocrossR{\BBB}{\DDD}
\foucrossR{\BBB}{\AAA} \onecrossR{\BBB}{\AAA} \thrcrossR{\BBB}{\CCC} \foucrossR{\BBB}{\CCC}
\thrcrossR{\BBB}{\DDD} \foucrossR{\BBB}{\DDD} \twocrossR{\BBB}{\AAA} \thrcrossR{\BBB}{\AAA}\phantom{.} \\
\CCC:  &
\onecrossR{\CCC}{\DDD} \twocrossR{\CCC}{\DDD} \onecrossR{\CCC}{\BBB} \twocrossR{\CCC}{\BBB}
\foucrossR{\CCC}{\AAA} \onecrossR{\CCC}{\AAA} \thrcrossR{\CCC}{\DDD} \foucrossR{\CCC}{\DDD}
\thrcrossR{\CCC}{\BBB} \foucrossR{\CCC}{\BBB} \twocrossR{\CCC}{\AAA} \thrcrossR{\CCC}{\AAA}\phantom{.} \\
\DDD: &
\onecrossR{\DDD}{\BBB} \twocrossR{\DDD}{\BBB} \onecrossR{\DDD}{\CCC} \twocrossR{\DDD}{\CCC}
\foucrossR{\DDD}{\AAA} \onecrossR{\DDD}{\AAA} \thrcrossR{\DDD}{\BBB} \foucrossR{\DDD}{\BBB}
\thrcrossR{\DDD}{\CCC} \foucrossR{\DDD}{\CCC} \twocrossR{\DDD}{\AAA} \thrcrossR{\DDD}{\AAA}\phantom{.}
\end{array}
$$
which are obtained from the cycles of $M_1(1234)$ by simply changing the order of the blocks 
$\onecrossR{\AAA}{\BBB} \twocrossR{\AAA}{\BBB} \thrcrossR{\AAA}{\BBB} \foucrossR{\AAA}{\BBB}$,
$\onecrossR{\AAA}{\CCC} \twocrossR{\AAA}{\CCC} \thrcrossR{\AAA}{\CCC} \foucrossR{\AAA}{\CCC}$, and 
$\onecrossR{\AAA}{\DDD} \twocrossR{\AAA}{\DDD} \thrcrossR{\AAA}{\DDD} \foucrossR{\AAA}{\DDD}$ in the cycle indexed by $1$. 
This arrangement lives in a triple cross surface that is decomposed by the curves into 3 digons, 15 tetragons, 3 pentagons, 1 hexagon and 1 nonagon.  
Note that this arrangement has no triangular faces.
Similarly the chirotope of the martagon $M_2(1234)$ is the chirotope of a second martagon $M_2^*(1234)$ defined by the cycles 
$$
\begin{array}{cc}
\AAA: &  
\onecrossR{\AAA}{\BBB} \twocrossR{\AAA}{\BBB} \thrcrossR{\AAA}{\BBB} \foucrossR{\AAA}{\BBB}
\twocrossR{\AAA}{\DDD} \thrcrossR{\AAA}{\DDD} \foucrossR{\AAA}{\DDD} \onecrossR{\AAA}{\DDD}
\onecrossR{\AAA}{\CCC} \twocrossR{\DDD}{\CCC} \thrcrossR{\AAA}{\CCC} \foucrossR{\AAA}{\CCC}\phantom{.}
\\
\BBB: & 
\onecrossR{\BBB}{\CCC} \twocrossR{\BBB}{\CCC} \foucrossR{\BBB}{\AAA} \onecrossR{\BBB}{\AAA}
\twocrossR{\BBB}{\DDD} \thrcrossR{\BBB}{\DDD} \thrcrossR{\BBB}{\CCC} \foucrossR{\BBB}{\CCC}
\twocrossR{\BBB}{\AAA} \thrcrossR{\BBB}{\AAA} \foucrossR{\BBB}{\DDD} \onecrossR{\BBB}{\DDD}\phantom{.}
\\
\CCC: & 
\onecrossR{\CCC}{\BBB} \twocrossR{\CCC}{\BBB} \foucrossR{\CCC}{\DDD} \onecrossR{\CCC}{\DDD}
\foucrossR{\CCC}{\AAA} \onecrossR{\CCC}{\AAA} \thrcrossR{\CCC}{\BBB} \foucrossR{\CCC}{\BBB}
\twocrossR{\CCC}{\DDD} \thrcrossR{\CCC}{\DDD} \twocrossR{\CCC}{\AAA} \thrcrossR{\CCC}{\AAA}\phantom{.}
\\
\DDD: & 
\onecrossR{\DDD}{\BBB} \twocrossR{\DDD}{\BBB} \foucrossR{\DDD}{\AAA} \onecrossR{\DDD}{\AAA}
\thrcrossR{\DDD}{\BBB} \foucrossR{\DDD}{\BBB} \thrcrossR{\DDD}{\CCC} \foucrossR{\DDD}{\CCC}
\twocrossR{\DDD}{\AAA} \thrcrossR{\DDD}{\AAA} \onecrossR{\DDD}{\CCC} \twocrossR{\DDD}{\CCC}. \end{array}
$$
This arrangement lives in a triple cross surface,  decomposed by the curves into 4 digons, 14 tetragons, 3 pentagons, 1 octagon and 1 nonagon.  
Graphical representations of (tubular neighborhoods of) these arrangements are given in Fig.~\ref{FinalMartagonQuater}.
\end{example}

\begin{figure}[!htb]
\centering
\psfrag{4}{} \psfrag{6}{} \psfrag{3}{} \psfrag{2}{}
\psfrag{O}{$326020$} \psfrag{P}{$228010$}
\psfrag{PNS1}{$\Gamma^{1}(X,Y,Z)$}
\psfrag{PNS10}{$\Gamma^{10}(X,Y,Z)$} \psfrag{PNS2}{$\Gamma^2(X,Y,Z)$}
\psfrag{PNS11}{$\Gamma^{11}(X,Y,Z)$}
\psfrag{one}{$1$}
\psfrag{two}{$2$}
\psfrag{thr}{$3$}
\psfrag{fou}{$4$}
\psfrag{Mone}{$M_1$}
\psfrag{Mtwo}{$M_2$}
\psfrag{Monestar}{$M_1^*$}
\psfrag{Mtwostar}{$M_2^*$}
\includegraphics[width = 0.750 \linewidth]{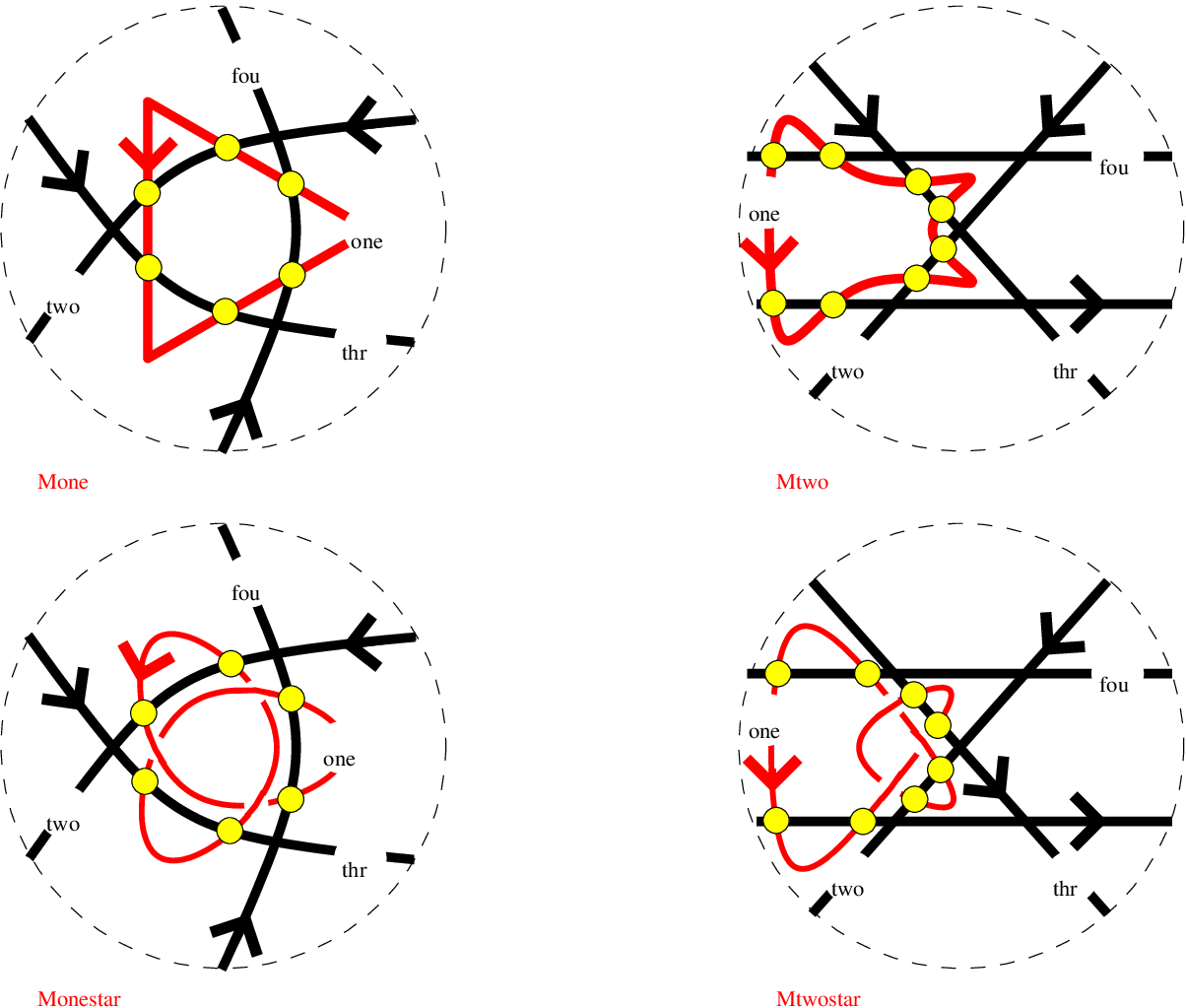}
\caption{The four martagons on four double pseudolines. Two live in a cross surface and two in a triple cross surface
\label{FinalMartagonQuater}}
\end{figure}

\begin{example} 
The $3$-chirotope on the indexing set $\{1,2,3,4\}$  with entries  
$ \bname{32}{123}$,
$\bname{32}{124}$,
$\bname{32}{134}$,
$\bname{32}{234}$
is not the chirotope of an arrangement because the $3$ side cycles indexed by $3$ of the entries 
$\bname{32}{123}$,
$\bname{32}{134}$,
$\bname{32}{234}$, namely
$\cyclethrTHRTWOthrR{1}{2}{3}$,
$\cyclethrTHRTWOtwoR{1}{3}{4}$, and 
$\cyclethrTHRTWOtwoR{2}{3}{4}$, understood as partial circular orders on the indices $1,2,4$ and their negatives, 
are incompatible.
Similarly for the 
$3$-chirotope on $\{1,2,3,4\}$ with entries $\bname{22}{123}$,$\bname{22}{423}$, $\bname{32}{124},\bname{32}{134}$. 
\end{example}

\begin{example} 
Example \ref{examplefour}  generalizes to any number of indices, i.e., the $\bname{64}{ijk}$, where $1\leq i< j< k \leq n$,
are the entries of the chirotope of an arrangement on $n$ curves.  For $n=5,6,7,8,9$ 
we get surfaces of genus $14,21,33,43,58$ decomposed by the curves into 
twenty  $2$-gons, one  $5$-gon, one  $10$-gon, five  $16$-gons, one  $25$-gon for $n=5$; 
thirty  2-gons, five  12-gons and six  20-gons for $n=6$; forty-two  2-gons, one  7-gon, two  14-gons, seven  24-gons, one  49-gon for $n =7$; 
fifty-six  2-gons, seven  16-gons, eight  28-gons, for $n= 8$; and  seventy-two  2-gons, one  9-gon, three  18-gons, three  27-gons, nine  32-gons for $n=9$.
\end{example}

Back to the proof of Theorems~\ref{LRCanygenus},~\ref{uptofive},~\ref{LRCanygenusPL}, and~\ref{uptofour}.

The proof needs some preparations.
Given a sequence $B = \{i_1j_1\}\{i_2j_2\}\{i_3j_3\}\ldots \{i_kj_k\}$, $i_l\in \{i,\overline{i}\}$, we denote by 
$\roll{B}{j_p}$ the sequence  $\alpha_{p+1}\alpha_{p+2}\ldots \alpha_{k}\alpha_p\alpha_1\alpha_2\ldots \alpha_{p-1}$ 
or its reverse $\alpha_{p-1}\alpha_{p-2}\ldots \alpha_2\alpha_{1}\alpha_p\alpha_{k_m}\ldots \alpha_{p+2}\alpha_{p+1}$, depending on whether $i_p = \jbar{i}$ or $i_p = i$,  
where $$\alpha_q = \begin{cases} 
             \{i_pj_{p}\}\otimes \{i_qj_{q}\}  & \text{if $p< q \leq k$}\\
             \{i_pj_{p}\}  & \text{if $q =p$}\\
             \{i_qj_{q}\}\otimes \{i_pj_{p}\} & \text{if $1\leq q<p$.}
\end{cases}
$$
We leave the verification of the following property to the reader: if $B$ is a prime factor of the side cycle of disk type indexed by $i$ 
 of an arrangement of double pseudolines then $\roll{B}{j_p}$ is the corresponding prime factor of the side cycle of disk type indexed by $j_{p}$. 
Routine considerations that we leave to the reader yield the following characterizations of families of cycles that arose as families of 
side cycles of arrangements of double pseudolines and those that arose as families of
side cycles of arrangements of pseudolines.  
\begin{lemma}\label{netbenefit}
 Let  $I$ be a finite set of (at least 2) indices,  let $D_i$ and $M_i$, $i\in I$, 
be two families of circular words on the signed version $\hat{I}$ of $I$, with  the property that  
$D_i$ and $M_i$ are shuffles of the elementary cycles $\jind{j}\jind{j}\jbar{j}\jbar{j}$, $j\neq i$, 
let $S_i$ be the result of  replacing in $D_i$ the linear subsequences $\jbar{j}\jbar{j}\jind{j}\jind{j}$, $j \neq i$,
by the linear sequences  $\{\nodeone{i}{j}\}\{\nodetwo{i}{j}\}\{\nodethr{i}{j}\}\{\nodefou{i}{j}\}$, let  $T_i$ be 
the result of  replacing in $M_i$  linear subsequences $\jbar{j}\jbar{j}\jind{j}\jind{j}$, $j\neq i$, by the linear sequences
 $\{\nodeone{i}{j}\}\{\nodetwo{i}{j}\}\{\nodethr{i}{j}\}\{\nodefou{i}{j}\}$, and let $S_{\jbar{i}}$ and $T_{\jbar{i}}$ be the reversal of $S_{i}$ and $T_{i}$, respectively.
Then the $D_i$ and $M_i$ are the side cycles of disk type and crosscap type of an arrangement $\Gamma$ of oriented double pseudolines indexed by $I$ 
if and only if there exist block decompositions $$B_{i1}B_{i2}\ldots B_{in_i}$$ of the $S_i$, $i\in I$,
where 
$B_{im} = \{i_1j_{m1}\}\{i_2j_{m2}\}\{i_3j_{m3}\}\ldots \{i_kj_{mk_m}\}$, $1\leq m\leq n_i$, 
with $j_{m_l} \notin \{j_{m_{l'}},\overline{j}_{m_{l'}}\}$ for all $1\leq l<l'\leq k_m$,
 such that 
\begin{enumerate}
\item $T_i = B^*_{i1}B^*_{i2}\ldots B^*_{in_i}$ where 
$B^*_{im}$ is the reversal of $B_{im}$ (note that the pair $S_i,T_i$  determine their block decompositions);
\item $\roll{{(B_{im})}}{j_{mp}}$ 
is one of the blocks of the block decomposition  of $S_{j_{mp}}.$ \qed
\end{enumerate}
\end{lemma}

\begin{lemma}\label{netbenefitPL}
Let  $I$ be a finite set of (at least 2) indices,  let $C_i$, $i\in I$, 
be a family of circular words on the signed version $\hat{I}$ of $I$, with  the property that  
the $C_i$ are shuffles of the elementary cycles $\jind{j}\jbar{j}$, $j\neq i$, and let $C_{\overline{i}}$ be the reversal of $C_i$.
Then the $C_i$ are the side cycles of an arrangement $\Gamma$ of oriented pseudolines indexed by $I$ 
if and only if there exist block decompositions 
$$B_{i1}B_{i2}\ldots B_{in_i}B'_{i1}B'_{i2}\ldots B'_{in_i}$$ 
of the $C_i$, $i\in I$,
where 
$B_{im} = j_{m1}j_{m2}j_{m3}\ldots j_{mk_m}$, $1\leq m\leq n_i$, 
with $j_{m_l} \notin \{j_{m_{l'}},\overline{j}_{m_{l'}}\}$ for all $1\leq l<l'\leq k_m$,
 such that 
\begin{enumerate}
\item $B'_{im}$ is the complement of the reversal of $B_{im}$ (note that this condition determines the block decomposition of $C_i$);
\item $j_{m2}j_{m3}\ldots j_{mk_m} \overline{i}$ 
is one of the blocks of the block decomposition  of $C_{j_{m1}}.$ \qed
\end{enumerate}
\end{lemma}

\begin{remark} According to Lemma~\ref{netbenefit}, the number $b_n$ of simple indexed arrangements of oriented double pseudolines on a given set of $n$ indices  is  the $n$-th power of the number 
of shuffles of the $n-1$ circular sequences $\thrcrossR{1}{j}\foucrossR{1}{j}\onecrossR{1}{j}\twocrossR{1}{j}$, $1\leq j\leq n$, $j\neq 1$, or, equivalently, the $n$-th power of the product 
of the number of permutations of a multiset of $4n-5$ elements of multiplicities $3,4,4,\ldots,4$ and the number of cyclic shifts of the $n-2$ linear sequences 
$\thrcrossR{1}{j}\foucrossR{1}{j}\onecrossR{1}{j}\twocrossR{1}{j}$, $2 <j\leq n$. Hence, using the standard notation 
for multinomial coefficients, 
\begin{eqnarray*}
b_n & = & \left\{4^{n-2} \binom{4n-5}{3,4,4,\ldots,4}\right\}^n.
\end{eqnarray*}
The first values are :  $b_2 = 1^2$, $b_3 = 140^3$, $b_4 = 184800^4$ and $b_5 = 10090080005^5$. Similarly, according to Lemma~\ref{netbenefitPL},
 the number $c_n$ of simple indexed arrangements of oriented pseudolines on a 
given set of $n$ indices  is  the $n$-th power of the number of antipodal shuffles of the $n-1$ circular sequences $\onecrossR{1}{j}\foucrossR{1}{j}$, $1\leq j\leq n$, $j\neq 1$, or, equivalently, the   $n$-th power of the number of signed permutations on a set of $n-2$ elements. 
Hence
\begin{eqnarray*}
c_n & = & \left\{2^{n-2} (n-2)!\right\}^n.
\end{eqnarray*}
The first values are : $c_2 = 1^2$, $c_3 = 2^3$, $c_4 = 8^4 = 4096$ and $c_5 = 48^5 = 254 803 968$. 
It will be interesting to have closed formulae also for nonsimple arrangements. 
\end{remark}

\begin{proof}[Proof of Theorem~\ref{LRCanygenus}]
Let $I$ be finite indexing set, let $\Ufive$ be the complex of subsets of size at most $5$ of $I$, let $\chi$ be a $5$-chirotope on the indexing set $I$, 
 and for $J \in \Ufive$, let $D_i(J)$ and $M_i(J)$ be the families of side cycles
 of the entry $\chi(J)$ of  $\chi$.
Proving the theorem boils down to prove that for any index $i\in I$ there exists 
\begin{enumerate}
\item a unique shuffle $D_i$ of the elementary 
cycles $\jind{j}\jind{j}\jbar{j}\jbar{j}$, 
$j\neq i$,  of which the $D_i(J)$, $i\in J \in \Ufive$, are subcycles; 
\item a unique shuffle $M_i$ of the elementary 
cycles $\jind{j}\jind{j}\jbar{j}\jbar{j}$, 
$j\neq i$,  of which the $M_i(J)$, $i\in J \in \Ufive$, are subcycles;  and that
\item 
 the two families of cycles $D_i$ and $M_i$ 
are the families of side cycles of an arrangement of oriented double pseudolines indexed by $I$ whose  $5$-chirotope is~ $\chi$.
\end{enumerate}
For $X \in \{D,M\}$ and $i \in I$, let ${\nRR}^X_i$ be the ternary relation defined on distinct elements $\alpha = i_\alpha j_\alpha \in \setnodebis{i}{j}$,
by $(\alpha,\alpha',\alpha'') \in {\nRR}^X_i$ if ${\alpha}$, $\alpha'$ and $\alpha''$ 
appear in this order on the cycle $X_i(\{i,j_{\alpha},j_{\alpha'},j_{\alpha''}\})$ and let ${\nBR}^X_i$, $i \in I$, be the binary relation defined on 
distinct elements $\alpha = i_\alpha j_\alpha \in \setnodebis{i}{j}$,
by $(\alpha,\alpha') \in {\nBR}^X_i$ if ${\alpha}$ and $\alpha'$
appear in this order in the same prime factor of the prime factor decomposition of $X_i(\{i,j_{\alpha},j_{\alpha'}\})$.
 Clearly 
\begin{enumerate}
\item ${\nRR}^X_i$ is well-defined;
\item for every triple $(\alpha,\alpha',\alpha'')$ one has $(\alpha,\alpha',\alpha'') \in {\nRR}^X_i$ or (exclusive) $(\alpha,\alpha'',\alpha') \in {\nRR}^X_i$;  
\item if $(\alpha,\alpha',\alpha'') \in {\nRR}^X_i$ then $(\alpha',\alpha'',\alpha) \in {\nRR}^X_i$;  
\item ${\nRR}^X_i$ is transitive, i.e., if $(\alpha,\alpha',\alpha'') \in {\nRR}^X_i$ and $(\alpha,\alpha'',\alpha''') \in {\nRR}^X_i$ then $(\alpha,\alpha',\alpha''') \in {\nRR}^X_i$
(because $X_i(i, j_{\alpha}, j_{\alpha'},j_{\alpha''})$, $X_i(i, j_{\alpha}, j_{\alpha''},j_{\alpha'''})$, and  $X_i(i, j_{\alpha}, j_{\alpha'},j_{\alpha''})$ are subcycles of 
$X_i(i, j_{\alpha}, j_{\alpha'},j_{\alpha''},j_{\alpha'''})$);
\item ${\nBR}^X_i$ is well-defined;
\item if $(\alpha,\alpha') \in {\nBR}^X_i$ and $(\alpha',\alpha'') \in {\nBR}^X_i$ then $(\alpha,\alpha'') \in {\nBR}^{X}_i$;
\item if $(\alpha,\alpha'') \in {\nBR}^X_i$ and $(\alpha,\alpha',\alpha'') \in {\nRR}^X_i$ then $(\alpha,\alpha') \in {\nBR}^X_i$ and $(\alpha',\alpha'') \in {\nBR}^X_i$;
\item if $(\alpha,\alpha') \in {\nBR}^X_i$ then $(\alpha',\alpha) \in {\nBR}^{\overline{X}}_i$.
\end{enumerate}
This proves that the shuffle $X_i$ of the elementary
cycles $\jind{j}\jind{j}\jbar{j}\jbar{j}$,
$j\neq i$, given by the ternary relation $\nRR^X_i$, 
is the unique shuffle of the elementary
cycles $\jind{j}\jind{j}\jbar{j}\jbar{j}$,
$j\neq i$, of which the $X_i(J)$, $i\in J \in \Ufive$, are subcycles 
and that there is a unique block decomposition $B_{i1}B_{i2}\ldots B_{in_i}$ of $D_i$, 
where $B_{im} = \{i_1j_{m1}\}\{i_2j_{m2}\}\{i_3j_{m3}\}\ldots \{i_kj_{mk_m}\}$, $1\leq m\leq n_i$,
with $j_{m_l} \notin\{ j_{m_{l'}},\overline{j}_{m_{l'}}\}$ for all $1\leq l<l'\leq k_m$, (the one given by the binary relation $\nBR^X_i$)
 such that
$T_i = B^*_{i1}B^*_{i2}\ldots B^*_{in_i}$ 
where
$B^*_{im}$ is the reversal of $B_{im}$. 
Since by construction  $\roll{B}{j_p}$
is one of the blocks of the block decomposition  of $S_{j_{mp}}$ we are done, thanks to Lemma~\ref{netbenefit}.  
\end{proof}
\begin{proof}[Proof of Theorem~\ref{LRCanygenusPL}] Similar to the proof of Theorem~\ref{LRCanygenus}.
\end{proof}

We come now to the proof of Theorems~\ref{uptofive} and~\ref{uptofour}. 

As said in the introduction, to prove Theorem~\ref{uptofive}, it can be argued  
that the mutation graph on the space of arrangements of double pseudolines of given size whose subarrangements of size at most $5$ are of genus $1$ is connected, or 
that for any pair of distinct faces of an arrangement of double pseudolines of genus $1$ there exists a subarrangement of size at most $3$ whose corresponding faces are distinct. 
Similarly, to prove Theorem~\ref{uptofour}, 
it can be argued that the mutation graph on the space of arrangements of pseudolines of given size whose subarrangements of size at most $4$ are of genus $1$ is connected, or 
that for any pair of distinct faces of an arrangement of pseudolines of genus $1$ there exists a subarrangement of size at most $2$ whose corresponding faces are distinct. 
We set out independently these two arguments  in the two next sections.

\subsection{Pumping lemma and mutations} 
We prove 
the connectedness of the  mutation graph on the space of arrangements of double pseudolines of given size whose subarrangements of size at most $5$ are of genus $1$ and 
the connectedness of the  mutation graph on the space of arrangements of pseudolines of given size whose subarrangements of size at most $4$ are of genus $1.$ 
The proof is based on the following abstractions of the pumping lemmas  of
Section~\ref{sec:homotopy}.
\begin{lemma}
\label{newmainresult}
Let $\Gamma$ be a simple arrangement of double pseudolines whose subarrangements of size at most $5$ are of genus $1$ and let $\gamma \in \Gamma$. 
Assume that there exists a vertex $v$ of the arrangement $\Gamma$  contained in the crosscap side of $\gamma$ in the subarrangement of size three
composed of $\gamma$ and the two double pseudolines crossing at $v$.  
Then there exists a triangular $2$-cell of the arrangement $\Gamma$ with a side supported by $\gamma$ and a vertex $w$ supported  
by the crosscap side of $\gamma$ in the subarrangement composed of $\gamma$ and the two  double pseudolines crossing at $w$. \qed
\end{lemma}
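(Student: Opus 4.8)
The plan is to mimic, in the more general setting of $4$-arrangements, the two-covering argument that proves the Pumping Lemma for arrangements of double pseudolines (Lemma~\ref{mainresult}), but to carry out the combinatorial part of the argument only inside subarrangements of size at most four, where $\ga$ is an honest arrangement of double pseudolines and all the classical tools apply. First I would recall that the hypothesis of the lemma is \emph{local}: the vertex $v$ is a crossing of two curves $\gamma_a,\gamma_b$ distinct from $\gamma$, and $v$ sits in the crosscap side of $\gamma$ \emph{as measured in the genus-one subarrangement} $\{\gamma,\gamma_a,\gamma_b\}$. By condition~(3) in the definition of a $4$-arrangement, the crosscap side of $\gamma$ in such a size-three subarrangement restricts coherently to the ribbon $R_\gamma$ of $\gamma$, so the phrase ``a two-cell, a vertex, or an edge of $\ga$ lies on the crosscap side of $\gamma$'' makes unambiguous sense whenever it refers to a configuration involving at most $\gamma$ plus two further curves; this is exactly the amount of genus-one structure we will need.

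The core construction is the following. Let $p\colon \widetilde{S}\to S_\ga$ be the orientation double cover of the underlying nonorientable surface, and let $\widetilde\gamma$ have lifts $\UU,\VV$; these cobound an annulus $C$ in $\widetilde S$. (Here I would need the elementary fact that a single double pseudoline $\gamma$ of a $4$-arrangement still has a cylindrical neighbourhood whose complement splits as disk side $\sqcup$ crosscap side; this follows from applying Theorem~\ref{theoone} inside any $\{\gamma,\gamma'\}$, plus condition~(3).) Each other curve $\gamma'$ lifts to two arcs crossing $C$, the \tracecurves, each running from $\UU$ to $\VV$; this is again a statement about the size-two subarrangement $\{\gamma,\gamma'\}$, hence purely genus-one. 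Now I would literally reuse the notions of \gtriangle\ and \emph{admissible} \gtriangle\ and the inductive ``pumping'' sequence $T_0=T$, $T_{k+1}=T'_k$ from the proof of Lemma~\ref{mainresult}. The positional Lemmas~\ref{positionthr}, \ref{positionone}, \ref{positiontwo} and~\ref{delta} are proved by applications of the Jordan curve theorem to pairs and triples of \tracecurves\ inside the annulus $C$; since at most three curves (namely $\gamma$ and the two curves whose \tracecurves\ are involved) appear in any one of these statements, each application takes place in a size-$\le 3$ subarrangement where $\ga$ is an arrangement of double pseudolines, so every such lemma carries over verbatim. The stationarity of the sequence $T_k$ then follows exactly as before, and Lemma~\ref{positionthr} yields a genuine triangular two-cell $T$ of $\widetilde\ga$ with a side on $\UU$; pushing it down by $p$ gives a triangular two-cell of $X_\ga$ with a side supported by $\gamma$, whose apex $w$ is a vertex lying on the crosscap side of $\gamma$ in the size-three subarrangement formed by $\gamma$ and the two curves crossing at $w$ — which is precisely the conclusion.

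The one genuinely new point, and the step I expect to be the main obstacle, is establishing that \emph{some} admissible \gtriangle\ exists at all, i.e.\ that the hypothesis ``there is a vertex $v$ on the crosscap side of $\gamma$'' really does seed the induction. In the genus-one case this was immediate because ``crosscap side of $\gamma$'' was a single global region bounded by $\gamma$; here the crosscap side is only defined subarrangement-by-subarrangement, so I must check that the existence of $v$ in the crosscap side of $\{\gamma,\gamma_a,\gamma_b\}$ forces, after lifting, a \tracecurve\ of $\gamma_a$ (say) to cross the annulus $C$ in such a way that it bounds, together with $\UU$, a \gtriangle. Concretely: the lift of $v$ lies in $C$, on the $\UU$-side of the relevant \tracecurves; taking the \tracecurve\ closest to $\UU$ through the relevant region and following it to its endpoint on $\UU$ produces a \gtriangle, and then the argument ``$A'$ is the apex of an admissible \gtriangle'' from the proof of Lemma~\ref{mainresult} applies unchanged. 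I would phrase this carefully using condition~(3) to guarantee that ``closest to $\UU$'' is well defined ribbon-wise. Once this seeding step is in place, the rest is a transcription of the genus-one proof, so I would write it as ``the proof of Lemma~\ref{mainresult} applies mutatis mutandis, the only modification being the seeding argument given above,'' rather than repeating the four positional lemmas.
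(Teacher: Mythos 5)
Your overall strategy---lift to a two-cover, \tracecurves, admissible \gtriangles, and the pumping sequence $T_0,T_1,\ldots$---is the right one, and you are right that the positional statements comparing a triangle with its successor (the analogues of Lemmas~\ref{positionthr}, \ref{positionone} and~\ref{positiontwo}) transfer, because they mention at most three curves besides $\gamma$ and hence live in genus-one subarrangements of a $4$-arrangement. But there are two problems. First, the ambient space is wrong: in the underlying surface of a $4$-arrangement the curve $\gamma$ has no globally defined crosscap side (that surface can have large genus), so the two lifts of $\gamma$ in the orientation double cover do \emph{not} cobound an annulus, and there is no single cylinder $C$ containing all the \tracecurves. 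The paper's proof instead takes $\UX$ to be the union of the cross surfaces $\SSJ_\subarrang$ over all subfamilies $\subarrang$ of size $2$ to $4$, passes to a two-covering of that union, and works with a whole family of cylinders $\strip{{\cal B}}$, one for each set ${\cal B}$ of at most four base vertices; every step of the argument must be localized to one of these.

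Second, and this is the real gap, your claim that ``every such lemma carries over verbatim'' fails precisely for the lemma that makes the induction terminate, namely Lemma~\ref{delta}. In the genus-one proof that lemma asserts that if an admissible \gtriangle\ $\Delta=XYY'$ contains $T$ then it also contains $T'$; both hypothesis and conclusion are containments of regions and involve the \tracecurves\ through $Y$, $Y'$, $B$, $B'$, $B''$---up to five curves besides $\gamma$---so they have no meaning inside a $4$-arrangement and cannot be verified in any genus-one subarrangement. The paper has to replace containment by a new relation ``$\Delta$ encloses $T$,'' defined purely by an ordering condition on the base vertices along $\UU$ together with the homeomorphism types of the two triples of \tracecurves\ $\gcurve(Y'),\gcurve(B),\gcurve(B')$ and $\gcurve(Y),\gcurve(B),\gcurve(B')$, each of which involves only three curves besides $\gamma$ and hence is a bona fide statement about a genus-one subarrangement of size four; Lemma~\ref{deltaAbstractBis} then propagates this relation from $T$ to $T'$. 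Designing this localized surrogate for containment is the genuinely new content of the abstract Pumping Lemma and is absent from your proposal. By contrast, the seeding step you single out as the main obstacle is essentially unchanged from the genus-one case: the hypothesis hands you a vertex $v$ in the \innerside\ of $\gamma$ inside a size-three subarrangement, its lift sits in the corresponding cylinder, and the derivation of a first admissible \gtriangle\ proceeds exactly as before.
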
 
\begin{lemma}
\label{newmainresultbis}
Let $\Gamma$ be a simple arrangement of pseudolines whose subarrangements of size at most $4$ are of genus $1$, let $\gamma, \gamma' \in \Gamma$, $\gamma \neq \gamma'$, and let 
$M(\gamma,\gamma')$ be one of two $2$-cells of size $2$ of the subarrangement $\{\gamma,\gamma'\}$. 
Assume that there exists a vertex $v$ of the arrangement $\Gamma$  contained in $M(\gamma,\gamma')$ in the subarrangement of size four  
composed of $\gamma,\gamma'$ and the two pseudolines crossing at $v$.  
Then there exists a triangular $2$-cell of the arrangement $\Gamma$ contained in $M(\gamma,\gamma')$ with a side supported by $\gamma$ in the subarrangement 
composed of $\gamma,\gamma'$ and the two  pseudolines crossing at the vertex $w$ opposite the side supported by~$\gamma$. 
\qed
\end{lemma}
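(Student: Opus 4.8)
\textbf{Proof sketch for Lemma~\ref{newmainresultbis}.}
The plan is to reduce the statement to a purely local, surface-covering argument in exactly the way the proof of the original Pumping Lemma (Lemma~\ref{mainresult}) proceeds, using the thinness hypothesis to strip away everything that is not directly relevant. First I would pass to a double cover $\Map{p}{\lift{S_\arrang}}{S_\arrang}$ of the underlying surface, chosen so that the orientation-reversing loop dual to $\gamma$ lifts trivially; this replaces the two-cell $M(\gamma,\gamma')$ of size $2$ by a disk region $\lift M$ bounded by an arc of a lift of $\gamma$ and an arc of a lift of $\gamma'$, and more precisely by a bigon-like cylinder bounded by the two lifts $\UU,\VV$ of $\gamma$ cut along $\gamma'$ in the same style as in Figure~\ref{TwoCovering}. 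Because $\arrang$ is \emph{thin}, every $\gamma''\neq\gamma,\gamma'$ meets $\gamma$ with its four intersection points consecutive and its crosscap side vertex-free, so inside $\lift M$ each such $\gamma''$ contributes (at most) two simple arcs crossing from the $\gamma$-side to the $\gamma'$-side, exactly as a \tracecurve\ does in the proof of Lemma~\ref{mainresult}; the hypothesis that some vertex $v$ of $X_\arrang$ lies in $M(\gamma,\gamma')$ in the subarrangement on $\gamma,\gamma'$ and the two curves through $v$ guarantees at least one such arc actually enters the region.

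The second step is to run the fan/triangle-chasing machinery of Lemmas~\ref{positionthr}--\ref{delta} verbatim on these arcs inside $\lift M$, with $\UU$ (the lifted $\gamma$-boundary of the cylinder) playing the role it plays in the proof of the Pumping Lemma. The key point is that ``$4$-arrangement'' is exactly the amount of genericity needed: any subarrangement of size four sits in a cross surface, so on the portion of the surface we actually look at (the region $\lift M$ together with the curves through the relevant vertices) we may apply Theorem~\ref{theoone} and Lemmas~\ref{dicozero}--\ref{dicoone}-type local dichotomies to control how each arc $\gcurve(B)$ either misses, is tangent to, or pierces the relevant cells. The monotone convergence argument — the sequence of admissible $\gamma$-triangles $T_0,T_1,\dots$ defined by $T_{k+1}=T'_k$ is stationary, by combining Lemmas~\ref{positionone},~\ref{positiontwo},~\ref{delta} — then produces a $\gamma$-triangle $T$ with $T=T'$, which by Lemma~\ref{positionthr} is a genuine triangular two-cell of $\lift{X_\arrang}$ lying inside $\lift M$ with base side on $\UU$. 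Pushing $T$ back down by $p$ gives a triangular two-cell of $X_\arrang$ contained in $M(\gamma,\gamma')$ with a side supported by $\gamma$; its third vertex $w$ is, by construction, a vertex at which only $\gamma$ and one further curve (the one carrying the apex side) are incident besides possibly $\gamma'$, so in the subarrangement on $\gamma,\gamma'$ and the two curves through $w$ the triangle has the asserted side-on-$\gamma$ property opposite $w$.

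The final step is bookkeeping: one must check that the existence of a vertex in $M(\gamma,\gamma')$ \emph{in the size-four subarrangement} sense really does force a $\gamma$-arc into $\lift M$ (not merely a crossing of $\gamma'$), and that the triangle we extract is genuinely a face of the \emph{full} complex $X_\arrang$ and not just of a sub-picture — this is where thinness is used a second time, since it prevents any unseen curve from cutting through the triangle (a stray curve through the triangle would have to pierce the crosscap side of some $\gamma''$ adjacent to $\gamma$, contradicting thinness, or would create a vertex in $M(\gamma,\gamma')$ closer to $\gamma$, letting us restart the argument with a smaller region). The main obstacle I anticipate is precisely this last verification: making rigorous that the local dichotomies of Lemmas~\ref{positionone}--\ref{delta}, which were stated for genuine arrangements of double pseudolines (genus one), still apply to a thin $4$-arrangement of higher genus when restricted to the region $\lift M$ and the finitely many curves through the vertices under consideration. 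This should follow because a $4$-arrangement restricted to any four of its curves is genus one, so every configuration that arises among $\le 4$ curves is covered by the already-proved lemmas; but the induction on the number of curves $K\setminus K'$ crossing the region, exactly as in the proof of Theorem~\ref{hgr}, has to be set up carefully to stay within subfamilies of size $\le 4$ at each step. Once that is in place the argument is a direct transcription of the proof of Lemma~\ref{mainresult}, and the result follows.
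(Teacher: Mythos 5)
Your proposal follows exactly the route the paper intends: the authors omit the proof of Lemma~\ref{newmainresultbis}, stating only that it reuses the ideas of the proof of Lemma~\ref{newmainresult} with a much simpler case analysis, and your sketch is precisely that adaptation (lift, trace curves, admissible $\gamma$-triangles, stationarity of the sequence $T_0,T_1,\dots$ via the analogues of Lemmas~\ref{positiononeAbstract}--\ref{deltaAbstractBis}, then Lemma~\ref{positionthrAbstract}). The one remark worth recording is that the covering step is less essential here than in Lemma~\ref{newmainresult}, since the digon $M(\gamma,\gamma')$ is already a disk (unlike the crosscap side of $\gamma$), which is exactly why the case analysis simplifies.
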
 
\begin{proof}[Proof of  Lemma~\ref{newmainresult}] 
Let $\Map{p_\Gamma}{\lift{\SSJ}_\Gamma}{\SSJ_\Gamma}$ be a $2$-sheeted unbranched covering of $\SSJ_\Gamma$ which is closed and orientable. 
For example the two relations 
$$ \left\{\begin{array}{c}
c_1c_1'c_2c_2'\ldots c_g c_g' =1\\
c_1'c_1c_2'c_2\ldots c_g' c_g =1
\end{array} \right.
$$
 define a closed and orientable $2$-sheeted unbranched covering of the nonorientable surface of genus $g$ defined by the relation 
$c_1c_1c_2c_2\ldots c_g c_g =1.$  
For any subarrangement $\subarrang$ of $\Gamma$ of size at least $2$ the restriction of $p_\Gamma$ to the pair $p_\Gamma^{-1}(R_\zeta)$, $R_\zeta$ extends naturally to  
a closed and orientable $2$-sheeted unbranched covering $\Map{p_\subarrang}{\lift{\SSJ}_\subarrang}{\SSJ_\subarrang}$ of the nonorientable surface $\SSJ_\subarrang$. 
Without loss of generality  we assume that the surfaces $\SSJ_\subarrang$ intersect pairwise only along their common ribbons, i.e., 
$\SSJ_\subarrang \cap \SSJ_{\subarrang'} = R_{\subarrang''}$ where $\subarrang'' = \subarrang \cap \subarrang'$.  Similarly we assume that the surfaces $\lift{\SSJ}_\subarrang$ 
intersect pairwise only along their common ribbons.
\begin{figure}[!htb]
\centering
\psfrag{UU}{$\UU$}\psfrag{VV}{$\VV$}
\psfrag{alpha}{$\alpha$}
\psfrag{alphaone}{$\alpha_1$}
\psfrag{alphatwo}{$\alpha_2$}
\psfrag{B}{$B$}\psfrag{Bs}{$B_*$}
\psfrag{Ud}{$\tau$}\psfrag{Vd}{$\tau$}
\psfrag{Ud'}{$\tau'$}\psfrag{Vd'}{$\tau'$}
\psfrag{U}{$\tau_+$}\psfrag{V}{$\tau_-$}
\psfrag{U'}{$\tau'_+$}\psfrag{V'}{$\tau'_-$}
\psfrag{Up}{$B$} \psfrag{Upp}{$B'$} 
\psfrag{aa}{(a)} \psfrag{bb}{(b)} \psfrag{cc}{} \psfrag{dd}{(c)} \psfrag{ee}{}
\includegraphics[width=0.8595\linewidth]{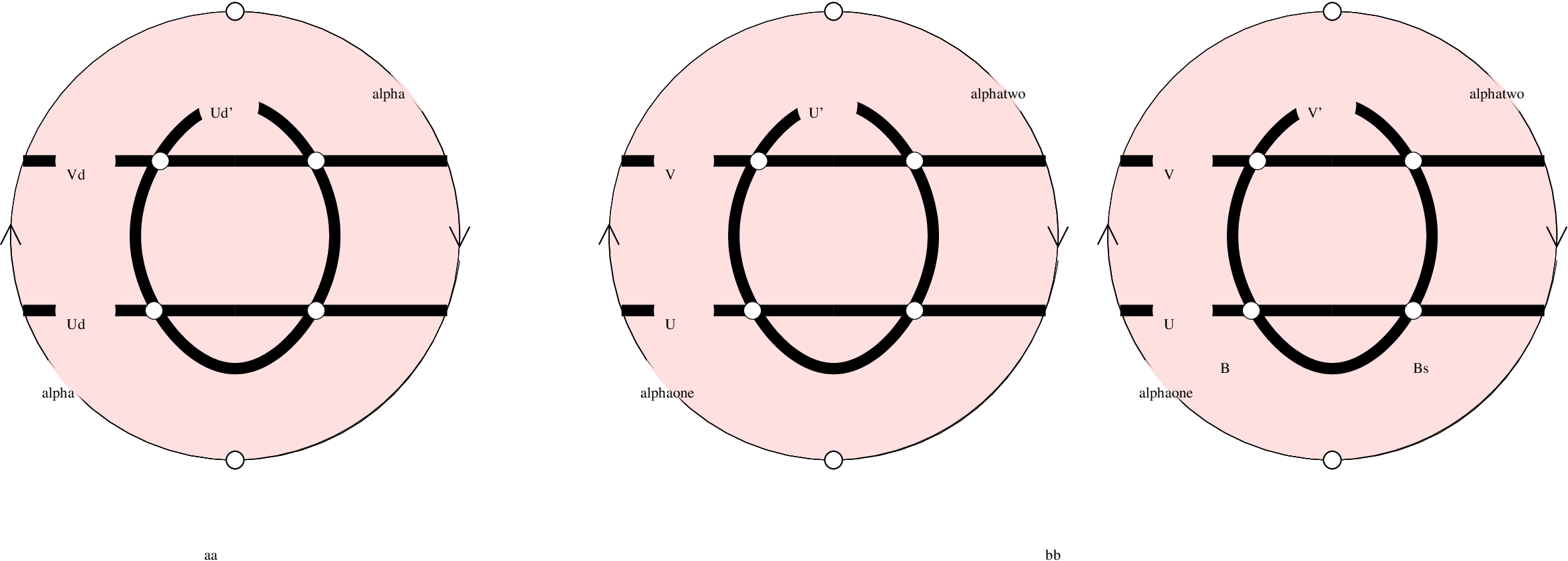}
\caption{ 
(a) A subarrangement of two double pseudolines; (b) its $2$-sheeted unbranched covering
\label{TwoCoveringAbstract}}
\end{figure}
The two lifts under $p_{\Gamma}$ of a curve $\tau$ of $\Gamma$ are denoted $\tau_+$ and~$\tau_-$, and the set of lifts of the curves of $\Gamma$ is denoted $\lift{\Gamma}$.  
Fig.~\ref{TwoCoveringAbstract}a shows a subarrangement of two double pseudolines and Fig.~\ref{TwoCoveringAbstract}b shows its $2$-sheeted unbranched covering. 
We note that two curves of $\lift{\Gamma}$ have exactly  $0$ or $2$ intersection 
points depending on whether they are the lifts of the same 
curve in $\Gamma$, or not. By convention if $B$ is one of the two intersection points of 
two crossing curves of $\lift{\Gamma}$  then the other one is denoted~$B_*$, as illustrated in Fig.~\ref{TwoCoveringAbstract}b. 
For $\zeta$ subarrangement of $\Gamma$ of size $2,3,4$ or $5$ containing $\gamma$ we denote by $C_{\zeta}$ the cylinder of the sphere ${\lift{\SSJ}}_\zeta$ bounded by $\UU$ and $\VV$.  
We introduce  the following terminology.
\begin{enumerate}
\item A {\it \tracecurve\ supported by} $\gamma' \in \Gamma$, $\gamma' \neq \gamma$, is a maximal subcurve of $\gamma'_+$ or $\gamma'_-$ contained in the 
cylinder $C_{\zeta}$ where $\zeta = \{\gamma,\gamma'\}$.  
Observe that there are four \tracecurves\ supported by  $\gamma'$ (two per lift of $\gamma'$)  and that a \tracecurve\ has an endpoint on $\UU$ and 
the other one on $\VV.$
The \tracecurve\ with endpoint $B$ on $\UU$ is denoted $\gcurve(B).$  
\item An {\it arrangement of \tracecurves}  is  a set of at most four \tracecurves\ embedded in the cylinder $C_\zeta$ 
where $\zeta$ is the set of supporting curves of the at most four \tracecurves\ augmented with $\gamma$.  
The cell complex of an arrangement of two \tracecurves\ depends only on the number of intersection points, as depicted in Fig.~\ref{TwoCoveringAbstractYY}.
\begin{figure}[!htb]
\centering
\psfrag{UU}{$\UU$}\psfrag{VV}{$\VV$}
\psfrag{alpha}{$\alpha$}
\psfrag{alphaone}{$\alpha_1$}
\psfrag{alphatwo}{$\alpha_2$}
\psfrag{B}{$B$}\psfrag{Bs}{$B_*$}
\psfrag{Ud}{$\tau$}\psfrag{Vd}{$\tau$}
\psfrag{Ud'}{$\tau'$}\psfrag{Vd'}{$\tau'$}
\psfrag{U}{$\tau_+$}\psfrag{V}{$\tau_-$}
\psfrag{U'}{$\tau'_+$}\psfrag{V'}{$\tau'_-$}
\psfrag{Up}{$B$} \psfrag{Upp}{$B'$} 
\psfrag{aa}{(a)} \psfrag{bb}{(b)} \psfrag{cc}{} \psfrag{dd}{(c)} \psfrag{ee}{}
\includegraphics[width=0.8595\linewidth]{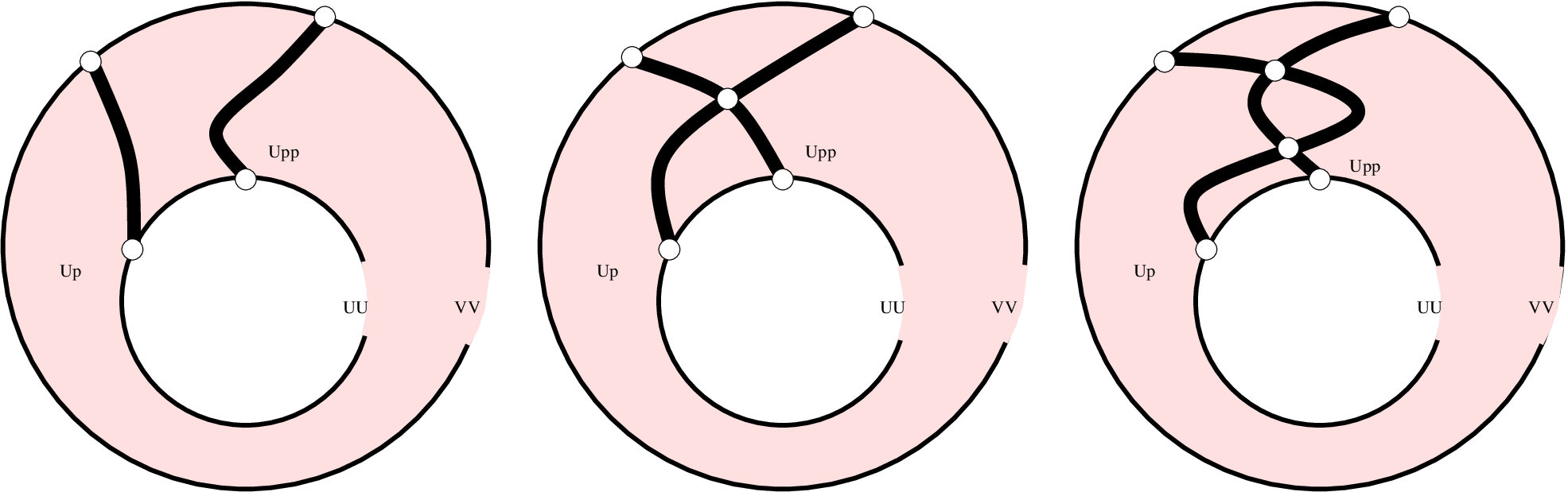}
\caption{The 3 possible arrangements of two  $\gamma$-curves
\label{TwoCoveringAbstractYY}}
\end{figure}
\item A {\it \gtriangle} is  a triangular face of the 
arrangement of  two crossing \tracecurves\ 
with a side supported by $\UU$;
the vertex of a \gtriangle\ not on $\UU$ is called its {\it apex} and the side
of a \gtriangle\ supported by $\UU$ is called its {\it base side}. 
The interior and the exterior of the base side of a  \gtriangle\ $T$, considered as a subset of $\UU$, are denoted $\base{T}$ and $\cobase{T}$, respectively. 
\item A  \gtriangle\ is {\it admissible} if one of its two sides
with the apex as an endpoint is an edge of $\lift{\Gamma}$. 
\begin{figure}[!htb]
\centering
\psfrag{aa}{\normalsize (a)} \psfrag{bb}{\normalsize (b)} \psfrag{cc}{\normalsize (c)}
\tiny
\footnotesize
\psfrag{Delta}{$\Delta$}
\psfrag{CYZ}{\normalsize $\stripbis{Y,Y'}$}
\psfrag{CBBY}{\normalsize $\stripbis{B,B',Y}$}
\psfrag{CBBZ}{\normalsize $\stripbis{B,B',Y'}$}
\psfrag{CBBB}{\normalsize $\stripbis{B,B',B''}$}
\psfrag{x}{$X$}
\psfrag{y1}{$Y'$}
\psfrag{y0}{$Y$}
\psfrag{t0}{$T$} \psfrag{t1}{$T'$} \psfrag{t2}{$T''$} \psfrag{a0}{$A$}
\psfrag{a1}{$A'$} \psfrag{a2}{$A''$} \psfrag{a3}{$A'''$} \psfrag{a4}{$A^{(4)}$}
\psfrag{b0}{$B$} \psfrag{b1}{$B'$} \psfrag{b2}{$B''$} \psfrag{b3}{$B'''$}
\psfrag{b4}{$B^{(4)}$} \psfrag{b5}{$B^{(5)}$} \psfrag{b1s}{$\twin{B}'$}
\psfrag{b2s}{$\twin{B}''$} \psfrag{bp3}{$\twin{B}'''$} \psfrag{T1}{$T'$}
\psfrag{a1s}{$\twin{A'}$}
\psfrag{T3}{$T'''$}
\psfrag{casetwo}{$B'' \in \cobase{T}$}
\psfrag{caseone}{$B'' \in \base{T}$}
\psfrag{aa}{(a)}
\psfrag{bb}{(b)}
\psfrag{cc}{(c)}
\psfrag{dd}{(d)}
\psfrag{ee}{(e)}
\psfrag{ff}{(f)}
\psfrag{gg}{(g)}
\includegraphics[width = 0.985 \linewidth]{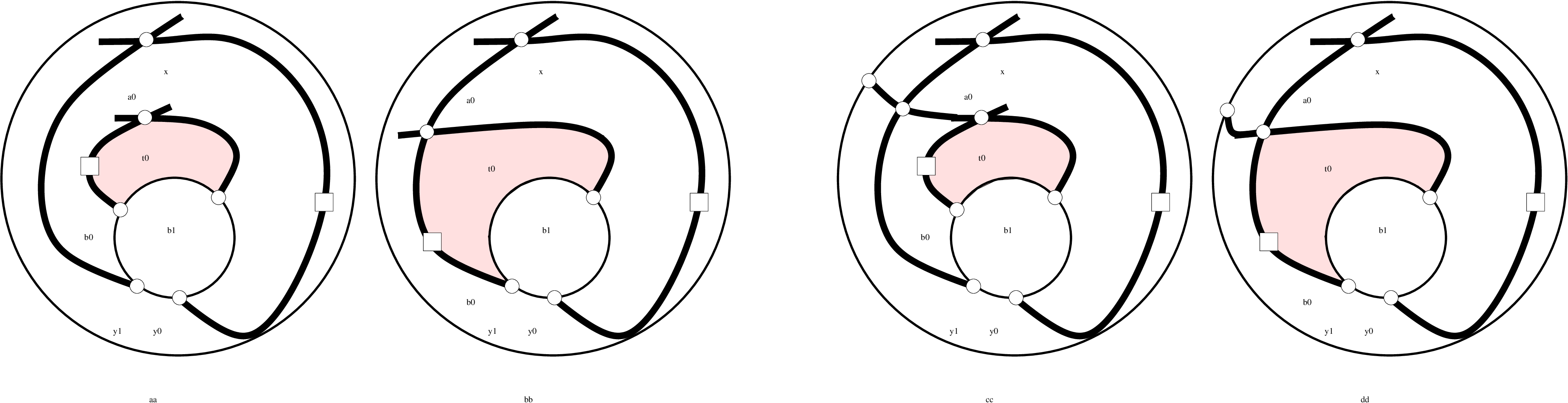}
\caption{The admissible \gtriangle\ $\Delta$ encloses the admissible \gtriangle\ $T$ \label{finalenclose}}
\end{figure}
\item 
An admissible \gtriangle\ $\Delta = XYY'$ with apex $X$ and edge side $XY$ is said to enclose an admissible \gtriangle\ $T = ABB'$ with apex $A$ and edge side $AB$ if $T$ is included in $\Delta$ and walking along the base side of $\Delta$ from $Y$ to $Y'$ we encounter $B'$ before $B$.
 Thus the arrangement of the four \tracecurves\ $\gcurve(Y)$, $\gcurve(Y')$, $\gcurve(B)$, $\gcurve(B')$ is, up to homeomorphism, 
one of those implicitly depicted in Fig.~\ref{finalenclose}a ($B\neq Y'$) or Fig.~\ref{finalenclose}b ($B=Y'$); and, consequently,  
one of those implicitly depicted in Fig.~\ref{finalenclose}c or Fig.~\ref{finalenclose}d since one can easily prove that $\gcurve(B')$ crosses the side $XY'$ only once.
\end{enumerate}
\begin{lemma} 
There is at least one admissible \gtriangle. 
\end{lemma}
\begin{proof}
Since by  assumption there is a vertex of $\Gamma$ in the crosscap side of  the double pseudoline 
$\gamma$ in the subarrangement composed of $\gamma$ and the two double pseudolines meeting at the vertex, there is  a \gtriangle, say $T=ABB'$ with apex $A$. Let 
$A'$ be the vertex of $\lift{\Gamma}$ that follows 
$B'$ on the side $B'A$ of $T$. Then $A'$ is the apex 
of an  admissible \gtriangle\ $T' = A'B'B''$ with edge side $A'B'$. 
This proves that there is at least one admissible \gtriangle.
\end{proof}
Let $T=ABB'$ be an  admissible \gtriangle\ with apex $A$ and edge side $AB$,
and   let $A'$ be the vertex of $\lift{\Gamma}$ that follows 
$B'$ on the side $B'A$ of $T$. Then $A'$ is the apex 
of an admissible \gtriangle\ $T' = A'B'B''$ with edge side $A'B'$.
A simple use of the Jordan curve theorem leads to the following four lemmas that control 
the relative positions of the base sides of $T$ and $T'$, possibly in the presence of a third admissible 
\gtriangle\ $\Delta = XYY'$ with apex $X$ and edge side $XY$ enclosing $T$. 
Fig.~\ref{caseoneBisAbstract}a, ~\ref{caseoneBisAbstract}b, ~\ref{caseoneBisAbstract}c, and Fig.~\ref{enclosedold}.

\begin{figure}[!htb]
\centering
\psfrag{aa}{\normalsize (a)} \psfrag{bb}{\normalsize (b)} \psfrag{cc}{\normalsize (c)}
\tiny
\footnotesize
\psfrag{CBB}{\normalsize $\stripbis{B,B'}$}
\psfrag{CBBB}{\normalsize $\stripbis{B,B',B''}$}
\psfrag{t0t1}{$T,T'$} 
\psfrag{a0a1}{$A,A'$}
\psfrag{t0}{$T$} \psfrag{t1}{$T'$} \psfrag{t2}{$T''$} \psfrag{a0}{$A$}
\psfrag{a1}{$A'$} \psfrag{a2}{$A''$} \psfrag{a3}{$A'''$} \psfrag{a4}{$A^{(4)}$}
\psfrag{b0}{$B$} \psfrag{b1}{$B'$} \psfrag{b2}{$B''$} \psfrag{b3}{$B'''$}
\psfrag{b4}{$B^{(4)}$} \psfrag{b5}{$B^{(5)}$} \psfrag{b1s}{$\twin{B}'$}
\psfrag{b2s}{$\twin{B}''$} \psfrag{bp3}{$\twin{B}'''$} \psfrag{T1}{$T'$}
\psfrag{a1s}{$\twin{A'}$}
\psfrag{T3}{$T'''$}
\includegraphics[width = 0.8750 \linewidth]{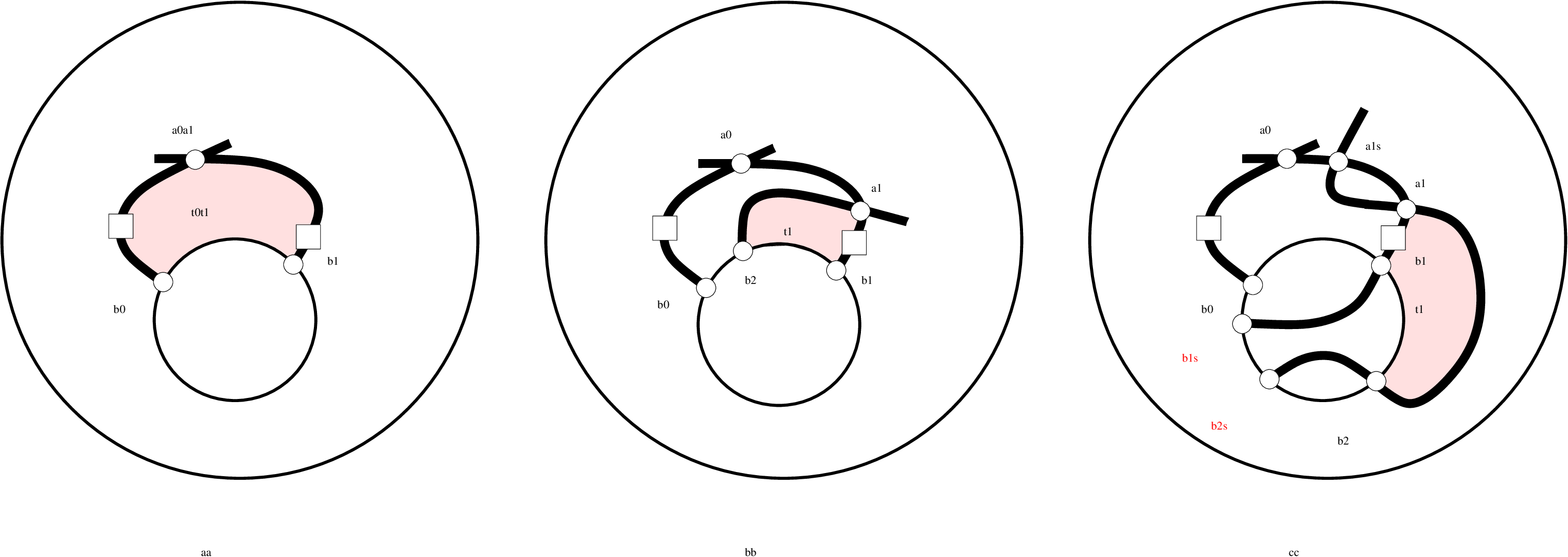}
\caption{Relative positions of an admissible \gtriangle\ $T$ and its derived  admissible \gtriangle~$T'$\label{caseoneBisAbstract} :
(a) $A=A', T = T'$; 
(b) $B'' \in \base{T}$;
(c) $B'' \in \cobase{T}$}
\end{figure}

\begin{lemma} \label{positionthrAbstract} 
Assume that $T=T'$. Then $T$ is a triangular two-cell of $\widetilde{\Gamma}$. \qed
\end{lemma}

\begin{lemma}\label{positiontwoAbstract}
Assume that $T\neq T'$ and  that $B'' \in \base{T}$.  Then:
(1) $\gcurve(B'')$ crosses the side $B'A$ of $T$ exactly once (at $A'$) and 
(2)
$\base{T'}$ is contained in $\base{T}$. \qed
\end{lemma}

\begin{lemma}\label{positiononeAbstract}
Assume that $T\neq T'$ and that $B'' \in \cobase{T}$.  Then:
\begin{enumerate}
\item
$\gcurve(B')$ and $\gcurve(B'')$ cross twice (at $A'$ and $\twin{A'}$) on the side $B'A$ of $T$, 
\item
$\base{T}$ and $\base{T'}$ are interior disjoint, 
\item 
$\twin{B}'$ and $\twin{B}'' \in \cobase{T} \cap \cobase{T'}$, and 
\item 
walking along $\cobase{T} \cap \cobase{T'}$  
from $B''$ to $B$ we encounter successively the points $\twin{B}''$ and $\twin{B}'$. 
\end{enumerate}
Furthermore if $\Delta$  encloses $T$, then $\Delta$ encloses $T'$.
\end{lemma}
\begin{figure}[!htb]
\centering
\psfrag{aa}{\normalsize (a)} \psfrag{bb}{\normalsize (b)} \psfrag{cc}{\normalsize (c)}
\tiny
\footnotesize
\psfrag{Delta}{$\Delta$}
\psfrag{CYZ}{\normalsize $\stripbis{Y,Y'}$}
\psfrag{CBBY}{\normalsize $\stripbis{B,B',Y}$}
\psfrag{CBBZ}{\normalsize $\stripbis{B,B',Y'}$}
\psfrag{CBBB}{\normalsize $\stripbis{B,B',B''}$}
\psfrag{x}{$X$}
\psfrag{y1}{$Y'$}
\psfrag{y0}{$Y$}
\psfrag{t0}{$T$} \psfrag{t1}{$T'$} \psfrag{t2}{$T''$} \psfrag{a0}{$A$}
\psfrag{a1}{$A'$} \psfrag{a2}{$A''$} \psfrag{a3}{$A'''$} \psfrag{a4}{$A^{(4)}$}
\psfrag{b0}{$B$} \psfrag{b1}{$B'$} \psfrag{b2}{$B''$} \psfrag{b3}{$B'''$}
\psfrag{b4}{$B^{(4)}$} \psfrag{b5}{$B^{(5)}$} \psfrag{b1s}{$\twin{B}'$}
\psfrag{b2s}{$\twin{B}''$} \psfrag{bp3}{$\twin{B}'''$} \psfrag{T1}{$T'$}
\psfrag{a1s}{$\twin{A'}$}
\psfrag{T3}{$T'''$}
\psfrag{casetwo}{$B'' \in \cobase{T}$}
\psfrag{caseone}{$B'' \in \base{T}$}
\psfrag{aa}{(a)}
\psfrag{bb}{(b)}
\psfrag{cc}{(c)}
\psfrag{dd}{(d)}
\psfrag{ee}{(a)}
\psfrag{ff}{(b)}
\psfrag{gg}{(c)}
\includegraphics[width = 0.850 \linewidth]{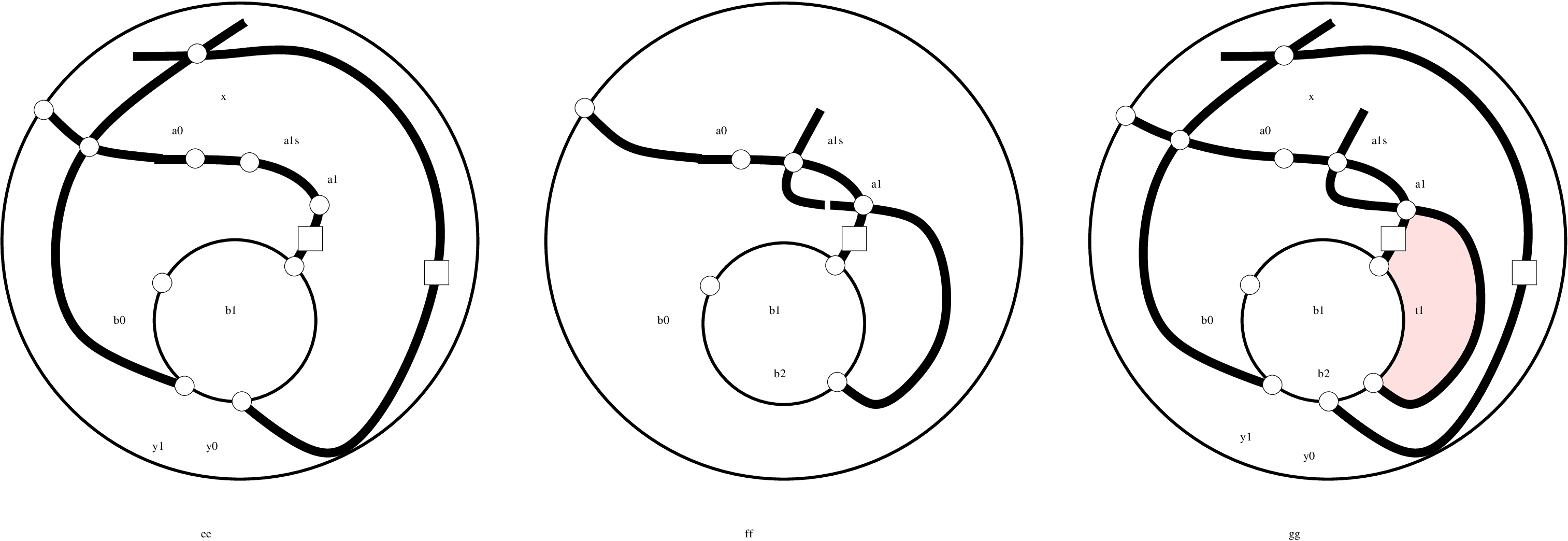}
\caption{(a) $\Delta$ encloses  $T$; 
(b) $B'' \in \cobase{T}$; (c) $\Delta$ encloses $T'$\label{enclosedold}}
\end{figure}
\begin{proof}  We only comment the furthermore part. 
Let $\Upsilon$ be the arrangement of the three \tracecurves\ $\gcurve(Y), \gcurve(Y'),\gcurve(B')$ and let $\Upsilon'$ be the arrangement of the two \tracecurves\ 
$\gcurve(B')$ and $\gcurve(B'')$, both arrangements being augmented with the points $A,B,A'$ and $\twin{A'}$.  
According to the previous discussion $\Upsilon$ is, up to homeorphism, one of those implicitly depicted in Fig.~\ref{enclosedold}a (we omit the case $Y'=B$).  
Similarly $\Upsilon'$ is, up to homeorphism, one of those implicitly depicted in Fig.~\ref{enclosedold}b. Again using the Jordan curve theorem 
we see easily that the only compatible superpositions of these two arrangements are, up to homeorphism, those implicitly depicted in Fig.~\ref{enclosedold}c. The lemma follows.  \end{proof} 
Consider now the sequence of admissible \gtriangles\ 
$T_0,T_1,T_2,\ldots $  
defined inductively by $T_0=T$ and $T_{k+1} = T'_k$  for $k\geq 0$. 
A simple combination of Lemmas~\ref{positiononeAbstract} and~\ref{positiontwoAbstract} leads to the conclusion that the sequence $T_k$ is stationary.
According to Lemma~\ref{positionthrAbstract} the lemma follows. 
\end{proof}
\begin{proof}[Proof of Lemma~\ref{newmainresultbis}.] As the  proof uses  similar ideas to the proof of the previous lemma with a much simpler case analysis,
 we omit it. \end{proof}

\begin{theorem} 
 \label{abstractconnectivitypl}
The mutation graph on the space of pseudoline arrangements of given size whose subarrangements of size at most $4$ are of genus $1$ is connected.  
\end{theorem}
\begin{proof} A {\it good} arrangement is an arrangement of pseudolines whose subarrangements of size at most $4$ are of genus $1$.
Clearly any good arrangement is connected, via a finite sequence of splitting mutations, to a  good simple arrangement. 
Then by a repeated application of Lemma~\ref{newmainresultbis} we see
that any good simple arrangement of size $n+1$ is connected, via a finite sequence of mutations,
to a good simple arrangement of pseudolines obtained from a good simple arrangement of size $n$ by adding a copy of one of its pseudolines 
as indicated in Fig.~\ref{addingacopy}. The result follows by induction.
\begin{figure}[!htb]
\centering
\psfrag{a}{$\alpha$}
\includegraphics[width = 0.8750 \linewidth]{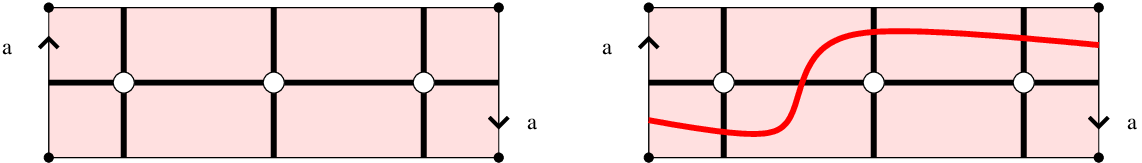}
\caption{\label{addingacopy} Adding a copy of a pseudoline in a pseudoline arrangement is carried out in the vicinity of the pseudoline. The choice of the position of 
intersection point between the pseudoline and its copy is arbitrary }
\end{figure}
\end{proof}
\begin{theorem} 
 \label{abstractconnectivity}
The mutation graph on the space of double pseudoline arrangements of given size whose subarrangements of size at most $5$ are of genus $1$ is connected.  
\end{theorem}
\begin{proof} A {\it good} arrangement is an  arrangement  of double pseudolines whose subarrangements of size at most $5$ are of genus $1$.
Clearly any good arrangement  is connected, via a finite sequence of splitting mutations, to a good simple arrangement. 
By a repeated application of Lemma~\ref{newmainresult} we see
that any good simple arrangement is connected, via a finite sequence of mutations,
to a good thin arrangement.  The results follows thanks to Theorem~\ref{abstractconnectivitypl} and the one-to-one correspondence between isomorphism classes of 
 simple pseudoline arrangements and isomorphism classes of thin double pseudoline arrangements.
\end{proof}

\begin{proof}[Proof of Theorem~\ref{uptofive}]
According to Theorem~\ref{abstractconnectivity} the mutation graph on the space of double pseudoline arrangements of given size whose subarrangements of size at most $5$ 
are of genus $1$ is connected. 
Since a mutation does not change the Euler characteristic of the underlying surface of an arrangement 
this prove that a double pseudoline arrangement whose subarrangements of size at most $5$ are of genus $1$ is of genus $1$.
\end{proof}
\begin{proof}[Proof of Theorem~\ref{uptofour}]
According to Theorem~\ref{abstractconnectivitypl} the mutation graph on the space of  pseudoline arrangements 
of given size  whose subarrangements of size at most $4$ are of genus $1$ is connected.
Since a mutation does not change the Euler characteristic of the underlying surface of an arrangement this prove that a pseudoline 
arrangement whose subarrangements of size at most $4$ are of genus $1$ is  
of genus $1$.
\end{proof}

\subsection{Separation lemma}
We come now to the announced alternative proofs of Theorems~\ref{uptofive} and~\ref{uptofour} before discussing improved versions of both. 
In particular we offer  results in strong support of 
the conjecture that an arrangement of double pseudolines whose subarrangements of size at most $4$ are of genus $1$ is of genus $1$. 
The alternative proofs are based on the following related two observations 
\begin{lemma}[Separation Lemma] \label{separationPL}
Let $F$ and $F'$ be two distinct faces of an arrangement of pseudolines $\Gamma$ of genus $1$. Then there
exists a subarrangement of $\Gamma$ of size  $2$ whose faces $A$ and $A'$ containing $F$ and $F'$ are distinct.
\end{lemma}
\begin{proof} Let $[F]$ and $[F']$  be the subarrangements 
of $\Gamma$ composed of the supporting curves of the sides of $F$ and  $F'$, respectively. Clearly the faces of the subarrangement $[F]$ containing $F$ and $F'$ are distinct. 
Therefore one can assume that $\Gamma = [F] = [F']$ is a cyclic arrangement with at least two central cells.  Then the result follows  
from the classification of cyclic pseudoline arrangements of genus $1$ that was recalled in Section~\ref{raiponces}. \end{proof}
\begin{lemma}[Separation Lemma] \label{separation}
Let $F$ and $F'$ be two distinct faces of an arrangement of double pseudolines $\Gamma$ of genus $1$. Then there
exists a subarrangement of $\Gamma$ of size at most $3$ whose faces $A$ and $A'$ containing $F$ and $F'$ are distinct.
\end{lemma}
\begin{proof} 
For the restricted class of thin arrangements the result follows from 
Lemma~\ref{separationPL} and the one-to-one correspondence between the class of isomorphism classes of simple pseudoline arrangements 
and the class of isomorphism classes of thin double pseudoline arrangements. 
The general case follows easily  from the connectedness of mutation graphs.\end{proof}
We are now ready for the alternative proofs. 
Let $\Gamma$ be an arrangement, let  $\gamma \in \Gamma$, and set $\Gamma' = \Gamma\setminus \{\gamma\}$.
Endow $\gamma$ with an orientation and introduce the set $\caseone{\Gamma}{\gamma}$ 
of pairs of consecutive nodes $NN'$ of the node cycle of $\gamma$ in the arrangement $\Gamma$ such the face $F$ of $\Gamma'$ that 
 $\gamma$ enters at $N$ and the face of $\Gamma'$ that $\gamma$ leaves at $N'$ are dictinct, as illustrated in Fig.~\ref{crosscase}a, and  
the set $\casetwo{\Gamma}{\gamma}$ of pairs of pairs of consecutive nodes $NN'$ and $MM'$ of the node cycle of $\gamma$ in the arrangement $\Gamma$ such that 
(1) $\gamma$ enters at $N$ and $M$ and leaves at $N'$ and $M'$ the same face $F$ of $\Gamma'$, 
(2) the pair $NN'$ separates the pair $MM'$ on the boundary of $F$, as illustrated in Fig.~\ref{crosscase}b.  
Clearly the genus of $\Gamma'$ is less than the genus of $\Gamma$ with equality if and only if $\caseone{\Gamma}{\gamma}$ and $\casetwo{\Gamma}{\gamma}$ are both empty. 
\begin{figure}[!htb]
\centering
\psfrag{face}{$F$}
\psfrag{facep}{$F'$}
\psfrag{N}{$N$}
\psfrag{Np}{$N'$}
\psfrag{M}{$M$}
\psfrag{Mp}{$M'$}
\psfrag{a}{(b)}
\psfrag{b}{(a)}
\includegraphics[width=0.95\linewidth]{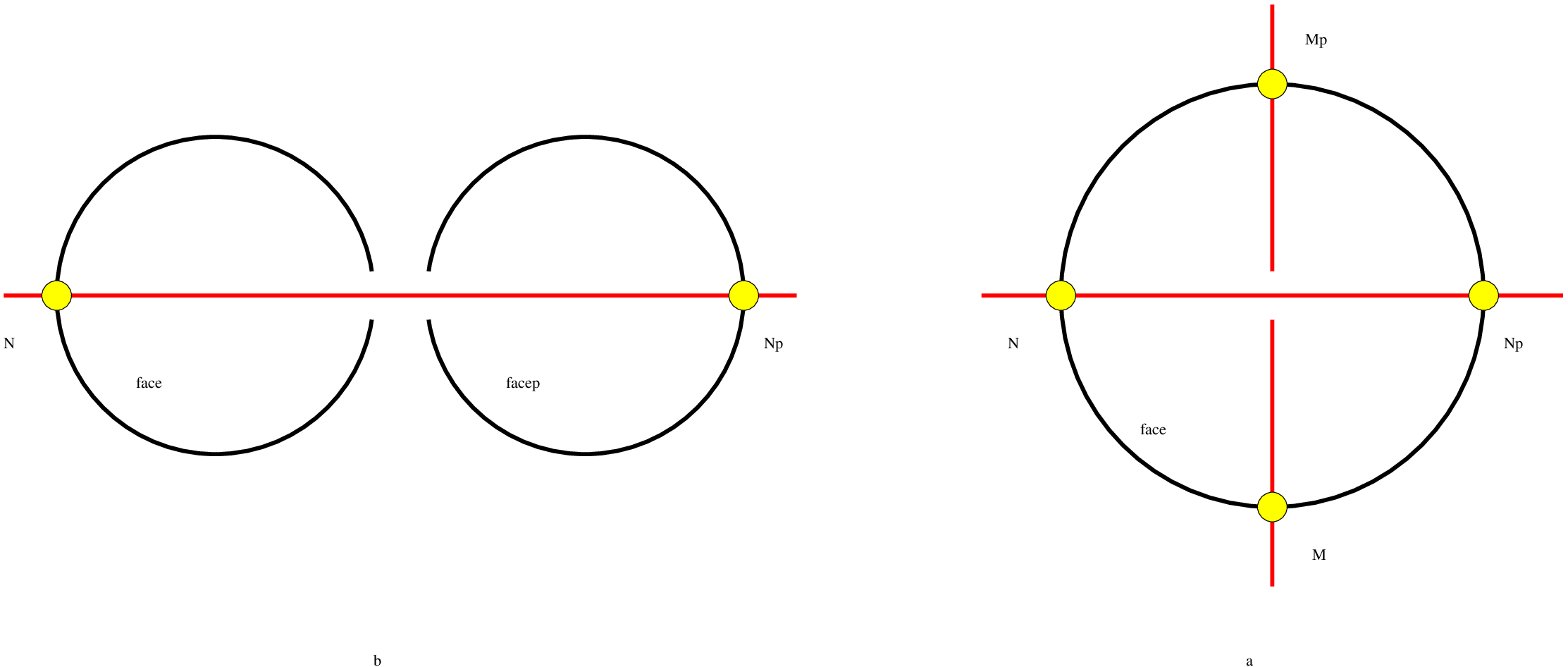}
\caption{\protect \small  (a) $NN'\in \caseone{\Gamma}{\gamma}$ ; (b) $NN'MM'\in \casetwo{\Gamma}{\gamma}$ \label{crosscase}}
\end{figure}
\begin{proof}[Second proof of Theorem~\ref{uptofive}] 
Let $\Gamma$ be an arrangement of double pseudolines whose subarrangements of size at most $5$ are of genus $1$.  Our goal 
is to prove that $\Gamma$ is of genus~$1$.  We proceed by induction on the size $n$ of $\Gamma$. The base  case $n\leq 5$ is clear. 
Assume $n \geq 6$, let $\gamma \in \Gamma$ and 
assume that $\Gamma\setminus \{\gamma\}$ is of genus $1$. 
We  show that $\caseone{\Gamma}{\gamma}$ and $\casetwo{\Gamma}{\gamma}$ are both empty.  Without loss of generality we can assume that $\Gamma$ is a simple arrangement.
We first show that $\caseone{\Gamma}{\gamma}$ is empty. Let $NN'$ be a pair of consecutive nodes of the node cycle of $\gamma$ in the arrangement $\Gamma$.
Let $\Gamma''$ be a subarrangement of $\Gamma'$ of size at most $3$ and let $A$ be a face of $\Gamma".$
We let the reader check that 
the following four claims are equivalent
\begin{enumerate}
\item $N$ is contained in $A$ or $\gamma$ enters $A$ at $N$;
\item $N'$ is contained in $A$ or $\gamma$ leaves $A$ at $N'$;
\item $F$ is contained  in  $A$;
\item $F'$ is contained in $A$.
\end{enumerate}
According to Lemma~\ref{separation} it follows that  $F = F'$. Hence $\caseone{\Gamma}{\gamma}$ is empty.
It remains to observe that if $NN'MM' \in \casetwo{\Gamma}{\gamma}$ then $NN'MM' \in \casetwo{\Gamma''}{\gamma}$ where $\Gamma''$ is the subarrangement of $\Gamma$ composed of $\gamma$ and 
the (at most 4) curves of $\Gamma$ that $\gamma$ crossed at $N,N',M$ and $M'$ to complete the proof. 
\end{proof}
\begin{proof}[Second proof of Theorem~\ref{uptofour}] 
Let $\Gamma$ be an arrangement of pseudolines whose subarrangements of size at most $4$ are of genus $1$.  Our goal 
is to prove that $\Gamma$ is of genus~$1$.  We proceed by induction on the size $n$ of $\Gamma$. The base case $n\leq 4$ is clear. Assume $n \geq 5$, let $\gamma \in \Gamma$ and 
assume that $\Gamma\setminus \{\gamma\}$ is of genus $1$. 
We  show that $\caseone{\Gamma}{\gamma}$ and $\casetwo{\Gamma}{\gamma}$ are both empty.  
Without loss of generality we can assume that $\Gamma$ is a simple arrangement.
We first show that $\caseone{\Gamma}{\gamma}$ is empty. Let $NN'$ be a pair of consecutive nodes of the node cycle of $\gamma$ in the arrangement $\Gamma$.
Let $\Gamma''$ be a subarrangement of $\Gamma'$ of size at most $2$ and let $A$ be a face of $\Gamma".$
We let the reader check that 
the following four claims are equivalent
\begin{enumerate}
\item $N$ is contained in $A$ or $\gamma$ enters $A$ at $N$;
\item $N'$ is contained in $A$ or $\gamma$ leaves $A$ at $N'$;
\item $F$ is contained  in  $A$;
\item $F'$ is contained in $A$.
\end{enumerate}
According to Lemma~\ref{separationPL} it follows that  $F = F'$. Hence $\caseone{\Gamma}{\gamma}$ is empty.
It remains to observe that if $NN'MM' \in \casetwo{\Gamma}{\gamma}$ then $M$ is the initial node of a pair of nodes in $\caseone{\Gamma''}{\gamma}$ 
where $\Gamma''$ is the subarrangement of $\Gamma$ composed of $\gamma$ and the $3$ curves of $\Gamma$ that $\gamma$ crossed at $N,N'$ and $M$ to complete the proof. 
\end{proof}

We now discuss the improved versions of Theorems~\ref{uptofive} and~\ref{uptofour}. They are  consequences of improved versions of the separation lemmas, obtained 
by looking at two-coverings of arrangements. The case of pseudoline arrangements is particularly simple.
\begin{lemma}[Separation Lemma]
\label{separationtwoPL} 
Let $GG'$ be a pair of distinct faces of an unbranched  $2$-covering $\widetilde{\Gamma}$ of an arrangement of pseudolines $\Gamma$ of size $2$ and genus 1. 
Then there exists a subarrangement $\Gamma'$ of $\Gamma$ of size  $1$ whose faces $A$ and $A'$ in $\widetilde{\Gamma}'$ containing $G$ and $G'$ are distinct.
\end{lemma}
\begin{proof} Obvious; cf. Fig.~\ref{TwoCoveringPL}. \end{proof}

\begin{figure}[!htb]
\centering
\psfrag{G}{$G$}
\psfrag{GP}{$G'$}
\psfrag{alpha}{$\alpha$}
\psfrag{alphaone}{$\alpha_1$}
\psfrag{alphatwo}{$\alpha_2$}
\psfrag{aa}{(a)} \psfrag{bb}{(b)} \psfrag{cc}{} \psfrag{dd}{(c)} \psfrag{ee}{}
\includegraphics[width=0.8595\linewidth]{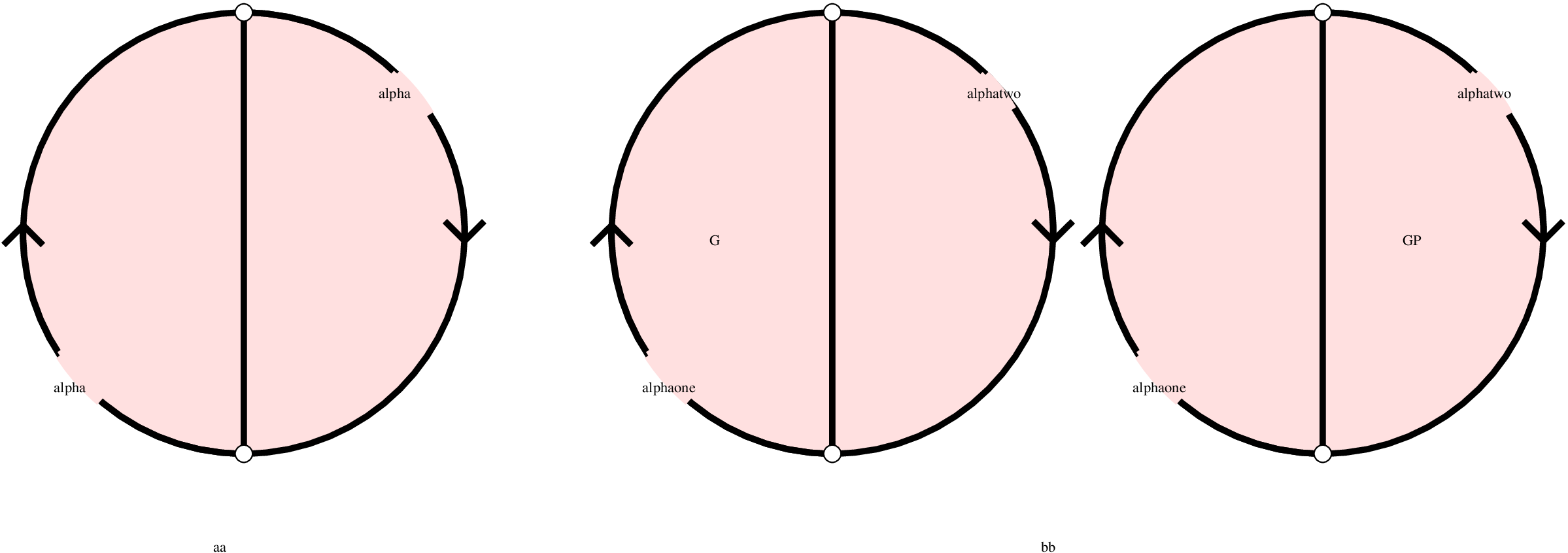}
\caption{ 
(a) An arrangement of two pseudolines; (b) its $2$-sheeted unbranched covering
\label{TwoCoveringPL}}
\end{figure}

\begin{theorem}\label{uptothree}
Let $\Gamma$ be an arrangement pseudolines whose subarrangements of size $3$ are of genus $1$. Then 
$\Gamma$ is of genus $1$. 
\end{theorem}
\begin{proof}
 We argue exactly as in the proof of Theorem~\ref{uptofive} except that we work in a $2$-covering of $\Gamma$ in order to use Lemma~\ref{separationtwoPL} instead 
of Lemma~\ref{separationPL}.
\end{proof}

The case of arrangements of double pseudolines requires a little  more work. 

Let $\Gamma$ be an arrangement of double pseudolines of genus $1$ and let $FF'$ be a pair of distinct faces of $\Gamma$. A {\it S-witness} of $FF'$ is a subarrangement of 
$\Gamma$ whose faces $A$ and $A'$ containing $F$ and $F'$ are distinct. The {\it S-number} of $FF'$ is the minimum of the sizes of its S-witnesses.
Thus our Separation Lemma asserts that the S-number of $FF'$ is $1,2$ or $3$. In case the $S$-number of $FF'$ is $3$ we say that the pair $FF'$ is a {\it critical pair} of $\Gamma$.  
In case $FF'$ is a critical pair and $\Gamma$ is of size $3$ we define the {\it critical graph} of $FF'$ as the graph with two vertices and three edges embedded in the cross surface of the arrangement 
whose vertices are two arbitrary points $a$ and $a'$ chosen in $F$ and $F'$  and whose edges are three paths joining $a$ to $a'$, 
each path avoiding two of the three curves (and crossing necessarily the third one) of the arrangement. 
It is no hard to see that the critical graph is unique up to ambient isotopy.  
Fig.~\ref{onecriticalpairs} and~\ref{twocriticalpairs} show the critical pairs of the arrangements of size $3$ together with their associated critical graphs. 
\begin{figure}[!htb]
\def\factor{0.200019315015000023}
\centering
\psfrag{8}{\small 8} \psfrag{7}{\small 7} \psfrag{6}{\small 6} \psfrag{5}{\small 5} \psfrag{4}{\small 4} \psfrag{3}{\small 3} \psfrag{2}{\small 2}
\psfrag{8}{} \psfrag{7}{} \psfrag{6}{} \psfrag{5}{} \psfrag{4}{} \psfrag{3}{} \psfrag{2}{}
\psfrag{o24}{24} \psfrag{o12}{12} \psfrag{o2}{2} \psfrag{o4}{4} \psfrag{o6}{6} \psfrag{o1}{1}
\psfrag{A}{$\name{04}$}
\psfrag{B}{$\name{07}$}
\psfrag{C}{$\name{18}$}
\psfrag{D}{$\name{25}$} 
\psfrag{F}{$\name{07}$} 
\psfrag{G}{$\name{37}$}
\psfrag{H}{$\name{15}$}
\psfrag{I}{\textcolor{red}{$\name{252310}$}}
\psfrag{J}{$\name{43}$}
\psfrag{K}{$\name{25}$}
\psfrag{L}{$\name{33}$}
\psfrag{M}{$\name{32}$}
\psfrag{N}{$\name{25_2}$}
\psfrag{Nstar}{$\name{25_1}$}
\psfrag{O}{$\name{32}$}
\psfrag{P}{$\name{22}$}
\psfrag{Q}{$\name{25}$}
\psfrag{R}{$\name{36}$}
\psfrag{Z}{$\name{64}$}
\includegraphics[width = \factor\linewidth]{P049000}
\includegraphics[width = \factor\linewidth]{P073300}
\includegraphics[width = \factor\linewidth]{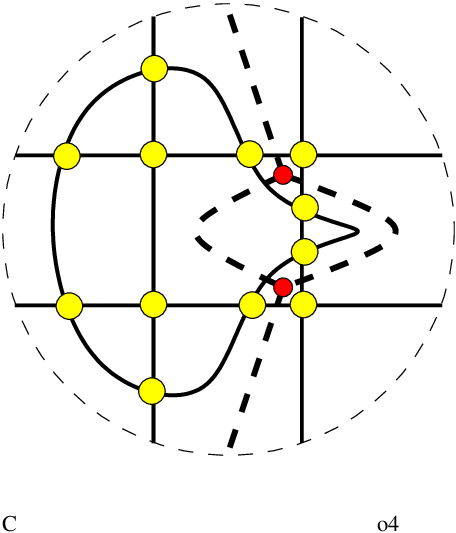}
\includegraphics[width = \factor\linewidth]{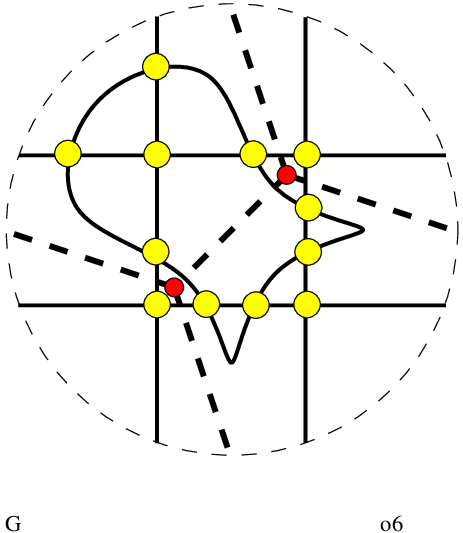}
\includegraphics[width = \factor\linewidth]{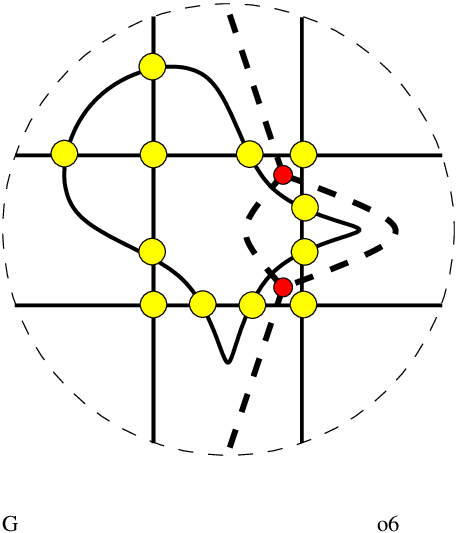}
\includegraphics[width = \factor\linewidth]{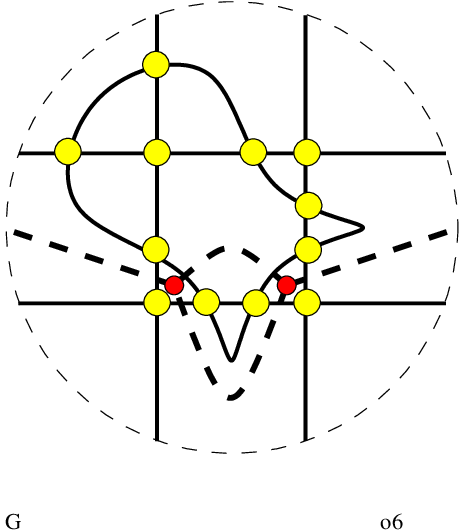}
\includegraphics[width = \factor\linewidth]{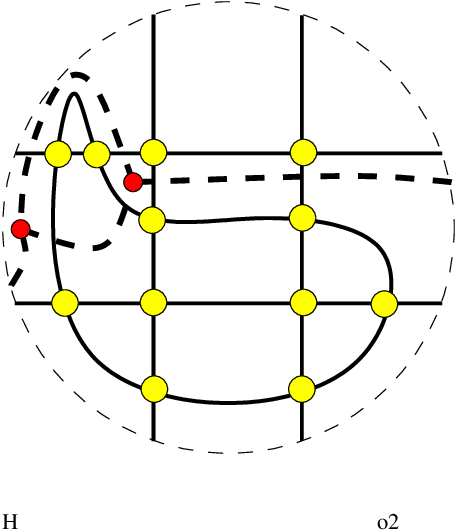}
\includegraphics[width = \factor\linewidth]{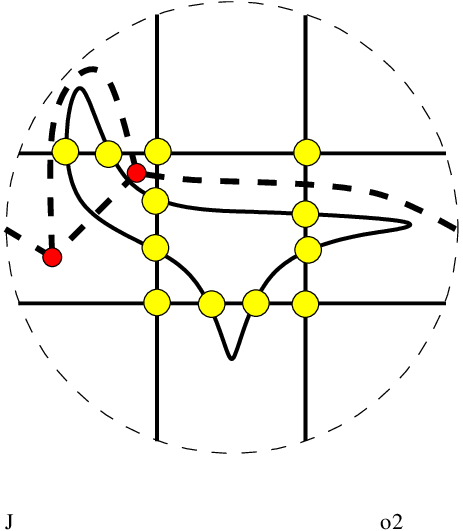}
\includegraphics[width = \factor\linewidth]{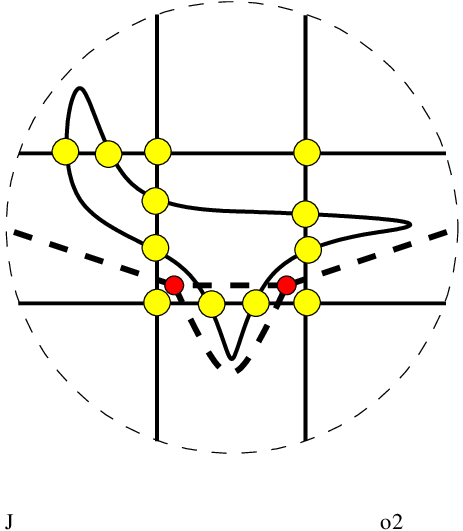}
\includegraphics[width = \factor\linewidth]{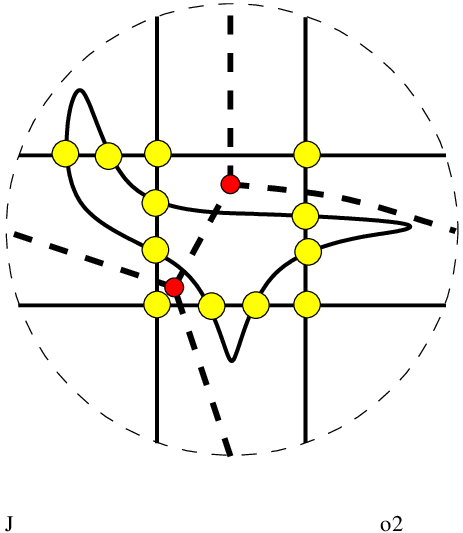}
\includegraphics[width = \factor\linewidth]{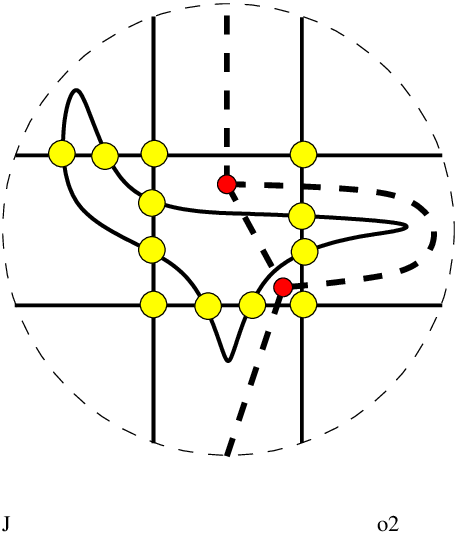}
\includegraphics[width = \factor\linewidth]{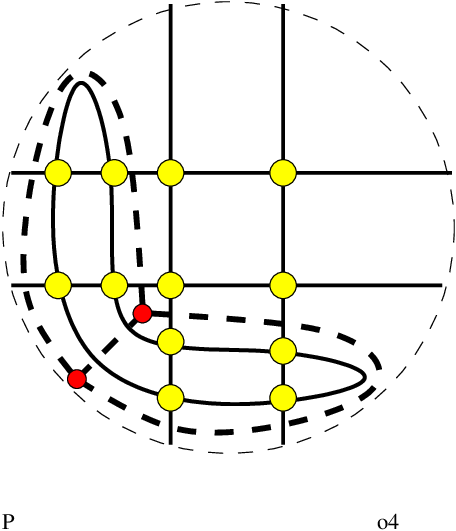}
\includegraphics[width = \factor\linewidth]{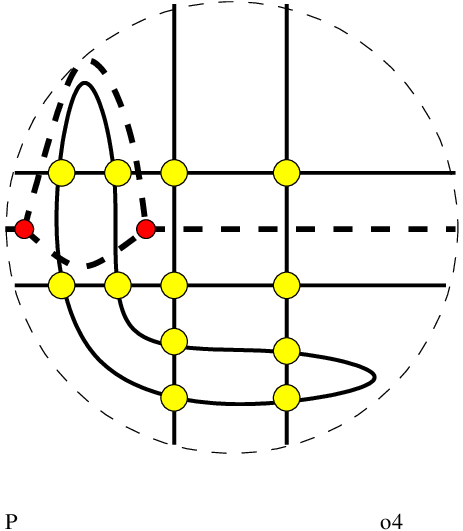}
\includegraphics[width = \factor\linewidth]{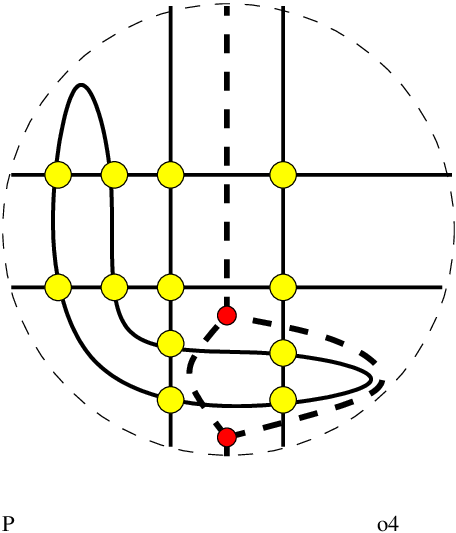}
\caption{Critical pairs of the arrangements $\name{18}$, $\name{37}$, $\name{15}$, $\name{43}$, and $\name{22}$ together with their associated critical graphs. 
The arrangements $\name{04}$ and $\name{07}$ have no critical pairs
\label{onecriticalpairs}}
\end{figure}
\begin{figure}[!htb]
\def\factor{0.200019315015000023}
\centering
\psfrag{8}{\small 8} \psfrag{7}{\small 7} \psfrag{6}{\small 6} \psfrag{5}{\small 5} \psfrag{4}{\small 4} \psfrag{3}{\small 3} \psfrag{2}{\small 2}
\psfrag{8}{} \psfrag{7}{} \psfrag{6}{} \psfrag{5}{} \psfrag{4}{} \psfrag{3}{} \psfrag{2}{}
\psfrag{o24}{24} \psfrag{o12}{12} \psfrag{o2}{2} \psfrag{o4}{4} \psfrag{o6}{6} \psfrag{o1}{1}
\psfrag{A}{$\name{04}$}
\psfrag{B}{$\name{07}$}
\psfrag{C}{$\name{18}$}
\psfrag{D}{$\name{25}$} 
\psfrag{F}{$\name{07}$} 
\psfrag{G}{$\name{37}$}
\psfrag{H}{$\name{15}$}
\psfrag{I}{\textcolor{red}{$\name{252310}$}}
\psfrag{J}{$\name{43}$}
\psfrag{K}{$\name{25}$}
\psfrag{L}{$\name{33}$}
\psfrag{M}{$\name{32}$}
\psfrag{N}{$\name{25_2}$}
\psfrag{Nstar}{$\name{25_1}$}
\psfrag{O}{$\name{32}$}
\psfrag{P}{$\name{22}$}
\psfrag{Q}{$\name{25}$}
\psfrag{R}{$\name{36}$}
\psfrag{Z}{$\name{64}$}
\includegraphics[width = \factor\linewidth]{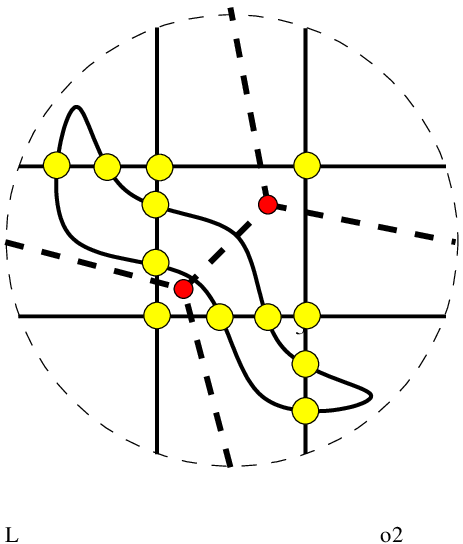}
\includegraphics[width = \factor\linewidth]{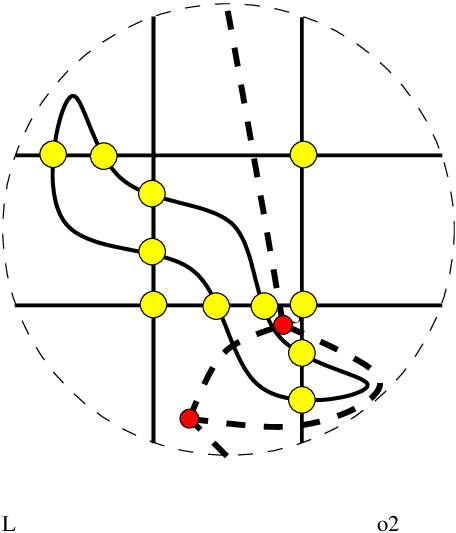}
\includegraphics[width = \factor\linewidth]{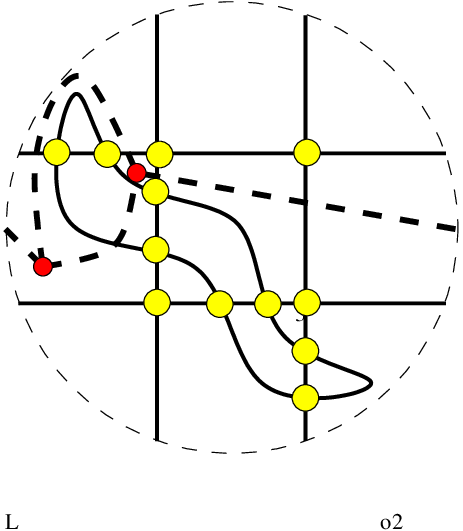}
\includegraphics[width = \factor\linewidth]{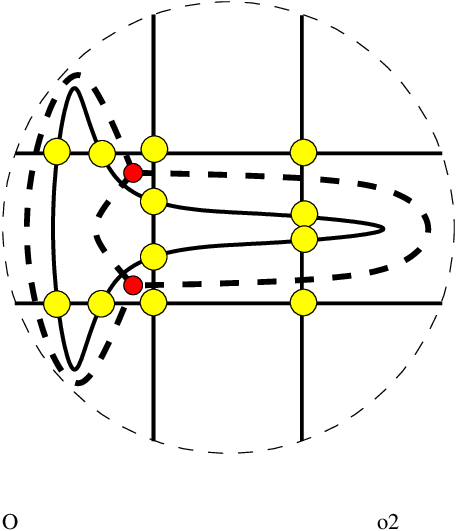}
\includegraphics[width = \factor\linewidth]{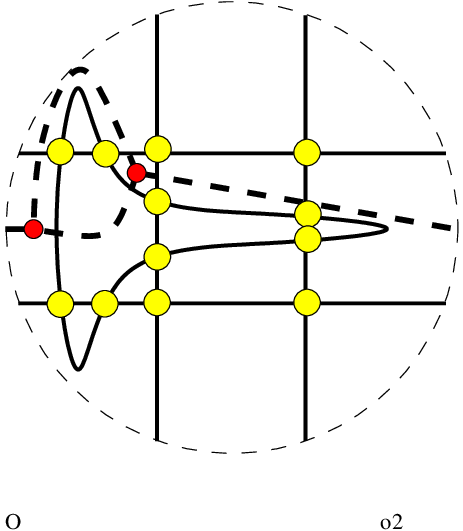}
\includegraphics[width = \factor\linewidth]{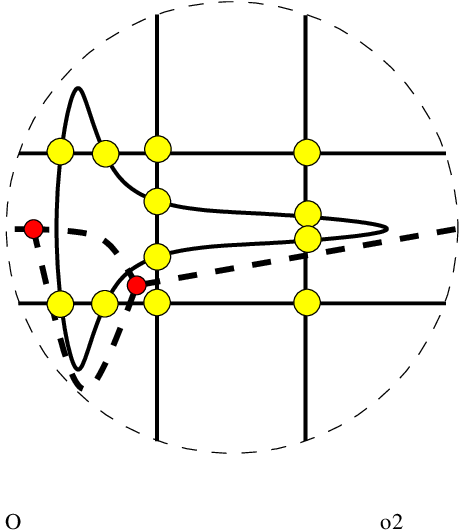}
\includegraphics[width = \factor\linewidth]{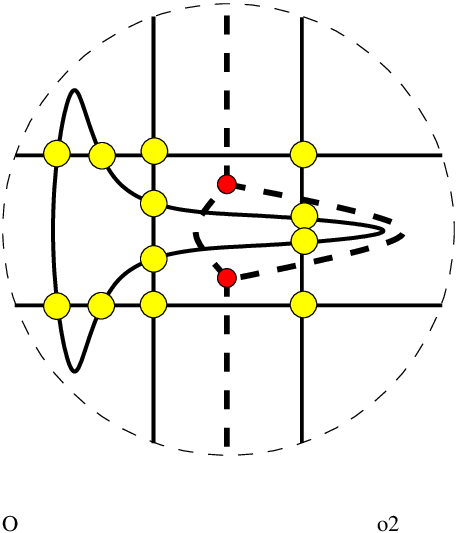}
\includegraphics[width = \factor\linewidth]{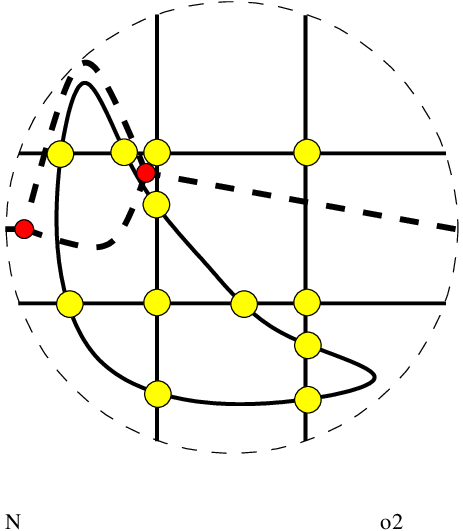}
\includegraphics[width = \factor\linewidth]{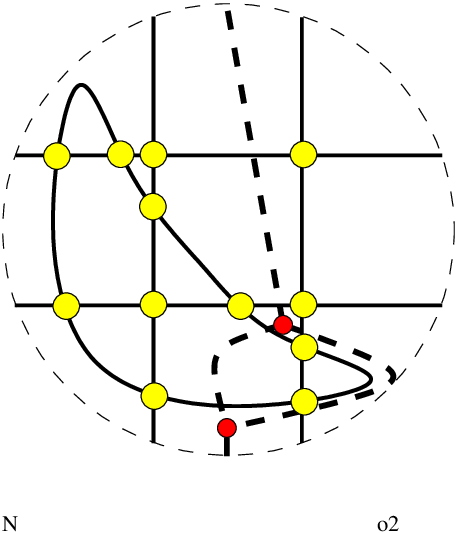}
\includegraphics[width = \factor\linewidth]{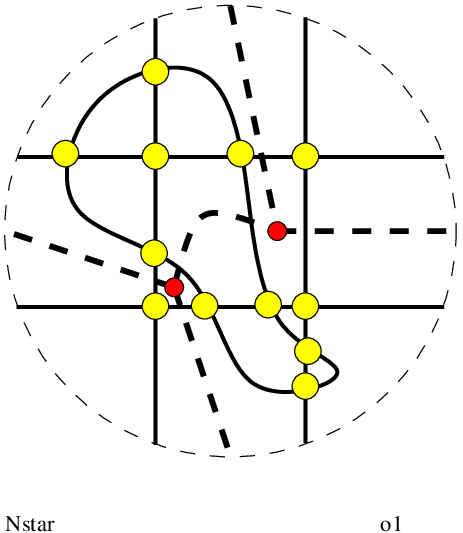}
\includegraphics[width = \factor\linewidth]{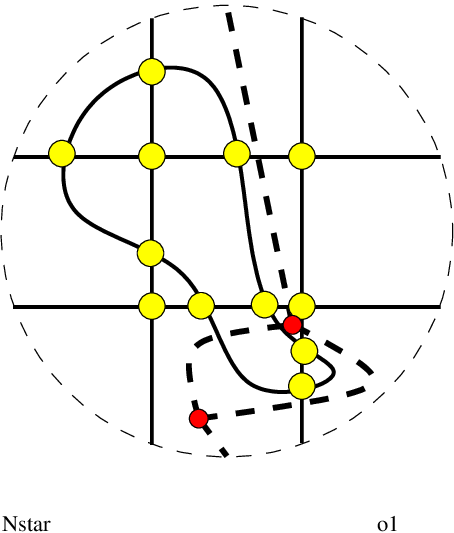}
\includegraphics[width = \factor\linewidth]{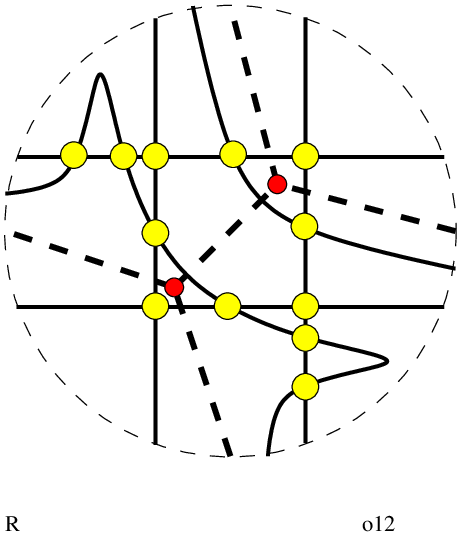}
\includegraphics[width = \factor\linewidth]{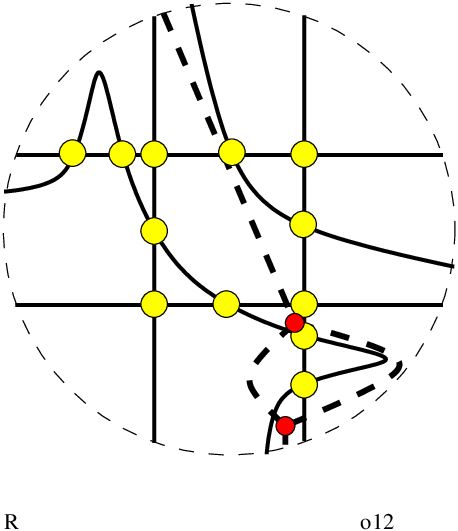}
\includegraphics[width = \factor\linewidth]{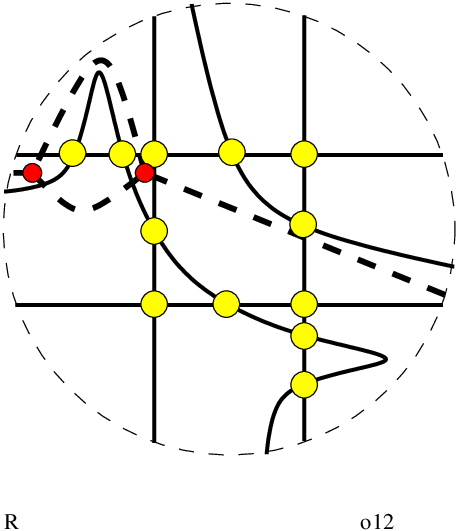}
\includegraphics[width = \factor\linewidth]{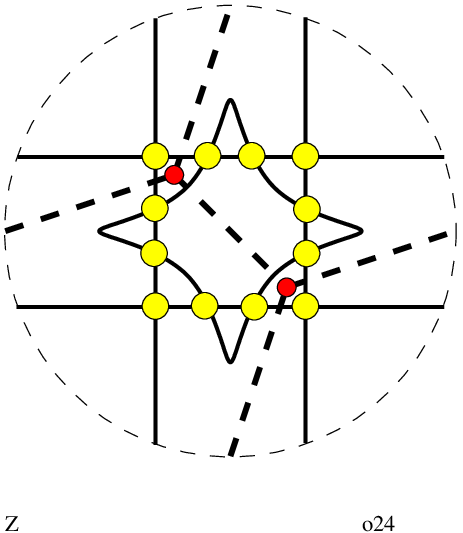}
\includegraphics[width = \factor\linewidth]{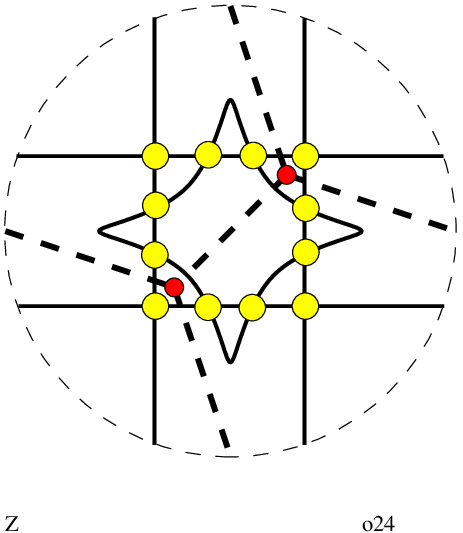}
\includegraphics[width = \factor\linewidth]{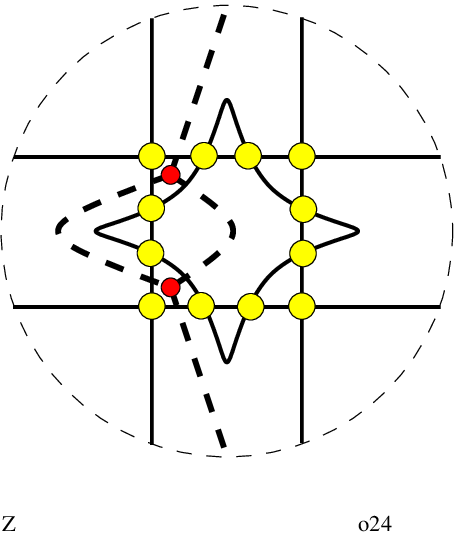}
\includegraphics[width = \factor\linewidth]{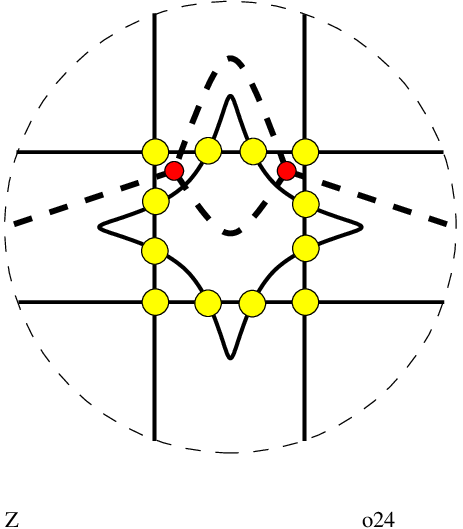}
\includegraphics[width = \factor\linewidth]{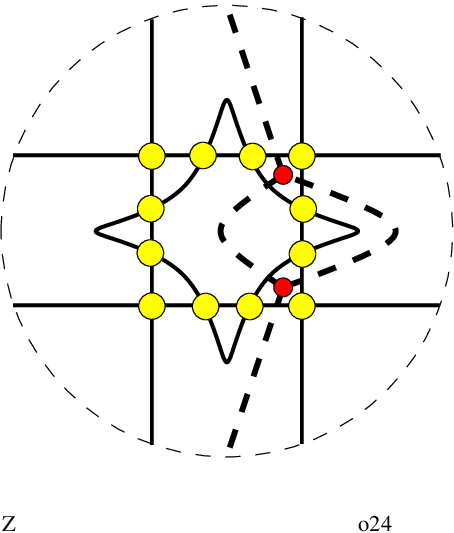}
\includegraphics[width = \factor\linewidth]{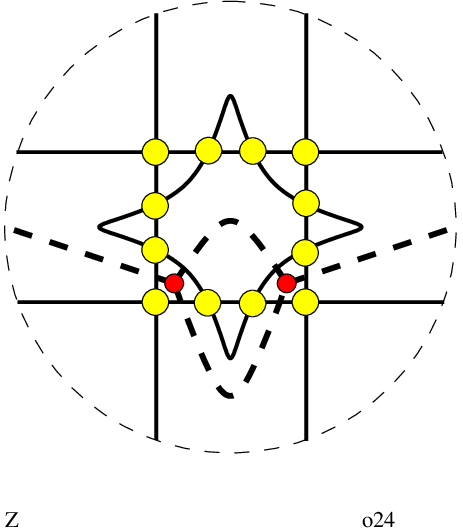}
\caption{Critical pairs of the arrangements $\name{33}$, $\name{32}$, $\name{25_2}$, $\name{25_1}$, $\name{32}$, $\name{22}$, $\name{25}$, $\name{36}$, 
and $\name{64}$ together with their associated critical graphs\label{twocriticalpairs}}
\end{figure}
(The arrangements $\name{04}$ and $\name{07}$ have no critical pairs.) 
A crucial observation is that critical graphs contain pseudolines with two exceptions:  one of the critical graphs the three critical pairs of $\name{22}$ and one of the critical graphs of 
the  four 
critical pairs of $\name{32}$ are free of pseudolines. 
Let us call {\it T-critical} a critical pair whose critical graph is free of pseudolines and {\it F-critical} a critical pair whose critical graph contains pseudolines (T is 
for truly and F is for falsely).
 Hence our Separation Lemma can be completed as follows  
\begin{lemma}[Separation Lemma]
\label{separationtwo} 
Let $GG'$ be a pair of distinct faces of an unbranched  2-covering $\widetilde{\Gamma}$ of an arrangement of double pseudolines $\Gamma$ of size 3 and genus 1. 
Then there exists a subarrangement $\Gamma'$ of $\Gamma$ of size at most $2$ whose faces $A$ and $A'$ in $\widetilde{\Gamma}'$ containing $G$ and $G'$ are distinct unless 
the pair $GG'$ is one of the two lifts of a T-critical pair $FF'$ of $\Gamma$ with the property that their faces are connected by the lift of the corresponding critical graph.  \qed
\end{lemma}
\begin{lemma}
Let $\Gamma$ be an arrangement of size $5$ whose subarrangements of size $4$ are of genus $1$, let $\gamma \in \Gamma$ and assume that $\casetwo{\Gamma}{\gamma}$ is nonempty. 
Then  $\caseone{\Gamma}{\gamma}$ is nonempty.
\end{lemma} 
\begin{proof}
Let $(NN',MM')\in \casetwo{\Gamma}{\gamma}$ with the property that there is no $(NN',XX') \in \casetwo{\Gamma}{\gamma}$ such that 
$NN',MM',XX'$ appear is this order on the node cycle of $\gamma$ in the arrangement $\Gamma$. 
Let $\nu'$ be the curve crossed by $\gamma$ at $N$.  
Let $M''$ be successor of $M'$ in the node cycle of $\gamma$. 
Let $L'$ be the first node of the node cycle of $\gamma$, not supported by $\nu'$, that follows $M'$, and let $L$ be its predecessor.  
A simple case analysis shows that either $LL'\in \caseone{\Gamma}{\gamma}$ or $M'M''\in \caseone{\Gamma}{\gamma}$. Hence $\caseone{\Gamma}{\gamma}$ is nonempty and we are done. \end{proof}
\begin{theorem}\label{preweakuptofour}
Let $\Gamma$ be an arrangement of size $5$  whose subarrangements of size $4$ are of genus $1$. Then
$\Gamma$ is of genus $1$ or $\Gamma$ contains at least one of the two subarrangements $\name{22}$ and $\name{32}$. 
\end{theorem}
\begin{proof} We argue exactly as in the proof of Theorem~\ref{uptofive} except that we work in a $2$-covering of $\Gamma$ in order to use Lemma~\ref{separationtwo} instead 
of Lemma~\ref{separation}.
\end{proof}

We arrive at the result announced in the introduction.
A {\it marked critical arrangement} is an arrangement of size $4$ and genus $1$ together with a pair $FF'$ of distinct faces (the mark) such that 
\begin{enumerate}
\item the S-number of $FF'$ is 3;
\item for any S-witness of $FF'$ of size $3$ the critical pair $AA'$ containing $FF'$ is T-critical;
\item the S-witnesses  of $FF'$ of size $3$ are  two in number.  
\end{enumerate}
Fig.~\ref{WeakuptoFour} shows three marked critical arrangements~:
the two S-witnesses of size $3$ of the mark are obtained by removing the curves labeled $\tau$ and $\tau'$.
Observe that the two last have the same underlying critical arrangement.
It is no hard to see that any marked critical arrangement is connected to one of these three by a sequence of mutations respecting the mark.  
It follows that marked critical arrangements are few dozens~: this number should be compared to the number ($6570$) of simple arrangements of size $4$ and genus $1$; cf.~\cite{fpp-nsafd-11}. 
\begin{figure}[!htb]
\centering
\psfrag{tau}{$\tau$}
\psfrag{taup}{$\tau'$}
\includegraphics[width = 0.975\linewidth]{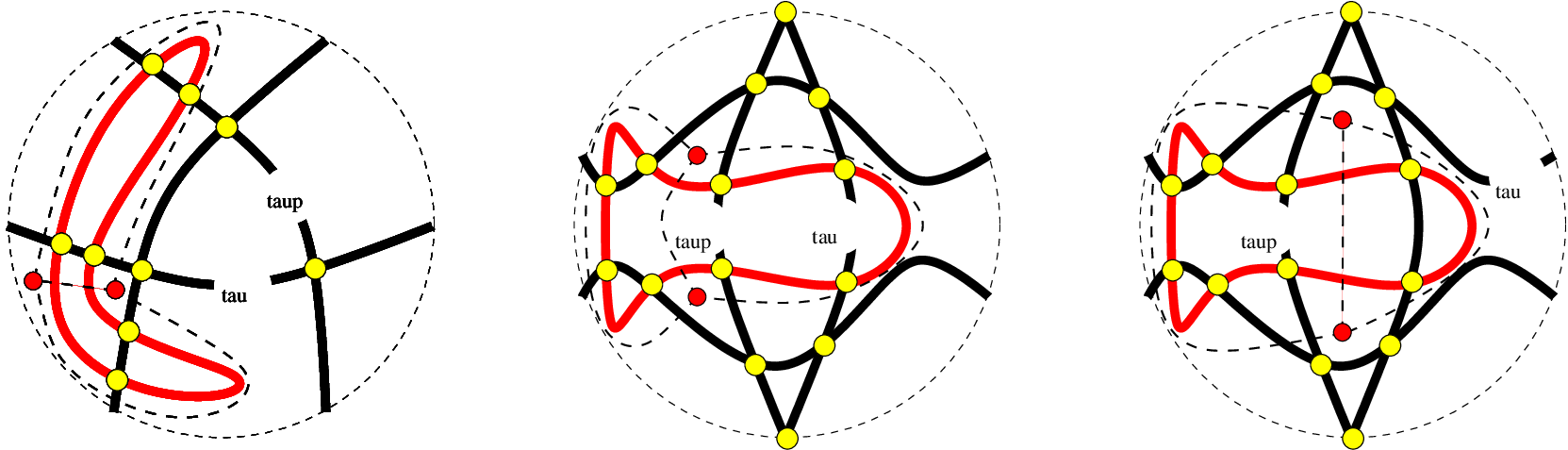}
\caption{Three marked  critical arrangements. In these diagrams the double pseudolines whose crosscap sides are free of vertices are simply represented by one of their core pseudolines 
 \label{WeakuptoFour}}
\end{figure}

\begin{theorem}\label{weakuptofour}
Let $\Gamma$ be an arrangement of double pseudolines of size $5$  whose subarrangements of size $4$ are of genus $1$. Then
$\Gamma$ is of genus $1$ or its subarrangements of size $4$ are critical arrangements.
\end{theorem}
\begin{proof} Let $\gamma \in \Gamma$ and assume that $\caseone{\Gamma}{\gamma}$ is nonempty. Let $NN'\in \caseone{\Gamma}{\gamma}.$
The curves crossing $\gamma$ at $N$ and $N'$ are denoted $\tau$ and $\tau'$, the face that $\gamma$ enters at $N$ is denoted $F$, and the face that $\gamma$ leaves 
at $N'$ is denoted $F'$. 
We let the reader check the following four  claims
\begin{enumerate}
\item The S-number of $FF'$ is 3;
\item for any S-witness of size $3$ the critical pair $AA'$ containing $FF'$ is T-critical;
\item $\tau\neq \tau'$; 
\item the S-witnesses  of $FF'$ are $\Gamma \setminus\{\gamma, \tau\}$ and $\Gamma\setminus \{\gamma,\tau'\}$;  
\end{enumerate}
from which it follows that $\Gamma'$ marked at $FF'$ is a marked critical arrangement. 
\end{proof}

As said in the introduction, Theorem~\ref{weakuptofour} shows that a computer check of the conjecture
that the arrangements of double pseudolines living in cross surfaces are those whose subarrangements of size at most $4$ live in cross surfaces, 
is doable with modest computing ressources. This computer check will the subject of another paper.

\clearpage
\section{An extension and a refinement}
\label{secsix} 
In this sixth and penultimate section 
we discuss {\it arrangements of pseudocircles},
{\it  crosscap or M{\"o}bius arrangements} and the {\it fibrations} of the
latter. The material on fibrations was partially motivated by the question raised by J.~E.~Goodman and R.~Pollack in~\cite{gp-cedcs-08}
about the realizability of their so-called double  permutation sequences by  
families  of pairwise disjoint convex bodies of affine topological  planes. 

Define an {\it oval} as the boundary of a convex body of a projective plane and the {\it dual of an oval} as the set of lines touching the oval but not its disk side.

An {\it arrangement of pseudocircles} is  a finite family of 
pseudocircles embedded in a cross surface, with the property that its subfamilies of size two are homeomorphic to 
  the dual arrangement of two points, two disjoint ovals, or one point and one  oval which are not incident. Observe that arrangements of pseudocircles extend both arrangements of pseudolines and arrangements of double pseudolines.
Fig.~\ref{addp} depicts  representatives of the isomorphism classes of arrangements of two pseudocircles.
\begin{figure}[!htb]
\centering
\psfrag{C}{$\alpha$}
\psfrag{G}{$\gamma$}
\psfrag{MSG}{$\MS(\gamma)$}
\psfrag{PP}{$\PP$}\psfrag{infty}{$\infty$}
\psfrag{1}{$1$-cell}\psfrag{2}{$2$-cell}\psfrag{0}{$0$-cell}\psfrag{3}{$\cal M$}\psfrag{-1}{$\emptyset$}
\psfrag{infty}{$\infty$}
\includegraphics[width=0.875\linewidth]{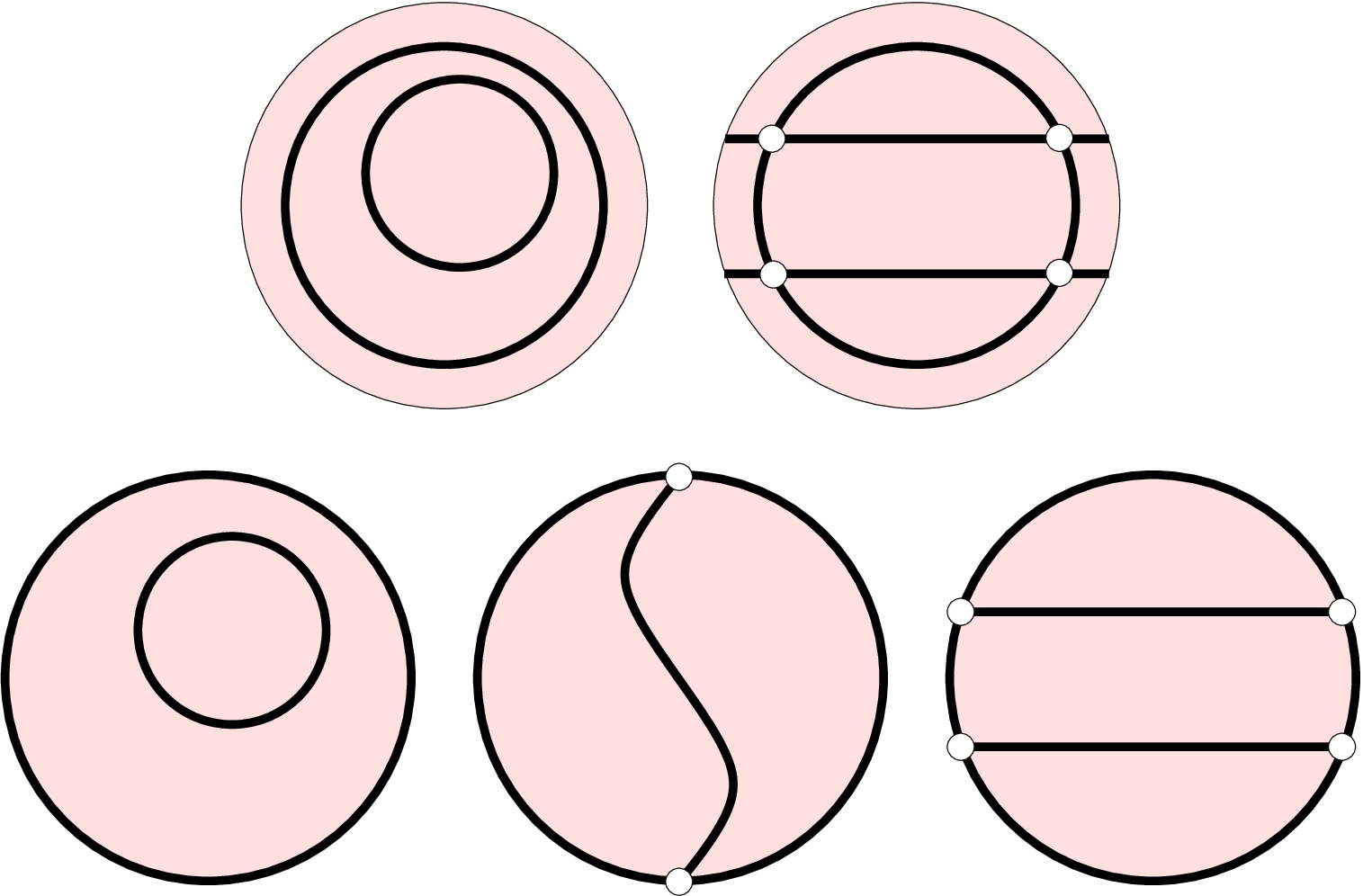}
\caption{Arrangements on two pseudocircles \label{addp}}
\end{figure}
The {\it order} of an arrangement of pseudocircles is defined as the isomorphism class of the poset of the bicolored curves of the arrangement ordered 
by reverse inclusion of their disk sides where by convention a pseudoline is colored in blue  ($\blu$) and a double pseudoline is colored in red ($\red$).
For example the orders of the arrangements depicted in the above figure are 
$\{\red \rightarrow \red\},\{\red,\red\},\{\blu \rightarrow \red)\},\{\blu,\blu\}$, and $\{\blu,\red\}$.
According  to Theorem~\ref{theoone}  examples of arrangements of pseudocircles are given by the dual arrangements of finite families of pairwise disjoint 
ovals and points of projective planes.
Minor adaptations  in our proof of the pumping lemma for arrangements of double pseudolines yield  the following pumping lemma for arrangements of pseudocircles.
We denote by $\TD(\gamma)$ the disk side of a double pseudoline $\gamma$.
\begin{lemma}[Pumping Lemma for Arrangements of Pseudocircles] \label{mainresultgen}
Let $\Gamma$ be a simple arrangement of pseudocircles, let $\gamma \in \Gamma$ be a double pseudoline of $\Gamma$, let 
$\Gamma'$ be the set $\gamma'\in \Gamma$ such that $\TD(\gamma')\supset \TD(\gamma)$, and let $M$ be the trace on the crosscap side of $\gamma$ 
of the  2-cell of the arrangement $\Gamma'$ that contains 
$\TD(\gamma)$. 
Assume that there is a vertex
of the arrangement $\Gamma$ lying in $M$. 
Then there is a triangular two-cell of the arrangement $\Gamma$
 contained in $M$  with 
a side supported by~$\gamma$.\qed
\end{lemma}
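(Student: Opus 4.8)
The strategy is to adapt verbatim the two-covering argument used to prove the original Pumping Lemma (Lemma~\ref{mainresult}), replacing the role played there by the M\"obius strip bounded by a single double pseudoline with the role played here by the trace $M$ of the relevant $2$-cell of the nested subarrangement $\Gamma'$. First I would fix a two-covering $\Map{p}{\lift{\UXPP}}{\UXPP}$ of the underlying \projectiveplane\ and lift the arrangement to $\lift{\Gamma}$; the two lifts of $\gamma$ are again denoted $\UU$ and $\VV$, bounding a cylinder $C$. The key observation is that the preimage $\lift{M}$ of $M$ in $C$ is, by the definition of $\Gamma'$ (those $\gamma'$ whose disk side contains $\TD(\gamma)$) and the defining property of arrangements of pseudocircles applied to subfamilies of size two, a \emph{subcylinder} of $C$ whose boundary is $\UU \cup \VV$ together with a portion cut off by the curves of $\Gamma'$ that are double pseudolines (each such $\lift{\gamma}'$ contributes two \tracecurves\ disjoint from the interior of $\lift M$) --- concretely, $\lift{M}$ is the component of $C$ minus the \tracecurves\ of the members of $\Gamma'$ that is incident to both $\UU$ and $\VV$. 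Lines of $\Gamma$ (pseudolines) that lie in $\Gamma$ simply cross $C$ from $\UU$ to $\VV$ as in the original proof; members of $\Gamma' \setminus \{\gamma\}$ are irrelevant inside $\lift M$ since they do not enter it.

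Next I would redo the local analysis on \tracecurves\ exactly as in the proof of Lemma~\ref{mainresult}: define a \tracecurve\ to be a maximal subcurve of a lift $\lift{\gamma}'$, $\gamma'\in\Gamma$ with $\gamma'\notin\Gamma'$ (these are the curves that can actually meet the interior of $\lift M$), contained in the closed cylinder $\overline{\lift M}$; there are two per such $\lift{\gamma}'$, each running from $\UU$ to $\VV$, and two crossing \tracecurves\ induce on the piece of $\lift M$ between them a cell decomposition with $0$, $1$, or $2$ intersection points, giving two quadrilaterals, or two triangles and a hexagon, or a digon plus two triangles plus an octagon, precisely as in Figure~\ref{TwoCovering}. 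Then I define a \gtriangle\ (triangular face with a side on $\UU$), its apex, base side $\base T$ and cobase $\cobase T$, and the notion of admissibility (one of the two sides at the apex is an edge of $\lift{X}$). The hypothesis that there is a vertex of $\Gamma$ in $M$ guarantees the existence of a \gtriangle, hence (passing to the successor vertex on a side, as in the original argument) of an \emph{admissible} one. Finally, Lemmas~\ref{positionone}, \ref{positiontwo}, \ref{positionthr}, and \ref{delta} --- which are purely local Jordan-curve statements about configurations of \tracecurves\ in a cylinder --- apply without change, so the sequence $T_0 = T$, $T_{k+1} = T_k'$ of derived admissible \gtriangles\ is stationary, and by Lemma~\ref{positionthr} its limit is a triangular $2$-cell of $X$ with a side on $\gamma$; since it lies in $\overline{\lift M}$ and has an edge on $\gamma$, its image lies in $M$.

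\textbf{Main obstacle.} The only genuinely new point --- and the step I expect to require the most care --- is verifying that $\lift M$ really is a subcylinder of $C$ disjoint (in its interior) from every lift of a member of $\Gamma'\setminus\{\gamma\}$, and that no \tracecurve\ coming from a curve $\gamma'\notin\Gamma'$ can fail to cross $\lift M$ from $\UU$ to $\VV$. This amounts to checking, via the size-two classification of arrangements of pseudocircles (Theorem~\ref{theoone} and the cases in Figure~\ref{addp}), that for $\gamma'\in\Gamma'$ the disk side $\TD(\gamma')$ contains $\TD(\gamma)$ forces the two \tracecurves\ of $\lift{\gamma}'$ to bound $\lift M$ on the two ``ends'' away from $\UU,\VV$ (when $\gamma'$ is a double pseudoline) or to miss the crosscap side of $\gamma$ entirely in the relevant region (when $\gamma'$ is a pseudoline with $\TD(\gamma')\supset\TD(\gamma)$ --- impossible, since a pseudoline has no proper disk side containing a full disk, so in fact $\Gamma'\setminus\{\gamma\}$ consists only of double pseudolines), and that conversely a curve $\gamma'$ with $\gamma'\notin\Gamma'$ either is a pseudoline (crossing $C$ transversally once) or a double pseudoline whose disk side does not contain $\TD(\gamma)$, so that at least one of its \tracecurves\ enters the interior of $\lift M$ and separates $\UU$ from $\VV$ inside it. Once this bookkeeping is in place, the rest is a transcription of the proof already given for Lemma~\ref{mainresult}; the details are left to the reader.
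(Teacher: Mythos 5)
Your overall plan---lift to the two-covering, reuse the \tracecurve{} / admissible \gtriangle{} machinery of Lemma~\ref{mainresult}, and let Lemmas~\ref{positionone}--\ref{delta} drive the derived sequence $T_k$ to a stationary triangle---is exactly the ``minor adaptation'' the paper has in mind (the paper in fact prints no proof of Lemma~\ref{mainresultgen}, only the remark that the proof of Lemma~\ref{mainresult} adapts). But the bookkeeping step you yourself flag as the main obstacle is where your write-up goes wrong, and in a way that matters. First, you rule pseudolines out of $\Gamma'$ on the grounds that a pseudoline has no disk side properly containing a disk. Under that reading the lemma is simply false: take $\gamma$ a double pseudoline and two pseudolines lying entirely in its \innerside{} (the order $\{\blu\rightarrow\red\}$ of Figure~\ref{addp}, dual to two points interior to the oval); then $\Gamma'$ is empty, $M$ is the whole \innerside{}, the crossing of the two pseudolines is a vertex in $M$, yet no curve touches $\gamma$ and there is no triangular two-cell with a side on $\gamma$. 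The statement is only coherent if the ``disk side'' of a pseudoline is taken to be its complement (as in the definition of the order of a pseudocircle arrangement), so that interior pseudolines belong to $\Gamma'$ and hence bound $M$ away from them. Your subsequent claim that every pseudoline of $\Gamma$ ``simply crosses $C$ from $\UU$ to $\VV$'' compounds the error: a pseudoline contained in the \innerside{} of $\gamma$ meets $\gamma$ nowhere, and its lift is a single core circle of the cylinder $C$ that separates $\UU$ from $\VV$ while touching neither, so it contributes no \tracecurve{} in your sense and blocks the descent of the $T_k$ if it is allowed to pass through $M$.

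Second, the assertion that $\lift{M}$ is a sub-cylinder of $C$ incident to both $\UU$ and $\VV$ is false whenever $\Gamma'$ is nonempty. The region $M$ is an annulus in the \innerside{} of $\gamma$ whose core is parallel to $\gamma$, hence null-homotopic modulo $2$; its preimage under the two-covering therefore consists of two disjoint annuli, one attached to $\UU$ and one to $\VV$, not a single band joining them. So the \tracecurve{} analysis cannot literally be carried out inside $\overline{\lift{M}}$. The workable adaptation keeps the whole cylinder $C$ of Lemma~\ref{mainresult} and restricts the \tracecurves{} to the lifts of the curves that actually cross $\gamma$ (those do run from $\UU$ to $\VV$, two or four crossings giving the same local pictures as in Figure~\ref{TwoCovering}); the hypothesis that the starting vertex lies in $M$ is used to seed an admissible \gtriangle{} whose closure avoids $\bigcup\Gamma'$, one checks that the derived \gtriangles{} continue to avoid $\bigcup\Gamma'$ (a curve of $\Gamma'$ never meets $\gamma$, so it cannot cut a \gtriangle{} with base on $\UU$ without entering and leaving through its two non-base sides), and the terminal triangular two-cell, having a side on $\gamma$ and being uncrossable by any curve disjoint from $\gamma$, automatically lies in $M$. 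As written, your proof does not establish these points, and in the interior-pseudoline configuration it proves a false statement.
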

Minor variations in our proofs of Theorems~\ref{theoHT},~\ref{theoPM}, and~\ref{theoADP} based on  the above  pumping lemma 
lead to direct extensions of these theorems modulo the following elementary 
dictionnary:

\begin{tabular}{ccc}
&&\\
arrangements of double pseudolines  & $\longleftrightarrow$  & arrangements of pseudocircles \\
convex bodies               & $\longleftrightarrow$   &  ovals and points\\
size                        & $\longleftrightarrow$   &  order\\
&&
\end{tabular}

\noindent
In particular
\begin{theorem}
\label{fmixedpmr}  
Any arrangement  of pseudocircles
is isomorphic to the dual family of a finite family of pairwise disjoint ovals and points  of a projective plane.  \qed
\end{theorem}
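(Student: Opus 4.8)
The statement is the mixed (pseudocircles) version of the Geometric Representation Theorem (Theorem~\ref{theoPM}), and the intended route is exactly the one indicated in the excerpt just above: transplant the entire proof machinery of Sections~\ref{secthr} and~\ref{secfou} through the dictionary
\[
\text{double pseudolines} \longleftrightarrow \text{pseudocircles}, \quad \text{convex bodies} \longleftrightarrow \text{ovals and points}, \quad \text{size} \longleftrightarrow \text{order}.
\]
The plan is to show first that the property \emph{``to be the dual family of a finite family of pairwise disjoint ovals and points of a \gpp''} is stable under mutations on the space of arrangements of pseudocircles, and then to invoke connectedness of the mutation graph (the analogue of Theorem~\ref{theoHT}) to propagate this property from one convenient representative in each connected component to all arrangements of that order. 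The convenient representatives are the obvious ones: a family of $k$ mutually exterior tiny circles around $k$ points of a standard \gpp, together with the $k'$ points themselves; by Theorem~\ref{theoone} (and the elementary classification of $2$-element arrangements of pseudocircles recorded in Figure~\ref{addp}) such a family realizes any prescribed order, so every connected component of the mutation graph of pseudocircle arrangements contains a realizable arrangement.

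\textbf{Key steps, in order.} First I would state and prove the homotopy/connectedness result for arrangements of pseudocircles of a fixed order on a fixed \projectiveplane: any two such are homotopic via a finite sequence of mutations followed by an isotopy. The only new ingredient is the Pumping Lemma for arrangements of pseudocircles, which is \emph{already stated} in the excerpt as Lemma~\ref{mainresultgen}; granted that lemma, the proof of Theorem~\ref{theoHT} goes through verbatim after replacing ``\innerside\ of $\gamma$'' by ``the trace $M$ on the \innerside\ of $\gamma$ of the $2$-cell of $\Gamma'$ containing $\TD(\gamma)$'' (the pseudolines of the arrangement play no active role in the pumping, since they sit in the \outerside\ of every double pseudoline containing them in its disk side, and the embedding Lemma~\ref{EmbeddingSimple} plus Ringel's theorem handles the thin case). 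Second, I would carry over the notion of \chardon\ (raiponce) and \repres\ of Section~\ref{secthr}: for an arrangement of pseudocircles, the double pseudolines contribute nabla-polygons $\nabla_i(L)$ exactly as before, while a pseudoline of the arrangement contributes nothing new --- it \emph{is} a pseudoline of the associated pseudoline arrangement, and its corresponding oval is replaced by a point lying on it in the completion. The analogues of Lemmas~\ref{dicozero} and~\ref{dicoone} hold with identical statements (using Theorem~\ref{theoone} applied to each $2$-element subfamily), and the stability-under-mutations argument of Theorem~\ref{hgr} --- both the merging and the splitting case --- transfers with only bookkeeping changes, because the local picture of a mutation involves at most three curves and the one-level/pumping construction of the moving pseudoline $\ell$ (or the pencil $\ell_i$) is insensitive to whether the non-moving curves are pseudolines or double pseudolines. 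Third, I would conclude: every arrangement of pseudocircles has a \repres\ (by stability plus connectedness, starting from the tiny-circles-and-points representative, whose \repres\ is exhibited directly), hence is isomorphic to the dual family of a completion of that \repres, which by construction is a finite family of pairwise disjoint ovals and points of a \gpp.

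\textbf{Where the difficulty lies.} The conceptual work is essentially done once Lemma~\ref{mainresultgen} is in hand; the genuine labor --- and the main obstacle --- is verifying that the \emph{splitting-mutation} half of the stability proof (the part in Theorem~\ref{hgr} that the authors leave to the reader even in the pure double-pseudoline case) really does survive the substitution, in particular that the auxiliary double pseudoline $w^*$ used to isolate the moving vertex, and the perturbed cyclic pencil $\ell_i^*$, can always be chosen when some of the curves involved in the mutation are (single) pseudolines rather than double pseudolines. A subtler point to check is that the dictionary entry \emph{``size $\leftrightarrow$ order''} is the correct invariant: one must confirm that a mutation preserves the order of a pseudocircle arrangement (it cannot change which curve contains which in its \outerside, since such a change would require a curve to pass through a $0$-dimensional cell of the subarrangement it forms with another curve, i.e.\ it would not be a single incidence change of the allowed type), so that connected components of the mutation graph are exactly the order classes. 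Once these two verifications are made, the theorem follows exactly as Theorem~\ref{theoPM} does. I would present the argument by stating the three transplanted lemmas (pumping, homotopy, stability) with the remark ``the proofs are the proofs of Section~\ref{secthr} read through the dictionary above,'' spelling out only the order-preservation of mutations and the tiny-circles-and-points base case in detail.
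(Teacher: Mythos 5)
Your proposal follows exactly the route the paper takes: the paper's own ``proof'' of Theorem~\ref{fmixedpmr} consists precisely of invoking the Pumping Lemma for arrangements of pseudocircles (Lemma~\ref{mainresultgen}) and then asserting that minor variations of the proofs of Theorems~\ref{theoHT} and~\ref{theoPM} go through under the dictionary (double pseudolines $\leftrightarrow$ pseudocircles, convex bodies $\leftrightarrow$ ovals and points, size $\leftrightarrow$ order). Your write-up is in fact more explicit than the paper's, usefully isolating the two points (the splitting-mutation case and the invariance of the order under mutations) that genuinely require checking, but it is the same argument.
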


For the numbers of isomorphism classes of simple arrangements  with trivial order ($\{\blu,\ldots,\blu,\red,\ldots,\red\}$)
on at most five curves we refer to~\cite{fpp-nsafd-11}, where these arrangements are called mixed arrangements. 


We now discuss a refinement of Theorem~\ref{fmixedpmr} in 
the context of {\it crosscap or M{\"o}bius arrangements} and their {\it fibrations.}  
Let $\MS$ be a M{\"o}bius strip, let $\OPC= \MS \cup \{\infty \}$ be its one-point compactification, and recall that $\OPC$ is a 
cross surface.
Define a {\it arrangement of pseudocircles in $\MS$} as a arrangement of pseudocircles in $\OPC$ with the property that the intersection of 
the disk sides  of  its  pseudolines  and double pseudolines is nonempty and contains the point at infinity; 
define a {\it fibration} of an  arrangement of pseudocircles $\Gamma$ 
in $\MS$ as a sup-arrangement $\Gamma'$ of $\Gamma$ in $\OPC$ composed of
the pseudocircles of $\Gamma$ and of the pseudolines of a pencil of pseudolines through the point at infinity with the property that any pseudoline of the pencil goes through a vertex of $\Gamma$ 
and any vertex of $\Gamma$ is incident to a pseudoline of the pencil. 
Fig.~\ref{fibration} shows an arrangement of two double pseudolines in a M{\"o}bius strip and representatives of its three possible isomorphism classes of fibrations.   
\begin{figure}[!htb]
\centering
\footnotesize
\psfrag{\infty}{$\infty$}
\includegraphics[width=0.9875\linewidth]{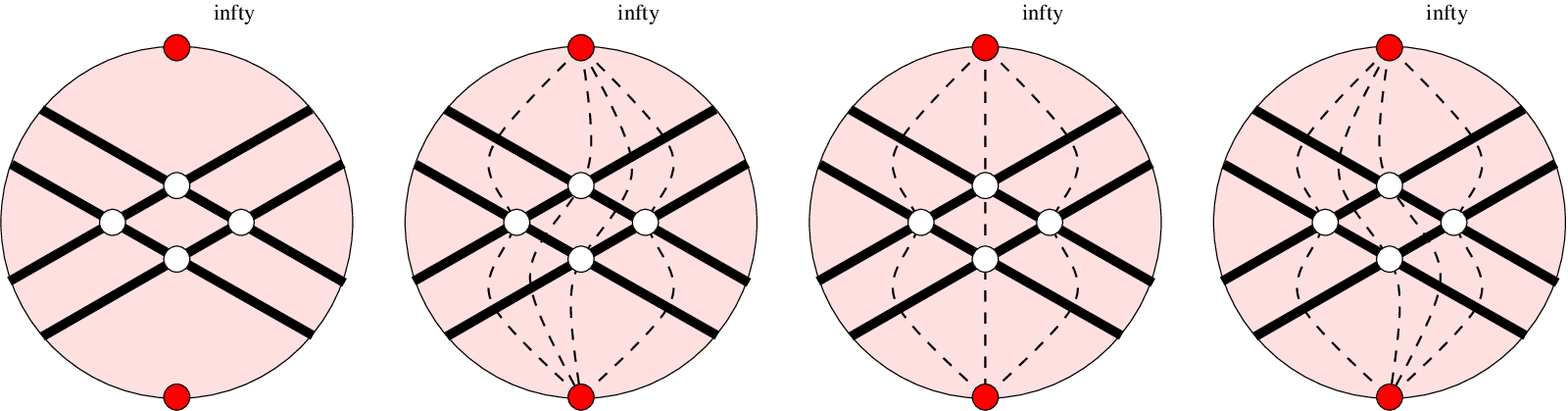}
\caption{An arrangement of two double pseudolines in a M{\"o}bius strip with representatives of its three isomorphism classes of fibrations \label{fibration}}
\end{figure}
According to Theorem~\ref{fmixedpmr} we see easily that any arrangement of pseudocircles $\Gamma$ in  $\MS$
is the dual arrangement of a family of pairwise disjoint ovals and points of  a projective plane $\GPP$ with line space $\OPC$,
with the property that the line at infinity $\infty$ avoids the ovals and the points of the family.  
Consequently the family $\Gamma'$  composed of the pseudocircles of $\Gamma$ and of the dual pseudolines of the intersection points of the line at infinity with the vertices 
of $\Gamma$ is a fibration of $\Gamma$. This fibration is denoted $\Gamma^{\GPP}$.
Conversely let $\Gamma'$ be a fibration of $\Gamma$. Does there exist a projective plane $\GPP$ such that $\Gamma'=\Gamma^{\GPP}$?  
Applying Theorem~\ref{fmixedpmr} to $\Gamma'$ we obtain a positive answer to that question.   

\begin{theorem} Let $\MS$ be a M{\"o}bius strip and  let $\Gamma'$ be a fibration of an arrangement of pseudocircles $\Gamma$ in $\MS$. 
Then $\Gamma$ is the dual family of a finite family of pairwise disjoint ovals and points  of a projective plane $\GPP$ with line space $\OPC$ such that $\Gamma' = \Gamma^{\GPP}$. \qed
\end{theorem}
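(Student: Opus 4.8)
The plan is to deduce this theorem from Theorem~\ref{fmixedpmr} applied not to $\Gamma$ itself but to the augmented family $\Gamma'$, exactly as the paragraph preceding the statement already indicates. First I would observe that a fibration $\Gamma'$ of $\Gamma$ in $\MS$ is, by definition, an arrangement of pseudocircles in $\OPC$: it consists of the pseudocircles of $\Gamma$ together with a pencil of pseudolines through $\infty$, and one checks that the subfamilies of size two are of the three admissible homeomorphism types (two pseudolines of the pencil meet only at $\infty$; a pseudoline of the pencil and a pseudocircle of $\Gamma$ meet as a line and an oval/point; pairs from $\Gamma$ are admissible since $\Gamma$ is an arrangement of pseudocircles). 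Moreover, because $\Gamma$ is an arrangement \emph{in $\MS$}, the outer sides of all the pseudolines and double pseudolines of $\Gamma$ have $\infty$ in their common intersection, and the pencil pseudolines all pass through $\infty$ and lie in the disk side of each double pseudoline away from $\infty$; so $\Gamma'$ is again an arrangement of pseudocircles in $\MS$ in the appropriate sense.

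Next I would apply Theorem~\ref{fmixedpmr} to $\Gamma'$: it is isomorphic to the dual family of a finite family $F'$ of pairwise disjoint ovals and points of a \gpp\ $\GPP$. Under this isomorphism the pseudocircles of $\Gamma$ correspond to a subfamily $F$ of ovals and points, and the pencil of pseudolines through $\infty$ corresponds to a pencil of lines of $\GPP$ through a common point, whose dual is a single line $\ell_\infty$ of the line space of $\GPP$; I would identify the line space of $\GPP$ with $\OPC$ so that $\ell_\infty$ is the point $\infty$. Since the ovals and points of $F$ are dual to the pseudocircles of $\Gamma$ and all pseudocircles of $\Gamma$ have $\infty$ in their outer side, $\ell_\infty$ misses every oval and every point of $F$. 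Restricting duality, $\Gamma$ is the dual family of $F$ in $\GPP$, and by construction the dual pseudolines of the intersection points of $\ell_\infty$ with the vertices of $\Gamma$ are exactly the images of the pencil pseudolines, i.e.\ $\Gamma' = \Gamma^{\GPP}$; here one uses the requirement in the definition of fibration that the pencil pseudolines biject with the vertices of $\Gamma$, so that no vertex is missed and no superfluous line is added.

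The main obstacle I expect is bookkeeping rather than conceptual: making precise that the isomorphism furnished by Theorem~\ref{fmixedpmr} can be chosen to send the distinguished pencil of $\Gamma'$ to a genuine pencil of lines through a single point of $\GPP$ (as opposed to merely a set of lines), and hence its dual to a single line $\ell_\infty$ which one is then free to call $\infty$. This is where the structure ``pseudolines of a pencil through the point at infinity'' must be transported correctly; it follows from the fact that in Theorem~\ref{fmixedpmr} the pseudolines of $\Gamma'$ go to the dual pseudolines of points of $\GPP$, and a set of pseudolines with a common point goes to a set of points of $\GPP$ whose duals share a common line. Once this identification is in place, the remaining verifications---that $\ell_\infty$ avoids $F$, that $\Gamma$ is dual to $F$, and that $\Gamma'=\Gamma^{\GPP}$---are immediate from the definitions. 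I would present the argument in roughly this order: (1) $\Gamma'$ is an arrangement of pseudocircles in $\MS$; (2) invoke Theorem~\ref{fmixedpmr} on $\Gamma'$; (3) transport the pencil to obtain $\ell_\infty=\infty$ and the family $F$; (4) check $\ell_\infty$ misses $F$ and conclude $\Gamma$ is dual to $F$ and $\Gamma'=\Gamma^{\GPP}$.
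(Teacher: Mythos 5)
Your proposal is correct and follows exactly the paper's own route: the paper's entire proof is the single sentence ``Applying Theorem~\ref{fmixedpmr} to $\Gamma'$ we obtain a positive answer to that question,'' and your four steps are just a careful expansion of that application (noting also that step (1) is essentially automatic, since a fibration is by definition a sup-arrangement of pseudocircles in $\OPC$). The bookkeeping you flag---transporting the pencil through $\infty$ to a pencil of lines of $\GPP$ through a single point, hence dual to a single line $\ell_\infty$ missing the ovals and points---is precisely the content the paper leaves to the reader.
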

In particular the above theorem answers positively the question raised by J.~E.~Goodman and R.~Pollack in~\cite{gp-cedcs-08} and~\cite[Problem 20]{dghp-iscep-05}
about the realizability of their so-called double permutation sequences and, more generally,  allowable interval sequences by  
families  of pairwise disjoint convex bodies of real two-dimensio\-nal affine topological planes. 
Indeed double permutation sequences and allowable interval sequences are simply a coding of isomorphism classes of our fibrations.  
For example the allowable interval sequences coding the  three isomorphism 
classes of fibrations of an arrangement of two double pseudolines indexed by $\{1,2\}$ are the following
\begin{equation}
\begin{bmatrix}
2 & 2 & 2 & 1 & 1 \\
2 & 1 & 1 & 2 & 1 \\
1 & 2 & 1 & 1 & 2 \\
1 & 1 & 2 & 2 & 2
\end{bmatrix},
\begin{bmatrix}
2 & 2 & 1 & 1 \\
2 & 1 & 2 & 1 \\
1 & 2 & 1 & 2 \\
1 & 1 & 2 & 2
\end{bmatrix},
\begin{bmatrix}
2 & 2 & 1 & 1 & 1 \\
2 & 1 & 2 & 2 & 1 \\
1 & 2 & 2 & 1 & 2 \\
1 & 1 & 1 & 2 & 2
\end{bmatrix}.
\end{equation}
We refer to~\cite{dghp-iscep-05,gp-cedcs-08} for the precise definition of double permutation sequences and that of allowable interval sequences. 
For recent applications of these notions see~\cite{n-aissc-12,n-aislt-12}. 

We conclude with  the statements of the counterparts of Theorems~\ref{theoHT} and~\ref{theoADP} in the context 
of arrangements in M{\"o}bius strips. 
\begin{theorem}
\label{HPHT} 
Let $\MS$ be a M{\"o}bius strip.
Any two arrangements of pseudocircles in $\MS$ of the same order are homotopic in $\MS$
via a finite sequence of mutations followed by an isotopy. \qed
\end{theorem}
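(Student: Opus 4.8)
The plan is to reduce Theorem~\ref{HPHT} to Theorem~\ref{theoHT} (connectedness of mutation graphs for arrangements of double pseudolines, or more generally the Pumping-Lemma machinery) by the same one-point-compactification device used to set up M\"obius arrangements in the first place. Recall that an arrangement of pseudocircles $\Gamma$ in $\MS$ is, by definition, an arrangement of pseudocircles in the projective plane $\OPC = \MS \cup \{\infty\}$ such that the common intersection of the \outersides\ of the pseudolines and double pseudolines of $\Gamma$ is nonempty and contains $\infty$. So the data "arrangement in $\MS$'' is the same as the data "arrangement in $\OPC$ with a marked point $\infty$ in the intersection of the outer sides'', and a homotopy of arrangements in $\MS$ is a homotopy of arrangements in $\OPC$ during which $\infty$ stays in that common open cell. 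The first thing I would do is observe that the order (in the sense defined just before Theorem~\ref{fmixedpmr}: the poset of bicolored curves ordered by reverse inclusion of outer sides) is exactly the combinatorial invariant that the Pumping-Lemma arguments of Section~\ref{sec:homotopy} preserve, so that two arrangements in $\MS$ of the same order are, as arrangements in $\OPC$, connected by mutations and an isotopy via Theorem~\ref{theoHT} (extended to pseudocircles by the dictionary after Theorem~\ref{fmixedpmr}).

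\textbf{Key steps.} First I would show that any arrangement of pseudocircles $\Gamma$ in $\MS$ can be brought, by a finite sequence of splitting mutations followed by the Pumping Lemma for arrangements of pseudocircles (Lemma~\ref{mainresultgen}), to a canonical thin model depending only on its order, while keeping $\infty$ in the common outer cell throughout; this is the analogue of the reduction of a simple arrangement to a thin one in the proof of Theorem~\ref{theoHT}, except that now one only pumps triangles out of the crosscap sides of the double pseudolines, and one must check that the pumping moves never sweep a curve across $\infty$ (they don't, because each pumping step is localized inside a crosscap side, hence inside the complement of the common outer cell). Second, for the pseudolines of $\Gamma$ one invokes the Ringel homotopy theorem, again localized away from $\infty$, to bring the sub-arrangement of pseudolines (which, in $\MS$, behaves like an affine pseudoline arrangement anchored at $\infty$) to a canonical position. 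Third, one assembles: given $\Gamma$ and $\Gamma'$ in $\MS$ of the same order, reduce each to the canonical model for that order by the above moves, and then the concatenation of the reduction of $\Gamma$ with the reverse of the reduction of $\Gamma'$ is the required finite sequence of mutations followed by an isotopy, all taking place in $\MS$. Throughout, one uses that a mutation does not change the Euler characteristic, the genus, or the order, so all intermediate arrangements are legitimate arrangements of pseudocircles in $\MS$.

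\textbf{Main obstacle.} The delicate point is the bookkeeping that keeps the marked point $\infty$ in the common outer cell during every elementary move. A mutation in $\OPC$ is a priori free to move any curve anywhere, so I must argue that the specific sequences produced by the Pumping Lemma and by Ringel's theorem can be chosen to be supported in regions bounded away from $\infty$: for the double pseudolines this is immediate since the relevant triangular faces lie in crosscap sides, which are disjoint from the common outer cell by definition; for the pseudolines one needs the observation that Ringel-type moves on an affine arrangement can be realized without crossing a fixed point of the unbounded cell, which is standard. A secondary point is to make precise that the order is a complete invariant for the canonical thin model — i.e., that two thin M\"obius arrangements with isomorphic bicolored outer-side posets are isotopic in $\MS$ — but this follows from the same flag/face-poset reconstruction argument (Lemma~\ref{EmbeddingSimple} together with Theorem~\ref{coding} extended to pseudocircles) that underlies the genus-$1$ case. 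Once these localization statements are in place the theorem is a routine transcription of the proof of Theorem~\ref{theoHT}.
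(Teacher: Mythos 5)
Your proposal follows essentially the same route the paper intends: the paper gives no explicit proof of Theorem~\ref{HPHT}, stating only that it follows by ``minor variations'' of the proof of Theorem~\ref{theoHT} using the Pumping Lemma for arrangements of pseudocircles (Lemma~\ref{mainresultgen}) and the dictionary after it, together with the thin/double embedding of Lemma~\ref{embedding}, and your reduction to a canonical thin model plus the affine Ringel theorem is exactly that adaptation. Your added care about keeping $\infty$ in the common outer cell throughout the mutations is the right point to flag and is handled correctly (the pumped triangles lie in crosscap sides, hence away from the cell containing $\infty$).
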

An arrangement of oriented pseudocircles is termed  {\it acyclic} if the orientations of the pseudocircles are coherent, 
in the sense that the pseudocircles are oriented according to the choice of a generator of the (infinite cyclic) 
fundamental group of the underlying M{\"o}bius strip.

\begin{theorem}
\label{tmmr}
\label{tmmrone}
The map that assigns to an isomorphism class of M{\"o}bius arrangements of pseudocircles its chirotope is  one-to-one 
and its range is the set of maps $\chi$ defined on the set of $3$-subsets of a finite set $I$ such that  
for every  $3$-, $4$-, and $5$-subset $J$ of $I$  
the restriction of $\chi$ to the set of $3$-subsets of $J$ is a chirotope of  M{\"o}bius arrangements of pseudocircles.
Furthermore the same result holds for the class of acyclic M{\"o}bius arrangements of pseudocircles.\qed
\end{theorem}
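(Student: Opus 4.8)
\textbf{Proof plan for Theorem~\ref{tmmr}.}
The plan is to run the same three-part argument that proved Theorems~\ref{theoHT}, \ref{theoPM}, and~\ref{theoADP}, using the dictionary
\[
\text{arrangements of double pseudolines} \longleftrightarrow \text{M\"obius arrangements of pseudocircles}
\]
together with the Pumping Lemma for arrangements of pseudocircles (Lemma~\ref{mainresultgen}) and Theorems~\ref{fmixedpmr} and~\ref{HPHT}. First I would set up the combinatorial data: for an indexed and oriented M\"obius arrangement of pseudocircles $\Gamma$ in $\MS$, one associates to each curve a side cycle of disk type and a side cycle of crosscap type exactly as in Section~\ref{secfou}, the only new point being the bookkeeping for the mixed subarrangements (pseudoline/double pseudoline, pseudoline/pseudoline) whose isomorphism classes were listed in Figure~\ref{addp}. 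The coding theorem (the M\"obius analogue of Theorem~\ref{coding}/Theorem~\ref{cycle}) then says that the isomorphism class of $\Gamma$ is determined by its family of side cycles, via the same flag-operator computation; here one uses that the order of $\Gamma$ (as in the refinement above) plus the side cycles recover the $0$-, $1$-, $2$-flag operators.

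Next I would prove the injectivity half: the isomorphism class of an indexed, oriented M\"obius arrangement of pseudocircles depends only on its chirotope. This follows the proof of the first part of Theorem~\ref{theoADP} verbatim once one reduces to four curves and identifies the martagons. The martagon list is slightly longer than in Lemma~\ref{martagonthr} because one must also handle subconfigurations containing pseudolines, but the extra cases are precisely the ``pinched'' specializations obtained by thinking of a pseudoline as a degenerate double pseudoline (compare the specialization of the Pumping Lemma to Lemma~\ref{merging}); for arrangements involving at least one pseudoline the separation/interval argument degenerates and the relevant cycles are forced. So the injectivity statement reduces, as before, to checking that the handful of new four-curve martagon chirotopes each determine a unique family of side cycles — a finite verification of the same flavor as the $M_1,M_2$ analysis in the proof after Theorem~\ref{cycle}.

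Then I would establish the surjectivity half: a chirotope all of whose $3$-, $4$-, $5$-subchirotopes are chirotopes of M\"obius arrangements is itself realized by a M\"obius arrangement. Here one imports the genus-relaxation machinery of Section~\ref{karrang}: define ``$k$-M\"obius-arrangements'' (finite families of simple closed curves in compact nonorientable surfaces, with ribbon structure, all of whose $k$-subfamilies are genuine M\"obius arrangements), show via the shuffle/block characterization (the analogue of Theorems~\ref{netbenefit} and~\ref{fivchitofivarrang}) that $5$-chirotopes are exactly chirotopes of $5$-M\"obius-arrangements, and then show the mutation graph on $4$-M\"obius-arrangements on a fixed index set is connected (the analogue of Theorem~\ref{abstractconnectivity}), using the pumping lemmas. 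Since a mutation preserves the Euler characteristic and the base case — the double version of a cyclic arrangement, now possibly with some curves taken to be pseudolines — lives in genus $1$, every $5$-M\"obius-arrangement is a genuine M\"obius arrangement; its realizability by disjoint ovals and points then comes from Theorem~\ref{fmixedpmr}, which also gives the compatibility of the duality map with the isomorphism relations, so the chirotope determines (and is determined by) the configuration. For the acyclic case one observes that orienting all curves coherently (with respect to a generator of $\pi_1(\MS)$) is preserved by mutations and by the duality map, and the cyclic base arrangement is acyclic, so the whole argument restricts.

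\textbf{Main obstacle.} The hard part will be the $k$-arrangement bookkeeping in the presence of mixed (pseudoline–double-pseudoline) pairs: one must re-derive the $\otimes$ multiplication table and the prime-factor/block characterization of Theorem~\ref{netbenefit} allowing curves of two types, check that the ternary relations $\nRR^X_i$ of the proof of Theorem~\ref{fivchitofivarrang} are still total circular orders when some indices correspond to pseudolines (a pseudoline contributes a single transversal intersection with each other curve rather than four, so the ``elementary cycle'' $\jj\jj\bar\jj\bar\jj$ must be replaced by a shorter pattern), and verify that the enclosure lemmas~\ref{positiononeAbstract}--\ref{deltaAbstractBis} still yield a stationary sequence of fans. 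None of this requires a new idea, but it is the step where the proof is not a literal transcription and where careful case analysis is unavoidable; everything else is a routine application of the already-established dictionary.
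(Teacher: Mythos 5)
Your proposal follows essentially the same route as the paper, which in fact gives no detailed proof of Theorem~\ref{tmmr} at all: it simply asserts that ``minor variations'' of the proofs of Theorems~\ref{theoHT}, \ref{theoPM}, and~\ref{theoADP}, driven by the Pumping Lemma for arrangements of pseudocircles and the dictionary (double pseudolines $\leftrightarrow$ pseudocircles, convex bodies $\leftrightarrow$ ovals and points, size $\leftrightarrow$ order), yield the Möbius counterparts. Your write-up is if anything more explicit than the paper about where the adaptation is nontrivial (the mixed pseudoline/double-pseudoline pairs, the shortened elementary cycles, the extended martagon list, and the acyclicity being preserved under mutation), and those are exactly the right points to flag.
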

Note that our homotopy theorem  for M{\"o}bius arrangements provides an algorithm to enumerate 
the isomorphism classes of M{\"o}bius arrangements
by traversing the associated mutation graphs. We have implemented this
algorithm to enumerate the isomorphism classes of the simple M{\"o}bius arrangements of double
pseudolines.   
Preliminary counting results (confirmed by two independent implementations) are reported in the following table
$$
\begin{array}{c|cccc}
n  & 1 & 2 & 3 & 4 \\
\hline
a_n & 1 & 1 & 118 & 541820\\
b_n & 1 & 1 & 22  & 22620 \\
c_n & 1 & 1 & 16  & 11502 \\
d_n & 1 & 1 & 12  &  5955 
\end{array}
$$
Here the index $n$ 
 refers to the number of double pseudolines, $a_n$ is the number of 
isotopy classes of simple indexed arrangements of double pseudolines, $b_n$ is the number of  
isotopy classes of simple arrangements of double pseudolines, $c_n$ is the number of isomorphism classes of simple arrangements of double pseudolines, and 
$d_n$ is the number of isomorphism classes of simple arrangements of double pseudolines considered as projective arrangements. 
Fig.~\ref{mobiuslist} depicts representatives of the $22$ isomorphism classes of non indexed arrangements of three double pseudolines: 
\begin{figure}[!htbp]
\footnotesize
\def\factor{0.21315015000023}
\centering
\psfrag{8}{\small 8} \psfrag{7}{\small 7} \psfrag{6}{\small 6} \psfrag{5}{\small 5} \psfrag{4}{\small 4} \psfrag{3}{\small 3} \psfrag{2}{\small 2}
\psfrag{8}{} \psfrag{7}{} \psfrag{6}{} \psfrag{5}{} \psfrag{4}{} \psfrag{3}{} \psfrag{2}{}
\psfrag{AA}{}
\psfrag{A}{$\nameM_{04}$}
\psfrag{B}{$\nameM_{07}$}
\psfrag{C}{$\nameM_{18}$}
\psfrag{G}{$\nameM_{37}$}
\psfrag{H2}{$\nameM_{15(2)}$}
\psfrag{H}{$\nameM_{15(3)}$}
\psfrag{Hm}{$\nameM_{15(3)}^\star$}
\psfrag{J}{$\nameM_{43}$}
\psfrag{L}{$\nameM_{33}$}
\psfrag{Lm}{$\nameM_{33}^\star$}
\psfrag{N2}{$\nameM_{25_2(2)}$}
\psfrag{N3}{$\nameM_{25_2(3)}$}
\psfrag{N2m}{$\nameM_{25(2)}^{\star}$}
\psfrag{Nstar3}{$\nameM_{25_1(3)}$}
\psfrag{Nstar2}{$\nameM_{25_1(2)}$}
\psfrag{Nstar3m}{$\nameM_{25_1(3)}^\star$}
\psfrag{Nstar2m}{$\nameM_{25_1(2)}^\star$}
\psfrag{O}{$\nameM_{32}$}
\psfrag{Om}{$\nameM_{32}^{\star}$}
\psfrag{P4}{$\nameM_{22(4)}$}
\psfrag{P2}{$\nameM_{22(2)}$}
\psfrag{R}{$\nameM_{36}$}
\psfrag{i1}{$A$}
\psfrag{i2}{$B$}
\psfrag{i3}{$C$}
\psfrag{tr}{$3$}
\psfrag{un}{$1$}
\psfrag{de}{$2$}
\includegraphics[width = \factor\linewidth]{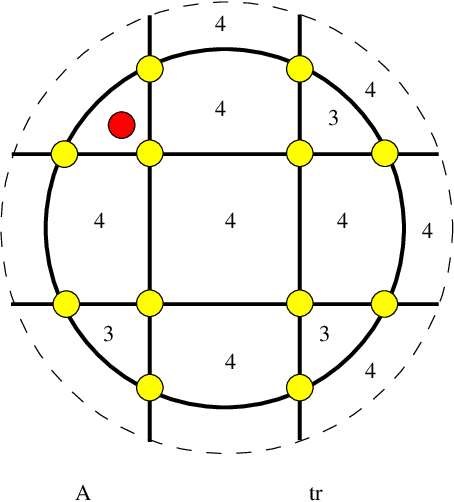}
\includegraphics[width = \factor\linewidth]{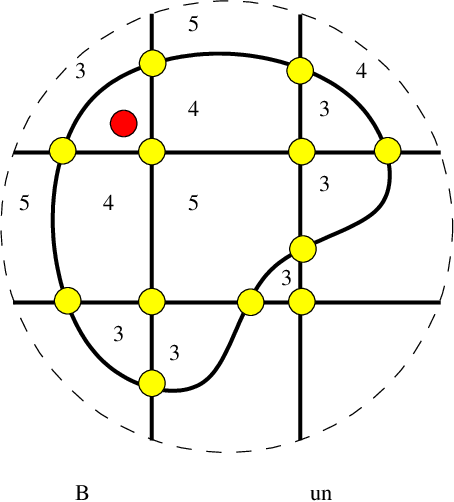}
\includegraphics[width = \factor\linewidth]{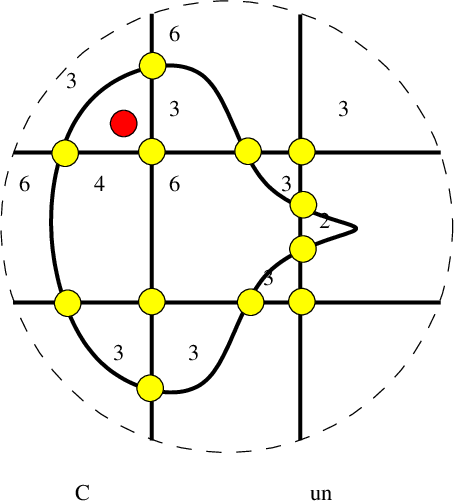}
\includegraphics[width = \factor\linewidth]{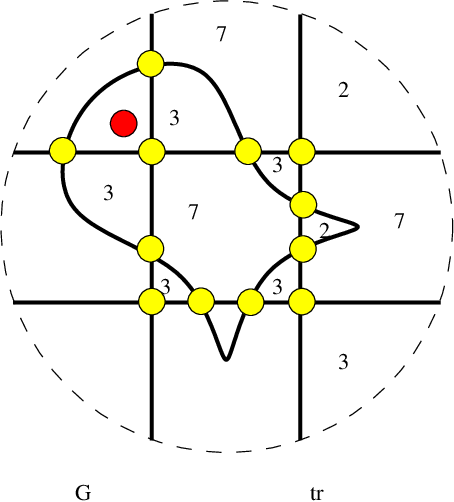}
\includegraphics[width = \factor\linewidth]{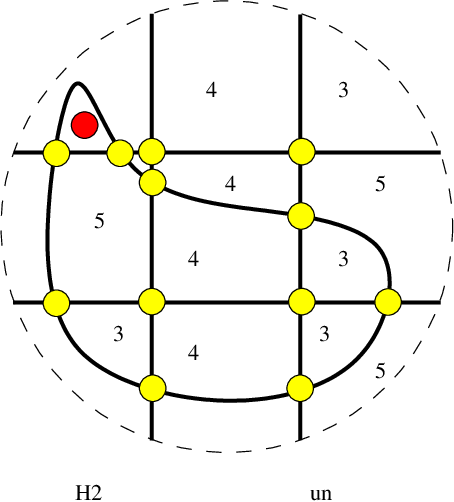}
\includegraphics[width = \factor\linewidth]{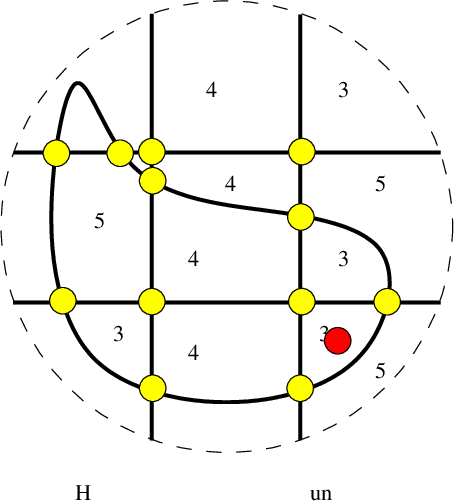}
\includegraphics[width = \factor\linewidth]{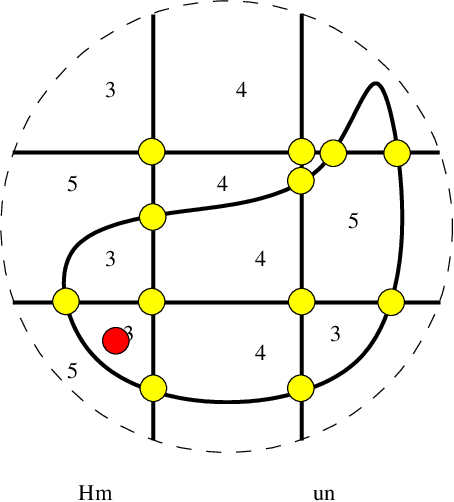}
\includegraphics[width = \factor\linewidth]{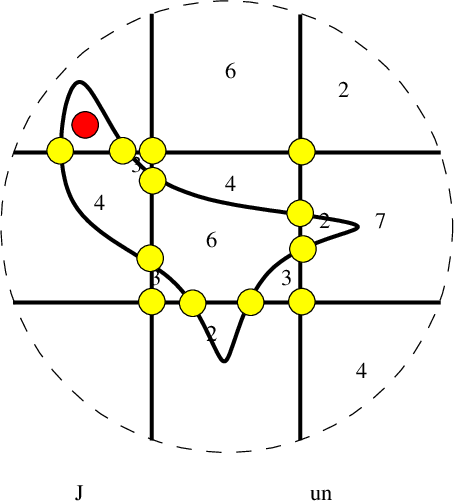}
\includegraphics[width = \factor\linewidth]{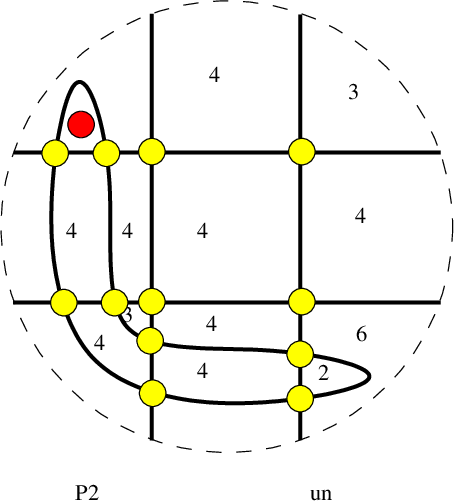}
\includegraphics[width = \factor\linewidth]{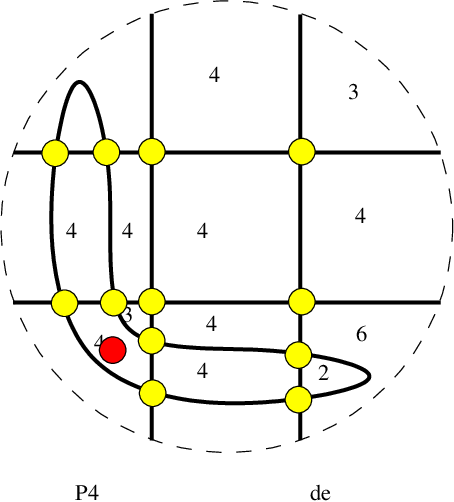}
\includegraphics[width = \factor\linewidth]{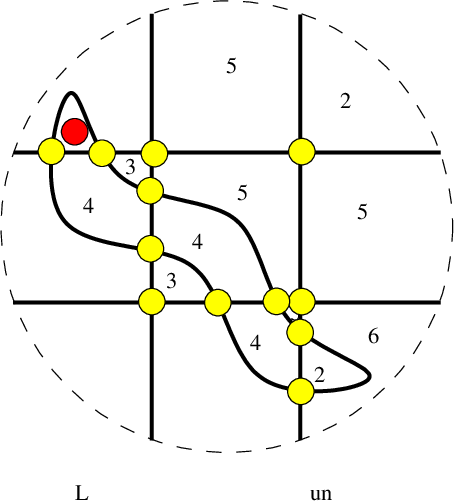}
\includegraphics[width = \factor\linewidth]{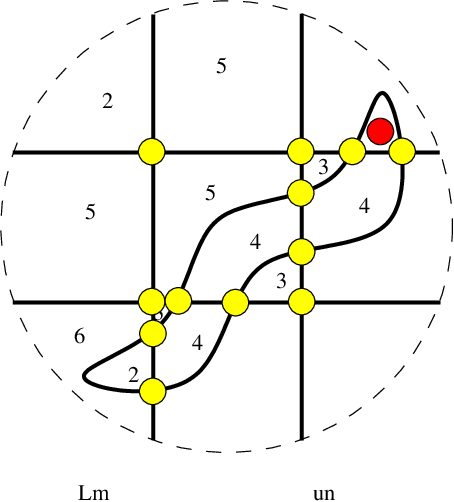}
\includegraphics[width = \factor\linewidth]{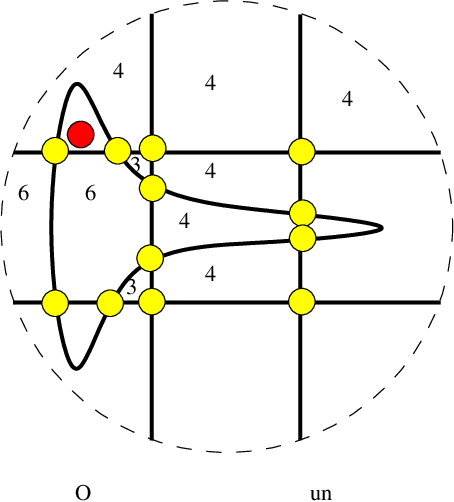}
\includegraphics[width = \factor\linewidth]{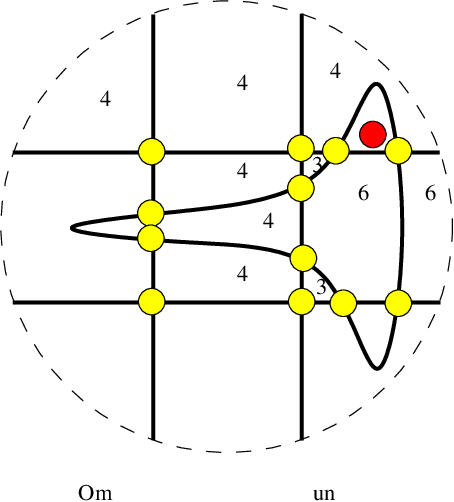}
\includegraphics[width = \factor\linewidth]{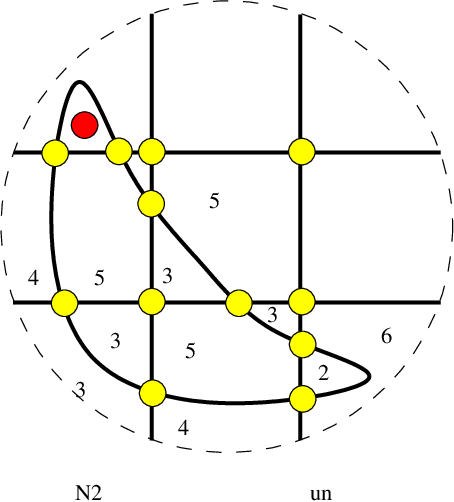}
\includegraphics[width = \factor\linewidth]{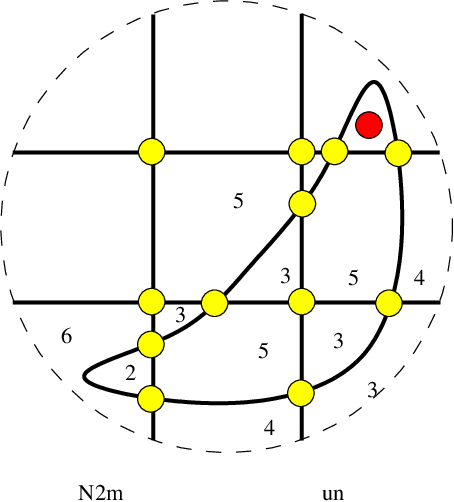}
\includegraphics[width = \factor\linewidth]{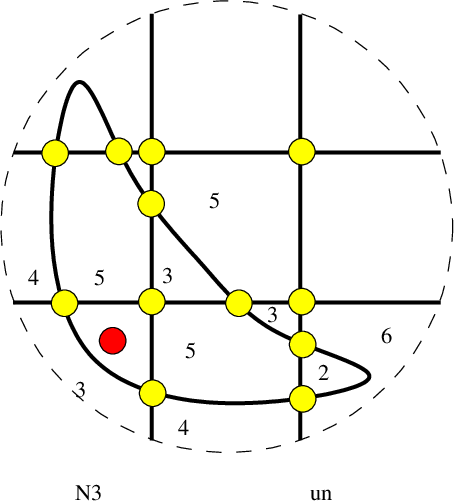}
\includegraphics[width = \factor\linewidth]{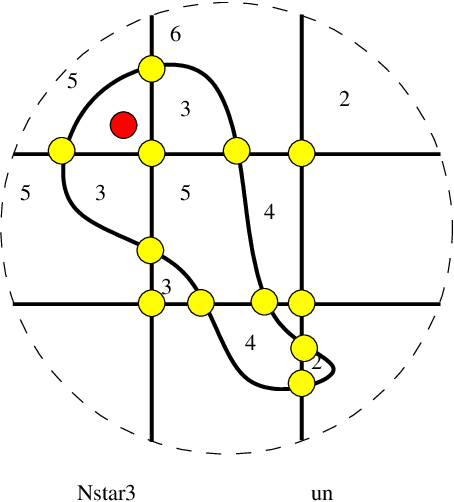}
\includegraphics[width = \factor\linewidth]{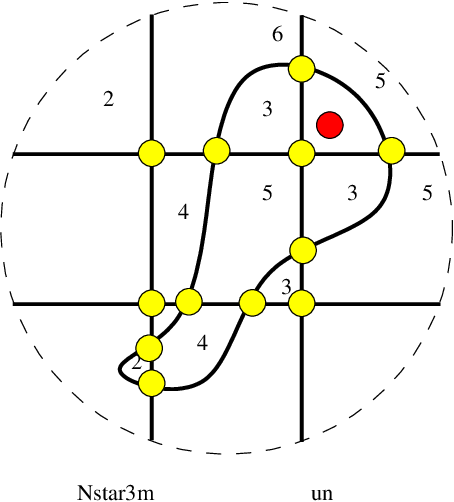}
\includegraphics[width = \factor\linewidth]{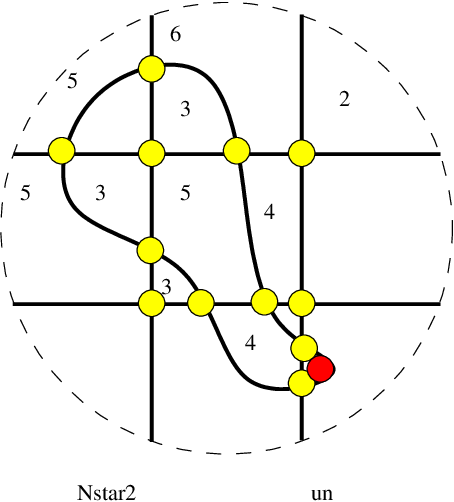}
\includegraphics[width = \factor\linewidth]{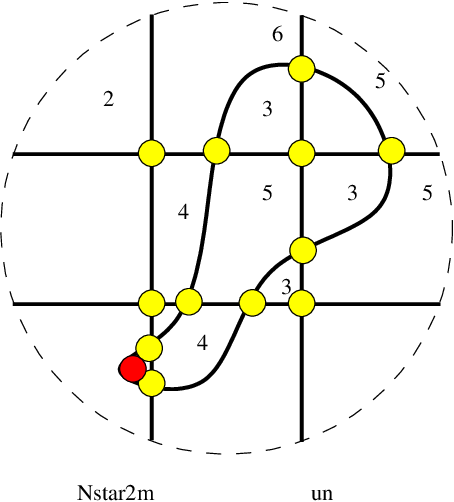}
\includegraphics[width = \factor\linewidth]{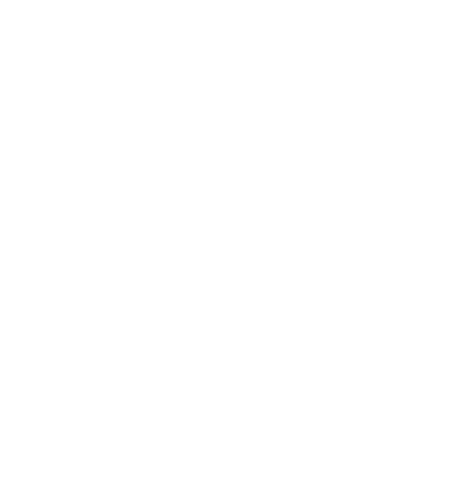}
\includegraphics[width = \factor\linewidth]{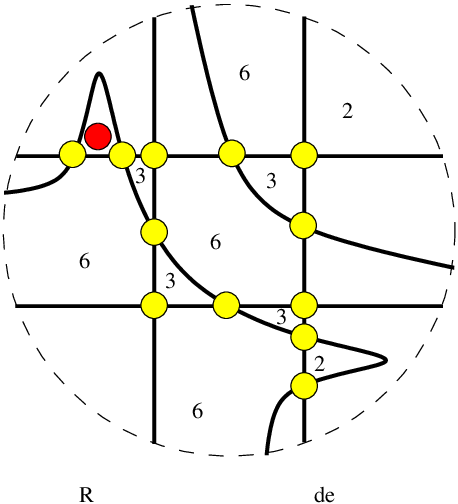}
\includegraphics[width = \factor\linewidth]{000000}
\caption{Representatives of the 22 isomorphism classes of simple non indexed arrangements on three double pseudolines  \label{mobiuslist}}
\end{figure}
Each diagram is labeled at its bottom left with a symbol to name it 
(of type $\nameM_\alpha$ where $\alpha$ is the $2$-sequence of its numbers of $2$-cells of size $2$ and $3$ possibly followed, between brackets, 
with the size of the unbounded $2$-cell of the arrangement in the case where there are several arrangements with the same $2$-sequence; $\nameM_{\alpha}$ and $\nameM_{\alpha}^\star$ 
are mirror images of one another) and is labeled at its 
bottom right with the size of its automorphism group; thus the number ($118$) of simple chirotopes of families of three pairwise disjoint convex bodies 
on a given indexing set of size $3$  can be computed as the sum 
$$\sum_{k\geq 1} \frac{3!}{k} g_k = 
\frac{6}{1} \times 18
+
\frac{6}{2} \times 2 
+ 
 \frac{6}{3}\times 2 
$$
where $g_k$ is the number of arrangements of Fig.~\ref{mobiuslist}  with group of automorphisms  of order~$k$.  
Finally we mention that there are $531$ (simple and non simple) chirotopes on a given indexing set of size $3$; cf.~\cite{fpp-nsafd-11}.

It is interesting to mention that 
the canonical embedding of Lemma~\ref{EmbeddingSimple} can be extended, in the case of M{\"o}bius arrangements, 
to the whole class of simple and non simple arrangements as explained below. 

Define a {\it pencil} of double pseudolines as a M{\"o}bius arrangement of double pseudolines
with the property that any of its subarrangements has only two {\it external} vertices, i.e., only two vertices in the
boundary of the two-cell that contains the point at infinity of the one-point
compactification of the underlying M{\"o}bius strip. 
Fig.~\ref{pencildouble} shows pencils of two, three, four and five double pseudolines; it is
\begin{figure}[!htb]
\centering
\psfrag{\infty}{$\infty$}
\includegraphics[width=0.9875\linewidth]{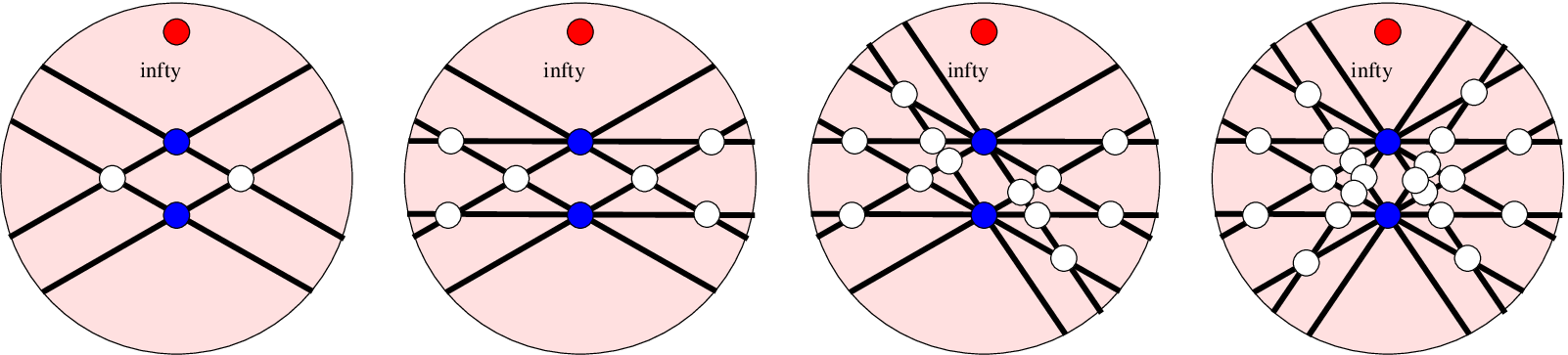}
\caption{Pencils of $2$, $3$, $4$, and $5$ double pseudolines
\label{pencildouble}}
\end{figure}
not hard to see that the isotopy class of a pencil of double pseudolines depends
only on its number of double pseudolines.
Now a M{\"o}bius arrangement of double pseudolines is termed {\it thin} if any of its 
subarrangements whose double pseudolines have associated M{\"o}bius strips with
nonempty intersection is a pencil of double pseudolines---to put it differently,  a M{\"o}bius arrangement of double pseudolines is thin if the crosscap sides of its double pseudolines are free of 
 external vertices---and  a M{\"o}bius arrangement of double pseudolines $\Gamma^*$ is termed 
a {\it double} of a M{\"o}bius pseudoline arrangement $\Gamma$ if there exists a one-to-one correspondence between $\Gamma$ and $\Gamma^*$ such that 
\begin{enumerate}
\item any pseudoline of $\Gamma$ is contained in the crosscap side of its corresponding double pseudoline in $\Gamma^*$, and 
\item any subarrangement of $\Gamma^*$ is a pencil of double pseudolines if and only if the  corresponding subarrangement of $\Gamma$ is a pencil of 
pseudolines.
\end{enumerate} 
In this M{\"o}bius setting  Lemma~\ref{EmbeddingSimple} can be read as follows. 
\begin{lemma}\label{embedding}
Let $\MS$ be a M{\"o}bius strip.
The map that assigns to an arrangement of pseudolines in $\MS$ its set of double versions induces a one-to-one 
and onto 
correspondence between the set of isotopy classes 
of pseudoline arrangements in $\MS$ and the set of isotopy  classes of thin 
double pseudoline arrangements in $\MS$.\qed 
\end{lemma}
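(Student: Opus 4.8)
Proof sketch for Lemma~\ref{embedding}. The plan is to run the (elementary) argument behind Lemma~\ref{EmbeddingSimple} verbatim, with the projective plane replaced by the one-point compactification $\OPC=\MS\cup\{\infty\}$ of $\MS$ and with the point $\infty$ kept fixed throughout. First I would record the local picture. A pseudoline $\gamma$ in $\OPC$ is non-separating, so a closed regular neighbourhood $R(\gamma)$ of $\gamma$ is a M\"obius band whose boundary $\partial R(\gamma)$ is a double pseudoline having $\gamma$ as a core pseudoline and having the interior of $R(\gamma)$ as its \innerside; moreover any two core pseudolines of a double pseudoline are isotopic inside its \innerside\ (the open-M\"obius-strip analogue of the uniqueness of the isomorphism class of pseudolines, resp.\ of core pseudolines, quoted in Section~\ref{secone}). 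This makes the operation $\gamma\mapsto\partial R(\gamma)$ well defined up to isotopy, supplies its inverse ``take a core'', and is compatible with pencils: $\{\gamma,\delta\}$ is a pencil of pseudolines precisely when $\{\partial R(\gamma),\partial R(\delta)\}$ (for thin enough disjoint neighbourhoods) is a pencil of double pseudolines.

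Onto. Given a thin double pseudoline arrangement $\Gamma^*$ in $\MS$, I would choose for each $\gamma^*\in\Gamma^*$ a core pseudoline $\gamma$ inside the \innerside\ of $\gamma^*$, performing the choices inside a system of pairwise disjoint thin regular-neighbourhood models so that the resulting family $\Gamma$ is in general position. The role of thinness is exactly this: since the \innersides\ carry no external vertices, any subfamily of $\Gamma^*$ whose \innersides\ meet is a pencil of double pseudolines, and in a pencil the cores chosen in the \innersides\ are forced to cross exactly once and to leave $\infty$ in their common complementary two-cell (checkable directly from the pencil picture of Figure~\ref{pencildouble}). Hence $\Gamma$ is an arrangement of pseudolines in $\MS$, each $\gamma$ is a core of $\gamma^*$, and pencils of $\Gamma^*$ correspond to pencils of $\Gamma$ and conversely, so $\Gamma^*$ is a double of $\Gamma$. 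Conversely, for any arrangement $\Gamma$ of pseudolines in $\MS$, disjoint thin neighbourhoods $R(\gamma)$ produce the thin double pseudoline arrangement $\{\partial R(\gamma):\gamma\in\Gamma\}$, a double of $\Gamma$; so the image of the doubling map is exactly the set of thin arrangements and every thin arrangement is hit. Note that simplicity of $\Gamma$ (or $\Gamma^*$) is never used: the pencil condition, not ``freeness of vertices'', is what drives the argument, which is why the correspondence extends to the non-simple case.

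Well-definedness and injectivity on isotopy classes. An ambient isotopy of $\MS$ carrying $\Gamma_1$ onto $\Gamma_2$ carries a thin disjoint neighbourhood system of $\Gamma_1$ onto one of $\Gamma_2$, hence a double of $\Gamma_1$ onto a double of $\Gamma_2$; combined with the independence of the double on the neighbourhood system up to isotopy (isotope one thin system of $\Gamma$ to another rel $\Gamma$), the doubling map descends to a well-defined map on isotopy classes, and all doubles of a fixed $\Gamma$ form one isotopy class. For injectivity, suppose a thin $\Gamma^*$ is a double of both $\Gamma_1$ and $\Gamma_2$. For each $\gamma^*\in\Gamma^*$ its two cores $\gamma_1,\gamma_2$ lie in the \innerside\ of $\gamma^*$ and are isotopic there; the pencil condition defining ``thin'' guarantees that the only obstructions to carrying out these isotopies simultaneously while remaining an arrangement in $\MS$ are the external vertices of $\Gamma^*$, which are avoided because the isotopies take place in the \innersides; performing them one double pseudoline at a time yields an isotopy of $\MS$ from $\Gamma_1$ to $\Gamma_2$. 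Together with the previous paragraph this gives the claimed bijection (compare Theorem~\ref{HPHT}).

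The main obstacle is the same mild one as in the projective case of Lemma~\ref{EmbeddingSimple}: the global-consistency bookkeeping that makes the local choices---of cores, and of isotopies between cores---compatible across overlapping subfamilies, so that the outcome is a genuine arrangement in $\MS$ with the prescribed pencil pattern, together with the verification that the M\"obius notion of thinness (no external vertices in the \innersides) is precisely the hypothesis forcing every subfamily with overlapping \innersides\ to be a pencil, hence the cores to be in general position and the isotopies to go through. As in Theorem~\ref{theoone} and Lemma~\ref{EmbeddingSimple} this is dispatched by elementary cut-and-paste with the Jordan curve theorem, now carried out in $\OPC$ while keeping $\infty$ fixed.
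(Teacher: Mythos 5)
The paper offers no proof of this lemma: it is stated with a \qed\ as the M\"obius ``reading'' of Lemma~\ref{EmbeddingSimple}, which is itself declared a simple consequence of the definitions, so there is nothing in the text to compare against. Your regular-neighbourhood/core argument is exactly the argument the authors leave implicit, and it is essentially correct, including the two points that genuinely need saying in the M\"obius setting: that the pencil condition (rather than freeness of all vertices) is what makes the cores cross correctly and lets the correspondence cover non-simple arrangements, and that injectivity reduces to isotoping cores inside the crosscap sides once the combinatorics is pinned down by the pencil pattern of $\Gamma^*$. One slip to correct: you twice ask for ``pairwise disjoint'' tubular neighbourhoods, but the neighbourhoods of two crossing pseudolines necessarily overlap in a quadrilateral around their crossing point (and must, since the doubled curves have to meet four times there); what you want is neighbourhoods that are thin enough that pairwise overlaps are exactly these quadrilaterals and triple overlaps occur only at concurrences. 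Since the rest of your sketch treats the overlaps correctly, this is a wording error rather than a gap.
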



\section{Conclusion and open problems}
We have introduced the notion of arrangements of double pseudolines as a combinatorial abstraction 
of families of pairwise disjoint convex bodies of projective planes and we have extended to that setting 
well-known fundamental properties of arrangements of pseudolines.  

Several open questions are raised by our work. We mention five of them below.

The cell poset of a simple arrangement of pseudolines presented  by its chirotope is 
computable in optimal quadratic time and linear working space; cf.~\cite{eg-tsa-89}.  
Can we achieve similar bounds for arrangements of double pseudolines presented by their chirotopes? 
Progress in this direction using the notion of pseudotriangulation 
is reported in our companion paper~\cite{hp-cpvbc-12}. 

There is a closed formula, due to R.~Stanley, counting the number $r_n$  of wiring representations of simple M{\"o}bius arrangements of $n$ pseudolines, 
namely 
$$r_n = \frac{\binom{n}{2}!}{(2n-3)(2n-5)^2(2n-7)^3\cdots 5^{n-3}3^{n-2}}.$$ 
Most of the proofs of this formula, if not all,  are based on  connections between standard Young Tableaux, reduced words and arrangements of pseudolines;
cf.~\cite{s-nrdec-84,eg-bt-87,ls-shaca-82,h-deaic-92,f-srwce-01}. 
Are there similar connections  for M{\"o}bius arrangements of double pseudolines? 
Is there any similar formula counting the number of wiring representations of simple M{\"o}bius arrangements of double pseudolines?

Say that an arrangement of $n$ double pseudolines is realizable if it is the dual of a family of $n$ 
disjoint disks of the standard two-dimensional projective plane  $\spp$.  In that case one can think of the arrangement 
 as the trace on the unit sphere of $\mathbb{R}^3$  of a centrally symmetric affine arrangement of $2n$ planes 
with the property that the distance to the origin of any line defined as the intersection of $d$ of these planes is less than~$1$. 
The open question is then the following : Is  any arrangement of double pseudolines the trace on the unit sphere 
of a centrally symmetric affine arrangement of pseudoplanes in $\mathbb{R}^3$?

Arrangements of double pseudolines are dual families of families of pairwise disjoint convex bodies of projective planes.   
What is the smallest example which is not realizable as the dual of a family of pairwise disjoint disks of the standard projective plane 
 $\spp$? (By a disk we mean  a convex body whose boundary is a circle, i.e., the intersection of 
the unit sphere of $\mathbb{R}^3$ with an affine plane.)  

Arrangements of double pseudolines generalize arrangements of pseudolines.  
What are the similar generalizations  for arrangements of pseudohyperplanes of dimensions 4, 5, etc.?
A generalization that comes naturally to mind  defines  
(1) a double pseudohyperplane as the image of the hypersurface $x_{1}= \pm 1/\sqrt{2}$ of 
the projective space $\mathbb{RP}^{d}$ (defined as the quotient of the unit sphere of  $\mathbb{R}^{d+1}$ under the antipodal map) 
under a self-homeomorphism of $\mathbb{RP}^{d}$, and 
 (2) an arrangement of double pseudohyperplanes 
as a finite family of double pseudohyperplanes with the property that its subfamilies of size $d$ are the images of the arrangement composed 
of the $d$ hypersurfaces $x_{i}= \pm 1/\sqrt{2d}$, $i=1,2,\ldots, d$, 
 under a self-homeomorphism of $\mathbb{RP}^{d}$.  Two questions arise  naturally : 
(1) Does the isomorphism class of a (indexed and oriented) double pseudohyperplane arrangement depend only on its chirotope, i.e., the family of 
isomorphism classes of its subarrangements of size $d+1$? 
(2) Does the class of chirotopes of double pseudohyperplane arrangements coincide with the class of maps that assigns to each $(d+1)$-subset of indices 
an isomorphism class of arrangements of double pseudohyperplanes indexed by that $(d+1)$-subset and whose restrictions to the sets of 
$(d+1)$-subsets of $(d+3)$-subsets of indices are chirotopes of double pseudohyperplane arrangements?

\section*{Acknowledgments.} 
We thank G{\"u}nter Rote for the fruitful discussions we have had on the pumping lemma  during its visit at the Ecole normale sup{\'e}rieure in October 2005, 
the Mezzenile research group for its constant support and for providing us an independent implementation 
of the enumeration algorithm for crosscap arrangements of double pseudolines described in this paper, 
Ricky Pollack for his constructive criticism, his patience and generosity, 
the anonymous referees for their very helpful critical comments on the first drafts, and 
an anonymous referee for his insightful feeback on the pumping lemma.

\clearpage

\bibliographystyle{abbrv}
\bibliography{$HOME/BIB/total,$HOME/BIB/geom,$HOME/BIB/geomplus,$HOME/BIB/livre,$HOME/BIB/divers,$HOME/BIB/algo}

\clearpage
\appendix

\clearpage
\section{Arrangements of pseudolines}
\label{appendix:apl}
We review the basics of  arrangements of pseudolines that fall within the general scope of the paper.  
\subsection{LR characterization}
An {\it arrangement of pseudolines} is a finite set of pseudolines living in the same cross surface with the property  that any two pseudolines intersect in
exactly one point. Two arrangements of pseudolines are {\it isomorphic} if one is the image of the other by an homeomorphism of their underlying cross surfaces.
Fig.~\ref{arrlines} depicts representatives of the isomorphism classes of arrangements of at most five pseudolines.
\begin{figure}[!htb]
\centering
\psfrag{Aaaa}{}
\psfrag{Azer}{}
\psfrag{Aone}{$A$}
\psfrag{Atwo}{$B$}
\psfrag{Athr}{$C$}
\psfrag{Afou}{$D$}
\psfrag{Afiv}{$E$}
\psfrag{Asix}{$F$}
\psfrag{Asev}{$G$}
\psfrag{Ahei}{$H$}
\psfrag{Anin}{$I$}
\psfrag{Aten}{$J$}
\psfrag{Aaaaor}{$2$}
\psfrag{Azeror}{$8$}
\psfrag{Aoneor}{$24$}
\psfrag{Atwoor}{$12$}
\psfrag{Athror}{$24$}
\psfrag{Afouor}{$12$}
\psfrag{Afivor}{$16$}
\psfrag{Asixor}{$10$}
\psfrag{Asevor}{$4$}
\psfrag{Aheior}{$8$}
\psfrag{Aninor}{$16$}
\psfrag{Atenor}{$20$}

\includegraphics[width=0.9875\linewidth]{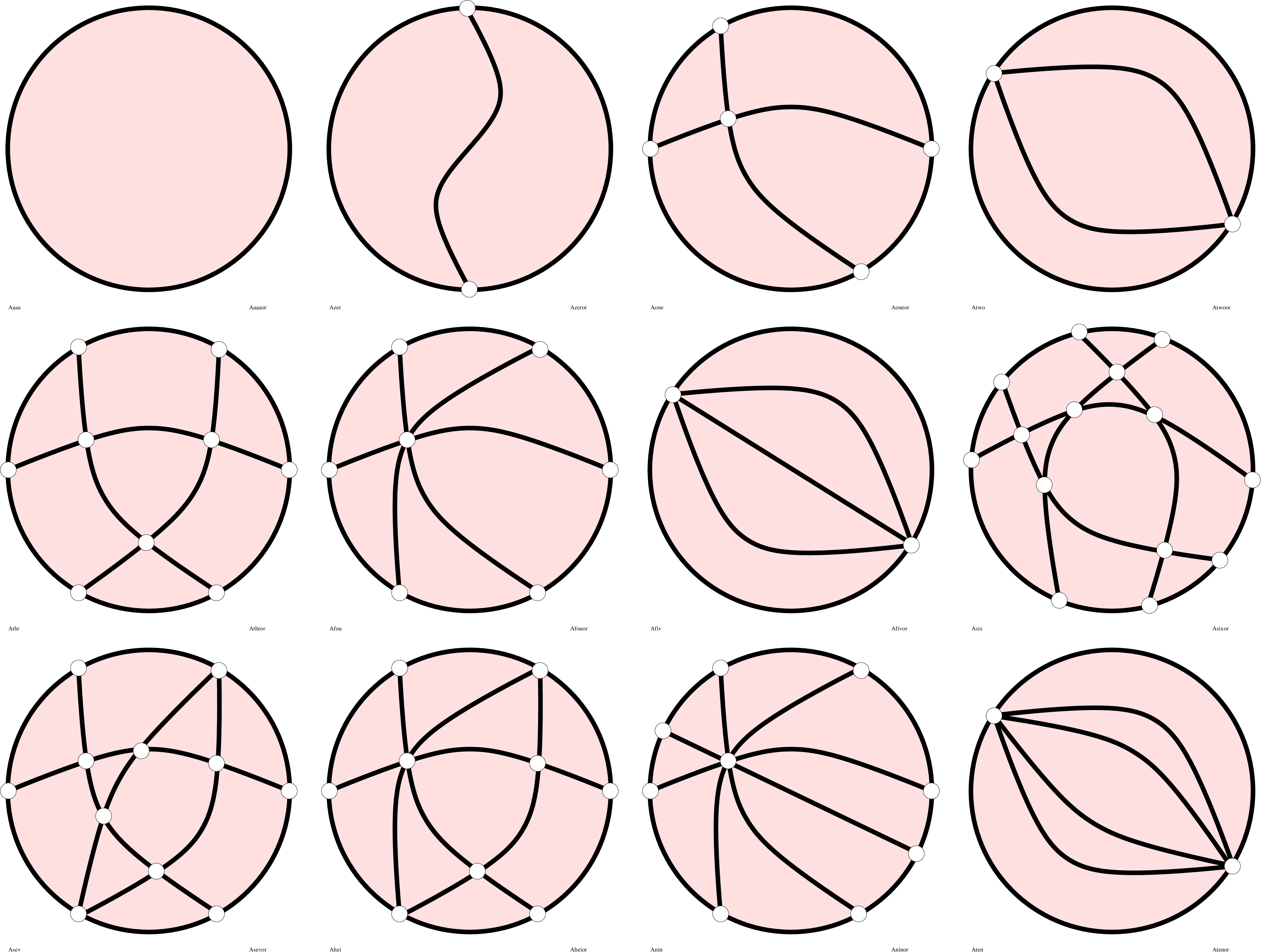}
\caption{Representatives of the isomorphism classes of arrangements of one, two, three, four and five pseudolines. Each diagram is labeled at its bottom right by the size of its automorphism group and at its bottom left by a symbol to name it \label{arrlines}}
\end{figure}
The question of understanding the isomorphism relation between arrangements of pseudolines was addressed and solved
by Ringel~\cite{r-tegtg-56,r-ugal-57} for simple arrangements and by Folkman and Lawrence~\cite{fl-om-78} for any arrangements---moreover, in the broader context of arrangements of pseudo\-hyperplanes---essentially as indicated in the following theorem  
where the term {\it chirotope} applied to an indexed arrangement of oriented pseudolines means the map that assigns to each $3$-subset of indices of the arrangement  
the isomorphism class of the subarrangement indexed by this $3$-subset.

\begin{theorem}[{\bf \cite{r-tegtg-56,r-ugal-57,fl-om-78}}]
\label{pretheofou} The map that assigns to an isomorphism class of indexed arrangements of oriented pseudolines
its chirotope is one-to-one and its range is the set of maps $\chi$ defined on the set of $3$-subsets of a finite set~$I$ 
such  that for every $3$-, $4$-, and $5$-subset $J$ of $I$
the restriction of $\chi$ to the set of $3$-subsets of $J$ is a chirotope of indexed arrangements of oriented pseudolines. 
\qed
\end{theorem}
\subsection{Chirotopes of small size}
The above theorem can be complemented by a comprehensive description of the indexed and oriented arrangements on 3, 4 and 5 pseudolines as we now explain. 
We use the idea of signed indices of an indexing set, namely the original indices $i_1,i_2,\ldots,i_n$ and their complements
$\overline{i}_1, \ldots, \overline{i}_n$. The original indices are said to be positive, their complements are said to be negative, and we 
define the complement $\overline{\eta}$ of a negative index $\eta$ as its positive version.
Let $X$ be an arrangement of pseudolines, let $X_*$ be an indexed and oriented version of $X$ and 
extend $X_*$ to the complements of the original indices by assigning to a negative index the reoriented version of the pseudoline assigned to its complement.  
Let $G$ be the group of permutations of the signed indices which are compatible with the complement operation 
and let $G_X$ be the group of automorphisms of $X$.
Clearly the map that assigns to $\sigma \in G_X$ its conjugate $ X_*^{-1}\sigma X_* \in G$  
under $X_*$ is a monomorphism of $G_X$ into $G$.  Thus we can see $G_X$ as a subgroup $G_{X_*}$ of $G$ and the number of distinct 
indexed and oriented versions of $X$ is the index $[G:G_{X^*}]$ of $G_{X_*}$ in $G$. 
In the sequel we use the notation $X(\sigma)$ for the arrangement $X_*\sigma$, $\sigma \in G$; hence $X(1) = X_*$, where $1$ is the unit of $G$. 

\begin{example}
Fig.~\ref{ioaph} depicts an arrangement $H$ on 5 pseudolines, its first barycentric subdivision, and one of its indexed and oriented versions $H_*$ on the indexing set $\{1,2,3,4,5\}$. 
The group $G_{H}$ is $D_4$  generated, for  example, by the automorphism $\sigma_{12}$ that exchanges the flags numbered $1$ and $2$ in the figure and 
the automorphism $\sigma_{18}$ that exchanges the flags numbered $1$ and $8$ in the figure.
Thus the number of distinct indexed (on a given set of indices) and oriented versions of 
$H$ is $5!2^5/8 = 480$.
The group $G_{H_*}$ is generated by the permutations 
$1\jbar{5}\jbar{4}\jind{2}3$ and 
$\jbar{1}\jind{3}\jind{2}\jbar{4}\jbar{5}$
which correspond to the automorphisms $\sigma_{12}$ and $\sigma_{18}$, respectively. 

\begin{figure}[!htb]
\centering
\psfrag{Aone}{$A(123)$}
\psfrag{Atwo}{$B(123)$}
\psfrag{Athr}{$C(1234)$}
\psfrag{Afou}{$D(1234)$}
\psfrag{Afiv}{$E(1234)$}
\psfrag{Asix}{$F(12345)$}
\psfrag{Asev}{$G(12345)$}
\psfrag{Ahei}{$H$}
\psfrag{IOAhei}{$H(12345)$}
\psfrag{Anin}{$I(12345)$}
\psfrag{Aten}{$J(12345)$}
\psfrag{Aoneor}{$2$}
\psfrag{Atwoor}{$4$}
\psfrag{Athror}{$16$}
\psfrag{Afouor}{$32$}
\psfrag{Afivor}{$24$}
\psfrag{Asixor}{$384$}
\psfrag{Asevor}{$960$}
\psfrag{Aheior}{$8$}
\psfrag{IOAheior}{$480$}
\psfrag{Aninor}{$240$}
\psfrag{Atenor}{$192$}
\psfrag{one}{$1$}
\psfrag{two}{$2$}
\psfrag{thr}{$3$}
\psfrag{fou}{$4$}
\psfrag{fiv}{$5$}
\psfrag{un}{$1$}
\psfrag{de}{$2$}
\psfrag{tr}{$3$}
\psfrag{qu}{$4$}
\psfrag{ci}{$5$}
\psfrag{si}{$6$}
\psfrag{se}{$7$}
\psfrag{hu}{$8$}
\includegraphics[width=0.975\linewidth]{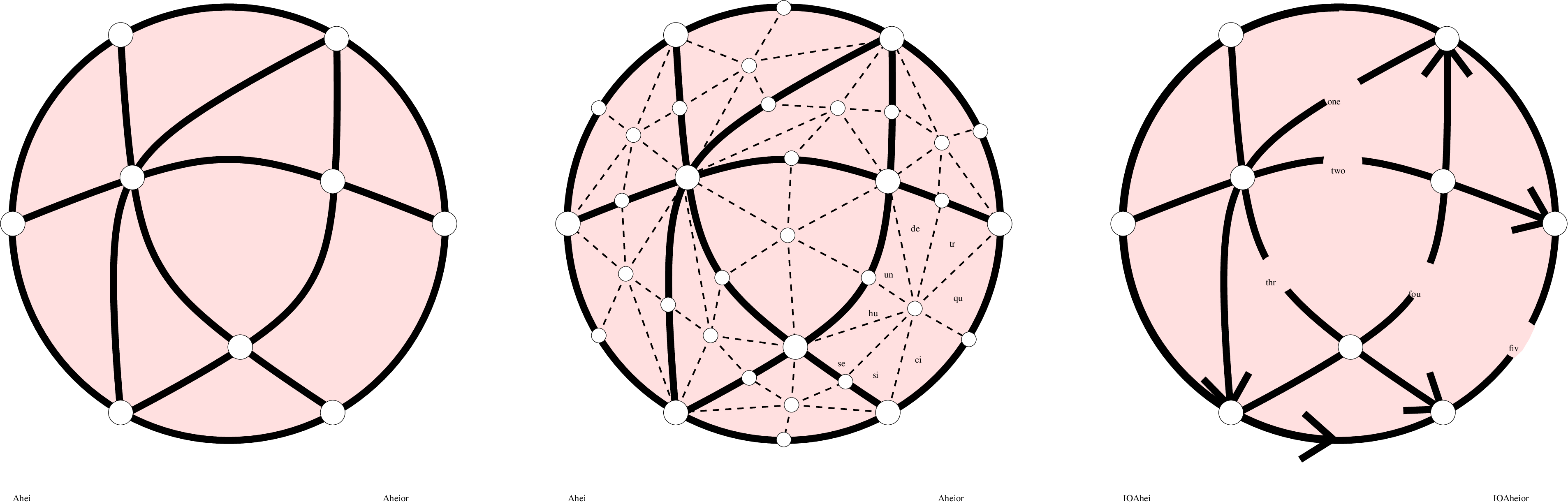}
\caption{\label{ioaph} An arrangement on $5$ pseudolines, its first barycentric subdivision, and one of its indexed and oriented versions} 
\end{figure}
\end{example}

Fig.~\ref{ttcbis} depicts one indexed and oriented version of each of the isomorphism classes of arrangements of three and four pseudolines; 
each diagram is labeled at its bottom right by its number of distinct reindexings (on a given set of indices) and reorientations. 
\begin{figure}[!htb]
\psfrag{one}{$1$} \psfrag{two}{$2$} \psfrag{thr}{$3$} \psfrag{fou}{$4$} \psfrag{fiv}{$5$}
\psfrag{AA}{$123$}
\psfrag{BB}{$132$}
\psfrag{CC}{$1253$}
\psfrag{DD}{$13\overline{5}4$}
\psfrag{EE}{$23\overline{5}4$}
\psfrag{one}{$1$}
\psfrag{two}{$2$}
\psfrag{thr}{$3$}
\psfrag{fou}{$4$}
\psfrag{otwo}{$2$}
\psfrag{ofou}{$4$}
\psfrag{oseize}{$16$}
\psfrag{othrtwo}{$32$}
\psfrag{otwofou}{$24$}
\psfrag{Aone}{$A(123) $}
\psfrag{Atwo}{$B(123) $}
\psfrag{Athr}{$C(1234)$}
\psfrag{Afou}{$D(1234)$}
\psfrag{Afiv}{$E(1234)$}
\centering
\includegraphics[width=0.9875\linewidth]{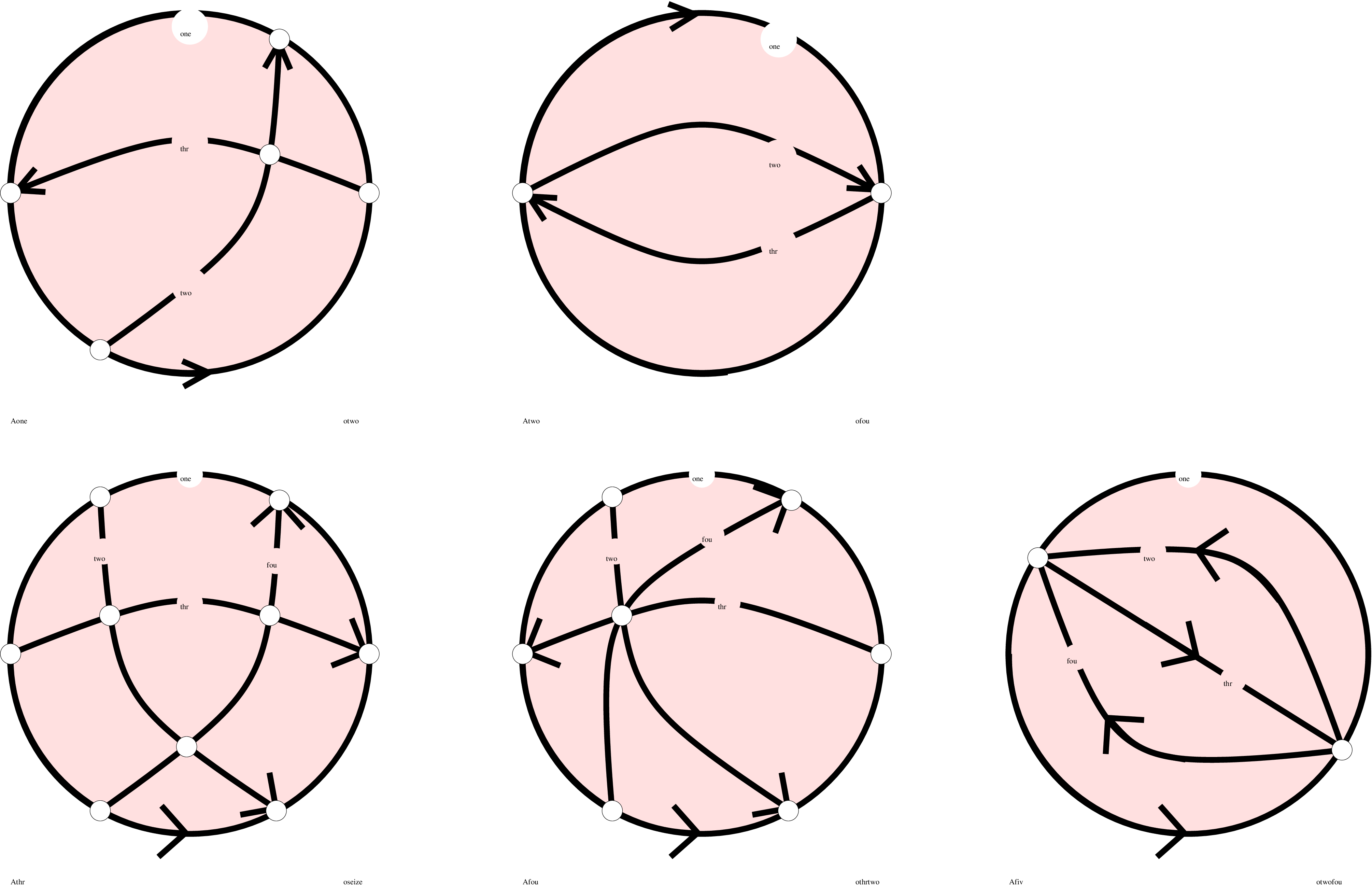}
\caption{\label{ttcbis}
Isomorphism classes of arrangements of three and four oriented pseudolines indexed by $1,2,3,4$}
\end{figure}
Thus the arrangement $A$ has 2 distinct indexed and oriented versions. The group $G_{A_*}$ is $S_4$ generated, for example,
by the permutations $\jbar{1}\jind{3}\jind{2}$, 
$\jbar{1}\jbar{2}\jind{3}$
and $\jind{2}\jbar{3}\jbar{1}$ and its 2 cosets  are 
$$
G_{A_*} = \left\{
\begin{array}{ccc}
123& 231& 312 \\
\overline{1}\overline{2}3&\overline{2}\overline{3}1&\overline{3}\overline{1}2\\
\overline{1}2\overline{3}&\overline{2}3\overline{1}&\overline{3}1\overline{2}\\
1\overline{2}\overline{3}&2\overline{3}\overline{1}&3\overline{1}\overline{2}\\
21\overline{3}&32\overline{1}&13\overline{2}\\
3\overline{2}1&1\overline{3}2&2\overline{1}3\\
\overline{1}32&\overline{2}13&\overline{3}21\\
\overline{2}\overline{1}\overline{3}& \overline{3}\overline{2}\overline{1}&\overline{1}\overline{3}\overline{2}
 \end{array}
\right\}, \qquad
(213)  G_{A_*} = \left\{
\begin{array}{ccc}
213 & 321 & 132\\
\overline{2}\overline{1}3&\overline{3}\overline{2}1&\overline{1}\overline{3}2\\
\overline{2}1\overline{3}&\overline{3}2\overline{1}&\overline{1}3\overline{2}\\
2\overline{1}\overline{3}&3\overline{2}\overline{1}&1\overline{3}\overline{2}\\
12\overline{3}&23\overline{1}&31\overline{2}\\
3\overline{1}2&1\overline{2}3&2\overline{3}1\\
\overline{2}31&\overline{3}12&\overline{1}23\\
\overline{1}\overline{2}\overline{3}&\overline{2}\overline{3}\overline{1}& \overline{3}\overline{1}\overline{2}
 \end{array}
\right\}.
$$
Similarly the number of distinct indexed and oriented versions of the arrangement $B$ is~$4$.   
The group $G_{B_*}$ is $S_3\times \mathbb{Z}_2$, generated for example by  the permutations $231,213,\overline{1}\overline{2}\overline{3}$ 
and its 4 cosets are 
$$
\phantom{(\overline{1}23)}G_{B_*} = \left\{
\begin{array}{ccc}
123&231&312 \\
213&321&132 \\
\overline{1}\overline{2}\overline{3} & \overline{2}\overline{3}\overline{1} &\overline{3}\overline{1}\overline{2} \\
\overline{2}\overline{1}\overline{3}&\overline{3}\overline{2}\overline{1}&\overline{1}\overline{3}\overline{2} \\
\end{array}
\right\},
(\overline{1}23)G_{B_*} 
= \left\{
\begin{array}{ccc}
\overline{1}23&23\overline{1}&3\overline{1}2
\\
2\overline{1}3&32\overline{1}&\overline{1}32
\\
1\overline{2}\overline{3}&\overline{2}\overline{3}1&\overline{3}1\overline{2} 
\\
\overline{2}1\overline{3}&\overline{3}\overline{2}1&1\overline{3}\overline{2} 
\end{array}
\right\},$$
$$
(1\overline{2}3)G_{B_*} = 
\left\{
\begin{array}{ccc}
1\overline{2}3&\overline{2}31&31\overline{2}
\\
\overline{2}13&3\overline{2}1&13\overline{2}
\\
\overline{1}2\overline{3}&2\overline{3}\overline{1}&\overline{3}\overline{1}2 
\\
2\overline{1}\overline{3}&\overline{3}2\overline{1}&\overline{1}\overline{3}2 
\end{array}
\right\},
(12\overline{3})G_{B_*} =
\left\{
\begin{array}{ccc}
12\overline{3}&2\overline{3}1&\overline{3}12
\\
21\overline{3}&\overline{3}21&1\overline{3}2
\\
\overline{1}\overline{2}3&\overline{2}3\overline{1}&3\overline{1}\overline{2}
\\  
\overline{2}\overline{1}3&3\overline{2}\overline{1}&\overline{1}3\overline{2} 
\end{array}
\right\}.
$$

Fig.~\ref{ttc} depicts these $2+4$ distinct indexed and oriented versions of $A$ and $B$.
\begin{figure}[!htb]
\centering
\psfrag{one}{1} \psfrag{two}{2} \psfrag{thr}{3} \psfrag{fou}{$4$}
\psfrag{onetwothr}{$123$}
\psfrag{onethrtwo}{$132$}
\psfrag{onetwo}{$12$}
\psfrag{onethr}{$13$}
\psfrag{twothr}{$23$}
\psfrag{onetwovonethrvtwothr}{$12,13,23$}
\psfrag{otwo}{$24$}
\psfrag{ofou}{$12$}
\psfrag{oseize}{$16$}
\psfrag{othrtwo}{$32$}
\psfrag{otwofou}{$24$}
\psfrag{Aone}{$A(123) $}
\psfrag{Aonebis}{$A(\overline{1}23) $}
\psfrag{Atwo}{$B(123) $}
\psfrag{Atwoone}{$B(\overline{1}23)$}
\psfrag{Atwotwo}{$B(1\overline{2}3)$}
\psfrag{Atwothr}{$B(12\overline{3})$}
\includegraphics[width=0.9875\linewidth]{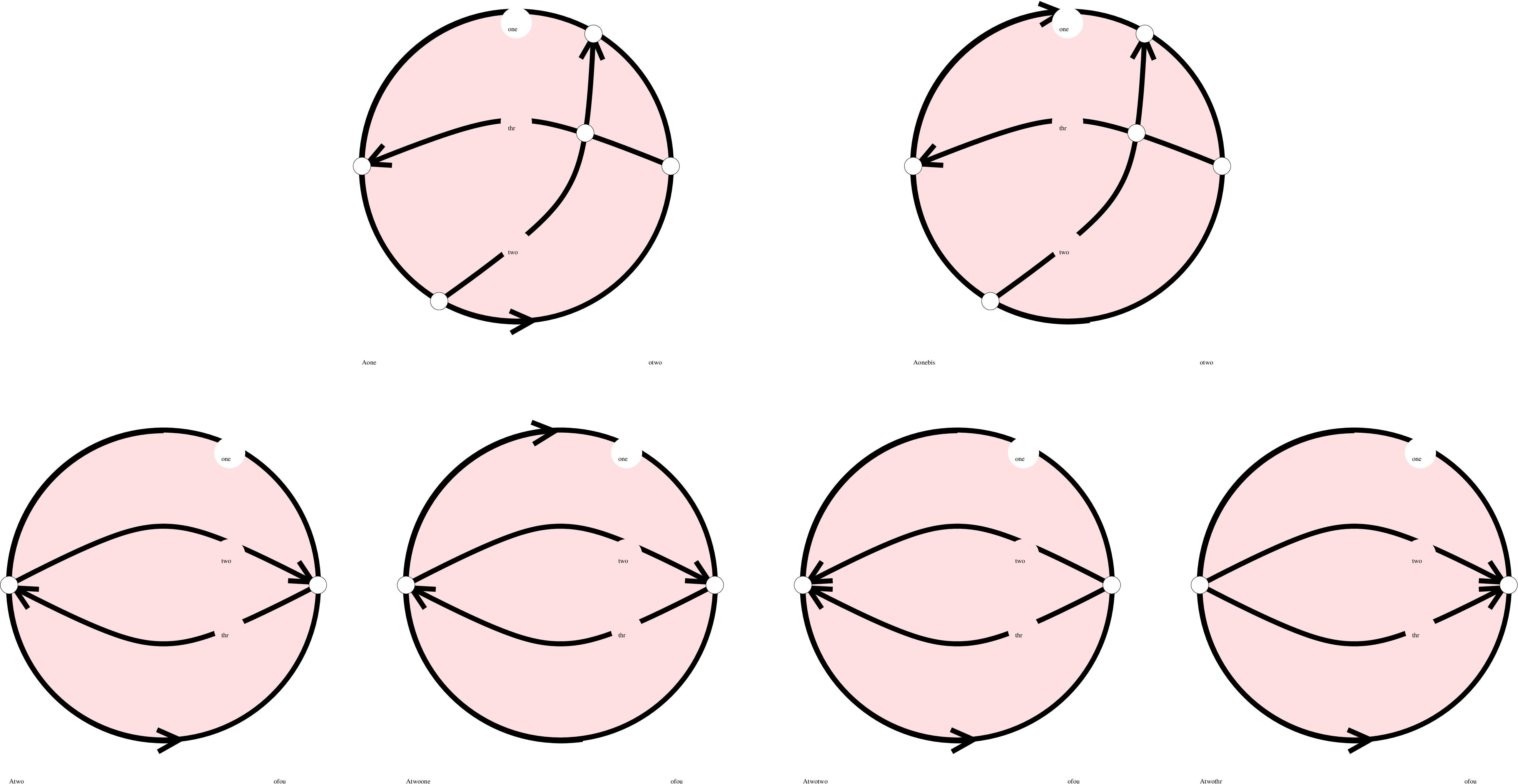}
\caption{\label{ttc} The possible entries of a chirotope of arrangement of pseudolines (of genus 1)}
\end{figure}

Using these notations one can describe the set of chirotopes 
on the indexing set $\{1,2,3,4\}$  as the set of 
$$\chi = \{\chi(123),\chi(124),\chi(134),\chi(234)\}$$ 
such that, up to a signed permutation of the  indices,
\begin{itemize}
\item[$(A_1)$:] if $A(213),A(314),A(412) \in \chi$ then $A(234)\in \chi$ or $A(243)\in \chi$ or  $B(234) \in \chi$;
\item[$(A_2)$:] if $B(12\overline{3}), B(12\overline{4}), B(13\overline{4}) \in \chi$ then $B(2\overline{3}4) \in \chi.$ 
\end{itemize}

To describe the set of chirotopes on $5$ indices we use the natural coding of an indexed arrangement of  
oriented pseudolines by its {\it side cycles} : 
there is exactly one side cycle per pseudoline $\gamma$ of the arrangement and the latter is defined as the circular sequence of signed indices obtained 
by writing down the indices of the pseudolines encountered when walking along the side of the pseudoline $\gamma$, 
each index being signed positively  or negatively depending on whether the encountered pseudoline is (locally) oriented away from or towards the pseudoline $\gamma$. 
For example
the side cycles of the arrangement $H(12345)$ of Fig.~\ref{ioaph} are
$$
\begin{array}{cl}
1 : &  \jind{2}\jind{3}\jind{4}\jind{5}\jbar{3}\jbar{2}\jbar{5}\jbar{4}\\
2 : &  \jind{1}\jind{3}\jbar{4}\jbar{5}\jbar{3}\jbar{1}\jind{4}\jind{5}\\
3 : &  \jbar{1}\jind{2}\jind{4}\jind{5}\jbar{2}\jind{1}\jbar{4}\jbar{5}\\
4 : &  \jind{3}\jind{2}\jbar{5} \jbar{1}\jbar{3}\jbar{2}\jind{1}\jind{5}\\
5 : &  \jind{3}\jind{2}\jbar{1}\jind{4}\jbar{3}\jbar{2}\jbar{4} \jind{1}.
\end{array}
$$
Similarly the three cycles of  $A(123)$ 
are $\cyclePAsimple{2}{3}, \cyclePAsimple{3}{1}$ and $\cyclePAsimple{1}{2}$
and those of $B(123)$ 
are $\cyclePAnonsim{2}{3},\cyclePAnonsim{3}{1}$ and $\cyclePAnonsim{1}{2}.$
The set of chirotopes on a set of $5$ indices, say $\{1,2,3,4,5\}$,  can then  be described as the set of 
\begin{eqnarray*}
\chi  & = & \{\chi(J) : J \in \binom{\{1,2,3,4,5\}}{3}\}\\
      & = & \{\chi(123),\chi(124),\chi(125),\chi(134),\chi(135), \\
      &   & \phantom{\{} \chi(145),\chi(234),\chi(235),\chi(245),\chi(345)\}
\end{eqnarray*}
such that the restrictions of $\chi$ to the sets of $3$-subsets of the $5$ subsets of $4$ indices of $\{1,2,3,4,5\}$ 
are chirotopes on $4$ indices, i.e., satisfy the axioms $(A_1)$ and $(A_2)$ mentioned above, which satisfy a single additional axiom $(A_3)$ 
saying that for any index $i$ the $4$ cycles indexed by $i$ of these $5$ chirotopes on $4$ indices are mergeable.

\subsection{Enlargement theorem} 
The {\it enlargement theorem for pseudoline arrangements}, due to Goodman, Pollack, Wenger and Zamfirescu~\cite{gpwz-atp-94}, 
proving a conjecture of B. Gr{\"u}nbaum~\cite[Conjecture 4.10, page 90]{g-as-72}, is the following. 

\begin{theorem}[{\bf \cite{gpwz-atp-94,k-ortmp-00}}] Any arrangement of pseudolines is an arrangement of lines of a projective plane.\qed
\end{theorem}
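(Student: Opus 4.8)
The first thing to stress is what the statement does \emph{not} say: it does not claim that the pseudolines can be straightened into genuine straight lines of the standard plane (that is false, by the existence of non-stretchable arrangements). It asserts only that the finite single-crossing family $\mathcal{A}$ sits inside the line space of \emph{some} topological \gpp. In the terminology of the paper, the plan is to produce a \gpp\ $(\pp,\lpp)$ whose point space $\pp$ is the \projectiveplane\ carrying $\mathcal{A}$ and whose line space $\lpp$ is a \emph{spread} of pseudolines — a family such that every two distinct points of $\pp$ lie on exactly one member of $\lpp$, every two distinct members meet in exactly one point, and the induced join and meet operations are continuous — with $\mathcal{A}\subseteq\lpp$. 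Once such a spread is built, $\mathcal{A}$ is by definition an arrangement of lines of the \gpp\ $(\pp,\lpp)$, which is the assertion.

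The combinatorial engine is \emph{Levi's enlargement lemma}: given a finite arrangement $\mathcal{B}$ of pseudolines and two points $p\neq q$ not jointly incident to any member of $\mathcal{B}$, there is a pseudoline through $p$ and $q$ meeting every member of $\mathcal{B}$ exactly once, so that $\mathcal{B}\cup\{\ell\}$ is again an arrangement. First I would record this lemma; its proof is the standard Jordan-curve argument, cutting the \projectiveplane\ along the curves of $\mathcal{B}$ into cells and routing an arc from $p$ to $q$ through the resulting cell complex that crosses the trace of each $\gamma\in\mathcal{B}$ exactly once, then closing it up into a pseudoline by the usual parity bookkeeping — the same cut-and-route mechanism already used in the Pumping Lemma of Section~\ref{sec:homotopy}; cf.~\cite{blswz-om-99}. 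Levi's lemma supplies joins at every finite stage and shows that no finite single-crossing family extending $\mathcal{A}$ can fail to join a prescribed pair of points without admitting a proper enlargement.

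The completion is then twofold. On the combinatorial side, consider the poset of single-crossing families of pseudolines of $\pp$ that contain $\mathcal{A}$ and in which no two members share two points; unions of chains again satisfy both conditions, so by Zorn's lemma a maximal such family $\lpp$ exists. Levi's lemma forces every pair of points of $\pp$ to be joined by a member of $\lpp$ (an unjoined pair could be joined by a new pseudoline, contradicting maximality), the single-crossing condition gives unique meets, and the prohibition on two common points gives unique joins; thus $(\pp,\lpp)$ is set-theoretically a projective incidence plane containing $\mathcal{A}$. On the topological side — where the real work lies — one must equip $\lpp$ with the subspace topology inherited from the space of pseudolines of $\pp$ and verify that $(\pp,\lpp)$ is a \gpp\ in the strong sense of the paper: that $\lpp$ is itself homeomorphic to a \projectiveplane\ and that the join map defined on ordered pairs of distinct points of $\pp$, together with the dual meet map, is continuous. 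This is achieved not by the crude maximality construction but by a controlled limiting one: approximate the intended spread by an increasing sequence of finite arrangements obtained from repeated applications of Levi's lemma along a countable dense set of point-pairs, and pass to a limit, showing that the single-crossing and covering properties persist.

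The step I expect to be the main obstacle is exactly this topological control of the limit. The enlargement itself is purely combinatorial and routine; the difficulty is to guarantee that in the limit (i) no approximating line degenerates or collapses to a point, (ii) two distinct limit lines never acquire a second common point nor fail to meet, and (iii) the join of two points varies continuously with the points, so that the axioms of a topological \gpp\ genuinely survive. Establishing these continuity and non-degeneracy properties is the heart of the Goodman--Pollack--Wenger--Zamfirescu argument (carried out there via their theory of sweeps and allowable sequences), and it is the technical core I would adapt from~\cite{gpwz-atp-94}, with the combinatorial enlargement lemma serving only as its scaffolding.
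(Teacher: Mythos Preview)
The paper does not prove this theorem at all: it is stated with the attribution \textbf{[gpwz-atp-94]} and a terminal \texttt{\textbackslash qed}, and is used throughout as a black box (in particular to derive Theorem~\ref{pretheotwo} and, via raiponces, the \GRT). There is therefore no proof in the paper to compare your proposal against.

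As for the proposal on its own merits: the overall shape is right, and you correctly identify both the role of Levi's lemma and the fact that the hard part is topological rather than combinatorial. One point deserves sharpening. In your Zorn step you argue that an unjoined pair $p,q$ in a maximal single-crossing family $\lpp$ contradicts maximality via Levi. But Levi's enlargement lemma is a statement about \emph{finite} arrangements; applied to an infinite $\lpp$ it does not directly produce a pseudoline through $p,q$ that meets \emph{every} member of $\lpp$ exactly once. You seem to sense this, since you immediately abandon the Zorn construction for a controlled countable limit along a dense set of point-pairs --- which is indeed the way the argument has to go, and is essentially what Goodman--Pollack--Wenger--Zamfirescu do (their mechanism being sweeps rather than a bare diagonal enumeration). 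It would be cleaner to drop the Zorn paragraph entirely and go straight to the inductive-limit construction, since the Zorn version as written promises more than it delivers.
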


Combining the enlargement theorem of pseudoline arrangements with the duality principle for projective planes  we get  the following theorem.
 
\begin{theorem} \label{pretheotwo} Any arrangement of pseudolines is isomorphic to the dual
arrangement of a finite set of points of a projective plane. 
\end{theorem}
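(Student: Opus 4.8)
The statement to prove is Theorem~\ref{pretheotwo}: any arrangement of pseudolines is isomorphic to the dual arrangement of a finite set of points of a projective geometry. The plan is to combine the enlargement theorem of Goodman, Pollack, Wenger and Zamfirescu (stated just above) with the duality principle for projective geometries recalled in Section~\ref{secone}. First I would invoke the enlargement theorem to realize the given arrangement of pseudolines $\mathcal{A}$ as the arrangement of (all) lines of some projective geometry $(\pp,\lpp)$; concretely, $\mathcal{A}$ is isomorphic to a finite subset $L = \{\ell_1,\ldots,\ell_n\}$ of the line space $\lpp$ of a projective geometry whose point space $\pp$ is a projective plane (here one should be slightly careful: the enlargement theorem produces a projective plane in which the pseudolines become lines, and one takes $\lpp$ to be that ambient line space, so that $(\pp,\lpp)$ is a projective geometry in the sense of the axioms of Section~\ref{secone}).

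Next I would apply the duality principle: the dual $(\lpp,\dual{\pp})$ of $(\pp,\lpp)$ is again a projective geometry, where $\dual{p}$ denotes the set of lines through $p$ and $\dual{\pp} = \{\dual p : p \in \pp\}$ is a subspace of the space of pseudolines of $\lpp$. Under this duality, each line $\ell_i \in \lpp$ becomes a \emph{point} of the dual geometry $(\lpp,\dual{\pp})$, so the finite set $\{\ell_1,\ldots,\ell_n\}$ of lines, viewed inside $\lpp$, is a finite set of points of the projective geometry $(\lpp,\dual{\pp})$. Its dual arrangement is, by definition, the set $\{\dual{\ell_1},\ldots,\dual{\ell_n}\}$ of pencils of lines through the $\ell_i$, and by the duality principle this is an arrangement of pseudolines in the projective plane $\dual{\pp}$. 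The final step is to check that this dual arrangement is isomorphic to the original arrangement $\mathcal{A}$: this follows because a projective geometry is isomorphic to its bidual via the map sending a point to its dual and a line to the set of duals of its points, so the incidences among $\ell_1,\ldots,\ell_n$ (which encode the isomorphism class of the pseudoline arrangement, via the point where $\ell_i$ and $\ell_j$ meet) are faithfully recorded by the incidences among $\dual{\ell_1},\ldots,\dual{\ell_n}$ in the dual geometry. Hence $\mathcal{A} \cong \{\dual{\ell_i}\}_i$ as arrangements of pseudolines, which is exactly the dual arrangement of the finite point set $\{\ell_i\}_i \subset \lpp$ in the projective geometry $(\lpp,\dual{\pp})$.

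The routine verification hidden here is that the isomorphism class of an arrangement of pseudolines (its homeomorphism type, together with the combinatorial cell structure) is preserved under the duality homeomorphism $\dual{p}\mapsto$ (pencil) — but this is precisely the content of the statement that the dual of a projective geometry is a projective geometry and that biduality is an isomorphism, both of which are quoted in Section~\ref{secone} from~\cite{sbghl-cpp-95,gpwz-atp-94,ps-gs-01,s-tp-68}. The main (and only genuine) obstacle is bookkeeping: making sure that the ambient projective plane and its line space produced by the enlargement theorem really form a projective geometry to which the duality principle applies, and that ``arrangement of pseudolines'' on both sides is interpreted with the same notion of isomorphism. Once that is set up, the proof is a two-line composition of two black boxes, so I would keep it short.

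\begin{proof}
By the enlargement theorem for pseudoline arrangements of Goodman, Pollack, Wenger and Zamfirescu, any arrangement of pseudolines $\mathcal{A} = \{a_1,\ldots,a_n\}$ is isomorphic to an arrangement $L = \{\ell_1,\ldots,\ell_n\}$ of lines of a projective geometry $(\pp,\lpp)$, i.e.\ the $\ell_i$ are lines of a projective plane $\pp$ whose ambient line space $\lpp$ (a subspace of the space of pseudolines of $\pp$) makes $(\pp,\lpp)$ a projective geometry. By the duality principle for projective geometries, the dual $(\lpp,\dual{\pp})$ is again a projective geometry, whose point space is the projective plane $\lpp$; in it, the $\ell_i$ are points. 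Its dual arrangement is $\{\dual{\ell_1},\ldots,\dual{\ell_n}\}$, an arrangement of pseudolines of the projective plane $\dual{\pp}$. Since a projective geometry is isomorphic to its bidual via the map sending a point to its dual and a line to the set of duals of its points, the point in which $\ell_i$ and $\ell_j$ meet in $\pp$ corresponds to the line $\dual{\ell_i}\cap\dual{\ell_j}$ of incidence data in $\dual{\pp}$, so the combinatorial (hence isomorphism) type of $\{\dual{\ell_i}\}_i$ coincides with that of $L$, and therefore with that of $\mathcal{A}$. Thus $\mathcal{A}$ is isomorphic to the dual arrangement of the finite set of points $\{\ell_1,\ldots,\ell_n\}$ of the projective geometry $(\lpp,\dual{\pp})$, as claimed.
\end{proof}
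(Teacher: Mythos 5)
Your proof is correct and follows essentially the same route as the paper: the paper's one-line argument is exactly "any pseudoline arrangement $\mathcal{A}$ is isomorphic to the dual of the point set $\mathcal{A}$ of the dual projective plane of any projective plane extension of $\mathcal{A}$," invoking the enlargement theorem, the duality principle, and biduality just as you do. Your version merely spells out the bookkeeping that the paper leaves implicit.
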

\begin{proof}
Indeed any pseudoline arrangement $\cal A$ is isomorphic to the dual of the point set $\cal A$
of the dual projective plane of any projective plane extension of $\cal A$---here we implicitly use the fact that a projective plane is isomorphic to its bidual.
\end{proof}

\clearpage
\section{Chirotopes of finite planar families of points \label{appendix:CFPFP}}
We now review the ``classical'' characterization of chirotopes of finite planar indexed families of oriented points mentioned in the abstract. 
(An oriented point in a projective plane is a point together with an orientation of its neighborhood, indicated in our drawings by an oriented circle 
surrounding the point.)
Our account takes  
advantage  of the relatively recent positive answer of Goodman, Pollack, Wenger and Zamiferescu~\cite{gpwz-atp-94} to the question of Gr{\"u}baum~\cite[Conjecture 4.10, page 90]{g-as-72}
about the embeddability of any arrangement of pseudolines in the line space of a projective plane.
 
Let $\Delta$ be a finite indexed family of oriented points of a projective plane $(\pp,\lpp)$, and let $\tau$ be a line of $(\pp, \lpp)$. 
We define 
\begin{enumerate}
\item the {\it cocycle of $\Delta$ at $\tau$} or the {\it cocycle of $\tau$ with respect to $\Delta$}
  or the {\it cocycle of the pair $(\Delta,\tau)$} as the  homeomorphism class of the pair $(\Delta,\tau)$, i.e., the  
set of $(\varphi \Delta, \varphi\tau)$ as $\varphi$ ranges over the set of homeomorphisms of surfaces
with domain $\pp$;  in other words two pairs $(\Delta,\tau)$ and $(\Delta',\tau')$ define the same cocycle if there exists a homeomorphism $\varphi$ 
of $\pp$ onto $\pp'$ such that  $\Delta' = \varphi \Delta$ and $\tau' = \varphi \tau$;
\item the {\it cocycle-map} as the map that assigns to each cell $\sigma$ of the dual arrangement of $\Delta$ the cocycle of $\Delta$ at an element (hence any) of $\sigma$;
\item a {\it $0$-, $1$-, $2$-cocycle} of $\Delta$ as a cocycle of $\Delta$ at a $0$-, $1$-, $2$-cell of its  dual arrangement; 
\item the {\it isomorphism class of $\Delta$} as the set of configurations $\Delta'$ that have the same set of $0$-cocycles as $\Delta$ (hence,using a simple perturbation argument,
 the same set of cocycles as $\Delta$); and  
\item the  {\it chirotope of $\Delta$} as the map that assigns to each $3$-subset of its indexing set the isomorphism class of the subconfiguration  indexed by this $3$-subset. 
\end{enumerate}
Fig.~\ref{PositionPoints} depicts the cocycles of configurations of three points: each circular diagram is  
\begin{figure}[!htb]
\psfrag{one}{$\ii$}
\psfrag{two}{$\kk$}
\psfrag{thr}{$\jj$}
\psfrag{lone}{$A$} \psfrag{ltwo}{$B$} \psfrag{lthr}{$C$} \psfrag{lfou}{$D$} \psfrag{lfiv}{$E$} \psfrag{lsix}{$F$} \psfrag{lsev}{$G$}
\psfrag{lone}{$\ii\jj\kk \bii\bjj\bkk$} 
\psfrag{ltwo}{$\ii\kk\bii\bkk,\bjj$} 
\psfrag{lthr}{$\ii\bii,\bkk,\bjj$} 
\psfrag{lfou}{$\ii,\kk,\jj$} 
\psfrag{oone}{$4$}
\psfrag{otwo}{$6$}
\psfrag{othr}{$6$}
\psfrag{ofou}{$4$}
\centering
\includegraphics[width=0.9875\linewidth]{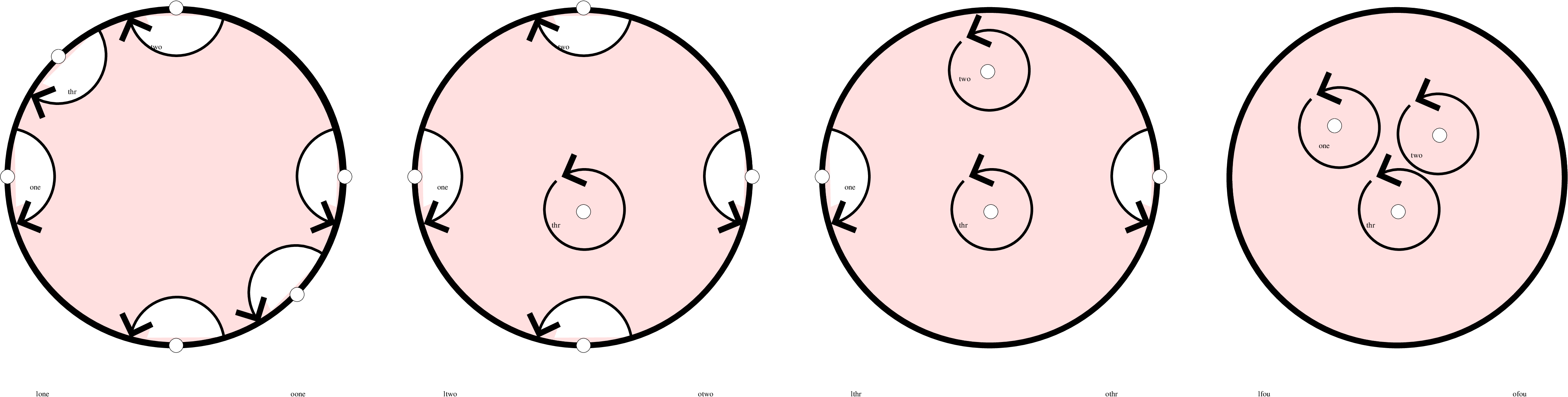}
\caption{The cocycles, up to reindexing and reorientation, of configurations of three points with indexing set $\{\ii,\kk,\jj\}$ 
\label{PositionPoints}}
\end{figure}
labeled at its bottom right  with its number of reindexings and reorientations
 and at its  bottom left with its {\it signature} $\mylabel(\Delta, \tau)$ which is defined as follows.  
Let $\cut{\tau}$ be the closed 2-cell obtained by cutting the cross surface $\pp$ along the line $\tau$, let $\Map{\nu_\tau}{\cut{\tau}}{\pp}$ be the canonical projection, 
 and let $\Sigma_\tau$ be the pre-image of the set of  $\Delta_i$s under $\nu_\tau$;
each element of  $\Sigma_\tau$ is endowed
with the orientation of the corresponding point of $\Delta$ and is labeled with the signed index of the corresponding signed point of $\Delta$.
Choose an orientation $\epsilon$ of  $\tau$, orient $\cut{\tau}$ accordingly, and define the {\it signature} of the triple $(\Delta,\tau, \epsilon)$
as the set of labels of the elements of $\Sigma$ whose corresponding signed points are contained in the interior of $\cut{\tau}$ and oriented consistently with the orientation of 
$\cut{\tau}$ plus the circular sequence of labels of the elements of $\Sigma_\tau$ oriented consistently with the orientation of $\cut{\tau}$
encountered when walking along the boundary of $\bcut{\pp}{\tau}$ according to its orientation.
The {\it signature  of the pair $(\Delta,\tau)$} or the {\it signature of $\Delta$ at $\tau$} is then defined as the unordered pair of signatures of the triples $(\Delta, \tau,\epsilon)$
 and $(\Delta, \tau,-\epsilon)$; it can be represented by either of its two elements since 
the signature of the triple  $(\Delta,\tau,-\epsilon)$ 
is obtained from the signature of the triple $(\Delta,\tau ,\epsilon)$ by replacing each of its elements  with  the reversal of its complement.
Clearly the  cocycle of a pair $(\Delta,\tau)$ depends only on its signature and vice versa.

A simple case analysis shows that the map that assigns to the isomorphism class of an indexed configuration of oriented points its chirotope is well-defined and one-to-one, that there 
are exactly six isomorphism classes of indexed configurations of three oriented points on the indexing set $\{1,2,3\}$, namely, in signature terms, 

$$
\begin{array}{l}
\{\one\two\thr\oneb\twob\thrb\}\\
\{\one\thr\two\oneb\thrb\twob\}\\
\{\two\one\thr\twob\oneb\thrb\}\\
\{\one\thrb\two\oneb\thr\twob\}\\
\{\one\two\oneb\twob,\thr\},\{\two\thr\twob\thrb,\one\},\{\thr\one\thrb\oneb,\two\} \\
\{\one\two\oneb\twob,\thrb\},\{\two\thr\twob\thrb,\oneb\},\{\thr\one\thrb\oneb,\twob\}
\end{array}
$$
and, finally, that 
the map that assigns to an indexed configuration of three oriented points the isomorphism class of its dual arrangement
is compatible with the isomorphism relation on indexed configurations of three oriented points and that the induced (one-to-one and onto) quotient map is the following 
$$
\begin{array}{lcr}
\{\one\two\thr\oneb\twob\thrb\} & \longrightarrow & B(\one\twob\thr) \\ 
\{\one\thr\two\oneb\thrb\twob\} & \longrightarrow & B(\one\two\thrb) \\
\{\two\one\thr\twob\oneb\thrb\} & \longrightarrow & B(\oneb\two\thr) \\
\{\one\twob\thr\oneb\two\thrb\} & \longrightarrow & B(\one\two\thr) \\
\{\one\two\oneb\twob,\thr\},\{\two\thr\twob\thrb,\one\},\{\thr\one\thrb\oneb,\two\}  & \longrightarrow & A(\one\two\thr) \\
\{\one\two\oneb\twob,\thrb\},\{\two\thr\twob\thrb,\oneb\},\{\thr\one\thrb\oneb,\twob\} & \longrightarrow & A(\oneb\two\thr)
\end{array}
$$
Since the map  that assigns to an isomorphism class of indexed arrangement of oriented pseudolines  its chirotope is one-to-one  and since any arrangement of pseudolines is isomorphic 
to the dual arrangement of a family of points 
it follows that the above considerations  concerning indexed configurations of three oriented points and indexed arrangements of three oriented pseudolines extend to configurations 
of any number of points and arrangements of any number of pseudolines.  We summarize:
\begin{theorem}
\label{theoPMversionpoint}
The map that assigns to an indexed configuration of oriented points the isomorphism class of its dual arrangement
is compatible with the isomorphism relation on indexed configurations of oriented points; furthermore the induced quotient map is one-to-one and onto.  \qed
\end{theorem}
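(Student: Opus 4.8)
\textbf{Proof plan for Theorem~\ref{theoPMversionpoint}.}

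The plan is to reduce the general statement to the case of three points, exactly as the preceding discussion suggests, and to leverage the two cited cornerstones: the bijectivity of the chirotope map for indexed arrangements of oriented pseudolines (Theorem~\ref{pretheofou}), and the fact that every arrangement of pseudolines is isomorphic to the dual arrangement of a finite family of points of a \gpp\ (Theorem~\ref{pretheotwo}). First I would make precise the duality correspondence: to an indexed configuration $\Delta$ of oriented points I assign the indexed arrangement $\Delta^*$ of oriented pseudolines whose $i$-th member is the dual line of the $i$-th point, oriented by the orientation inherited from the orientation of the point (the neighborhood-orientation of a point transports canonically to an orientation of the pencil of lines through it, hence of the dual pseudoline). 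This is well defined up to isomorphism by the duality principle for \projectivegeometries\ recalled in Section~\ref{secone}.

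\textbf{Compatibility.} The key step is to check that the assignment $\Delta\mapsto \Delta^*$ is compatible with the isomorphism relations on both sides, i.e.\ that $\Delta$ and $\Delta'$ isomorphic (same set of $0$-cocycles) implies $\Delta^*$ and $\Delta'^*$ isomorphic. Here the cleanest route is via the cocycle map: the $0$-cocycles of $\Delta$ are indexed by the vertices of $\Delta^*$, a $1$-cocycle by an edge, a $2$-cocycle by a face, and walking along the dual pseudoline $\Delta_i^*$ one reads off precisely the cycle of $\Delta^*$ indexed by $i$ --- signed by whether each encountered pseudoline is locally oriented away from or toward $\Delta_i^*$, which is exactly the sign datum packaged in the cocycle-label. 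So the family of cycles of $\Delta^*$ is recoverable from the cocycle map of $\Delta$, and by a perturbation argument (as used repeatedly above for configurations of convex bodies) the cocycle map of $\Delta$ is determined by its set of $0$-cocycles. Since by Theorem~\ref{pretheofou} an indexed arrangement of oriented pseudolines is determined up to isomorphism by its family of cycles (equivalently by its chirotope), this shows $\Delta^*$ depends only on the isomorphism class of $\Delta$; one then notes, conversely, that the family of $0$-cocycles of $\Delta$ depends only on the cocycle-labeled dual arrangements of the three-element subconfigurations together with the isomorphism class of $\Delta^*$ --- this is the point-analogue of item (iii) in the proof of Theorem~\ref{theotwobis}, and it gives injectivity of the induced quotient map once one verifies the three-point case by the explicit case analysis tabulated just before the statement.

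\textbf{Surjectivity.} For ``onto'', I would argue: given any arrangement $\cal A$ of oriented pseudolines, Theorem~\ref{pretheotwo} realizes its unoriented version as the dual arrangement of a finite point set of a \gpp; transporting orientations back through the duality isomorphism produces an indexed configuration $\Delta$ of oriented points with $\Delta^*\cong{\cal A}$. Combined with the (already established) injectivity of the chirotope map for oriented pseudoline arrangements and with the three-point dictionary displayed above --- which exhibits the explicit bijection between the six isomorphism classes of oriented triples of points and the corresponding oriented triples of pseudolines --- this shows the quotient map is a bijection at the level of arbitrary size, completing the proof. The main obstacle, I expect, is the bookkeeping in the compatibility step: carefully matching the sign conventions in the cocycle-label of $(\Delta,\tau)$ with the local-orientation signs in the cycles of $\Delta^*$, and checking that the perturbation argument genuinely lets one pass from ``same $0$-cocycles'' to ``same cocycle map''. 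Everything else is either a direct invocation of Theorems~\ref{pretheofou} and~\ref{pretheotwo} or the finite verification for three points, which is routine.
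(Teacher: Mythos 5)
Your proposal is correct and follows essentially the same route as the paper: an explicit case analysis for three oriented points (the six isomorphism classes and their dictionary with the oriented arrangements $A(\cdot)$, $B(\cdot)$), then an extension to arbitrary size by invoking the injectivity of the chirotope map for oriented pseudoline arrangements (Theorem~\ref{pretheofou}) together with the realizability of any pseudoline arrangement as a dual point arrangement (Theorem~\ref{pretheotwo}). The extra detail you supply in the compatibility step --- recovering the cycles of $\Delta^*$ from the cocycle map and the $0$-cocycles from the labeled three-element subconfigurations, mirroring the proof of Theorem~\ref{theotwobis} --- is a legitimate fleshing-out of what the paper leaves as "the above considerations extend".
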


Therefore there are also six isomorphism classes of cocycle-maps on the indexing set $\{1,2,3\}$; they are depicted 
in Fig.~\ref{cocyclemaps}.
\begin{figure}[!htb]
\centering
\footnotesize
\psfrag{one}{} \psfrag{two}{} \psfrag{thr}{}
\psfrag{ones}{} \psfrag{twos}{} \psfrag{thrs}{}
\psfrag{one}{1} \psfrag{two}{2} \psfrag{thr}{3}
\psfrag{ones}{$1$} \psfrag{twos}{$2$} \psfrag{thrs}{$3$}
\psfrag{duality}{$\stackrel{\mathrm{duality}}{\longrightarrow}$}
\psfrag{uxdxt}{$1,2,3$}
\psfrag{ubxdxt}{$\overline{1},2,3$}
\psfrag{ubxdbxt}{$\overline{1},\overline{2},3$}
\psfrag{uxdxtb}{$1,2,\overline{3}$}
\psfrag{uxdbxt}{$1,\overline{2},3$}
\psfrag{uubxdxt}{$1\overline{1},2,3$}
\psfrag{uubxdxtb}{$1\overline{1},2,\overline{3}$}
\psfrag{ddbxuxt}{$2\overline{2},1,3$}
\psfrag{ddbxuxtb}{$2\overline{2},1,\overline{3}$}
\psfrag{ttbxuxd}{$3\overline{3},1,2$}
\psfrag{ttbxuxdb}{$3\overline{3},1,\overline{2}$}
\psfrag{udubdbxt}{$12\overline{1}\overline{2},3$}
\psfrag{dtdbtbxu}{$23\overline{2}\overline{3},1$}
\psfrag{udubdbxt}{$12\overline{1}\overline{2},3$}
\psfrag{utbubtxd}{$1\overline{3}\overline{1}3,2$}
\psfrag{udtubdbtb}{$123\overline{123}$}
\includegraphics[width=0.875\linewidth]{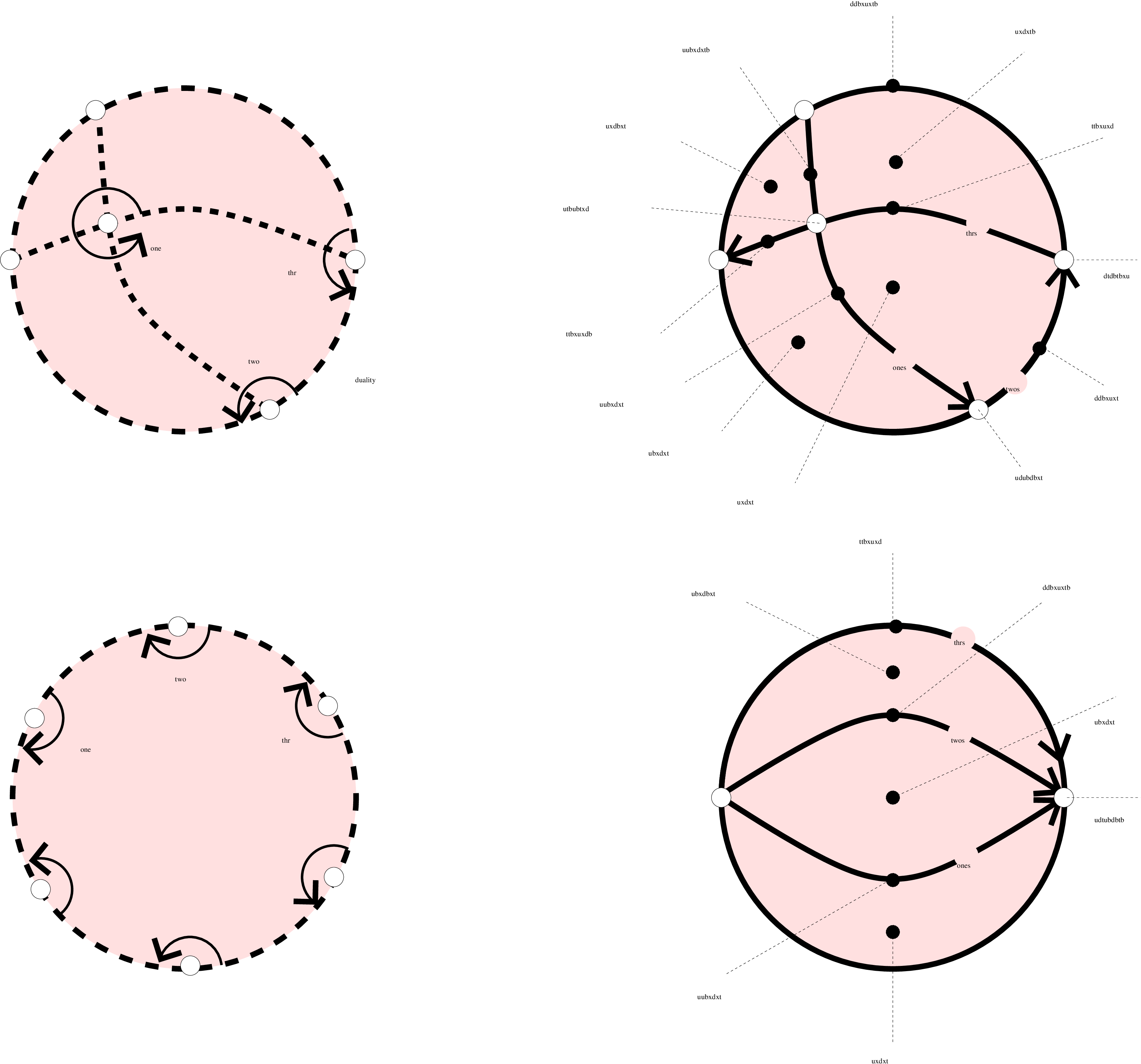}
\caption{The cocycle maps for families of three points indexed by $1,2$ and $3$ \label{cocyclemaps} }
\end{figure}

The well-informed 
reader will have recognized here a reformulation,  taking advantage of the embeddability of any arrangement of pseudolines in the line space of a projective plane,
of the existence of an adjoint---or Type II representation---for every oriented matroid of rank
three~\cite{g-pcbgs-80,c-mor3a-82,gp-sccca-84},~\cite[page 263]{blswz-om-99}.

Combining Theorems~\ref{pretheotwo},~\ref{theoPMversionpoint} and \ref{pretheofou} we get the characterization of chirotopes of planar families of points 
mentioned in the abstract.

\begin{theorem} \label{pretheofiv} 
Let $\chi$ be a map on the set of $3$-subsets of a finite set~$I$.
Then $\chi$ is a chirotope of finite planar families of points  if and only if for every $3$-, $4$-, and $5$-subset $J$ of $I$  
the restriction of $\chi$ to the set of $3$-subsets of $J$ is a chirotope of finite planar families of  points. \qed
 \end{theorem}

\clearpage
\section{Basics of convexity in projective planes}  \label{sectwo}
\newcommand{\NEI}{R}
\newcommand{\SI}{{\cal S}}
\newcommand{\XI}{X}
\newcommand{\YI}{Y}
\newcommand{\ZI}{Z}
\newcommand{\RI}{R}
\newcommand{\QI}{Q}

In this section we establish the basics of convexity in projective planes that we have taken for granded in the paper, namely   
\setcounter{theorem}{0}
\begin{theorem}\label{weakdefinition}
A convex body of a projective plane is a closed topological disk, its polar is a convex body of the dual projective plane, and its dual is the boundary of its polar 
(hence a double pseudoline). Furthermore, up to homeomorphism, the dual arrangement of a pair of disjoint convex bodies of a projective plane is the unique arrangement of
two double pseudolines
that intersect in four transversal intersection points and induce a cellular decomposition of their underlying cross surface.\qed
\end{theorem}
We proceed in two steps, establishing the basics first for neutral (hence affine) planes, which we shall briefly recall, and second for projective planes by reduction to the first step.

\setcounter{theorem}{44}

\subsection{Background material on neutral and affine planes} 
A  {\it neutral plane} is a topological point-line incidence geometry $(\TopPlane,\Lines)$ whose point space~$\TopPlane$ is 
homeomorphic to~$\mathbb{R}^2$, whose line space $\Lines$ is a subspace of the space of pseudolines
of the point space,{}\footnote{ The space of pseudolines of $\mathbb{R}^2$ is the quotient of the space of embeddings 
of $\mathbb{R}$ into $\mathbb{R}^2$ with closed images (i.e., the set of continuous one-to-one maps $\Map{\varphi}{\mathbb{R}}{\mathbb{R}^2}$, 
with the property that
$\varphi(\mathbb{R})$ is closed in $\mathbb{R}^2$, endowed with the compact-open
topology) under the natural action of the group of homeomorphisms of $\mathbb{R}$. 
As usual we identify a pseudoline $\varphi$ with its image $\varphi(\mathbb{R})$.
Similarly the space of oriented pseudolines of $\mathbb{R}^2$ is the quotient of the space of embeddings 
of $\mathbb{R}$ into $\mathbb{R}^2$ with closed images 
under the natural action of the group of direct homeomorphisms of $\mathbb{R}$.}  and whose axiom system is reduced to the following single axiom: 
any two distinct points belong to exactly one line, called their {\it joining line}, which depends continuously on the two points.

The {\it join map}, denoted  $\vee$, assigns to
any ordered pair of distinct points  of $\TopPlane$ their joining line in $\Lines$;  the {\it intersection map}, denoted $\wedge$,
 assigns to any ordered pair of distinct intersecting lines of $\Lines$ their common intersection point in~$\TopPlane$. The join and intersection maps are
continuous and open. 

\begin{theorem}[{\cite[page 220]{bs-mcip-65}}] Let $\Lines$ be the line space of a  neutral plane and let $\OLines$ be the space of oriented versions of the lines of $\Lines$.
Then 
$\Lines$ is  an open  crosscap, the natural projection  $\MapLight{}{\OLines}{\Lines}$ that assigns to an oriented line its unoriented version is a two-covering map, 
and  the pencil of lines through a point is a pseudoline in $\Lines$, i.e., a nonseparating simple closed curve embedded in $\Lines$.  \qed
\end{theorem}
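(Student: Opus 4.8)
The statement to be proved is the classical theorem (attributed to Busemann--Salzmann) that for a neutral plane $(\plane,\Lines)$ the line space $\Lines$ is an open M\"obius strip, the natural map $\OLines\to\Lines$ forgetting orientation is a two-sheeted covering, and each point-pencil $\dual{p}=\{\ell\in\Lines: p\in\ell\}$ is a nonseparating simple closed curve in $\Lines$. The plan is to build everything out of the continuity and openness of the join and intersection maps, together with the sole incidence axiom; no metric or order structure is available. First I would fix a reference pencil and a ``coordinatization'' of $\Lines$ by an auxiliary pseudoline: pick two distinct points $a,b$ and a line $\lambda_0$ not through either; then for a generic line $\ell$ one records the pair of intersection points $\ell\wedge(a\vee b')$ as $b'$ ranges suitably, making precise the intuition that a line is determined by ``where it meets a fixed line'' plus ``which way it slopes.'' The cleanest route, however, is the standard one: show $\dual{p}$ is a simple closed curve by exhibiting a homeomorphism between $\dual{p}$ and the space of directions at $p$, i.e., the circle of germs of embedded arcs emanating from $p$; continuity of $\vee$ gives the map $\mathbb{S}^1\to\dual{p}$, and the incidence axiom (uniqueness of the joining line) gives injectivity, while properness/closedness of lines plus openness of $\vee$ give that the image is closed and the map a homeomorphism onto its image.

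\textbf{Key steps, in order.} (1) Establish that for each point $p$ the pencil $\dual p$ is homeomorphic to a circle: parametrize by the unit circle of tangent directions at $p$ using that $\plane\cong\mathbb{R}^2$, use continuity of $\vee$ to get a continuous surjection onto $\dual p$, use the uniqueness axiom to get injectivity, and use compactness of $\mathbb{S}^1$ to upgrade to a homeomorphism. (2) Show $\dual p$ is \emph{nonseparating} in $\Lines$: one produces, for a line $\ell\in\dual p$, a small transversal arc of lines crossing $\dual p$ exactly once (lines pivoting slightly off $p$ on either side), and checks via the intersection map that the two sides are connected to each other by going ``around the plane,'' exactly as one shows the pencil of lines through a point in $\mathbb{RP}^2$ is a pseudoline; equivalently, compute that $\Lines\setminus\dual p$ is connected. (3) Identify $\Lines$ topologically: cover $\Lines$ by the open sets $U_{p,q}=\{\ell: p\notin\ell, q\notin\ell\}$-type charts on which $\ell\mapsto(\ell\wedge L_1,\ell\wedge L_2)$ for two fixed disjoint lines $L_1,L_2$ gives a homeomorphism onto an open subset of $L_1\times L_2\cong\mathbb{R}^2$ minus the diagonal-type obstruction; glue and use that $\Lines$ is a connected $2$-manifold (from openness and local Euclidean-ness of these charts) which contains a nonseparating simple closed curve $\dual p$ and is noncompact, and deduce $\Lines\cong$ open M\"obius strip. (4) For the covering statement, note $\OLines\to\Lines$ is a continuous surjection whose fibers have exactly two points (a line has exactly two orientations), it is a local homeomorphism because orientations can be chosen continuously on a small chart, hence a two-sheeted covering; the nontriviality (that $\OLines$ is a cylinder, the orientation double cover of the M\"obius strip) then follows from step (3) since the M\"obius strip has a unique connected double cover.

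\textbf{Expected main obstacle.} The delicate point is step (3): proving $\Lines$ is a \emph{manifold} and then pinning down its homeomorphism type using only the abstract axioms. Local Euclidean structure must be extracted purely from continuity/openness of $\vee$ and $\wedge$ and the uniqueness axiom, without any linear or metric crutch; in particular one must verify the charts $\ell\mapsto(\ell\wedge L_1,\ell\wedge L_2)$ are genuinely homeomorphisms (injectivity again from the incidence axiom, openness from openness of $\wedge$, and surjectivity onto the right open set requires knowing precisely which pairs of points are ``joinable by a line avoiding $p$,'' which is where the global topology of $\plane$ enters). After the manifold structure is in place, the classification argument---connected, noncompact, boundaryless surface containing a nonseparating circle, with the complement of that circle a disk---forces an open M\"obius strip, but assembling the ``complement is a disk'' claim cleanly is the part I expect to require the most care; I would reduce it to showing $\Lines\setminus\dual p\cong\plane$ via the map $\ell\mapsto \ell\wedge(\text{a fixed line through }p)$ together with a transverse coordinate, again leaning on uniqueness of joins. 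The covering-space and pencil-is-a-pseudoline assertions are then comparatively routine consequences, exactly paralleling the projective-plane facts already invoked in the excerpt.
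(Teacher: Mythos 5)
First, a point of comparison: the paper does not prove this statement at all --- it is quoted as a known result with a citation to page~220 of the Salzmann et al.\ monograph, and the surrounding appendix only \emph{uses} it. So there is no in-paper argument to measure your sketch against; I can only assess the plan on its own terms. The overall architecture (pencils are circles, the line space is a surface, identify its homeomorphism type, the orientation double cover is routine) is the standard one, but two of your steps contain genuine errors as written.

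In step (1), the injectivity claim fails. There is no ``circle of directions'' at a point of a merely topological plane: the space of germs of embedded arcs at $p$ is not a circle, and if by ``directions'' you mean the half-lines of lines through $p$ then the parametrization presupposes what it is meant to prove. The natural substitute --- mapping a small circle $C$ around $p$ to $\dual{p}$ by $q\mapsto p\vee q$ --- is continuous and surjective but at least two-to-one (every line through $p$, being a closed embedded copy of $\mathbb{R}$, meets $C$ in at least two points), so it only exhibits $\dual{p}$ as a compact connected quotient of $\mathbb{S}^1$. To get a circle you need local charts, e.g.\ $x\mapsto p\vee x$ restricted to a line $\ell_0$ not through $p$, which \emph{is} injective by uniqueness of joins and open by openness of $\vee$; but then you must show such arcs cover $\dual{p}$, which is not automatic since in a neutral plane a line through $p$ need not meet $\ell_0$.

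In step (3), the reduction to ``$\Lines\setminus\dual{p}\cong\plane$'' is false, and the classification argument built on it would prove the wrong thing. Already in the Euclidean model the lines missing a fixed point form an open annulus (parametrize by the closest point, landing in $\mathbb{R}^2\setminus\{0\}$), and this is forced in general: a one-sided simple closed curve in a surface whose complement is an open $2$-cell assembles to a closed surface of Euler characteristic $1$, i.e.\ $\mathbb{RP}^2$ --- the line space of a \emph{compact} projective plane, contradicting noncompactness of $\Lines$. The correct statement is that $\dual{p}$ is a core circle of an open M{\"o}bius strip, so its complement is an open annulus; establishing that the complement is an annulus (equivalently, pinning down the global type of $\Lines$) is exactly the hard part, and your sketch supplies no mechanism for it. A related unproved ingredient is the existence of the two disjoint lines $L_1,L_2$ used in your charts, and the fact that the chart $\ell\mapsto(\ell\wedge L_1,\ell\wedge L_2)$ is only defined on the (possibly proper) open set of lines meeting both. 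Steps (2) and (4) are essentially fine once (1) and (3) are repaired.
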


An {\it affine plane} is a neutral plane $(\TopPlane,\Lines)$ with the property that for every point-line pair $(\apoint,\aline)$ 
there exists a unique line $\bline$ incident to $\apoint$ such that either $\bline=\aline$ or $\bline$ and $\aline$ are disjoint; the line $\bline$ is called
 the {\it parallel} to $\aline$ through $\apoint$, and the lines $\aline$ and $\bline$ are said to be {\it parallel}. 
The parallelism relation is an equivalence relation,  the parallel class of $\aline$ is denoted~$[\aline]$,
 and the set of parallel classes is denoted $[\Lines]$.  
The {\it projective completion} of an affine plane $(\TopPlane,\Lines)$ is the topological point-line incidence geometry whose line space  $\pclines$ and 
point space $\pcplane$ are, respectively,
\begin{enumerate}
\item  the set $\{ \aline \cup\{[\aline]\} \mid \aline \in \Lines\} \cup \{[\Lines]\}$, 
endowed with the topology of the
one-point compactification $\Lines \cup \{\infty\}$ of $\Lines$ via the map  
that assigns  $\aline \in \Lines$ to  $\aline \cup \{[\aline]\} \in \pclines$ and $\infty$ to  $[\Lines]$;
 and 
\item the set $\TopPlane \cup [\Lines]$  endowed with 
the topology  with subbase  the  $J_\apoint \wedge J_\apointbis$ where  $\apoint$ and  $\apointbis$ are two
points of  $\pcplane$ and where  $J_\apoint $ and  $J_\apointbis$ are disjoint open
intervals of the pencils of lines through $\apoint$ and $\apointbis$, respectively. 
\end{enumerate}
The {\it affine parts} of a projective plane $(\pp,\lpp)$ are the topological point-line incidence geometries
 $(\pp\setminus \aline, \lpp\setminus \{\aline\})$ where $\aline$ ranges over $\Lines$. 

\begin{theorem}\label{projectivecompletion}
The projective completion  of an affine plane is a projective plane and 
the affine parts of a projective plane are affine planes with the property that their projective completions are isomorphic 
to the initial projective plane.   \qed
\end{theorem}

We refer to the monograph of Salzmann et al~\cite[Chap. 3]{sbghl-cpp-95} for supplementary background material on 
neutral planes, where they are called $\mathbb{R}^2$-planes.

\subsection{Convexity and duality in neutral planes}
We work in a  neutral plane $(\TopPlane, \Lines)$.  
As in the Euclidean plane,
a subset of points is called {\it convex} if it includes the line segments joining its points. 
A {\it convex body} is a compact convex subset of points with nonempty interior, 
its {\it polar}  is the set of lines missing its interior, and 
its {\it dual} is its set of tangent lines or supporting lines (i.e., the set of lines that intersect the body but not its interior or, equivalently, 
the set of lines that intersect the  convex body and that include the body in one of their 
two closed sides). 
A {\it double pseudoline} of an open crosscap is a double pseudoline of the one-point compactification of the open crosscap with the property that 
the point at infinity belongs to the disc side of the double pseudoline.

In this section we establish the following transcription of Theorem~\ref{theoone} for neutral planes.  

\begin{theorem} \label{theooneneutral} A convex body of a neutral plane is a closed topological disk,  its polar is a  
closed topological disk with an interior point deleted, which is closed in the line space and whose intersection  with the pencil of lines through any point is a closed line segment, and 
its dual  is the boundary of its polar, hence a  double pseudoline of the line space. 
Furthermore, up to homeomorphism, the dual arrangement of a pair of disjoint convex bodies of a neutral plane 
is the unique arrangement of two double pseudolines in an open crosscap 
that intersect in four transversal intersection points and induce a cellular decomposition of the one-point compactification of the open crosscap. \qed
\end{theorem}

The proof proceeds by a sequence of auxiliary results.

\subsubsection{Boundary of a convex body and tangents}

The proofs of the two following lemmas are adapted from~\cite[Chap. 11.3]{b-gcppr-78}.

\begin{lemma} 
The boundary of a convex body is a simple closed curve. 
\end{lemma}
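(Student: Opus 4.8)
The statement to prove is that the boundary $\partial U$ of a convex body $U$ in a real two-dimensional neutral plane $(\plane,\Lines)$ is a simple closed curve. The plan is to follow the classical argument for convex bodies in the Euclidean plane (as in~\cite[Chap.~11.3]{b-gcppr-78}), adapting each step to the weaker combinatorial setting where the only structure available is that two points determine a joining line depending continuously on the pair. First I would fix an interior point $o$ of $U$ (which exists since $U$ has nonempty interior by definition). The key geometric input is a ``radial'' description of $U$ as seen from $o$: for each line $\ell$ through $o$, the intersection $\ell \cap U$ is a line-segment (using that $U$ is convex and that a pseudoline meets a convex body in a connected set), and since $o$ is interior this segment contains $o$ in its relative interior. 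The pencil of lines through $o$ is a pseudocircle in $\Lines$ (by the cited theorem on neutral planes), and I would use the two-covering $\MapLight{}{\OLines}{\Lines}$ to pass to the pencil of oriented lines (rays) through $o$, which is a genuine circle $\mathbb{S}^1$; along each ray, $U$ meets it in a half-open segment $[o, r(u)]$ where $r(u)$ is the unique boundary point of $U$ on that ray.

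The heart of the proof is then to show that the map $\MapLight{r}{\mathbb{S}^1}{\partial U}$ is a homeomorphism. Injectivity is clear (distinct rays through $o$ meet only at $o$, and $r(u)\neq o$), and surjectivity follows because any boundary point $p$ lies on a unique ray from $o$ and must be the last point of $U$ on that ray (if $U$ extended past $p$, convexity with $o$ would put $p$ in the interior; if it stopped before $p$, then $p\notin U$). The continuity of $r$ is where I expect the main obstacle: in the Euclidean case one uses the metric and an explicit ``gauge'' (Minkowski functional), but here one only has the continuity of the join and intersection maps and the compactness of $U$. The plan is to argue by contradiction: if $r$ were discontinuous at some $u_0$, take $u_n \to u_0$ with $r(u_n)$ staying outside a fixed neighborhood of $r(u_0)$; by compactness of $U$ pass to a convergent subsequence $r(u_n)\to q \in U$, note $q$ lies on the ray through $u_0$ (continuity of the intersection map, since $q$ is the intersection of the ray $u_0$ with the relevant structure), so $q \in [o,r(u_0)]$; then use convexity to derive that points just beyond $r(u_0)$ on the ray $u_0$ are interior to $U$ (because they lie in the convex hull of $o$, a neighborhood of the $r(u_n)$'s, and nearby rays), contradicting that $r(u_0) \in \partial U$. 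Conversely a ``jump inward'' of $r$ is excluded because it would make $r(u_0)$ an interior point. The identification of interior points will rely on the fact that the join map is open, so that the convex hull of an open set together with a point has nonempty interior.

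Once $r$ is shown continuous, it is a continuous bijection from the compact space $\mathbb{S}^1$ to the Hausdorff space $\partial U$ (which is Hausdorff as a subspace of $\plane \cong \mathbb{R}^2$), hence a homeomorphism, and therefore $\partial U$ is homeomorphic to $\mathbb{S}^1$, i.e., a simple closed curve. I would also remark that $\partial U$ is nonseparating versus separating is not at issue here — that refinement (that $\partial U$ is an ``oval'' and its set of tangents a double pseudoline) is the content of the subsequent lemmas; the present lemma only asserts the topological type of $\partial U$. The main technical care is thus entirely in the continuity argument for $r$, where the absence of a metric forces one to replace ``small balls'' by ``convex hulls of open neighborhoods'' and to lean on the openness of the join map and on compactness of $U$.
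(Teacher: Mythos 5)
Your proposal is correct and uses the same parametrization as the paper: both identify $\partial U$ with the pencil $L_A\cong\mathbb{S}^1$ of oriented lines through an interior point $A$ via the map sending an oriented line to the far endpoint of its trace on $U$. The two arguments diverge only at the continuity step, which is indeed the heart of the lemma. The paper proves continuity of $\varphi$ directly: it picks two further interior points $B,C$ with $A$ in the interior of the segment $[B,C]$, and traps $\varphi(\ell_n)$ between the intersections of $\ell_n$ with the half-lines from $B$ and from $C$ through $\varphi(\ell)$; these intersections converge to $\varphi(\ell)$ by continuity of the join and intersection maps, so the trapping segment collapses. Your route is instead by contradiction and compactness: a subsequential limit $q$ of $r(u_n)$ must lie in $\ell_{u_0}\cap U$ on the correct ray, hence in $[o,r(u_0))$ if $r$ jumps, and the half-open radial segment $[o,r(u_0))$ consists of interior points, contradicting $q\in\partial U$ (note $\partial U$ is closed, so $q\in\partial U$). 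This works, but it outsources the geometry to the auxiliary fact that $[o,r(u_0))\subseteq\operatorname{Int}(U)$, which in the absence of a metric must itself be proved from openness of the join map --- essentially the content the paper defers to its later Lemma on traces of lines on interiors; you acknowledge this, so the gap is one of bookkeeping, not of substance. One phrase in your sketch is muddled: you speak of ``points just beyond $r(u_0)$'' becoming interior, but after you have located $q$ in $[o,r(u_0)]\setminus\{r(u_0)\}$ the contradiction is simply that $q$ is an interior point of $U$ lying in $\partial U$; no point beyond $r(u_0)$ is needed, and no ``outward jump'' case exists since $U$ is closed. Finally, you close with compact-to-Hausdorff, whereas the paper observes that the inverse of $\varphi$ (namely $p\mapsto A\vee p$) is continuous and so only continuity of $\varphi$ is needed; both closings are valid. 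What the paper's sandwich argument buys is a self-contained, quantifier-free proof using only the continuity and openness of $\vee$ and $\wedge$; what yours buys is brevity at the continuity step, paid for by the interiority lemma.
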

\begin{proof} 
Let $U$ be a convex body, let $A$ be one of its interior points, and let $L_A
\approx \mathbb{S}^1$ be the pencil of oriented lines through $A.$
Consider the application  $\Map{\varphi}{L_A}{\partial U}$ that assigns to $\aline \in L_A$ 
the endpoint of the trace of $U$ on $\aline$ beyond $A$.
Clearly $\varphi$ is a well-defined one-to-one and onto correspondence whose inverse is continuous. 
Therefore it is sufficient to show that $\varphi$ is continuous.
Let $\aline \in L_A$ and let $B$ and $C$ be two points of the interior of~$U$ with $A$ contained in the interior of the line segment joining~$B$ to~$C$. 
As illustrated in Fig.~\ref{boundary} the rays  with origins $A,B$ and $C$ through
$\varphi(\aline)$ leave $U$ at $\varphi(\aline).$ 
\begin{figure}[!htb]
\psfrag{A}{$A$} \psfrag{B}{$B$} \psfrag{C}{$C$} \psfrag{CB}{$U$} \psfrag{phil}{$\varphi(\aline)$}
\psfrag{l}{$\aline$} \psfrag{ln}{$\aline_n$}
\includegraphics[width = 0.50\linewidth]{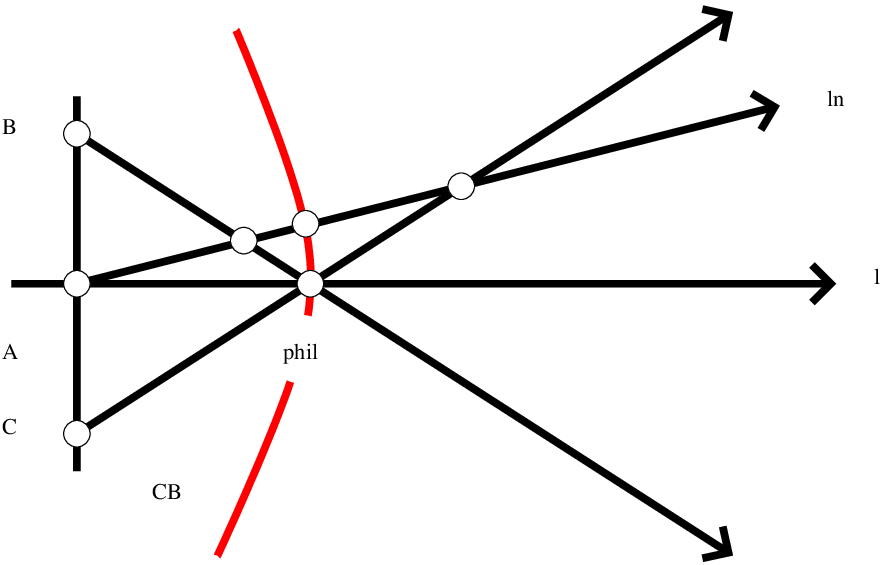}
\caption{ \label{boundary} }
\end{figure}
Let $\aline_1, \aline_2, \ldots, \aline_n, \ldots$, be a sequence of oriented lines of $L_A$
converging to $\aline$ with $\aline_n \neq \aline$ for all $n \geq 1$.  For $n$ large enough the intersection points of the
line $\aline_n$ with the lines $B \vee \varphi(\aline)$ and $C\vee \varphi(\aline)$ are well-defined 
and converge to $\varphi(\aline).$ Consequently, for $n$ large enough, the line
$\aline_n$ intersects the rays  through $\varphi(\aline)$ with origins $B$ and
$C.$ One of these two intersection points is beyond $\varphi(\aline)$ and the
other one is before $\varphi(\aline).$  Therefore $\varphi(\aline_n)$ belongs to the
line segment joining these two points. When $n$ goes to infinity, this line
segment retracts onto $\varphi(\aline).$ Therefore $\varphi$ is continuous. 
\end{proof}

\begin{lemma} Let $U$ be a convex body, let $A$ be a point not in the interior of $U$,  
let  $\XI$ be the set of lines through~$A$ intersecting the interior of $U$, and 
let  $\YI$ be the set of lines through~$A$ intersecting $U$ but not its interior. 
 Then  $\XI$ is a nonempty open interval whose endpoints belong to $\YI$. 
Furthermore $\YI$ is a pair if and only if $A\notin \partial U$.
\end{lemma}
\begin{proof} The set $\XI$ is 
\begin{enumerate}
\item nonempty, because the interior of $U$ is nonempty; 
\item an open subset of the pencil of lines through $A$,  because the join map is open; 
\item connected,  because the interior of $U$ is connected.
\end{enumerate} 
Consequently,  $\XI$ is a nonempty open interval of the pencil of lines through $A$ or 
$\XI$ is the pencil of lines through $A$.

We now show that $\XI$ is not the pencil of lines through $A$.

For $\aline \in \XI$, let $\aline^+$ and $\aline^-$ be the two connected components of $\aline\setminus \{A\}$ with the convention 
that $\aline^+$ is the one that intersects the interior of $U$ and, consequently, $\aline^-$ is the one that misses $U$. 
We set $\RI^+ = \bigcup_{\aline \in \XI} \aline^+$ and $\RI^- = \bigcup_{\aline \in \XI} \aline^-$.
Let $\SI$ be the set of closed line segments contained in the interior of $U$ whose supporting line is not incident to~$A$.
For $I \in \SI$, let $\XI_I$ be the set of lines of $\XI$ intersecting  the interior of $I$ and let $\QI^+(I) = \bigcup_{\aline \in \XI_I} \aline^+$ and $\QI^-(I) = \bigcup_{\aline \in \XI_I} \aline^-$. 
 We leave the verification of the following properties to the reader 
\begin{enumerate}
\item $\SI$ is nonempty;
\item $\XI = \bigcup_{I \in \SI} \XI_I$;
\item $\QI^+(I)$  and $\QI^-(I)$ are open quadrants;
\item $\RI^+ = \bigcup_{I\in \SI} \QI^+(I)$ is open and nonempty;
\item $\RI^- = \bigcup_{I\in \SI} \QI^-(I)$ is open and nonempty.
\end{enumerate}
 
The sets $\RI^+$ and $\RI^-$ are disjoint nonempty open subsets of the point space minus~$A.$ Since this last set is  
connected, there exists a point $E$ neither  in $\RI^+$ nor in $\RI^-.$
The line joining $A$ and $E$ misses the interior of $U.$ Consequently $\XI$ is not
the pencil of lines through $A.$

Finally the endpoints of $X$ belongs to $Y$ since $U$ is compact. The furthermore part follows easily.
\end{proof}

\subsubsection{Duality}
The dual of a convex body $U$ is denoted $\tang{U}$.
\begin{lemma}\label{dualbodybis} 
Let $U$ be a convex body.  Then 
\begin{enumerate}
\item $\tang{U}$ is a simple closed curve in $\Lines$;
\item $\tang{U}$ is a double pseudoline in $\Lines$; 
\item the set of lines intersecting the interior of $U$ is the open crosscap bounded by $\tang{U}$;
\item the set of lines missing $U$ is the one-punctured topological disk bounded by $\tang{U}$.
\end{enumerate} 
\end{lemma}
\begin{proof} We endow the plane with an orientation. Let $\Delta$ be the map that assigns to $A$ not in~$U$ the tangent to $U$ through $A$ with the property that walking along the tangent from~ $A$ to~$U$ we see the convex body $U$ on our right, let $I = [A,B]$ be a closed line segment missing $U$ with the property 
that $\Delta(A) \neq \Delta(B)$, and let $\Gamma$ be a simple closed curve surrounding $U$. We leave the verification of
the following properties to the reader 
\begin{enumerate}
\item $\Delta$ is continuous and onto;
\item the restriction of $\Delta$ to the domain $\Gamma$ is onto;
\item the restriction of $\Delta$ to the domain $I$ and codomain $\Delta(I)$ is a homeomorphism;
\item $\Delta$ is open;
\end{enumerate}
from which it follows that $\tang{U}$ is compact and locally homeomorphic to $\mathbb{R}$, hence a simple closed curve. 

We now prove claims (2), (3), and (4). Let $A$ be an interior point of $U$ and let $\Lines \cup \{\infty\}$ be the one-point compactification of $\Lines$.
Since two pseudolines intersect in at least one point and since the pencil of lines through $A$  is a pseudoline 
that does not intersect $\tang{U}$, it follows that $\tang{U}$ is a double pseudoline of $\Lines \cup \{\infty\}$.
Let $X$ be the set of lines intersecting the interior of $U$. The set $X$ is connected, open, closed in $\Lines \setminus \tang{U}$ and  contains pseudolines. 
Therefore $X$ is the trace on $\Lines$ of the open crosscap bounded by $\tang{U}$ in $\Lines \cup \{\infty\}$.  
It remains to show that the open crosscap bounded by $\tang{U}$ in $\Lines \cup \{\infty\}$ does not contain $\infty$.
This follows from~\cite[Lemma 31.24]{sbghl-cpp-95} which asserts that the set of lines intersecting a compact set of points is compact. 
\end{proof}

\begin{lemma} 
Let $U$ and $V$ be two disjoint convex bodies. Then the double pseudolines $\tang{U}$ and $\tang{V}$ intersect in exactly four points, where they cross. 
\end{lemma}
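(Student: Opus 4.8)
The statement to prove is: for two disjoint convex bodies $U$ and $V$ of a neutral plane, the double pseudolines $\tang{U}$ and $\tang{V}$ meet in exactly four points, and at each of these points they cross (transversally). The natural approach is to identify the intersection $\tang{U}\cap\tang{V}$ with the set of \emph{common} tangents of $U$ and $V$, and then to count common tangents directly by a pencil-sweep argument in the spirit of the proof of Lemma on the boundary of a convex body and of Lemma~\ref{dualbodybis}. The plan is as follows. First I would observe that a line $\ell\in\Lines$ lies in $\tang{U}\cap\tang{V}$ precisely when $\ell$ supports both $U$ and $V$, i.e.\ $\ell$ meets each body but misses both interiors. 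Since $U$ and $V$ are disjoint compact convex sets, a common supporting line is either an ``outer'' bitangent (both bodies on the same closed side) or an ``inner'' bitangent (the bodies in opposite open sides). So the count of $|\tang{U}\cap\tang{V}|$ reduces to counting these two kinds.

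Next I would set up the sweep. Pick an interior point $A$ of $U$ and an interior point $B$ of $V$; by disjointness $A\neq B$ and the joining line $A\vee B$ is well defined and, crucially, meets the interior of \emph{each} body (it passes through $A$ and through $B$). Parametrize the pencil $L_A\cong\cercle$ of oriented lines through $A$. For each $\ell\in L_A$ with $\ell$ not meeting $V$, define two numbers: the ``left support'' and ``right support'' tangent directions of $V$ as seen from the relevant endpoint on $\ell$, exactly as in the map $\Delta$ of Lemma~\ref{dualbodybis}. The key is to compare, as $\ell$ rotates once around $A$, the tangent line to $U$ through the moving boundary point $\varphi(\ell)\in\partial U$ (which sweeps out $\tang{U}$ once, by Lemma~\ref{dualbodybis}) against the two tangent lines to $V$ through the moving point. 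A common tangent of $U$ and $V$ occurs exactly when the tangent-to-$U$ line coincides with a tangent-to-$V$ line. Using the lemma that the set of lines through a point missing a convex body is an open interval (and its two endpoints are the two tangents through that point), together with the fact that $\partial U$ and $\partial V$ are disjoint simple closed curves bounding disjoint bodies, one shows that as the foot point traverses $\partial U$ once the tangent line to $U$ ``passes'' each of the two tangent lines to $V$ exactly twice — once producing an outer and once an inner bitangent in each of the two ``regions'' of $\partial U$ separated by the two points where $A\vee B$ meets $\partial U$. This yields exactly four common tangents: two outer and two inner. (Alternatively, and perhaps more cleanly, one does the Jordan-curve bookkeeping directly on the two double pseudolines: two double pseudolines in $\Lines\cup\{\infty\}$ must cross an even number of times because each separates the surface; and the M\"obius strip bounded by $\tang{U}$ — the lines meeting $\mathrm{int}\,U$ — contains the core pseudolines, so $\tang{V}$, being a separating curve disjoint from the pencil of lines through an interior point of $V$, must enter and leave the disk side and the M\"obius side of $\tang{U}$ in a constrained way; combined with a lower bound of four from an explicit exhibition of two outer and two inner common tangents, one pins the number down to exactly four.)

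For transversality at each of the four intersection points, I would argue that at a common tangent $\ell_0$ the two curves $\tang{U}$ and $\tang{V}$ have ``different tangent directions'' in $\Lines$ in the following combinatorial sense: moving along $\tang{U}$ near $\ell_0$ corresponds to rolling the support line around $\partial U$, which changes the touching point on $U$ but, to first order, not the touching point on $V$; whereas moving along $\tang{V}$ near $\ell_0$ does the opposite. Concretely, a small neighbourhood of $\ell_0$ in $\Lines$ is cut by $\tang{U}$ into two cells distinguished by which closed side of the line contains $\mathrm{int}\,U$, and similarly for $\tang{V}$; because $U$ and $V$ are disjoint, these two dichotomies are independent near $\ell_0$, so the four quadrants are all realized and the curves cross rather than touch. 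This independence is exactly the content of the disjointness hypothesis and is where I expect the only real subtlety: one must check that no common tangent is a line-segment contact simultaneously degenerate for both bodies in a way that would let the curves touch without crossing. But a common tangent $\ell_0$ meets $U$ in a segment (or point) and $V$ in a disjoint segment (or point), and an arbitrarily small rotation of $\ell_0$ about an interior point of its $U$-contact moves it off $V$ entirely on one side and into $\mathrm{int}\,V$ on the other — and symmetrically — which forces the crossing.

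\textbf{Main obstacle.} The crux is the exact count ``four'', not the transversality: one needs to rule out $0$ or $2$ common tangents and also more than $4$. The lower bound of (at least) two outer and two inner bitangents is the delicate part for general neutral planes, since one cannot invoke Euclidean convexity arguments (no inner products, no separating hyperplane theorem in the metric form); instead one must push the pencil-sweep monotonicity through using only the topological facts already established — that $\tang{U}$ is a double pseudoline, that lines-through-a-point-missing-$U$ form an open interval with tangent endpoints, and that two pseudolines (resp.\ a pseudoline and a double pseudoline) have the standard intersection behaviour. I expect the cleanest route is the hybrid one: use the Jordan-curve / separation properties of the double pseudolines $\tang{U}$, $\tang{V}$ in $\Lines\cup\{\infty\}$ to bound the number of crossings above by $4$ (via the structure of how a separating curve can interact with the disk side and crosscap side of another separating curve, given that neither meets the pencil of lines through an interior point of the other), and use an explicit continuity/sweep construction to exhibit at least $4$; the two bounds then coincide, and evenness plus transversality of each intersection follow from the local analysis above.
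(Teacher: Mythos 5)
Your proposal correctly reduces the statement to counting common tangents and your local ``four quadrants'' argument for transversality is essentially the right one (and matches what the paper gets for free), but the central counting step --- exactly two outer and two inner bitangents, no more and no fewer --- is exactly the part you leave as a plan, and neither of your two proposed routes closes it. The first route (sweep the pencil through an interior point $A$ of $U$, track the boundary point $\varphi(\ell)$, and claim the tangent to $U$ ``passes'' each tangent to $V$ exactly twice) needs a monotonicity or degree argument that you do not supply; nothing you have established forces the relevant comparison function to change sign exactly twice rather than four or six times. The second route is worse: the claimed topological upper bound of four crossings for two double pseudolines is false in general --- two separating simple closed curves in a cross surface can cross $6, 8, \ldots$ times, and the condition that neither curve meets the pencil of lines through an interior point of the other body does not by itself cap the count at four. (Indeed, ``intersect in exactly four transversal points'' is part of the \emph{definition} of an arrangement of two double pseudolines in this paper, precisely because it is not automatic.) The upper bound must come from convexity, not from the topology of the curves.

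The idea you are missing is the paper's tangent-assignment map: fix an orientation of the plane and let $\Delta$ send each point $A\notin U$ to the unique tangent to $U$ through $A$ such that, walking along it from $A$ towards $U$, one sees $U$ on the right. One shows $\Delta$ is continuous, and then that $\Delta(V)$ is a \emph{closed nondegenerate interval} $[T,T']$ of the curve $\tang{U}$, with $\Delta(\interior{V})=\,]T,T'[$. Here connectedness and compactness of $V$ do the work: the image of a connected compact set under $\Delta$ is a connected compact subarc, and it is the interval structure that simultaneously yields existence of the endpoints $T\neq T'$ (lower bound), the fact that $T,T'$ are the \emph{only} common tangents of this orientation class (upper bound), and the crossing of $\tang{U}$ and $\tang{V}$ at $T$ and $T'$ (on one side of $T$ along $\tang{U}$ the tangents meet $\interior{V}$, on the other they miss $V$). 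Swapping the roles of $U$ and $V$ produces the remaining pair. If you want to complete your write-up, replacing your pencil sweep through an interior point by this map from the \emph{exterior} of $U$ restricted to $V$ is the decisive simplification.
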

\begin{proof} We endow the plane with an orientation.
Let $\Delta$ be the map that assigns to $A \notin U$ the tangent to $U$ through $A$ with the property that 
walking along the tangent from  $A$ to $U$ we see the convex body $U$ on our right, let $I = [A,B]$ be a closed line segment missing $U$ with the property 
that $\Delta(A) \neq \Delta(B)$, and let $\Gamma$ be a simple closed curve surrounding $U$.  We leave the verification of
the following properties to the reader 
\begin{enumerate}
\item $\Delta$ is continuous and onto;
\item the restriction of $\Delta$ to the domain $\Gamma$ is onto;
\item the restriction of $\Delta$ to the domain $I$ and codomain $\Delta(I)$ is a homeomorphism;
\item $\Delta(V)$ is a closed interval $[T,T']$, $T\neq T'$, of $\tang{U}$;
\item $\Delta(\interior{V})$ is the open interval $]T,T'[$;
\end{enumerate}
from which it follows that $T$ and $T'$ are  the sole tangents to both $U$ and $V$ such that walking along the tangents  
from $V$ to $U$ we see $U$ on our right (and walking along the tangents from $U$ to $V$ 
we see $V$ on our left or on our right depending on whether we walk on $T$ or on $T'$),
 and that $\tang{U}$ and $\tang{V}$ cross at $T$ and $T'$. Switching the roles of $U$ and $V$ we get a second pair of common tangents to $U$ and $V$. 
This proves the lemma.
\end{proof}


\begin{lemma} Let $U$ and $V$ be two disjoint convex bodies. Then the double pseudolines $\tang{U},\tang{V}$ induce a cellular decomposition of the 
(one-point compactification of) $\Lines$. 
\end{lemma}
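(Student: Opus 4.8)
The statement to prove is: for two disjoint convex bodies $U,V$ of a neutral plane $(\plane,\Lines)$, the pair of double pseudolines $\tang{U},\tang{V}$ decomposes the one-point compactification $\pclines$ of $\Lines$ into (open) cells — i.e., every connected component of $\pclines\setminus(\tang{U}\cup\tang{V})$ is an open $2$-cell, every edge (component of $(\tang{U}\cup\tang{V})$ minus the four crossing points) is an open arc, and the four crossing points are the vertices. By the previous lemma we already know that $\tang{U}$ and $\tang{V}$ are double pseudolines that cross transversally in exactly four points, so the combinatorial ``shape'' of the incidence data is pinned down; what remains is a genuinely topological regularity statement, namely that no face is pathological (e.g.\ not simply connected, or not an open disk).

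\textbf{First step.} I would first record the local picture at each of the four crossing points and along each edge. Since the two curves cross transversally, a neighbourhood of a crossing point is homeomorphic to a neighbourhood of $0$ in $\mathbb{R}^2$ with the two coordinate axes, so locally we see four quadrant-sectors; and along the relative interior of an edge a neighbourhood is an open band cut into two halves by the edge. This shows the $1$-skeleton $\tang{U}\cup\tang{V}$ is a finite CW-subcomplex of the surface $\pclines$ with $V_0=4$ vertices and $E_0=8$ edges (four arcs on each double pseudoline, the four crossing points splitting each pseudocircle into four arcs).

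\textbf{Second step (the main point).} The substantive claim is that each connected component $F$ of the complement of $\tang{U}\cup\tang{V}$ is an open $2$-cell. The cleanest route is via the classification of surfaces with boundary together with an Euler-characteristic count. Concretely: $\pclines$ is a cross surface (Euler characteristic $1$); its complement of the $1$-skeleton has some number $f$ of components $F_1,\dots,F_f$; each $\overline{F_k}$ is a compact surface-with-boundary, and I claim each is a closed disk. To prove this, I would argue that $\tang{U}\cup\tang{V}$ is \emph{connected} and that each face is \emph{simply connected}: connectivity is immediate because $\tang{U}$ and $\tang{V}$ already meet. For the simple-connectedness of a face $F$, use the key geometric input that $\tang{U}$ bounds a M\"obius strip (the lines meeting $\interior U$) and a once-punctured disk (lines missing $U$), and similarly for $\tang{V}$ — from Lemma~\ref{dualbodybis}. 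Any face $F$ lies in exactly one of the two sides of $\tang{U}$ and exactly one of the two sides of $\tang{V}$; of the four combinations, the one contained in the intersection of the two \emph{disk} sides sits inside a disk and is automatically planar, while the others are handled by the crossing pattern (four transversal crossings of two separating curves on a cross surface force the component containing $\infty$ and the M\"obius-strip components to be further cut into disks by the other curve). Once every face is a disk, run Euler: $V_0-E_0+f=4-8+f=\chi(\pclines)=1$, hence $f=5$, and since each face is an open disk the decomposition is cellular; alternatively, since we have already shown each face is a $2$-cell, the Euler count is just a sanity check and the cellularity is the content.

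\textbf{Expected main obstacle.} The delicate part is ruling out that one of the faces — in particular the face $F_\infty$ containing the point at infinity of $\Lines$, which a priori sits inside a M\"obius strip and could be an annulus, M\"obius band, or worse — fails to be a disk. The argument has to genuinely use that \emph{both} curves are present and cross four times: a single double pseudoline on a cross surface leaves a M\"obius-strip side and a disk side (non-cellular), and it is exactly the second curve, crossing the first in four points, that chops the M\"obius-strip side into disks. I would make this precise by cutting $\pclines$ along $\tang{U}$ to get a disk $D$ and an open M\"obius strip $M$, tracking where the arcs of $\tang{V}$ land in each piece (the lemma on $\Delta(V)=[T,T']$ controls which arcs of $\tang{V}$ lie in the disk side versus the M\"obius side of $\tang{U}$), and checking that the induced subdivision of $M$ (a M\"obius strip cut by a properly embedded arc, plus the boundary structure) consists of disks. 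This case analysis over the four crossing configurations, while elementary, is where all the care is needed; everything else is the standard CW/Euler bookkeeping.
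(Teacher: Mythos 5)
Your route---cut the cross surface along $\tang{U}$ into a disk and a M\"obius band, track where the two arcs of $\tang{V}$ land, and close with an Euler count---is genuinely different from the paper's proof, which never cuts the surface: it adjoins the two auxiliary pseudolines $\dual{u}$ and $\dual{v}$ (the pencils through interior points $u\in\interior{U}$, $v\in\interior{V}$), notes that each of the arrangements $\{\dual{u},\dual{v},\tang{U}\}$ and $\{\dual{u},\dual{v},\tang{V}\}$ is determined up to homeomorphism by the intersection counts already established, shows that only two superpositions of these two pictures are compatible with the four transversal crossings of $\tang{U}$ and $\tang{V}$, and finally excludes one of them because the set of lines meeting both bodies is connected.

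The gap in your proposal sits exactly where you write that the remaining faces ``are handled by the crossing pattern.'' Four transversal crossings of two double pseudolines do \emph{not} force the M\"obius side of $\tang{U}$ to be chopped into disks. The two arcs of $\tang{V}$ properly embedded in that M\"obius band could both be boundary-parallel (take for $\tang{V}$ a curve bounding a small disk that pokes across $\tang{U}$ twice): then a M\"obius-band face, and possibly an annular face, survives, and the decomposition is not cellular. Such configurations of two double pseudolines with exactly four transversal crossings do exist; this is precisely why Theorem~\ref{theoone} must carry ``and induce a cellular decomposition'' as a separate clause, and why some geometric input beyond the crossing count is indispensable. The input the paper uses is the connectedness of the set of lines meeting both bodies: the intersection of the two open M\"obius sides is the image of the connected set $\interior{U}\times\interior{V}$ under the continuous join map, hence connected, and this is what rules out the last bad configuration. (A smaller instance of the same over-claim: a face lying in the intersection of the two disk sides is planar, but planar does not yet mean it is a single open $2$-cell.) Your appeal to $\Delta(V)=[T,T']$ is the right kind of geometric information---it pins down which arcs of $\tang{U}$ lie in the M\"obius side of $\tang{V}$---but as written it is not developed into an argument that excludes the boundary-parallel case; the proposal defers to ``a case analysis where all the care is needed,'' which is exactly where the proof lives. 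To repair it, either import the connectedness of the common-transversal region as above, or carry out the arc-placement analysis from $\Delta(V)=[T,T']$ (and its three companions) to completion.
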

\begin{proof} Let $u \in \interior{U}$ and let $v \in \interior{V}$.  We have seen that
\begin{enumerate}
\item $\tang{U}$ and $\tang{V}$ are double pseudolines intersecting in exactly four points---where they cross;
\item $\dual{u}$ and $\dual{v}$ are pseudolines intersecting in exactly one point---where they cross; 
\item $\dual{u}$ is contained in the open crosscap bounded by $\tang{U}$;
\item $\dual{u}$ and $\tang{V}$ intersect in exactly two points---where they cross;
\item $\dual{v}$ is contained in the open crosscap bounded by $\tang{V}$;
\item $\dual{v}$ and $\tang{U}$ intersect in exactly two points---where they cross.
\end{enumerate}
Consequently the arrangements $\{\dual{u},\dual{v}, \tang{U}\}$ and $\{\dual{u},\dual{v}, \tang{V}\}$ are (up to homeomorphism) those depicted in the two leftmost diagrams of 
Fig.~\ref{twocases}. 
\begin{figure}[!htb]
\psfrag{US}{$\tang{U}$} \psfrag{VS}{$\tang{V}$}
\psfrag{u}{$\dual{u}$} \psfrag{v}{$\dual{v}$}
\includegraphics[width =0.98550\linewidth]{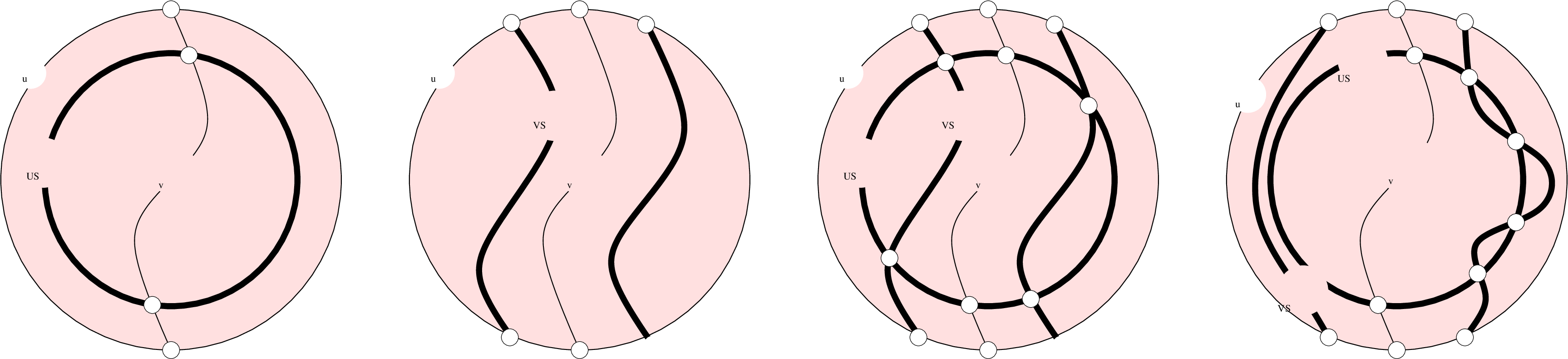}
\caption{\label{twocases}}
\end{figure}
Now it is not hard  to see that the only overlays of these two diagrams fulfilling condition (1) above are the two rightmost diagrams of Fig.~\ref{twocases}.
It remains to observe that the set of lines intersecting both $U$ and $V$ is connected to rule out the rightmost  diagram from our considerations and to conclude that  
the double pseudolines $\tang{U},\tang{V}$ induce a cellular decomposition of the one-point compactification of $\Lines$.
\end{proof}

Observe that this proves that two disjoint convex bodies have a strictly separating line.


\subsection{Convexity and duality in projective planes}
We now work in a projective plane $(\pp,\lpp)$. Recall that a convex body is a closed subset of points with nonempty interior whose intersection with any 
line is a (necessarily closed) line segment, that its polar is the set of lines that miss it, and that its dual is its set of tangent lines or supporting lines 
(i.e., the set of lines intersecting the body but not its interior).  
According to Theorems~\ref{projectivecompletion} and~\ref{theooneneutral} proving Theorem~\ref{theoone}  boils down to prove that the set of lines missing 
two disjoint convex bodies is nonempty. The proof proceeds by a sequence of auxiliary results.

\begin{lemma}\label{lemone} Assume that two of the three sides of a triangular face of a
simple arrangement of three lines are contained in a convex body. Then the triangular face is contained in the convex body. 
\end{lemma}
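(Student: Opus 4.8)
Fix a simple arrangement of three lines $a,b,c$ in the projective plane $\pg=(\pp,\lpp)$, let $\Theta$ be a triangular face of this arrangement, and suppose (say) that the two sides of $\Theta$ supported by $a$ and $b$ are contained in a convex body $U$. I want to conclude $\Theta\subseteq U$. The idea is to pass to a suitable affine part of $\pg$ in which the statement becomes an elementary convexity fact, and then transfer it back, relying on the fact (recorded in the section on affine parts of projective planes) that the affine parts of a projective plane are affine planes whose lines are the traces of the lines of $\pg$.

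First I would choose the line to delete. The three lines $a,b,c$ meet pairwise in three distinct points $A=b\wedge c$, $B=a\wedge c$, $C=a\wedge b$ (the vertices of the arrangement), and they cut $\pp$ into four triangular faces, $\Theta$ being one of them; its three vertices are among $A,B,C$, and in fact are exactly $A,B,C$ since every face of the arrangement of three lines is incident to all three vertices. Let $\Theta'$ be the face of the arrangement $\{a,b\}$ (two lines meeting at $C$) that contains $\Theta$: this is one of the two ``lune'' regions cut out by $a$ and $b$, and $\Theta$ is obtained from $\Theta'$ by slicing with $c$. Pick a point $p$ in the interior of $\Theta$ and let $\ell$ be any line through $p$ that misses $U$ — such a line exists because $U$ is a convex body and $p\notin U$ (indeed $p$ lies in the open face $\Theta$, which is disjoint from the closed sides supported by $a$ and $b$, hence from $U$, modulo checking that $\Theta$ meets neither side of $a$ nor of $b$ that bounds $U$; alternatively use that the set of lines missing $U$ is nonempty and choose one through $p$, which is possible since the dual $\tang U$ is a double pseudoline and the lines through $p$ form a pseudoline, so they meet $\tang U$ and in particular the pencil enters the disk side).

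Now delete $\ell$ and work in the affine plane $(\plane,\Lines)=(\pp\setminus\ell,\ \lpp\setminus\{\ell\})$. Since $\ell$ misses $U$, the body $U$ lies entirely in $\plane$ and is still a convex body there. Since $\ell$ passes through the interior point $p$ of $\Theta$, the line $\ell$ crosses $\Theta$, cutting $\Theta\setminus\ell$ into (at most) two convex affine pieces; each closed half of $\Theta$ is the intersection of finitely many affine half-planes (bounded by traces of $a$, $b$, $c$, and $\ell$), hence is the convex hull of its vertices. The vertices of these pieces lie on the sides of $\Theta$: the two ``long'' sides are on $a\cap\plane$ and $b\cap\plane$, and these segments are in $U$ by hypothesis; the endpoints on $c$ (the vertices $A$ and $B$ of $\Theta$, if present) are themselves endpoints of the $a$-side and the $b$-side, hence in $U$ too. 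Thus every vertex of each affine piece lies in $U$, and by affine convexity (the statement that the segment joining two points of $U$ lies in $U$, iterated) each piece is contained in $U$. Therefore $\Theta\setminus\ell\subseteq U$, and since $U$ is closed, $\Theta\subseteq U$.

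The main obstacle is the bookkeeping in the second paragraph: one must be careful that the chosen line $\ell$ through $p$ genuinely misses $U$ and that deleting it does not destroy the combinatorial picture — in particular that $\Theta$ remains a (possibly subdivided) bounded convex region of the affine plane whose extreme points all lie on the two sides known to be in $U$. Getting a clean argument that $p\notin U$ (equivalently that the open triangle $\Theta$ is disjoint from $U$) also needs a word: it follows because $U$ lies in one closed side of $a$ and in one closed side of $b$, and since two sides of $\Theta$ are in $U$, $U$ must lie on the $\Theta$-side of both $a$ and $b$; but then on the $c$-side of the arrangement, $U$ could a priori stick out past $c$, which is exactly why we do \emph{not} claim $U\subseteq\Theta$, only $\Theta\subseteq U$ — and this asymmetry is what makes the ``hull of the vertices'' argument the right tool. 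Once this is set up, everything else is routine affine convexity, so I would keep that part terse.
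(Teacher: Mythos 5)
There is a genuine gap, and it occurs at the very first step of your construction. You pick a point $p$ in the interior of the triangular face $\Theta$ and ask for a line $\ell$ through $p$ missing $U$. No such line exists under the hypotheses: a line through an interior point of $\Theta$ must cross the boundary of $\Theta$ in at least two points, and that boundary is contained in $a\cup b\cup c$; since $\ell$ meets each of $a$ and $b$ in exactly one point and those sides of $\Theta$ lie in $U$ by hypothesis, a line missing $U$ can meet $\partial\Theta$ only on the single point $\ell\wedge c$ --- one point, not two. So the affine chart you want to pass to does not exist, and the rest of the argument has nothing to stand on. (Your parenthetical justification is also not sound: a pencil of lines through a point is a pseudoline and may well be entirely contained in the closed M\"obius side of $\tang{U}$, so ``the pencil meets $\tang{U}$'' does not give you a line missing $U$.)

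Even if you repaired this by deleting a line missing $U$ that avoids $\Theta$ altogether, the approach would be circular in the logical architecture of the paper: the existence of a line missing a convex body of a projective plane is Lemma~\ref{lemfou}, which is deduced from Lemmas~\ref{lemtwo}, \ref{lemtwozer}, \ref{lemtwoone} and \ref{lemthr}, all of which rest on Lemma~\ref{lemone}. Likewise the fact that $\tang{U}$ is a double pseudoline is only available after one knows $U$ sits inside an affine part. This lemma is the base of that chain and must be proved intrinsically in the projective plane. The paper does so by a sweeping argument that never presupposes a missing line: it fixes a point $D$ on the line $(BC)$ outside the segment $[BC]$, and for each $X$ on the side $[AB]$ considers the two complementary segments of the line $(DX)$ determined by $X$ and $X'=(DX)\wedge(AC)$; the sets $I_1$, $I_2$ of parameters $X$ for which the inner, respectively outer, segment lies in $\CB$ are disjoint (unless $\CB$ contains a line), closed, and cover $[AB]\setminus\{A\}$, so by connectedness one of them is all of $[AB]\setminus\{A\}$, and compactness rules out the outer alternative. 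Your instinct to sweep the triangle by segments is the right one, but it has to be executed projectively, with the two-segment dichotomy handled explicitly, rather than by retreating to an affine chart whose existence is not yet (and in your setup cannot be) established.
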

\begin{proof} Let $\CB$ be a compact subset of points with
nonempty interior whose intersection with any line is a line segment or a  line, and let  $T$ be a triangular face of a
simple arrangement of three lines.
Let $A,B,C$ be the three vertices of the triangular face $T$, as illustrated in the left diagram  of Fig.~\ref{fundone} where the triangular face 
is marked with a little square, let $[AB], [BC]$ and $[CA]$ be the three sides of $T$,
 and assume that $[AB]$ and $[AC]$ are contained in $\CB.$ 
Proving our lemma comes down to proving that $\CB$ contains a line or that $T$ is contained in $\CB.$
\begin{figure}[!htb]
\centering
\psfrag{A}{$A$} \psfrag{B}{$B$} \psfrag{C}{$C$} \psfrag{D}{$D$}
\psfrag{X}{$X$} \psfrag{Xp}{$X'$}
\includegraphics[width=0.75\linewidth]{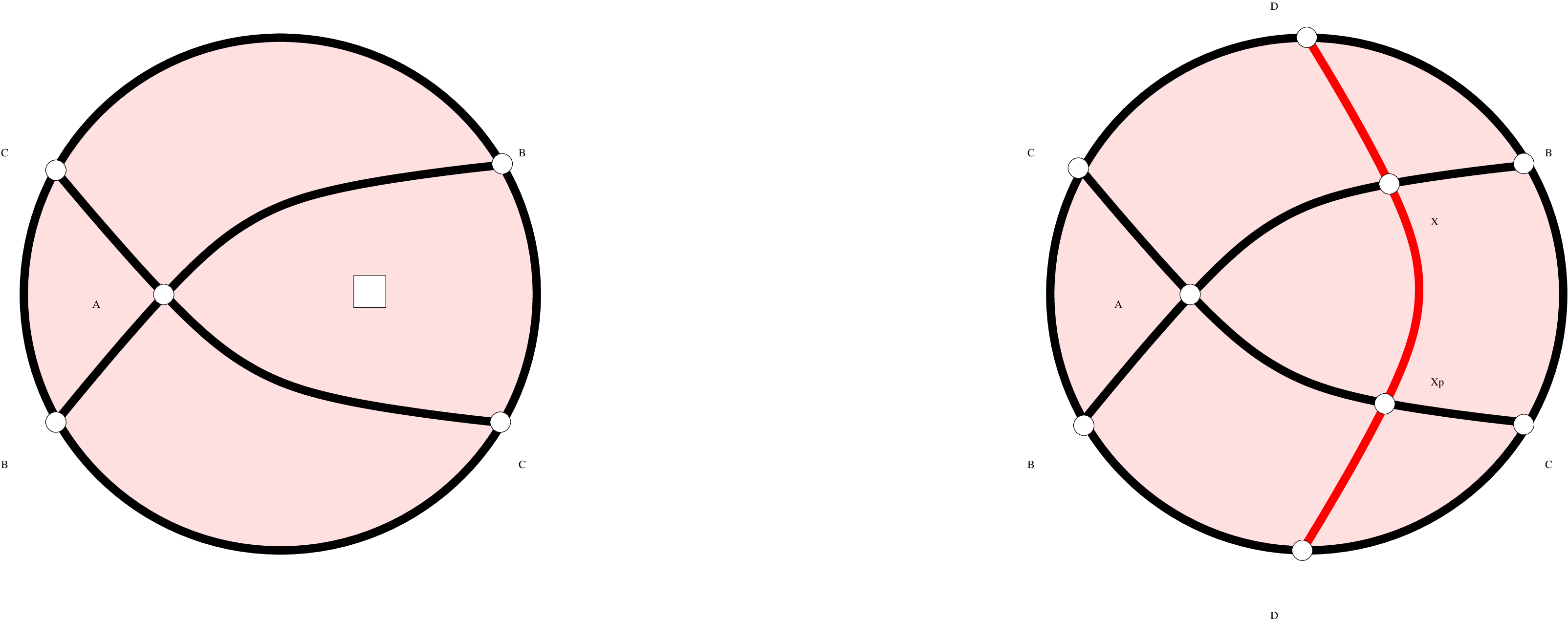}
\caption{\label{fundone} 
}
\end{figure}
Let $D$ be a point of the line $(BC)$ outside the line segment $[BC]$, as illustrated in the right diagram  of Fig.~\ref{fundone}. 
For any $X\in [AB]$, we denote by $X'$ the intersection of the line $(DX)$ with the line $(AC)$ (note that $X'$ ranges over the line segment $[AC]$ and that $B'=C$),
by $[XX']$ the line segment supported by the line $(DX)$ contained 
into $T$, and, for $X\neq A$, by $[X'X]$ the line segment of $(DX)$ contained in the complement of the interior of $T$. 
Let $I$ be the set of $X \in [AB]$, $X\neq A$,  such that $[XX']$ is contained in $\CB$, and let $J$ 
be the set of $X\in [AB]$, $X\neq A$, such that $[X'X]$ is contained in $\CB.$
One can easily check that 
\begin{enumerate}
\item $I \cap J= \emptyset$ unless $\CB$ contains a line $(DX)$ with  $X\in
[AB], X\neq A$;
\item $I \cup J = [AB]\setminus \{A\}$;
\item $I$ and $J$ are both closed in $[AB]\setminus \{A\}.$
\end{enumerate}
Assume now that $I \cap J = \emptyset$, otherwise $\CB$ contains a line $(DX)$ with $X \in [AB], X\neq A$,
and we are done. 
Since $[AB]\setminus \{A\}$ is connected, it follows that $I$  or $J$ is empty. 
In the first case the line  $(DA)$ is contained in $\CB$ since
$\CB$ is compact and in the second case $T = \bigcup_{X \in [AB]} [XX'] \subseteq \CB$. 
In both cases we are done.  \end{proof}

\begin{lemma} \label{lemtwo}
The trace of a line on the interior of a convex body is empty or
is the interior of the trace of the line on the body. 
\end{lemma}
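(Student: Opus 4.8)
The statement to prove is Lemma~\ref{lemtwo}: the trace of a line $\ell$ on the interior of a convex body $U$ is either empty, or equal to the interior (relative to $\ell$) of the trace $\ell\cap U$. First I would dispose of the trivial cases: if $\ell$ misses $U$ entirely, both sides are empty; and if $\ell\cap U$ is a single point, that point is a boundary point of $U$ (an interior point of a convex body, being in the interior, has a whole neighborhood inside $U$, so any line through it meets $U$ in a nondegenerate segment), hence the trace on $\interior U$ is empty and the relative interior of a single point is empty as well. So the substance is the case where $\ell\cap U$ is a nondegenerate line-segment $[P,Q]$ with $P\neq Q$, and one must show $\ell\cap\interior U$ is exactly the open segment $(P,Q)$.

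\textbf{Key steps.} The containment $\ell\cap\interior U\subseteq (P,Q)$ is immediate since $\interior U\subseteq U$ forces $\ell\cap\interior U\subseteq[P,Q]$, and the endpoints $P,Q$ lie on $\partial U$ (no neighborhood of $P$ in $\ell$ stays inside $U$, so $P\notin\interior U$). For the reverse containment, take a point $A\in(P,Q)$; I must produce a neighborhood of $A$ contained in $U$. The plan is to exploit the fact that $U$ has nonempty interior: pick an interior point $u\in\interior U$, which does not lie on $\ell$ (if it did, $\ell\cap U$ would be nondegenerate near $u$, fine, but we need $u\notin\ell$ — so instead pick $u\in\interior U$ and if it happens to lie on $\ell$, replace it by a nearby interior point off $\ell$, which exists because $\interior U$ is open and two-dimensional). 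Now consider the "cone" over the segment $[P,Q]$ with apex $u$: by convexity, for every point $B$ on the open segment $(P,Q)$ and every $t\in(0,1)$ the point on the segment $[u,B]$ at parameter $t$ lies in $U$, and in fact in $\interior U$ for $B\in(P,Q)$ and $t\in(0,1)$ — this is the standard fact that the join of an interior point and any point of a convex set lies, except possibly at the endpoint, in the interior. Running $B$ over a small open subsegment of $(P,Q)$ around $A$ and $t$ over a small interval around $1$, one sweeps out an open set contained in $\interior U$ and containing $A$; since the join map $\vee$ and the intersection map $\wedge$ are continuous and open (as recalled in the background on neutral/projective planes) this "sweep" is genuinely open. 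To make the argument purely projective rather than affine, I would pass to the affine part $(\pp\setminus k,\lpp\setminus\{k\})$ at any line $k$ disjoint from $U$ — such a $k$ exists because $U$ is a closed disk, hence compact, and its set of tangents is a double pseudoline whose complement contains a line missing $U$ (use Lemma~\ref{dualbodybis}, or simply pick any line through two points outside $U$ that are "far" from $U$); in this affine chart $U$ is a compact convex body in the ordinary sense, $\ell$ is an ordinary line, and the classical fact "relative interior of a convex set = topological interior relative to its affine hull" applies verbatim. Alternatively, and perhaps more in the spirit of the surrounding section, one can argue directly: the key auxiliary tool is Lemma~\ref{lemone}, which says that if two sides of a triangular face lie in a convex body then the whole face does; applying this with a triangle whose vertices are $u$ and two points of $\partial U$ straddling $A$ on $\ell$, and whose two sides incident to those two vertices run along $\partial U$-segments inside $U$, shows the triangular face — an open neighborhood of $A$ — lies in $U$, hence in $\interior U$.

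\textbf{Main obstacle.} The delicate point is the claim that the join of an interior point $u$ with a point $B\in\ell\cap U$ other than the endpoints stays in the \emph{interior} of $U$, carried out without the safety net of a metric or a vector-space structure — i.e.\ purely from the axioms of a real two-dimensional projective (or neutral) plane and the continuity/openness of $\vee$ and $\wedge$. The cleanest route is to reduce to an affine chart missing $U$, where $U$ becomes a compact convex body in a plane homeomorphic to $\mathbb{R}^2$ and the statement is the textbook fact about relative interiors of convex sets; the only thing to check carefully there is that the affine structure coming from the chart is compatible with the convexity notion used throughout (line-segments correspond to line-segments), which is built into the definition of the projective completion and the affine part recalled above. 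I expect the proof the authors give to be short, either invoking Lemma~\ref{lemone} directly or reducing to the affine case, so the bulk of the work is bookkeeping rather than a genuinely hard step.
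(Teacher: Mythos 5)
There is a genuine gap, and it occurs right at the point where you set up the main case. You reduce to ``$\ell\cap U$ is a nondegenerate segment $[P,Q]$'' and then set out to prove that $\ell\cap\operatorname{Int}(U)$ equals the open segment $(P,Q)$. That is false: a supporting line can touch a convex body along a whole boundary segment (think of an edge of a polygon), in which case $\ell\cap U=[P,Q]$ is nondegenerate while $\ell\cap\operatorname{Int}(U)=\emptyset$. The lemma is a dichotomy, and the nontrivial case is governed by the hypothesis that $\ell$ meets the interior at some point $C$ --- not by the nondegeneracy of $\ell\cap U$. This is exactly how the paper's proof begins: it assumes $[AB]=\ell\cap U$ contains an interior point $C$, chooses a segment $[DE]$ through $C$ contained in $\operatorname{Int}(U)$ and not contained in $\ell$ (so $D$ and $E$ lie on opposite sides of $\ell$), and applies Lemma~\ref{lemone} to the four triangles with side pairs $[AC],[CD]$; $[AC],[CE]$; $[BC],[CD]$; $[BC],[CE]$. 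The union of these four triangles is a two-sided neighborhood of every point of the open segment $(A,B)$, which is what forces $(A,B)\subseteq\operatorname{Int}(U)$.

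Your cone argument has a second, related defect even in the correctly stated case: joining a single interior point $u$ to the points of $(P,Q)$ and letting the parameter range near $1$ sweeps out an open set on one side of $\ell$ only; its closure contains the target point $A$ but the set itself need not, so it does not witness $A\in\operatorname{Int}(U)$. The textbook fact you invoke gives $[u,B)\subseteq\operatorname{Int}(U)$, which says nothing about $B$ itself. You need interior points on both sides of $\ell$ near $A$, and the only source of those is a point of $\ell\cap\operatorname{Int}(U)$ --- again the hypothesis you dropped. Finally, the fallback reduction to an affine chart requires a line missing $U$; in this paper that existence statement is Lemma~\ref{lemfou}, which is proved after and by means of the present lemma (via Lemmas~\ref{lemtwoone} and~\ref{lemthr}), so that route is circular in this development.
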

\begin{proof} 
Let $\CB$ be a convex body, let $[AB]$ be the trace of a line on $\CB$ and
assume that $[AB]$ intersects the interior of $\CB$ at point $C$. Let $[DE]$ be
a line segment through $C$, contained in the interior of $\CB$, and not
contained in the line $(AB).$  Clearly the arrangement composed of the six lines joining two of the 
four points $A,B,D,E$ is, up to homeomorphism, the one shown in the rightmost diagram of Fig.~\ref{fundtwo}.  
\begin{figure}[!htb]
\centering
\psfrag{AA}{$A$} \psfrag{BB}{$B$} \psfrag{CC}{$C$} \psfrag{DD}{$D$} \psfrag{EE}{$E$}
\includegraphics[width=0.875\linewidth]{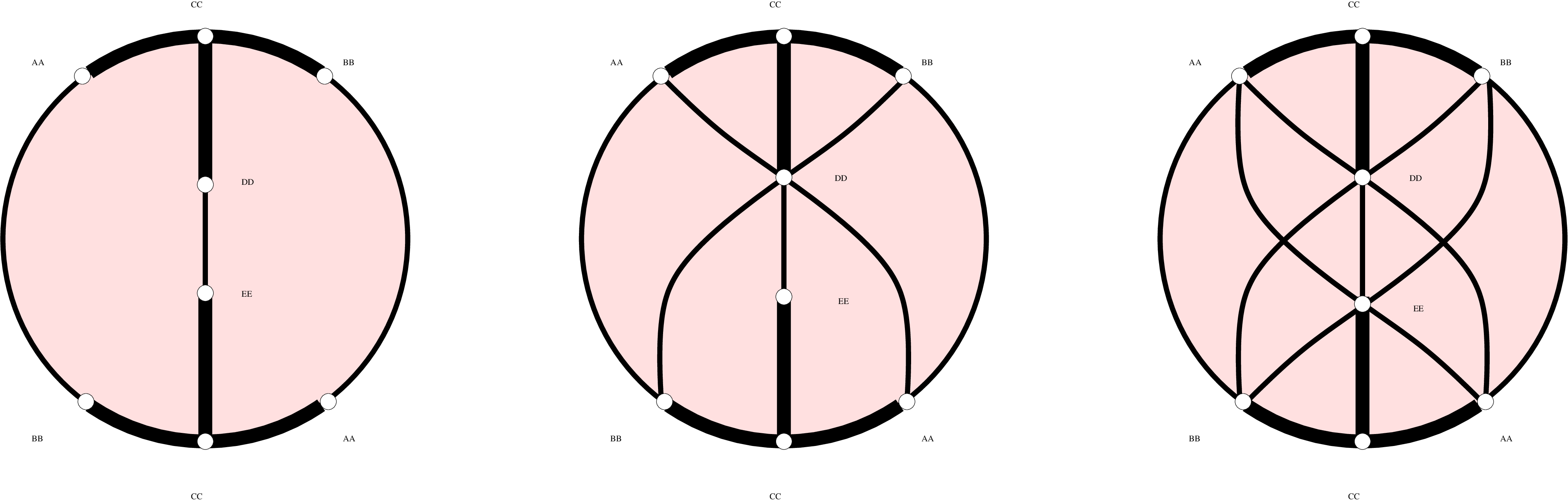}
\caption{\label{fundtwo}}
\end{figure}
According to  Lemma~\ref{lemone} the four triangles $ACD$, $ACE$, $BCD$, and $BCE$ are included in $U$. The lemma follows.
\end{proof}

\begin{lemma} \label{lemtwozer}
Let $\CB$ be a convex body, let $\CL$ be a line intersecting $\CB$ along a nonempty line segment $\II$, let $\TD$ be the closed topological disk obtained by cutting $\pp$ along  $\CL$,  
let $\MapLight{}{\TD}{\pp}$ be the induced canonical map, and  let $\FCB$, $\FCL$  and $\FII$ be the pre-images of 
$\CB$, $\CL$ and $\II$ under $\MapLight{}{\TD}{\pp}.$
Then
\begin{enumerate}
\item $\FII$ has two connected components, denoted $\FIP$ and $\FIM$ thereafter;
\item $\FCB$  has two connected components and the traces of these two connected components on $\FCL$ are the two connected components of $\FII$; 
we denote by $\FCBP$ and $\FCBM$ the connected components of $\FCB$ that contain $\FIP$ and $\FIM$, respectively, and we set 
$\CBP = \FCBP\setminus \FIP$ and $\CBM = \FCBM\setminus \FIM$; 
\item $\CBP$ and $\CBM$ are closed convex subsets of the affine part of $(\pp,\lpp)$ obtained by removing the line $\CL$;
\item 
$\CBP$ or $\CBM$ is  nonempty and  $\CBP$  and $\CBM$ are both nonempty if and only if $\CL$ intersects the interior of $\CB$; 
\item the topological closure of $\CBP$ in $\TD$ is $\CBP \cup \IP$ under the assumption that $\CBP$ is nonempty, and a similar result holds for $\CBM$.
\end{enumerate}
\end{lemma}
\begin{proof} 
Claim (1) is clear since the restriction of $\MapLight{}{\TD}{\pp}$ to the domain $\FCL$ and codomain $\CL$ is a two-covering. 
\begin{figure}[!htb]
\centering
\psfrag{l}{$\CL$}
\psfrag{I}{$\II$} 
\psfrag{IL}{$\FCL$} 
\psfrag{J}{$\FIP$} \psfrag{K}{$\FIM$} \psfrag{B}{$B$}
\psfrag{trace}{$\trace{A}{B}{\CB}$}
\psfrag{AB}{$A_B$} \psfrag{ABS}{$A_B^*$}
\psfrag{V}{$V$} \psfrag{JN}{$J_n$} \psfrag{AN}{$A_n$} \psfrag{A}{$A$} \psfrag{AP}{$A'$} \psfrag{k}{$k$}
\includegraphics[width=0.875\linewidth]{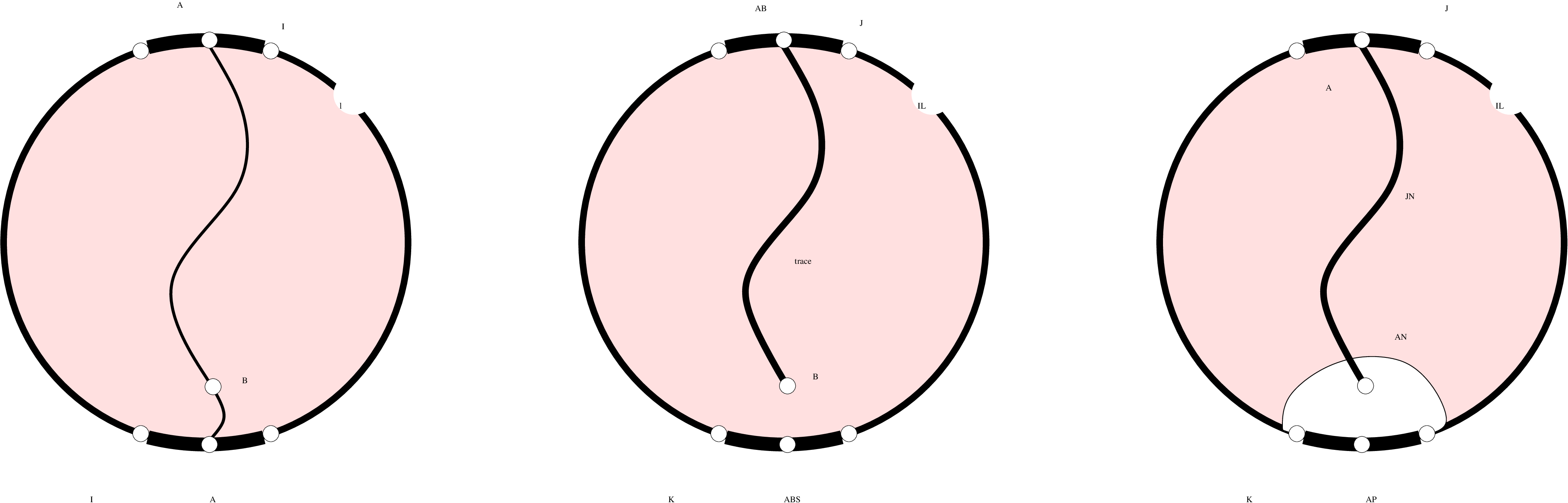}
\caption{\label{fundtwozer}}
\end{figure}
For all $X,Y \in \CB$, $X\neq Y$, we denote by $\trace{X}{Y}{\CB}$ the  line segment joining $X$ and $Y$ contained in $\CB$.  Let  $A \in \II$. 
For all $B\in \CB\setminus \II$ the pre-image  under $\MapLight{}{\TD}{\pp}$ of $\trace{A}{B}{\CB}$ has two connected components: a first line segment reduced to a single point $A^*_B$ and a 
second line segment $(\trace{A}{B}{\CB} \setminus \{A\})\cup A_B$  where $\{A_B^*,A_B\}$ is the pre-image of $A$ under $\MapLight{}{\TD}{\pp}$.
See Fig.~\ref{fundtwozer} for an illustration.
Let $\OCBP$ be the set of $B \in \CB\setminus \II$ such that $A_B \in \FIP$ and, similarly, let $\OCBM$ be the set of $B \in \CB\setminus \II$ such that $A_B \in \FIM.$  
Clearly, by definition, $\OCBP\cup \OCBM = \CB\setminus I$ and $\OCBP\cap \OCBM = \emptyset$.  
We claim that
\begin{enumerate}
\item $\OCBP$ and $\OCBM$ are independent of the choice of $A \in \II$;
\item $\OCBP$ and $\OCBM$ are closed convex subsets of the affine part of $(\pp,\lpp)$ obtained by removing the line $\CL$;
\item $\OCBP$ or $\OCBM$ is nonempty and $\OCBP$ and $\OCBM$ are both nonempty if and only if $\II$ intersects the interior of $\CB$;
\item the topological closure of $\OCBP$ in $\TD$ is $\OCBP \cup \FIP$ under the assumption that $\OCBP$ is nonempty, and a similar result holds for $\OCBM$;
\end{enumerate}
from which it follows that $\FCB$ has two connected components: $\OCBP \cup \FIP$ and $\OCBM \cup \FIM$. 
Claims (1), (2) and (3) are simple applications of Lemmas~\ref{lemone} and ~\ref{lemtwo}. 
Assume now that $\OCBP$ is nonempty. Clearly, the topological closure of $\OCBP$ contains $\OCBP\cup \FIP$ and is contained in $\OCBP\cup \FIP \cup \FIM$.  
Thus we have to prove that the topological closure of $\OCBP\cup \FIP$ avoids $\FIM$. 
Assume the contrary. Then there exists a convergent sequence of points $A_n \in \OCBP$  with limit $A' \in \FIM$. 
Let $A \in \II$, let $J_n = \trace{A}{A_n}{\CB} \setminus \{A\}$ and let   $\aline_n$ be the supporting line of $J_n$.
Without loss of generality 
one can assume that the sequence $\aline_n$ has a limit $\aline'$. Let $B \in \aline'$, $B \notin \II$. There exists a convergent sequence of points $B_n \in \aline_n$   with limit
$B$. For $n$ large enough $B_n$ belongs to $J_n$. Since $\OCBP$ is closed it follows that $B \in \OCBP$ and, consequently, $\aline'$ is a subset of $\CB$. 
This contradicts the assumption that $\CB$ is a convex body. The lemma follows with $\CBP = \OCBP$ and $\CBM = \OCBM$.  \end{proof}

\begin{lemma} \label{lemtwoone} 
Assume that there is a line missing the interior of a convex body. Then there is a line missing the body. 
\end{lemma}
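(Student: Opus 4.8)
The statement to be proved is Lemma~\ref{lemtwoone}: if a convex body $\CB$ of a projective plane $(\pp,\lpp)$ admits a line $\ell$ missing its \emph{interior}, then it admits a line missing $\CB$ altogether. The idea is to reduce to the neutral-plane situation, where the corresponding statement is already available (the last paragraph of the neutral-plane subsection records that \emph{two disjoint convex bodies have a strictly separating line}, and, more pertinently, the lemmas there describe precisely the set of lines missing a convex body as the punctured disk bounded by its dual double pseudoline). First I would pass to the affine part $(\pp\setminus\ell,\lpp\setminus\{\ell\})$. Since $\ell$ misses the interior of $\CB$, Lemma~\ref{lemtwozer} applies with $\II=\CB\cap\ell$ equal to a line-segment contained in $\partial\CB$ (or empty, in which case $\ell$ already misses $\CB$ and we are done): $\II$ does not meet the interior of $\CB$, so one of $\CBP,\CBM$ is empty, say $\CBM=\emptyset$, and hence $\FCB=\CBP\cup\IP$ with $\CBP$ a closed convex subset of the affine plane $\plane=\pp\setminus\ell$. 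In particular $\CBP$ is a convex body of $\plane$ (it is compact, convex, and has nonempty interior because $\CB$ does and $\II$ lies on the boundary).

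\textbf{Key steps.} The second step is to produce, inside the affine plane $\plane$, a line missing the convex body $\CBP$ and then to check that the corresponding line of $(\pp,\lpp)$ misses all of $\CB$, not merely $\CBP$. For the first half, I would invoke the neutral-plane theory: by Lemma~\ref{dualbodybis} the set of lines of $\plane$ missing $\CBP$ is the (one-punctured) topological disk bounded by the dual double pseudoline $\tang{\CBP}$, and this disk is nonempty (it contains the puncture point, i.e.\ ``the line at infinity'' $\ell$ itself, which by construction misses $\CBP$). So there is genuinely a line $k$ of $\plane$ with $k\cap\CBP=\emptyset$; moreover, since the lines missing $\CBP$ form an open set, one can choose $k$ in the interior of that disk, hence $k\neq\ell$ and $k$ is an honest line of the projective plane. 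The third step is the verification that $k$ misses $\CB$: $\CB$ is the image of $\FCB=\CBP\cup\IP$ under the gluing map $\nu_\ell\colon\TD\to\pp$, and $\IP\subseteq\ell$; a line $k\neq\ell$ of $(\pp,\lpp)$ meets $\ell$ in exactly one point and meets $\CB$ iff it meets $\CBP$ in $\plane$ together with possibly that single point on $\ell$. Since $k\cap\CBP=\emptyset$, the only way $k$ could meet $\CB$ is through a point of $\II=\CB\cap\ell$; but then $k$ would be a supporting line of $\CB$ touching $\CB$ only along (part of) the boundary segment $\II$, and this possibility is excluded by noting that $k\cap\CBP=\emptyset$ forces, via Lemma~\ref{lemtwozer}(4) (the closure of $\CBP$ in $\TD$ is $\CBP\cup\IP$) or a short direct limiting argument, that $k$ cannot be tangent along $\II$ unless $\II$ itself were degenerate — and a degenerate $\II$ returns us to the trivial case. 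Thus $k\cap\CB=\emptyset$.

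\textbf{Main obstacle.} The routine part is the reduction via Lemma~\ref{lemtwozer} and the appeal to the neutral-plane duality; the delicate point is the last step, disentangling the boundary behaviour at $\II$. The subtlety is that a line of $\plane$ missing the \emph{open} convex body $\CBP$ might still contain a boundary point of $\CBP$, and that boundary point could lie on $\ell$, so naively a line missing $\CBP$ need not miss $\CB$. The fix is to use the openness of the set of missing lines to perturb $k$ into the interior of the punctured disk $\pclines\setminus(\tang{\CBP}\cup\{\infty\})$, where ``missing'' is strict and stable; the remaining bookkeeping is to show that strict missing of $\CBP$ plus $k\neq\ell$ implies strict missing of $\CB$, which is exactly where Lemma~\ref{lemtwozer}(4) enters — it pins down that the only points of $\CB$ lying on $\ell$ are those of $\IP$, so a line transverse to $\ell$ and avoiding $\CBP$ meets $\CB$ only if it passes through $\II$, and avoiding the open body $\CBP$ sufficiently strictly rules this out. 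I would write this final disentanglement carefully, as it is the one place the argument is not purely formal.
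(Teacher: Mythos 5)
Your reduction to the affine chart breaks down at the decisive point. After cutting along $\ell$ you assert that $\CBP$ is a convex body of the affine plane $\plane=\pp\setminus\ell$, ``compact, convex, with nonempty interior''. But in the only nontrivial case ($\II=\CB\cap\ell\neq\emptyset$) the set $\CBP=\CB\setminus\II$ is \emph{not} compact in $\plane$: its closure in $\pp$ is all of $\CB$, which meets $\ell$ along $\II$, so $\CBP$ is an unbounded closed convex subset of the affine chart, accumulating on the line at infinity exactly along $\II$. Consequently Lemma~\ref{dualbodybis} (which is proved for compact convex bodies of an $\mathbb{R}^2$-plane) does not apply to $\CBP$, and the ``one-punctured disk of missing lines'' whose nonemptiness you invoke is not available. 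This is not a repairable technicality at the margin: the entire content of Lemma~\ref{lemtwoone} is that $\CB$ may touch $\ell$, so that deleting $\ell$ leaves an unbounded convex set to which the neutral-plane duality cannot be applied directly. (Your subsequent discussion of perturbing $k$ inside the disk of missing lines, and of the boundary bookkeeping along $\II$, all presupposes this nonexistent disk.)

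The paper's argument avoids the affine reduction altogether and instead perturbs $\ell$ itself. From Lemma~\ref{lemtwozer}(3) one of $\CBP,\CBM$ is empty, i.e.\ $\CB$ accumulates on $\ell$ from one side only. One then picks an auxiliary line $\ell_\infty$ missing the segment $\II$, so that the point $\ell\wedge\ell_\infty$ lies outside $\CB$, takes a neighbourhood $U$ of that point disjoint from $\CB$ and a shrinking basis $V_n$ of neighbourhoods of $\ell$, and observes (using the one-sidedness from Lemma~\ref{lemtwozer}) that for large $n$ the half of $V_n$ on the empty side, together with $U$, is disjoint from $\CB$; a compactness argument then produces a line of the pencil through $\ell\wedge\ell_\infty$ contained in that region, and this line misses $\CB$. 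If you want to salvage your write-up, you should replace the appeal to Lemma~\ref{dualbodybis} by such a one-sided perturbation of $\ell$; as it stands, the existence of \emph{any} line missing $\CB$ has not been established.
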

\begin{proof}
Let $\CB$ be a convex body, let $\aline$ be a line missing the interior of $\CB$, let $I$ be the trace of $\aline$ on  $\CB$ and assume
that $I$ is nonempty (otherwise we are done).
Let $\aline_{\infty}$ be a line missing the line segment $I$ and let $Q$ and $Q'$ be the two connected components of the complement of the lines $\aline$ and $\aline_\infty$
in $\pp$, as indicated in the leftmost diagram of Fig.~\ref{fundtwoone}. 
\begin{figure}[!htb]
\centering
\psfrag{Q}{$Q'$} \psfrag{Qp}{$Q$}
\psfrag{linf}{$\aline_{\infty}$}
\psfrag{l}{$\aline$}
\psfrag{I}{$I$}
\psfrag{V}{$V_n$}
\psfrag{JN}{$J_n$}
\psfrag{AN}{$A_n$}
\psfrag{ApN}{$A'_n$}
\psfrag{k}{$k$}
\psfrag{U}{$\NEI$}
\includegraphics[width=0.875\linewidth]{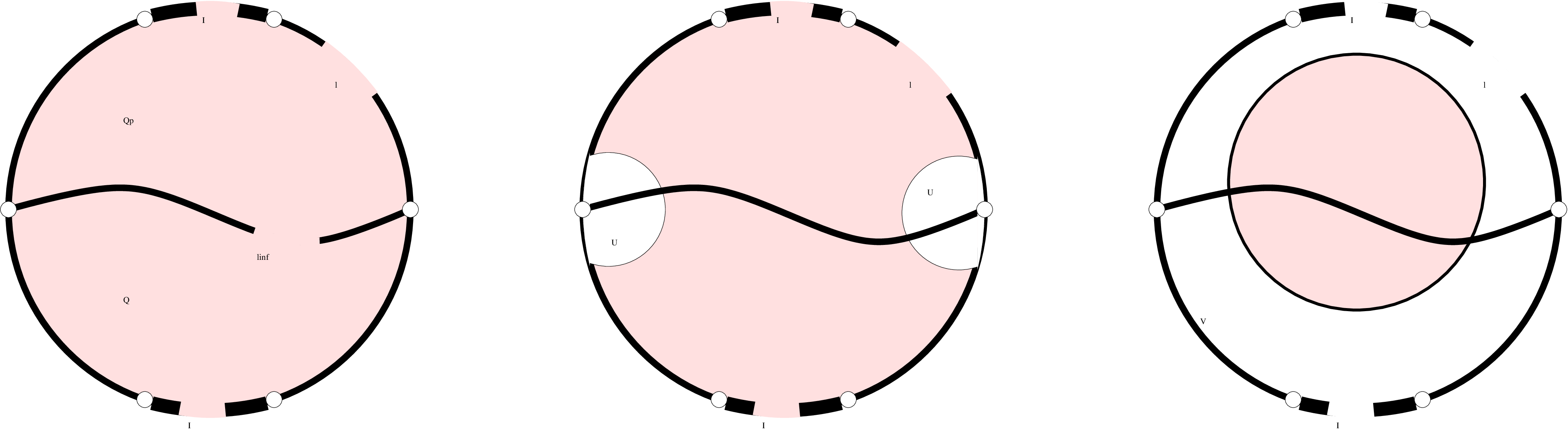}
\caption{\label{fundtwoone}}
\end{figure}
Let $\NEI$ be a neighboorhood of the intersection point of $\aline$ and $\aline_\infty$ disjoint from $\CB$.
Let $V_n$ be a decreasing sequence of open neighborhoods of $\aline$ with $\bigcap_n V_n = \aline$ and let $W_n$ and $W'_n$ be the traces of $V_n$ on $Q \cup \NEI$ and $Q'\cup \NEI$, 
respectively.
According to the previous lemma there exists an $n_0$ such that for all $n\geq n_0$ the trace of $W_n$ on $\CB$ is empty or for all $n\geq n_0$ the trace of $W'_n$ 
on $\CB$ is empty.
Without loss of generality one can assume that the trace of $W'_n$ on $\CB$ is empty for $n\geq n_0$. Using standard compactness arguments we see that there is a line $\aline''$ of the pencil of lines through the intersection point of $\aline$ and $\aline_\infty$ contained in $W'_n$ and we are done.
\end{proof}

\begin{lemma} \label{lemtwotwo} 
Assume that there is a line missing the interiors of two disjoint convex bodies. Then there is a line missing the two bodies. 
\end{lemma}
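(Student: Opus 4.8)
\textbf{Proof plan for Lemma~\ref{lemtwotwo}.} The statement is the two-body analogue of Lemma~\ref{lemtwoone}: given disjoint convex bodies $U,V$ and a line $\ell$ missing both interiors, we want a line missing both bodies outright. The plan is to reduce to Lemma~\ref{lemtwoone} in two successive passes. First I would apply Lemma~\ref{lemtwoone} to the body $U$ and the line $\ell$: since $\ell$ misses the interior of $U$, there is a line $\ell_U$ missing $U$. The difficulty is that $\ell_U$ may now cut through $V$. So the first pass is not automatically enough, and I need to be more careful about \emph{where} the new line is found.

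The refinement I would use is exactly the one inside the proof of Lemma~\ref{lemtwoone}: the separating line is produced as a limit of lines lying in an arbitrarily small neighborhood $V_n$ of the original line $\ell$, on one of the two sides of an auxiliary line $\ell_\infty$ through the crossing point of $\ell$ and $\ell_\infty$. Concretely, I would pick $\ell_\infty$ missing both traces $I_U = \ell\cap U$ and $I_V=\ell\cap V$ (possible since $U\cap V=\emptyset$ forces $I_U,I_V$ to be disjoint line segments of $\ell$, so a single line can avoid their union, or one can even take $\ell_\infty$ through the crossing point region as in the one-body proof). Shrinking neighborhoods $V_n\searrow \ell$ and cutting them by $\ell_\infty$ into pieces $W_n,W_n'$, Lemma~\ref{lemtwoone} applied to $U$ tells us that one of the two families, say $W_n'$, eventually misses $U$; and applied to $V$ it tells us that one of the two families eventually misses $V$. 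The key observation is that the ``side'' chosen for $U$ and the ``side'' chosen for $V$ must be the \emph{same} side: on the other side of $\ell_\infty$ the neighborhoods $W_n$ contain the full trace $I_U$ (resp.\ $I_V$) which meets $U$ (resp.\ $V$), so that side can never work for either body. Hence for $n$ large enough $W_n'$ simultaneously misses $U$ and $V$, and then the standard compactness argument (a line of the pencil through $\ell\cap\ell_\infty$ contained in $W_n'$) produces the desired common missing line.

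The main obstacle, and the step to write carefully, is the bookkeeping that forces the ``same side'' conclusion: one must check that $\ell_\infty$ can indeed be chosen to avoid $I_U\cup I_V$ simultaneously (using disjointness of $U$ and $V$, and the fact that each $I$ is a line segment by Lemma~\ref{lemtwo}), and that on the side of $\ell_\infty$ containing $I_U$ (resp.\ $I_V$) no neighborhood $W_n$ can miss $U$ (resp.\ $V$). Everything else is a verbatim repetition of the compactness machinery already deployed in the proofs of Lemmas~\ref{lemtwoone} and \ref{lemtwozer}, so the write-up should explicitly say ``arguing as in the proof of Lemma~\ref{lemtwoone}'' rather than redo it. As an alternative, shorter route, one could instead invoke Theorem~\ref{theoone} once it is available for the affine situation, but since this lemma is a building block \emph{toward} Theorem~\ref{theoone}, the self-contained reduction above is the appropriate one here.
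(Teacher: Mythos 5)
There is a genuine gap, and it sits exactly at the step you flag as the ``key observation''. Your claim that the good side for $U$ and the good side for $V$ automatically coincide is false for an arbitrary choice of $\ell_\infty$, and the reason you give for it is vacuous: the sets $W_n$ and $W'_n$ are contained in $Q\cup U$ and $Q'\cup U$, which meet $\ell$ nowhere outside the small neighborhood $U$ of $\ell\wedge\ell_\infty$ (and $U$ is chosen disjoint from the bodies), so neither $W_n$ nor $W'_n$ ever contains the traces $I_U$, $I_V$ --- ``that side contains the full trace'' is not a property either side has. For a concrete failure, take $\ell$ to be the $x$-axis of the standard affine chart, $U$ the unit disk centred at $(0,1)$ and $V$ the unit disk centred at $(10,-1)$, and $\ell_\infty$ the line at infinity of that chart (which misses both traces). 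Then $U$ protrudes into the upper component $Q$ arbitrarily close to $\ell$ and $V$ protrudes into the lower component $Q'$ arbitrarily close to $\ell$, so no $W_n$ and no $W'_n$ ever misses both bodies, and no line of the pencil through $\ell\wedge\ell_\infty$ close to $\ell$ separates them. A common missing line does exist (e.g.\ $y=\epsilon(x-5)$ for small $\epsilon>0$), but it crosses $\ell$ \emph{between} the two traces; equivalently, the construction only works if $\ell_\infty$ is chosen so that $\ell\wedge\ell_\infty$ lies in the correct one of the two arcs of $\ell\setminus(I\cup J)$. Your proposal never makes, or even acknowledges, this choice.

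This selection step is precisely what the paper's proof supplies, by a more global argument than the one you iterate. The paper cuts $\pp$ along $\ell$ into a closed disk whose boundary double-covers $\ell$; by Lemma~\ref{lemtwozer} (using that $\ell$ misses both interiors) each body lifts to a connected set attached to exactly one of the two antipodal lifts of its trace, say $I^+$ for $U$ and $J^+$ for $V$, the other lifts $I^-$, $J^-$ being bare boundary arcs. Since $\{I^+,I^-\}$ and $\{J^+,J^-\}$ are antipodal pairs on the boundary circle, the pair $\{I^+,J^+\}$ is always contiguous in the cyclic order, so one can run a line whose lift is an arc separating $I^+\cup J^+$ from $I^-\cup J^-$; that line misses both bodies. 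If you want to keep your neighborhood-shrinking route, you must first prove this combinatorial fact and use it to place $\ell\wedge\ell_\infty$ in the gap that puts both bodies on the same component $Q$; only then does the ``simultaneous Lemma~\ref{lemtwoone}'' argument (whose side-selection really comes from Lemma~\ref{lemtwozer}, not from Lemma~\ref{lemtwoone} itself) go through. Your dismissal of the route via Theorem~\ref{theoone} as circular is correct.
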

\begin{proof}
Let $\CB$ and $\CB'$ be two disjoint 
convex bodies,  let $\aline$ be a line missing the interiors of $\CB$ and $\CB'$, let $I$ and $J$ be the traces 
 of $\aline$ on  $\CB$ and $\CB'$ and assume that $I$ and $J$ are nonempty (otherwise we are done, thanks to the previous lemma), as indicated in the leftmost diagram  of Fig.~\ref{fundtwotwo}. Let $\TD$ be the closed 
\begin{figure}[!htb]
\centering
\psfrag{Q}{$Q'$} \psfrag{Qp}{$Q$}
\psfrag{linf}{$\aline_{\infty}$}
\psfrag{l}{$\aline$}
\psfrag{lF}{$\widetilde{\aline}$}
\psfrag{I}{$I$} \psfrag{IP}{$I_+$} \psfrag{IM}{$I_-$}
\psfrag{J}{$J$} \psfrag{JP}{$J_+$} \psfrag{JM}{$J_-$}
\psfrag{V}{$V$} \psfrag{A}{$A$} \psfrag{Ap}{$A'$} \psfrag{AN}{$A_n$} \psfrag{ApN}{$A'_n$}
\psfrag{k}{$k$}
\includegraphics[width=0.75\linewidth]{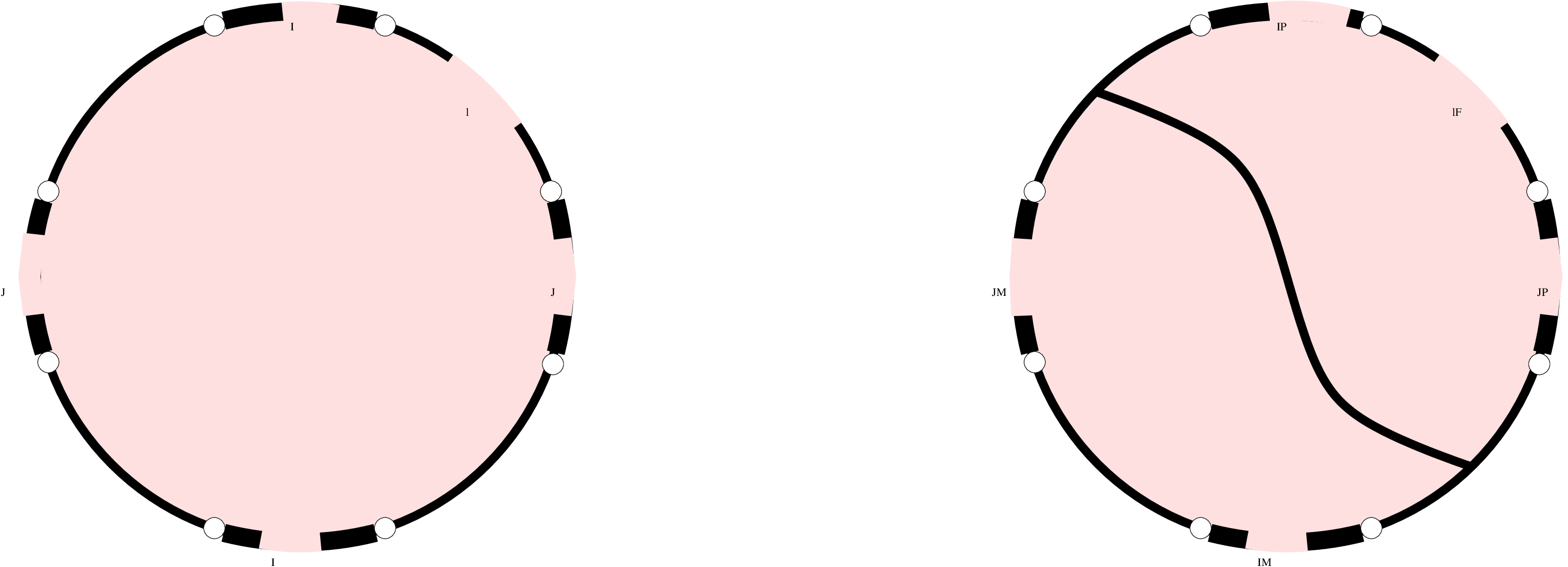}
\caption{\label{fundtwotwo}}
\end{figure}
topological disk obtained by cutting $\pp$ along $\aline$, let $\MapLight{}{\TD}{\pp}$ be the induced canonical map, let $I_+$  and $I_-$ be the 
two connected components of the pre-image of $I$ under $\MapLight{}{\TD}{\pp}$ with the convention that $I_-$ is also a connected component of the pre-image of 
$\CB$ under $\MapLight{}{\TD}{\pp}$, and similarly let $J_+$  and $J_-$ be the
two connected components of the pre-image of $J$ under $\MapLight{}{\TD}{\pp}$ with the convention that $J_-$ is also a connected component of the pre-image of 
$\CB$ under $\MapLight{}{\TD}{\pp}$.  The lemma follows from the simple observation that  there is a line misssing $I$ and $J$ whose pre-image under $\MapLight{}{\TD}{\pp}$
separates $I_+ \cup J_+$ from $I_- \cup J_-$, as indicated in the right diagram  of Fig.~\ref{fundtwotwo}.
\end{proof}

\begin{lemma}\label{lemthr} Any boundary point of a convex body is incident to a line missing 
the interior of the body. 
\end{lemma}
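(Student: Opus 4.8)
The statement to prove is Lemma~\ref{lemthr}: every boundary point of a convex body $\CB$ lies on a line missing the interior of $\CB$. The natural strategy is to reduce to the affine (indeed neutral-plane) situation, where a supporting-line statement of this kind was already established. Concretely, I would proceed as follows. First, since $A \in \partial \CB$, the point $A$ is not interior to $\CB$; I want to produce \emph{some} line through (or near) $A$ that I can remove to land in an affine chart. The cheapest route: by Lemma~\ref{lemtwoone} it suffices to exhibit a line missing the interior of $\CB$ at all (not necessarily through $A$), because once I have a line missing $\interior{\CB}$ I can cut along any line $\ell$ through $A$ that does meet $\CB$ only in a segment not crossing the interior --- but that is circular. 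So instead I would cut the projective plane along an \emph{arbitrary} line $\ell$ chosen to pass through a point of $\interior{\CB}$ if possible, or rather: pick any line $\ell$ not containing $A$; cut $\pp$ along $\ell$ to get the closed disk $\TD$ with canonical projection $\TD \to \pp$, and consider the preimage $\FCB$ of $\CB$. By Lemma~\ref{lemtwozer}, $\FCB$ has at most two connected components, each a closed convex subset of the affine part $\pp \setminus \ell$; and $A$, not lying on $\ell$, has a unique preimage $\widetilde A$ lying in one of these components, call it $\CB_0$.

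The second step is to check that $\widetilde A$ is a boundary point of $\CB_0$ in the affine plane $\pp \setminus \ell$. This follows because the projection $\TD \to \pp$ is a local homeomorphism away from $\ell$ and $A \notin \ell$, so it carries a neighborhood basis of $\widetilde A$ to a neighborhood basis of $A$; since every neighborhood of $A$ meets both $\CB$ and its complement, the same holds for $\widetilde A$ and $\CB_0$ (using Lemma~\ref{lemtwozer}(4) to see that the ``other'' component $I^-$, if present, is glued along $\ell$ and hence does not contaminate a small neighborhood of $\widetilde A$). Now $\CB_0$ is a closed convex set in an affine plane; if it is bounded it is a convex body there, and the neutral/affine supporting-line theorem (the second lemma of the neutral-plane section, the one describing the tangents to a convex body through a boundary point: the set of tangents to $U$ through $A \in \partial U$ is a nonempty closed interval of the pencil through $A$) gives a line $k$ in $\pp \setminus \ell$ through $\widetilde A$ missing $\interior{\CB_0}$. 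Projecting $k$ back to a line $\overline k$ of $\pp$ through $A$: since $\overline k \cap \interior{\CB}$ would have to lie in $\interior{\CB_0} \cup \interior{\CB_0'}$ (interiors of the two affine components) and $\overline k$ meets $\interior{\CB_0'}$, if at all, in points mapping from the $\ell$-side --- here I must argue that $\overline k$ missing $\interior{\CB_0}$ together with $k$ being a genuine affine line forces $\overline k$ to miss $\interior{\CB}$ entirely. This is where Lemma~\ref{lemtwo} enters: the trace of $\overline k$ on $\interior{\CB}$ is the interior of the trace of $\overline k$ on $\CB$, and that trace, pulled to $\TD$, lies in $\FCB$ with one endpoint-component on $\ell$; a short argument shows it cannot meet $\interior{\CB_0}$.

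\textbf{Main obstacle.} The delicate point is the case analysis forced by the cut: $\FCB$ may be connected or have two pieces, and the line $\ell$ I chose to define the chart may itself cut through $\interior{\CB}$, so ``$\CB_0$ is bounded'' and ``$\CB_0$ is a convex body'' are not automatic --- $\CB_0$ could be an unbounded closed convex set (a half-plane-like region) in $\pp \setminus \ell$. To sidestep this I would be careful to choose $\ell$ at the outset to be a line \emph{missing $\CB$ altogether}; such a line exists: apply Lemma~\ref{lemtwoone}, but that requires first knowing there is a line missing $\interior{\CB}$, which is essentially what we are proving. The clean fix is to choose $\ell$ to be a line through $A$ that is tangent in the weak sense that it meets $\CB$ in a segment with $A$ as an endpoint --- wait, that presupposes supporting lines. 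The honest resolution: choose any line $\ell$ meeting $\interior{\CB}$ and also not through $A$ (generic lines do this); then by Lemma~\ref{lemtwozer}(3) \emph{both} components $\CB^+, \CB^-$ are nonempty, each is a \emph{compact} convex body of $\pp \setminus \ell$ by part (2) together with the compactness of $\CB$ and the fact that each $\CB^\pm$ sits on one side, so boundedness is genuine; $\widetilde A$ lies in one of them as a boundary point, and the affine supporting-line lemma applies directly. Thus the real work is (i) verifying $\widetilde A \in \partial \CB^\pm$ rigorously, and (ii) pushing the affine supporting line $k$ back to $\pp$ and confirming via Lemma~\ref{lemtwo} that the projected line still misses $\interior{\CB}$ (it meets $\CB$ only along $\partial \CB$, entering the other component $\CB^\mp$ only through points of $\partial\CB^{\mp}$). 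I expect (ii) to be the step requiring the most care, since it is exactly the interface where projective-to-affine pullback of convexity (Lemmas~\ref{lemone},~\ref{lemtwo},~\ref{lemtwozer}) does the heavy lifting.
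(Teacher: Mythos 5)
Your reduction to the affine case founders on the compactness claim, and this is a genuine gap rather than a detail. Having chosen $\ell$ to meet $\interior{\CB}$ (which, as you note, is forced, since choosing $\ell$ to miss $\CB$ or even just $\interior{\CB}$ is circular), the two pieces $\CBP,\CBM$ of Lemma~\ref{lemtwozer} are closed convex subsets of the affine part $\pp\setminus\ell$, but they are \emph{not} compact: by Lemma~\ref{lemtwozer}(4) the closure of $\CBP$ in the cut disk is $\CBP\cup\IP$, so the closure of $\CBP$ in $\pp$ contains the nonempty arc $I\subset\ell$; a compact subset of $\pp\setminus\ell$ would be closed in $\pp$, so $\CBP$ accumulates on $\ell$ and is unbounded in the chart. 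Hence it is not a convex body of the neutral plane $\pp\setminus\ell$, and the supporting-line lemma you invoke (stated and proved in the paper only for convex bodies, with compactness used explicitly in its proof) does not apply to it. You correctly identified boundedness as the delicate point and then resolved it with a false assertion (``each is a compact convex body \dots\ by part (2) together with the compactness of $\CB$''). To repair the argument you would have to extend the neutral-plane tangent lemma to closed convex sets with nonempty interior that are unbounded in the chart --- plausible, but it is new work the proposal does not do. For what it is worth, your step (ii) is sound: if a line of $\pp\setminus\ell$ through $\widetilde A$ misses $\interior{\CBP}$, then by Lemma~\ref{lemtwo} its projective completion misses $\interior{\CB}$, since the trace of that line on $\CB$ would be a segment with endpoint $A$ whose points near $A$ lie in $\interior{\CB}\setminus\ell\subseteq\interior{\CBP}$.

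The paper's own proof avoids all of this and is considerably more elementary: it works directly in the projective plane with the pencil of \emph{oriented} lines through the boundary point $A$, colouring each oriented line blue (misses $\interior{\CB}$), white ($A$ is the initial endpoint of its trace on $\interior{\CB}$) or red ($A$ is the terminal endpoint). By Lemma~\ref{lemtwo} this is a well-defined partition, the white and red sets are open and both nonempty (the two orientations of any line joining $A$ to an interior point give one of each), and since the oriented pencil is connected the blue set cannot be empty. You may want to compare: the connectedness argument uses only Lemma~\ref{lemtwo} and needs no affine chart, no cut, and no compactness beyond what is already packaged in the definition of a convex body.
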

\begin{proof} Let $\CB$ be a convex body and let $A$ be a boundary point of
$\CB.$  The color of an oriented line $\aline$ through $A$  
is defined to be 
\begin{enumerate}
\item  {\it blue} if the line $\aline$ intersects the interior of $\CB$ and if $A$ is the initial
point of the trace on the interior of $\CB$ of the oriented line $\aline;$
\item  {\it white} if the line $\aline$ does not intersect the interior of $\CB$;
\item  {\it red} if the line intersects the interior of $\CB$ and if $A$ is the
terminal point of the trace on the interior of $\CB$ of the oriented line $\aline.$
\end{enumerate}
According to Lemma~\ref{lemtwo} any oriented line through $A$ has a color, and
these colors are mutually exclusive.
The sets of blue and red  oriented lines  are open subsets of the pencil of oriented lines
through $A$. Since none of these two sets is empty  and since the pencil of
oriented lines through $A$ is connected it follows that the set of  white oriented lines  
is nonempty. This proves the lemma. 
\end{proof}

\begin{lemma} \label{lemfou} The set of lines missing a convex body is nonempty. 
\end{lemma}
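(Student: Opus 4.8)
The statement to prove is Lemma~\ref{lemfou}: the set of lines missing a convex body $\CB$ of a projective plane is nonempty. The natural route is exactly the one the preceding lemmas have been building towards, so I would simply assemble them. First I would pick an arbitrary boundary point $A$ of $\CB$ (such a point exists since $\CB$ is a convex body, hence has nonempty interior and is not the whole plane, so its boundary is a genuine simple closed curve). Then I would invoke Lemma~\ref{lemthr}: there is a line $\ell$ through $A$ that misses the \emph{interior} of $\CB$. At this point $\ell$ may still meet $\CB$ along a boundary line segment, so the job is not yet done; the second and final step is to feed this $\ell$ into Lemma~\ref{lemtwoone}, which says precisely that the existence of a line missing the interior of $\CB$ upgrades to the existence of a line missing $\CB$ altogether. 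This yields the conclusion.

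The key steps, in order: (1) produce a boundary point $A\in\partial\CB$; (2) apply Lemma~\ref{lemthr} to $A$ to get a line $\ell\ni A$ with $\ell\cap\interior{\CB}=\emptyset$; (3) apply Lemma~\ref{lemtwoone} to $\ell$ to obtain a line genuinely disjoint from $\CB$. Nothing beyond these three invocations is needed, and indeed the excerpt essentially telegraphs this by writing ``Combining Lemmas~\ref{lemtwoone} and~\ref{lemthr} we get that the set of lines missing a convex body is nonempty'' immediately before the lemma statement.

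\textbf{Main obstacle.} Honestly there is no real obstacle left: all the substantive work has already been done in Lemmas~\ref{lemone}--\ref{lemthr}, where the passage from ``misses the interior'' to ``misses the body'' (Lemma~\ref{lemtwoone}) is handled by the cutting-along-a-line construction and a compactness argument, and the existence of a blue (interior-missing) line through a boundary point (Lemma~\ref{lemthr}) is handled by the connectedness of the pencil of oriented lines together with the trichotomy of colors. The only thing to be mildly careful about in writing Lemma~\ref{lemfou} itself is to make sure $\partial\CB$ is nonempty, which is immediate: a convex body is compact with nonempty interior and strictly smaller than $\pp$ (a line through an interior point would otherwise be contained in $\CB$, contradicting that $\CB$ is a \emph{body} whose trace on each line is a line segment), so its boundary is nonempty. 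Thus the proof is a two-line citation of earlier results, and I would present it as such.

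\begin{proof}
Let $A$ be a boundary point of the convex body (the boundary is nonempty since the body is compact with nonempty interior and is not all of $\pp$). By Lemma~\ref{lemthr} there is a line through $A$ missing the interior of the body. By Lemma~\ref{lemtwoone} it follows that there is a line missing the body. \end{proof}
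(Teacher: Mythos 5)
Your proof is correct and is exactly the paper's argument: the paper itself states, just before the lemma, that it follows by combining Lemma~\ref{lemtwoone} with Lemma~\ref{lemthr}, which is precisely your two-step citation. The extra remark that $\partial\CB$ is nonempty is a harmless (and valid) bit of added care.
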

\begin{proof} Simple consequence  of Lemmas~\ref{lemtwoone} and~\ref{lemthr}.  \end{proof}

\begin{lemma} \label{lemfiv}
The set of lines missing two disjoint convex bodies is nonempty.  
\end{lemma}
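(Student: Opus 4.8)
The plan is to reduce the projective statement to the already–proved affine/neutral case, exactly in the style of the preceding lemmas. Given two disjoint convex bodies $U$ and $V$ of a projective plane $\pg = (\pp,\lpp)$, I want to produce a line missing both. First I would invoke Lemma~\ref{lemfou} twice: there is a line $\ell$ missing $U$ and a line $\ell'$ missing $V$. The difficulty is that $\ell$ need not miss $V$ nor $\ell'$ miss $U$, so neither line is directly a separating line; I need a single line missing the interiors of both bodies to then apply Lemma~\ref{lemtwotwo}.

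So the key step is to exhibit a line missing both interiors. Here I would argue as follows. Pick a point $p$ in the interior of $U$ and a point $q$ in the interior of $V$; since $U$ and $V$ are disjoint, $p \neq q$, so they have a joining line $p \vee q$. Cut $\pp$ along a line $\ell$ missing $U$ (Lemma~\ref{lemfou}), obtaining the closed disk $\TD$ with canonical map $\MapLight{}{\TD}{\pp}$; by Lemma~\ref{lemtwozer} the preimage of $U$ is a single convex body $\widetilde U$ of the affine $\mathbb{R}^2$-plane $\pg^\ell$, while the preimage of $V$ is either a single convex body or splits into two convex pieces $\CBP \cup \CBM$. In the affine plane $\pg^\ell$ I may now use the convexity results for neutral planes: if a preimage piece of $V$ is disjoint from $\widetilde U$, then by the neutral-plane separation result (established via Lemma~\ref{dualbodybis} and the cellular-decomposition lemma, whose proof noted that two disjoint convex bodies of a neutral plane have a strictly separating line) there is an affine line of $\pg^\ell$ missing both; its image in $\pp$ is a projective line missing $U$ and the corresponding part of $V$. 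The remaining case is where some preimage piece of $V$ meets $\widetilde U$ inside $\TD$ — but this would force the line $\ell$ to pass through the joining segment of interior points of the two bodies in $\pp$, contradicting that $\ell$ misses $U$ once one keeps track, via Lemma~\ref{lemtwozer}(3), of how $V$ can split; I would choose $\ell$ more carefully, e.g.\ a line missing $U$ and also chosen generically (missing one fixed interior point of $V$, which is possible by Lemma~\ref{lemtwoone}/Lemma~\ref{lemfou} applied in a suitable affine chart), so that $V$ does not split, reducing to the affine disjoint case outright.

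Once a line missing both interiors is in hand, Lemma~\ref{lemtwotwo} immediately upgrades it to a line missing both bodies, completing the proof. The main obstacle I expect is the bookkeeping in the cutting argument: guaranteeing that the line $\ell$ missing $U$ can be chosen so that the preimage $\widetilde V$ of $V$ remains connected (a single convex body) rather than splitting into $\CBP \cup \CBM$, since a splitting corresponds precisely to $\ell$ separating the two halves of $V$ and could in principle interleave badly with $U$. The cleanest fix is to first secure a line $\ell_0$ missing the interior of $U$ and missing one interior point $q$ of $V$ (combining Lemma~\ref{lemfou} for $U$ with a genericity/perturbation step justified by the openness of the join and intersection maps), upgrade $\ell_0$ to a line $\ell$ missing $U$ entirely via Lemma~\ref{lemtwoone} while keeping it off $q$, and then observe that in the affine chart $\pg^\ell$ the bodies $\widetilde U$ and $\widetilde V$ are genuinely disjoint convex bodies, so the neutral-plane separation result applies and its image line is the desired separating line in $\pg$.
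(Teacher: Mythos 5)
There is a genuine gap at the heart of your reduction. You correctly isolate the only hard case: a line $\ell$ missing $U$ (Lemma~\ref{lemfou}) may pass through the interior of $V$, in which case the preimage of $V$ in the disk obtained by cutting $\pp$ along $\ell$ splits into the two pieces $\CBP\cup\CBM$ of Lemma~\ref{lemtwozer}, and the affine two-body separation result no longer applies. But your proposed fix does not close this case. Choosing $\ell$ to avoid \emph{one} interior point $q$ of $V$ is much weaker than choosing it to avoid the interior of $V$: a line can miss $q$ and still cross the interior of $V$, and by Lemma~\ref{lemtwozer}(3) the preimage of $V$ then still splits, so the claim that in the chart $\pg^\ell$ the bodies $\widetilde{U}$ and $\widetilde{V}$ are ``genuinely disjoint convex bodies'' is unjustified. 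Worse, the existence of a line missing $U$ and missing the interior of $V$ is essentially the statement to be proved (modulo Lemma~\ref{lemtwotwo}); no genericity or perturbation argument based on openness of the join map can produce it, since the set of lines meeting the interior of $V$ is itself open and could a priori contain every line missing $U$.

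The paper confronts exactly this case with an explicit construction rather than a choice of chart. With $\ell$ missing $U$ and meeting $V$ in a segment $[RS]$, it takes tangents $\EEE$ and $\FFF$ to $V$ at $S$ and $R$ (if either misses the interior of $U$ one is done), sets $A=\EEE\wedge\FFF$, chooses a line $\GGG$ through $A$ avoiding the interior of $V$ and a line $\HHH$ through $B=\ell\wedge\GGG$ avoiding $U$ but meeting the interior of $V$, and replaces $V$ by its truncation $W$, the intersection of $V$ with the strip delimited by $\GGG$ and $\HHH$: unlike $V$ itself, $W$ \emph{is} a convex body of the affine chart $\pg^\ell$ disjoint from $U$. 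The affine theory then yields the interior bitangents of $U$ and $W$; their intersection point $D$ lies in the triangle bounded by $\EEE$, $\FFF$ and $\HHH$, and the line $B\vee D$ misses the interiors of both $U$ and $V$, after which Lemma~\ref{lemtwotwo} finishes. If you want to keep your reduction-to-affine strategy you must supply something playing the role of this truncation step; no choice of $\ell$ alone can be guaranteed to avoid the interior of $V$.
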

\begin{proof} According to Lemma~\ref{lemtwotwo} it is sufficient to prove that the set of lines missing the interiors of two disjoint convex bodies is nonempty.
Let $U$ and $V$ be two disjoint convex bodies and let $\aline$ be a line missing $U.$ 
If $\aline$ avoids the interior of $V$ we are done. Otherwise $\aline$ intersects $V$ along a closed line segment, say $[RS]$, $R\neq S$, 
and $\aline$ intersects the interior of $V$ along the interior of $[RS]$. 

\begin{figure}[!htb]
\centering
\psfrag{Fp}{$\FFF$} \psfrag{Ep}{$\EEE$}
\psfrag{linf}{$\aline$}
\psfrag{g}{$\GGG$}
\psfrag{h}{$\HHH$}
\psfrag{A}{$A$}
\psfrag{B}{$B$}
\psfrag{D}{$D$}
\psfrag{Sone}{$S_1$} \psfrag{Stwo}{$S_2$} \psfrag{Sthr}{$S_3$} \psfrag{Sfou}{$S_4$}
\psfrag{Vone}{$V$} \psfrag{Vtwo}{$V$}
\psfrag{R}{$R$} \psfrag{S}{$S$}
\includegraphics[width=0.9875\linewidth]{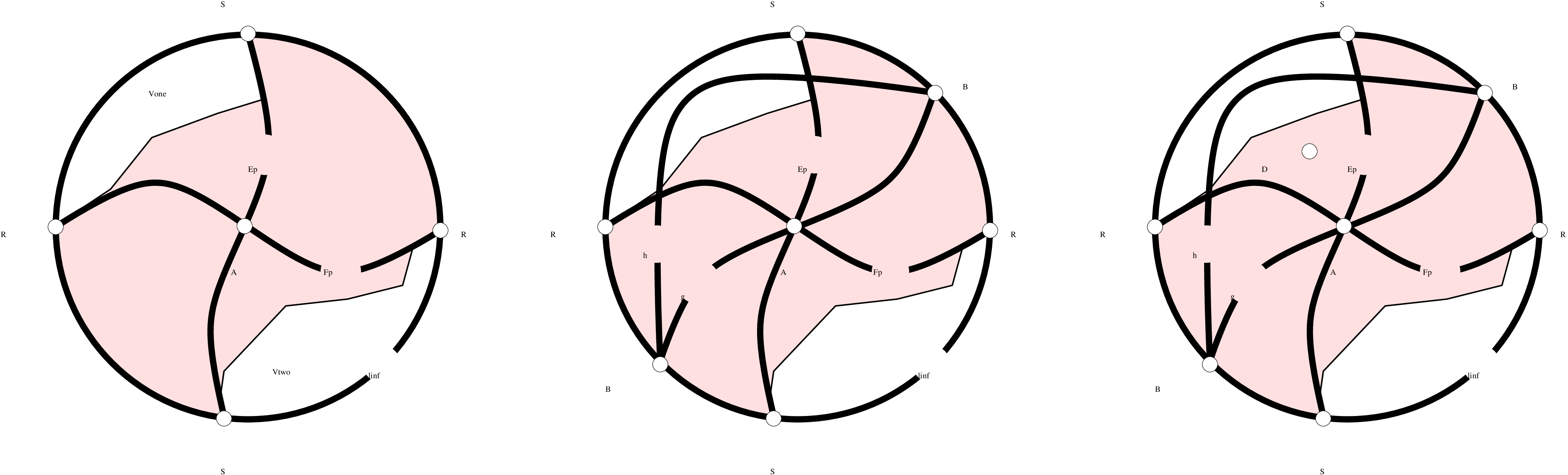}
\caption{\label{fundthr} }
\end{figure}

Let $\EEE$ be a tangent to $V$ at $S$, let $\FFF$ be a tangent to $V$ at $R$ and let $A$ be the intersection point of 
$\EEE$ and $\FFF$. If $\EEE$ or $\FFF$ misses the interior of $U$ we are done. Otherwise we proceed as follows. 

Let $\GGG$ be a line through $A$ that avoids the interior of $V$, 
let $B$ be the intersection point of $\aline$ and $\GGG$, let $\HHH \neq \aline$ be a line through $B$ that avoids $U$ 
but intersects the interior of $V$, and let $W$  be the intersection of  $V$  with the strip delimited by $\GGG$ and $\HHH$ in the affine plane $(\pp\setminus \aline, \lpp\setminus \{\aline\})$. 
Clearly $U$ and $W$ are disjoint convex bodies of the affine plane $(\pp\setminus \aline, \lpp\setminus \{\aline\})$: 
Let $D$ be the intersection point of their interior bitangents. 
We let the reader check that $D$ belongs to the triangle in $\pp\setminus \aline$ delimited by the lines $\EEE,\FFF$ and $\HHH$ and that 
the line through $D$ of the pencil of lines through $B$ avoids the interiors of $U$ and $V$.
\end{proof}
\end{document}